\titleformat{\chapter}[hang]
{\normalfont\huge\scshape}{\thechapter}{1em}{}
\titleformat{\section}[hang]
{\normalfont\bfseries\large}{\thesection}{0.8em}{}
\titleformat{\subsection}[hang]
{\normalfont\bfseries}{\thesubsection}{0.8em}{}
\titleclass{\part}{top}
\titleformat{\part}[display]
{\Huge\scshape\centering}{\partname~\thepart}{20pt}{}
\titlespacing*{\part}{0pt}{40pt}{40pt}
\newcounter{results}[section] % Uniform counters for lemmas, theorems, propositions etc
\theoremstyle{plain}
\newtheorem{theorem}[results]{Theorem}
\newtheorem{lemma}[results]{Lemma}
\newtheorem{proposition}[results]{Proposition}
\newtheorem{corollary}[results]{Corollary}
\newcounter{alpharesults}
\newtheorem{alphatheorem}[alpharesults]{Theorem}
\newtheorem{alphaproposition}[alpharesults]{Proposition}
\newtheorem*{theorem*}{Theorem}
\newtheorem*{lemma*}{Lemma}
\newtheorem*{proposition*}{Proposition}
\newtheorem*{corollary*}{Corollary}
\newtheorem*{exercise*}{Exercise}
\newtheorem*{fact*}{Fact}
\newtheorem*{open*}{Open problem}
\theoremstyle{remark}
\newtheorem{remark}[results]{Remark}
\newtheorem*{remark*}{Remark}
\newtheorem*{question*}{Question}
\theoremstyle{definition}
\newtheorem{definition}[results]{Definition}
\newtheorem{example}[results]{Example}
\newtheorem*{definition*}{Definition}
\newtheorem*{example*}{Example}
\numberwithin{equation}{section}
\crefname{figure}{Figure}{Figures}
        \renewcommand{\comma}{\ensuremath{,}}
        \newcommand{\comma}{\ensuremath{,}}
\newcommand{\semicolon}{\ensuremath{;}}
\newcommand{\point}{\ensuremath{.}}
\newcommand{\pheq}{\ensuremath{\hphantom{{}={}}}}
\newcommand{\N}{\ensuremath{\mathbb N}}%Natural numbers
\newcommand{\Z}{\ensuremath{\mathbb Z}}%Integers
\newcommand{\R}{\ensuremath{\mathbb R}}%Real numbers
\DeclarePairedDelimiter\ClOp{[}{)}
\DeclarePairedDelimiter\OpOp{(}{)}
\DeclarePairedDelimiter\ClCl{[}{]}
\newcommand{\co}[2]{\ensuremath{\ClOp{#1, #2}}}
\newcommand{\oo}[2]{\ensuremath{\OpOp{#1, #2}}}
\newcommand{\cc}[2]{\ensuremath{\ClCl{#1, #2}}}
\newcommand{\bigO}{\ensuremath{\mathcal{O}}}
\DeclarePairedDelimiter\abs{\lvert}{\rvert} % Absolute value
\DeclarePairedDelimiter\norm{\lVert}{\rVert} % Norm
\newcommand{\scal}[2]{\ensuremath{\langle #1 , #2 \rangle}} % Scalar product 
\newcommand \eps{\ensuremath{\varepsilon}}
\newcommand{\st}{\ensuremath{\ :\ }} % Such that in formulae.
\newcommand{\eqdef}{\ensuremath{\coloneqq}} % Equal in a definition.
\newcommand{\id}{\ensuremath{\mathrm{id}}}% Identity
\DeclareMathOperator{\tr}{tr}
\DeclareMathOperator{\supp}{supp}
\renewcommand{\d}{\ensuremath{d}} % differential
\newcommand{\de}{\ensuremath{\, d}} % differential in integrals
\let\div\undefined
\newcommand{\div}{\ensuremath{\mathrm{div}}} % Divergence
\newcommand{\grad}{\ensuremath{\nabla}} % Gradient
\newcommand{\lapl}{\ensuremath{\Delta}} % Laplacian
\newcommand{\DerParz}[2]{\ensuremath{\frac{\partial #1}{\partial #2}}} % Partial derivative
\newcommand{\vf}{\ensuremath{\mathfrak X}} % Vector fields space
\newcommand{\cov}{\ensuremath{\nabla}} %Covariant derivative
\DeclareMathOperator{\II}{I\!I} % Second fundamental form
\newcommand{\Scal}{\ensuremath{R}} % Scalar curvature
\DeclareMathOperator{\Ric}{Ric} % Ricci tensor
\DeclareMathOperator{\vol}{vol} % Volume
\DeclareMathOperator{\dist}{dist} % Distance
\newcommand{\Haus}{\ensuremath{\mathscr H}} % Hausdorff measure
\newcommand{\ms}{\ensuremath{\Sigma}} % Minimal submanifold
\newcommand{\amb}{\ensuremath{M}} % Ambient manifold
\DeclareMathOperator{\ind}{ind} % Index
\newcommand{\area}{\ensuremath{\Haus^2}} % Area
\newcommand{\length}{\ensuremath{\Haus^1}} % Length
\DeclareMathOperator{\genus}{genus} % Genus
\DeclareMathOperator{\bdry}{boundaries} % Number of boundary components
\newcommand{\Diff}{\ensuremath{D}} % Hessian
\newcommand{\jac}{\ensuremath{J}} % Jacobian operator
\newcommand{\A}{\ensuremath{A}} % Second fundamental form
\newcommand{\hypP}{\hyperref[HypP]{$(\mathfrak{P})$}} % No bad fbms
\newcommand{\hypC}{\hyperref[HypC]{$(\mathfrak{C})$}} % Mean convexity of the boundary
\newcommand{\hypN}{\hyperref[HypN]{$(\mathfrak{N})$}} % Local picture
\newcommand{\lam}{\ensuremath{\mathcal L}} % Lamination
\newcommand{\set}{\ensuremath{\mathcal S}} % Set of points
\newcommand{\bu}[1]{\ensuremath{\bar{#1}}}
\newcommand{\notg}{\ensuremath{a}}
\newcommand{\notb}{\ensuremath{b}}
\newcommand{\euno}{\ensuremath{\scriptscriptstyle{(1)}}}
\newcommand{\edue}{\ensuremath{\scriptscriptstyle{(2)}}}
\newcommand{\ebi}{\ensuremath{\scriptscriptstyle{(\notb)}}}
\newcommand{\dih}{\ensuremath{\mathbb D}}
\DeclareMathOperator{\Prob}{Prob}
\newcommand{\Is}{\ensuremath{\mathfrak{Is}}}
\newcommand{\vard}{\ensuremath{\boldsymbol {\mathrm{F}}}}
\newcommand{\so}{\ensuremath{\boldsymbol{\Sigma}}}
\newcommand{\reduc}[1][\gamma]{\ensuremath{\underset{#1}{\ll}}}
\newcommand{\sreduc}[1][\gamma]{\ensuremath{\underset{#1}{<}}}
\newcommand{\smetric}{\ensuremath{\gamma}}
\newcommand{\selem}{\ensuremath{h}}
\newcommand{\limitv}{\ensuremath{\Xi}}
\newcommand{\avoids}{\ensuremath{\Theta}}
\newcommand{\euball}{\ensuremath{\mathscr{B}}}
\DeclarePairedDelimiter\interval{(}{)}
\DeclarePairedDelimiter\Interval{[}{)}
\DeclarePairedDelimiter\intervaL{(}{]}
\newcommand{\Eq}{\ensuremath{D}} % Equatorial disc
\DeclareMathOperator{\asinh}{asinh}
\colorlet{myGray}{gray}
\colorlet{myBlue}{blue}
\colorlet{myBlack}{black}
\colorlet{myBackground}{gray!10}
\title{Contributions to the theory of free boundary minimal surfaces}
\author{Giada Franz}
\begin{document}

\frontmatter

\makeatletter
\begin{titlepage}
    \centering
    
    \large 
    
    \vspace{2ex}

    DISS. ETH NO. 28421 
    
    \vspace{10ex}
    
    {\Huge \scshape Contributions to the theory of\\[0.5ex] free boundary minimal surfaces}
    
    \vspace{10ex}
    
    A thesis submitted to attain the degree of\\
    Doctor of Sciences of ETH Zurich\\
    (Dr. sc. ETH Zurich)
    
    \vspace{8ex}
    
    presented by\\[0.2ex]
    {\Large \scshape Giada Franz}
    
    \vspace{5ex}
    
    MSc in Mathematics at University of Pisa\\
    born on 09.01.1994
    
    \vspace{18ex}
    
    accepted on the recommendation of\\[2ex]
    Prof. Dr. Alessandro Carlotto, examiner\\
    Prof. Dr. Ailana Fraser, co-examiner
    
    \vfill
    
    2022
    
\end{titlepage}
\makeatother

% \pagestyle{plain} % No headers, just page numbers
% \pagenumbering{roman} % Roman numerals
\setcounter{page}{2}

%%%%%%%%%%%%%%%%%%%%%%%%%%%%%%%%%%%%%%%%%%%%%%%%%
%%% Abstract
%%%%%%%%%%%%%%%%%%%%%%%%%%%%%%%%%%%%%%%%%%%%%%%%%
\chapter*{Abstract}

\addcontentsline{toc}{chapter}{Abstract/Sommario} 

In this thesis, we present various contributions to the study of free boundary minimal surfaces, which are critical points of the area functional with respect to variations that constrain the boundary of such surfaces to the boundary of the ambient manifold.
After introducing some basic tools and discussing some delicate aspects related to the definition of Morse index when allowing for a contact set, we divide the thesis in two parts.

In the first part of this dissertation, we study free boundary minimal surfaces with bounded Morse index in a three-dimensional ambient manifold. More specifically, we present a degeneration analysis of a sequence of such surfaces, proving that (up to subsequence) they converge smoothly away from finitely many points and that, around such `bad' points, we can at least `uniformly' control the topology and the area of the surfaces in question.
As a corollary, we obtain a complete picture of the way different `complexity criteria' (in particular: topology, area and Morse index) compare for free boundary minimal surfaces in ambient manifolds with positive scalar curvature and mean convex boundary.

In the second part, we focus on an equivariant min-max scheme to prove the existence of free boundary minimal surfaces with a prescribed topological type.
The principle is to choose a suitable group of isometries of the ambient manifold in order to obtain exactly the topology we are looking for. 
We recall a proof of the equivariant min-max theorem, and we also prove a bound on the Morse index of the resulting surfaces. Finally, we apply this procedure to show the existence of a new family of free boundary minimal surfaces with connected boundary and arbitrary genus in the three-dimensional unit ball.

\chapter*{Sommario}

In questa tesi, presentiamo vari contributi allo studio di superfici minime con bordo libero, definite come punti critici del funzionale area rispetto a variazioni che vincolano il bordo di queste superfici al bordo della varietà ambiente. Dopo aver introdotto degli strumenti di base e aver discusso degli aspetti delicati legati alla definizione di indice di Morse quando ammettiamo un insieme di contatto, dividiamo la tesi in due parti.

Nella prima parte di questa dissertazione, studiamo superfici minime con bordo liscio con indice di Morse limitato, in una varietà ambiente di dimensione tre. In particolare, presentiamo un'analisi del comportamento di una successione di tali superfici, dimostrando che una sottosuccessione converge in modo liscio a meno che in un numero finito di punti e che, intorno a questi punti `cattivi', possiamo quantomeno controllare `uniformemente' la topologia e l'area delle superfici in questione. Come corollario, otteniamo un quadro completo di come si confrontano diversi `criteri di complessità' (nello specifico: topologia, area e indice di Morse) di superfici minime con bordo libero in varietà ambiente con curvatura scalare positiva e bordo con curvatura media non negativa.

Nella seconda parte, ci concentriamo su uno schema di min-max equivariante per dimostrare l'esistenza di superfici minime con bordo libero di un tipo topologico prescritto. Il principio è scegliere accuratamente il gruppo di isometrie della varietà ambiente in modo tale da ottenere esattamente la topologia che stiamo cercando.
Richiamiamo una dimostrazione del teorema di min-max equivariante, dimostrando anche un controllo sull'indice di Morse della superficie risultante. Infine, applichiamo questa procedura per dimostrare l'esistenza di una nuova famiglia di superfici minime con bordo libero con bordo connesso e genere arbitrario nella palla unitaria di dimensione tre.

%%%%%%%%%%%%%%%%%%%%%%%%%%%%%%%%%%%%%%%%%%%%%%%%%
%%% Acknowledgements
%%%%%%%%%%%%%%%%%%%%%%%%%%%%%%%%%%%%%%%%%%%%%%%%%

\chapter*{Acknowledgements}

\addcontentsline{toc}{chapter}{Acknowledgements} 

I would like to thank my advisor Alessandro Carlotto for his support and guidance, for having introduced me to so many fascinating problems and for countless fruitful conversations during these years. I also thank Ailana Fraser, for kindly accepting to be my co-examiner and for being an inspiration. Moreover, a special thank goes also to all the incredible colleagues and professors that I met in Pisa and in Zürich and have been fundamental for my mathematical development. In particular, I would like to mention some of my collaborators and friends that directly contributed in some way to this thesis: Mario Schulz, for many exciting discussions about minimal surfaces; Alessandro Pigati, for the patience in answering my questions; and Federico Trinca, for his precious feedback and encouragement.

Thanks to my friends and my family, my mum, my dad, and my brother, who supported me during these years and with whom I have spent so many beautiful moments. I tried to write down all the names, but it seemed impossible to give fair credit to everyone, so I decided to keep these acknowledgements short. There are many people that made my everyday life more enjoyable and people that I talked to once but left a great impression on me, from a mathematical or a human point of view. I thank all of them. I have learned so many things in these years and grown as a person, and I am grateful to the many people that I met in this process and who have been by my side.

\vfill

My doctoral studies have received funding from the European Research Council (ERC) under the
European Union’s Horizon 2020 research and innovation programme (grant agreement No. 947923).

\tableofcontents
\clearpage

%%%%%%%%%%%%%%%%%%%%%%%%%%%%%%%%%%%%%%%%%%%%%%%%%
%%% Notation
%%%%%%%%%%%%%%%%%%%%%%%%%%%%%%%%%%%%%%%%%%%%%%%%%

\chapter*{Basic notation}
\addcontentsline{toc}{chapter}{Basic notation}  
\markboth{Basic notation}{Basic notation}
\label{chpt:notation}

Let $(\amb^3,\smetric)$ be a Riemannian manifold with boundary and let $\ms^2\subset\amb$ be an embedded surface with $\partial\ms\subset\partial\amb$. Then we denote by:
\begin{itemize}
\item $\vf(M)$ the set of vector fields on $M$ such that $X(x)\in T_x\partial M$ for all $x\in \partial M$. 
\item $X^\perp \in \Gamma(N\Sigma)$ the normal component to $\Sigma$ of a vector $X\in\vf(M)$, where $\Gamma(N\Sigma)$ denotes the sections of the normal bundle of $\Sigma$.
\item $\Diff$ the connection on $\amb$, $\cov$ the induced connection on $\ms$ and $\grad^\perp$ the induced connection on the normal bundle of $\Sigma$.
\item $\Ric_M$ and $\Scal_M$ the Ricci curvature and the scalar curvature of $M$, respectively. 
\item $\nu$ a choice of a global unit normal vector field on $\ms$, when $\ms$ is two-sided.
\item $\eta$ the outward unit co-normal vector field to $\partial \ms$.
\item $\hat\eta$ the outward unit co-normal vector field to $\partial\amb$ (which coincides with $\eta$ along $\partial\ms$ when $\ms$ satisfies the free boundary property). 
\item $\II^{\partial\amb}(X,Y) = g(D_X Y,\hat{\eta})$ the second fundamental form of $\partial\amb\subset\amb$. Observe that $\II^{\partial\amb} < 0$ if for example $\amb$ is the unit ball in $\R^3$ (and thus $\partial\amb$ is the unit sphere).
\item $\A(X,Y) = (D_X Y)^\perp$ the second fundamental form of $\ms\subset\amb$ and $\abs{A}^2$ its squared norm.
\item $H^{\partial\amb}$ the mean curvature of $\partial\amb$, that is $H^{\partial\amb} = - \II^{\partial\amb}(E_1,E_1) - \II^{\partial\amb}(E_2,E_2)$ for every choice of a local orthonormal frame $\{E_1,E_2\}$ of $\partial\amb$. 
We say that $\partial\amb$ is mean convex if $H^{\partial\amb} \ge 0$, and strictly mean convex if $H^{\partial\amb} > 0$ (that is, for example, the case of the unit ball in $\R^3$).
\item $\chi(\ms)$, $\genus(\ms)$, $\bdry(\ms)$ and $\area(\ms)$ respectively the Euler characteristic, the genus, the number of boundary components and the area of $\ms$.
\item $\Xi(a) \eqdef\{ x^1\ge a \}\subset \R^3$ and $\Pi(a)\eqdef\{ x^1= a \}\subset \R^3$, for $0 \ge a \ge -\infty$. When $a=-\infty$, we agree that $\Xi(a)$ coincides with $\R^3$ and $\Pi(a)$ is empty.
\end{itemize}
All these notions are easily extended to the case when the surface in question is only immersed, rather than embedded.

\begin{figure}[htpb]
\centering
%%%%%%%%%%%%%%%%%%%%%%%%%%%%%%%%%%%%%%%%%%%%%%%%%%%%%
%%% crit_cat.tikz
%%%%%%%%%%%%%%%%%%%%%%%%%%%%%%%%%%%%%%%%%%%%%%%%%%%%%
\begin{tikzpicture}[line cap=round,line join=round,rotate=5]

\pgfmathsetmacro{\R}{3}
\pgfmathsetmacro{\T}{1.19968}
\pgfmathsetmacro{\a}{1/(\T*cosh(\T))}
\pgfmathsetmacro{\e}{0.403*\R}
\coordinate(Pa)at({\R*\a*cosh(\T)},{\R*\a*\T});
\coordinate(Pav)at({1.3*\R*\a*cosh(\T)},{1.3*\R*\a*\T});
\coordinate(Pao)at({\R*\a*cosh(\T)+0.3*\R*\a*\T},{\R*\a*\T-0.3*\R*\a*cosh(\T)});
\coordinate(Pb)at({\R*\a*cosh(-\T)},{-\R*\a*\T});
\coordinate(Qa)at({-\R*\a*cosh(\T)},{\R*\a*\T});
\coordinate(Qb)at({-\R*\a*cosh(-\T)},{-\R*\a*\T});

\pgfmathsetmacro{\wa}{215}
\pgfmathsetmacro{\wb}{140}
\pgfmathsetmacro{\ua}{0.4375*\R}
\pgfmathsetmacro{\ub}{0.35*\R}

\draw[myBlack] (0,0) circle (\R);
\node[above=2pt] at(0,\R) {$\amb$};

\begin{scope}[myBlue]
\node at(-0.65*\R,-0.2*\R) {$\ms$};
\draw[scale=1,domain=-\T:\T,smooth,variable=\s]  plot ({\R*\a*cosh(\s)},{\R*\a*\s});
\draw[scale=1,domain=-\T:\T,smooth,variable=\s]  plot ({-\R*\a*cosh(\s)},{\R*\a*\s});

\draw(Pa)..controls+(\wa:\ua)and +(180-\wa:\ua)..(Qa);
\draw(Pb)..controls+(\wa:\ua)and +(180-\wa:\ua)..(Qb);
\draw(Pa)..controls+(\wb:\ub)and +(180-\wb:\ub)..(Qa);
\draw[dashed](Pb)..controls+(\wb:\ub)and +(180-\wb:\ub)..(Qb);
\end{scope}

\draw[-latex] (Pa) -- (Pav) node[above]{$\eta$};
\draw[-latex] (Pa) -- (Pao) node[right]{$\nu$};
\end{tikzpicture}
%%%%%%%%%%%%%%%%%%%%%%%%%%%%%%%%%%%%%%%%%%%%%%%%%%%%%
%%%%%%%%%%%%%%%%%%%%%%%%%%%%%%%%%%%%%%%%%%%%%%%%%%%%%
\caption{An example of a free boundary minimal surface in the unit ball $B^3\subset \R^3$ with some notation included.} \label{fig:NotEx}
\end{figure}
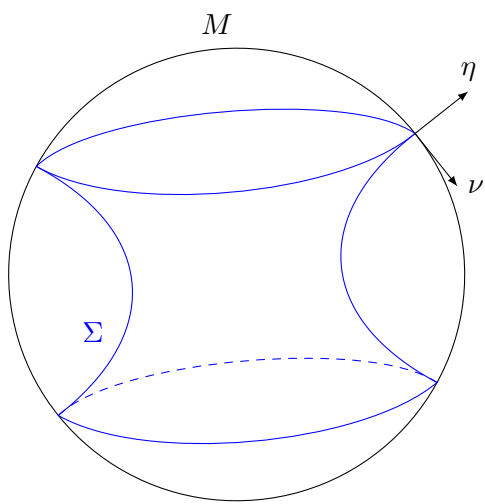

%%%%%%%%%%%%%%%%%%%%%%%%%%%%%%%%%%%%%%%%%%%%%%%%%
%%% Main matter
%%%%%%%%%%%%%%%%%%%%%%%%%%%%%%%%%%%%%%%%%%%%%%%%%

\mainmatter

%%%%%%%%%%%%%%%%%%%%%%%%%%%%%%%%%%%%%%%%%%%%%%%%%
%%% Introduction
%%%%%%%%%%%%%%%%%%%%%%%%%%%%%%%%%%%%%%%%%%%%%%%%%

\chapter*{Introduction} 
\addcontentsline{toc}{chapter}{Introduction} 
\markboth{Introduction}{Introduction}
\label{chpt:intro}

The focus of this thesis is the study of minimal surfaces, which are undoubtedly one of the main themes in Geometric Analysis.
Starting from the classical Plateau problem of finding an area minimizing disc whose boundary is a fixed curve in $\R^3$, the theory has spread in a variety of research directions.
Here, we shall be concerned with a class of surfaces whose study goes back at least to the work \cite{Courant1940} of Courant in the 1940s (see in particular Part II therein, and also \cite{Courant1977}*{Chapter VI}), when he posed the problem of finding an area minimizing surface whose boundary lies on a fixed submanifold in $\R^m$.
This was indeed the birth of the theory of \emph{free boundary minimal surfaces} (\emph{FBMS} in short), where the name comes from the fact that the boundary is `free' to move inside a fixed constraint.
We will now present the modern point of view on this problem, which has evolved to the study of \emph{stationary} points of the area functional rather than only minimizing ones.

Given a compact three-dimensional Riemannian manifold $(M^3,\gamma)$ with boundary and a smooth embedded surface $\Sigma^2\subset M$ with $\partial\Sigma = \Sigma\cap \partial M$, we say that $\Sigma$ is a \emph{free boundary minimal surface} if the first variation of the area functional at $\Sigma$ vanishes with respect to variations induced by proper diffeomorphisms of $M$ (namely, diffeomorphisms of $M$ sending the boundary into itself), or equivalently if $\Sigma$ has zero mean curvature and meets the ambient boundary orthogonally.

\begin{remark*}
More generally, one can define $k$-dimensional \emph{free boundary minimal submanifolds} in any compact Riemannian manifold $(M^{m},\gamma)$, with $k\le m-1$.
Here, we mostly focus on the case of surfaces in three-dimensional Riemannian manifolds since, as we will see, the theory is already extremely rich in this setting.
\end{remark*}

These surfaces are a special instance of a more general class of variational objects, i.e., the \emph{capillary surfaces}, which arise in Physics to model the interface between two incompressible and immiscible fluids at the equilibrium (see \cite{Finn1986} for a thorough monograph). Let us describe here an easy example (see \cref{fig:FluidsInterface}), for which the reader can also look at \cite{Nitsche1985}.

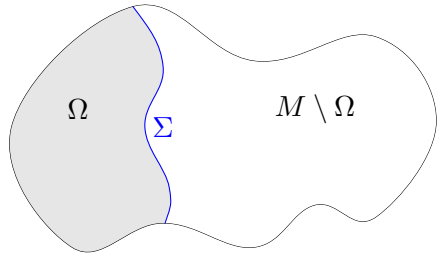
\begin{figure}[htpb]
\centering
%%%%%%%%%%%%%%%%%%%%%%%%%%%%%%%%%%%%%%%%%%%%%%%%%%%%%
%%% intro.tikz
%%%%%%%%%%%%%%%%%%%%%%%%%%%%%%%%%%%%%%%%%%%%%%%%%%%%%
\begin{tikzpicture}[scale =.8]

\clip plot [smooth cycle, tension=0.7] coordinates { (0,0) (2,2) (4,1.1) (6,1.5) (7,0.1) (6,-1.5) (5,-1.3) (4,-2) (2.5,-1.6) (1,-2)};

\fill[gray!20!white] plot [smooth, tension=0.7] coordinates {(0,3) (1,3) (2,2) (2.5,1) (2.2,0) (2.6, -1) (2.5,-1.7) (2,-3) (-0.1,-3)};

\draw[blue] plot [smooth, tension=0.7] coordinates {(0,3) (1,3) (2,2) (2.5,1) (2.2,0) (2.6, -1) (2.5,-1.7) (2,-3) (-0.1,-3)};

\draw plot [smooth cycle, tension=0.7] coordinates { (0,0) (2,2) (4,1.1) (6,1.5) (7,0.1) (6,-1.5) (5,-1.3) (4,-2) (2.5,-1.6) (1,-2)};

\node at (1.1,0.3) {$\Omega$};
\node[blue] at (2.5,0) {$\Sigma$};
\node at (5,0.3) {$M\setminus\Omega$};
% \node at (7,-0.3) {$M$};
\end{tikzpicture}
\caption{Capillary surface as interface between two fluids.}
\label{fig:FluidsInterface}
%%%%%%%%%%%%%%%%%%%%%%%%%%%%%%%%%%%%%%%%%%%%%%%%%%%%%
%%%%%%%%%%%%%%%%%%%%%%%%%%%%%%%%%%%%%%%%%%%%%%%%%%%%%
\end{figure}

Consider a hollow body $M^3$ filled with two incompressible and immiscible fluids, the first occupying the region $\Omega$ (grey in the figure) and the second occupying the region $M\setminus\Omega$.
With this notation, $\Sigma\eqdef\partial\Omega\setminus\partial M$ represents the interface between the two fluids.
Physically, at the equilibrium, the two fluids arrange in order to minimize the area of the interface, which is thus a solution of the relative isoperimetric problem of finding the surface that minimizes the area among those enclosing relative volume $\vol(\Omega)$ in $M$.
As one can easily show, the result is that $\Sigma$ has \emph{constant} mean curvature and meets the ambient boundary orthogonally.

Free boundary minimal surfaces are related to a variety of other problems in many different areas (for example partial differential equations, general relativity, complex analysis, conformal geometry, etc), which cannot be properly accounted for here.
However, following the series of articles \cite{FraserSchoen2011,FraserSchoen2013,FraserSchoen2016} by Fraser--Schoen, we wish to recall the connection between the \emph{Steklov eigenvalue problem} for compact surfaces with boundary and free boundary minimal surfaces in the unit ball $B^m$.
These results indicate, among other things, an analogy between free boundary minimal surfaces in $B^m$ and minimal surfaces in $S^m$, the latter having a relation with the \emph{Laplacian eigenvalue problem} on closed surfaces (cf. \cite{Nadirashvili1996}*{Theorem 2}, see also \cite{ElSoufiIlias2000}, \cite{FraserSchoen2013}*{Section 2}).
This often inspires the research of the two settings to move in parallel.

The general question that guides the study of (free boundary) minimal surfaces in our perspective is to classify, in some sense, all such surfaces in a given manifold. A good starting point is, for example, the case of ambient manifold $B^3\subset\R^3$. This is obviously a very ambitious goal, which we are not close to achieve, but it is useful to keep it in mind to contextualize results and open problems.

That being said, this thesis is divided in two parts. 
\begin{itemize}
\item In \cref{part:Complexity}, following \cite{CarlottoFranz2020}, we explore the relations among different measures of complexity of a free boundary minimal surface, such as its Morse index, its topology and its area.
\item In \cref{part:Existence}, based on \cite{CarlottoFranzSchulz2020} and \cite{Franz2021}, we investigate an equivariant min-max procedure, which we then we use to prove the existence of free boundary minimal surfaces with pre-assigned topology in the three-dimensional unit ball $B^3\subset\R^3$. We also explain how this method is useful to obtain information on the resulting surfaces (for example about their Morse index).
\end{itemize}

In the following sections, we present these two themes in more detail.
For the purposes of this introduction, we assume that $(M^3,\smetric)$ is a compact Riemannian manifold with boundary, which satisfies the following additional property.
\begin{description}
\item [$(\mathfrak{P})$]\label{HypP}\ If $\ms^2\subset \amb$ is a smooth, connected, complete (possibly noncompact), embedded surface with zero mean curvature which meets the boundary of the ambient manifold orthogonally along its own boundary, then $\partial\ms = \ms\cap\partial\amb$. 
\end{description}
Such a condition prevents the existence of minimal surfaces that touch $\partial\amb$ in their interior and it is implied by simple geometric assumptions (for example property \hypC{} in \cref{sec:properFBMS}, namely that the boundary of the ambient manifold is mean convex with no minimal component).
We contextualize this property, its relevance and the issues that arise when one drops it in \cref{sec:Properness}.
In particular, we point out how far from trivial the `properness issues' are. Indeed, if one allows for an interior contact set, then there are at least four natural definitions of Morse index one can adopt, and the value one computes depends not only on the adopted definition but also, even for a given definition, on the size of the contact set.

\section*{Part I: Relating topology, area and Morse index}
\addcontentsline{toc}{section}{Part I: Relating topology, area and Morse index}

As anticipated, \cref{part:Complexity} is based on \cite{CarlottoFranz2020} and concerns the study of different properties of a free boundary minimal surface, as (for example) the topology, the area and the Morse index.
Before entering in the details, let us give the definition of Morse index, which roughly is the number of negative directions of the second variation of the area functional. 

Let $\Sigma^2\subset M$ be a smooth embedded free boundary minimal surface. 
Let $(\psi_t)_{t\ge 0}$ be a smooth family of proper diffeomorphisms of $M$ such that $\psi_0=\id$ and let $X(x) = \frac{\d}{\d t}\big|_{t=0} \psi_t(x)$ be the associated variation vector field at time $0$. By definition of free boundary minimal surface, we have that $\frac{\d}{\d t}\big|_{t=0} \Haus^2(\psi_t(\Sigma)) = 0$, while one can check that the second variation of the area $\frac{\d^2}{\d t^2}\big|_{t=0} \Haus^2(\psi_t(\Sigma))$ depends only on the normal component $X^\perp$ of $X$ along $\Sigma$, in particular we can write it as a quadratic form $Q^\Sigma(X^\perp,X^\perp)$.
We define the \emph{Morse index} of $\Sigma$ as the maximal dimension of a linear subspace of the normal vector bundle $\Gamma(N\Sigma)$ where $Q^\Sigma$ is negative definite.
We refer to \cref{sec:SecondVariation} for further details.

Already from classical results about complete minimal surfaces in $\R^3$, we expect that the Morse index of a (free boundary) minimal surface should control its topology.
Indeed, having finite Morse index for a complete minimal surface in $\R^3$ is equivalent to having finite total curvature (see \cite{FischerColbrie1985}), which implies strong restrictions on the topology of the minimal surface by \cite{Osserman1964}.
In the last decades, these relation between index and topology of a minimal surface have been thoroughly investigated.
A recurrent method is using that the dimension of the space of harmonic one-forms is related to the topology via the Hodge theorem, and that harmonic forms give (in some sense) negative directions for the second derivative of the area. Exploiting this idea Chodosh--Maximo obtained quantitative bounds from above on the topology of a complete minimal surface in $\R^3$ in terms of the index in \cite{ChodoshMaximo2016,ChodoshMaximo2018}.

In the case of a boundaryless compact ambient manifold $(M^3,\smetric)$ with \emph{positive Ricci curvature}, it was conjectured by Marques, Neves, Schoen that there exists $C=C(M,\smetric)>0$ such that
\[
b_1(\Sigma)\le C\ind(\Sigma), 
\]
for every closed minimal surface $\Sigma^2\subset M$, where $b_1(\Sigma)$ is the first Betti number of $\Sigma$.
So far, this conjecture has been proven only under additional assumptions (e.g. for compact rank one symmetric spaces), see \cite{Ros2006,Savo2010,AmbrozioCarlottoSharp2018Index}, with methods similar to the one described above involving harmonic one-forms.

More recently, with completely different methods, Song proved in \cite{Song2020} that, for every three-dimensional boundaryless compact Riemannian manifold $(M^3,\smetric)$ \emph{without curvature assumptions}, there exists $C=C(M,\smetric)>0$ such that
\begin{equation} \label{eq:SongTopInd} 
b_1(\Sigma)\le C\area(\Sigma) (\ind(\Sigma)+1) \tag{$\ast$}
\end{equation}
for every minimal surface $\Sigma^2\subset M$.

For what concerns the free boundary setting, thanks to \cite{AmbrozioCarlottoSharp2018IndexFBMS} (see also \cite{Sargent2017}), we know that for every free boundary minimal surface $\Sigma$ in a compact strictly mean convex domain of $\R^3$ it holds
\[
\frac 13 (2 g + b - 1) \le\ind(\Sigma),
\]
where $g$ is the genus of $\Sigma$ and $b$ is the number of its boundary components.

As one can see, such advances leave several questions open. 
As an example, one may wonder if, unconditionally with respect to the ambient curvature, a bound on the index of a free boundary minimal surface is sufficient to have a bound on its topology. Observe that this is not implied by \eqref{eq:SongTopInd}, since that estimate involves the area as well. 

Based on several previous contributions, in \cref{part:Complexity} we manage to find all the missing answers to these problems in the setting of three-dimensional ambient manifolds with \emph{`weakly positive geometry'}, namely for ambient manifolds $(\amb^3,\smetric)$ such that
\begin{enumerate}[label={\normalfont(\roman*)}]
\item \emph{either} the scalar curvature of $\amb$ is positive and $\partial \amb$ is mean convex with no minimal components;
\item \emph{or} the scalar curvature of $\amb$ is nonnegative and $\partial\amb$ is strictly mean convex.
\end{enumerate}
\begin{remark*}
We use the expression \emph{`weakly positive geometry'}, because as ambient manifolds with \emph{`positive geometry'} we have in mind manifolds $(\amb^3,\smetric)$ such that
\begin{enumerate}[label={\normalfont(\roman*)}]
\item \emph{either} the Ricci curvature of $\amb$ is positive and $\partial \amb$ is convex with no minimal components;
\item \emph{or} the Ricci curvature of $\amb$ is nonnegative and $\partial\amb$ is strictly convex.
\end{enumerate}
We emphasize that these are not standard definitions, so we use them only here in the introduction to simplify the presentation.

One often expects a change of behaviour from minimal surfaces in manifolds without any curvature assumptions, or in `weakly positive geometry', or instead in `positive geometry'. To better understand what we mean, we refer to \cite{Song2020}*{Section 6.1} for a list of the conjectured quantitative estimates relating topology and Morse index in these different settings.
\end{remark*}

We outline the results of \cref{part:Complexity} in the following subsections, the first two being original work of this thesis, while the third being based on earlier work in the literature. 
We also summarize what qualitative relations hold and what do not hold, in the `weakly positive geometry' setting, in the diagram in \cref{fig:Complexity}.

\subsection*{Index bounds topology and area}

\begin{alphatheorem}[Index $\Rightarrow$ topology and area, {\cite{CarlottoFranz2020}*{Thm 1.4} \textrightarrow\ \cref{thm:AreaBound}}] \label{thm-intro:index}
Let $(M^3,\gamma)$ be a compact Riemannian manifold with `weakly positive geometry'. Given $I\in\N$, there exists $C=C(I)$ such that every compact connected free boundary minimal surface $\Sigma^2\subset M$ with nonempty boundary and index $I$ has area, total curvature, genus and number of boundary components bounded by $C$.
\end{alphatheorem}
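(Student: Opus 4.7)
The plan is to argue by contradiction, using the degeneration analysis for sequences of FBMS of bounded Morse index (the main result of Part I, announced in the introduction) together with the rigidity imposed by ``weakly positive geometry''. Suppose the conclusion fails: then for some $I\in\N$ there exists a sequence $\{\Sigma_k\}_{k\in\N}$ of compact connected FBMS in $M$ with $\ind(\Sigma_k)=I$ and at least one of $\area(\Sigma_k)$, $\int_{\Sigma_k}\abs{A}^2$, $\genus(\Sigma_k)$, $\bdry(\Sigma_k)$ diverging to $+\infty$. Observe that ``weakly positive geometry'' implies property \hypP{}, since $\partial M$ is mean convex with no minimal component, so the degeneration machinery applies directly.

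Applying the degeneration theorem, up to subsequence the $\Sigma_k$'s converge smoothly with some finite integer multiplicity $m$, away from a uniformly bounded set of concentration points $p_1,\dots,p_N\in M$, to a smooth (possibly empty) free boundary minimal surface $\Sigma_\infty\subset M$. Moreover, the local topology and the area of $\Sigma_k$ inside fixed small neighborhoods of each $p_i$ are bounded by a constant depending only on $I$ and $(M,\smetric)$. To close the contradiction I would then establish three facts: (a) $\Sigma_\infty$ has uniformly bounded area, genus and number of boundary components; (b) the multiplicity $m$ is uniformly bounded; (c) once $\area(\Sigma_k)$, $\genus(\Sigma_k)$ and $\bdry(\Sigma_k)$ are bounded, so is the total curvature $\int_{\Sigma_k}\abs{A}^2$.

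For (a), since $\ind$ is lower semicontinuous under smooth convergence, $\Sigma_\infty$ itself is a FBMS of index at most $I$, and the compactness of the space of such surfaces in the weakly positive geometry regime --- a consequence of a Fischer-Colbrie--Schoen type stability argument that uses positivity (resp.\ nonnegativity) of the scalar curvature of $M$ together with mean convexity (resp.\ strict mean convexity) of $\partial M$ --- delivers the desired bounds. For (b), if $m\to\infty$ then, by lifting a basis of low eigenfunctions of the Jacobi operator on $\Sigma_\infty$ (suitably cut off away from the $p_i$'s and obeying the correct Robin-type boundary condition along $\partial\Sigma_\infty$ involving $\II^{\partial M}$) to $\Sigma_k$, one produces arbitrarily many $L^2$-orthogonal negative directions for the second variation form $Q^{\Sigma_k}$, contradicting $\ind(\Sigma_k)=I$. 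For (c), one applies the Gauss equation combined with Gauss--Bonnet for surfaces with boundary, so that $\int_{\Sigma_k}\abs{A}^2$ is bounded in terms of $\chi(\Sigma_k)$, $\area(\Sigma_k)$, $\length(\partial\Sigma_k)$ and the ambient curvatures of $(M,\smetric)$, all of which are already under control.

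The hard part, as I see it, is step (b): one must ensure that the multiplicity--Jacobi field construction is fully compatible with the free boundary condition, especially in case (i) of ``weakly positive geometry'' where mean convexity of $\partial M$ is not strict and the contribution of $\II^{\partial M}$ to the boundary term in the stability form becomes delicate. The other steps are essentially variants of classical tools adapted to the free boundary setting; the quantitative degeneration analysis of Part I is precisely what is needed to package the singular blow-up behavior near the $p_i$'s into the uniform control required to close the contradiction.
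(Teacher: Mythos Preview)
Your argument has a fundamental gap at the very first step. You assume the degeneration analysis produces convergence ``with some finite integer multiplicity $m$'' to a smooth free boundary minimal surface $\Sigma_\infty$. But that conclusion requires an \emph{a priori} area bound, which is exactly what you are trying to prove. Without an area bound, the degeneration theorem (\cref{cor:ExistenceBlowUpSetAndCurvatureEstimate}) only yields convergence, away from finitely many points, to a free boundary minimal \emph{lamination} $\lam$ of $M$. A lamination may have uncountably many leaves, noncompact leaves, and no well-defined notion of integer multiplicity; so your steps (a) and (b) are not even well-posed. Relatedly, step (a) as written is circular: it invokes ``compactness of the space of such surfaces in the weakly positive geometry regime'', which is the content of the theorem itself. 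And the heuristic in (b) is not correct as stated: high-multiplicity convergence to a leaf $L$ produces a positive Jacobi field on $L$ (forcing $L$ to be stable), not extra negative directions on $\Sigma_k$.

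The paper's actual argument first reduces to the area bound (the remaining bounds then follow from \cref{thm:TopFromArea}). Assuming $\area(\Sigma_j)\to\infty$, one picks a point $x_0$ where area concentrates and looks at the leaf $L\in\lam$ through $x_0$; the dichotomy in \cref{thm:RemSingLimLam} forces $L$ to have stable universal cover. Here is where ``weakly positive geometry'' enters, through the key new ingredient you are missing: \cref{prop:StableImpliesCptness}, a Schoen--Yau--type diameter and topology bound for \emph{stable} free boundary minimal surfaces under these curvature assumptions, which forces $L$ to be a compact disc. One then performs the surgery of \cref{cor:Surgery} to get surfaces $\tilde\Sigma_j$ with bounded curvature (and still diverging area), and a monodromy argument over the simply connected leaf $L$ shows $\tilde\Sigma_j$ actually converges to $L$ with multiplicity one --- contradicting the area divergence. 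The role of weakly positive geometry is thus entirely concentrated in the stable-surface estimate, not in a multiplicity--index transfer.
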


This theorem constitutes the core of \cref{part:Complexity}.
The proof is based on a careful analysis of the behaviour of a sequence $\{\Sigma_j\}_{j\in\N}$ of free boundary minimal surfaces with bounded index (in a general ambient manifold, not only in `weakly positive geometry'), cf. \cref{sec:TopologicalDegeneration}. In fact, one can show that (up to subsequence) the surfaces $\Sigma_j$ satisfy curvature estimates away from a finite set $\mathcal{S}_\infty$ of points, hence they converge (again up to subsequence) to a free boundary minimal lamination away from $\mathcal{S}_\infty$ (see \cref{cor:ExistenceBlowUpSetAndCurvatureEstimate}). Observe that the `limit' is a minimal lamination (see \cref{sec:FBMLam} for definition and basic properties) rather than a minimal surface, because a priori we do not have any control on the area of the surfaces $\Sigma_j$. 
Moreover, near the points in $\mathcal{S}_\infty$ it is possible to develop an accurate blow-up analysis (see \cref{thm:GlobalDeg} and \cref{cor:Surgery}), thanks to the fact that the index bounds the topology for free boundary minimal surfaces in a half-space (see \cref{prop:GeometryOfHalfBubble}). 
The arguments retrace the results in the closed case treated in \cite{ChodoshKetoverMaximo2017}, with new nontrivial technical difficulties.

In order to prove that the limit lamination is a well-behaved surface in the case of `weakly positive geometry', the additional ingredient is a control on the size of \emph{stable} free boundary surfaces in a three-dimensional manifold with `weakly positive geometry'. More precisely, we prove the following result. 

\begin{alphaproposition} [{\cite{CarlottoFranz2020}*{Prop 1.8} \textrightarrow\ \cref{prop:StableImpliesCptness}}]
Let $(M^3,\gamma)$ be a complete Riemannian manifold. Denote by $\rho_0\eqdef \inf_M R_\smetric$ the infimum of the scalar curvature of $M$ and by $\sigma_0\eqdef \inf_{\partial M} H^{\partial M}$ the infimum of the mean curvature of $\partial M$. Assume that $M$ has `weakly positive geometry' and \emph{either} $\rho_0>0$ \emph{or} $\sigma_0>0$\footnote{Note that the assumption that either $\rho_0 > 0$ or $\sigma_0>0$ does not follow from having `weakly positive geometry' because we allow for $M$ to be noncompact.}.

Then every complete, connected, embedded, stable free boundary minimal surface $\ms^2\subset\amb$ that is two-sided and has nonempty boundary is compact, and its intrinsic diameter satisfies the bound
\[
\operatorname{diam}(\ms)\eqdef \sup_{x,y\in\ms} d_\ms(x,y) \le \min\left\{ \frac{2\sqrt 2 \pi}{\sqrt{3\varrho_0}}, \frac{\pi + 8/3}{\sigma_0}\right\}\point
\]
Moreover, one has that
\[
0< \frac{\varrho_0}2 \area(\ms) + \sigma_0 \length(\partial\ms) \le 2\pi\chi(\ms) \semicolon
\]
in particular, $\ms$ is diffeomorphic to a disc.
\end{alphaproposition}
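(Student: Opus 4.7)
The strategy is to combine the stability inequality with the Gauss equation and with Gauss--Bonnet. Suppose first that $\ms$ is already known to be compact. Then I would test the second-variation quadratic form $Q^\ms$ with the constant function $\phi \equiv 1$ to obtain, by stability,
\[
\int_\ms \left( \abs{A}^2 + \Ric_\amb(\nu,\nu) \right) \de\area + \int_{\partial \ms} \II^{\partial\amb}(\nu,\nu) \de\length \le 0\point
\]
The Gauss equation for a two-sided minimal surface in a $3$-manifold, together with the relation $2K_\ms = \Scal_\ms$ between Gauss and scalar curvature, rewrites the bulk integrand as $\tfrac12 \abs{A}^2 + \tfrac12 \Scal_\amb - K_\ms$. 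Along $\partial\ms$, the free boundary condition gives $\hat\eta = \eta$, so that $\{\nu, T\}$ is an orthonormal frame of $T\partial\amb$ along $\partial\ms$; hence $\II^{\partial\amb}(T,T) = k_g$ (the geodesic curvature of $\partial\ms$ in $\ms$, since $\eta$ is tangent to $\ms$), and the trace identity yields the pointwise relation $\II^{\partial\amb}(\nu,\nu) = -H^{\partial\amb} - k_g$. Substituting and then absorbing $\int_\ms K_\ms + \int_{\partial\ms} k_g = 2\pi\chi(\ms)$ via Gauss--Bonnet, I obtain
\[
\frac12 \int_\ms \abs{A}^2 \de\area + \frac{\varrho_0}{2} \area(\ms) + \sigma_0 \length(\partial \ms) \le 2\pi \chi(\ms)\point
\]
Since at least one of $\varrho_0, \sigma_0$ is strictly positive and $\partial\ms \neq \emptyset$, the left-hand side is strictly positive, forcing $\chi(\ms) \ge 1$; as $\ms$ is connected with nonempty boundary, this means $\chi(\ms) = 1$, so $\ms$ is a disc, and discarding the $\abs{A}^2$ term also yields the stated integral bound.

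The remaining point, and where the actual work lies, is to establish compactness of $\ms$ together with the explicit diameter estimates without assuming compactness a priori. The plan is to apply the stability inequality with a cutoff $\phi = u \circ \dist_\ms(\cdot, p)$ supported on an intrinsic geodesic ball $B_r(p) \subset \ms$ and localize the Gauss-equation plus Gauss--Bonnet identity of the previous paragraph. For the $\varrho_0$-bound I would take $u(s) = \cos(\alpha s)$ and optimize $\alpha$ so that the gradient cost $\int \abs{\grad \phi}^2$ is balanced against the bulk gain $\tfrac{\varrho_0}{2}\phi^2$; the coarea formula then controls the geodesic-curvature contribution from $\partial B_r(p)$ and produces the explicit constant $2\sqrt 2 \pi/\sqrt{3\varrho_0}$. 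For the $\sigma_0$-bound, a piecewise affine cutoff combined with the strictly positive boundary contribution $\sigma_0 \length(\partial\ms \cap \supp\phi)$ trades length against area and yields the bound $(\pi + 8/3)/\sigma_0$.

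The main obstacle I foresee is producing the sharp numerical constants rather than just qualitative diameter bounds: this requires carefully tracking the localized Gauss--Bonnet contribution, in particular the geodesic curvature on $\partial B_r(p) \cap \ms^\circ$ and the corner contributions where $\partial B_r(p)$ meets $\partial\ms$, together with an optimal choice of cutoff. Once any diameter bound is in place, completeness of $\ms$ upgrades it to compactness, and the argument of the first paragraph then recovers all conclusions of the statement simultaneously.
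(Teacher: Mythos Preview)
Your treatment of the compact case---stability with $\phi\equiv 1$, Gauss equation, Gauss--Bonnet---is correct and standard. The substance is the diameter bound, and here your plan departs from the paper. The paper is explicit that the direct Schoen--Yau cutoff approach \emph{does not} carry over to the free boundary setting: when $B_r(p)\subset\ms$ meets $\partial\ms$, the localized Gauss--Bonnet collects $\int_{\partial\ms\cap B_r}k_g$ with $k_g=-\II^{\partial\amb}(\tau,\tau)$, and only the trace $H^{\partial\amb}$ is controlled, not the individual principal curvature $\II^{\partial\amb}(\tau,\tau)$. There is no clean sign with which to absorb this term, and the corner contributions you flag only compound the difficulty. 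So what you describe as ``the main obstacle'' is in fact a genuine gap.

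The paper's approach is different. Stability yields a positive Jacobi function $\omega$ satisfying the Robin condition $\partial_\eta\omega=-\II^{\partial\amb}(\nu,\nu)\omega$ on $\partial\ms$, and one passes to the conformal metric $\tilde g=\omega^2 g$ on $\ms$. Direct computation gives $\tilde K\ge\omega^{-2}\varrho_0/2\ge 0$ and, crucially, $\tilde k=\omega^{-1}H^{\partial\amb}\ge\omega^{-1}\sigma_0\ge 0$: the boundary becomes \emph{convex} in $\tilde g$. After proving $(\ms,\tilde g)$ is complete (a Fischer--Colbrie type lemma), Gauss--Bonnet on expanding $\tilde g$-balls delivers the integral inequality $\tfrac{\varrho_0}{2}\area(\ms)+\sigma_0\length(\partial\ms)\le 2\pi$ \emph{without} assuming compactness a priori. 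For the $\varrho_0$-bound, a $\tilde g$-minimizing geodesic between two interior points avoids the convex boundary entirely, so the closed-case argument of Schoen--Yau/Carlotto applies verbatim and gives $2\sqrt{2}\pi/\sqrt{3\varrho_0}$. For the $\sigma_0$-bound, one computes the second variation of the conformal length $\tilde\ell(\beta)=\int_\beta\omega$ along a minimizing curve from $x_0$ to $\partial\ms$; the Robin boundary condition on $\omega$ at the endpoint injects exactly $H^{\partial\amb}$ into the resulting one-dimensional eigenvalue inequality, yielding $d_\ms(x_0,\partial\ms)\le 4/(3\sigma_0)$. Combined with $\length(\partial\ms)\le 2\pi/\sigma_0$ from the integral estimate, this produces the constant $(\pi+8/3)/\sigma_0$.
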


The powerful principle behind this result is similar to the one underlying the estimate by Schoen--Yau \cite{SchoenYau1983} in the closed case, namely that stable minimal hypersurfaces inherit (in some sense) the positivity of the scalar curvature of the ambient manifold. 
However, the arguments for the closed case are not sufficient in the free boundary context, which turns out to be much more delicate (see \cref{sec:CptnessStable}).

\subsection*{Topology does not bound index and area}

\begin{alphatheorem} 
[Topology $\not\Rightarrow$ index and area, {\cite{CarlottoFranz2020}*{Thm 1.12} \textrightarrow\ \cref{thm:Counterexample}}]\label{thm-intro:topology}
For every $g\ge 0$ and $b>0$, there exists a compact Riemannian manifold $(M^3,\gamma)$ with positive scalar curvature and strictly mean convex boundary that contains a sequence of compact connected free boundary minimal surfaces of genus $g$ and $b$ boundary components, but whose area and Morse index attain arbitrary large values.
\end{alphatheorem}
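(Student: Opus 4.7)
The strategy is to reduce the problem to producing FBMS of divergent area via \cref{thm:AreaBound}, and then to construct an explicit ambient and sequence realising the prescribed topology.

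\textbf{Reduction.} For any ambient satisfying the hypotheses of Theorem B, \cref{thm:AreaBound} yields a constant $C = C(I)$ such that every FBMS of Morse index at most $I$ has area at most $C$; contrapositively, any sequence of FBMS in such an ambient whose areas diverge must also have diverging Morse indices. It therefore suffices to construct, for each $(g,b)$, a compact 3-manifold $(M, \gamma)$ with positive scalar curvature and strictly mean convex boundary together with a sequence of embedded connected FBMS $\Sigma_n \subset M$ of topology $(g, b)$ with $\area(\Sigma_n) \to \infty$.

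\textbf{Ambient and sequence.} A natural template for the ambient is a warped product $M = [-1, 1] \times_f S^2$, with $S^2$ carrying a metric of positive Gauss curvature. Using the warped-product formulas $R_M = R_{S^2}/f^2 - 4 f''/f - 2(f'/f)^2$ and $H^{\partial M}|_{r = \pm 1} = \pm 2 f'(\pm 1)/f(\pm 1)$, one verifies that if $R_{S^2}$ is large enough relative to the warping and $f$ is smooth positive with $f'(-1) < 0 < f'(1)$, then $M$ has positive scalar curvature and strictly mean convex boundary. The base is forced to be topologically $S^2$: no other closed surface admits a metric of positive scalar curvature, and even $T^2 \times [-1, 1]$ fails to carry a PSC metric with strictly mean convex boundary (by doubling combined with the Gromov--Lawson--Schoen--Yau obstruction on $T^3$). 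Within such $M$, one produces a sequence of FBMS of topology $(g, b)$ with diverging area via an equivariant min-max scheme in the spirit of Part II of the thesis: a finite isometry group $G$ of $(M, \gamma)$ is chosen so that $G$-equivariant $k$-parameter sweepouts produce FBMS of topology exactly $(g, b)$, and $\Sigma_n$ is taken to correspond to the $k$-th equivariant width, which diverges as $k \to \infty$ by a Weyl-law-type argument.

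\textbf{Main obstacle.} The central difficulty is simultaneously satisfying three constraints inside a single compact ambient: positive scalar curvature and strict mean convexity of $\partial M$ (which already significantly restrict the underlying topology), exact topology $(g, b)$ for every term of the sequence, and genuine divergence of the areas (equivalent, by the reduction, to divergence of the Morse indices). Each constraint in isolation is manageable, but their combined imposition is delicate. In particular, the naive approach via totally geodesic cylinders $[-1, 1] \times \gamma$ over simple closed geodesics $\gamma \subset S^2$ cannot suffice, since a positively-curved two-sphere admits only finitely many simple closed geodesics (Lusternik--Schnirelmann); one must instead appeal to an equivariant min-max machinery that simultaneously controls the topology of the resulting surface and produces surfaces at arbitrarily high area levels.
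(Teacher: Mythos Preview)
Your reduction via \cref{thm:AreaBound} is logically valid: once you have a sequence of FBMS with divergent area in an ambient with positive scalar curvature and strictly mean convex boundary, divergence of the Morse index follows immediately. (The paper does not actually argue this way---in its construction, both area and index diverge by direct inspection---but your reduction is a legitimate shortcut.)

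The gap lies entirely in the construction. The warped product $[-1,1]\times_f S^2$ gives you essentially one family of nontrivial FBMS: the totally geodesic annuli $[-1,1]\times\gamma$ over simple closed geodesics $\gamma$, which all have $(g,b)=(0,2)$. You correctly note that there are only finitely many of these. Your suggested fix---run equivariant $k$-parameter min-max and let $k\to\infty$, appealing to a Weyl law---does not produce what you need. The Weyl law (in either the Almgren--Pitts or Simon--Smith setting) gives surfaces of increasing area, but there is no mechanism forcing those surfaces to all have the \emph{same prescribed} topology $(g,b)$; in general one expects the topology (or multiplicity) to grow with $k$. Choosing a finite symmetry group $G$ gives only an upper genus bound (via \cref{thm:EquivMinMax}), and controlling the number of boundary components is notoriously delicate even for a \emph{single} width---the paper devotes an entire ad hoc argument (\cref{lem:number_of_boundary_components}) to the case $(g,1)$ in $B^3$ with $k=1$. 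You have not specified any $G$ or sweepout, and no known equivariant scheme achieves ``fixed $(g,b)$, all widths.''

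The paper's route is entirely different and does not use min-max at all. It is an explicit modular gluing in the spirit of Colding--De Lellis \cite{ColdingDeLellis2005}: one assembles several building blocks, each a closed or bordered three-manifold of positive scalar curvature containing a one-parameter family of minimal (or free boundary minimal) surfaces locally looking like parallel great spheres near designated points. The key block from \cite{ColdingDeLellis2005} is a round-ish $S^3$ containing a sequence of minimal \emph{spheres} with area and index going to infinity; attaching $g$ copies of a ``torus block'' raises the genus, and a new ``free boundary $b$-annulus block'' (constructed in \cref{subs:BlocksDisk}--\cref{subs:BlocksAnnulus} via a Miao-type smoothing and a half-space gluing lemma) introduces exactly $b$ boundary components. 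A wire-matching argument joins the families across the necks, and a final Gromov--Lawson connected sum places the whole picture inside any prescribed $M$. The divergence of area and index is inherited from the spiraling-spheres block, and the topology is fixed by construction.
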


In this case the proof consists in constructing the desired manifolds by gluing elementary blocks, each of them playing a specific role (see \cref{sec:Counterexample}), partly based on earlier work by Colding--De Lellis \cite{ColdingDeLellis2005} on the closed case.

\subsection*{Area does not bound topology and index}

\begin{alphatheorem}
[Area $\not\Rightarrow$ index and topology] \label{thm-intro:area}
There are sequences of free boundary minimal surfaces in the three-dimensional unit ball $B^3$ with uniformly bounded area, unbounded topology and unbounded Morse index.
\end{alphatheorem}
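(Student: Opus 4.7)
The plan is to apply \cref{thm-intro:index} in contrapositive form. Since $B^3\subset\R^3$ has positive scalar curvature and strictly mean convex boundary, it falls within the `weakly positive geometry' class covered by \cref{thm-intro:index}, which asserts that a uniform bound on the Morse index of a sequence of free boundary minimal surfaces forces uniform bounds on both area and topology. Consequently, as soon as one can exhibit a sequence $\{\ms_k\}_{k\in\N}$ of compact, connected, embedded free boundary minimal surfaces in $B^3$ with $\sup_k\area(\ms_k)<\infty$ and $\genus(\ms_k)+\bdry(\ms_k)\to\infty$, the divergence $\ind(\ms_k)\to\infty$ is immediate, and the statement follows. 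The entire content of the theorem therefore reduces to an \emph{existence} question.

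Several constructions in the literature provide such sequences. The most classical is the Fraser--Schoen family of genus-zero free boundary minimal surfaces in $B^3$ with $k$ boundary components, realised as extremal metrics for the first normalised Steklov eigenvalue among surfaces of that topological type: as $k\to\infty$ these surfaces converge (as integral varifolds) to an equatorial disc with multiplicity two, so that $\area(\ms_k)\to 2\pi$ while $\bdry(\ms_k)\to\infty$. An alternative is the Kapouleas--Li family, produced by desingularising two parallel equatorial discs along the equator via many small catenoidal necks, which yields surfaces of fixed number of boundary components and arbitrarily large genus, with area again approaching $2\pi$. A third option, internal to this thesis, is provided by the equivariant min-max construction of \cref{part:Existence}, whose area can be controlled directly through the min-max value.

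In each of these constructions the area bound is essentially automatic: either the surfaces are obtained as small perturbations of a double equatorial disc (whose mass is exactly $2\pi$), or the area can be estimated from above via the variational/Steklov characterisation. The unbounded topology is built into the construction, and so \cref{thm-intro:index} does the rest. The main obstacle is thus not the Morse-index lower bound itself, which is automatic, but the existence step: producing even one sequence in $B^3$ with unbounded topology and uniformly bounded area is a genuinely deep and independent question, settled historically by Fraser--Schoen and by Kapouleas--Li, and revisited from the equivariant min-max perspective in \cref{part:Existence} of this thesis.
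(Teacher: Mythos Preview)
Your argument is correct and follows the same two-step strategy as the paper: exhibit a sequence in $B^3$ with bounded area and unbounded topology (Fraser--Schoen, or the surfaces of \cref{part:Existence}), then deduce unbounded index. One small slip: $B^3$ has \emph{zero} scalar curvature, not positive, so it satisfies case~(ii) of the `weakly positive geometry' hypotheses rather than case~(i); the conclusion is unaffected. The paper invokes \cite{AmbrozioBuzanoCarlottoSharp2018}*{Corollary~4} (bounded area $+$ bounded index $\Rightarrow$ bounded topology) rather than \cref{thm-intro:index} to pass from unbounded topology to unbounded index, but your route via \cref{thm-intro:index} is equally valid and arguably more self-contained.
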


There are several examples proving this theorem. One is the family of free boundary minimal surfaces with connected boundary and arbitrary genus that we construct in \cref{thm:main-fbms-b1} in \cref{part:Existence}, but there are many others that are better described in the following section and in \cref{intro:existence}.
Observe that, if we have a family of free boundary minimal surfaces in the unit ball with bounded area and unbounded topology, then it has also unbounded Morse index thanks to \cite{AmbrozioBuzanoCarlottoSharp2018}*{Corollary 4}.

\begin{figure}[htpb]
\centering
%%%%%%%%%%%%%%%%%%%%%%%%%%%%%%%%%%%%%%%%%%%%%%%%%%%%%
%%% complexity_diagram.tikz
%%%%%%%%%%%%%%%%%%%%%%%%%%%%%%%%%%%%%%%%%%%%%%%%%%%%%
\begin{tikzpicture}[scale=1.35]
\tikzset{bharrownegated/.style={
        decoration={markings,
            mark= at position 0.5 with {
                \node[transform shape, scale=1.7] (tempnode) {$\times$};
            },
            mark=at position 1 with {\arrow[scale=2]{>}}
        },
        postaction={decorate},
    }
}
\tikzset{bharrow/.style={
    decoration={markings,mark=at position 1 with {\arrow[scale=2]{>}}},
    postaction={decorate},
    }
}
\begin{scope}[every node/.style = {
    shape = rectangle,
    minimum width= 3.8cm,
    minimum height=1.5cm,
    align=center}]
\node (top) at (0,0.5) [draw=gray] {topological bounds\\ \footnotesize $\chi(\ms)$};
\node (ind) at (-4,-3.5) [draw=gray] {index bounds\\ \footnotesize $\ind(\ms)$};
\node (are) at (4,-3.5) [draw=gray] {area bounds\\ \footnotesize $\area(\ms)$};
\end{scope}
\begin{scope}[every node/.style={scale=0.85}]
\draw[bharrow] ([xshift=1mm,yshift=-1.5mm]ind.east) -- node[sloped,midway,below=2mm] {\cref{thm-intro:index}} ([xshift=-1mm,yshift=-1.5mm]are.west);
\draw[bharrownegated] ([xshift=-1mm,yshift=1.5mm]are.west) -- node[sloped,midway,above=2mm] {\cref{thm-intro:area}} ([xshift=1mm,yshift=1.5mm]ind.east);
\draw[bharrow] ([xshift=-2mm,yshift=1mm]ind.north) -- node[sloped,midway,above=2mm] {\cref{thm-intro:index} $\oplus$ \cite[Corollary 4]{AmbrozioBuzanoCarlottoSharp2018}} ([xshift=-2mm,yshift=-1mm]top.south west);
\draw[bharrownegated] ([xshift=3mm,yshift=-1mm]top.south west) -- node[sloped,midway,below=2mm] {\cref{thm-intro:topology}} ([xshift=3mm,yshift=1mm]ind.north);
\draw[bharrownegated] ([xshift=-3mm,yshift=-1mm]top.south east) -- node[sloped,midway,below=2mm] {\cref{thm-intro:topology}} ([xshift=-3mm,yshift=1mm]are.north);
\draw[bharrownegated]([xshift=2mm,yshift=1mm]are.north) -- node [sloped,midway,above=2mm] {\cref{thm-intro:area}} ([xshift=2mm,yshift=-1mm]top.south east);
\end{scope}
\end{tikzpicture}
%%%%%%%%%%%%%%%%%%%%%%%%%%%%%%%%%%%%%%%%%%%%%%%%%%%%%
%%%%%%%%%%%%%%%%%%%%%%%%%%%%%%%%%%%%%%%%%%%%%%%%%%%%%
\caption{A diagram comparing complexity criteria for compact Riemannian three-manifolds $(M,\smetric)$ satisfying \emph{either} $\Scal_\smetric>0$, $H^{\partial\amb}\ge 0$ \emph{or} $\Scal_\smetric\ge0$, $H^{\partial\amb}> 0$.}\label{fig:Complexity}
\end{figure}

\section*{Part II: Existence results} 
\addcontentsline{toc}{section}{Part II: Existence results}

In \cref{part:Existence}, based on \cite{CarlottoFranzSchulz2020} and \cite{Franz2021}, we focus on the \emph{realization problem}, roughly meaning the problem of constructing, in any given Riemannian manifold, a (free boundary) minimal hypersurface for any unobstructed topological type.

In this respect, it is due mentioning the outstanding advances by Marques--Neves in \cite{MarquesNeves2014} and \cite{MarquesNeves2021}, Liokumovic--Marques--Neves in \cite{LiokumovichMarquesNeves2018}, Irie--Marques--Neves in \cite{IrieMarquesNeves2018} and Song in \cite{Song2018} (among others), to push the Almgren--Pitts min-max theory up to proving the renowned \emph{Yau's conjecture}, namely that any closed Riemannian manifold of dimension between 3 and 7 contains infinitely many closed embedded minimal hypersurfaces. The analogous result in the free boundary case has been obtained combining works of Li--Zhou \cite{LiZhou2021}, Guang--Li--Wang--Zhou \cite{GuangLiWangZhou2021} and Wang \cite{Wang2019,Wang2020}.
However, observe that these results do not give a solution to the realization problem, since the general theory does not provide a control on the topology of the resulting surface. Indeed, so far, this has been (only very partially) obtained in the Almgren--Pitts theory by bounding the Morse index and then using one of the (ineffective) results cited above (as for example \cite{Song2020}) to get a bound on the topology.
This is one of the reasons why it is appropriate to first study the `simpler' case of $B^3$.

The equatorial disc and the critical catenoid are the easiest free boundary minimal surfaces in the three-dimensional unit ball, but we know examples with other topological types: genus $0$ and arbitrary number of boundary components (see \cite{FraserSchoen2016,Ketover2016FBMS,FolhaPacardZolotareva2017,KapouleasZou2021}), genus $1$ and sufficiently large number of boundary components (see \cite{FolhaPacardZolotareva2017}), connected boundary and sufficiently large genus (see \cite{KapouleasWiygul2017}), $3$ boundary components and sufficiently large genus (see \cite{Ketover2016FBMS,KapouleasLi2017,CarlottoSchulzWiygul2022}) and $4$ boundary components and sufficiently large genus (see \cite{KapouleasMcGrath2020}).

The methods employed in these constructions can be divided in three main categories: abstract existence results for the Steklov eigenvalue problem, gluing methods, and equivariant min-max schemes. These three approaches are discussed in more detail in \cref{intro:existence} (\ref{method:Steklov}, \ref{method:Perturbation} and \ref{method:EquivMinMax}, respectively).

Here we focus on the equivariant min-max approach, essentially based on the Almgren--Pitts min-max theory (see \cite{Almgren1965,Pitts1981}) but in the variant developed by Simon--Smith \cite{Smith1982} and Colding--De Lellis \cite{ColdingDeLellis2003}, which requires more regularity on the sweepouts but provides a better control on the topology of the resulting surface. 

Thanks to a suitable application of this procedure, we are able to contribute to the realization question in $B^3$ (see \cite{Nitsche1985}*{pp. 7}, \cite{Li2020}*{Open Question 1}), constructing a new family of free boundary minimal surfaces with connected boundary and arbitrary genus.
\begin{alphatheorem}[{\cite{CarlottoFranzSchulz2020}*{Thm 1.1} \textrightarrow\ \cref{thm:main-fbms-b1}}] \label{thm:NewFamilyFBMS}
For each $1\le g \in \N$ there exists an embedded free boundary minimal surface $M_g$ in $B^3$ with connected boundary and genus $g$.
\end{alphatheorem}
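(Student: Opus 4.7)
The approach is to apply the equivariant Simon--Smith free boundary min-max theorem developed in Part II of this thesis to $B^3$, with respect to a carefully chosen finite group $G_g<O(3)$ of isometries. The group and the accompanying sweepout must be designed so that any non-trivial smooth $G_g$-equivariant free boundary minimal surface produced by a one-parameter equivariant min-max is forced by its symmetry (via an orbifold Euler characteristic / Riemann--Hurwitz computation) to have exactly genus $g$ and one boundary component. Concretely, I would take $G_g$ to be the group generated by rotation of order $g+1$ about a vertical axis $\ell\subset\R^3$, reflection across the horizontal plane orthogonal to $\ell$, and reflection across a vertical mirror plane containing $\ell$. For a $G_g$-equivariant, properly embedded surface $\Sigma\subset B^3$ whose orbit data consist of two fixed points on $\ell$ (cone points of order $g+1$) together with a reflection-fixed equatorial arc, an orbifold Riemann--Hurwitz computation gives $\chi(\Sigma)=1-2g$ and $\bdry(\Sigma)=1$.

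\textbf{Sweepout.} Next, I would construct an explicit one-parameter $G_g$-equivariant continuous family $\{\Sigma_t\}_{t\in[0,1]}$ of smooth, properly embedded surfaces in $B^3$, degenerating at $t=0,1$ to low-area $G_g$-invariant configurations (e.g.\ the two polar ``caps'' collapsed to the two endpoints of $\ell\cap B^3$) and passing at some intermediate parameter through a surface of genus $g$ with a single boundary component. A natural model is an equivariant desingularization of the union of the equatorial disc and a vertical axial segment by $g+1$ small necks symmetrically placed around $\ell$, opening and then closing these necks as $t$ varies. The crucial quantitative point is to arrange the family so that the width of the associated saturated class strictly exceeds the area of every ``simple'' $G_g$-invariant free boundary minimal surface, most importantly the equatorial disc (and, when relevant, the critical catenoid).

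\textbf{Min-max and topological identification.} Applying the equivariant min-max theorem of Part II, together with the accompanying equivariant Morse index bound, produces a smooth, properly embedded, $G_g$-equivariant free boundary minimal surface $M_g\subset B^3$ with area equal to the equivariant width and equivariant Morse index at most one. The width lower bound from the sweepout rules out higher-multiplicity and collapsed limits, while the chosen symmetry immediately excludes the critical catenoid for $g\ge 1$. The upper semi-continuity of $\genus$ and $\bdry$ along equivariant Simon--Smith convergence (as established in Part II) then gives $\genus(M_g)\le g$ and $\bdry(M_g)\le 1$; combined with the Riemann--Hurwitz rigidity from the first step, this pins down $\genus(M_g)=g$ and $\bdry(M_g)=1$.

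\textbf{Main obstacle.} The crux of the argument lies in the last step, namely the topological identification of the min-max limit: one must simultaneously (i) exclude multiplicity greater than one, (ii) exclude collapse to any lower-complexity $G_g$-invariant free boundary minimal surface (chiefly the equatorial disc), and (iii) prevent the genus or the number of boundary components from strictly dropping in the limit. The first two hinge on a sharp width computation, i.e.\ on the explicit construction of the sweepout, while the third relies on the equivariant Simon--Smith compactness-with-controlled-topology machinery, which allows topology to drop only at finitely many symmetry-compatible degeneration modes---all of which must be excluded by the specific geometry of $G_g$. Engineering a single sweepout that meets all three requirements simultaneously is the main technical hurdle.
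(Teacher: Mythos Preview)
Your outline has the right overall architecture---choose a symmetry group, build an equivariant sweepout, apply the equivariant min-max theorem, then use Riemann--Hurwitz to pin down the topology---but there are two genuine gaps.

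First, and most seriously, there is \emph{no} upper semi-continuity of the number of boundary components in Simon--Smith (or equivariant Simon--Smith) min-max. Your sentence ``upper semi-continuity of $\genus$ and $\bdry$ along equivariant Simon--Smith convergence (as established in Part II)'' is incorrect for $\bdry$; only the genus bound \eqref{eq:GenusBound} is available, and the thesis explicitly flags this point (see the remark quoting Li just after \cref{thm:main-fbms-b1}). In the paper the connectedness of $\partial M_g$ is obtained by an ad-hoc argument (\cref{lem:number_of_boundary_components}): one first uses \cref{lem:endpoints} to single out a distinguished boundary component containing all the axis endpoints, and then applies Simon's Lifting Lemma to any hypothetical extra boundary component, deriving a contradiction from the fact that each slice $\Sigma^j$ of the min-max sequence is topologically a pair of discs glued along the horizontal axes. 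Without an argument of this kind, your Step ``Topological identification'' does not close.

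Second, the group you propose contains reflections, hence orientation-reversing isometries. The equivariant min-max theorem proved and used here (\cref{thm:EquivMinMax}) requires $G$ to consist of \emph{orientation-preserving} isometries; this is stated explicitly and is essential for the local regularity analysis (\cref{sec:OrientationPreserving}, \cref{cor:IntersectionSingularLocus}). The paper works instead with the dihedral group $\dih_{g+1}$ generated by the $\pi$-rotations about the horizontal axes $\xi_1,\ldots,\xi_{g+1}$, which is orientation-preserving. This choice is not cosmetic: the $\pi$-rotation about a horizontal axis contained in $\Sigma_t$ forces the multiplicity of the limit to be odd, and together with the sharp area window $\pi<W_\Pi<3\pi$ (coming from the three-disc-plus-ribbons sweepout of \cref{lem:ConstructionSweepout} and the isoperimetric stability argument of \cref{cor:WidthBiggerPi}) this gives $m=1$ and rules out the equatorial disc. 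Your sketch of the sweepout (``desingularizing the disc union a vertical segment'') and of the width bound does not produce this window, and with your group the parity argument for $m$ is unavailable.
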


\begin{remark*}
Note that we manage to obtain surfaces for \emph{any} genus, which is in general difficult, since the low topology cases are more delicate to handle. In particular, we find a free boundary minimal surface in $B^3$ with genus $1$ and connected boundary, whose existence was previously regarded as a well-known open problem in this field (cf. Nitsche, Schoen, see also \cite{Li2020}*{Section 3}).
\end{remark*}

Since the topology of an orientable compact surface with boundary is determined by its genus and the number of its boundary components, we can summarize the known existence results of free boundary minimal surfaces in $B^3$ in the table of \cref{fig:ExistenceTable}.
One can observe that there are still many topological types for which existence of a free boundary minimal surface is still unknown. On the other hand, Lawson in \cite{Lawson1970} constructed closed minimal surfaces in $S^3$ for every genus, hence we are naturally led to conjecture that there are free boundary minimal surfaces in $B^3$ for any given topological type.

\begin{figure}[htb]
\definecolor{mycell}{gray}{.8}
\renewcommand{\arraystretch}{1.2}
\begin{center}
\begin{tabular}{   p{3ex}>{\centering\arraybackslash}m{2ex}|>{\centering\arraybackslash}m{2ex}>{\centering\arraybackslash}m{2ex}>{\centering\arraybackslash}m{2ex}>{\centering\arraybackslash}m{2ex}>{\centering\arraybackslash}m{2ex}>{\centering\arraybackslash}m{2ex}>{\centering\arraybackslash}m{2ex} } 
 && \multicolumn{6}{c}{genus}\\[2ex]
  && 0 & 1 & 2 & \multicolumn{2}{c}{$\ldots$} & \multicolumn{2}{c}{$g\gg 1$} \\ \hline
 &1 & \cellcolor{mycell}{\color{blue}$\checkmark$} & \cellcolor{mycell}{\color{blue}$\checkmark$} &  \cellcolor{mycell}{\color{blue}$\checkmark$} &\cellcolor{mycell}{\color{blue}$\checkmark$} & \cellcolor{mycell}{\color{blue}$\checkmark$} & \cellcolor{mycell}{\color{blue}$\checkmark$} & \cellcolor{mycell}{\color{blue}$\checkmark$}\\ 
 \parbox[t]{2mm}{\multirow{6}{*}{\rotatebox[origin=c]{90}{\# boundaries}}}&2 & \cellcolor{mycell}{\color{blue}$\checkmark$} & ? & ? & ? & ? & ? & ?\\ 
 &3 & \cellcolor{mycell}{\color{blue}$\checkmark$} & ? & ? & ? & ? & \cellcolor{mycell}{\color{blue}$\checkmark$} & \cellcolor{mycell}{\color{blue}$\checkmark$}\\ 
 &4 & \cellcolor{mycell}{\color{blue}$\checkmark$} & ? & ? & ? & ? & \cellcolor{mycell}{\color{blue}$\checkmark$} & \cellcolor{mycell}{\color{blue}$\checkmark$}\\ 
 &\parbox[t]{2mm}{\multirow{2}{*}{\rotatebox[origin=c]{90}{$\ldots$}}} & \cellcolor{mycell}{\color{blue}$\checkmark$} & ? & ?  & ? & ? & ? & ?\\ 
 & & \cellcolor{mycell}{\color{blue}$\checkmark$} & ? & ? & ? & ? & ? & ?\\ 
 &\parbox[t]{2mm}{\multirow{2}{*}{\rotatebox[origin=c]{90}{$b\gg 1$}}} & \cellcolor{mycell}{\color{blue}$\checkmark$} & \cellcolor{mycell}{\color{blue}$\checkmark$} & ? & ? & ? & ? & ?\\
 & & \cellcolor{mycell}{\color{blue}$\checkmark$} & \cellcolor{mycell}{\color{blue}$\checkmark$} & ? & ? & ? & ? & ?\\ 
\end{tabular}
\end{center}
\caption{Topological types for which we know existence of a FBMS in $B^3$.}
\label{fig:ExistenceTable}
\end{figure}

Let us now enter in more details about the equivariant min-max theory à la Simon--Smith and Colding--De Lellis.

\subsection*{Equivariant min-max theory}

As anticipated above, the theory that we present in this thesis, and we use to prove \cref{thm:NewFamilyFBMS}, is based on the variant of the Almgren--Pitts min-max theory à la Simon--Smith \cite{Smith1982} and Colding--De Lellis \cite{ColdingDeLellis2003}. This consists in a machinery to obtain (free boundary) minimal surfaces following the same mountain pass principle that one employs to obtain critical points of a functional (see \cite{AmbrosettiRabinowitz1973}, cf. \cite{AmbrosettiMalchiodi2007}*{Chapter 8}, \cite{Struwe2008}*{Chapter II}).

In the nonparametric setting, the foundations of the theory go back to Smith's thesis \cite{Smith1982}, where he proved the existence of an embedded minimal two-sphere in any Riemannian three-sphere.
The survey \cite{ColdingDeLellis2003} by Colding--De Lellis is now the main reference for a complete proof of the regularity and the embeddedness of minimal surfaces arising from this min-max procedure in three-dimensional ambient manifolds. The authors follow ideas from J. Pitts, L. Simon and F. Smith.

Then, the theory was extended:
\begin{itemize}
\item to prove a genus bound on the resulting surface in \cite{DeLellisPellandini2010}, improved to the optimal one in \cite{Ketover2019};
\item to the free boundary case in \cite{Li2015};
\item to the equivariant setting (i.e., imposing a symmetry to the entire procedure) in the closed case \cite{Ketover2016Equivariant} and in the case with boundary \cite{Ketover2016FBMS};
\item to the multi-parameter setting in \cite{ColdingGabaiKetover2018};
\item to closed minimal hypersurfaces in higher dimensions in \cite{DeLellisTasnady2013}, without proving any control on the topology.
\end{itemize}

In this thesis, we consider surfaces in a three-dimensional ambient manifold $(M^3,\smetric)$ with strictly mean convex boundary, in such a way that property \hypC{}, and thus also property \hypP{}, holds. Moreover, we consider a finite group $G$ of \emph{orientation-preserving} isometries of $M$.

Given a smooth $G$-sweepout $\{\Sigma_t\}_{t\in I^n}$, i.e., roughly, a curve of $G$-equivariant orientable surfaces varying smoothly with respect to $t\in I^n\eqdef[0,1]^n$ (see \cref{def:GSweepout} for a precise definition), we define its saturation as the set of $G$-sweepouts obtained from $\{\Sigma_t\}_{t\in I^n}$ by composing it with $G$-equivariant isotopies.
Assume also that the saturation satisfies a mountain pass condition.
Then, the equivariant min-max theorem roughly states that there exists a min-max sequence $\{\Sigma^j\}_{j\in\N}$, which consists of surfaces in $G$-sweepouts of the saturation, converging in the sense of varifolds to a $G$-equivariant free boundary minimal surface $\Sigma^\infty$ (up to multiplicity). Furthermore, the genus of $\Sigma^\infty$ is bounded by the genus of the surfaces $\Sigma_t$, $t\in I^n$.

The theorem is the result of the papers cited above (apart from \cite{DeLellisTasnady2013}, treating the higher dimensional case). In particular, a full statement for one-parameter sweepouts (i.e., $n=1$) can be found in \cite{Ketover2016FBMS}*{Theorem 3.2}. However, since the arguments are split in several different references and since we need to refine some of the results for this thesis, we present a proof of the regularity and genus bound in \cref{sec:ProofRegularityGenus}.

Furthermore, in \cite{Franz2021}, we are able to enrich the conclusion of the min-max theorem, proving a property of the index of the resulting surface.

\begin{alphatheorem} [\cite{Franz2021}*{Thm 1.10} \textrightarrow\ \cref{thm:EquivMinMax}]
The free boundary minimal surface $\Sigma^\infty$ obtained from the $G$-equivariant min-max theorem described above has $G$-equivariant index less or equal than $n$.
\end{alphatheorem}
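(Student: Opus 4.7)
The plan is to argue by contradiction, adapting the Marques--Neves index-bound paradigm from the Almgren--Pitts framework to the equivariant Simon--Smith setting employed here. Suppose, for contradiction, that the $G$-equivariant index of $\Sigma^\infty$ exceeds $n$; then the Jacobi bilinear form $Q^{\Sigma^\infty}$ is strictly negative definite on some $(n+1)$-dimensional $G$-invariant subspace $V$ of $G$-equivariant normal sections. The goal is to exploit these $n+1$ independent negative directions to modify an arbitrary near-optimal $G$-sweepout so that its maximum area drops by a definite amount, contradicting the definition of the equivariant min-max value as the infimum of such maxima over the saturation.

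The first step is a quantitative equivariant deformation lemma. Using that $G$ acts by orientation-preserving isometries, one $G$-averages a basis of $V$ and extends it to $G$-equivariant, compactly supported vector fields on $M$ tangent to $\partial M$; their time-$s$ flows assemble into a smooth map $\Psi : \bar B^{n+1}_\rho \to \mathrm{Diff}^{\mathrm{prop}}_G(M)$ with $\Psi(0) = \id$. A second-order Taylor expansion of $\area$ yields positive constants $c, \rho$ such that
\begin{equation*}
\area(\Psi(v)(\Sigma^\infty)) \leq \area(\Sigma^\infty) - c |v|^2 \qquad \text{for all } v \in \bar B^{n+1}_\rho.
\end{equation*}
An upper-semicontinuity argument (using the $C^2$-regular convergence away from the bad set, in the spirit of \cite{ColdingDeLellis2003}) then provides a varifold neighbourhood $\mathcal{U}$ of $|\Sigma^\infty|$ on which the robust estimate $\area(\Psi(v)(\Sigma)) \leq \area(\Sigma) - (c/2)|v|^2$ still holds for every $\Sigma$ with $|\Sigma| \in \mathcal{U}$; in particular, on the sphere $\partial B^{n+1}_\rho$ area drops uniformly by $c\rho^2/2$.

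The heart of the argument is the construction of a competitor $G$-sweepout. Fix $\{\Sigma_t\}_{t\in I^n}$ in the saturation whose sup of areas overshoots the min-max value by at most $\eta \ll c\rho^2$, and let $K \subset I^n$ be the compact subset where $\Sigma_t$ is both near-maximal in area and satisfies $|\Sigma_t| \in \mathcal{U}$. One then constructs a continuous $G$-equivariant map $v : I^n \to \bar B^{n+1}_\rho$ with $|v(t)| \geq \rho/2$ on the bulk of $K$, with $v(t) = 0$ near $\partial I^n$ and outside a neighbourhood of $K$, and suitably interpolating in between. Off $K$, a first-order Birkhoff-type pull-tight, implemented $G$-equivariantly via the first variation formula, reduces area by a further uniform amount since the relevant surfaces are bounded away from the critical set. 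Setting $\tilde\Sigma_t \eqdef \Psi(v(t))(\Sigma_t)$ on $K$ and extending by the Birkhoff flow off $K$ produces a $G$-sweepout in the same saturation with maximum area strictly below the equivariant min-max value, the sought contradiction. The requirement $\dim V \geq n+1$ rather than $\dim V \geq n$ is the standard dimensional count from the Marques--Neves argument: one extra parameter is needed to navigate around the critical set while retaining a continuous and $G$-equivariant choice of descent direction along the entire near-maximal slice of the sweepout.

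The principal obstacle is the gluing in the last paragraph. The selection $v(\cdot)$ and the Birkhoff pull-tight off $K$ must be matched continuously across their common boundary in $I^n$, preserve $G$-equivariance throughout (which requires averaging vector fields over $G$ in a way that does not destroy the negativity of $Q^{\Sigma^\infty}$ on $V$), and respect the boundary and topological constraints that define a smooth $G$-sweepout in the sense of \cref{def:GSweepout}. This is typically handled via a cellular subdivision of $I^n$ adapted to $K$ and to the $G$-orbit structure, together with small isotopy interpolations close to the identity. A further delicate point arises when the varifold convergence of the min-max sequence to $\Sigma^\infty$ has multiplicity: the model deformation $\Psi$ must then act coherently on a multi-sheeted neighbourhood of $\Sigma^\infty$, which is the main technical divergence from the classical Marques--Neves pull-tight in the Almgren--Pitts setting.
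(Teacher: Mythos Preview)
Your broad strategy---use the $(n{+}1)$ equivariant negative directions to build a family of $G$-equivariant diffeomorphisms and deform a near-optimal sweepout---is indeed the core of the Marques--Neves paradigm and matches the paper's Deformation Theorem (\cref{thm:Deformation}). But your argument has a genuine gap in the way you pass from the deformation to a contradiction with the width.

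The problem is the sentence ``off $K$, a first-order Birkhoff-type pull-tight \ldots reduces area by a further uniform amount since the relevant surfaces are bounded away from the critical set.'' Being bounded away from $\Sigma^\infty$ is \emph{not} the same as being bounded away from the critical set. A near-optimal sweepout may have slices of area close to $W_\Pi$ that lie near \emph{other} stationary varifolds of mass $W_\Pi$; pull-tight will not decrease area near those, and your deformation $\Psi$ (built from the Jacobi directions of $\Sigma^\infty$) does nothing useful there either. So the modified sweepout need not have supremum strictly below $W_\Pi$, and no contradiction follows. Relatedly, your quadratic estimate $\area(\Psi(v)(\Sigma)) \le \area(\Sigma) - (c/2)|v|^2$ is not correct for $\Sigma$ merely close to $\Sigma^\infty$: the function $v\mapsto\area(\Psi(v)(\Sigma))$ has its maximum at some $m(\Sigma)$ near but not equal to $0$, so for small $|v|$ the area can \emph{increase}; this is why the paper's \cref{lem:IndexDiffeo,lem:PushAway} track $m(V)$ explicitly and use the transversality step $a_j(t)\neq m_j(t)$.

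The paper resolves this not by dropping the width, but by producing a new minimizing sequence that \emph{still realises} $W_\Pi$ yet provably avoids a neighbourhood of $\Sigma^\infty$ (this is exactly \cref{thm:Deformation}). One then iterates: in a bumpy metric there are only countably many $G$-equivariant free boundary minimal configurations at level $W_\Pi$, so one can enumerate all those with $G$-index $\ge n{+}1$ and successively deform away from each, obtaining a minimizing sequence whose critical set misses \emph{every} high-index surface. Applying the regularity package of \cref{sec:ProofRegularityGenus} to \emph{that} sequence produces a min-max limit with $G$-index $\le n$. The passage to a general metric is by approximation with bumpy metrics together with the index-stability result \cref{thm:ConvBoundGIndex}. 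Your write-up is missing both the iteration over all high-index competitors and the bumpy-metric/approximation layer; without these, the argument does not close.
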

Here, by \emph{$G$-equivariant index} we mean the maximal dimension of a linear subspace of the vector bundle of \emph{$G$-equivariant} normal vector fields (i.e., vector fields that are invariant with respect to the action of the symmetry group $G$) where $Q^{\Sigma^\infty}$ (introduced in the previous section) is negative definite (see \cref{def:GequivIndex}).
\begin{remark*}
The equivariant index is expected to be less or equal than $n$ since $\Sigma^\infty$ is constructed via an equivariant min-max procedure with $n$-parameters. However the proof is highly technically involved, as is the one proving the regularity and the genus bound of $\Sigma^\infty$, and consists in adapting the case without equivariance studied in \cite{MarquesNeves2016} in the Almgren--Pitts setting.
\end{remark*}

\subsection*{Free boundary minimal surfaces with connected boundary}

Given the equivariant min-max theorem, the first difficulty in proving existence of free boundary minimal surfaces with a given topology stands in choosing a suitable symmetry group $G$ and a suitable sweepout $\{\Sigma_t\}_{t\in I^n}$. This is far from being trivial, especially if one does not know \emph{a priori} how the resulting surface should look like.

\begin{figure}[htbp]
\centering
\includegraphics[scale=0.8]{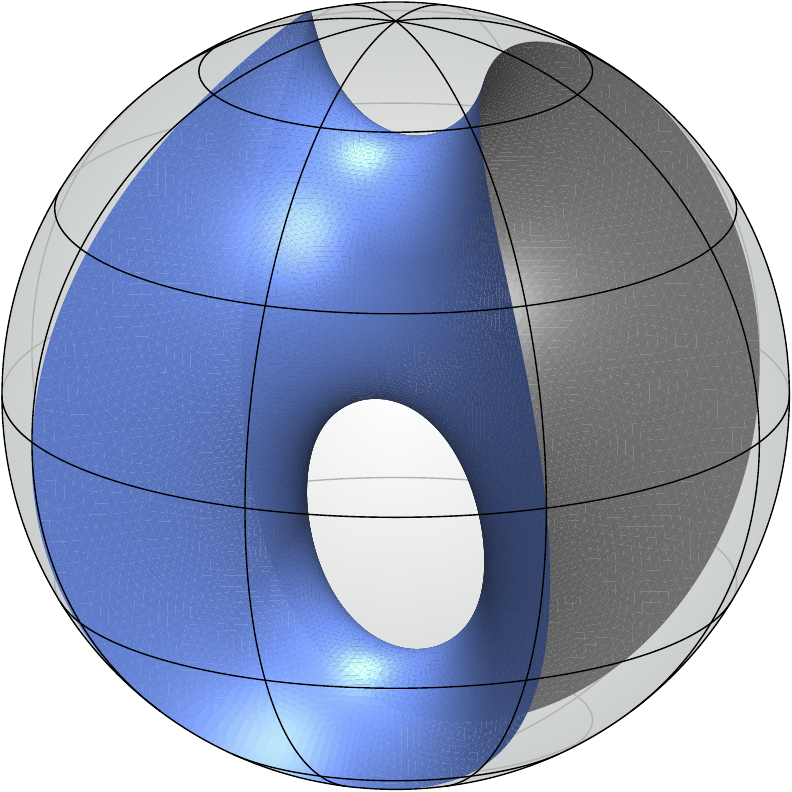}%
\hfill
\includegraphics[scale=0.8]{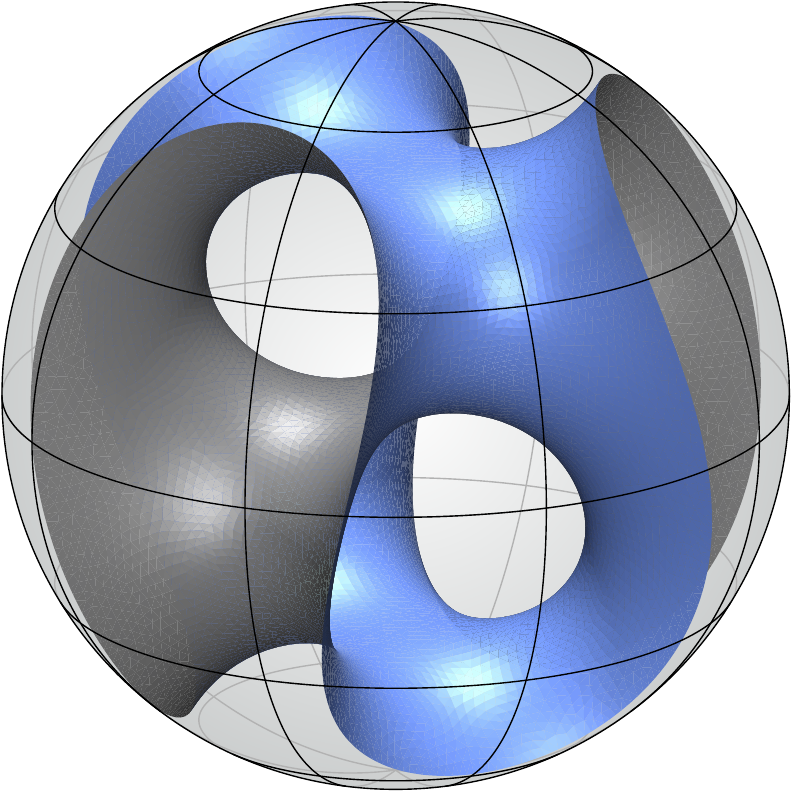}%
\caption{Numerical simulations from \cite{SchulzGallery} of free boundary minimal surfaces with connected boundary and genus $1$ and $2$, respectively.}%
\label{fig:fbms-b1}
\end{figure}

With the help of numerical simulations (see \cref{fig:fbms-b1}, and \cite{SchulzGallery}), we are able to successfully apply an equivariant min-max procedure and prove \cref{thm:NewFamilyFBMS}.
In the proof we use one-parameter sweepouts, and we impose a dihedral symmetry $\mathbb{D}_{g+1}$, where $g$ is going to be the genus of the resulting surface (see \cref{def:DihedralGroup} for a definition of the dihedral group). 
\begin{remark*}
From the numerical simulations we expect a further symmetry with respect to a plane passing through the vertical axis. However, the symmetry with respect to a plane is orientation-reversing, and the equivariant min-max theorem has been developed only for finite groups consisting of orientation-preserving isometries so far.
\end{remark*}

The $G$-sweepout $\{\Sigma_t\}_{t\in I}$ is explicitly described in the proof and, roughly, each surface $\Sigma_t$ for $0<t<1$ looks like three parallel discs joint by ribbons, with genus $g$ and connected boundary (see \cref{sec:sweepout}). A careful control on the area of the surfaces of the sweepout is important to show that the resulting free boundary minimal surface $M_g$ is obtained as a limit with multiplicity one (see \cref{cor:WidthBiggerPi} and \cref{lem:multiplicity}).

The control on the genus of $M_g$ follows from the semicontinuity of the genus in the min-max procedure and from a structure theorem for surfaces with dihedral symmetry (see \cref{lem:structural}). Unfortunately, no control is available on the number of boundary components of surfaces resulting from the min-max procedure. Hence, we prove that $M_g$ has connected boundary by an ad-hoc argument, first singling out a preferred component and then carefully applying Simon's Lifting Lemma (cf. \cite{DeLellisPellandini2010}*{Proposition 2.1}).

\begin{remark}
Similar methods have been employed by Buzano--Nguyen--Schulz in \cite{BuzanoNguyenSchulz2021} to construct noncompact self-shrinkers for the mean curvature flow with arbitrary genus. 
However, observe that, in their setting, the control on the number of ends of the resulting surface is still quite elusive and they are able to fully determine it only for large genus.
\end{remark}

Finally, we also obtain information on the index of $M_g$ and on the behavior as $g\to\infty$. 

\begin{alphaproposition} [cf. \cref{thm:main-fbms-b1}]
The surface $M_g$ obtained in \cref{thm:NewFamilyFBMS} has $\dih_{g+1}$-equivariant index equal to $1$ and converges in the sense of varifolds to the equatorial disc with multiplicity three, as $g\to\infty$.
\end{alphaproposition}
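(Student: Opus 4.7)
For the $\dih_{g+1}$-equivariant index, I would treat the two inequalities separately. The upper bound $\mathrm{ind}_{\dih_{g+1}}(M_g)\le 1$ is a direct consequence of the equivariant min-max theorem stated just above, applied to our one-parameter ($n=1$) setting. For the matching lower bound, the plan is to argue by contradiction: if $M_g$ were $\dih_{g+1}$-equivariantly stable, then the equivariant Jacobi operator would be nonnegative, and a standard deformation analysis would provide a $\dih_{g+1}$-equivariant foliation of a tubular neighborhood of $M_g$ by surfaces of area at most $\area(M_g)$ (using a positive equivariant Jacobi field when strictly stable, with a second-order perturbation through the kernel in the degenerate case). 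Splicing this foliation into a nearly optimal $\dih_{g+1}$-sweepout in the saturation, in place of the max-time slice, would produce a competitor sweepout whose maximum area is strictly less than the width $W_g=\area(M_g)$, contradicting its definition.

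For the varifold convergence as $g\to\infty$, I would first establish the uniform area bound $\area(M_g)\le 3\pi+o(1)$. Since the min-max multiplicity is one we have $\area(M_g)=W_g\le\max_{t\in I}\area(\Sigma_t^g)$, and the explicit $\dih_{g+1}$-sweepout constructed in \cref{thm:NewFamilyFBMS} (three parallel horizontal discs joined by $g+1$ thin symmetric ribbons) can be chosen so that the total ribbon area is $o(1)$ as $g\to\infty$, making the three discs contribute essentially $3\pi$. Then, by the compactness theorem for stationary integral free boundary varifolds, a subsequence of $\{M_g\}$ converges in the varifold sense to a limit $V$ which inherits the symmetries of $M_g$ in the limit: the cyclic rotational subgroup of $\dih_{g+1}$ of order $g+1$ becomes equidistributed in $SO(2)$ as $g\to\infty$, so $V$ is $SO(2)$-invariant around the vertical axis, and the horizontal $\pi$-rotations in $\dih_{g+1}$ pass to the limit as a reflection across the equatorial plane.

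Next I would classify the limit: an integer-rectifiable stationary free boundary varifold in $B^3$ that is $O(2)$-invariant and symmetric across the equatorial plane is supported on a disjoint union of equatorial discs and horizontally symmetric pieces of the critical catenoid, each with positive integer multiplicity. The upper bound $V(B^3)\le 3\pi$ together with the fact that the critical catenoid has area strictly greater than $\pi$ forces $V=k\cdot D_{\mathrm{eq}}$ for some $k\in\{0,1,2,3\}$, where $D_{\mathrm{eq}}$ denotes the equatorial disc. To rule out $k\le 2$ I would establish the matching lower bound $W_g\ge 3\pi-o(1)$ via a linking/calibration argument: the $\dih_{g+1}$-sweepout is constructed so that at some intermediate time it realizes a configuration linking each of three distinct horizontal planes, and this linking is preserved under $\dih_{g+1}$-equivariant isotopies; calibrating the time-slice against the three horizontal planes then forces its area to be at least $3\pi-o(1)$, which propagates to $W_g$.

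The central difficulty is the matching lower bound $W_g\ge 3\pi-o(1)$. A naive comparison with a single horizontal plane yields only $\pi$, so the crux is to isolate the correct topological invariant of the saturation — essentially a triple linking / parity count with three horizontal discs — and to verify that it is both preserved under admissible $\dih_{g+1}$-equivariant isotopies and strong enough to calibrate to $3\pi$. Once this invariant is in place, $\area(M_g)=W_g\to 3\pi$ forces $V=3\cdot D_{\mathrm{eq}}$, completing the varifold convergence part of the proposition.
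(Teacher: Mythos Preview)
Your proposal diverges substantially from the paper's proof in both parts, and the asymptotic part has a genuine gap.

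\textbf{Equivariant index.} The upper bound is correct and matches the paper. For the lower bound, the paper does not run a stability/foliation contradiction at all: it simply exhibits an explicit test function. Since $M_g$ contains the horizontal axes $\xi_1,\dots,\xi_{g+1}$, one has $\operatorname{sgn}_{M_g}(h)=-1$ for each horizontal $\pi$-rotation $h$, so the height function $u(x_1,x_2,x_3)=x_3$ lies in $C^\infty_{\dih_{g+1}}(M_g)$, and a direct computation gives $Q_{M_g}(u,u)=-\int_{M_g}\abs{A}^2u^2<0$. Your approach (equivariant foliation and splicing) might be made to work, but it is far heavier and the degenerate-kernel case is delicate; the paper's argument is a two-line computation.

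\textbf{Asymptotic behavior.} Here there is a real gap. You identify the crux as proving $W_g\ge 3\pi-o(1)$ via a ``triple linking / calibration'' invariant, and then concede this is the central difficulty without actually constructing the invariant or checking it is preserved under $\dih_{g+1}$-equivariant isotopies. In fact such an invariant is not used in the paper, and it is not at all clear one exists at the level of the Simon--Smith saturation you are working in. The paper bypasses any lower width bound entirely, using instead three geometric ingredients:
\begin{itemize}
\item From $\area(M_g)<3\pi$ one gets $\length(\partial M_g)<6\pi$ (Brendle/White), and since $\partial M_g$ is a single closed curve made of $2(g+1)$ isometric arcs joining endpoints of the $\xi_k$, each arc has length $<3\pi/(g+1)$.
\item By the maximum principle for coordinate functions on free boundary minimal surfaces in $B^3$, the maximal height $h(g)$ is attained on $\partial M_g$; hence $h(g)<3\pi/(2g+2)\to 0$. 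This forces any varifold subsequential limit to be supported on the equatorial disc, so the limit is $m\cdot D$ with $m\in\{1,2,3\}$.
\item Multiplicity $m=1$ is excluded by \cite{Ketover2016FBMS}*{Proposition 2.1}. Multiplicity $m=2$ is excluded by a parity argument: away from finitely many points one has smooth multi-graphical convergence (genus is locally zero near some point on a common horizontal axis), and exactly one sheet contains the axis, so the $\Z_2$-symmetry about that axis pairs the remaining sheets and forces $m$ odd. Hence $m=3$.
\end{itemize}
Your route via $O(2)$-invariant varifold classification plus a width lower bound would, even if completed, require two nontrivial inputs (regularity/classification of rotationally invariant stationary free boundary varifolds, and the unproven linking invariant), whereas the paper's squeeze-plus-parity argument uses only boundary-length control, the maximum principle, and local smooth convergence.
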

\begin{remark*}
The computation of the $\dih_{g+1}$-equivariant index is contained in \cite{Franz2021}*{Theorem 1.13} and is a consequence of the equivariant index bound discussed above. The behavior of $M_g$ as $g\to\infty$ is not presented in any other reference apart from this thesis, but it is the result of discussions after the publication of the article \cite{CarlottoFranzSchulz2020} among the authors.
\end{remark*}

So far, the only free boundary minimal surfaces in $B^3$ for which we know the index are the equatorial disc (with index $1$) and the critical catenoid (with index $4$, see \cite{Devyver2019,SmithZhou2019,Tran2020}). However, the situation in the closed case gives hope for improvement, since recently Kapouleas and Wiygul in \cite{KapouleasWiygul2020} computed the index of a family of Lawson surfaces in $S^3$. 
Hence, computing the equivariant index of the surfaces $M_g$ is meant as a first step towards the computation of the actual Morse index of such surfaces.

\clearpage

\section*{Schematic content of the thesis}

\addcontentsline{toc}{section}{Schematic content of the thesis} 

\vspace{-2ex}

\begin{center}
%%%%%%%%%%%%%%%%%%%%%%%%%%%%%%%%%%%%%%%%%%%%%%%%%%%%%
%%% article_scheme.tikz
%%%%%%%%%%%%%%%%%%%%%%%%%%%%%%%%%%%%%%%%%%%%%%%%%%%%%
\usetikzlibrary{trees}
\tikzstyle{every node}=[anchor=west]
\tikzstyle{tit}=[shape=rectangle, rounded corners,
    draw, minimum width = 3cm]
\tikzstyle{sec}=[shape=rectangle, rounded corners]
\tikzstyle{optional}=[dashed,fill=gray!50]
\begin{tikzpicture}[scale=0.9]

\tikzstyle{mybox} = [draw, rectangle, rounded corners, inner sep=10pt, inner ysep=20pt]
\tikzstyle{fancytitle} =[fill=white, text=black, draw]

\begin{scope}[align=center, font=\small]
 \node [tit] (pre) {Preliminaries};
 \node [sec] [below of = pre, yshift=-0.5cm] (preFBMS) {Definitions and basic \\ results about FBMS \\ (\cref{chpt:preliminariesFBMS})};
 
 \node[sec] [right of =preFBMS, xshift=6.5cm,yshift=1.4cm] (intro) {\hyperref[chpt:intro]{Introduction}};
 \node[sec] [right of =preFBMS, xshift=6.5cm,yshift=-0.2cm] (mm) {About the definition of Morse index \\for FBMS that are not proper\\(\cref{sec:Properness})};

 \node[sec] [below of = preFBMS, yshift= -3cm] (inco) {Context and results \\ (\cref{intro:complexity})};
 
 \node [sec] [below of = inco, yshift=-1.2cm] (fbml) {Free boundary minimal \\ laminations\\ (\cref{sec:FBMLam})};
 \node [sec] [below of = fbml, yshift=-1.5cm] (morse) {Morse theoretic \\ arguments\\ (\cref{sec:MorseTheory})};
 \node [sec] [below of = morse, yshift=-4.1cm] (inex) {Context and results\\ (\cref{intro:existence})};
 \node [sec] [below of = inex, yshift=-1.5cm] (equiv) {Equivariant surfaces \\ and spectrum\\ (\cref{chpt:GroupIsom,chpt:EquivSpectrum,chpt:EquivFBMS})};
\end{scope}

\draw[gray,dashed,rounded corners] ($(pre)+(-2.3,0.5)$)  rectangle ($(pre)+(2.3,-21.9)$);

\begin{scope}[align=center, xshift=4.7cm, yshift=-4.1cm, font=\small]

\node [sec] (pi) {\textbf{\cref{part:Complexity}: Relating topology, area and Morse index}\\ based on \cite{CarlottoFranz2020}};

\node[sec] [below of = pi, yshift=-1cm,xshift=-2cm] (deg) {Analysis of sequence of FBMS \\ with bounded index\\
(\cref{sec:MacroDescr,sec:LocalDegeneration,sec:Surgery})};
\node[sec] [below of = pi, yshift=-1cm,xshift=3cm] (diam) {Diameter bound \\for stable FBMS\\ (\cref{sec:CptnessStable})};
\node[sec] [below of = pi, yshift=-3.5cm,xshift=0.5cm] (area) {Qualitative area and topology \\ bounds for FBMS with bounded index\\ (\cref{sec:AreaBound})};

\node[sec] [below of = area, yshift=-1.3cm] (count) {Topology does not bound \\ area and index of FBMS\\ (\cref{sec:Counterexample})};
\end{scope}

\tikzset{bigarr/.style={
decoration={markings,mark=at position 1 with {\arrow[scale=1.5]{>}}},
postaction={decorate}}}

\draw[bigarr,gray] ([xshift=1cm,yshift=0.1cm]deg.south) to ([xshift=1cm,yshift=-0.8cm]deg.south);
\draw[bigarr,gray] ([xshift=-0.6cm,yshift=0.1cm]diam.south) to ([xshift=-0.6cm,yshift=-0.8cm]diam.south);

\draw[blue,rounded corners] ($(pi)+(-10.8,0.8)$)  rectangle ($(pi)+(6,-8.7)$);

\begin{scope}[align=center, xshift=4.2cm, yshift=-14.2cm, font=\small]

\node [sec] (pii) {\textbf{\cref{part:Existence}: Existence of FBMS via equivariant min-max}\\ based on \cite{CarlottoFranzSchulz2020} and \cite{Franz2021}};

\node [sec] [below of = pii, yshift=-0.7cm, xshift=-2cm] (reg) {Regularity and genus bound \\(\cref{sec:ProofRegularityGenus})};
\node [sec] [below of = pii, yshift=-0.7cm, xshift=3cm] (ind) {Index bound \\ (\cref{chpt:IndexBound})};

\node [sec] [below of = pii, yshift=-2.7cm,xshift=0.5cm] (thm) {Equivariant min-max theorem\\ (\cref{thm:EquivMinMax})};

\node [sec] [below of = thm, yshift=-1.5cm] (fbb1) {Existence of FBMS in $B^3$ with\\ connected boundary and arbitrary genus\\ (\cref{chpt:FBMSb1})};
\end{scope}

\draw[bigarr,gray] ([xshift=1cm,yshift=0.1cm]reg.south) to ([xshift=1cm,yshift=-1cm]reg.south);
\draw[bigarr,gray] ([xshift=-0.6cm,yshift=0.1cm]ind.south) to ([xshift=-0.6cm,yshift=-1cm]ind.south);
\draw[bigarr,gray] ([yshift=0.1cm]thm.south) to ([yshift=-1cm]thm.south);

\draw[blue,rounded corners] ($(pii)+(-10.8,0.8)$)  rectangle ($(pii)+(6,-8)$);

\end{tikzpicture}
%%%%%%%%%%%%%%%%%%%%%%%%%%%%%%%%%%%%%%%%%%%%%%%%%%%%%
%%%%%%%%%%%%%%%%%%%%%%%%%%%%%%%%%%%%%%%%%%%%%%%%%%%%%
\end{center}

%%%%%%%%%%%%%%%%%%%%%%%%%%%%%%%%%%%%%%%%%%%%%%%%%
%%% Preliminaries FBMS
%%%%%%%%%%%%%%%%%%%%%%%%%%%%%%%%%%%%%%%%%%%%%%%%%

\chapter[Preliminaries on free boundary minimal surfaces]{Preliminaries on free boundary\\ minimal surfaces} \label{chpt:preliminariesFBMS}

In this chapter, we present several basic definitions and preliminary results about free boundary minimal surfaces in a three-dimensional ambient manifold.

\section{First variation of the area functional} \label{sec:FirstVar}

Given a Riemannian manifold $(\amb^3,g)$, we denote by $\vf(\amb)$ the linear space of smooth ambient vector fields $X$ such that 
\begin{enumerate}[label={\normalfont(\roman*)}]
\item $X(x) \in T_x\amb$ for all $x\in\amb$,
\item $X(x)\in T_x\partial\amb$ for all $x\in\partial\amb$.
\end{enumerate}

\begin{definition}
Given a two-dimensional varifold $V\in\mathcal{V}^2(\amb)$ in $M$ and a vector field $X\in\vf(M)$, we denote by $\delta V(X)$ the \emph{first variation of the area} of $V$ along $X$, namely
\[
\delta V(X) = \frac{\d}{\d t}\Big|_{t=0} \norm{(\psi_t)_*V}(M),
\]
where $\psi\colon[0,+\infty)\times M\to M$ is the flow generated by $X$.
\end{definition}

\begin{definition}
We say that a varifold $V\in\mathcal{V}^2(M)$ in $M$ is \emph{free boundary stationary} if $\delta V(X)=0$ for all $X\in\vf(\amb)$.
\end{definition}

In the case of a smooth, embedded surface $\ms^2\subset\amb$, the first variation of the area of $\Sigma$ along a vector field $X\in\vf(M)$ is given by
\begin{equation} \label{eq:FirstVar}
\begin{split}
\frac{\d}{\d t}\Big|_{t=0} \Haus^2(\psi_t(\ms))  &= \int_\ms \div_\ms (X) \de \Haus^2 \\
&= -\int_\ms \scal HX \de \Haus^2 + \int_{\partial\ms} \scal{X}\eta \de \Haus^1 ,
\end{split}
\end{equation}
where $\psi\colon[0,+\infty)\times M\to M$ is the flow generated by $X$.
The same formula holds true, modulo straightforward notational changes, in the case of immersed surfaces.

\section{Properly embedded surfaces} \label{sec:properFBMS}

When $M$ and $\ms$ are compact, it is customary to say that a surface $\ms^2\subset \amb$ is \emph{properly embedded} if $\partial\ms = \ms\cap \partial\amb$. If that is the case, the equation above implies that $\ms$ is a stationary point of the area functional if and only if it has zero mean curvature and meets the ambient boundary orthogonally. In this case $\ms$ is called \emph{free boundary minimal surface}.

\begin{remark}  \label{subsec:ImpRem}
It is possible to extend this definition also to nonproperly embedded surfaces, convening that $\ms^2\subset\amb^3$ is a free boundary minimal surface if it has zero mean curvature and meets the boundary of the ambient manifold orthogonally along its own boundary. 
\end{remark}

\begin{remark}\label{rem:edged}
In certain circumstances, we shall deal with free boundary minimal surfaces $\ms$ whose boundary is not entirely contained in $\partial\amb$, that is when $\partial\ms\setminus\partial\amb \not=\emptyset$ (cf. also \cite[Remark 13]{AmbrozioCarlottoSharp2018Compactness}).
For example, we look at the intersection of free boundary minimal surfaces with geodesic balls in the ambient manifold centered at boundary points: in that case we talk about \emph{edged} free boundary minimal surface and we mean that they satisfy the free boundary condition only with respect to the boundary of the ambient manifold that is contained in the ball, and not necessarily with respect to the relative boundary of the ball in question.
\end{remark}

In this thesis we focus our attention on compact Riemannian manifolds $(\amb^3,g)$ that satisfy property \hypP{}, that has been stated in the introduction. However, note that local limits of properly embedded free boundary minimal surfaces can be nonproperly embedded even assuming property \hypP{}.
In particular, this is the case of \cref{thm:LamCptness}.

\begin{remark} \label{rem:ConfusionProper}
We stress that, when $\ms$ or $\amb$ are noncompact, there is a possible confusion between \emph{proper} in the sense stated above, namely $\partial \ms = \ms\cap \partial\amb$, and \emph{proper} in the sense that the inclusion map $\ms\hookrightarrow\amb$ is proper as in General Topology.
Unfortunately, in \cref{part:Complexity}, we have to use both meanings, since we need to consider noncompact free boundary minimal surfaces in our ambient manifold and even free boundary minimal surfaces in noncompact ambient spaces.
Therefore, when we talk about \emph{properly embedded} surfaces with boundary we mean that \emph{both} properties are satisfied.
In all cases of possible ambiguity, we try to make the interpretation as explicit and clear as possible.
\end{remark}

Thanks to the maximum principle, property \hypP{} is implied by the following geometric condition on the boundary.
\begin{description}
\item [$(\mathfrak{C})$]\ \ \  The boundary $\partial\amb$ is mean convex and has no minimal component. \label{HypC}
\end{description}
\begin{remark}
Note that the assumptions in the main theorems stated in the \hyperref[chpt:intro]{Introduction} all imply \hypC{} and thus \hypP{}.
\end{remark}

\section{Second variation of the area functional} \label{sec:SecondVariation}

Given a properly embedded free boundary minimal surface $\ms^2\subset M^3$, one can consider the second variation of the area functional, which can be written as
\begin{equation} \label{eq:SecondVar}
\begin{split}
\frac{\d^2}{\d t^2}\Big|_{t=0} \Haus^n(\psi_t(\ms)) &= \int_\ms (\abs{\cov^{\perp}X^{\perp}}^2 - (\Ric_{\amb}(X^\perp,X^\perp) + \abs A^2 \abs{X^\perp}^2)) \de \Haus^2 {}\\
&\pheq + \int_{\partial\ms} \II^{\partial\amb}(X^\perp, X^\perp) \de \Haus^1\comma
\end{split}
\end{equation}
where $X^\perp$ is the normal component of $X$ and $\cov^\perp$ is the induced connection on the normal bundle $N\ms$ of $\ms\subset\amb$.
Thus, given a section $Y\in\Gamma(N\ms)$ of the normal bundle, the second variation along the flow generated by $Y$ is equal to the quadratic form
\[
 Q^\ms(Y,Y) \eqdef  \int_\ms (\abs{\cov^{\perp}Y}^2 - (\Ric_{\amb}(Y,Y) + \abs A^2 \abs{Y}^2)) \de \Haus^2 + \int_{\partial\ms} \II^{\partial\amb}(Y, Y) \de \Haus^1 \point
\]
The \emph{(Morse) index} of $\ms$ is defined as the maximal dimension of a linear subspace of $\Gamma(N\ms)$ where $Q^\ms$ is negative definite.
Under the above assumptions, this number equals the number of negative eigenvalues of the following elliptic problem on $\Gamma(N\ms)$:
\[
\begin{cases}
\lapl_\ms^\perp Y + \Ric^\perp_\amb(Y,\cdot) +\abs A^2 Y + \lambda Y = 0 & \text{on $\ms$}\comma\\
\cov^\perp_\eta Y = - (\II^{\partial\amb}(Y,\cdot))^\sharp & \text{on $\partial\ms$}\point
\end{cases}
\]

\begin{remark}
We postpone to \cref{sec:Properness} a thorough discussion about the definition of Morse index, in particular in the case when $\Sigma$ is not properly embedded. 
\end{remark}

Observe that, if $\ms^2\subset\amb^3$ is two-sided, then every vector field $Y\in\Gamma(N\ms)$ can be written as $Y = f\nu$ and $Q^\ms(Y,Y)$ coincides with the quadratic form $Q_\ms(f,f)$, defined as
\[
\begin{split}
Q_\ms(f,f) &\eqdef \int_\ms (\abs{\cov f}^2 - (\Ric_{\amb}(\nu,\nu) + \abs A^2 )f^2) \de \Haus^2 + \int_{\partial\ms} \II^{\partial\amb}(\nu, \nu)f^2 \de \Haus^1\\
&= -\int_\ms f\jac_\ms(f) \de\Haus^2 + \int_{\partial\ms} \left( f\DerParz f \eta + \II^{\partial\amb}(\nu,\nu) f^2 \right) \de\Haus^1 \comma
\end{split}
\]
where $\jac_\ms \eqdef \lapl_\ms + (\Ric_\amb(\nu,\nu) + \abs A^2)$ is the (scalar) Jacobi operator of $\ms$.
Hence, the index coincides with the number of negative eigenvalues of the elliptic problem
\[
\begin{cases}
\jac_\Sigma f = \lapl_\ms f + (\Ric_\amb(\nu,\nu) +\abs A^2)f  + \lambda f = 0 & \text{on $\ms$}\comma\\
\DerParz f\eta = - \II^{\partial\amb}(\nu,\nu) f & \text{on $\partial\ms$}
\end{cases}
\]
on $C^\infty(\Sigma)$ (see also \cref{chpt:EquivSpectrum} for further details in presence of a group of symmetries).

\section{Euler characteristic} \label{sec:EulChar}
Recall that the Euler characteristic of a compact surface $\ms$ is equal to
\[
\chi(\ms) = \begin{cases}
            2 - 2\genus(\ms) - \bdry(\ms) & \text{if $\ms$ is orientable} \comma\\
            1- \genus(\ms) - \bdry(\ms) & \text{if $\ms$ is not orientable} \point
            \end{cases}
\]
Now let $\ms_1,\ms_2$ be two compact oriented surfaces with boundary and consider $c_1$, $c_2$ be two boundary components of $\ms_1$ and $\ms_2$ respectively. Note that $c_1$ and $c_2$ are both homeomorphic to $S^1$.
As shown in \cref{fig:GlSurf}, we can glue $\ms_1$ and $\ms_2$ along $c_1$ and $c_2$ in two ways:
\begin{enumerate} [label={\normalfont(\roman*)}]
\item We can attach all $c_1$ to all $c_2$. \label{at:circle}
\item We can attach an arc of $c_1$ to an arc of $c_2$. \label{at:segment}
\end{enumerate}

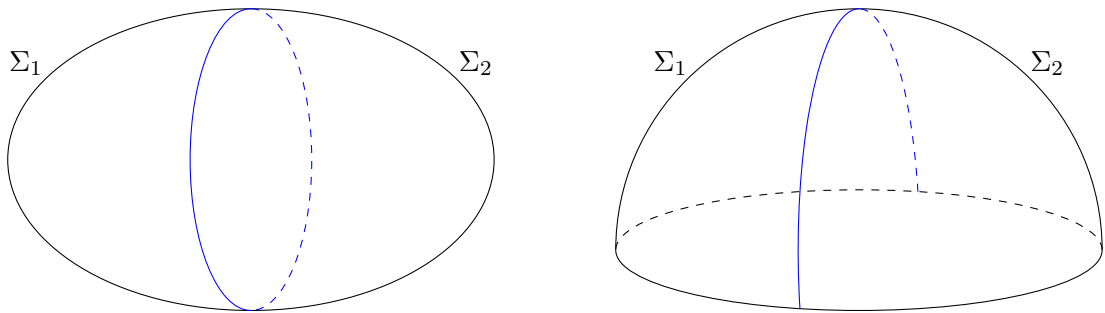
\begin{figure}[htpb]
\centering
%%%%%%%%%%%%%%%%%%%%%%%%%%%%%%%%%%%%%%%%%%%%%%%%%%%%%
%%% gluing_surf.tikz
%%%%%%%%%%%%%%%%%%%%%%%%%%%%%%%%%%%%%%%%%%%%%%%%%%%%%
\begin{tikzpicture}[scale=0.8]
\pgfmathsetmacro{\radius}{4}
\pgfmathsetmacro{\a}{4/sqrt(17)}
\draw (-\radius,0) arc (180:360:{\radius} and {\radius/4});
\draw[dashed] (\radius,0) arc (0:180:{\radius} and {\radius/4});
\draw (\radius,0) arc (0:180:{\radius});
\draw[dashed,blue] (\a,\a) arc ({atan(1/4)}:90:{\radius/4} and {\radius});
\draw[blue] (0,\radius) arc (90:{180+atan(1/4)}:{\radius/4} and {\radius});

\node at ({-\radius+0.9}, {\radius-0.9}) {$\ms_1$};
\node at ({\radius-0.9}, {\radius-0.9}) {$\ms_2$};

\pgfmathsetmacro{\xs}{-5/2*\radius}
\pgfmathsetmacro{\ys}{\radius/2-0.5}
\pgfmathsetmacro{\radiusa}{(\radius/2)+0.5}
\pgfmathsetmacro{\radiusb}{\radius}

\draw (\xs,\ys) ellipse ({\radiusb} and {\radiusa});
\draw[blue] (\xs, {\ys+\radiusa}) arc (90:270:{\radius/4} and {\radiusa});
\draw[dashed,blue] (\xs, {\ys+\radiusa}) arc (90:-90:{\radius/4} and {\radiusa});
\node at ({-\radius+0.3+\xs}, {\radius-0.9}) {$\ms_1$};
\node at ({\radius-0.3+\xs}, {\radius-0.9}) {$\ms_2$};
\end{tikzpicture}
%%%%%%%%%%%%%%%%%%%%%%%%%%%%%%%%%%%%%%%%%%%%%%%%%%%%%
%%%%%%%%%%%%%%%%%%%%%%%%%%%%%%%%%%%%%%%%%%%%%%%%%%%%%
\caption{Gluing of two discs via \ref{at:circle} (on the left) and \ref{at:segment} (on the right).} \label{fig:GlSurf}
\end{figure}

In general, we can construct an oriented surface $\ms$ by gluing $b$ boundary components of $\ms_1,\ms_2$ as in \ref{at:circle} and $b'$ boundary components as in \ref{at:segment}. Then the Euler characteristic of $\ms$ is given by
\[
\chi(\ms) = \chi(\ms_1)+\chi(\ms_2) - b' \point
\]
Therefore $\ms$ has genus equal to $\genus(\ms_1)+\genus(\ms_2) + b + b' - 1$ and number of boundary components equal to the sum of the boundary components of $\ms_1$ and $\ms_2$ minus $2b+b'$.

\section[Reflecting FBMS in a half-space]{Reflecting free boundary minimal surfaces in a half-space} \label{sec:ReflPrinc}

In this section, we recall a standard reflection lemma, which is useful to transfer information about minimal surfaces in $\R^3$ to free boundary minimal surfaces in a half-space. The proof consists in a well-known argument based on elliptic estimates.
\begin{lemma} \label{lem:ReflectionPrinciple}
If $\ms^2$ is an embedded free boundary minimal surface in $\Xi(a)\subset \R^3$ for some $0\ge a >-\infty$ (that is $\ms$ has zero mean curvature and meets $\Pi(a)$ orthogonally along $\partial\ms$), 
then the union of $\ms$ with its reflection with respect to $\Pi(a)$ is a smooth, embedded minimal surface in $\R^3$ without boundary.
\end{lemma}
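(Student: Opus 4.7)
My plan is to work locally at each boundary point and reduce the problem to the standard fact that a solution of the minimal surface equation satisfying a homogeneous Neumann condition on a flat boundary extends smoothly by even reflection. Away from $\partial\Sigma$ there is nothing to check: since the reflection $R$ across $\Pi(a)$ is an isometry of $\R^3$ and $\partial\Sigma \subset \mathrm{Fix}(R) = \Pi(a)$, the set $\tilde\Sigma \eqdef \Sigma \cup R(\Sigma)$ consists of $\Sigma$ and a smooth minimal surface lying on the opposite side of $\Pi(a)$, meeting it exactly along $\partial\Sigma$. Embeddedness of $\tilde\Sigma$ at the topological level is immediate from this; what must be shown is smoothness across the common boundary.

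Fix $p \in \partial\Sigma$. The free boundary condition $\eta = \hat\eta$ at $p$ forces $-e_1 \in T_p\Sigma$, so $T_p\Sigma$ contains the direction normal to $\Pi(a)$. I would choose Cartesian coordinates $(y^1,y^2,y^3)$ centered at $p$ in which $\Pi(a) = \{y^1 = 0\}$ and $T_p\Sigma = \{y^3 = 0\}$; in such coordinates $R$ becomes $(y^1,y^2,y^3) \mapsto (-y^1,y^2,y^3)$. Near $p$ the surface $\Sigma$ is then the graph of a smooth function $u$ on a half-disc $\{y^1 \ge 0\}$ with $u(0,0) = 0$ and $\nabla u(0,0) = 0$. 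A short computation (the unit normal to the graph is proportional to $(-\partial_{y^1} u, -\partial_{y^2} u, 1)$, and must be tangent to $\Pi(a)$ at every boundary point) shows that the free boundary property is exactly the Neumann condition $\partial u/\partial y^1 \equiv 0$ on $\{y^1 = 0\}$.

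With this identification the reflected piece $R(\Sigma)$ is locally the graph of $(y^1,y^2) \mapsto u(-y^1,y^2)$, so $\tilde\Sigma$ coincides near $p$ with the graph of the even extension $U(y^1,y^2) \eqdef u(|y^1|,y^2)$. The Neumann condition makes $U$ of class $C^1$ across $\{y^1 = 0\}$, and an integration by parts shows that $U$ satisfies the minimal surface equation in the distributional sense on the whole disc (the boundary term produced by testing against a compactly supported function cancels precisely by the Neumann condition). Since the minimal surface operator is quasilinear uniformly elliptic on a $C^1$-neighbourhood of the zero graph, standard Schauder bootstrapping upgrades $U$ to a smooth (indeed real analytic) solution throughout the full disc. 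Thus $\tilde\Sigma$ is smooth and minimal in a neighbourhood of $p$, and since $p \in \partial\Sigma$ was arbitrary and $\partial\tilde\Sigma = \emptyset$, the conclusion follows. The main obstacle I foresee is the coordinate-dependent verification that the free boundary condition is equivalent to the Neumann condition $\partial u/\partial y^1 = 0$; once this is in place, the $C^1$-matching through the even reflection and the ensuing elliptic regularity argument are standard.
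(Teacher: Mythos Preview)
Your proposal is correct and is precisely the ``well-known argument based on elliptic estimates'' that the paper alludes to without writing out in detail. The key steps you identify---recasting the free boundary condition as the Neumann condition $\partial_{y^1}u=0$ in graphical coordinates over $T_p\Sigma$, taking the even extension, and invoking elliptic regularity for $C^1$ weak solutions of the minimal surface equation---constitute the standard proof the paper has in mind.
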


We also recall the following regularity result for free boundary minimal immersions in a half-space of $\R^3$.

\begin{proposition} \label{cor:MinSurfInR3WithFiniteIndex}
Let $\ms\subset \Xi(a)\subset\R^3$ be a complete free boundary minimal injective immersion with finite index and with $\partial\ms = \Pi(a)\cap \ms$, for some $0\ge a >-\infty$. Then $\ms$ is two-sided, has finite total 
curvature and is properly embedded.
\end{proposition}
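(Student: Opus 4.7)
The strategy is to double $\ms$ across the wall $\Pi(a)$ in order to reduce to classical facts about complete minimal immersions in $\R^3$ of finite Morse index. Let $R\colon \R^3\to\R^3$ denote the Euclidean reflection across $\Pi(a)$. The first step is to extend \cref{lem:ReflectionPrinciple} from the embedded to the merely injectively immersed setting (the argument in \cref{lem:ReflectionPrinciple} is entirely local along $\partial\ms$ and depends only on the free boundary condition, so it goes through verbatim). The output is a complete smooth minimal immersion
\[
\tilde\ms \eqdef \ms \cup R(\ms)
\]
without boundary. Injectivity of this doubled immersion follows from the injectivity of $\ms$ together with the observation that $\ms\setminus\partial\ms\subset \{x^1>a\}$ while $R(\ms)\setminus R(\partial\ms)\subset\{x^1<a\}$, so the two halves meet only along $\partial\ms$, where the orthogonality condition produces a smooth match.

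The second and technically most delicate step is to show that $\tilde\ms$ has finite Morse index. The point is that $\tilde\ms$ is $R$-invariant, and the Jacobi operator commutes with the natural $R$-action on (sections of) the normal bundle. Given a compactly supported normal vector field $Y$ on $\tilde\ms$, decompose it as $Y = Y^{+} + Y^{-}$ with $Y^{\pm}$ the $R$-even and $R$-odd parts; these subspaces are $Q^{\tilde\ms}$-orthogonal. An $R$-even field restricts to a section on $\ms$ satisfying, precisely because of the free boundary condition at $\partial\ms$, the Robin-type boundary condition appearing in the free-boundary stability problem for $\ms$, so its contribution to the negative eigenspace is bounded by $\ind(\ms)$. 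An $R$-odd field vanishes along $\partial\ms$, so its negative-space dimension is bounded by the (finite) index of the Dirichlet problem for the same Jacobi operator on $\ms$, again controlled by $\ind(\ms)$ via a standard comparison. (If $\tilde\ms$ is not yet known to be two-sided, I would first pass to its oriented double cover, which is complete and has at most twice the index, and carry out the analysis there.)

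The third step is to invoke the classical theorem of Fischer-Colbrie \cite{FischerColbrie1985}: a complete minimal immersion in $\R^3$ of finite Morse index has finite total curvature. Hence $\tilde\ms$ (or its oriented double cover) has finite total curvature, and Osserman's structure theorem \cite{Osserman1964} then yields that $\tilde\ms$ is conformally a compact Riemann surface with finitely many punctures, is properly immersed in $\R^3$, and is orientable (hence two-sided, since the ambient space is $\R^3$). Combined with injectivity, properness upgrades to proper embedding of $\tilde\ms$, and $\ms = \tilde\ms \cap \Xi(a)$ inherits two-sidedness, finite total curvature, and proper embeddedness.

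The main obstacle is the finite index transfer in the second step: one has to handle the even/odd decomposition carefully so that the boundary terms produced by integration by parts on $\ms$ match the Robin condition of the free boundary stability problem. This is exactly the place where the free boundary (orthogonal contact) hypothesis enters in an essential, rather than cosmetic, way, and it is also where one must dispense with any a priori two-sidedness assumption by a short passage to the orientation cover before applying Fischer-Colbrie.
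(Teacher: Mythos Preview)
Your approach is correct and coincides with the paper's: reflect across $\Pi(a)$, transfer the index bound to the doubled surface via the even/odd symmetrisation, and then invoke the classical finite-index theory in $\R^3$. The paper simply packages these steps as citations---\cite[Section~2.2]{AmbrozioBuzanoCarlottoSharp2018} for the index transfer you sketch, and \cite[Theorem~B.1]{ChodoshKetoverMaximo2017} in place of your Fischer-Colbrie/Osserman discussion (the latter reference already bundles two-sidedness, finite total curvature, and proper embeddedness for complete minimal injective immersions of finite index in $\R^3$, so you need not unpack the orientation-cover step yourself).
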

\begin{proof}
This is a consequence of Theorem B.1 in \cite{ChodoshKetoverMaximo2017}, after applying the reflection principle \cref{lem:ReflectionPrinciple} and the argument of \cite[Section 2.2]{AmbrozioBuzanoCarlottoSharp2018}, which implies that the reflected surface has finite index.
\end{proof}

We further apply \cite[Section 2.3]{AmbrozioBuzanoCarlottoSharp2018} together with \cite{ChodoshMaximo2016} to obtain topological information from index bounds for free boundary minimal surfaces in a half-space of $\R^3$.

\begin{definition}
Given a complete, connected, properly embedded, free boundary minimal surface $\ms^2$ in $\Xi(a)\subset \R^3$ for some $0\ge a >-\infty$, the number of ends of $\ms$ is the number of connected components of $\ms$ outside any sufficiently large compact set. We denote this number by $\mathrm{ends}(\ms)$.
\end{definition}
Observe that the number of ends of a properly embedded free boundary minimal surface in $\Xi(a)\subset \R^3$ as above is indeed well-defined (see \cite[Remark 26]{AmbrozioBuzanoCarlottoSharp2018}).

\begin{proposition} \label{prop:GeometryOfHalfBubble}
Given $I\ge 0$, there exists $\kappa(I)\ge 0$ such that every complete, connected, properly embedded, free boundary minimal surface  $\ms^2$ in $\Xi(a)\subset \R^3$ for some $0\ge a >-\infty$ of index at most $I$ 
has genus, number of ends and number of boundary components (in $\Pi(a)$) all bounded by $\kappa(I)$.
\end{proposition}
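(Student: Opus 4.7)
The plan is to reduce the statement to the analogous quantitative bounds for complete (boundaryless) minimal surfaces in $\R^3$, for which one can appeal to the work of Chodosh--Maximo, and then translate the information back through the reflection principle.

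First, I would apply \cref{cor:MinSurfInR3WithFiniteIndex} to $\ms$ to ensure that we are in a position where reflection is legitimate and useful: $\ms$ is two-sided, has finite total curvature, and is properly embedded. By \cref{lem:ReflectionPrinciple}, the union $\tilde\ms \eqdef \ms \cup R(\ms)$ (where $R$ is the reflection across $\Pi(a)$) is then a smooth, complete, embedded minimal surface in $\R^3$ without boundary, still having finite total curvature. In particular $\tilde\ms$ has a well-defined genus $\tilde g$ and a finite number $\tilde e$ of ends.

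Next, I would bound $\mathrm{ind}(\tilde\ms)$ in terms of $I$. The natural comparison goes as follows: any section of $N\tilde\ms$ decomposes into its even and odd parts with respect to the reflection $R$, and one checks that both parts contribute to negative directions of $Q^{\tilde\ms}$ only if their restriction to $\ms$ gives a section satisfying (respectively Neumann or Dirichlet type) boundary conditions on $\partial\ms$ on which $Q^\ms$ is already negative; this allows one to estimate $\mathrm{ind}(\tilde\ms) \le 2I + c$ for some absolute constant $c$, as carried out in \cite{AmbrozioBuzanoCarlottoSharp2018}*{Section 2.3}. At this point, the main quantitative input is the theorem of Chodosh--Maximo \cite{ChodoshMaximo2016}: a complete finite-index embedded minimal surface in $\R^3$ has genus and number of ends bounded by an explicit function of the index. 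Applied to $\tilde\ms$, this gives $\tilde g \le C_1(I)$ and $\tilde e \le C_2(I)$.

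Finally, I would translate these bounds into the desired control on $\ms$. Since $\tilde\ms$ is the topological double of $\ms$ along $\partial\ms$, a standard Euler characteristic computation (as recalled in \cref{sec:EulChar}) gives, after compactifying the ends,
\[
\chi(\tilde\ms) = 2\chi(\ms) - \chi(\partial\ms) = 2\chi(\ms),
\]
which combined with $2 - 2\tilde g = 2(2 - 2\genus(\ms) - \bdry(\ms))$ yields $\tilde g = 2\genus(\ms) + \bdry(\ms) - 1$. On the other hand, since $\ms \subset \Xi(a)$ and $R(\ms)\subset R(\Xi(a))$ only share $\partial\ms \subset \Pi(a)$, each end of $\ms$ gives a distinct end of $\tilde\ms$, so $\tilde e = 2\,\mathrm{ends}(\ms)$. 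Both $\genus(\ms)$, $\bdry(\ms)$ and $\mathrm{ends}(\ms)$ are therefore controlled by $C_1(I)$ and $C_2(I)$, and one sets $\kappa(I)$ to be the maximum of these.

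The main obstacle is genuinely step two: giving a clean, effective bound on the index of the reflected surface, since a priori one has to be careful that destabilising sections on $\ms$ satisfying the free boundary condition do extend to genuine destabilising sections on $\tilde\ms$, and that one does not lose too much in doing so. Once this is in place, the topological accounting and the appeal to \cite{ChodoshMaximo2016} are relatively mechanical.
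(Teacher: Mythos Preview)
Your approach is the same as the paper's: reflect across $\Pi(a)$, bound the index of the double via \cite{AmbrozioBuzanoCarlottoSharp2018}*{Section 2.3} (the paper gets precisely $\ind(\check\ms)\le 2I$), apply Chodosh--Maximo to bound $\genus(\check\ms)$ and $\mathrm{ends}(\check\ms)$, and then translate back. The first two steps are fine; the gap is in the final bookkeeping.

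Both identities $\tilde e = 2\,\mathrm{ends}(\ms)$ and $\tilde g = 2\genus(\ms) + \bdry(\ms) - 1$ fail whenever $\ms$ has an end that meets $\Pi(a)$: such an end and its reflection merge into a \emph{single} end of $\tilde\ms$, and the corresponding compactification point sits on $\partial\bar\ms$, so distinct boundary lines of $\ms$ may coalesce in $\bar\ms$. Concretely, let $\ms$ be half of a catenoid, cut by a plane through its axis of symmetry: then $\genus(\ms)=0$, $\bdry(\ms)=2$, $\mathrm{ends}(\ms)=2$, while the double is the full catenoid with $\tilde g=0$ and $\tilde e=2$, contradicting both of your formulae. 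Your doubling identity $\chi(\bar{\tilde\ms})=2\chi(\bar\ms)$ is correct, but it only yields $\tilde g = 2\genus(\ms)+\bdry(\bar\ms)-1$ with $\bdry(\bar\ms)$ in place of $\bdry(\ms)$; since in general $\bdry(\bar\ms)<\bdry(\ms)$, you have not yet bounded $\bdry(\ms)$.

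The paper sidesteps this by invoking \cite{AmbrozioBuzanoCarlottoSharp2018}*{Lemma 29}, which gives the relation $\chi(\check\ms)-\mathrm{ends}(\check\ms)=2(\chi(\ms)-\mathrm{ends}(\ms))$ valid in all cases, where $\chi(\ms)=2-2\genus(\ms)-\xi(\ms)$ and $\xi(\ms)$ counts the boundary components of $\ms\cap B_R^{\R^3}(0)$ for large $R$; it then concludes via the elementary inequality $\bdry(\ms)\le\xi(\ms)+\mathrm{ends}(\ms)$. Your argument can be repaired along exactly these lines.
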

\begin{proof}
Let $\check\ms$ be the union of $\ms$ with its reflection with respect to $\Pi(a)$, as above.
Then, thanks to \cite[Section 2.3]{AmbrozioBuzanoCarlottoSharp2018} (in particular equation (2.5) therein), $\check\ms$ is a complete, connected, properly embedded, minimal surface of $\R^3$ with index less or equal than $2I$.
Hence, using the main estimate in \cite{ChodoshMaximo2016}, we obtain that
\[
\frac 23 (\genus(\check\ms) + \mathrm{ends}(\check\ms)) -1 \le 2I\comma
\]
where $\mathrm{ends}(\check\ms)$ denotes the number of ends of $\check\ms$.

Moreover, by Lemma 29 in \cite{AmbrozioBuzanoCarlottoSharp2018}, it holds that $\chi(\check\ms)-\mathrm{ends}(\check\ms) = 2(\chi(\ms)-\mathrm{ends}(\ms))$. Therefore, we obtain
\begin{align*}
\chi(\ms)-\mathrm{ends}(\ms) &= \frac 12(\chi(\check\ms)-\mathrm{ends}(\check\ms)) = 1 - \genus(\check\ms) - \mathrm{ends}(\check\ms)  \\
&\ge 1 -\frac 32(2I+1)= -3I-\frac 12 \point
\end{align*}
Observe that $\chi(\ms) = 2-2\genus(\ms) - \xi(\ms)$, where $\xi(\ms)$ is the number of boundary components of $\ms\cap B^{\R^3}_R(0)$, for any $R>0$ sufficiently large. Thus we get
\[
2\genus(\ms) + \xi(\ms) +\mathrm{ends}(\ms) \le 3I+\frac 52\comma
\]
from which it follows directly that $\genus(\ms)$ and $\mathrm{ends}(\ms)$ are both bounded by $3I+5/2$.

Finally, note that $\bdry(\ms)\le \xi(\ms)+\mathrm{ends}(\ms)$ and so
\[
3I+\frac 52 \ge 2\genus(\ms) + \xi(\ms) +\mathrm{ends}(\ms) \ge \xi(\ms) +\mathrm{ends}(\ms) \ge \bdry(\ms)\comma
\]
which concludes the proof once we choose $\kappa(I) = 3I +5/2$. 
\end{proof}

\section{Multiplicity one convergence} \label{sec:Mult1Conv}

In this section we prove that convergence to a surface with multiplicity one well-behaves in presence of isolated singularities, which essentially follows from Allard's regularity theory (see \cite{Allard1972}).

\begin{lemma} \label{lem:MultOneConvExtends}
Let $\ms_j^2\subset\amb^3$ be a sequence of connected free boundary minimal surfaces in a three-dimensional complete Riemannian manifold $\amb$. Moreover let $\ms^2\subset \amb$ be an embedded free boundary minimal surface in $\amb$. Assume that the sequence $\ms_j$ converges locally smoothly to $\ms$, with multiplicity one away from a finite set of points\footnote{Here we mean that for every $x\in\ms$ there exists a neighborhood $U$ of $x$ such that $\ms_j\cap U$ converges graphically smoothly to $\ms\cap U$ with multiplicity one.} $\set$.
Then $\ms_j$ converges locally smoothly to $\ms$ everywhere.
\end{lemma}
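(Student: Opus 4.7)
The plan is to reduce matters to a local analysis near a single exceptional point and then invoke a suitable (free boundary) version of Allard's regularity theorem. Fix $p \in \set$: if $p \notin \ms$ I need to show that $\ms_j$ is eventually empty near $p$, while if $p \in \ms$ (interior or boundary) I need to produce a neighborhood in which $\ms_j$ converges smoothly with multiplicity one to $\ms$. To this end I would choose a geodesic ball $B = B_r(p)$ with $r > 0$ small enough that $\overline{B} \cap \set = \{p\}$, that $\ms \cap B$ is either empty or a single embedded disc-type sheet (meeting $\partial\amb$ orthogonally along $\partial\ms \cap B$, if $p \in \partial\ms$), and such that the smooth multiplicity one convergence $\ms_j \to \ms$ already holds on an annular neighborhood of $\partial B$.

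The first step is to obtain a uniform mass bound. Using the smooth multiplicity one convergence on the outer annulus together with the monotonicity formula for (free boundary) stationary integral varifolds, one derives a uniform estimate $\area(\ms_j \cap B) \le C$ independent of $j$. This allows Allard's compactness theorem to produce, up to subsequence, a free boundary stationary integral $2$-varifold $V$ on $B$ to which $|\ms_j|$ converges in the varifold sense.

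Next, I would identify $V = |\ms|$ on all of $B$. On $B \setminus \{p\}$ this is immediate from the smooth multiplicity one convergence hypothesis, and since integral $2$-varifolds carry no atomic mass at a single point, the identity extends to $p$. In particular, the density of $V$ at $p$ equals $\Theta^2(|\ms|, p)$, which is $0$, $1$, or $1/2$ depending on whether $p$ lies off $\ms$, in the interior of $\ms$, or on $\partial\ms$, respectively. If the density is zero, monotonicity forces $V \equiv 0$ in a smaller ball, so $\ms_j$ is eventually empty there; if the density is $1$ or $1/2$, one applies the interior Allard $\varepsilon$-regularity theorem or its free boundary analogue due to Gr\"uter--Jost to conclude that $\ms_j$ is a $C^{1,\alpha}$ graph over $\ms$ near $p$ for $j$ large, with smoothness upgraded via Schauder estimates for the minimal surface equation. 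Since $p \in \set$ was arbitrary, the lemma follows.

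The main technical obstacles are the uniform area bound on $B$, which requires the free boundary monotonicity formula together with some care near $\partial\amb$, and invoking the correct boundary regularity theorem when $p \in \partial\ms$; the interior case is entirely standard, whereas the free boundary case is essentially a bookkeeping exercise once the Gr\"uter--Jost monotonicity and regularity are in place.
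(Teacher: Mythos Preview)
Your proposal is correct and follows essentially the same strategy as the paper: use the smooth multiplicity-one convergence on an annulus around $p$ together with the (free boundary) monotonicity formula to control the density of $\ms_j$ at $p$ at all small scales, and then conclude via Allard-type regularity (interior or the free boundary version). The paper carries this out by directly estimating the area ratios $\Haus^2(\ms_j\cap B_r(p))/(\omega_2 r^2)$ rather than passing through a varifold limit, and it cites the extended monotonicity formula of Guang--Li--Zhou and Theorem~17 of Ambrozio--Carlotto--Sharp for the final step, but the substance is the same as what you outlined.
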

\begin{proof}
Let $p$ be a point in $\ms\cap\set$ and let ${r_0}>0$ be sufficiently small such that $B_{4{r_0}}(p)$ does not contain any other point of $\set$ apart from $p$. Fix $\eps>0$ and take ${r_0}$ possibly smaller in such a way that 
\[
\frac{\Haus^2(\ms\cap (B_{2{r_0}}(p)\setminus B_{r_0}(p)))}{\omega_2(4{r_0}^2-{r_0}^2)} < 
\begin{cases}
1+\eps/4 & \text{if $p\in \ms\setminus\partial\ms$}\\
\frac 12(1+\eps/4) & \text{if $p\in\partial\ms$} \point
\end{cases}
\]
Since the convergence of $\ms_j$ to $\ms$ is smooth and graphical in $B_{2{r_0}}(p)\setminus B_{r_0}(p)$, then we can assume that the same estimate holds for every $j$ sufficiently large substituting $\eps/4$ with $\eps/2$. By the extended monotonicity formula (cf. \cite{GuangLiZhou2020}) this implies that
\[
\frac{\Haus^2(\ms_j\cap B_{r_0}(p))}{\omega_2{r_0}^2} < 
\begin{cases}
1+\eps/2 & \text{if $p\in \ms\setminus\partial\ms$}\\
\frac 12(1+\eps/2) & \text{if $p\in\partial\ms$} \point
\end{cases}
\]
Thus, again by the same monotonicity formula, taking $r_0>0$ possibly smaller, we have
\[
\frac{\Haus^2(\ms_j\cap B_r(p))}{\omega_2r^2} < 
\begin{cases}
1+\eps & \text{if $p\in \ms\setminus\partial\ms$}\\
\frac 12(1+\eps) & \text{if $p\in\partial\ms$}\comma
\end{cases}
\]
for all $r<r_0$ and $j$ sufficiently large.
This concludes the proof by virtue of Theorem 17 in \cite{AmbrozioCarlottoSharp2018Compactness}.
\end{proof}

%%%%%%%%%%%%%%%%%%%%%%%%%%%%%%%%%%%%%%%%%%%%%%%%%
%%% Definition Morse index - unproper FBMS
%%%%%%%%%%%%%%%%%%%%%%%%%%%%%%%%%%%%%%%%%%%%%%%%%

\chapter[Morse index for free boundary minimal surfaces]{Morse index for free boundary\\ minimal surfaces}\label{sec:Properness}

Let $(\amb^3,g)$ be a compact Riemannian manifold with nonempty boundary. Given an embedded surface $\ms^2$ in $\amb$ with $\partial\ms\subset \partial\amb$, we wish to compare different definitions of free boundary minimality and Morse index when one allows for an (arbitrary) contact set of $\ms$ along $\partial\amb$, as in \cref{fig:NonProp}. The aim is to analyze this matter, providing some flavour of the rather subtle nature of the question. To avoid ambiguities, throughout this chapter we always use the expression \emph{free boundary minimal surface} to refer to a surface with zero mean curvature that meets the boundary of the ambient manifold orthogonally along its own boundary.

\begin{figure}[htpb]
\centering
%%%%%%%%%%%%%%%%%%%%%%%%%%%%%%%%%%%%%%%%%%%%%%%%%%%%%
%%% unproperness.tikz
%%%%%%%%%%%%%%%%%%%%%%%%%%%%%%%%%%%%%%%%%%%%%%%%%%%%%
\begin{tikzpicture} [scale =0.6]
\pgfmathsetmacro{\R}{4}
\pgfmathsetmacro{\ta}{10}
\pgfmathsetmacro{\xa}{\R*sin(\ta)}
\pgfmathsetmacro{\ya}{\R*cos(\ta)}
\pgfmathsetmacro{\tb}{40}
\pgfmathsetmacro{\xb}{\R*sin(\tb)}
\pgfmathsetmacro{\yb}{\R*cos(\tb)}
\pgfmathsetmacro{\g}{0.02}

\draw (-\xa,-\ya) arc(-90-\ta:-360-90+\ta:\R);
\draw plot [smooth, tension = 0.45] coordinates {(-\xa,-\ya) ({-3/5*\xa},{-8/9*\ya}) (0,-\g) ({3/5*\xa},{-8/9*\ya}) (\xa,-\ya)};
\draw[blue,thick] (-\R,0) -- (\R,0);

\node at (0.85*\R,0.8*\R) {$\amb_0$};
\node[blue] at ({-3/4*\R}, 0.5) {$\ms$};

\begin{scope}[xshift=350]
\draw (-\xb,-\yb) arc(-90-\tb:-360-90+\tb:\R);
\draw plot [smooth, tension = 0.5] coordinates {(-\xb,-\yb) ({-5/6*\xb},{-\yb}) ({-3/4*\xb},{-1/8*\yb}) ({-2/3*\xb}, -\g)};
\draw plot [smooth, tension = 0.5] coordinates {({2/3*\xb}, -\g) ({3/4*\xb},{-1/8*\yb}) ({5/6*\xb},{-\yb}) (\xb,-\yb)};
\draw[blue,thick] (-\R,0) -- (\R,0);
\draw ({-2/3*\xb}, -\g) -- ({2/3*\xb}, -\g);

\node at (0.85*\R,0.8*\R) {$\amb_{r_0}$};
\node[blue] at ({-3/4*\R}, 0.5) {$\ms$};
\end{scope}
\end{tikzpicture}
%%%%%%%%%%%%%%%%%%%%%%%%%%%%%%%%%%%%%%%%%%%%%%%%%%%%%
%%%%%%%%%%%%%%%%%%%%%%%%%%%%%%%%%%%%%%%%%%%%%%%%%%%%%
\caption{Modified unit ball with nonproperly embedded free boundary minimal surface.} \label{fig:NonProp}
\end{figure}
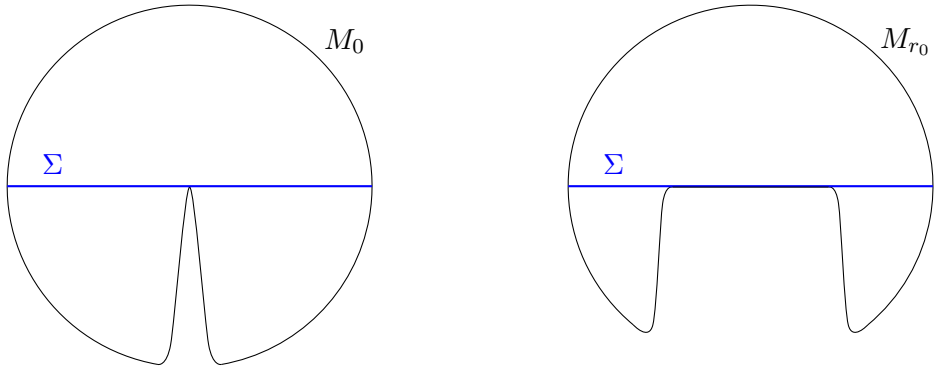

\begin{figure}
\centering
%%%%%%%%%%%%%%%%%%%%%%%%%%%%%%%%%%%%%%%%%%%%%%%%%%%%%
%%% min-non-min.tikz
%%%%%%%%%%%%%%%%%%%%%%%%%%%%%%%%%%%%%%%%%%%%%%%%%%%%%
\begin{tikzpicture} [scale =0.6]
\pgfmathsetmacro{\R}{4}
\pgfmathsetmacro{\r}{1}
\pgfmathsetmacro{\t}{30}
\pgfmathsetmacro{\x}{\R*sin(\t)}
\pgfmathsetmacro{\y}{\R*cos(\t)}
\pgfmathsetmacro{\xa}{\R/4*8/3}
\pgfmathsetmacro{\ya}{\R/4*sqrt(80/9)}
\pgfmathsetmacro{\xb}{\xa*sqrt(5/64)}
\pgfmathsetmacro{\yb}{\ya*sqrt(5/64)}
\pgfmathsetmacro{\s}{asin(2/3)}
\pgfmathsetmacro{\g}{0.02}

\draw (-\x,-\y) arc(-90-\t:-360-90+\t:\R);

\draw (-\r,-1.5*\r) arc(180:0:\r);
\draw plot [smooth, tension = 0.7] coordinates {(-\x,-\y) ({-1.1*\r},{-\y}) ({-\r},{-1.5*\r})};
\draw plot [smooth, tension = 0.7] coordinates {(\x,-\y) ({1.1*\r},{-\y}) ({\r},{-1.5*\r})};

\draw[blue,thick] (-\xa,-\ya) -- (-\xb,-\yb);
\draw[blue,thick] (\xa,-\ya) -- (\xb,-\yb);
\draw[blue,thick] (-\xb,-\yb) arc(180-\s:\s:\r);

\node at (0.85*\R,0.8*\R) {$\amb$};
\node[blue] at ({-1/2*\R}, {-1/3*\R}) {$\ms$};
\end{tikzpicture}
%%%%%%%%%%%%%%%%%%%%%%%%%%%%%%%%%%%%%%%%%%%%%%%%%%%%%
%%%%%%%%%%%%%%%%%%%%%%%%%%%%%%%%%%%%%%%%%%%%%%%%%%%%%
\caption{A local minimizer of the area functional that is not a minimal surface.} 
\label{fig:MinNonMin}
\end{figure}
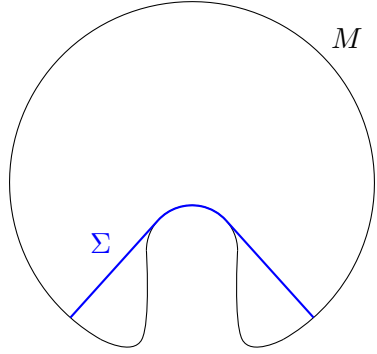

\begin{remark}
Observe that local minimizers of the area functional are not necessarily free boundary minimal surfaces with respect to this definition. See \cref{fig:MinNonMin} for an illustrative picture of a local minimizer which is not a free boundary minimal surface.
\end{remark}

\begin{example}
A useful example to keep in mind is the following one.
Let us denote by $D$ the horizontal equatorial disc in the three-dimensional unit ball $B^3\subset\mathbb{R}^3$.
Given any closed subset $C\subset D$ with $C\cap\partial D = \emptyset$, consider a smooth compact ambient manifold $\amb_C$ obtained from $B^3$ by removing a portion of the lower half-ball in such a way that $\partial\amb_{C}$ intersects the interior of $D$ exactly in $C$. This can be done for every such choice of $C$.
Observe that $D$ is a nonproperly embedded free boundary minimal surface in $\amb_C$. As a special instance, we consider $\amb_C$ for $C = \bar B_{r_0}(0)\subset \R^2$ where $0\le r_0<1$ and we denote it simply by $\amb_{r_0}$ (see \cref{fig:NonProp}).
\end{example}

\section{Possible sets of variations}

Given a surface $\ms\subset\amb$, there are several possible families of variations along which we can deform $\ms$. Some natural choices are (see also \cref{fig:PropScheme}):
\begin{align}
\vf_e(\amb,\ms) &\eqdef \{ X\in\vf(\amb) \st X(x) \in T_x\partial\amb\ \forall x\in\partial\ms  \}; \label{eq:vfe} \\
\vf_i(\amb, \ms) &\eqdef \{ X\in\vf(\amb) \st X(x) \in T_x\partial\amb\ \forall x\in\partial\ms\comma \label{eq:vfi}\\
&\phantom{\eqdef \{ X\in\vf(\amb) \st} g(X(x), \hat\eta(x))\leq 0 \ \forall x\in\partial\amb \};  \notag\\ 
\vf_\partial(\amb) &\eqdef \{ X\in\vf(\amb) \st X(x) \in T_x\partial\amb\ \forall x\in\partial\amb \}; \label{eq:vfd}\\
\vf_c(\amb,\ms) &\eqdef \{ X\in \vf(\amb) \st X(x) \in T_x\partial\amb\ \forall x\in\partial\ms\comma \label{eq:vfc}\\
&\phantom{\eqdef \{ X\in \vf(\amb) \st} \supp(X)\cap \ms \Subset \ms\setminus (\operatorname{int}(\ms)\cap \partial\amb) \}. \notag 
\end{align}
Recall that $\hat \eta$ is the outward unit co-normal to $\partial\amb$.
Moreover observe that 
\[
\vf_e(\amb,\ms)\supset \vf_i(\amb,\ms) \supset \vf_\partial(M) \supset \vf_c(\amb,\ms).
\]

\begin{remark}
Note that in the rest of the thesis (see \hyperref[chpt:notation]{Basic notation} and \cref{sec:FirstVar}) we write $\vf(M)$ instead of $\vf_\partial(M)$. In this chapter, we decided to use the notation $\vf_\partial(M)$ to emphasize that these vector fields are tangent to the boundary of $\partial M$.
\end{remark}

Hereafter, we write $\vf_*(\amb,\ms)$ to denote any of the previous subsets of $\vf(M)$.

\begin{figure}[htpb]
\centering
%%%%%%%%%%%%%%%%%%%%%%%%%%%%%%%%%%%%%%%%%%%%%%%%%%%%%
%%% properness_scheme
%%%%%%%%%%%%%%%%%%%%%%%%%%%%%%%%%%%%%%%%%%%%%%%%%%%%%
\begin{tikzpicture} [scale=0.58]
\pgfmathsetmacro{\R}{4}
\pgfmathsetmacro{\ta}{10}
\pgfmathsetmacro{\xa}{\R*sin(\ta)}
\pgfmathsetmacro{\ya}{\R*cos(\ta)}
\pgfmathsetmacro{\tb}{40}
\pgfmathsetmacro{\xb}{\R*sin(\tb)}
\pgfmathsetmacro{\yb}{\R*cos(\tb)}
\pgfmathsetmacro{\g}{0.02}
\pgfmathsetmacro{\h}{0.1*\R}

\begin{scope}[xshift=0]
\draw (-\xb,-\yb) arc(-90-\tb:-360-90+\tb:\R);
\draw plot [smooth, tension = 0.5] coordinates {(-\xb,-\yb) ({-5/6*\xb},{-\yb}) ({-3/4*\xb},{-1/8*\yb}) ({-2/3*\xb}, -\g)};
\draw plot [smooth, tension = 0.5] coordinates {({2/3*\xb}, -\g) ({3/4*\xb},{-1/8*\yb}) ({5/6*\xb},{-\yb}) (\xb,-\yb)};
\draw[blue,thick] (-\R,0) -- (\R,0);
\draw ({-2/3*\xb}, -\g) -- ({2/3*\xb}, -\g);

\def\up{{-0.9, -0.7, -0.5, -0.3,-0.1, 0.1, 0.3, 0.5, 0.7, 0.9}}
\foreach \i in {0,...,9}
  \draw[->] (\up[\i]*\R, 0) -- (\up[\i]*\R, \h);

\def\down{{-0.8, -0.6, -0.4, -0.2, 0, 0.2, 0.4, 0.6, 0.8}}
\foreach \i in {0,...,8}
  \draw[->] (\down[\i]*\R, 0) -- (\down[\i]*\R, -\h);
  
\node at (-0.9*\R, 0.9*\R) {\eqref{eq:vfe}};
\end{scope}

\begin{scope}[xshift=300] 
\draw (-\xb,-\yb) arc(-90-\tb:-360-90+\tb:\R);
\draw plot [smooth, tension = 0.5] coordinates {(-\xb,-\yb) ({-5/6*\xb},{-\yb}) ({-3/4*\xb},{-1/8*\yb}) ({-2/3*\xb}, -\g)};
\draw plot [smooth, tension = 0.5] coordinates {({2/3*\xb}, -\g) ({3/4*\xb},{-1/8*\yb}) ({5/6*\xb},{-\yb}) (\xb,-\yb)};
\draw[blue,thick] (-\R,0) -- (\R,0);
\draw ({-2/3*\xb}, -\g) -- ({2/3*\xb}, -\g);

\def\up{{-0.9, -0.7, -0.5, -0.3,-0.1, 0.1, 0.3, 0.5, 0.7, 0.9}}
\foreach \i in {0,...,9}
  \draw[->] (\up[\i]*\R, 0) -- (\up[\i]*\R, \h);

\def\down{{-0.8, -0.6, 0.6, 0.8}}
\foreach \i in {0,...,3}
  \draw[->] (\down[\i]*\R, 0) -- (\down[\i]*\R, -\h);
  
\node at (-0.9*\R, 0.9*\R) {\eqref{eq:vfi}};
\end{scope}

\begin{scope}[yshift=-270]
\draw (-\xb,-\yb) arc(-90-\tb:-360-90+\tb:\R);
\draw plot [smooth, tension = 0.5] coordinates {(-\xb,-\yb) ({-5/6*\xb},{-\yb}) ({-3/4*\xb},{-1/8*\yb}) ({-2/3*\xb}, -\g)};
\draw plot [smooth, tension = 0.5] coordinates {({2/3*\xb}, -\g) ({3/4*\xb},{-1/8*\yb}) ({5/6*\xb},{-\yb}) (\xb,-\yb)};
\draw[blue,thick] (-\R,0) -- (\R,0);
\draw ({-2/3*\xb}, -\g) -- ({2/3*\xb}, -\g);

\def\up{{-0.9, -0.7, -0.5, 0.5, 0.7, 0.9}}
\foreach \i in {0,...,5}
  \draw[->] (\up[\i]*\R, 0) -- (\up[\i]*\R, \h);

\def\down{{-0.8, -0.6, 0.6, 0.8}}
\foreach \i in {0,...,3}
  \draw[->] (\down[\i]*\R, 0) -- (\down[\i]*\R, -\h);

\node at (-0.9*\R, 0.9*\R) {\eqref{eq:vfd}};
\end{scope}

\begin{scope}[xshift=300, yshift=-270]
\draw (-\xb,-\yb) arc(-90-\tb:-360-90+\tb:\R);
\draw plot [smooth, tension = 0.5] coordinates {(-\xb,-\yb) ({-5/6*\xb},{-\yb}) ({-3/4*\xb},{-1/8*\yb}) ({-2/3*\xb}, -\g)};
\draw plot [smooth, tension = 0.5] coordinates {({2/3*\xb}, -\g) ({3/4*\xb},{-1/8*\yb}) ({5/6*\xb},{-\yb}) (\xb,-\yb)};
\draw[blue,thick] (-\R,0) -- (\R,0);
\draw ({-2/3*\xb}, -\g) -- ({2/3*\xb}, -\g);

\def\up{{-0.9, -0.7, 0.7, 0.9}}
\foreach \i in {0,...,3}
  \draw[->] (\up[\i]*\R, 0) -- (\up[\i]*\R, \h);

\def\down{{-0.8, -0.6, 0.6, 0.8}}
\foreach \i in {0,...,3}
  \draw[->] (\down[\i]*\R, 0) -- (\down[\i]*\R, -\h);
  
\node at (-0.9*\R, 0.9*\R) {\eqref{eq:vfc}};
\end{scope}
\end{tikzpicture}
%%%%%%%%%%%%%%%%%%%%%%%%%%%%%%%%%%%%%%%%%%%%%%%%%%%%%
%%%%%%%%%%%%%%%%%%%%%%%%%%%%%%%%%%%%%%%%%%%%%%%%%%%%%
\caption{Visualization of the different possible sets of variations.} \label{fig:PropScheme}
\end{figure}
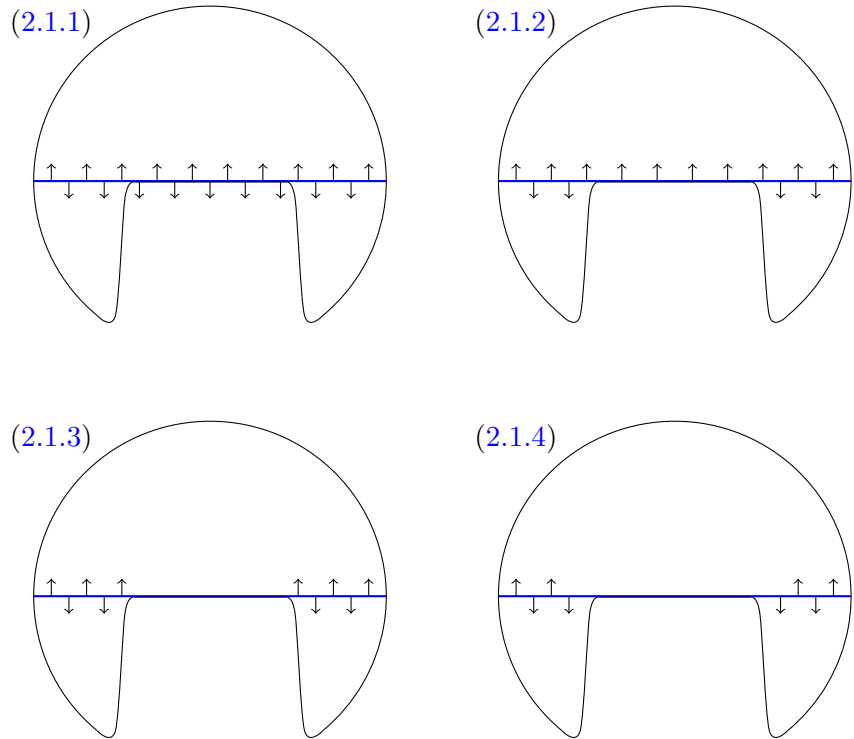

\begin{proposition} \label{prop:FirstSecMakeSense}
Let $\amb$, $\ms$ be as above and let $X\in \vf_e(\amb,\ms)$.
Consider $\check \amb$ to be a compact manifold in which $\amb$ embeds as a regular domain and such that $\ms$ is properly embedded in $\check \amb$, namely $\partial\ms = \ms\cap \partial\check\amb$. Moreover, let $\check X\in \vf(\check\amb)$ be a smooth extension of the vector field $X$ to all $\check \amb$. Then the first variation of the area of $\ms$ with respect to $\check X$ does not depend on the choice of the extensions $\check\amb$ and $\check X$, and is given by \eqref{eq:FirstVar}. 
Furthermore, if $\ms$ is a free boundary minimal surface, then we can compute the second variation of the area with respect to $\check X$ and the result again does not depend on the choice of the extensions and is given by \eqref{eq:SecondVar}.
\end{proposition}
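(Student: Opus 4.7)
The plan is to reduce the statement to the intrinsic formulas \eqref{eq:FirstVar} and \eqref{eq:SecondVar}, observing that both depend only on the restriction $X|_\ms$ (respectively only on $X^\perp$) together with data from the ambient metric near $\ms$; hence independence from the extension is automatic once one arranges that the extension agrees with $\amb$ on a neighborhood of $\ms$. What remains is to produce an extension $(\check\amb,\check X)$ in the first place and to justify the reduction to intrinsic data for the second variation.

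First I would construct $\check\amb$ by attaching a thin outer collar to $\partial\amb$. Setting $K \eqdef (\operatorname{int}\ms)\cap \partial\amb$, which is compact and disjoint from $\partial\ms$, I would pick a smooth cutoff $\phi\colon\partial\amb\to[0,1]$ equal to $1$ on a neighborhood of $K$ and vanishing on a neighborhood of $\partial\ms$, and glue an outer collar of width $\delta\phi$ (for some small $\delta>0$) to $\partial\amb$ using Fermi coordinates. The resulting compact manifold $\check\amb$ contains $\amb$ as a regular domain, coincides with $\amb$ near $\partial\ms$ (so that $\partial\check\amb = \partial\amb$ and $\II^{\partial\check\amb} = \II^{\partial\amb}$ there), and satisfies $\ms\cap\partial\check\amb = \partial\ms$ because the interior contact set $K$ has been pushed into the interior of $\check\amb$. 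Then I would define $\check X$ by extending $X$ smoothly to $\check\amb\setminus\amb$ with tangency to $\partial\check\amb$, which is routine via local Fermi coordinates and a partition of unity; note that the tangency of $\check X$ to $\partial\check\amb$ on $\partial\check\amb\cap\amb$ reduces exactly to the hypothesis $X(x)\in T_x\partial\amb$ for $x\in\partial\ms$.

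For the first variation, I would compute in $\check\amb$ via the standard divergence identity
\[
\frac{\d}{\d t}\Big|_{t=0}\Haus^2(\psi_t(\ms)) = \int_\ms \div_\ms(\check X)\de\Haus^2 = -\int_\ms \scal{H}{\check X}\de\Haus^2 + \int_{\partial\ms}\scal{\check X}{\eta}\de\Haus^1,
\]
where $H$ and $\eta$ are the mean curvature vector and outward conormal of $\ms$ inside $\check\amb$. Since $\check X|_\ms = X|_\ms$ and since a tubular neighborhood of $\ms$ can be arranged to lie inside $\amb$, both $H$ and $\eta$ coincide with the corresponding quantities intrinsic to $\ms\subset\amb$. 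The right-hand side is therefore independent of $(\check\amb,\check X)$ and matches \eqref{eq:FirstVar}. For the second variation, decomposing $\check X|_\ms = X^\top + X^\perp$, the classical computation in $\check\amb$ produces a quadratic form in $X^\perp$ whose integrands are $\abs{\cov^\perp X^\perp}^2$, $\Ric_{\check\amb}(X^\perp,X^\perp)$, $\abs A^2\abs{X^\perp}^2$, and $\II^{\partial\check\amb}(X^\perp,X^\perp)$; by the discussion above each of these equals the corresponding $\amb$-quantity, so the result is \eqref{eq:SecondVar} and is independent of the extension.

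The key subtlety is the reduction from $\check X$ to $X^\perp$ in the second variation, since $X^\top$ need not vanish on $\partial\ms$ nor be tangent to $\partial\ms$. The plan is the standard critical-point argument: write $\psi_t = \tilde\psi_t\circ\phi_t$, where $\phi_t$ is the one-parameter family of diffeomorphisms of $\ms$ generated (to first order) by $X^\top$, extended to a self-diffeomorphism of $\check\amb$ using that the free boundary condition $\eta=\hat{\eta}$ along $\partial\ms$ allows $\phi_t$ to preserve $\partial\ms$ to the required order. Since $\phi_t$ is area-preserving, only $\tilde\psi_t$ contributes to the second variation, and its initial velocity along $\ms$ is $X^\perp$. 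Carrying this argument out rigorously — in particular verifying that the tangential reparametrization does not produce spurious boundary terms under our hypothesis $X\in\vf_e(\amb,\ms)$ — will be the main technical task.
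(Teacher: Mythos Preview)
Your approach is correct in spirit but substantially overcomplicates the argument, and you misidentify where the actual content lies.

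The proposition \emph{assumes} the existence of $\check\amb$ and $\check X$; your first paragraph constructing them is unnecessary (though harmless). More importantly, the formula \eqref{eq:SecondVar} is already established in Section~\ref{sec:SecondVariation} for properly embedded free boundary minimal surfaces, and by hypothesis $\ms$ \emph{is} properly embedded in $\check\amb$. So the reduction to $X^\perp$ is not a ``key subtlety'' requiring a reparametrization argument: you simply apply \eqref{eq:SecondVar} in the ambient $\check\amb$ with the vector field $\check X$, obtaining the right-hand side with $\check X^\perp$, $\Ric_{\check\amb}$, $\II^{\partial\check\amb}$ in place of the $\amb$-quantities.

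The paper's proof is then a single observation: inspect each term of \eqref{eq:FirstVar} and \eqref{eq:SecondVar} and note it depends only on data along $\ms$ (the restriction $\check X|_\ms = X|_\ms$, the ambient curvature at points of $\ms$, the second fundamental form of $\ms$, etc.). Since $\ms\subset\amb\subset\check\amb$ with the same metric, these are all independent of the extension. The paper singles out $\abs{\cov^\perp \check X^\perp}^2$ as the only term for which this is not entirely obvious, and disposes of it by noting that $\cov^\perp$ is the induced connection on the normal bundle, hence determined by $\check X^\perp|_\ms$ alone. That is the entire argument.

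Your version does arrive at the same conclusion, but the extra machinery (explicit collar, factoring the flow as $\tilde\psi_t\circ\phi_t$) is not needed and obscures the point. The content is formula inspection, not a variational rederivation.
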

\begin{proof}
The result follows by simply inspecting equations \eqref{eq:FirstVar} and \eqref{eq:SecondVar} for the first and second variation, respectively, applied to $\check X$.
Indeed, the only possible term in the formulae that is not obviously independent of the extensions is $\abs{\cov^\perp \check X^\perp}^2$. 
However, keeping in mind that the extensions $\check\amb$ and $\check X$ are smooth, the covariant derivatives of $\check X^{\perp}$ are uniquely determined by their values along $\ms$.
\end{proof}

\begin{remark}
Given \cref{prop:FirstSecMakeSense}, and thanks to \eqref{eq:FirstVar}, we can say that:
\begin{itemize}
\item If we consider as variations $\vf_e(\amb,\ms)$ or $\vf_i(\amb,\ms)$, the critical points are exactly all and only the free boundary minimal surfaces.
\item In the other two cases $\vf_\partial(\amb)$ and $\vf_c(\amb,\ms)$, free boundary minimal surfaces are critical points but the other implication is not true.
Indeed, being a critical point with respect to these variations does not impose \emph{any} condition on $\ms\cap (\partial\amb\setminus\partial\ms)$. In particular, surfaces as in \cref{fig:MinNonMin} are critical points.
\end{itemize}
\end{remark}

\section{Four alternative definitions}
Given a free boundary minimal surface $\ms$, we can now try to define the Morse index of $\ms$ with respect to variations in $\vf_*(\amb,\ms)$.

In the cases when $\vf_*(\amb,\ms)$ is a vector space, we can just mimic the classical definition of Morse index. 
Namely, denoting by $\vf_*^R(\amb,\ms)\subset \Gamma(T\ms)$ the set of restrictions of elements of $\vf_*(\amb,\ms)$ to $\Gamma(T\ms)$, we define the Morse index $\ind_*(\ms)$ with respect to the variations in $\vf_*(\amb,\ms)$ as the maximal dimension of a linear subspace of $\Gamma(N\ms)\cap \vf_*^R(\amb,\ms)$ where the second variation of the area (given by \eqref{eq:SecondVar}) is negative definite.
In this way we define $\ind_e(\ms)$, $\ind_\partial(\ms)$ and $\ind_c(\ms)$.
\begin{remark}
This definition of Morse index $\ind_c(\ms)$ coincides with the one given by Guang--Wang--Zhou in \cite{GuangWangZhou2021} and employed in \cite{GuangLiWangZhou2021}.
\end{remark}

Observe that $\vf_i^R(\amb,\ms)$ is only a convex cone in $\vf(M)$, thus the conceptual scheme above is not immediately applicable. So, let us recall some terminology and employ it to suggest a natural candidate for the definition of $\ind_i(\ms)$.
\begin{definition}
Given a surface $\ms\subset\amb$, we say that a set $\Theta\subset\Gamma(N\ms)\cap \vf_i^R(\amb,\ms)$ is a \emph{convex subcone} if it is closed under linear combinations with nonnegative coefficients.
The \emph{dimension} of a convex subcone $\Theta$ is the minimal dimension of a linear subspace of $\Gamma(N\ms)$ that contains $\Theta$.
\end{definition}

\begin{definition}
Given an embedded free boundary minimal surface $\ms\subset\amb$, we define $\ind_i(\ms)$ as the maximal dimension of a convex subcone of $\Gamma(N\ms)\cap \vf_i^R(\amb,\ms)$ where the second variation of the area functional is negative definite.
\end{definition}

Unfortunately, this definition is not always meaningful as shown in the following proposition.
\begin{proposition}
Let $\ms = D$ be the equatorial disc in the ambient manifold $\amb = \amb_{r_0}$ defined above, for some $0<r_0<1$.
Then $\ms$ is a free boundary minimal surface in $\amb$ and $\ind_i(\ms)=\infty$.
\end{proposition}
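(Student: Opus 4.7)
My plan is first to verify that the equatorial disc $D$ is a free boundary minimal surface in $\amb_{r_0}$ in the sense of \cref{subsec:ImpRem} ($D$ is totally geodesic, and meets $\partial \amb_{r_0}$ orthogonally along its own boundary $\partial D = \partial B^3 \cap D$, which is the equator of the unit sphere), and then to compute the relevant quadratic form. Choosing $\nu = \hat z$ as the upward unit normal to $D$ and using $\abs{A}=0$, $\Ric=0$, $\II^{\partial B^3}(\nu,\nu)=-1$, formula \eqref{eq:SecondVar} reduces, for $Y=f\nu$, to
\[
Q_D(f,f) = \int_D \abs{\nabla f}^2 \de\Haus^2 - \int_{\partial D} f^2 \de\Haus^1.
\]
Along the contact set $C \eqdef \bar B_{r_0}(0) \subset D \cap \partial \amb_{r_0}$ one has $\hat\eta = -\nu$, so the inequality $g(X,\hat\eta)\le 0$ from \eqref{eq:vfi}, applied to any ambient extension $X$ with $X|_D = f\nu$, forces $f \ge 0$ on $C$; conversely, every such $f$ is realized by $X = \tilde f\, Y_0$ with $\tilde f$ a smooth extension of $f$ to a tubular neighborhood of $D$ and $Y_0$ an extension of $\nu$ with $Y_0 \in T(\partial B^3)$ along $\partial B^3$ (possible since $\hat z$ is already tangent to $\partial B^3$ at the equator). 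Thus $\Gamma(ND)\cap \vf_i^R$ is precisely $\{f\nu : f \in C^\infty(D),\ f|_C \ge 0\}$.

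The main step is, for each integer $N\ge 1$, to construct an $N$-dimensional convex subcone of this set on which $Q_D$ is negative definite. The key observation is that although classically $Q_D$ has a single negative direction (spanned by the constant $f\equiv 1$, with $Q_D(1,1)=-2\pi$), the one-sided constraint $f\ge 0$ on $C$ lets us attach arbitrarily many linearly independent nonnegative perturbations of the constant while staying in the cone. I will choose $w_1,\ldots,w_N \in C^\infty(D)$ with $w_i \ge 0$, supported in pairwise disjoint small regions of $D$, and such that $\{1,w_1,\ldots,w_N\}$ is linearly independent, and then set $v_i \eqdef 1 + \epsilon w_i$ for a small $\epsilon>0$ to be chosen. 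Each $v_i \ge 1$ on $C$, so every nonnegative combination $\sum_i c_i v_i$ remains nonnegative on $C$, and the $v_i$ are linearly independent; thus $\Theta_\epsilon \eqdef \{\sum_i c_i v_i : c_i\ge 0\}$ is an $N$-dimensional convex subcone of $\Gamma(ND)\cap\vf_i^R$.

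To check negative definiteness, set $S = \sum_i c_i \ge 0$ and $u = \sum_i c_i w_i \ge 0$ and expand
\[
Q_D\!\left(\textstyle\sum_i c_i v_i\right) = -2\pi S^2 - 2\epsilon S \int_{\partial D} u \de\Haus^1 + \epsilon^2 Q_D(u,u).
\]
The middle term is nonpositive (since $u\ge 0$), while $Q_D(u,u) \le \int_D \abs{\nabla u}^2 = \sum_i c_i^2 \int_D \abs{\nabla w_i}^2 \le C_N S^2$ for a constant $C_N$ depending only on $N$ and the $w_i$ (using disjoint supports and $\sum_i c_i^2 \le (\sum_i c_i)^2 = S^2$). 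Hence $Q_D(\sum_i c_i v_i) \le S^2(-2\pi + \epsilon^2 C_N)$, which is strictly negative for $\epsilon$ small enough whenever $c\ge 0$ is nonzero. This yields $\ind_i(D)\ge N$ for every $N$, so $\ind_i(D) = \infty$.

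I expect the main conceptual obstacle to be recognizing how much extra freedom the convex-cone structure provides: for the linear-subspace indices $\ind_e$, $\ind_\partial$, $\ind_c$ a similar construction fails, because any subspace contains the negatives of its elements and so $1 + \epsilon w_i$ would have to coexist with $-1 - \epsilon w_i$, immediately violating $f\ge 0$ on $C$ and collapsing the count back to the classical one. It is precisely the asymmetry of the cone --- that one can add nonnegative deformations to a genuine negative direction but cannot subtract them --- that produces the infinite-dimensional blow-up and makes $\ind_i$ pathological in the presence of a nontrivial interior contact set. The technical extension argument for $\vf_i^R$ is routine in comparison.
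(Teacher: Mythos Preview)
Your proof is correct and follows essentially the same approach as the paper: identify the admissible set with $\{f\ge 0$ on $\bar B_{r_0}(0)\}$, compute $Q_D(f,f)=\int_D|\nabla f|^2-\int_{\partial D}f^2$, and then build an $N$-dimensional convex subcone by perturbing the constant $1$ in $N$ independent localized directions while respecting the one-sided constraint.

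There is a minor cosmetic difference worth noting. The paper takes bumps $\rho_1,\dots,\rho_N$ supported in $B_{r_0}(0)$ with $\int|\nabla\rho_k|^2<2\pi/N$ and sets $\psi_k=1-\sum_{h\neq k}\rho_h$; the smallness of the gradient energy is built into the choice of $\rho_k$. You instead take arbitrary nonnegative bumps $w_1,\dots,w_N$ with disjoint supports and set $v_i=1+\epsilon w_i$, absorbing the smallness into the parameter $\epsilon$. Your version is arguably a bit more direct (no need to engineer the gradient energy of each bump, and no need to place the bumps inside the contact set), and the expansion $Q_D(S+\epsilon u)=-2\pi S^2 - 2\epsilon S\int_{\partial D}u + \epsilon^2 Q_D(u,u)$ together with $\sum c_i^2\le S^2$ makes the negativity transparent. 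Either way, the conceptual content---that the cone structure allows one to ``pile on'' nonnegative perturbations of the single genuine negative direction---is the same.
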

\begin{proof}
First of all, observe that $\Gamma(N\ms)\cap \vf_i^R(\amb,\ms)$ can be identified with the set of functions $\{f\in C^\infty(\ms) \st \text{$f\ge 0$ on $\bar B_{r_0}(0)$} \}$ and that the second variation of the area along any function $f$ in this set can be written as
\begin{equation} \label{eq:SecVarDisk}
Q_\ms(f,f) = \int_\ms\abs{\grad f}^2 \de \Haus^2 - \int_{\partial\ms} f^2\de\Haus^1 \point
\end{equation}

Now, given any $N\in\N$, let us consider functions $\rho_1,\ldots,\rho_N \in C^\infty(\ms)$, $0\le \rho_k\le1$ for $k=1,\ldots,N$, such that
\begin{enumerate} [label=(\roman*)]
\item $\supp(\rho_k)\subset B_{r_0}(0)\subset \ms$ for $k=1,\ldots,N$; \label{bf:supp}
\item $\supp(\rho_k)\cap \supp(\rho_h) = \emptyset$ for any $1\le k<h \le N$; \label{bf:disj}
\item $\int_\ms\abs{\grad\rho_k}^2\de\Haus^2<2\pi/N$ for $k=1,\ldots,N$; \label{bf:norm}
\item there exist $x_1,\ldots,x_N\in\ms$ such that $\rho_k(x_k) = 1$ for $k=1,\ldots,N$. \label{bf:value}
\end{enumerate}

Then define $\psi_k\eqdef 1-\sum_{h\not=k} \rho_h$ for $k=1,\ldots,N$ and denote by $\Theta\subset \Gamma(N\ms)\cap \vf_i^R(\amb,\ms)$ the convex subcone of dimension $N$ generated by $\{\psi_k\}_{k=1,\ldots,N}$ (observe that $\psi_k\ge 0$ in $\ms$). We want to prove that $Q_\ms$ is negative definite on $\Theta$, which would conclude the proof since $\ind_i(\ms)\ge \dim\Theta = N$.

Pick any $\psi\eqdef \sum_{k=1}^Na_k\psi_k \in \Theta$: using \ref{bf:disj} and \ref{bf:norm}, together with the fact that $\supp(\rho_k)\cap\partial\ms=\emptyset$ for $k=1,\ldots,N$ by \ref{bf:supp}, we have that
\[
\begin{split}
Q_\ms(\psi,\psi) &= \int_\ms \sum_{k=1}^N\abs{\grad\rho_k}^2\Bigg(\sum_{h\not=k}a_h\Bigg)^2 \de\Haus^2 - \int_{\partial\ms} \Bigg(\sum_{k=1}^N a_k \Bigg)^2\de\Haus^1\\
&\le \left(\sum_{k=1}^Na_k \right)^2\left( \int_\ms \sum_{k=1}^N\abs{\grad\rho_k}^2 \de\Haus^2 -2\pi\right) < 0 \comma
\end{split}
\]
which concludes the proof.
\end{proof}

\section{Basic comparison results} We now present the simplest inequality involving $\ind_e(\ms)$, $\ind_\partial(\ms)$, $\ind_c(\ms)$ and compute these three quantities in an explicit example.

\begin{proposition} \label{prop:IneqIndices}
Given an embedded free boundary minimal surface $\ms\subset\amb$, we have that $\ind_e(\ms)$, $\ind_\partial(\ms)$ and $\ind_c(\ms)$ are well-defined (finite) numbers and it holds
\[
\ind_e(\ms) \ge \ind_\partial(\ms) = \ind_c(\ms)\point
\]
\end{proposition}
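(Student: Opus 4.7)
The plan is to decompose the statement into two parts: the chain $\ind_e(\ms) \ge \ind_\partial(\ms) \ge \ind_c(\ms)$ will follow from monotonicity of the admissible variation spaces, while the equality $\ind_\partial(\ms) = \ind_c(\ms)$ will be deduced from a density/cutoff argument. Directly from the definitions \eqref{eq:vfe}, \eqref{eq:vfd} and \eqref{eq:vfc}, one reads off the inclusions $\vf_c(\amb,\ms) \subset \vf_\partial(\amb) \subset \vf_e(\amb,\ms)$, which pass to the corresponding sets of normal restrictions $\Gamma(N\ms)\cap \vf_*^R(\amb,\ms)$. Since each of these three is a vector subspace of $\Gamma(N\ms)$, and the maximum negative-definite dimension of $Q^\ms$ is monotone in the ambient vector space, the inequalities $\ind_c(\ms) \le \ind_\partial(\ms) \le \ind_e(\ms)$ are immediate.

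For finiteness, by compactness of $\ms$ and ellipticity of the Jacobi problem described in \cref{sec:SecondVariation}, the quadratic form $Q^\ms$ on $\Gamma(N\ms)$ has only finitely many negative directions, so a fortiori each of $\ind_e(\ms)$, $\ind_\partial(\ms)$, $\ind_c(\ms)$ is finite. For the key reverse inequality $\ind_c(\ms) \ge \ind_\partial(\ms)$, I consider an $N$-dimensional linear subspace $V \subset \Gamma(N\ms)\cap \vf_\partial^R$ on which $Q^\ms$ is negative definite. Every $Y = f\nu \in V$ must vanish on the interior contact set $C \eqdef \operatorname{int}(\ms)\cap\partial\amb$: indeed at $x \in C$ the tangent plane $T_x\ms$ coincides with $T_x\partial\amb$, so the unit normal $\nu$ to $\ms$ is parallel to $\hat\eta$, and tangency to $\partial\amb$ forces $f(x) = 0$. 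I then introduce a family of cutoffs $\chi_\eps \in C^\infty(\ms;[0,1])$ equal to $0$ in an $\eps$-neighborhood of $C$, equal to $1$ outside a $2\eps$-neighborhood, with $|\nabla\chi_\eps|\lesssim 1/\eps$. The products $\chi_\eps Y$ lie in $\Gamma(N\ms)\cap \vf_c^R$, and since $|f|=O(\dist(\cdot,C))$ near $C$ by smoothness together with the vanishing of $f$ on $C$, one verifies $\chi_\eps Y \to Y$ in $H^1(\ms)$. By continuity of $Q^\ms$ in the $H^1$-topology applied on the unit sphere of the finite-dimensional space $V$, for $\eps$ sufficiently small the family $\{\chi_\eps Y_k\}_{k=1}^N$ spans an $N$-dimensional subspace of $\Gamma(N\ms)\cap \vf_c^R$ on which $Q^\ms$ is still negative definite, yielding $\ind_c(\ms) \ge N$.

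The main obstacle is justifying the $H^1$-approximation in full generality: one needs $|f|\,|\nabla\chi_\eps|$ to be $L^2$-negligible on the annulus $\{\eps<\dist(\cdot,C)<2\eps\}$, which reduces to a tubular volume estimate for $C$ inside $\ms$. In the benign situations where $C$ is a finite union of points, smooth arcs, or closed regions with rectifiable boundary, the estimate is immediate; in greater generality one exploits that on the two-dimensional interior of $C$ the function $f$ and all its derivatives vanish identically, so only a shrinking neighborhood of $\partial C$ contributes, and the bound closes via capacity-type considerations. The non-two-sided case is handled by replacing $f\nu$ with a section of $N\ms$ and reading the tangency condition pointwise; the structure of the argument is otherwise unchanged.
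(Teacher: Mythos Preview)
Your proof is correct and follows essentially the same strategy as the paper: both exploit that normal sections coming from $\vf_\partial^R$ must vanish on the contact set $C=\operatorname{int}(\ms)\cap\partial\amb$ (since $T_x\ms=T_x\partial\amb$ there), and then run an $H^1$-density argument to pass to $\vf_c^R$. The paper packages the density step as an abstract lemma---Lipschitz functions on $\R^n$ vanishing on $\Omega^c$ are $H^1$-approximable by $C^\infty_c(\Omega)$, proved via the truncation $(u-\eps)_+$ after adding the distance function---whereas you multiply by an explicit cutoff $\chi_\eps$; these are two implementations of the same idea.

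One remark: your final-paragraph hedging is unnecessary. The term you flag, $\|f\,\nabla\chi_\eps\|_{L^2}$, is controlled because $|f|\le \|\nabla f\|_\infty\,d(\cdot,C)$ makes the integrand bounded by a constant on the annulus $\{\eps<d(\cdot,C)<2\eps\}$, and the measure of this annulus tends to zero for \emph{any} closed $C$ simply because it is contained in $\{0<d(\cdot,C)<2\eps\}$, which decreases to $\emptyset$. No tubular or capacity estimate is needed. The term you do not mention, $\|(\chi_\eps-1)\nabla Y\|_{L^2}$, is in fact the one requiring a little thought: by dominated convergence it tends to $\int_C|\nabla f|^2$, and this vanishes by the standard fact that a $W^{1,1}$ function has zero gradient a.e.\ on any level set. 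The paper's lemma uses the same fact implicitly when concluding $\int_{\{u<\eps\}}|\nabla u|^2\to 0$.
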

\begin{proof}
First observe that, by \cref{prop:FirstSecMakeSense}, $\ind_e(\ms)$ coincides with the Morse index of $\ms$ seen as a properly embedded free boundary minimal surface in any extension $\check\amb$ of $\amb$ as in the statement of the proposition. Therefore $\ind_e(\ms)$ is a well-defined number.
Moreover, since $\vf_c(\amb,\ms) \subset \vf_\partial(\amb) \subset \vf_e(\amb,\ms)$, we have that 
$\ind_c(\ms)$ and $\ind_\partial(\ms)$ are well-defined as well, and $\ind_c(\ms)\le \ind_\partial(\ms)\le \ind_e(\ms)$.

We only need to prove that $\ind_c(\ms)$ is actually equal to $\ind_\partial(\ms)$. 
We first show that $\vf_c^R(\amb,\ms)$ is dense in $\vf_\partial^R(\amb)$ with respect to the $H^1$ norm, which is a consequence of the following lemma thanks to a standard partition of unity argument on $\Omega\eqdef \ms\setminus (\operatorname{int}(\ms)\cap \partial\amb)$ (in particular around $\partial\Omega\setminus\partial\ms$).

\begin{lemma}
Let $\Omega\subset \R^n$ be a bounded open domain and consider $u\in \operatorname{Lip}(\R^n)$ such that $u=0$ in $\Omega^{c}$. Then there exists a sequence of functions $u_k\in C^\infty_c(\Omega)$ that converge to $u$ in the $H^1(\R^n)$ norm, i.e.,
\[
\int_{\R^n}\abs{u-u_k}^2 + \abs{\grad u-\grad {u_k}}^2 \de x  \to 0 \point 
\]
\end{lemma}
\begin{proof}
Let $u_+,u_-\in \operatorname{Lip}(\R^n)$ be the positive and the negative part of $u$, respectively. Moreover, let us define $\rho \eqdef d(\cdot,\Omega^c)$, for which we have that $\rho\in \operatorname{Lip}(\R^n)$, $\rho = 0$ in $\Omega^c$ and $\rho>0$ in $\Omega$. The functions $u_1\eqdef \rho + u_+, u_2\eqdef \rho + u_- \in \operatorname{Lip}(\R^n)$ are zero in $\Omega^c$ and are strictly positive in $\Omega$. Note that it is sufficient to prove the result for these two functions.

Therefore, without loss of generality, let us assume that $u\ge d(\cdot,\Omega^c)>0$ in $\Omega$. Moreover, to simplify the notation, let us assume that the Lipschitz constant of $u$ is $1$. Then, let us consider $u_\eps\eqdef (u-\eps)_+ \in \operatorname{Lip}(\R^n)$. Observe that $u_\eps(x) = 0$ for every $x\in\Omega$ such that $d(x,\Omega^c) \le \eps$.
Furthermore, it holds that (see e.g. \cite[Theorem 3.38]{AmbrosioCarlottoMassaccesi2018})
\[
\int_{\overline\Omega} \abs{u-u_\eps}^2 \de x +  \int_{\overline\Omega} \abs{\grad u-\grad u_\eps}^2 \de x \le \eps^2\abs{\Omega} + \int_{\{u< \eps\}} \abs{\grad u}^2 \de x\comma
\]
which converges to $0$ as $\eps\to 0$ since $u\in H^1(\R^n)$ and $\bigcap_{\eps>0}\{u<\eps\} = \{u=0\}$.
Thus, let us choose a sequence $\eps_k\to 0$ and define $u_k\eqdef u_{\eps_k} \ast \varphi_{\eps_k/2}$, where $\varphi_{\eps}(x) \eqdef \eps^{-1}\varphi(x/\eps)$ and $\varphi\in C^\infty_c(B_1(0))$ is a cutoff function with $\varphi(0)=1$, $\varphi\le 1$.
The conclusion is then straightforward.
\end{proof}

Now, let $\Delta$ be an $(\ind_\partial(\ms))$-dimensional vector subspace of $\Gamma(N\ms)\cap \vf_\partial^R(\amb)$ where $Q_\ms$ is negative definite.
By density of $\vf_c^R(\amb,\ms)$ in $\vf_\partial^R(\amb)$, we can find an $(\ind_\partial(\ms))$-dimensional vector subspace $\tilde\Delta\subset \Gamma(N\ms)\cap \vf_c^R(\amb,\ms)$ such that $\tilde\Delta\cap \{ V\in \Gamma(N\ms)\st \norm V_{H^1} = 1\}$ is as close as we want to $\Delta\cap \{ V\in \Gamma(N\ms)\st \norm V_{H^1} = 1\}$ in the $H^1(\bar\ms)$ norm. In particular, we can choose $\tilde\Delta$ such that $Q_\ms$ is negative definite in $\tilde\Delta\cap \{ V\in \Gamma(N\ms)\st \norm V_{H^1} = 1\}$, and so in $\tilde\Delta$.
Therefore, we have that $\ind_\partial(\ms) = \dim\Delta=\dim\tilde\Delta \le \ind_c(\ms)$, which concludes the proof.
\end{proof}

We conclude this section by showing that the inequality above is actually strict in cases of interest.

\begin{proposition}
Let $\ms = D$ be the equatorial disc in the ambient manifold $M=M_{r_0}$ with $0\le r_0 <1$, defined above. Then $\ms$ is a free boundary minimal surface in $M_{r_0}$ with $\ind_e(\ms) = 1$ for all $0\le r_0 < 1$ and
\[
\ind_\partial(\ms) = \ind_c(\ms) = \begin{cases}
                                   1 & \text{if $r_0<e^{-1}$}\comma\\
                                   0 & \text{if $r_0\ge e^{-1}$} \point
                                   \end{cases}
\]
\end{proposition}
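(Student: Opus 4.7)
The plan is to check first that $D$ is a free boundary minimal surface in every $M_{r_0}$ (flatness and orthogonality to $S^2$ along $\partial D$ are both immediate), and then to compute the three indices separately. Through the unit normal $\nu = e_3$, I identify $\Gamma(ND) \cong C^\infty(D)$ and specialize the second variation to the current setting ($\Ric_{\R^3} \equiv 0$, $\abs{A}^2 \equiv 0$ and $\II^{S^2}(\nu, \nu) = -1$), obtaining
\[
Q_D(f, f) = \int_D \abs{\grad f}^2 \de \Haus^2 - \int_{\partial D} f^2 \de \Haus^1.
\]

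For $\ind_e(D)$, the tangency constraint in $\vf_e(M, D)$ only matters at $\partial D \subset S^2$, where $\nu$ is already tangent, so $f$ is unconstrained. Applying \cref{prop:FirstSecMakeSense} with the extension $\check M = B^3$ (in which $D$ is properly embedded) identifies $\ind_e(D)$ with the classical Morse index of the equatorial disc in $B^3$; this equals $1$, the lower bound coming from $Q_D(1, 1) = -2\pi < 0$ and the matching upper bound being standard (e.g., via the Fourier decomposition below, with $r_0$ replaced by $0$).

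Next, $\ind_\partial(D) = \ind_c(D)$ by \cref{prop:IneqIndices}. At any interior contact point $x \in \bar B_{r_0}(0)$ both $T_x \partial M$ and $T_x D$ coincide with the horizontal plane, so $X(x) \in T_x \partial M$ forces $f(x) = 0$; on $\partial D$ no further constraint is imposed. Hence $\ind_\partial(D)$ equals the Morse index of $Q_D$ on $\{f \in C^\infty(D) \st f \equiv 0 \text{ on } \bar B_{r_0}(0)\}$. Decomposing $f(r, \theta) = \phi_0(r) + \sum_{n \ge 1}(\phi_n(r)\cos n\theta + \psi_n(r)\sin n\theta)$ diagonalizes $Q_D$, and the substitution $t = \log r$, $L \eqdef -\log r_0 > 0$, reduces the $n$-th mode (up to a positive multiplicative constant) to the one-dimensional functional
\[
I_n(\psi) = \int_{-L}^0 \big[(\psi')^2 + n^2 \psi^2\big] \de t - \psi(0)^2, \qquad \psi(-L) = 0.
\]

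For $n \ge 1$, I minimize $\int_{-L}^0 [(\psi')^2 + n^2 \psi^2]\,\de t$ under the constraints $\psi(-L) = 0$ and $\psi(0) = c$, obtaining the extremal $\psi(t) = c \sinh(n(t + L))/\sinh(nL)$ with minimum $n c^2 \coth(nL)$; since $\coth(nL) > 1$ and $n \ge 1$, one has $I_n(\psi) \ge (n \coth(nL) - 1) c^2 > 0$ for every nonzero $\psi$, so the nonradial modes contribute nothing to the index. For $n = 0$, the relevant eigenvalue problem is $-\psi'' = \mu \psi$ on $[-L, 0]$ with $\psi(-L) = 0$ and $\psi'(0) = \psi(0)$; a negative eigenvalue $\mu = -\nu^2$ corresponds to $\psi(t) = A \sinh(\nu(t+L))$ and, via the Robin condition at $t = 0$, to a positive root of $g(\nu) \eqdef \nu - \tanh(\nu L)$. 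The identities $g(0) = 0$, $g'(0) = 1 - L$ and $g''(\nu) > 0$ for $\nu > 0$ show that such a root exists and is unique if and only if $L > 1$, that is $r_0 < e^{-1}$. Summing over Fourier modes then yields the stated values. The main (if minor) obstacle is the uniform strict inequality $n \coth(nL) > 1$ that rules out negative directions in every nonradial mode; once this is in place, the $1$D analysis for $n = 0$ is routine.
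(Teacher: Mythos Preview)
Your proof is correct, but it follows a genuinely different route from the paper's.

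The paper exploits the a priori inequality $\ind_\partial(D)\le\ind_e(D)=1$ (from \cref{prop:IneqIndices}) to reduce the computation of $\ind_\partial$ to a binary question: does there exist a single function $f$ with $f\equiv 0$ on $\bar B_{r_0}(0)$ and $Q_D(f,f)<0$? This is answered by a direct Cauchy--Schwarz estimate in the radial variable, yielding the sharp inequality
\[
Q_D(f,f)\ge \left(\frac{1}{\ln(r_0^{-1})}-1\right)\int_{\partial D} f^2\de\Haus^1
\]
with equality achieved for $f(r,\theta)=c(\ln r-\ln r_0)$; the sign of the prefactor then decides the index.

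Your approach instead performs a full spectral analysis: Fourier decomposition in $\theta$, the substitution $t=\log r$, and an explicit eigenvalue count mode by mode. This is more laborious but also more systematic: it does not rely on knowing $\ind_e=1$ in advance and would compute the index even if it were larger. The paper's shortcut is more elegant for this specific statement, while your method is the one that would scale to higher-index situations. Note that your threshold $L=1$ corresponds precisely to the paper's extremal function, which in your variables is the zero-mode $\psi(t)=c(t+L)$.

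One minor remark: the endpoint $r_0=0$ (where $L=+\infty$) is not literally covered by your finite-interval argument, but it follows either by passing to the limit in your transcendental equation (which becomes $\nu=1$) or, more simply, by observing that a single-point constraint in dimension two does not affect the $H^1$ index, so $\ind_\partial=\ind_e=1$ directly.
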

Informally, this means that the free boundary minimal surfaces in \cref{fig:NonProp} are respectively unstable (the one on the left) and stable (the one on the right) with respect to the variations $\vf_\partial(\amb)$ and $\vf_c(\amb,\ms)$.

\begin{proof}
First observe that $\ind_e(\ms)=1$, since it coincides with the index of the equatorial disc in $B^3$. Therefore, we only need to compute $\ind_\partial(\ms)= \ind_c(\ms)$, which are equal by \cref{prop:IneqIndices}.

Since $\ind_\partial(\ms) \le \ind_e(\ms)=1$, it is sufficient to determine whether $\ind_\partial(\ms)$ is $0$ or $1$, i.e., whether there exists $f\in C^\infty(\ms)$ such that $f =0$ on $\bar B_{r_0}(0)$ and $Q_\ms(f,f) < 0$.
For any such function $f\in C^\infty(\ms)$ it holds that
\begin{align*}
Q_\ms(f,f) &= \int_0^{2\pi} \int_{r_0}^1 (\abs{\partial_r f(r,\theta)}^2 + r^{-2} \abs{\partial_\theta f(r,\theta)}^2)r \de r \de \theta - \int_0^{2\pi} f(1,\theta)^2\de\theta\\
&\ge \int_0^{2\pi} \int_{r_0}^1 \abs{\partial_r f(r,\theta)}^2 r \de r \de \theta - \int_0^{2\pi} f(1,\theta)^2\de\theta\\
&= \int_0^{2\pi}\left( \int_{r_0}^1 \abs{\partial_r f(r,\theta)}^2 r \de r - f(1,\theta)^2\right)\de\theta \point
\end{align*}
However, observe that by the Cauchy-Schwartz inequality
\begin{align*}
\abs{f(1,\theta)} &= \abs{f(1,\theta) - f(r_0,\theta)} = \left \lvert \int_{r_0}^1 \partial_r f(r,\theta) \de r \right\rvert \le \left(\int_{r_0}^1 \frac 1r\de r \right)^{1/2} \left( \int_{r_0}^1 \abs{\partial_r f(r,\theta)}^2 r\de r\right)^{1/2}\comma
\end{align*}
thus
\[
f(1,\theta)^2 \le \ln(r_0^{-1}) \int_{r_0}^1 \abs{\partial_r f(r,\theta)}^2 r\de r \comma
\]
which implies that
\[
Q_\ms(f,f) \ge \left(\frac1{\ln(r_0^{-1})} - 1 \right) \int_0^{2\pi} f(1,\theta)^2\de\theta 
\]
with equality if and only if $f = f_{r_0}$, where $f_{r_0}(r,\theta)\eqdef c (\ln(r) - \ln (r_0))$ in $B_1(0)\setminus B_{r_0}(0)$ for some constant $c\in\R$ and $f_{r_0}(r,\theta) \eqdef 0$ in $\bar B_{r_0}(0)$.
Therefore, the index is $1$ if and only if $\ln(r_0^{-1}) >  1$ (observe that, rigorously, a smoothing of the function $f_{r_0}$ is needed), that means $r_0<e^{-1}$ as we wanted.
\end{proof}

%%%%%%%%%%%%%%%%%%%%%%%%%%%%%%%%%%%%%%%%%%%%%%%%%
%%% Part I
%%%%%%%%%%%%%%%%%%%%%%%%%%%%%%%%%%%%%%%%%%%%%%%%%

\part[Inequivalent complexity criteria for FBMS]{Inequivalent complexity criteria for free boundary minimal surfaces}

We obtain a series of results in the global theory of free boundary minimal surfaces, which in particular provide a rather complete picture for the way different \emph{complexity criteria}, such as area, topology and Morse index compare, beyond the regime where effective estimates are at disposal. 

\begin{flushright}
{Based on \cite{CarlottoFranz2020}}    
\end{flushright}

\label{part:Complexity}

%%%%%%%%%%%%%%%%%%%%%%%%%%%%%%%%%%%%%%%%%%%%%%%%%
%%% I - Introduction
%%%%%%%%%%%%%%%%%%%%%%%%%%%%%%%%%%%%%%%%%%%%%%%%%

\chapter{Context and results} \label{intro:complexity}

The striking developments on the study of free boundary minimal surfaces in recent years pose a number of challenges. Among those, it is natural to ask how different pieces of information that one can associate to a free boundary minimal surface relate to each other. In this part, we primarily focus on three sources of data: the \emph{Euler characteristic} (as a topological descriptor, cf. \cref{sec:EulChar}), the \emph{area} (as a measure of geometric size) and the \emph{Morse index} (which plays the role of the most basic analytic invariant one can associate to the surface in question). 
 
The general scope of the present work is to investigate how these different \emph{complexity criteria} can be compared to each other, under mild \emph{positive curvature} assumptions, i.e., in a regime where effective/quantitative estimates are typically not available. More precisely, the context to have in mind is that of a compact three-manifold satisfying either of the following two pairs of curvature conditions:
\begin{enumerate}[label={\normalfont(\roman*)}]
\item the scalar curvature of $\amb$ is positive and $\partial \amb$ is mean convex with no minimal components; \label{ch:PosScal}
\item the scalar curvature of $\amb$ is nonnegative and $\partial\amb$ is strictly mean convex. \label{ch:PosMean}
\end{enumerate}
First of all, we consider questions of this sort: `Given a Riemannian three-manifold $(M,\smetric)$ as above, is it possible to construct a monotone function $f$ such that any free boundary minimal surface whose index is bounded by $C$ has area bounded by $f(C)$?'
We can combine two of the main results we present here with other recent advances in the field to fully determine whether each of the six natural implications that one can associate to the three pieces of data above hold true, thereby obtaining a rather complete description of the scenario in front of us.

In particular, we develop a detailed analysis of the topological degenerations that may occur, in the limit, to sequences of free boundary minimal surfaces solely subject to a uniform Morse index bound to ultimately prove that \emph{a bound on the index implies a bound on the area, the topology and the total curvature} (see \cref{thm:AreaBound} for a precise statement). In turn, this theorem implies novel unconditional compactness (\cref{cor:CptBoundIndPosRic}) and generic finiteness (\cref{cor:GenericFin}) results. We note that, prior to this work, no (effective or ineffective) counterpart of \cref{thm:AreaBound} was known for free boundary minimal surfaces, not even under the stronger curvature assumptions that the Ricci curvature of the ambient manifold be positive, and its boundary strictly convex (see also \cref{rmk:FraserLicomment} and \cref{rmk:Hersch}).

To show that a bound on the topology cannot possibly imply a bound on the area, nor on the index, we build a large class of pathological counterexamples: in \cref{thm:Counterexample} we construct, for any smooth three-manifold $M$ supporting Riemannian metrics of positive scalar curvature and mean convex boundary and any $g\geq 0, b>0$, one such Riemannian metric $\smetric=\smetric(g,b)$ in a way that $(M,\smetric)$ contains a sequence of connected, embedded, free boundary minimal surfaces of genus $g$ and exactly $b$ boundary components, and whose area and Morse index attain arbitrarily large values. In fact, we have some freedom on the geometric boundary conditions we impose, so that a few variants of the construction are actually possible. 

Concerning the last two (possible) implications above, we note how a bound on the area cannot possibly imply a bound on the index, nor on the topology (no matter how strong curvature conditions are imposed). 
Indeed, there are several existence results of family of free boundary minimal surfaces in the three-dimensional unit ball $B^3\subset\R^3$ with uniformly bounded area and unbounded topology. One of this examples is the family constructed in \cref{thm:main-fbms-b1} of this thesis. Other families are described in detail in \cref{intro:existence}, more precisely in \cref{method:Steklov,method:Perturbation,method:EquivMinMax}. It follows from \cref{thm:TopFromArea}, or even from \cite{AmbrozioBuzanoCarlottoSharp2018}*{Corollary 4}, that these families have also unbounded index.
Roughly speaking, a bound on the area only implies weaker forms of convergence, typically of measure-theoretic character (e.g. in the sense of varifolds, flat chains, currents, \ldots), but does not capture finer geometric properties.

The diagram in \cref{fig:Complexity} then implicitly defines a hierarchy of conditions, based on the implications that hold or do not hold true. Of course, it is then natural to ask whether \emph{pairs} of `weak conditions' imply a `strong' one, e.g. prototypically whether a bound on the area and the topology implies a bound on the index. This interesting question has recently been answered, in the affirmative, by Lima \cite[Theorem B]{Lima2017} who adapted to the free boundary setting some remarkable estimates by Ejiri--Micallef \cite{EjiriMicallef2008}: one can bound the Morse index from above by a linear function of the area and the Euler characteristic, with a multiplicative constant only depending on the ambient manifold. Thereby, the picture we obtain is quite exhaustive and final.

We now present the contents of this part of the dissertation in more detail, point out the technical challenges we faced and relate them to the pre-existing results in the literature, some of which played a fundamental role with respect to this project.

\section{Topological degeneration analysis} \label{sec:TopologicalDegeneration}

A good starting point for our discussion is the following result of Fraser--Li, which is the free boundary analogue of the classical compactness theorem \cite[Theorem 1]{ChoiSchoen1985} by Choi--Schoen.
\begin{theorem}[{\cite[Theorem 1.2]{FraserLi2014}}]
\label{thm:FraLiCpt}
Let $(\amb^3,\smetric)$ be a compact Riemannian manifold with nonempty boundary. Suppose that $\amb$ has nonnegative Ricci curvature and strictly convex boundary. Then the space of compact, properly embedded, free boundary minimal surfaces of fixed topological type in $\amb$ is compact in the $C^k$ topology for any $k\ge 2$.
\end{theorem}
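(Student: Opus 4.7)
The plan is to follow the Choi--Schoen compactness scheme, adapted to the free boundary setting. Let $\{\Sigma_j\}$ be a sequence of properly embedded free boundary minimal surfaces in $M$ all of the same topological type (fixed genus $g$ and number of boundary components $b$, hence fixed Euler characteristic $\chi_0$); the goal is to extract a subsequence converging smoothly to another such surface.

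First I would establish a uniform area bound. Combining Gauss--Bonnet with the Gauss equation
$$K_{\Sigma_j} = K_M(T\Sigma_j) - \tfrac{1}{2}\abs{A}^2$$
and the observation that along $\partial \Sigma_j$ the geodesic curvature of $\partial\Sigma_j \subset \Sigma_j$ coincides with $\II^{\partial M}(\eta,\eta)$ by the free boundary condition, one gets
$$2\pi\chi_0 = \int_{\Sigma_j} K_M(T\Sigma_j)\de\Haus^2 - \tfrac{1}{2}\int_{\Sigma_j}\abs{A}^2\de\Haus^2 + \int_{\partial\Sigma_j} \II^{\partial M}(\eta,\eta)\de\Haus^1.$$
The sign information coming from strict convexity of $\partial M$ together with nonnegativity of $\Ric_M$ (which controls $K_M(T\Sigma_j)$ in terms of ambient geometry only) turns this identity into a uniform upper bound on $\int_{\Sigma_j}\abs{A}^2$. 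Combining this with the monotonicity formula, extended across $\partial M$ via the reflection \cref{lem:ReflectionPrinciple}, then yields a uniform upper bound on $\area(\Sigma_j)$.

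Next I would establish uniform pointwise curvature estimates $\sup_{\Sigma_j}\abs{A}^2 \le C$ by a blow-up contradiction à la Choi--Schoen. Assuming $\abs{A}(x_j)\to\infty$, after point-picking one rescales by $\lambda_j\eqdef \abs{A}(x_j)$: the rescaled surfaces enjoy uniform curvature bounds and, by the area bound above, uniform local area bounds, so a subsequence converges locally smoothly to a complete embedded minimal surface $\Sigma_\infty$ either in $\R^3$ (if $\lambda_j\dist(x_j,\partial M)\to\infty$) or in a closed half-space with free boundary on its boundary plane (if that quantity stays bounded). Lower semicontinuity of the total curvature forces $\int_{\Sigma_\infty}\abs{A}^2<\infty$; in the boundary case, the reflection \cref{lem:ReflectionPrinciple} then doubles $\Sigma_\infty$ into a complete minimal surface of finite total curvature in $\R^3$. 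By the classical Chern--Osserman theory, such a surface that is not a plane (which it cannot be, since $\abs{A}(0)=1$) carries strictly positive first Betti number; multiplicity-one smooth convergence, guaranteed by the curvature bound and monotonicity, then propagates this topological complexity back to the $\Sigma_j$ for $j$ large, contradicting the fixed topological type.

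Finally, once $\sup_{\Sigma_j}\abs{A}$ is uniformly bounded, standard Schauder estimates for the minimal surface equation, including up to the boundary (handled either directly with the Neumann-type orthogonality condition or again via reflection), deliver uniform $C^{k,\alpha}$ bounds; combining with the area bound produces a smoothly convergent subsequence whose limit is a properly embedded free boundary minimal surface of the same topological type. The main obstacle is the blow-up step at boundary points: ensuring the limit is a bona fide free boundary minimal surface in a half-space with the right topological information, and ruling out that topology might escape in the limit in a way that fails to be inherited by the approximating surfaces, which is where embeddedness, the reflection principle, and the half-space topology/index correspondence behind \cref{prop:GeometryOfHalfBubble} play the decisive role.
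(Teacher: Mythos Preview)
This theorem is not proved in the paper; it is simply quoted from \cite{FraserLi2014}. The only information the paper offers about its proof is in \cref{rmk:FraserLicomment}, which notes that Fraser--Li obtain the crucial area bound by an \emph{analytic} route, namely through the connection with the first Steklov eigenvalue (\cite[Proposition 3.4]{FraserLi2014}). Your proposal is therefore not being compared against a proof in the paper but against the original Fraser--Li argument as the paper describes it, and it departs from that argument at the most delicate point.

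The genuine gap is in your derivation of the area bound. The Gauss--Bonnet identity you write down yields a bound on $\int_{\Sigma_j}\abs{A}^2$, not on $\area(\Sigma_j)$, and the passage ``Combining this with the monotonicity formula \ldots\ then yields a uniform upper bound on $\area(\Sigma_j)$'' does not work: a total curvature bound plus monotonicity does not by itself produce a global area bound. (If it did, one would not need the Yang--Yau/Choi--Wang eigenvalue argument in the closed Choi--Schoen theorem, nor the Steklov analogue in Fraser--Li.) Moreover, the claim that $\Ric_M\ge 0$ ``controls $K_M(T\Sigma_j)$'' is not correct in the direction you need: nonnegative Ricci does not give a pointwise lower bound on the ambient sectional curvature along $T\Sigma_j$, so the sign information required to extract anything useful from the Gauss--Bonnet identity is missing. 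Finally, \cref{lem:ReflectionPrinciple} is stated only for Euclidean half-spaces; invoking it to extend monotonicity across the boundary of a general $(\amb,\smetric)$ is not justified. The remainder of your outline (blow-up plus Chern--Osserman to rule out curvature concentration, then Schauder) is in the right spirit and close to how such compactness theorems are proved once the area bound is in hand, but the area bound itself has to come from a different source.
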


Afterwards, a general investigation of the spaces of free boundary minimal hypersurfaces with bounded index and volume has been carried through in \cite{AmbrozioCarlottoSharp2018Compactness} and \cite{AmbrozioBuzanoCarlottoSharp2018}. In absence of any curvature assumptions, it is not possible to obtain a strong compactness result as in \cref{thm:FraLiCpt}, since curvature concentration can occur at certain (isolated) points. However, one can still prove a milder form of subsequential convergence (smooth, graphical convergence with multiplicity $m\geq 1$ away from finitely many points), and develop an accurate blow-up analysis near the points of bad convergence.
This analysis leads to several compactness and finiteness results, among which we want to recall the following theorem.
\begin{theorem}[{\cite[Corollary 4]{AmbrozioBuzanoCarlottoSharp2018}}] \label{thm:TopFromArea}
Let $(\amb^3,\smetric)$ be a compact Riemannian manifold with boundary and consider $I\in \N$, $\Lambda\ge 0$. Then there exist constants $g_0=g_0(\amb,\smetric,I,\Lambda)$ and $b_0 = b_0(\amb,\smetric,I,\Lambda)$ such that every compact, properly embedded, free boundary minimal surface with index bounded by $I$ and area bounded by $\Lambda$ has genus bounded by $g_0$ and number of boundary components bounded by $b_0$. Furthermore, there exists a constant $\tau_0=\tau_0(\amb,\smetric,I,\Lambda)$ so that the total curvature (i.e., the integral of the square length of the second fundamental form) of any such surface is bounded from above by $\tau_0$. 
\end{theorem}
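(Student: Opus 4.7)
The plan is to argue by contradiction and invoke a free boundary analogue of the Choi--Schoen compactness/bubbling machinery developed in \cite{AmbrozioCarlottoSharp2018Compactness, AmbrozioBuzanoCarlottoSharp2018}. Assume we have a sequence $\{\Sigma_j\}$ of compact, properly embedded free boundary minimal surfaces in $(M,\smetric)$ with $\ind(\Sigma_j)\le I$ and $\area(\Sigma_j)\le \Lambda$, but such that $\genus(\Sigma_j)+\bdry(\Sigma_j)\to\infty$. The Morse index bound gives uniform curvature estimates away from a finite set $\set_\infty=\{p_1,\ldots,p_N\}\subset\amb$ with $N\le N(I)$ (points where $\abs{A}^2$ concentrates); combined with the area bound, standard Allard-type theory (cf. \cref{lem:MultOneConvExtends}) produces a subsequence converging, locally smoothly and with uniformly bounded multiplicity $m\le m(\Lambda)$, to a smooth properly embedded free boundary minimal surface $\Sigma_\infty\subset\amb$ on compact subsets of $\amb\setminus\set_\infty$.

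\textbf{Bubble analysis and topological accounting.} At each bad point $p_k$ I would rescale, extracting a sequence of dilations $\lambda_j^{(k)}\to\infty$ that produces (up to subsequence) a nontrivial complete minimal limit $\tilde\Sigma_k$: either a complete minimal surface in $\R^3$ (if $p_k\in\operatorname{int}\amb$) or a free boundary minimal surface in a half-space $\Xi(a)\subset\R^3$ (if $p_k\in\partial\amb$), with $\ind(\tilde\Sigma_k)\le I$ preserved under the rescaling. Applying \cite{ChodoshMaximo2016} in the interior case and \cref{prop:GeometryOfHalfBubble} in the boundary case, each $\tilde\Sigma_k$ has genus, number of ends, and number of boundary components bounded by $\kappa(I)$. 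The core topological bookkeeping, following the closed-case template of \cite{ChodoshKetoverMaximo2017}, then writes
\[
\chi(\Sigma_j) \;=\; m\,\chi(\Sigma_\infty) \;+\; \sum_{k=1}^{N}\bigl(\chi(\tilde\Sigma_k) - m_k\,e_k\bigr),
\]
where $m_k$ is the local multiplicity near $p_k$ and $e_k$ counts the ends of $\tilde\Sigma_k$ that are reglued to $\Sigma_\infty$. Each term on the right is uniformly bounded in $j$, so $\abs{\chi(\Sigma_j)}\le C(I,\Lambda)$. A parallel count, tracking separately the boundary circles of $\Sigma_\infty$ and of the bubbles $\tilde\Sigma_k$, bounds $\bdry(\Sigma_j)$, which in turn bounds $\genus(\Sigma_j)$ via $\chi=2-2\genus-\bdry$, contradicting the assumption.

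\textbf{Total curvature.} For the bound on $\int_\Sigma \abs{A}^2$, I would use the Gauss equation $\abs{A}^2 = 2\bigl(K_\amb(T\Sigma)-K_\Sigma\bigr)$ (valid since $H=0$) combined with Gauss--Bonnet, yielding
\[
\int_\Sigma \abs A^2 \de\area \;=\; 2\int_\Sigma K_\amb(T\Sigma)\de\area \;-\; 4\pi\chi(\Sigma) \;+\; 2\int_{\partial\Sigma} k_g \de\length.
\]
The free boundary condition identifies $k_g$ along $\partial\Sigma$ with a second fundamental form coefficient of $\partial\amb$, so $\abs{k_g}\le C(M,\smetric)$. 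To control $\length(\partial\Sigma)$, pick a smooth ambient vector field $X\in\vf(M)$ with $\scal{X}{\hat\eta}\equiv 1$ on $\partial\amb$; the first variation identity \eqref{eq:FirstVar}, together with $H=0$ and $\eta=\hat\eta$ along $\partial\Sigma$, gives $\length(\partial\Sigma)=\int_\Sigma\div_\Sigma X\de\area \le C(M,\smetric)\Lambda$. Combining with the already established bound on $\abs{\chi(\Sigma)}$ and the trivial bound $\int_\Sigma K_M(T\Sigma)\de\area \le C\Lambda$, one concludes $\int_\Sigma\abs A^2\le \tau_0(M,\smetric,I,\Lambda)$.

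\textbf{Main obstacle.} The hardest step is the refined blow-up/surgery analysis at boundary singular points $p_k\in\partial\amb$: one has to match, in a scale-invariant way, the ends of the half-space bubble $\tilde\Sigma_k$ with the local sheets of the limit $\Sigma_\infty$ near $p_k$, while ensuring that both the embeddedness and the orthogonality to $\partial\amb$ are preserved under rescaling. This is precisely where the free boundary setting diverges nontrivially from the closed case of \cite{ChodoshKetoverMaximo2017}, and where the Reflection Principle (\cref{lem:ReflectionPrinciple}) is used to translate half-space statements into statements about doubled minimal surfaces in $\R^3$ with index at most $2I$.
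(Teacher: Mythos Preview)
The paper does not actually prove this theorem: it is quoted as \cite[Corollary~4]{AmbrozioBuzanoCarlottoSharp2018}, with only the statement and a remark on its generality appearing in the text. Your sketch follows the approach of that reference essentially verbatim: smooth convergence with finite multiplicity away from a finite concentration set (the number of bad points controlled by $I$, the multiplicity by $\Lambda$ via the monotonicity formula), blow-up at each bad point to a complete (free boundary) minimal surface in $\R^3$ or a half-space of index at most $I$, and topological bookkeeping using \cref{prop:GeometryOfHalfBubble} and \cite{ChodoshMaximo2016} to bound each bubble's contribution. Your Gauss--Bonnet derivation of the total-curvature bound, together with the first-variation estimate $\length(\partial\Sigma)\le C(M,\smetric)\Lambda$, is also the standard route and is correct.

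One caveat: the Euler-characteristic identity you display is schematic rather than literal. In general there may be bubble \emph{trees} (several nested scales at a single concentration point), and the precise matching of bubble ends to local sheets of $\Sigma_\infty$ requires the neck/annulus analysis carried out in \cite{AmbrozioBuzanoCarlottoSharp2018} (compare also \cref{thm:GlobalDeg} and \cref{cor:Surgery} in this paper for the lamination setting). Since each nontrivial bubble carries positive index and the total index is at most $I$, the number of bubbles is still uniformly bounded and the qualitative conclusion is unaffected; but the formula as written would need to be replaced by an inequality obtained after surgery rather than an exact additivity.
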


\begin{remark}
The statement in \cite{AmbrozioBuzanoCarlottoSharp2018} is more general as it only requires a bound on some eigenvalue of the Jacobi operator instead of an index bound, and it applies to free boundary minimal hypersurfaces in ambient manifolds of dimension $3\leq n+1\leq 7$.
\end{remark}

Here we shall be concerned with the space of free boundary minimal surfaces (inside a three-dimensional Riemannian manifold) with bounded index but without any a priori bound on the area. The analogous task has been carried out, for the case of \emph{closed} minimal surfaces, in the remarkable article \cite{ChodoshKetoverMaximo2017} by Chodosh--Ketover--Maximo. Some of the methods developed there are essential for our analysis, and we often rely on the results presented in that article for interior points, although serious technical work (and specific tools) are needed to properly handle the possible degenerations occurring near the boundary of the ambient manifold. 

Let us now describe the key steps in this approach. Hence, let us consider a compact Riemannian manifold with nonempty boundary $(\amb^3,\smetric)$. For the sake of simplicity, let us assume that property \hypP{} holds.

As already explained in the \hyperref[chpt:intro]{Introduction} and in \cref{sec:properFBMS}, such a condition prevents the existence of minimal surfaces that touch $\partial\amb$ in their interior (see also \cref{sec:Properness} for the issues if we drop it) and it is implied by simple geometric assumptions, as for example \hypC{}.

Fixed $I\in\N$ and given a sequence of compact, properly embedded, free boundary minimal surfaces $\ms_j^2\subset\amb$ with $\ind(\ms_j)\le I$, we develop our analysis in two steps:
\begin{description}
\item [Macroscopic behavior.]  First we prove that, up to subsequence, the surfaces $\ms_j$ converge locally smoothly away from a finite set of points $\set_\infty$ to a smooth free boundary minimal lamination $\lam\subset\amb$, that is a suitable disjoint union of free boundary minimal surfaces (see \cref{def:fbmlam}). 
Note that we cannot expect anything better than a lamination without imposing uniform bounds on the area.
Moreover, it holds that the curvature of $\ms_j$ is locally uniformly bounded away from $\set_\infty$.
This tells us that the surfaces are well-controlled away from a finite set of points.
Let us remark that the points in $\set_\infty$ can belong to the boundary.

The main ingredient here is an extension of the curvature estimate for stable minimal surfaces (cf. \cite{Schoen1983Estimates,White1987Curvature} and \cite{SchoenSimon1981} for higher dimensions) to free boundary minimal surfaces with bounded index.

\item [Microscopic behavior.] The second step consists in carefully studying the local behavior of the surfaces $\ms_j$ near the `bad' points in $\set_\infty$. In particular, we prove that, for $\eps>0$ sufficiently small, $\ms_j\cap B_{\eps}(\set_\infty)$ contains only a finite number of components where the curvature is not bounded and these components have controlled topology and area.

The way to proceed in the proof is to blow-up the components of $\ms_j\cap B_{\eps}(\set_\infty)$ with unbounded curvature at the `scale of the curvature'. In this way we obtain a (free boundary) minimal surface in $\R^3$ or in a half-space of $\R^3$ (if initially we were near the boundary of $\amb$) with index less or equal than $I$. 
Thanks to \cite{ChodoshMaximo2016} (cf. also \cite{ChodoshMaximo2018}), possibly combined with the theory developed in Section 2 of \cite{AmbrozioBuzanoCarlottoSharp2018}, we are able to conclude the description of $\ms_j$ at small scale, paying attention to prevent data loss in the blow-up.
\end{description}

\begin{figure*}[htpb]
\centering
%%%%%%%%%%%%%%%%%%%%%%%%%%%%%%%%%%%%%%%%%%%%%%%%%%%%%
%%% intro.tikz
%%%%%%%%%%%%%%%%%%%%%%%%%%%%%%%%%%%%%%%%%%%%%%%%%%%%%
\begin{tikzpicture}[scale =0.55]
\coordinate (A1) at (0,0); 
\coordinate (B1) at (0.2,0.05); 
\coordinate (C1) at (0.4,0.12);
\coordinate (D1) at (0.6,0.21);
\coordinate (E1) at (0.8,0.35);

\coordinate (F1) at (4,1.97);
\coordinate (G1) at (4.1,2.05);
\coordinate (H1) at (4.3,2.17);
\coordinate (I1) at (4.5,2.24);
\coordinate (J1) at (4.7,2.28);
\coordinate (K1) at (4.85,2.27);

\coordinate (F2) at (5,-4.13);
\coordinate (G2) at (5.1,-4.13);
\coordinate (H2) at (5.3,-4.12);
\coordinate (I2) at (5.5,-4.11);
\coordinate (J2) at (5.7,-4.09);
\coordinate (K2) at (5.8,-4.07);

\coordinate (C2) at (0.7,-3);
\coordinate (D2) at (0.9,-3.05);
\coordinate (E2) at (1.1,-3.1);

\coordinate (X) at (-1.5,-0.2);
\coordinate (Y) at (-1.2, -3.2);
\coordinate (U) at (7.5, 1.8);
\coordinate (V) at (7.9, -2.6);

\draw plot [smooth cycle, tension=0.6] coordinates {(A1) (B1) (C1) (D1) (E1)
			      (1.5,0.9) (2,1.2) (2.5,1.3) (3,1.7) (3.7,1.71)
			      (F1) (G1) (H1) (I1) (J1) (K1)
			      (5.5,2.2) (6,1.8)
			      (U) (8.5,-0.5) (V)
			      (7,-2.9) (6.5, -3.3) (6.2, -3.93) 
			      (K2) (J2) (I2) (H2) (G2) (F2)
			      (4.5,-4) (3.5,-3.2) (2.7, -2.5) (2,-2.7) (1.5, -3.2) 
			      (E2) (D2) (C2)
			      (0.3, -3)
			      (Y) (-1.9,-2) (X) (-0.5,-0.05)};

\draw (F1) to [bend left=20] (F2);
\draw (G1) to [bend left=20] (G2);

\draw plot [smooth, tension = 0.5] coordinates {(H1) (5.2,0.2) (5.45,-1) (5.57, -1.2) (5.65, -1) (5.4,0.2) (I1)};
\draw plot [smooth, tension = 0.5] coordinates {(H2) (5.5,-2.5) (5.5,-1.5) (5.6, -1.3) (5.7, -1.5) (5.7,-2.5) (I2)};
\pgfmathsetmacro{\t}{atan(0.03/0.1)}
\begin{scope}[rotate around={\t:(5.57,-1.2)}]
\draw[gray] (5.57, -1.2) arc (90:270:0.03 and {sqrt(0.03*0.03+0.1*0.1)/2});
\draw (5.57, -1.2) arc (90:-90:0.03 and {sqrt(0.03*0.03+0.1*0.1)/2});
\end{scope}

\draw plot [smooth, tension = 0.5] coordinates {(J1) (5.25,1.2) (5.68,0) (5.9,-1.29) (5.97, -1.45) (6, -1.27) (5.8,0) (5.37,1.2) (K1)};
\draw plot [smooth, tension = 0.5] coordinates {(J2)  (5.9,-2.7) (5.93, -1.7) (5.97, -1.53) (6.03,-1.7) (6.02, -2.7) (K2)};
\draw[gray] (5.97, -1.45) arc (90:270:0.02 and {0.04});
\draw (5.97, -1.45) arc (90:-90:0.02 and {0.04});

\draw[gray] (0.45, -2.9) arc (90:182:0.02 and {0.08});
\draw (0.45, -2.9) arc (90:0:0.02 and {0.08});
\draw plot [smooth, tension = 0.55] coordinates {(A1) (0.35,-1.2) (0.35,-2.65) (0.45, -2.9) (0.55, -2.65) (0.55,-1.2) (B1)};

\pgfmathsetmacro{\t}{atan(0.01/0.1)}
\begin{scope}[rotate around={-\t:(0.87, -2.6)}]
\draw[gray] (0.87,-2.6) arc (90:270:0.03 and {sqrt(0.01*0.01+0.1*0.1)/2});
\draw (0.87,-2.6) arc (90:-90:0.03 and {sqrt(0.01*0.01+0.1*0.1)/2});
\end{scope}
\draw plot [smooth, tension = 0.55] coordinates {(C1) (0.75,-1.2) (0.77,-2.4) (0.87, -2.6) (0.97, -2.4) (0.95,-1.2) (D1)};
\draw plot [smooth, tension = 0.9] coordinates {(C2) (0.75, -2.8) (0.86, -2.7) (0.93, -2.8) (D2)};

\draw (E1) to [bend left = 15] (E2);

\pgfmathsetmacro\R{0.65}
\draw[gray] (C2) circle (\R);
\draw[gray] (5.685, -1.2) circle (\R);

\node(bp) at (-1,1) {\footnotesize `bad' points};
\draw[blue] (bp) to [bend right = 30] (0.2,-2.7);
\draw[blue] (bp) to [bend left=30] (5.3,-0.8);

\node at (1.7,-0.8) {$\ms_j$};

\draw[->] (9,-1) -- (12,-1);
\node at (10.5,-0.6) {\footnotesize $j\to\infty$};

\begin{scope}[xshift=420]
\coordinate (A1) at (0,0); 
\coordinate (B1) at (0.2,0.05); 
\coordinate (C1) at (0.4,0.12);
\coordinate (D1) at (0.6,0.21);
\coordinate (E1) at (0.8,0.35);

\coordinate (F1) at (4,1.97);
\coordinate (G1) at (4.1,2.05);
\coordinate (H1) at (4.3,2.17);
\coordinate (I1) at (4.5,2.24);
\coordinate (J1) at (4.7,2.28);
\coordinate (K1) at (4.85,2.27);

\coordinate (F2) at (5,-4.13);
\coordinate (G2) at (5.1,-4.13);
\coordinate (H2) at (5.3,-4.12);
\coordinate (I2) at (5.5,-4.11);
\coordinate (J2) at (5.7,-4.09);
\coordinate (K2) at (5.8,-4.07);

\coordinate (C2) at (0.7,-3);
\coordinate (D2) at (0.9,-3.05);
\coordinate (E2) at (1.1,-3.1);

\coordinate (X) at (-1.5,-0.2);
\coordinate (Y) at (-1.2, -3.2);
\coordinate (U) at (7.5, 1.8);
\coordinate (V) at (7.9, -2.6);

\draw plot [smooth cycle, tension=0.6] coordinates {(A1) (B1) (C1) (D1) (E1)
			      (1.5,0.9) (2,1.2) (2.5,1.3) (3,1.7) (3.7,1.71)
			      (F1) (G1) (H1) (I1) (J1) (K1)
			      (5.5,2.2) (6,1.8)
			      (U) (8.5,-0.5) (V)
			      (7,-2.9) (6.5, -3.3) (6.2, -3.93) 
			      (K2) (J2) (I2) (H2) (G2) (F2)
			      (4.5,-4) (3.5,-3.2) (2.7, -2.5) (2,-2.7) (1.5, -3.2) 
			      (E2) (D2) (C2)
			      (0.3, -3)
			      (Y) (-1.9,-2) (X) (-0.5,-0.05)};
			      
\draw[blue] (C1) to [bend left = 15] (C2);
\draw[blue] (F1) to [bend left = 20] (F2);
\draw[blue] (G1) to [bend left = 20] (G2);
\draw[blue] (I1) to [bend left = 20] (I2);

\fill[blue] (C2) circle [radius=0.2em] node[below] {$\set_\infty$};
\fill[blue] (5.685, -1.2) circle [radius=0.2em] node[right] {$\set_\infty$};
\node[blue] at (1.1,-0.8) {$\lam$};
\end{scope}

\draw[->] (2.5, -4.5) to [bend right = 30] (3.5,-6.5);
\node at (1.3,-5.7) {\footnotesize surgery};

\draw[->] (15.2, -6.9) to (17.5,-4.8);
\node[rotate=45] at (16,-5.5) {\footnotesize $j\to\infty$};

\begin{scope}[yshift=-200,xshift=160]

\coordinate (A1) at (0,0); 
\coordinate (B1) at (0.2,0.05); 
\coordinate (C1) at (0.4,0.12);
\coordinate (D1) at (0.6,0.21);
\coordinate (E1) at (0.8,0.35);

\coordinate (F1) at (4,1.97);
\coordinate (G1) at (4.1,2.05);
\coordinate (H1) at (4.3,2.17);
\coordinate (I1) at (4.5,2.24);
\coordinate (J1) at (4.7,2.28);
\coordinate (K1) at (4.85,2.27);

\coordinate (F2) at (5,-4.13);
\coordinate (G2) at (5.1,-4.13);
\coordinate (H2) at (5.3,-4.12);
\coordinate (I2) at (5.5,-4.11);
\coordinate (J2) at (5.7,-4.09);
\coordinate (K2) at (5.8,-4.07);

\coordinate (C2) at (0.7,-3);
\coordinate (D2) at (0.9,-3.05);
\coordinate (E2) at (1.1,-3.1);

\coordinate (X) at (-1.5,-0.2);
\coordinate (Y) at (-1.2, -3.2);
\coordinate (U) at (7.5, 1.8);
\coordinate (V) at (7.9, -2.6);

\draw plot [smooth cycle, tension=0.6] coordinates {(A1) (B1) (C1) (D1) (E1)
			      (1.5,0.9) (2,1.2) (2.5,1.3) (3,1.7) (3.7,1.71)
			      (F1) (G1) (H1) (I1) (J1) (K1)
			      (5.5,2.2) (6,1.8)
			      (U) (8.5,-0.5) (V)
			      (7,-2.9) (6.5, -3.3) (6.2, -3.93) 
			      (K2) (J2) (I2) (H2) (G2) (F2)
			      (4.5,-4) (3.5,-3.2) (2.7, -2.5) (2,-2.7) (1.5, -3.2) 
			      (E2) (D2) (C2)
			      (0.3, -3)
			      (Y) (-1.9,-2) (X) (-0.5,-0.05)};
			      
\draw (A1) to [bend left = 15] (0.3,-3);
\draw (B1) to [bend left = 15] (0.5,-2.99);
\draw (C1) to [bend left = 15] (C2);
\draw (D1) to [bend left = 15] (D2);
\draw (E1) to [bend left = 15] (E2);
\draw (F1) to [bend left = 20] (F2);
\draw (G1) to [bend left = 20] (G2);
\draw (H1) to [bend left = 20] (H2);
\draw (I1) to [bend left = 20] (I2);
\draw (J1) to [bend left = 20] (J2);
\draw (K1) to [bend left = 20] (K2);

\pgfmathsetmacro\R{0.65}
\draw[blue] (C2) circle (\R);
\draw[blue] (5.685, -1.2) circle (\R);

\node at (1.7,-0.8) {$\tilde\ms_j$};
\end{scope}
\end{tikzpicture}
%%%%%%%%%%%%%%%%%%%%%%%%%%%%%%%%%%%%%%%%%%%%%%%%%%%%%
%%%%%%%%%%%%%%%%%%%%%%%%%%%%%%%%%%%%%%%%%%%%%%%%%%%%%
\end{figure*}

At this point, due to this precise description of the degeneration, we are able to perform a `simplification surgery' on $\ms_j$.
Namely, fixing $\eps>0$ sufficiently small, we modify the surfaces $\ms_j$ inside $B_\eps(\set_\infty)$ to obtain new surfaces $\tilde\ms_j$ with the following properties:
\begin{itemize}
\item $\tilde\ms_j$ coincides with $\ms_j$ outside $B_\eps(\set_\infty)$ (the surgery is performed only near the `bad points');
\item the surfaces $\tilde\ms_j$ have uniformly bounded curvature;
\item the topology and the area of $\tilde\ms_j$ are comparable to those of $\ms_j$;
\item the surfaces $\tilde\ms_j$ converge locally smoothly to the lamination $\lam$ introduced above.
\end{itemize}

We refer the reader to \cref{cor:ExistenceBlowUpSetAndCurvatureEstimate}, \cref{thm:GlobalDeg} and \cref{cor:Surgery} for precise statements concerning the description of the topological degeneration and the surgery procedure, respectively. We note here that the \emph{refined} Morse-theoretic arguments we present in \cref{sec:MorseTheory} (where we need to separately count the number of curves $\Sigma_j$ traces, locally, along the two parts of the boundary of small geodesic balls near $\partial M$) are essential to make the whole machinery work. 

\section{Results in `weakly positive geometry'}

That being said, by means of this analysis we obtain the following result, which shows that it is possible to remove the assumption on the area bound from \cref{thm:TopFromArea} in the case of ambient manifolds satisfying the mild curvature assumptions mentioned above. 

\begin{theorem} \label{thm:AreaBound}
Let $(\amb^3,\smetric)$ be a compact Riemannian manifold with boundary. 
Moreover assume that
\begin{enumerate}[label={\normalfont(\roman*)}]
\item \emph{either} the scalar curvature of $\amb$ is positive and $\partial \amb$ is mean convex with no minimal components;
\item \emph{or} the scalar curvature of $\amb$ is nonnegative and $\partial\amb$ is strictly mean convex.
\end{enumerate}
Given $I\in\N$, there exist constants $\Lambda_0 = \Lambda_0(\amb, \smetric, I)$, $\tau_0=\tau_0(\amb, \smetric, I)$,  $g_0 = g_0(\amb, \smetric, I)$ and $b_0 = b_0(\amb, \smetric, I)$ such that for every compact, connected, embedded, free boundary minimal surface $\ms^2\subset\amb$ with nonempty boundary and with index at most $I$ we have that its area is bounded by $\Lambda_0$, its total curvature is bounded by $\tau_0$, its genus by $g_0$ and the number of its boundary components by $b_0$.
\end{theorem}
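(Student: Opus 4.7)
The plan is to argue by contradiction by combining the topological degeneration analysis sketched in \cref{sec:TopologicalDegeneration} with the diameter/area bound for stable free boundary minimal surfaces afforded by \cref{prop:StableImpliesCptness} in the `weakly positive geometry' regime. Since \cref{thm:TopFromArea} already yields bounds on total curvature, genus and number of boundary components once the area is under control, it suffices to produce the area bound $\Lambda_0$. Suppose then, for contradiction, that there exists a sequence of compact, connected, embedded free boundary minimal surfaces $\{\ms_j\}_{j\in\N}$ in $\amb$ with nonempty boundary, with $\ind(\ms_j) \le I$, and with $\area(\ms_j) \to \infty$.

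\emph{Step 1: degeneration and surgery.} First I would pass to a subsequence and apply the main outputs of the degeneration machinery (\cref{cor:ExistenceBlowUpSetAndCurvatureEstimate}, \cref{thm:GlobalDeg} and \cref{cor:Surgery}) to extract a finite set $\set_\infty \subset \amb$ and a free boundary minimal lamination $\lam \subset \amb \setminus \set_\infty$ which arises as the smooth local limit of $\ms_j$ away from $\set_\infty$, together with modified surfaces $\tilde\ms_j$ of uniformly bounded second fundamental form whose area, genus and number of boundary components differ from those of $\ms_j$ by a constant depending only on $I$. In particular, $\tilde\ms_j \to \lam$ locally smoothly on all of $\amb$, and $\area(\tilde\ms_j) \to \infty$.

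\emph{Step 2: stability, size of the leaves, counting.} The key point is that every leaf of $\lam$ must be stable. Indeed, by the smooth convergence $\tilde\ms_j \to \lam$, any compactly supported variation on a leaf which makes the Jacobi quadratic form negative can be transplanted to a negative direction for $Q^{\tilde\ms_j}$ for $j$ large; cumulating such negative directions coming from an unstable leaf, or from more than $I + C(I)$ distinct leaves, would violate the uniform bound on $\ind(\tilde\ms_j)$. Given stability, \cref{prop:StableImpliesCptness} applies to each leaf of $\lam$ that meets $\partial \amb$ and shows it is a compact disc of area bounded purely in terms of $(\amb,\smetric)$; the classical Schoen--Yau diameter estimate in positive scalar curvature (and a straightforward adaptation in the other sub-case of the hypothesis) handles the closed interior leaves analogously. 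The monotonicity formula supplies a uniform positive lower bound on the area of each leaf, so $\lam$ has boundedly many leaves, and a logarithmic-cutoff argument on a tubular neighborhood of each leaf, adapted to respect the Robin boundary condition of the Jacobi operator, bounds the multiplicity of the convergence $\tilde\ms_j \to \lam$. Together these give a uniform bound on $\area(\tilde\ms_j)$, the desired contradiction.

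\emph{Main obstacle.} The genuinely delicate points are (a) the transplantation of negative directions from leaves whose closure touches boundary singular points in $\set_\infty \cap \partial \amb$, and (b) the logarithmic-cutoff multiplicity bound in the free boundary setting, since all admissible competitors for the index must be compatible with the Robin-type boundary condition coming from $\II^{\partial \amb}$. Both represent the substantive additional work needed over the closed counterpart in \cite{ChodoshKetoverMaximo2017}, and they are also the reason why the constants $\Lambda_0, \tau_0, g_0, b_0$ in \cref{thm:AreaBound} have to remain ineffective in spite of the effective index bound.
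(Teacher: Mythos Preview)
Your Step 1 matches the paper. Step 2, however, has genuine gaps. First, you apply the index transplantation to $\tilde\ms_j$, but these are the \emph{surgered} surfaces from \cref{cor:Surgery} and are not minimal, so there is no Jacobi form or Morse index to speak of; the index bound is only available for the original $\ms_j$. Second, and more importantly, your mechanism for bounding the number of leaves of $\lam$ does not work: stable leaves contribute no negative directions, so you cannot count them against the index, and the monotonicity lower bound on the area of each leaf does not bound their number either (a lamination may have infinitely many disjoint compact leaves accumulating on one another). Third, the ``logarithmic-cutoff'' multiplicity bound is not a standard argument and you have not indicated how it would go; in fact controlling the multiplicity of lamination convergence globally is exactly the hard point.

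The paper's argument avoids all of this by localizing. Since $\area(\ms_j)\to\infty$, there is a point $x_0$ with $\Haus^2(\ms_j\cap B_r(x_0))\to\infty$ for every $r>0$. By \cref{thm:RemSingLimLam} the leaf $L\in\lam$ through $x_0$ satisfies a dichotomy: either it has stable universal cover, or the convergence to it is locally multiplicity one. The second alternative is ruled out by the area concentration at $x_0$, so $L$ has stable universal cover, and \cref{prop:StableImpliesCptness} (applied via a pull-back) forces $L$ to be a compact embedded disc. Now pass to the surgered $\tilde\ms_j$ (bounded curvature, boundedly many components, still diverging area) and take a single connected component. Near $x_0$ at least one sheet of $\tilde\ms_j$ over $L$ converges with multiplicity one; because $L$ is simply connected, a monodromy argument upgrades this local multiplicity-one convergence to global multiplicity-one convergence of the whole component to $L$. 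That bounds the area of that component by roughly $\area(L)$, contradicting divergence. The point you are missing is that simple connectivity of the limit leaf (coming from \cref{prop:StableImpliesCptness}) replaces any need to count leaves or bound multiplicity directly.
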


\begin{remark}
Note that the requirement that either of the curvature assumptions hold \emph{strictly} is actually necessary, for the manifold $S^1\times S^1\times I$, endowed with a flat metric, contains stable minimal annuli of arbitrarily large area.
\end{remark}

\begin{remark}\label{rmk:FraserLicomment}
An area bound for free boundary minimal surfaces is required also in the proof of \cref{thm:FraLiCpt} (see \cite[Proposition 3.4]{FraserLi2014}).
However, the assumption that is made there is a bound on the topology and, also, the method employed in that case is essentially analytic, relying on the aforementioned connection between free boundary minimal surfaces and the first Steklov eigenvalue.
\end{remark}

\begin{remark}\label{rmk:Hersch}
So far \cref{thm:AreaBound} was known only for surfaces with index $I=0$ or $I=1$ (see \cite[Appendix A]{AmbrozioBuzanoCarlottoSharp2018}). Indeed, for stable free boundary minimal surfaces the area bound follows from the stability inequality and a similar argument can be applied to surfaces with index $1$ based on the well-known \emph{Hersch trick}. 
\end{remark}

In order to prove the previous theorem, it turns out that one needs to gain some control on the size of \emph{stable subdomains} of free boundary minimal surfaces. Prior to this work, this result was known only in the closed case (see \cite{Carlotto2015}, based on ideas going back to the work by Schoen--Yau \cite{SchoenYau1983}).

\begin{proposition} \label{prop:StableImpliesCptness}
Let $(\amb^3,\smetric)$ be a three-dimensional Riemannian manifold with boundary. Denote by $\varrho_0\eqdef \inf_M \Scal_g$ the infimum of the scalar curvature of $\amb$ and by $\sigma_0\eqdef \inf_{\partial\amb}H^{\partial\amb}$ the infimum of the mean curvature of $\partial\amb$. Assume that 
\begin{enumerate}[label={\normalfont(\roman*)}]
\item \label{sic:posscal} \emph{either} the scalar curvature of $\amb$ is uniformly positive ($\varrho_0>0$) and $\partial \amb$ is mean convex ($\sigma_0\ge 0$) with no minimal components;
\item \label{sic:strictmc} \emph{or} the scalar curvature of $\amb$ is nonnegative ($\varrho_0\ge 0$) and $\partial\amb$ is uniformly strictly mean convex ($\sigma_0 > 0$).
\end{enumerate}
Then every complete, connected, embedded, stable free boundary minimal surface $\ms^2\subset\amb$ that is two-sided and has nonempty boundary is compact, and its intrinsic diameter satisfies the bound
\[
\operatorname{diam}(\ms)\eqdef \sup_{x,y\in\ms} d_\ms(x,y) \le \min\left\{ \frac{2\sqrt 2 \pi}{\sqrt{3\varrho_0}}, \frac{\pi + 8/3}{\sigma_0}\right\}\point
\]
Moreover, one has that
\[
0< \frac{\varrho_0}2 \area(\ms) + \sigma_0 \length(\partial\ms) \le 2\pi\chi(\ms) \semicolon
\]
in particular, $\ms$ is diffeomorphic to a disc.
\end{proposition}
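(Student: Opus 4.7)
The plan follows a Schoen--Yau type strategy: propagate positive scalar curvature (resp.\ strict mean convexity) from the ambient manifold to the stable free boundary minimal surface, with careful bookkeeping of the boundary terms. I would first derive, assuming momentarily that $\ms$ is compact, a master Gauss--Bonnet inequality by testing stability against $Y=\nu$ (permitted by two-sidedness). Since $\cov^\perp\nu\equiv 0$, this reduces to $\int_\ms(\Ric_M(\nu,\nu)+\abs{A}^2)\,d\Haus^2 \leq \int_{\partial\ms}\II^{\partial M}(\nu,\nu)\,d\Haus^1$. In dimension three, the Gauss equation for a minimal surface rewrites the interior integrand as $\tfrac12(\Scal_M+\abs{A}^2)-K_\ms$, where $K_\ms$ is the intrinsic Gauss curvature. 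For the boundary integrand, orthogonality of $\ms$ and $\partial M$ forces $\nu$ to be tangent to $\partial M$ along $\partial\ms$; completing $\nu$ to an orthonormal frame $\{T,\nu\}$ of $T\partial M|_{\partial\ms}$ with $T$ tangent to $\partial\ms$ and using $\II^{\partial M}(T,T)=-\kappa_g$ (with $\kappa_g$ the signed geodesic curvature of $\partial\ms$ in $\ms$, inward convention) gives $\II^{\partial M}(\nu,\nu)=-H^{\partial M}+\kappa_g$. Combining with Gauss--Bonnet $\int_\ms K_\ms + \int_{\partial\ms}\kappa_g = 2\pi\chi(\ms)$ yields
\[
2\pi\chi(\ms)\;\geq\;\tfrac12\int_\ms(\Scal_M+\abs{A}^2)\,d\Haus^2 + \int_{\partial\ms}H^{\partial M}\,d\Haus^1\;\geq\;\tfrac{\varrho_0}{2}\area(\ms)+\sigma_0\length(\partial\ms).
\]
The right-hand side is strictly positive (using $\partial\ms\neq\emptyset$ together with the no-minimal-components assumption in case~(i), or $\sigma_0>0$ in case~(ii)); since $\chi(\ms)>0$ is an integer and $b\geq 1$, the classification of compact surfaces with boundary immediately forces $g=0$, $b=1$, so $\ms$ is a disc.

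Next I need to justify the compactness standing assumption of Step~1 by proving the two explicit diameter estimates; since $\ms$ is complete with $\partial\ms\neq\emptyset$, finite intrinsic diameter forces compactness. For the bound $2\sqrt{2}\pi/\sqrt{3\varrho_0}$, I would localize the Step~1 argument by testing stability against a cutoff $\phi\circ d_\ms(\cdot,p)$ supported in a geodesic subdomain of $\ms$ around an arbitrary point $p$, and optimize $\phi$ as in the classical closed Schoen--Yau diameter bound; the boundary term retains a favorable sign thanks to mean convexity, while $\Scal_M\geq\varrho_0>0$ drives the contradiction for radii exceeding the stated value. For the mean-convexity bound $(\pi+8/3)/\sigma_0$, I would instead test stability against a function of the distance $d(\cdot,\partial\ms)$: this reduces the inequality to a one-dimensional ODE comparison on the length of the level sets $\{d(\cdot,\partial\ms)=s\}$, in which the strict positivity $\sigma_0>0$ of $H^{\partial M}$ supplies the driving boundary contribution and, after carefully optimizing the profile, pins down the explicit constant.

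The main obstacle will be Step~2, and in particular the bound $(\pi+8/3)/\sigma_0$. Unlike the closed Schoen--Yau setting, where the positive driving term comes from $\Scal_M$ in the interior, here it originates from $\partial\ms$, so one must engineer distance-to-boundary test functions carefully and track the nested tubes $\{d(\cdot,\partial\ms)\leq s\}\subset\ms$ to extract a sharp explicit constant from stability, while also ensuring that the argument does not lose information across the \emph{free} portion of the boundary of the tested subdomain (where no sign is a priori available). Once both diameter estimates are established, compactness follows and Step~1 applies verbatim, completing the proof.
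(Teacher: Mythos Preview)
Your Step~1 is correct and is indeed the standard rearrangement; the paper effectively recovers the same Gauss--Bonnet inequality but does so in a way that handles the compact and noncompact cases uniformly (see below).

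The substantial divergence is in Step~2. The paper does \emph{not} test stability directly with cutoffs of $d_\ms(\cdot,p)$ or $d_\ms(\cdot,\partial\ms)$. Instead it passes through the Fischer--Colbrie device: since $\ms$ is stable (and two-sided), there is a positive solution $\omega$ of $\jac_\ms\omega=0$ with the Robin boundary condition $\partial_\eta\omega=-\II^{\partial M}(\nu,\nu)\omega$. One then studies the conformal metric $\tilde g=\omega^2 g$ on $\ms$ and computes
\[
\tilde K=\omega^{-2}\Bigl(\tfrac12\Scal_\smetric+\tfrac12\abs{A}^2\Bigr)+\frac{\abs{\nabla\omega}^2}{\omega^4}\ge \frac{\varrho_0}{2\omega^2}\ge 0,
\qquad
\tilde k=\omega^{-1}H^{\partial M}\ge \frac{\sigma_0}{\omega}\ge 0.
\]
So $(\ms,\tilde g)$ has nonnegative curvature \emph{and convex boundary}; one shows it is complete, and Gauss--Bonnet on its metric balls gives the inequality $\tfrac{\varrho_0}{2}\area(\ms)+\sigma_0\length(\partial\ms)\le 2\pi$ (even if $\ms$ is a priori noncompact). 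The diameter bounds are then obtained by computing the \emph{second variation of $\tilde g$-length} along $\tilde g$-minimising curves: from a point to $\partial\ms$ (giving $d_\ms(x_0,\partial\ms)\le\min\{2\sqrt2\pi/\sqrt{3\varrho_0},\,4/(3\sigma_0)\}$), and between two interior points (such a geodesic avoids the convex boundary, so the closed-case argument applies verbatim for the $\varrho_0$ bound). The constant $(\pi+8/3)/\sigma_0$ then arises by combining the distance-to-boundary bound $4/(3\sigma_0)$ with the bound $\length(\partial\ms)\le 2\pi/\sigma_0$ from Gauss--Bonnet.

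Your direct approach runs into a genuine obstacle: plugging $u=\phi(d_\ms(\cdot,p))$ into the stability inequality and using the Gauss equation leaves you with $-\int K\phi^2$, and $K$ is indefinite. In the closed Schoen--Yau argument this is precisely what the conformal change absorbs; the same issue is worse in the free boundary case because the boundary introduces an extra geodesic-curvature term when you try to integrate $K$ by parts. Your level-set scheme for the $\sigma_0$ bound has the same problem, and it is not clear how ``ODE comparison on $\length\{d(\cdot,\partial\ms)=s\}$'' would isolate $\sigma_0$ without the boundary-convexity of $(\ms,\tilde g)$ doing the work. In short, the missing idea is the positive Jacobi function and the associated conformal metric; once you have those, the free boundary becomes $\tilde g$-convex and the one-dimensional variational arguments go through cleanly.
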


It is interesting to note that the variational argument that allows to prove the corresponding estimate in the closed case is not sufficient in the free boundary context and, indeed, this result turns out to be much more delicate (see \cref{sec:CptnessStable}). Yet, the key idea remains similar: stable (free boundary) minimal hypersurfaces inherit the `positivity' curvature properties of the ambient manifold (cf. \cite{SchoenYau1979PMT,SchoenYau2017} for a striking application of this principle to the proof of the \emph{positive mass theorem}).

We shall now present two significant consequences of \cref{thm:AreaBound}. The first one descends by combining the area bound with the geometric compactness result of \cref{thm:FraLiCpt}.
\begin{corollary} \label{cor:CptBoundIndPosRic}
Let $(\amb^3,\smetric)$ be a compact Riemannian manifold with boundary. Suppose that $\amb$ has nonnegative Ricci curvature and that the boundary $\partial\amb$ is strictly convex.
Then any set of compact embedded free boundary minimal surfaces with uniformly bounded index is compact in the $C^k$ topology for every $k\ge 2$.
\end{corollary}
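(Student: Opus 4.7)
The plan is to combine the area and topology bounds of \cref{thm:AreaBound} with the geometric compactness statement of \cref{thm:FraLiCpt}. First, I would observe that nonnegative Ricci curvature implies $\Scal_\smetric \geq 0$, and that strict convexity of $\partial\amb$ (namely $\II^{\partial\amb} < 0$ in the sign convention of the preliminaries) implies $H^{\partial\amb} > 0$, that is $\partial\amb$ is strictly mean convex. Thus case (ii) of \cref{thm:AreaBound} applies in the present setting, and moreover \hypC{} (and hence \hypP{}) holds automatically.

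Given a sequence $\{\ms_j\}_{j\in\N}$ of compact embedded free boundary minimal surfaces in $\amb$ with $\ind(\ms_j) \leq I$, I would decompose each $\ms_j$ into connected components. Since the Jacobi quadratic form decouples over connected components, each component has index at most $I$. For each component with nonempty boundary, \cref{thm:AreaBound} yields uniform bounds, depending only on $(\amb,\smetric)$ and $I$, on area, total curvature, genus, and number of boundary components. A possible closed component is either ruled out by a standard maximum principle argument (foliating a neighborhood of $\partial\amb$ by strictly mean convex parallel hypersurfaces and pushing inward), or handled directly via the closed analogue of \cref{thm:AreaBound} established by Chodosh--Ketover--Maximo. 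Using a uniform lower area bound on each component (coming from the monotonicity formula in the free boundary setting, as recalled in \cref{sec:Mult1Conv}), the total number of components is then bounded uniformly in $j$.

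Passing to a subsequence, I may assume that all $\ms_j$ consist of the same number of connected components, and that corresponding components across $j$ share the same genus and the same number of boundary components, i.e., the same topological type. At this point \cref{thm:FraLiCpt} of Fraser--Li, applied component by component, yields a further subsequence converging in the $C^k$ topology for every $k \geq 2$. Since any sequence admits a $C^k$-convergent subsequence, the claimed compactness follows.

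The main obstacle in this plan is not really analytic but rather organizational: one must verify that the per-component application of \cref{thm:AreaBound} (which is phrased for \emph{connected} surfaces with \emph{nonempty} boundary) combines correctly with a uniform bound on the number of components, and that the closed-component possibility is dispatched cleanly. Once this bookkeeping is done, the substantive content is entirely packaged inside the statements of \cref{thm:AreaBound} and \cref{thm:FraLiCpt}, and no further analysis is required.
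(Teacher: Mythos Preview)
Your proposal is correct and follows exactly the approach the paper indicates: the paper does not give a detailed proof of this corollary, merely remarking that it ``descends by combining the area bound with the geometric compactness result of \cref{thm:FraLiCpt}.'' Your extra bookkeeping about connected components, the handling of potential closed components, and the uniform bound on the number of components is appropriate detail that the paper omits, but the core idea---\cref{thm:AreaBound} gives uniform area and topology bounds under the stated curvature hypotheses, which then feeds into Fraser--Li's \cref{thm:FraLiCpt}---is precisely the paper's intended argument.
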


In addition, one can employ the Baire-type result given in \cite[Theorem 9]{AmbrozioCarlottoSharp2018Compactness} to obtain the following \emph{generic finiteness} result.

\begin{corollary}\label{cor:GenericFin}
Let $\amb^3$ be a compact manifold with boundary. For a generic choice of $\smetric$ in the class of Riemannian metrics such that $M$ has positive scalar curvature and $\partial M$ has strictly mean convex boundary, the space of compact, embedded, free boundary minimal surfaces with index bounded by $I$ is finite (for any $I\in\mathbb{N}$). Hence, the set of all such surfaces (regardless of their Morse index) is countable. Analogous conclusions hold true for $\smetric$ chosen in a dense subclass of metrics satisfying the curvature conditions \ref{ch:PosScal} or \ref{ch:PosMean} given above.
\end{corollary}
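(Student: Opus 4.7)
The plan is to combine \cref{thm:AreaBound} with the Baire-type result \cite[Theorem 9]{AmbrozioCarlottoSharp2018Compactness}, which (in its free boundary incarnation) asserts that for any fixed $I \in \N$ and $\Lambda > 0$, the set of smooth Riemannian metrics on $M$ for which there are only \emph{finitely} many compact, properly embedded, free boundary minimal surfaces of index $\le I$ and area $\le \Lambda$ contains a countable intersection of open dense subsets (with respect to a suitable $C^k$ topology, say $C^\infty$). The novelty here is that, thanks to \cref{thm:AreaBound}, we can upgrade such statements to the area-unconstrained setting whenever the ambient curvature assumptions are in force.

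First I would work in the class of metrics
\[
\mathcal{M}_+ \eqdef \{\smetric \st \Scal_\smetric > 0 \text{ on } M,\ H^{\partial M}_\smetric > 0 \text{ on } \partial M\},
\]
which is an \emph{open} subset of the space of smooth Riemannian metrics (since $M$ is compact and both conditions are open). For each $I \in \N$ and each $\Lambda > 0$, \cite[Theorem 9]{AmbrozioCarlottoSharp2018Compactness} supplies a residual set $\mathcal{G}_{I,\Lambda} \subset \mathcal{M}_+$. Next, for a given $\smetric \in \mathcal{M}_+$, apply \cref{thm:AreaBound}: since both curvature conditions \ref{ch:PosScal} and \ref{ch:PosMean} are satisfied (strictly), any compact connected embedded FBMS $\Sigma \subset (M,\smetric)$ with $\ind(\Sigma) \le I$ has area bounded by $\Lambda_0(M,\smetric,I)$. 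One technicality is that $\Lambda_0$ varies with the metric, so I would first pass to a countable exhaustion: let $\mathcal{U}_n \subset \mathcal{M}_+$ be open sets with $\bigcup_n \mathcal{U}_n = \mathcal{M}_+$ on each of which one can take a uniform area bound $\Lambda_n = \Lambda_n(I)$ (using continuity of the constants in \cref{thm:AreaBound} with respect to the ambient geometry, which follows from its proof via the curvature estimates of \cref{sec:TopologicalDegeneration}). Then the set
\[
\mathcal{G}_I \eqdef \bigcap_{n \in \N} \left( \mathcal{G}_{I,\Lambda_n(I)} \cap \mathcal{U}_n \right) \cup \left( \mathcal{M}_+ \setminus \overline{\mathcal{U}_n} \right)
\]
is residual in $\mathcal{M}_+$, and for every $\smetric \in \mathcal{G}_I$ the space of compact embedded FBMS of index at most $I$ is finite (every such surface lies in the finite list produced by the Baire theorem applied on the relevant $\mathcal{U}_n$).

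Taking the countable intersection $\mathcal{G} \eqdef \bigcap_{I \in \N} \mathcal{G}_I$, which remains residual in $\mathcal{M}_+$ by the Baire category theorem (applied in the Fréchet space of smooth symmetric $(0,2)$-tensors), we conclude that for every $\smetric \in \mathcal{G}$ and every $I \in \N$ there are only finitely many FBMS with index $\le I$; stratifying by $I$ yields that the total family of FBMS (over all indices) is a countable union of finite sets, hence countable. This proves the first assertion.

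For the last sentence of the statement, the classes of metrics associated to the weaker curvature conditions (positive scalar curvature with mean convex and no minimal boundary components in case \ref{ch:PosScal}, nonnegative scalar curvature with strictly mean convex boundary in case \ref{ch:PosMean}) need no longer be open, so full genericity in the Baire sense may be out of reach; however, one still obtains density of the good set by noting that $\mathcal{M}_+$ is dense inside each of these larger classes (by a small perturbation that simultaneously increases $\Scal_\smetric$ and $H^{\partial M}_\smetric$) and that the set $\mathcal{G}$ constructed above is dense inside $\mathcal{M}_+$. I expect the main obstacle to be the metric-dependence of the area bound $\Lambda_0$ from \cref{thm:AreaBound}: organizing the countable exhaustion correctly so that the Baire argument closes up is delicate and must be handled verbatim as in \cite[Theorem 9]{AmbrozioCarlottoSharp2018Compactness}, rather than invoked as a black box.
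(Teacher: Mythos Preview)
Your overall strategy is correct and matches the paper's (implicit) approach: combine \cref{thm:AreaBound} with \cite[Theorem 9]{AmbrozioCarlottoSharp2018Compactness}. However, you have overcomplicated the argument and introduced an unnecessary and unjustified step.

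The content of \cite[Theorem 9]{AmbrozioCarlottoSharp2018Compactness} is the free boundary bumpy metrics theorem: there is a \emph{single} residual set $\mathcal{B}$ of metrics (those for which no compact properly embedded free boundary minimal surface, nor any finite cover thereof, admits a nontrivial Jacobi field) that works simultaneously for all $I$ and all $\Lambda$. There is no family $\mathcal{G}_{I,\Lambda}$ indexed by the bounds. Once you know this, the argument is direct: take $\smetric \in \mathcal{B} \cap \mathcal{M}_+$ (the intersection of a residual set with an open set is residual in the latter). For this fixed $\smetric$, \cref{thm:AreaBound} produces a constant $\Lambda_0 = \Lambda_0(M,\smetric,I)$ bounding the area of every compact connected embedded free boundary minimal surface of index at most $I$; the compactness theorem of \cite{AmbrozioCarlottoSharp2018Compactness} then says this space is compact, and bumpiness makes it discrete, hence finite. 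No exhaustion, no countable cover $\{\mathcal{U}_n\}$, and crucially no continuity of $\Lambda_0$ in $\smetric$ is needed: the area bound is applied metric by metric, after the residual set has already been fixed.

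Your claim that the constants in \cref{thm:AreaBound} depend continuously on the metric is not justified (the proof proceeds by contradiction and compactness, yielding no explicit or obviously continuous bound), and although your exhaustion construction can be made to work set-theoretically, it rests on this unproven continuity. Drop the exhaustion entirely and invoke the bumpy metrics theorem in its correct form; the density statement for the broader classes \ref{ch:PosScal} and \ref{ch:PosMean} then follows exactly as you sketched in your final paragraph.
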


\begin{remark}\label{rmk: GenericYau}
Guang--Li--Wang--Zhou proved in \cite{GuangLiWangZhou2021}*{Proposition 5.3} that, for a generic choice of metric on a compact Riemannian manifold with boundary, the space of (compact embedded) free boundary minimal surfaces with index bounded by $I$ is \emph{countable}.
Observe that their result proves countability instead of finiteness, but it holds in any compact Riemannian manifold with boundary.
\end{remark}

\section[Pathological families of FBMS]{Pathological families of free boundary minimal surfaces}

Explicit examples, based on equivariant constructions due to Hsiang \cite{Hsiang1983} and Calabi \cite{Calabi1967} show that the conclusion of the geometric compactness theorem by Choi--Schoen cannot possibly hold in higher dimension or codimension, respectively, no matter how restrictive curvature assumptions one considers on the ambient manifold. A related question, explicitly posed by B. White in 1984, see \cite{Brothers1986}, is whether strong compactness still holds under weaker curvature conditions but in ambient dimension three, specifically in the class of compact three-manifolds of positive scalar curvature. For the closed case, this question was fully answered by Colding and De Lellis in \cite{ColdingDeLellis2005} (after earlier, significant contributions by Hass--Norbury--Rubinstein \cite{HassNorburyRubinstein2003}): given any compact three-manifold $M$ supporting metrics of positive scalar curvature, and a nonnegative integer $g$, one can construct a metric of positive scalar curvature $\smetric=\smetric(g)$ so that the ambient manifold $(M,\smetric)$ contains a sequence of pairwise distinct, closed minimal surfaces of genus $g$ which is not compact in the sense above. In fact, one can analyze the limit behavior of such a sequence in detail: one witnesses convergence to a singular minimal lamination (inside the given ambient manifold) with precisely two singular points, located at two antipodes on a stable minimal sphere. Furthermore, the value of the area and of the Morse index of these surfaces diverge. 
Here we discuss, and solve, the corresponding problem in the setting of \emph{free boundary} minimal surfaces inside smooth compact three-manifolds with boundary. 

\begin{theorem}\label{thm:Counterexample}
Let $M^3$ be a compact, orientable, three-manifold supporting Riemannian metrics of positive scalar curvature and mean convex boundary and let $g\geq 0$ and $b>0$ be integers. Then there exists a Riemannian metric $\smetric=\smetric(g,b)$ of positive scalar curvature and totally geodesic boundary such that $(M,\smetric)$ contains a sequence of connected, embedded, free boundary minimal surfaces of genus $g$ and exactly $b$ boundary components, and whose area and Morse index attain arbitrarily large values. Analogous conclusions hold true requiring the ambient manifold to have positive scalar curvature and strictly mean convex boundary.
\end{theorem}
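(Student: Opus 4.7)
The first observation reduces the theorem: once $(\amb,\smetric)$ is built with positive scalar curvature and the prescribed boundary condition, the divergence of the Morse indices along an area-diverging sequence of FBMS is \emph{automatic} by \cref{thm:AreaBound}, since a uniform bound $\ind(\ms_j)\le I$ would force a uniform area bound $\Lambda_0(\amb,\smetric,I)$ in contradiction with $\area(\ms_j)\to+\infty$. It therefore suffices to construct a manifold $(\amb,\smetric)$ with the required curvature conditions, along with a sequence of connected embedded FBMS $\ms_j\subset\amb$ of fixed genus $g$ and fixed number $b$ of boundary components whose areas are unbounded.

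The plan is to assemble $(\amb,\smetric)$ by gluing a finite family of elementary blocks, adapting the closed-case strategy of Colding--De Lellis \cite{ColdingDeLellis2005}. Each block plays a specific role: a \emph{degeneration block}, responsible for producing the pathological area-divergent sequence, together with auxiliary \emph{topology blocks} and \emph{boundary blocks} that prescribe the target genus $g$ and boundary-component count $b$, all connected by neutral positive-scalar-curvature pieces and terminated by a mean convex (or totally geodesic) end matching $\partial\amb$. A clean way to organise the degeneration block is the doubling trick: fix an auxiliary metric $\smetric_0$ on $\amb$ of positive scalar curvature with totally geodesic boundary (available from the hypothesis via Gromov--Lawson type boundary straightening), so that the double $(\widetilde\amb,\widetilde\smetric_0)$ is a closed positive-scalar-curvature 3-manifold carrying an isometric involution $\sigma$ whose fixed set is the seam $\partial\amb\subset\widetilde\amb$. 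Inside it, realise a $\sigma$-equivariant version of the Colding--De Lellis degeneration, modelled on a $\sigma$-invariant stable minimal sphere $S$ whose fixed circle coincides with the equator, and graft catenoidal necks in a symmetric fashion: some straddling $\partial\amb$ (these will produce boundary components of the quotient surfaces) and others appearing as interior $\sigma$-pairs (these will produce interior handles). This yields a sequence $\{\widetilde\ms_j\}\subset\widetilde\amb$ of $\sigma$-invariant closed embedded minimal surfaces of fixed genus $\tilde g = 2g+b-1$, meeting $\partial\amb$ transversely in exactly $b$ circles, with $\area(\widetilde\ms_j)\to+\infty$. The reflection principle (\cref{lem:ReflectionPrinciple}), applied locally at the fixed set of $\sigma$, then promotes the restrictions $\ms_j\eqdef\widetilde\ms_j\cap\amb$ to smooth embedded FBMS in $(\amb,\smetric_0)$; an Euler-characteristic count $\chi(\widetilde\ms_j)=2\chi(\ms_j)$ (the fixed circles contribute zero) gives $\genus(\ms_j)=g$, $\bdry(\ms_j)=b$, and $\area(\ms_j)=\area(\widetilde\ms_j)/2\to+\infty$.

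The hard part will be the equivariant topological bookkeeping: the handle-grafting must be orchestrated so that precisely $2g$ handles appear as interior $\sigma$-paired necks while precisely $b$ further handles straddle $\partial\amb$, and the scalar curvature must remain strictly positive throughout the gluing. This requires an explicit equivariant refinement of the catenoid-gluing procedure of \cite{ColdingDeLellis2005} and a careful choice of warping profile on a tubular neighbourhood of $S$, both organised so that the construction commutes with $\sigma$. The strictly mean convex variant of the theorem is then reached by a small $C^\infty$ perturbation of $\smetric_0$ supported in a collar neighbourhood of $\partial\amb$, designed so as to render the boundary strictly mean convex while preserving positive scalar curvature; a persistence argument, based on arranging (via generic choice of the block parameters) that each $\ms_j$ has nondegenerate Jacobi operator and no incident to the perturbation region, then transports the entire sequence to the perturbed metric and concludes the proof.
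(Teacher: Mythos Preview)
Your opening reduction has a genuine gap: \cref{thm:AreaBound} requires either that $\partial M$ is mean convex \emph{with no minimal component} (together with $R_\smetric>0$), or that $\partial M$ is \emph{strictly} mean convex. A totally geodesic boundary has $H^{\partial M}\equiv 0$, so every component is minimal and neither hypothesis is met. Hence you cannot deduce $\ind(\ms_j)\to\infty$ from $\area(\ms_j)\to\infty$ in the totally geodesic case, which is the case you actually construct. The paper avoids this issue entirely: the index divergence is inherited directly from the Colding--De Lellis spiraling-spheres block (see \cref{rmk:index}), where it is built into the construction, not deduced a posteriori.

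Beyond this, your doubling strategy is quite different from the paper's and leaves the central step as an unexecuted assertion. The paper does \emph{not} double $M$ and run an equivariant closed construction; instead it builds new free-boundary blocks directly. For $b=1$ it caps a cylinder $I\times S^2$ with a round hemisphere, smooths the interface via Miao's theorem, and obtains a PSC ball containing a one-parameter family of free boundary minimal discs (\cref{lem:fbpiece}); for $b\ge 2$ it halves the Colding--De Lellis torus block along a totally geodesic sphere to get annuli (\cref{rmk:divide}) and proves a free-boundary wire-matching lemma (\cref{lem:join}) to concatenate copies. These boundary blocks are then joined, via the original (non-equivariant) Colding--De Lellis matching, to $g$ torus blocks and one spiraling-spheres block, and finally Gromov--Lawson connected-summed with $(M,\smetric_T)$. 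Your equivariant alternative---grafting necks in a $\sigma$-symmetric fashion so that exactly $b$ straddle the fixed set and $2g$ come in interior pairs---is plausible, but the equivariant refinement of the Colding--De Lellis warping and neck-gluing is precisely the ``hard part'' you flag and do not carry out; the paper's modular route sidesteps it. Your persistence argument for the strictly mean convex variant is also more delicate than needed: since in the paper's assembly the boundary sits only in the $(M_2^{(b)},\smetric_2^{(b)})$ block while the area/index degeneration occurs in the interior spiraling block, one can perturb the boundary block alone (via \cite{CarlottoLi2019}*{Lemma C.1}) without touching the degenerating region.
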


This shows that such curvature conditions, i.e., $R_\smetric> 0$ in $M$ and $H^{\partial M}_\smetric\geq 0$ on its boundary $\partial M$ are in general too weak to ensure any form of geometric compactness. In fact, in each of the above examples we witness not only the lack of smooth single-sheeted subsequential convergence, but even of a milder form of subsequential convergence (namely: smooth, possibly with integer multiplicity $m\geq 1$, away from finitely many points) to a smooth free boundary minimal surface. The limit object is, as above, more pathological than one would hope for. On the other hand, the reader may want to compare these `negative' results with some of the geometric applications in \cite{AmbrozioBuzanoCarlottoSharp2018}, based on a bubbling analysis: it is shown that (in three-manifolds satisfying the aforementioned curvature conditions) sequences of free boundary minimal surfaces of fixed topological type \emph{and correspondingly low index} shall indeed subconverge in the strongest geometric sense.

\begin{remark}\label{rmk:ConvexBC}
In certain cases, for instance when $M$ is a ball and $b=1$, it is possible to deform the metric $\smetric$ near the boundary $\partial M$ so that $(M,\smetric)$ has positive scalar curvature, \emph{strictly convex} boundary and contains a sequence of free boundary minimal surfaces whose limit behavior is as above.
\end{remark}

\begin{remark}
A simpler variant of the very same construction we shall present in the proof of \cref{thm:Counterexample} allows to prove that, when dropping any curvature requirement, these noncompactness phenomena can be made to occur inside \emph{any} pre-assigned topological three-manifold.
\end{remark}

\begin{remark}
A full topological characterization of those compact three-manifolds that support metrics of positive scalar curvature and mean convex boundary has been obtained by the first author and Li in \cite[Theorem A]{CarlottoLi2019}. Roughly speaking, one can assert that those curvature conditions are only `mildly' restrictive from the topological perspective, as in particular the boundary $\partial M$ can be the disjoint union of closed surfaces whose genera correspond to any pre-assigned string of nonnegative integers.  
\end{remark}

The examples we construct partly rely on earlier work by Colding--De Lellis \cite{ColdingDeLellis2005}. Their construction is, in some sense, \emph{modular}: they build some simple blocks and develop tools to glue such blocks together by means of a suitable \emph{wire matching argument}. The novel ingredients one needs to prove our results is a new family of building blocks: for any $b>0$ we construct a (conveniently simple) Riemannian metric of positive scalar curvature on the three-ball so that the resulting three-manifold contains a sequence of free boundary surfaces of genus 0 and exactly $b$ boundary components. The construction we present is rather different depending on whether $b\geq 2$ or instead $b=1$, the latter relying on a smoothing lemma by P. Miao employed to the scope of desingularizing a preliminary edgy model. A detailed proof of \cref{thm:Counterexample} is provided in \cref{sec:Counterexample}.

\section{Structure of \texorpdfstring{\cref{part:Complexity}}{Part I}}

We provide an outline of the contents of this part by means of the following diagram.

\begin{center}
%%%%%%%%%%%%%%%%%%%%%%%%%%%%%%%%%%%%%%%%%%%%%%%%%%%%%
%%% article_scheme.tikz
%%%%%%%%%%%%%%%%%%%%%%%%%%%%%%%%%%%%%%%%%%%%%%%%%%%%%
\usetikzlibrary{trees}
\tikzstyle{every node}=[anchor=west]
\tikzstyle{tit}=[shape=rectangle, rounded corners,
    draw, minimum width = 3cm]
\tikzstyle{sec}=[shape=rectangle, rounded corners]
\tikzstyle{optional}=[dashed,fill=gray!50]
\begin{tikzpicture}[scale=0.9]

\tikzstyle{mybox} = [draw, rectangle, rounded corners, inner sep=10pt, inner ysep=20pt]
\tikzstyle{fancytitle} =[fill=white, text=black, draw]

\begin{scope}[align=center, font=\small]
 \node [tit] (pre) {Preliminaries};
 \node [sec] [below of = pre, yshift=-0.5cm, xshift=-3cm] (fbml) {Free boundary minimal\\ laminations (\cref{sec:FBMLam})};
 \node [sec] [below of = pre, yshift=-0.5cm, xshift=3cm] (mor) {Morse-theoretic arguments\\ (\cref{sec:MorseTheory})};
\end{scope}

\draw[gray,dashed,rounded corners] ($(pre)+(-6,0.7)$)  rectangle ($(pre)+(6,-2.6)$);

\begin{scope}[align=center, yshift=-4cm]
\node [tit] (res) {Main results};
\node [sec] [below of = res, yshift=-0.5cm] (are) {Area bound\\ (\cref{sec:AreaBound})};
\node [sec] [below of = are, yshift=-1.2cm] (cou) {Counterexamples\\ (\cref{sec:Counterexample})};
\begin{scope}[font=\small]
\node [sec] [right of = are, xshift=4.1cm] (sur) {Simplification surgery\\ (\cref{sec:Surgery})};
\node [sec] [left of = are, xshift=-4.1cm] (cpt) {Diameter bounds \\ for stable FBMS \\ (\cref{sec:CptnessStable})};
\node [sec] [below of = sur, xshift=-1.3cm, yshift=-1.2cm] (mic) {Microscopic\\ behavior \\ (\cref{sec:LocalDegeneration})} ;
\node [sec] [below of = sur, xshift=1.3cm, yshift=-1.2cm] (mac) {Macroscopic\\ behavior \\ (\cref{sec:MacroDescr})} ;

\draw[blue,rounded corners] ($(cou)+(2,-1.2)$)  rectangle ($(cpt)+(-2,1)$);
\node[blue] at ($(cou)+(-7.6,-0.7)$) {\footnotesize Core of the part};

\end{scope}

\tikzset{bigarr/.style={
decoration={markings,mark=at position 1 with {\arrow[scale=1.5]{>}}},
postaction={decorate}}}
 
\begin{scope}[gray]
\draw[bigarr] ([xshift=0.4cm]mic.north) to ([xshift=0.4cm,yshift=0.8cm]mic.north);
\draw[bigarr] ([xshift=-0.4cm]mac.north) to ([xshift=-0.4cm,yshift=0.8cm]mac.north);
\draw[bigarr] (sur) to (are);
\draw[bigarr] (cpt) to (are);
\end{scope}
\end{scope}
\end{tikzpicture}
%%%%%%%%%%%%%%%%%%%%%%%%%%%%%%%%%%%%%%%%%%%%%%%%%%%%%
%%%%%%%%%%%%%%%%%%%%%%%%%%%%%%%%%%%%%%%%%%%%%%%%%%%%%
\end{center}

%%%%%%%%%%%%%%%%%%%%%%%%%%%%%%%%%%%%%%%%%%%%%%%%%
%%% I - Preliminaries
%%%%%%%%%%%%%%%%%%%%%%%%%%%%%%%%%%%%%%%%%%%%%%%%%

%%%%%%%%%%%%%%%%%%%%%%%%%%%%%%%%%%%%%%%%%%%%%%%%%%%%%%%%%%%%%%%%
%%% Free boundary minimal laminations
%%%%%%%%%%%%%%%%%%%%%%%%%%%%%%%%%%%%%%%%%%%%%%%%%%%%%%%%%%%%%%%%

\chapter{Free boundary minimal laminations}\label{sec:FBMLam}

In this chapter we want to generalize the definitions and the results about minimal laminations (see for example \cite[Definition 2.1]{Carlotto2015} or \cite[Definition 2.2]{MeeksPerezRos2016}) to the case of manifolds with boundary (see also \cite[Section 5]{GuangLiZhou2020}). 
Indeed, laminations naturally arise as limits of (free boundary) minimal surfaces that are assumed to have uniformly bounded index, but not necessarily uniformly bounded area.

\section{Definitions and compactness results}

\begin{definition} \label{def:fbmlam}
A \emph{free boundary minimal lamination} $\lam$ in a three-dimensional Riemannian manifold $(\amb^3,\smetric)$ with boundary $\partial\amb$ is the union of a collection of pairwise disjoint, connected, embedded free boundary minimal surfaces of $\amb$. Moreover we require that $\cup_{L\in \lam} L$ is a closed subset of $\amb$ and that, for each $x\in\amb$, one of the following assertions holds:
\begin{enumerate}  [label={\normalfont(\roman*)}]
\item 
\label{fbml:interior}
$x\in \amb\setminus \partial\amb$ and there exists an open neighborhood $U$ of $x$ and a local coordinate chart $\varphi\colon  B_1^2(0)\times \oo01\subset \R^3 \to U$ such that $\varphi^{-1}((\cup_{L\in \lam} L) \cap U) = B_1^2(0) \times C$ for a closed subset $C\subset \oo 01$; 
\item \label{fbml:goodbdry}
$x\in \partial\amb$ and there exists a relatively open neighborhood $U$ of $x$ and a local coordinate chart $\varphi\colon (B_1^2(0) \cap \{x^1\ge 0\})\times \oo01 \subset \Xi(0)\to U$ such that $\varphi^{-1}((\cup_{L\in \lam} L) \cap U) = (B_1^2(0)\cap\{x^1\ge 0\})\times C$ for a closed subset $C\subset \oo 01$; 
\item \label{fbml:badbdry}
$x\in\partial\amb$ and there exists an extension $\check\amb$ without boundary and an open neighborhood $U$ of $x$ in $\check\amb$ such that property \ref{fbml:interior} is satisfied for the neighborhood $U\ni x$.
\end{enumerate}
\end{definition}
A schematic representation of the three different situations in \cref{def:fbmlam} can be found in \cref{fig:DefLam}.

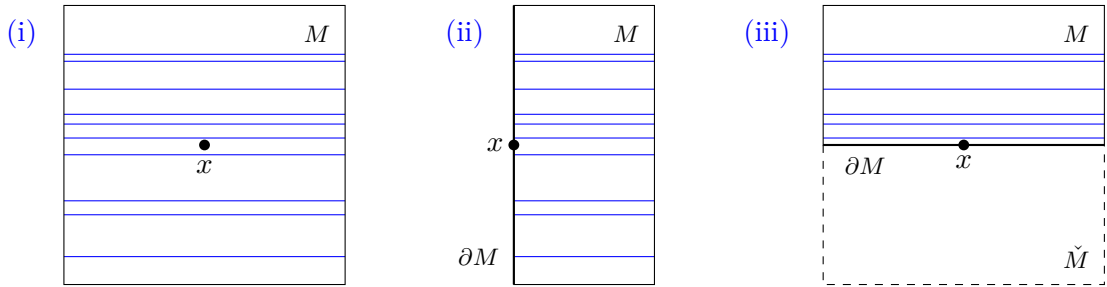
\begin{figure}[htpb]
\centering
%%%%%%%%%%%%%%%%%%%%%%%%%%%%%%%%%%%%%%%%%%%%%%%%%%%%%
%%% def_lam.tikz
%%%%%%%%%%%%%%%%%%%%%%%%%%%%%%%%%%%%%%%%%%%%%%%%%%%%%
\begin{tikzpicture}[scale=1.85]
\pgfmathsetmacro{\xa}{-3.2}
\pgfmathsetmacro{\xb}{-1}
\pgfmathsetmacro{\xc}{2.2}
\pgfmathsetmacro{\r}{1}
\pgfmathsetmacro{\s}{0.8}
\pgfmathsetmacro{\t}{0.3}

\pgfmathsetmacro{\hp}{5}
\def\heightspos{{0.05, 0.15, 0.22, 0.4, 0.6, 0.65}}
\pgfmathsetmacro{\hn}{3}
\def\heightsneg{{-0.07, -0.4, -0.5, -0.8}}

% Lamination i
\foreach \i in {0,...,\hp}
  \draw[myBlue] ({\xa-\r},\heightspos[\i]) -- ({\xa+\r},\heightspos[\i]);
\foreach \i in {0,...,\hn}
  \draw[myBlue] ({\xa-\r},\heightsneg[\i]) -- ({\xa+\r},\heightsneg[\i]);
  
\fill (\xa, 0) circle [radius=0.1em] node[below=2pt] {$x$};
\draw  ({\xa+\r},\r) -- ({\xa-\r},\r) -- ({\xa-\r},-\r) -- ({\xa+\r},-\r) -- cycle;
\node at ({\xa+\s}, \s) {\footnotesize $\amb$};
\node at ({\xa-\r-\t}, \s) {\ref{fbml:interior}};

% Lamination ii
\foreach \i in {0,...,\hp}
  \draw[myBlue] (\xb,\heightspos[\i]) -- ({\xb+\r},\heightspos[\i]);
\foreach \i in {0,...,\hn}
  \draw[myBlue] (\xb,\heightsneg[\i]) -- ({\xb+\r},\heightsneg[\i]);
  
\fill (\xb, 0) circle [radius=0.1em] node[left] {$x$};
\draw  ({\xb+\r},\r) -- ({\xb},\r) -- ({\xb},-\r) -- ({\xb+\r},-\r) -- cycle;
\draw[thick] ({\xb},\r) -- ({\xb},-\r);
\node at ({\xb+\s}, \s) {\footnotesize $\amb$};
\node at ({\xb-0.25}, -\s) {\footnotesize $\partial \amb$};
\node at ({\xb-\t-0.05}, \s) {\ref{fbml:goodbdry}};

% Lamination iii
\foreach \i in {0,...,\hp}
  \draw[myBlue] ({\xc-\r},\heightspos[\i]) -- ({\xc+\r},\heightspos[\i]);

\fill (\xc, 0) circle [radius=0.1em] node[below] {$x$};
\draw ({\xc+\r},\r) -- ({\xc-\r},\r) -- ({\xc-\r},0) -- ({\xc+\r},0) -- cycle;
\draw[thick] ({\xc-\r},0) -- ({\xc+\r},0);
\node at ({\xc+\s}, \s) {\footnotesize $\amb$};
\node at ({\xc-0.7}, -0.15) {\footnotesize $\partial\amb$};
\draw[dashed]  ({\xc-\r},0) -- ({\xc-\r},-\r) -- ({\xc+\r},-\r) -- ({\xc+\r},0);
\node at ({\xc+\s}, -\s) {\footnotesize $\check{\amb}$};
\node at ({\xc-\r-\t-0.1}, \s) {\ref{fbml:badbdry}};
\end{tikzpicture}
%%%%%%%%%%%%%%%%%%%%%%%%%%%%%%%%%%%%%%%%%%%%%%%%%%%%%
%%%%%%%%%%%%%%%%%%%%%%%%%%%%%%%%%%%%%%%%%%%%%%%%%%%%%
\caption{Definition of lamination in chart.} \label{fig:DefLam}
\end{figure}
\begin{remark}
Note that, if we require property \hypP{} on $\amb$, then case \ref{fbml:badbdry} cannot occur. We have included it in the definition because we need to consider free boundary minimal laminations in half-spaces of $\R^3$ (which do not fulfill \hypP{}) and local limits of free boundary minimal surfaces (or laminations) which a priori can be nonproperly embedded (see \cref{subsec:ImpRem} and \cref{thm:LamCptness} below).
\end{remark}

\begin{definition}
We say that a point $p$ of a minimal lamination $\lam$ is a \emph{limit point} if there exists a coordinate chart $(U,\varphi)$ with $p\in U$ as in the previous definition such that $\varphi^{-1}(p) = (t,x)$ and $t$ is an accumulation point for $C$.
\end{definition}

\begin{remark} \label{rem:LimLeaf}
Thanks to the Harnack inequality, if $p$ is a limit point of a lamination $\lam$, then the entire leaf through $p$ consists of limit points of $\amb$. In this case, we shall call it a \emph{limit leaf}.
\end{remark}

As anticipated, we introduce the concept of lamination to gain compactness. Indeed the following theorem holds.
\begin{theorem}[{\cite[Proposition B.1]{ColdingMinicozzi2004IV} and \cite[Theorem 5.5]{GuangLiZhou2020}}] \label{thm:LamCptness}
Let $(\amb^3, \smetric)$ be a complete three-dimensional Riemannian manifold with boundary. Given $x\in\amb$ and $r>0$, let $\lam_j$ be a sequence of free boundary minimal laminations with uniformly bounded curvatures in $B_{2r}(x) \subset \amb$. Then there exists a subsequence which converges in $B_r(x)$ in the $C^{0,\alpha}$ topology for any $\alpha<1$ to a Lipschitz minimal lamination $\lam$ whose (possibly nonproperly embedded) leaves have free boundary with respect to $\partial\amb$. Moreover the leaves of $\lam$ are smooth free boundary minimal surfaces and the leafwise convergence is $C^\infty$. 
\end{theorem}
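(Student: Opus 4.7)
The plan is to reduce everything to local graphical estimates and combine them via a diagonal subsequence. The uniform curvature bound, together with embeddedness of the leaves within each fixed $\lam_j$, gives local graphical representations of the leaves in small neighborhoods of each point of $\overline{B_r(x)}$; the core task is to obtain uniform $C^{k,\alpha}$ estimates for these graphs (with $k$ arbitrary), separately at interior and at boundary points.

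First, at an interior point $p\in\overline{B_r(x)}\setminus\partial\amb$, I would choose a small ball around $p$ and express each leaf of $\lam_j$ meeting the ball as a graph over a totally geodesic disc through $p$. The minimal surface equation is quasilinear elliptic, and the uniform curvature bound yields $C^{1,\alpha}$ bounds on these graphs. Standard interior Schauder estimates then upgrade this to $C^{k,\alpha}$ bounds for any $k$, and Arzel\`a--Ascoli extracts a leafwise smoothly convergent subsequence; this is exactly the content of \cite{ColdingMinicozzi2004IV}*{Proposition B.1}.

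At a boundary point $p\in\partial\amb$, I would work in Fermi coordinates adapted to $\partial\amb$. For a free boundary leaf of $\lam_j$ meeting $\partial\amb$ transversely near $p$, orthogonality plus bounded curvature ensures that it is a graph of a function $u$ over a half-disc in $T_p\partial\amb$, satisfying the minimal surface equation with a Neumann-type boundary condition encoding the orthogonality. Boundary Schauder estimates for oblique-derivative problems then yield uniform $C^{k,\alpha}$ bounds up to $\partial\amb$. Leaves that stay away from $\partial\amb$ in a neighborhood of $p$ (case \ref{fbml:badbdry} of \cref{def:fbmlam}) are handled by extending $\amb$ slightly past $\partial\amb$ to a smooth manifold $\check\amb$ and invoking the interior argument there. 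Patching together these local convergences by a diagonal argument produces a subsequence converging in $B_r(x)$ to a family $\lam$ of smooth, pairwise disjoint, free boundary minimal surfaces; disjointness persists in the limit by the interior and boundary maximum principles.

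The transverse product structure required by \cref{def:fbmlam} follows because, in each coordinate chart, the graph heights converge in $C^{0,\alpha}$ to a closed subset of the transverse interval, giving a Lipschitz lamination structure globally while each leaf is individually smooth. The hardest step will be the boundary case: one must ensure that a sequence of free boundary leaves accumulating at a point of $\partial\amb$ passes the Neumann boundary condition to the limit, and that no subtle failure of transversality occurs at accumulations (e.g., a limit leaf tangent to $\partial\amb$). Since the $C^{k,\alpha}$ bounds hold uniformly up to $\partial\amb$, both issues reduce to taking limits directly in the PDE and in the boundary condition, after which the resulting limit leaf is itself a smooth free boundary minimal surface with the required convergence of the $\lam_j$ to $\lam$ in the $C^{0,\alpha}$ topology.
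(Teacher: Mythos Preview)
The paper does not give its own proof of this theorem; it is stated as a direct citation of \cite[Proposition B.1]{ColdingMinicozzi2004IV} and \cite[Theorem 5.5]{GuangLiZhou2020}. Your sketch follows precisely the strategy of those references: the interior case via uniform graphical estimates and Schauder theory is Colding--Minicozzi's argument, and the boundary case via Fermi coordinates and oblique-derivative Schauder estimates is Guang--Li--Zhou's contribution. So your proposal is correct and aligned with the cited literature.
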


\begin{remark} \label{rem:ConvOfLam} In the scenario described in the previous statement, we will say that $\lam_j$ locally converges to $\lam$ \emph{in the sense of laminations}.
In that respect, we recall that the convergence of a sequence of laminations $\lam_j$ to a lamination $\lam$ is leafwise $C^\infty$ if, for every sequence of points $x_j\in \lam_j$ that converges to a point $x\in \amb$, there exists a leaf $L\subset\lam$ such that $x\in L$ and a neighborhood $U$ of $x$ such that the connected component of $\mathcal{L}_j\cap U$ that contains $x_j$ converges to the connected component of $L\cap U$ that contains $x$ smoothly with multiplicity one (in the sense of graphs).
\end{remark}

It is well-known that a two-sided minimal surface having a positive Jacobi field is stable (cf. e.g. \cite[Lemma 1.36]{ColdingMinicozzi2011}). In turn, the existence of a positive Jacobi field can be deduced whenever multi-sheeted convergence occurs. We shall state here a helpful variation on this theme, whose proof is, by now, rather standard (cf. e.g. Theorem 22 in \cite{White2016}, and Proposition 2.1 in \cite{CarlottoChodoshEichmair2016}).
\begin{lemma} \label{lem:StableUnivOrMultOne}
Let $(\amb^3,\smetric)$ be a complete three-dimensional Riemannian manifold with boundary and let $\ms_j^2\subset\amb$ be a sequence of (connected) properly embedded free boundary minimal surfaces that locally converge to a free boundary minimal lamination $\lam\subset \amb\setminus\set_\infty$ away from a finite set of points $\set_\infty$. Consider a leaf $L\in\lam$, then one of the following assertions holds:
\begin{enumerate} [label={\normalfont(\arabic*)}]
\item $L$ has stable universal cover; \label{suomo:stable}
\item the convergence of $\ms_j$ to $L$ is locally smooth with multiplicity one (in the sense of graphs); namely, for every $x\in L$ there exists a neighborhood $U$ of $x$ in $\amb$ such that $U\cap (\bigcup_{L'\in\lam} L') = U\cap L$ and $U\cap \ms_j$ converges smoothly to $U\cap L$ with multiplicity one (as graphs). \label{suomo:multone}
\end{enumerate}
\end{lemma}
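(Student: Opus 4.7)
The plan is to argue by contrapositive: assuming the convergence of $\ms_j$ to $L$ is not locally smooth with multiplicity one, I will produce a positive Jacobi function on the universal cover (of the orientation double cover, if needed) of $L$ that satisfies the free boundary Robin condition, and then deduce stability from it via the classical $\varphi\mapsto\varphi/w$ substitution.

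First I would reduce to the two-sided situation by passing to the orientation double cover (and eventually to the universal cover) of $L$, so that sections of $NL$ may be identified with scalar functions. At any point $x_0\in L$ around which the convergence is not single-sheeted, the curvature bound implicit in $x_0\notin\set_\infty$, together with Allard-type regularity, yields a neighborhood $U$ of $x_0$ in $\amb$ such that, for all $j$ large, $\ms_j\cap U$ decomposes as a stack of $N_j\ge 2$ ordered graphs $\mathrm{graph}(u_j^k)$ over $L\cap U$, with $u_j^1<\cdots<u_j^{N_j}\to 0$; each $u_j^k$ satisfies the minimal surface equation and, when $x_0\in\partial\amb$, the Robin-type condition $\partial u_j^k/\partial\eta=-\II^{\partial\amb}(\nu,\nu)u_j^k$ arising from the free boundary property (to leading order in $u_j^k$).

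Next I would set $w_j\eqdef u_j^2-u_j^1>0$ and normalize $\bar w_j\eqdef w_j/w_j(x_0)$. Since $\bar w_j$ solves a linear elliptic equation obtained by linearizing the minimal surface operator about $L$, standard elliptic estimates together with the Harnack inequality (in its interior or boundary version, as appropriate) let me extract a $C^2_{\mathrm{loc}}$-subsequential limit $w>0$ on $L\cap U$ satisfying $\jac_L w=0$ with the same Robin condition on $\partial L\cap U$. Because $L$ is connected, and the subset of $L$ where $\ms_j$ accumulates with multiplicity at least two is open (by definition) and closed (by Harnack together with the curvature estimates), it coincides with $L$. Covering $L$ by such neighborhoods and lifting to the universal cover $\hat L$, a diagonal/normalization-at-a-basepoint argument patches the local limits into a globally defined smooth $w>0$ on $\hat L$ that solves the Jacobi equation in the interior and the Robin condition on $\partial\hat L$.

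Stability is then a one-line computation: for $\varphi\in C^\infty_c(\hat L)$, writing $\varphi=w\psi$ and integrating the gradient term by parts, one uses $\jac_{\hat L}w=0$ together with the Robin condition (so that the $\II^{\partial\amb}$-term in $Q_{\hat L}$ cancels the boundary term produced by the integration by parts) to obtain
\[
Q_{\hat L}(\varphi,\varphi)=\int_{\hat L} w^2\,\abs{\cov\psi}^2\de\Haus^2\ge 0,
\]
whence $\hat L$ is stable. The main technical subtlety I expect is the boundary version of the extraction step: verifying rigorously that the normalized separations $\bar w_j$ really do inherit the correct Robin condition in the limit (rather than, say, a Dirichlet one) along $\partial L\cap\partial\amb$, and that the boundary Harnack inequality applies uniformly near such points. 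Away from $\set_\infty$, which is finite and disjoint from $L$, no additional difficulties arise in the globalization.
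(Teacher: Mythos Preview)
Your approach is correct and is precisely the standard argument the paper has in mind: the paper does not actually give a proof of this lemma, stating instead that it is ``by now, rather standard (cf.\ e.g.\ Theorem 22 in \cite{White2016}, and Proposition 2.1 in \cite{CarlottoChodoshEichmair2016}).'' The paper does, however, essentially reproduce this argument later in Step~2 of the proof of \cref{thm:RemSingLimLam}, where it takes the normalized difference $u_j^+-u_j^-$ of the top and bottom sheets (rather than your $u_j^2-u_j^1$), passes to the limit via Harnack and elliptic estimates, and invokes \cite[Section~6]{AmbrozioCarlottoSharp2018Compactness} for the free boundary Robin condition and the associated boundary regularity---exactly the technical point you flag at the end of your proposal.
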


\section{Removable singularities}

Thanks to \cref{thm:LamCptness}, in \cref{sec:MacroDescr} we prove that free boundary minimal surfaces with uniformly bounded index converge, possibly after extracting a subsequence, to a free boundary minimal lamination which is smooth away from a finite number of points (so that, in particular, we are in the scenario described in the statement of \cref{lem:StableUnivOrMultOne}). The aim of the following propositions is to provide tools to show that these singularities are actually removable.

\begin{corollary} \label{cor:StableLam}
Let $\varphi\colon \ms\to \Xi(0)\setminus\{0\}\subset\R^3$ be a stable, two-sided minimal immersion that has free boundary with respect to $\Pi(0)$ and is complete away from $\{0\}$. Then the closure of $\varphi(\ms)$ is a plane or a half-plane.
\end{corollary}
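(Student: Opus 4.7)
The strategy is to reflect across the boundary plane to produce a complete stable minimal immersion in $\R^3 \setminus \{0\}$, remove the resulting isolated singularity, and then invoke a classical rigidity theorem.

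First I would apply the reflection principle of \cref{lem:ReflectionPrinciple} (extended to the immersed setting chartwise) to produce a smooth, two-sided minimal immersion $\tilde\varphi \colon \tilde\Sigma \to \R^3 \setminus \{0\}$ of the double of $\Sigma$ along $\partial\Sigma$, which is complete away from the origin and has no boundary. Two-sidedness transfers directly since reflection is an isometry of $\R^3$. Next, I would check that $\tilde\Sigma$ is stable as an interior minimal immersion. Since $\Pi(0) \subset \R^3$ is totally geodesic, $\II^{\Pi(0)} \equiv 0$, so the boundary term in the free boundary quadratic form vanishes and free boundary stability of $\Sigma$ amounts exactly to $\int_\Sigma (\abs{\nabla u}^2 - \abs{A}^2 u^2) \geq 0$ for every $u \in C^\infty_c(\Sigma)$. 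Writing an arbitrary compactly supported test function on $\tilde\Sigma$ as the sum of its even and odd parts with respect to the reflection involution, and restricting each piece to $\Sigma$, one sees that stability of $\tilde\Sigma$ is an immediate consequence.

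At this point, $\tilde\Sigma$ is a two-sided, stable minimal immersion in $\R^3 \setminus \{0\}$ which is complete away from $\{0\}$. The main analytic step is to show that $\{0\}$ is a removable singularity. Using Schoen's interior curvature estimate for stable minimal surfaces, applied on balls $B_{\abs{x}/2}(x)$ disjoint from the origin, one gets $\abs{A}(x) \leq C/\abs{x}$ for $x \in \tilde\Sigma$ near the origin; combined with a logarithmic cut-off trick in the stability inequality, this produces Euclidean-type area growth, so the density $\Theta(\tilde\Sigma, 0)$ is finite. Classical removable singularity results for stable minimal surfaces in punctured balls (or Allard's regularity theorem, once the density bound is available) then yield that $\tilde\Sigma$ extends smoothly across $\{0\}$ to a complete, two-sided, stable minimal immersion in all of $\R^3$.

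The rigidity theorem of Fischer-Colbrie--Schoen (equivalently do Carmo--Peng, Pogorelov) guarantees that any such immersion is a plane $P \subset \R^3$. The conclusion then follows by intersecting $P$ with the half-space: if $P$ is parallel to $\Pi(0)$ and contained in $\Xi(0)$ (including the degenerate case $P = \Pi(0)$), then the closure of $\varphi(\Sigma)$ is the plane $P$ itself; otherwise the orthogonality forced by the free boundary condition gives $P \perp \Pi(0)$, and the closure of $\varphi(\Sigma) = P \cap \Xi(0)$ is a half-plane. The principal obstacle in carrying out this plan is the removable singularity step, whose proof synthesizes Schoen's curvature estimate with the stability inequality (via a logarithmic cut-off) to yield precisely the density bound required by Allard's theorem.
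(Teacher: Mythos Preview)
Your reflection strategy is the same as the paper's, and your stability argument for the double (via even/odd decomposition, using that $\II^{\Pi(0)}\equiv 0$) is correct and is essentially what \cite[Section 2]{AmbrozioBuzanoCarlottoSharp2018} does, which is what the paper cites. The difference lies in the endgame. The paper does not attempt to remove the singularity at all: it applies directly the Bernstein-type theorem of Gulliver--Lawson \cite{GulliverLawson1986}, which asserts that a two-sided stable minimal immersion in $\R^3\setminus\{0\}$ that is complete away from $\{0\}$ is already a plane. This short-circuits your entire ``remove the singularity, then apply Fischer-Colbrie--Schoen'' program, since Gulliver--Lawson is precisely the punctured version of that rigidity statement.

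Your route is not wrong, but the removable-singularity sketch is where the real work hides, and the outline you gave is slightly imprecise: finite density at $0$ is not enough to invoke Allard --- you need the density to be exactly one (equivalently, the tangent cone to be a single multiplicity-one plane). Getting there from Schoen's estimate $\abs{A}(x)\le C/\abs{x}$ requires an additional step (e.g.\ showing blow-ups are planes and upgrading to $\abs{A}(x)\abs{x}\to 0$, then arguing graphically). All of this is exactly the content of the Gulliver--Lawson theorem, so in effect you are reproving the result the paper simply quotes. Either way the conclusion is the same; the paper's path is just shorter.
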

\begin{proof} If $\partial\ms=\emptyset$ then the result is a direct consequence of the Bernstein-type theorem by Gulliver--Lawson \cite{GulliverLawson1986}. Otherwise,
consider the double $\check{\ms}$ of $\ms$ (that is to say: the boundaryless surface that is obtained by reflecting $\ms$ across $\partial\ms$) and let $\tau\colon \check{\ms}\to \check{\ms}$ be the associated involution. 
Denoting with $\varrho\colon \R^3\to\R^3$ the reflection with respect to $\Pi(0)$, we define the map $\check\varphi\colon \check{\ms} \to \R^3\setminus\{0\}$ as follows:
\[
\check\varphi(x) \eqdef \begin{cases}
                   \varphi(x) & \text{if $x\in \ms\subset \check{\ms}$}\comma\\
                   \varrho(\varphi(\tau(x))) & \text{if $x\in \check{\ms}\setminus\ms$} \point
                  \end{cases}
\]
Observe that $\check\varphi$ is a two-sided minimal immersion that is complete away from $\{0\}$. Moreover, it follows from the discussion presented in \cite[Section 2]{AmbrozioBuzanoCarlottoSharp2018} that $\check\varphi$ is stable, so we can conclude as above.
\end{proof}

The following result mirrors the one obtained for interior points in Proposition D.3 of \cite{ChodoshKetoverMaximo2017}.

\begin{proposition} \label{prop:RemovableSingStab}
Let $(\amb^3,\smetric)$ be a complete Riemannian manifold with boundary. Fix $p\in\partial\amb$ and $\eps_0>0$ and consider an embedded minimal surface $\hat\ms\subset B_{\eps_0}(p)\setminus\{p\}$ having free boundary with respect to $\partial\amb$, and stable universal cover.
Then $\hat\ms$ smoothly extends across $p$, i.e., there exists a free boundary minimal surface $\ms\subset B_{\eps_0}(p)$ such that $\hat\ms = \ms\setminus\{p\}$.
\end{proposition}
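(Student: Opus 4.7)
The plan is to adapt the interior argument of \cite{ChodoshKetoverMaximo2017}*{Proposition D.3} to the boundary setting, substituting Corollary \ref{cor:StableLam} above in place of the Bernstein-type rigidity of Gulliver--Lawson used in the interior case. The three main ingredients are: (i) a blow-up at $p$ that converts the question into a problem in the half-space $\Xi(0)\subset\R^3$; (ii) a curvature estimate for stable free boundary minimal surfaces that survives the rescaling; (iii) a boundary version of Allard's regularity theorem to upgrade tangent-plane information into smooth extension across $p$.

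First, I would fix Fermi coordinates around $p$ identifying a neighborhood of $p$ in $\amb$ with a neighborhood of $0$ in $\Xi(0)$, sending $p\mapsto 0$ and $\partial\amb\mapsto \Pi(0)$. For a sequence $\lambda_k\to\infty$, consider the rescaled ambient manifolds $(\amb,\lambda_k^2\smetric)$, which converge smoothly on compact subsets of $\Xi(0)$ to the Euclidean half-space, together with the rescaled surfaces $\hat\ms_k$ obtained by dilating $\hat\ms$ by $\lambda_k$. Each $\hat\ms_k$ is a free boundary minimal surface in the rescaled ambient, embedded, and has stable universal cover (stability being scale-invariant). The curvature estimate for stable free boundary minimal surfaces, combined with a reflection argument through $\partial\amb$ as in \cite{AmbrozioBuzanoCarlottoSharp2018}*{Section 2} to reduce to the classical interior stability estimate, then furnishes uniform bounds on $|A_{\hat\ms_k}|$ on compact subsets of $\Xi(0)\setminus\{0\}$.

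Next, by \cref{thm:LamCptness} a subsequence of $\{\hat\ms_k\}$ converges in the sense of laminations to a free boundary minimal lamination $\lam_\infty$ of $\Xi(0)\setminus\{0\}$. Passing stability to limits (via positive Jacobi fields on the universal cover, cf.\ \cref{lem:StableUnivOrMultOne}), each leaf of $\lam_\infty$ is a complete two-sided stable free boundary minimal immersion into $\Xi(0)\setminus\{0\}$, so \cref{cor:StableLam} forces each leaf to be (the closure of) a half-plane meeting $\Pi(0)$ orthogonally along its straight edge, and passing through $0$ in its closure. Disjointness of distinct leaves, together with the fact that $\{0\}$ lies in the closure of each leaf, forces $\lam_\infty$ to consist of a single half-plane $P\subset\Xi(0)$ with $\partial P\subset\Pi(0)$, possibly with some multiplicity.

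Finally, the existence of a unique tangent half-plane to $\hat\ms$ at $p$, together with the monotonicity formula for free boundary stationary varifolds and a density computation at $p$ yielding density $1/2$, allows invoking a boundary version of Allard's regularity theorem to conclude that $\hat\ms$ extends smoothly across $p$ as a free boundary minimal surface $\ms$. The main obstacle I anticipate is ruling out multi-sheeted blow-up limits: if the tangent half-plane arises with multiplicity $m>1$, one must argue that $\hat\ms$ near $p$ decomposes, by smooth graphical convergence away from $\{0\}$ together with \cref{lem:MultOneConvExtends}, into several disjoint sheets, each of which is a minimal graph over $P$ with free boundary on $\Pi(0)$ admitting an isolated singularity at $0$; standard removable singularity theory for minimal graphs (after doubling across $\Pi(0)$, as in \cref{lem:ReflectionPrinciple}) then smooths each sheet, and embeddedness of $\hat\ms$ forces $m=1$, completing the argument.
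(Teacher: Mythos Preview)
Your blow-up and compactness steps are fine, but the argument breaks down at the classification of the limit lamination $\lam_\infty$. You assert that \cref{cor:StableLam} forces each leaf to be a half-plane through $0$ meeting $\Pi(0)$ orthogonally, but the corollary only says the closure of each leaf is a plane \emph{or} a half-plane. In particular, a leaf with empty boundary yields a full plane, which in $\Xi(0)$ must be parallel to $\Pi(0)$ and may well be $\Pi(0)$ itself; and even for half-plane leaves there is no a priori reason their closure contains $0$. So your ``disjointness plus $0$ in every closure'' step, which is your only mechanism for getting a \emph{unique} half-plane, is unjustified. The paper works harder here: after upgrading the curvature estimate to $|A_{\hat\ms}|(x)\,d(x,p)\le 1/4$, it uses the Morse-theoretic \cref{cor:14PuncturedBall} to rule out any leaf not through $0$ (such a leaf would force a disc or half-disc component of $\hat\ms$ disjoint from $p$, contradicting connectedness), reducing to exactly two cases: a single half-plane $\Delta$ through $0$ orthogonal to $\Pi(0)$, or $\Pi(0)$ itself.

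The second case you miss entirely. When $\lam_\infty=\{\Pi(0)\}$, the traces $\hat\ms\cap\partial B_{2\delta}(p)$ may spiral rather than close up, and one cannot read off a punctured-disc structure directly; the paper dispatches this by noting that the spiraling situation is then identical to the interior spiraling case in \cite{ChodoshKetoverMaximo2017}*{Proposition D.3}. In the non-spiraling case the paper does not invoke Allard at all: the $1/4$-estimate plus \cref{cor:14PuncturedBall} (in the variant of \cref{rem:14PunctBallVariant}) shows $\hat\ms\cap B_{3\delta}(p)$ is a properly embedded punctured disc or half-disc, and then one cites the removable-singularity results of \cite{ChodoshKetoverMaximo2017}*{Proposition D.1} or \cite{FraserLi2014}*{Theorem 4.1} directly. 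Your proposed endgame via density $1/2$ and boundary Allard would require first knowing the tangent is a multiplicity-one half-plane, and your sheet-decomposition argument for ruling out $m>1$ does not work as stated: the sheets are only defined away from $0$ and may well merge into a single connected neck near $0$, so you cannot remove singularities sheet-by-sheet.
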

\begin{remark}
Note that we are not requiring that $\hat\ms$ is properly embedded in $B_{\eps_0}(p)\setminus \{p\}$ (in particular it could be nonproperly embedded in the sense of maps).
\end{remark}

\begin{proof}
First observe that we can assume that $p$ belongs to the topological closure of $\hat \ms$, otherwise the result would be obvious.
Taking $\eps_0>0$ possibly smaller and using \cite[Theorem 1.2]{GuangLiZhou2020}, we can assume that in $B_{\eps_0}(p)$ it holds $\abs{\A_{\hat\ms}}(x)d_\smetric(p,x) \le C$ for all $x\in{\hat\ms}$, for some $C>0$. Therefore, for any $r_j\to 0$, the surfaces $r_j^{-1}({\hat\ms} - p)$ locally converge (in the sense of laminations), up to subsequence, to a free boundary minimal lamination ${\lam}_\infty$ of $\Xi(0)\setminus\left\{0\right\}$ thanks to \cref{thm:LamCptness}.

Observe that each leaf of $\lam_\infty$ is complete away from $\{0\}$. We now argue that each leaf also has stable universal cover. Consider a leaf $L\in \lam_\infty$; then, by \cref{lem:StableUnivOrMultOne}, $L$ has stable universal cover or the convergence to $L$ is locally smooth with multiplicity one. However, if the second case occurs, the stability of the universal cover of the surfaces $r_j^{-1}({\hat\ms} - p)$ is inherited by the universal cover of $L$.

Hence, we can apply \cref{cor:StableLam} to obtain that ${\lam}_\infty$ consists of parallel planes or half-planes and thus, possibly further restricting the ball we are considering, we can improve the curvature estimate to (say)
\begin{equation}\label{eq:14estLam}
\abs{\A_{\hat\ms}}(x) d_\smetric(x,p) \le \frac 14
\end{equation}
for all $x\in {\hat\ms} \subset B_{\eps_0}(p)\setminus\{p\}$.

We now want to prove that $\lam_\infty$ is either a single half-plane $\Delta$ passing through the origin and orthogonal to $\Pi(0)$ or $\Pi(0)$ itself.
If not the case, then there would exist another plane or half-plane not passing through the origin which appears in the (lamination) limit of the rescalings $r_j^{-1}(\hat\ms-p)$; hence one could define $\delta\in \oo0{\eps_0}$ sufficiently small such that $\hat\ms\cap (B_\delta(p)\setminus \{p\})$ contains a properly embedded component diffeomorphic to a disc or a half-disc, which does not contain $p$. As a result, we can choose $\delta$ (and possibly taking $\eps_0$ smaller) in such a way that $\hat\ms$ satisfies the assumptions of \cref{cor:14PuncturedBall} between radii $\delta$ and $\eps_0$ in a suitable Fermi chart.
Applying the corollary, we conclude that $\hat\ms$ itself contains a disc or half-disc (obtained, roughly speaking, by gluing the previous disc or half-disc with its corresponding `trivial' component of $\hat\ms\setminus B_\delta(p)$), but this is a contradiction since $\hat\ms$ is connected and its closure contains $p$.
Therefore $\lam_\infty$ consists of only one leaf\footnote{Note that $\lam_\infty$ cannot contain two intersecting (half-)planes, because this would contradict the embeddedness of ${\hat\ms}$.}, which is either $\Delta$ or $\Pi(0)$.

 Let us consider $\delta\in \oo0{\eps_0/3}$ such that $\hat\ms\cap (B_{3\delta}(p)\setminus B_\delta (p))$ intersects $\partial B_{2\delta}(p)$ transversely, is sufficiently (smoothly) close to the limit half-plane or plane and is a multigraph over it.
In particular $\hat\ms\cap \partial B_{2\delta}(p)$ is the union of injectively immersed curves.

If $\hat\ms\cap \partial B_{2\delta}(p)$ contains a compact curve, which can be either an $S^1$ or an arc, then $\hat\ms\cap B_{3\delta}(p)$ is a properly embedded topological punctured disc or half-disc in $B_{3\delta}(p)\setminus \{p\}$ thanks to estimate \eqref{eq:14estLam}, since we can apply \cref{cor:14PuncturedBall} (in Fermi chart) in the variant described in \cref{rem:14PunctBallVariant}.
Thus, using Proposition D.1 in \cite{ChodoshKetoverMaximo2017} or its free boundary analogue (based on \cite[Theorem 4.1]{FraserLi2014}), respectively, we obtain that $\hat\ms$ extends smoothly across $\{p\}$.

The only other situation that can happen is that $\hat\ms\cap \partial B_{2\delta}(p)$ consists of one or more spiraling curves. Observe that this case can only occur if $\lam_\infty$ has $\Pi(0)$ as its only leaf since, otherwise,  $\Delta\cap (B_{3\delta}(p)\setminus B_\delta (p))$ is simply connected and thus it is not possible to see a spiraling behavior.
However, the method to deal with the spiraling situation in the case when $\lam_\infty=\left\{\Pi(0)\right\}$ is completely analogous to the one in the spiraling case in the proof for interior points in \cite[Proposition D.3]{ChodoshKetoverMaximo2017}, therefore we can employ the same argument to conclude.
\end{proof}

We are now able to discuss the general case, when one needs to deal with isolated singularities arising when taking limits of free boundary minimal surfaces with bounded index.

\begin{theorem} \label{thm:RemSingLimLam}
Let $(\amb^3,\smetric)$ be a compact three-dimensional Riemannian manifold with boundary.
Let $\ms_j^2\subset\amb$ be a sequence of properly embedded free boundary minimal surfaces with uniformly bounded index, which locally converge to a free boundary minimal lamination ${\hat\lam}$ in $\amb\setminus\set_\infty$ away from a finite set of points $\set_\infty$.
Then ${\hat\lam}$ extends smoothly through $\set_\infty$ to a smooth lamination $\lam$ of $\amb$.
Moreover, given any leaf $L\in\lam$, one of the following assertions holds:
\begin{enumerate} [label={\normalfont(\arabic*)}]
\item $L$ has stable universal cover; \label{rsll:stable}
\item the convergence of $\ms_j$ to $L$ is locally smooth with multiplicity one (in the sense of graphs); namely, for every $x\in L$ there exists a neighborhood $U$ of $x$ in $\amb$ such that $U\cap (\bigcup_{L'\in\lam} L') = U\cap L$ and $U\cap \ms_j$ converges smoothly to $U\cap L$ with multiplicity one (as graphs). \label{rsll:multone}
\end{enumerate}
\end{theorem}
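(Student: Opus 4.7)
My plan is to work locally near each singular point $p \in \set_\infty$ and show that the leaves of $\hat\lam$ extend smoothly across $p$ as free boundary minimal surfaces. First I would fix $p \in \set_\infty$ and choose $\eps_0 > 0$ small enough that $B_{\eps_0}(p) \cap \set_\infty = \{p\}$; only leaves whose closure meets $p$ require attention. Applying \cref{lem:StableUnivOrMultOne} to the restricted sequence $\ms_j \cap (B_{\eps_0}(p) \setminus \{p\})$, each such leaf $L$ satisfies one of: (a) $L$ has stable universal cover, or (b) the convergence $\ms_j \to L$ is locally smooth with multiplicity one throughout $B_{\eps_0}(p) \setminus \{p\}$.

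For leaves of type (a), I would directly invoke \cref{prop:RemovableSingStab} when $p \in \partial\amb$, or its interior analogue \cite[Proposition D.3]{ChodoshKetoverMaximo2017} when $p$ lies in the interior of $\amb$, to produce a smooth extension of $L$ through $p$. For leaves of type (b), the strategy is to reduce to case (a) after possibly shrinking the ball. Because the convergence is smooth and multiplicity one, any compactly supported normal variation of $L$ inside $B_{\eps_0}(p) \setminus \{p\}$ can be transplanted to a variation of $\ms_j$ via the graphical representation, with second variation form converging to $Q^L$. Hence the bound $\ind(\ms_j) \le I$ passes to the limit to give $\ind(L \cap (B_{\eps_0}(p) \setminus \{p\})) \le I$. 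An exhaustion by compact subdomains then produces some $r \in (0, \eps_0)$ for which $L \cap (B_r(p) \setminus \{p\})$ is stable, and its universal cover is automatically stable (after passing, if necessary, to the orientation double cover). A second application of \cref{prop:RemovableSingStab} now extends $L$ smoothly across $p$.

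Repeating this procedure at each $p \in \set_\infty$ yields the desired smooth free boundary minimal lamination $\lam$, extending $\hat\lam$ over the finitely many added points. The dichotomy then transfers to $\lam$: leaves originally of type (a) retain a stable universal cover, since any test variation on the extended leaf can be cut off to be supported away from the finitely many added points with arbitrarily small loss of energy; leaves of type (b) continue to admit smooth multiplicity-one convergence on all of $\lam$, by combining the original multiplicity-one convergence on $\amb \setminus \set_\infty$ with the Allard-type extension principle supplied by \cref{lem:MultOneConvExtends} at the added points. I expect the main obstacle to be the index-inheritance step in case (b): the transplantation of test sections on $L$ to $\ms_j$ must be executed without loss of control on their energy, and then one must pass to the limit carefully — the delicate point being to ensure that the multi-sheet behavior that could a priori develop as one approaches the puncture is genuinely excluded by the multiplicity-one hypothesis.
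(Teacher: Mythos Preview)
Your outline is essentially correct and matches the paper's two-step structure. One caution in Step~1, case~(b): the passage from ``$L \cap (B_r(p) \setminus \{p\})$ is stable'' to ``its universal cover is stable'' is not automatic. Stability of a two-sided minimal surface does not in general pass to covers; what does pass is the existence of a positive (super)solution of the Jacobi equation. The paper closes this by invoking \cite[Proposition~1]{FischerColbrie1985}, which from finite index produces both stability outside a compact set \emph{and} a positive Jacobi function there---and it is the latter that lifts to give stability of the universal cover. You should also say a word about why the collection of extended leaves retains the lamination structure at $p$ (and why distinct extended leaves do not meet there); the paper deduces this from the blow-down behaviour established inside the proof of \cref{prop:RemovableSingStab}.

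For Step~2 the paper takes a somewhat different route. Rather than transferring the $\hat\lam$-dichotomy to $\lam$ via cutoff, it argues directly on each extended leaf $L$: limit leaves have stable universal cover by the standard argument; multiplicity-one convergence away from $\set_\infty$ extends across via \cref{lem:MultOneConvExtends}; and in the remaining case (not a limit leaf, convergence not multiplicity one) the paper constructs a positive Jacobi function on $L$ itself by taking, on an exhaustion of $L \setminus \set_\infty$, the normalised difference $h_j = (u_j^+ - u_j^-)/(u_j^+ - u_j^-)(x_0)$ of the top and bottom sheets of $\ms_j$ over $L$ (as in \cite[Section~6]{AmbrozioCarlottoSharp2018Compactness}), passing to a limit Jacobi field on $L \setminus \set_\infty$, and showing it is bounded and hence extends smoothly across the punctures. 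Your transfer approach also works, but to make it rigorous you must compare the universal cover of $L$ with that of $\hat L = L \setminus \set_\infty$, which are genuinely different spaces; the cleanest way through is again to first produce a positive Jacobi function on $\hat L$ and then argue that it extends across the punctures to $L$.
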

\begin{proof}
We split the proof in two steps: in the first step we show that $\hat\lam$ extends through $\set_\infty$, while in the second step we prove properties \ref{rsll:stable} and \ref{rsll:multone} of the leaves of $\lam$.

\vspace{2mm}\textbf{Step 1.} Since the result is local, we can assume to work around a single point $p\in \set_\infty$.
First, let us prove that there exists $\eps_0>0$ sufficiently small such that each leaf of ${\hat\lam}$ in $B_{\eps_0}(p)$ has stable universal cover.

Thanks to \cref{lem:StableUnivOrMultOne}, either $\hat L\in \hat \lam$ has stable universal cover or the convergence of $\ms_j$ to $\hat L$ is locally smooth and graphical with multiplicity one. Observe that there can be only a finite number of unstable leaves of the second type, otherwise the uniform bound on the index of $\ms_j$ would be violated.
If we focus on \emph{one} such leaf, we can argue as follows.
Taking $\eps_0$ possibly smaller, we can assume that $B_{\eps_0}(p)$ is simply connected and thus $\Sigma_j\cap B_{\eps_0}(p)$ is two-sided\footnote{This is a general topological fact, whose proof can be found for example in \cite[Lemma C.1]{ChodoshKetoverMaximo2017} (where the result is stated in a particular case but the proof is completely analogous).}. Hence, it is easily argued that (by virtue of the multiplicity one convergence) ${\hat L}\cap B_{\eps_0}(p)$ has to be two-sided as well. Now, a straightforward variation of the same argument as in \cite[Proposition 1]{FischerColbrie1985} proves that we can pick $\eps_0$ even smaller, in such a way that $\hat L\cap (B_{\eps_0}(p)\setminus \{p\})$ is stable and, in addition, the Jacobi operator has a positive solution in the same domain. Hence, using such a function, we can derive that $\hat L\cap (B_{\eps_0}(p)\setminus \{p\})$ has, in fact, stable universal cover.

We have thus proved that there exists $\eps_0>0$ such that every leaf of $\hat\lam$ in $B_{\eps_0}(p)\setminus \{p\}$ has stable universal cover. Then we can apply \cref{prop:RemovableSingStab} to obtain that each leaf extends smoothly across $p$. Note that the extended leaves cannot meet at $p$ (since $\hat\lam$ is assumed to be a lamination in $M\setminus \mathcal{S}_{\infty}$).

It is only left to prove that $\lam$ obtained as union of the extended leaves has the structure of lamination around the point $p$. This fact follows from the proof of \cref{prop:RemovableSingStab}, where it was shown that that for any sequence of positive numbers $r_j\to 0$ we have that $r_j^{-1}(\lam-p)$ converge to a lamination consisting of parallel planes or half-planes. 

\vspace{2mm}\textbf{Step 2.} If $L$ is a limit leaf of $\lam$ (see \cref{rem:LimLeaf}), then $L$ has stable universal cover by standard arguments (cf. \cref{lem:StableUnivOrMultOne}).
On the other hand, if the convergence of $\ms_j$ to $L$ is locally smooth with multiplicity one (in the sense of graphs) away from $\set_\infty$, then the convergence extends across $\set_\infty$ thanks to \cref{lem:MultOneConvExtends} and we end up in case \ref{rsll:multone}.

Therefore, let us assume that $L$ is not a limit leaf and that $\ms_j$ does not converge to $L$ with multiplicity one.
Possibly passing to the double cover, we can assume that $L$ is two-sided (cf. \cite[Section 6]{AmbrozioCarlottoSharp2018Compactness}). We then show that $L$ admits a positive Jacobi function, which proves that $L$ is stable as well as its universal cover.

Let us consider a regular domain $\Omega\Subset L\setminus \set_\infty$. Consider a vector field $X\in \vf(M)$ that has unit length and is normal to $L$ along $L$. Denote by $\Phi(x,t)$ the flow associated to $X$ and, for every $\eps>0$, define
\[
\Omega_\eps\eqdef \{ \Phi(x,t) \st x\in\Omega\comma \abs t< \eps \} \point
\]
Observe that we can fix $\eps>0$ such that the convergence of $\ms_j$ to $\lam$ is smooth in the sense of laminations in $\Omega_\eps$ and such that the only component of $\lam$ in $\Omega_\eps$ is $L\cap \Omega_\eps$.

By definition of convergence in the sense of laminations, $\Omega_\eps\setminus \Omega_{\eps/2}$ does not intersect $\ms_j$ for $j$ sufficiently large, since it does not intersect any leaf of $\lam$.
Then, for every $j$, define
\begin{align*}
u_j^+(x) &\eqdef \sup  \{t\in \oo{-\eps}\eps \st \Phi(x,t)\in \ms_j\}\comma \\
u_j^-(x) &\eqdef \inf \{t\in \oo{-\eps}\eps \st \Phi(x,t)\in \ms_j\} \point
\end{align*}
Note that $-\eps/2< u_j^-(x) < u_j^+(x) < \eps/2$, since $\ms_j\cap(\Omega_\eps\setminus\Omega_{\eps/2}) = \emptyset$ and $\ms_j$ does not converge to $L$ with multiplicity one.
Furthermore observe that, by \cref{rem:ConvOfLam} together with the compactness of $\Omega$, for every $j$ sufficiently large (depending on $\Omega$) the surfaces $\ms_j^\pm \eqdef \{ \Phi(x, u_j^\pm(x)) \st x\in\Omega \}\subset \ms_j$ are well-defined smooth surfaces (with boundary) that converge uniformly smoothly to $\Omega$.

At this point, we can go through the very same argument as in the proof of Theorem 5 in \cite[Section 6]{AmbrozioCarlottoSharp2018Compactness}, so we just sketch it briefly.

Fixing $x_0\in\Omega\setminus\partial\amb$, define $\tilde h_j\eqdef u_j^+ - u_j^->0$ and $h_j(x)\eqdef \tilde h_j(x_0)^{-1} \tilde h_j(x)$. Then, following exactly the same proof of Claim 1 in \cite[Section 6]{AmbrozioCarlottoSharp2018Compactness}, we have that $h_j$ is bounded in $C^l(\Omega)$ for all $l\in\N$ and converges smoothly to $h\in C^\infty(\Omega)$, solution of
\begin{equation} \label{eq:JacEqSub}
\begin{cases}
\jac_L (h) = \lapl_L h +(\frac 12 \Scal_\smetric+\frac 12 \abs A^2 - K)h = 0 & \text{in $\Omega$}\comma\\
\frac{\partial h}{\partial \eta} = - \II^{\partial M}(\nu,\nu)h & \text{on $\partial\amb\cap\Omega$}\point
\end{cases}
\end{equation}
Then, by taking an exhaustion of $L\setminus\set_\infty$ by domains $\Omega\Subset L\setminus\set_\infty$ containing $x_0$, we obtain a function $h\in C^\infty(L\setminus\set_\infty)$ solving \eqref{eq:JacEqSub} in $L\setminus\set_\infty$. Moreover $h(x_0)=1$ and $h\ge 0$.
Finally, thanks to the same proof of Claim 2 in \cite[Section 6]{AmbrozioCarlottoSharp2018Compactness}, it holds that $h$ is uniformly bounded and thus extends to a smooth Jacobi function on all $L$, which is positive everywhere thanks to the maximum principle and the Hopf boundary point lemma. 
\end{proof}

In the setting of Euclidean half-spaces, \cref{thm:RemSingLimLam} reads as follows.
\begin{proposition}\label{prop:LimLamInR3}
Let $\smetric_j$ be a sequence of Riemannian metrics in $\R^3$ that converges, locally smoothly, to the Euclidean metric.
Let $\ms_j^2\subset \Xi(a_j) \subset \R^3$ for some $0\ge a_j\ge -\infty$ be a sequence of properly embedded, \emph{edged}, free boundary minimal surfaces in $(\Xi(a_j),\smetric_j)$, such that for every $j\in\mathbb{N}$ we have $\Sigma_j\subset\Delta_j$ for a sequence of compact domains $\Delta_j$ exhausting $\Xi(a)$ for $a=\lim_{j\to\infty} a_j$, each being the intersection of a smooth domain of $\R^3$ with $\Xi(a_j)$.
Assume that the surfaces $\ms_j$ have index bounded by $I\in\N$ and locally converge (in the sense of laminations) to a free boundary minimal lamination $\hat\lam\subset \Xi(a)\setminus\set_\infty$ away from a finite set of points $\set_\infty$.
Then $\hat\lam$ extends smoothly through $\set_\infty$ to a free boundary minimal lamination $\lam\subset\Xi(a)$ and the following dichotomy holds:
\begin{enumerate} [label={\normalfont(\arabic*)}]
\item $\lam$ consists of parallel planes or half-planes; \label{llir:stable}
\item $\lam$ is a complete, nonflat, connected, properly embedded, free boundary minimal surface in $\Xi(a)$ of (positive) index at most $I$ and (a subsequence of) $\ms_j$ converges to $\lam$ locally smoothly (in the sense of graphs) with multiplicity one. \label{llir:multone}
\end{enumerate}
\end{proposition}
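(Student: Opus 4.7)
The plan is to adapt the reasoning of Theorem \ref{thm:RemSingLimLam} to the setting of (possibly degenerate) Euclidean half-spaces with varying ambient metrics, and then use the classification of complete stable two-sided free boundary minimal surfaces in $\Xi(a)$ to pin down the dichotomy.

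First I would extend $\hat\lam$ across $\set_\infty$. Fixing $p\in\set_\infty$, I mimic Step 1 in the proof of Theorem \ref{thm:RemSingLimLam}: by Lemma \ref{lem:StableUnivOrMultOne}, any leaf of $\hat\lam$ in a small punctured ball $B_{\eps_0}(p)\setminus\{p\}$ either has stable universal cover or is approached smoothly with multiplicity one by $\ms_j$. The uniform index bound $\ind(\ms_j)\le I$ allows only finitely many leaves of the second type in any such ball (since each yields a distinct unit of index via the usual disjoint-neighborhoods argument), and for each such leaf $L$ one picks $\eps_0$ so small that $B_{\eps_0}(p)$ is simply connected, forcing $L$ to be two-sided; a Fischer--Colbrie-type argument then produces a positive Jacobi function on the punctured neighborhood and hence stability of the universal cover. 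One then invokes Proposition \ref{prop:RemovableSingStab} (whose proof carries over verbatim for metrics $\gamma_j\to\gamma_{\mathrm{Eucl}}$, since the only geometric input is the Bernstein-type Corollary \ref{cor:StableLam}) to extend each leaf smoothly across $p$ to a lamination $\lam$ of $\Xi(a)$.

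Next I split into cases. If every leaf of $\lam$ has stable universal cover, then for any leaf $L$: either $\partial L=\emptyset$ and $L$ is a complete stable two-sided minimal surface in $\R^3$, hence a plane; or $\partial L\subset\Pi(a)$ and the reflection principle (Lemma \ref{lem:ReflectionPrinciple}) together with Corollary \ref{cor:StableLam} (or, equivalently, the Gulliver--Lawson Bernstein theorem applied to the doubled surface) forces $L$ to be a half-plane orthogonal to $\Pi(a)$. Disjointness of the leaves of $\lam$ then implies they are parallel, which is case \ref{llir:stable}.

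Otherwise, some leaf $L$ fails to have stable universal cover, so by Lemma \ref{lem:StableUnivOrMultOne} applied on $\Xi(a)\setminus\set_\infty$ the convergence $\ms_j\to L$ is locally smooth with multiplicity one there, and Lemma \ref{lem:MultOneConvExtends} upgrades this to smooth multiplicity-one convergence on all of $\Xi(a)$. Smooth multiplicity-one convergence in an open neighborhood of $L$ rules out any other leaf of $\lam$ accumulating on $L$, and connectedness of the limit (as the unique smooth graphical limit of the connected surfaces $\ms_j$) then gives $\lam=L$. Lower semicontinuity of the Morse index under smooth multiplicity-one convergence yields $\ind(L)\le I$, whence Proposition \ref{cor:MinSurfInR3WithFiniteIndex} ensures $L$ is two-sided, properly embedded and of finite total curvature in $\Xi(a)$. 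Nonflatness of $L$ (otherwise we would be in case \ref{llir:stable}) combined with the Bernstein/do Carmo--Peng-type rigidity implies $\ind(L)\ge 1$, giving case \ref{llir:multone}. The main technical obstacle I anticipate is the first step: Proposition \ref{prop:RemovableSingStab} was stated for a fixed compact ambient manifold, and one must carefully track that its proof (in particular the curvature estimate near $p$ à la \cite{GuangLiZhou2020}, the blow-up at scales $r_j\to 0$, and the spiral exclusion argument) survives uniformly when the ambient metrics $\gamma_j$ vary and only the portion of $\partial\ms_j$ lying in $\Pi(a_j)$ is constrained, as permitted by the edged free boundary condition.
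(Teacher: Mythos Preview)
Your overall strategy matches the paper's: extend $\hat\lam$ across $\set_\infty$ using the removable-singularity machinery, then run the stable-cover/multiplicity-one dichotomy on each leaf. The paper, however, takes two shortcuts you re-derive by hand: it simply invokes \cref{thm:RemSingLimLam} for the extension (implicitly trusting that its proof adapts to the varying-metric, noncompact setting), and for the dichotomy it first observes that every leaf has finite index (planes in the stable-cover case, index at most $I$ in the multiplicity-one case) and then appeals to Corollary~B.2 of \cite{ChodoshKetoverMaximo2017}, transferred to $\Xi(a)$ via \cref{lem:ReflectionPrinciple} and \cref{cor:MinSurfInR3WithFiniteIndex}.

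There is a genuine gap in your case-\ref{llir:multone} argument. You assert that multiplicity-one convergence to the nonflat leaf $L$, together with ``connectedness of the limit (as the unique smooth graphical limit of the connected surfaces $\ms_j$)'', yields $\lam=L$. But the $\ms_j$ are not assumed connected in the proposition, and even if they were, local multiplicity-one convergence to $L$ only rules out other leaves \emph{in a neighbourhood of $L$}; it says nothing about a disjoint planar (or other finite-index) leaf sitting elsewhere in $\Xi(a)$. Excluding such coexistence is exactly where a half-space-theorem-type input is needed: a nonflat, properly embedded, finite-total-curvature minimal surface in $\R^3$ cannot be disjoint from another complete minimal surface. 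This is the content of Corollary~B.2 in \cite{ChodoshKetoverMaximo2017}, which the paper invokes after reflecting the lamination across $\Pi(a)$ and using \cref{cor:MinSurfInR3WithFiniteIndex} to guarantee the reflected leaves still have finite index. Without this step (or an equivalent maximum-principle-at-infinity argument), your proof does not establish that $\lam$ consists of a single leaf in the nonflat case.
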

\begin{proof}
The first part of the statement follows directly from \cref{thm:RemSingLimLam}, so let us prove properties \ref{llir:stable} and \ref{llir:multone}.
Let us consider a leaf $L\in\lam$. By \cref{thm:RemSingLimLam}, $L$ has stable universal cover or the convergence of $\ms_j$ to $L$ is locally smooth with multiplicity one (in the sense of graphs).
In the first case, $L$ must be a plane by Corollary 22 in \cite{AmbrozioBuzanoCarlottoSharp2018}. In the second case, $L$ has index bounded by $I$, otherwise the bound on the index of $\ms_j$ would be violated. In particular, all the leaves of $\lam$ have bounded index, thus the result is just the free boundary analogue of Corollary B.2 in \cite{ChodoshKetoverMaximo2017}, from which our result follows using \cref{lem:ReflectionPrinciple} and \cref{cor:MinSurfInR3WithFiniteIndex}, which is needed (in particular) to ensure that the reflected minimal lamination still has finite index.
\end{proof}

\chapter{Some Morse-theoretic arguments} \label{sec:MorseTheory}

In this chapter we collect a few lemmas that will be useful to obtain topological information at intermediate scales. The basic idea is that if a surface fulfills suitable curvature estimates then it is `locally simple'.

\section[FBMS with bounded curvature]{Free boundary minimal surfaces with bounded curvature}

\begin{lemma} \label{lem:GradEst}
Let $M^3 = \{\abs x\le 2\}\cap \Xi(a)\subset \R^3$ be endowed with the Euclidean metric, for some $0\ge a\ge -\infty$. Fix $p\in B_{1/4}(0)\cap M$ and $r>0$.
Let $\ms^2\subset \amb\setminus B_r(p)$ be a connected embedded surface, having free boundary with respect to $\Pi(a)$, with $\partial\ms = \ms\cap \partial (\amb\setminus B_r(p))$. Assume that for every $x\in\ms$ it holds $\abs{\A_\ms}(x) \abs{x-p} \le \eps$, for some constant $0<\eps\le 1/3$. Moreover suppose that $\ms$ intersects $\partial B_r(p)$.
Then, denoting by $f\colon \ms\to\R$ the function $f(x)\eqdef \abs{x-p}^2$, we have that
\[
\abs{\grad_\ms f(x)} \ge 2(1-\eps) (\abs{x-p} - r) \semicolon  
\]
furthermore, if $\ms\cap\partial B_r(p)$ contains a compact component $\Gamma$, then in fact
\[
\abs{\grad_\ms f(x)} \ge 2(1-\eps) (\abs{x-p} - r) + \min_{y\in \Gamma}\ \abs{\grad_\ms f(y)}\point
\]
\end{lemma}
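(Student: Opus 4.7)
The plan is to exploit a strong convexity property of $f$ on $\ms$, coming from the curvature hypothesis, and then integrate the resulting ODE along a suitable integral curve of $\grad_\ms f$ from $\ms\cap\partial B_r(p)$ up to $x$.

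\emph{Hessian of $f$.} Writing $\phi(x)\eqdef \scal{x-p}{\nu}$ for the signed height of $x-p$ in a (locally chosen) unit normal direction $\nu$ to $\ms$, a direct computation using minimality ($H=0$) yields
\[
\grad^2_\ms f(v,w) = 2\scal vw - 2\phi\,\A(v,w) \qquad\text{for all }v,w\in T_x\ms,
\]
whose trace is $\lapl_\ms f = 4$. Since $\abs\phi\le\abs{x-p}=\rho$ and $\abs\A\,\rho\le\eps$, one immediately obtains $\grad^2_\ms f(v,v) \ge 2(1-\eps)\abs v^2$ for every $v\in T_x\ms$; in other words, $f$ is strongly convex on $\ms$.

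\emph{ODE along the gradient flow.} For $x\in\ms$ with $\rho(x)>r$, consider the arc-length parametrized integral curve $\gamma\colon[0,L]\to\ms$ of the unit vector field $\hat f\eqdef \grad_\ms f/\abs{\grad_\ms f}$ with $\gamma(L)=x$ and $\gamma(0)\in\ms\cap\partial B_r(p)$. Along $\gamma$, the convexity bound gives
\[
\frac{\d}{\d s}\abs{\grad_\ms f}(\gamma(s)) = \grad^2_\ms f(\hat f,\hat f) \ge 2(1-\eps),
\]
while the fact that $\grad_\ms\rho = (x-p)^\top/\rho$ is parallel to $\hat f$ yields
\[
\frac{\d\rho}{\d s}(\gamma(s)) = \abs{\grad_\ms\rho} = \frac{\abs{\grad_\ms f}}{2\rho}\le 1.
\]
Integrating over $[0,L]$ one obtains simultaneously $\rho(x)-r\le L$ and
\[
\abs{\grad_\ms f}(x) \ge \abs{\grad_\ms f}(\gamma(0)) + 2(1-\eps)L \ge \abs{\grad_\ms f}(\gamma(0)) + 2(1-\eps)(\rho(x)-r).
\]
The first estimate of the lemma follows by dropping the nonnegative first term on the right-hand side; the refined one is obtained by arranging $\gamma(0)\in\Gamma$, so that $\abs{\grad_\ms f}(\gamma(0))\ge\min_\Gamma\abs{\grad_\ms f}$.

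\emph{Main obstacle.} The delicate point is to guarantee that the backward integral curve of $\hat f$ from $x$ actually terminates on $\ms\cap\partial B_r(p)$ (respectively on $\Gamma$), and is not trapped at an interior critical point of $f$ nor forced to exit through $\ms\cap\partial\amb$. I expect this to be handled by a standard open-closed argument on $\ms$, exploiting the strong convexity of $f$ (which rules out saddle points and controls the structure of its critical set), the connectedness of $\ms$, and the free boundary condition along $\ms\cap\Pi(a)$ (which prevents $\hat f$-trajectories from escaping through that portion of $\partial\ms$): the set of points reachable by the forward flow of $\hat f$ starting from $\ms\cap\partial B_r(p)$ is open by continuous dependence, closed by the above ingredients, and nonempty, hence it exhausts $\ms$ by connectedness. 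The compactness of $\Gamma$ in the refined statement is precisely what allows one to run the same argument with $\Gamma$ in place of the full set $\ms\cap\partial B_r(p)$.
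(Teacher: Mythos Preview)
Your Hessian bound is exactly right, and incidentally you do not need $H=0$: the lemma does not assume minimality, and the estimate $\grad^2_\ms f(v,v)\ge 2(1-\eps)\abs v^2$ follows from $\abs{\A}\abs{x-p}\le\eps$ alone. The genuine gap is the one you flag yourself, and the proposed open--closed argument does not close. First, the forward flow of $\hat f$ from $\ms\cap\partial B_r(p)$ can exit $\ms$ through $\Pi(a)$ whenever $p\notin\Pi(a)$, since the $e_1$-component of $\grad_\ms f$ on $\ms\cap\Pi(a)$ equals $2(a-p_1)<0$ there. More seriously, ruling out an interior critical point of $f$ on $\ms$ is essentially the content of the first inequality (which says precisely that $\grad_\ms f\neq0$ away from $\partial B_r(p)$); strong convexity only tells you such a point would be a strict local minimum, not that it cannot exist. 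For the refined estimate the situation is worse: the backward integral curve of $\hat f$ from $x$ is determined by the ODE, and there is no mechanism to ``arrange'' for it to land on $\Gamma$ rather than on some other component of $\ms\cap\partial B_r(p)$.

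The paper sidesteps all of this by integrating the same Hessian bound along a unit-speed \emph{geodesic} $\alpha$ in $\ms$ from $\ms\cap\partial B_r(p)$ to $x$. Such a curve exists by compactness and connectedness: the free boundary condition makes $\ms\cap\Pi(a)$ totally geodesic, and the curvature bound makes $\ms\cap\{\abs x=2\}$ strictly convex in $\ms$, so a minimizing curve cannot touch $\ms\cap\partial\amb$ tangentially in its interior. Along $\alpha$ one has $(f\circ\alpha)''\ge 2(1-\eps)$, and $(f\circ\alpha)'(0)\ge0$ simply because $f$ attains its global minimum $r^2$ on $\ms$ at $\alpha(0)$; integrating gives the first inequality without ever mentioning critical points of $f$. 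For the refined estimate one instead takes $\alpha$ to be a length-\emph{minimizing} curve from the compact set $\Gamma$ to $x$: the first inequality then prevents $\alpha$ from revisiting $\partial B_r(p)$ at intermediate times, so $\alpha$ is a geodesic, and by first variation $\alpha'(0)$ is normal to $\Gamma$ in $\ms$, hence parallel to $\grad_\ms f(\alpha(0))$, giving $(f\circ\alpha)'(0)=\abs{\grad_\ms f(\alpha(0))}\ge\min_\Gamma\abs{\grad_\ms f}$.
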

\begin{proof}
Consider a point $x\in\ms\setminus\partial\amb$ and take a unit-speed geodesic $\alpha\colon\cc 0l\to \ms$ such that $\alpha(0)\in \ms\cap \partial B_r(p)$ and $\alpha(l)=x$.
Note that $\alpha$ exists since $\ms$ is connected and a geodesic starting inside $\ms$ cannot touch $\ms\cap\partial\amb$ tangentially. Indeed $\ms\cap\Pi(a)$ is a union of geodesics in $\ms$ thanks to the free boundary condition and a simple computation shows that $\ms\cap(\partial\amb\setminus\Pi(a))$ is strictly convex in $\ms$ by the curvature estimate.

Then observe that $\grad_\ms f(x)$ is the projection of the vector $2(x-p)$ on $\ms$ for all $x\in\ms$.
In addition, for every $v\in T_x\ms$, it holds
\begin{align*}
(\mathrm{Hess}_\ms f)_x(v,v)  &= 2(\abs v ^2 - \scal{A_\ms(x)(v,v)}{x-p}) \ge 2\abs v ^2 (1 - \abs{A_\ms}(x)\abs{x-p}) \\
&\ge  2\abs v^2 (1-\eps) \point
\end{align*}
Thus we have that
\[
\frac{\d}{\d t} (f\circ \alpha) = \scal{\grad_\ms f(\alpha)}{\alpha'} \quad\text{and} \quad \frac{\d^2}{\d t^2}(f\circ\alpha) = (\mathrm{Hess}_\ms f)_\alpha (\alpha',\alpha') \ge 2(1-\eps) \point
\]
Therefore, since clearly $\scal{\grad_\ms f(\alpha(0))}{ \alpha'(0)} \ge 0$, we obtain that
\[
\begin{split}
\abs{\grad_\ms f(x)} &\ge \scal{\grad_\ms f(x)}{\alpha'(l)} = \frac{\d}{\d t}\Big|_{t=l} (f\circ \alpha) \ge 2(1-\eps)l \ge  2(1-\eps)(\abs{x-p}-r)\point
\end{split}
\]

Now assume that $\ms\cap\partial B_r(p)$ contains a compact component $\Gamma$. Take a point $x\in\ms$ and consider a \emph{length minimizing} unit-speed \emph{curve} $\alpha\colon \cc 0l \to\ms$ with $\alpha(0) \in \Gamma$ and $\alpha(l) =x$. 
Note that $\alpha$ cannot touch $\ms\cap \partial B_r(p)$ at time larger than zero. Indeed, if that happened, there would exist an intermediate time $t_0>0$ such that $f\circ \alpha$ has a (local) maximum at $t_0$ and $\alpha$ is a geodesic between $0$ and $t_0$. However this fact leads to a contradiction with the previous computation since we would have at the same time $(f\circ\alpha)'(t_0) =0$ and $(f\circ\alpha)'(t_0) \ge 2(1-\eps)t_0>0$. Therefore $\alpha$ is a geodesic, since it cannot touch $\partial\ms$ in its interior.

Performing the same computation as above, this time using that $\scal{\grad_\ms f(\alpha(0))}{\alpha'(0)} = \abs{\grad_\ms f(\alpha(0))}$ (by minimality of $\alpha$), we thus obtain
\[
\abs{\grad_\ms f(x)} \ge 2(1-\eps)(\abs{x-p}-r) + \abs{\grad_\ms f(\alpha(0))} \ge 2(1-\eps)(\abs{x-p}-r) + \min_{y\in\Gamma}\ \abs{\grad_\ms f(y)}\comma
\]
which concludes the proof.
\end{proof}

\begin{corollary} \label{cor:14PuncturedBall}
Let $\smetric$ be a metric on $\{\abs x\le 2\}\cap \Xi(0)\subset\R^3$ sufficiently close to the Euclidean one, and denote by $M^3\eqdef B_1(0)\subset \Xi(0)$ the unit ball with respect to this metric\footnote{\label{fn:MetricBalls}That is to say: here $B_1(0)$ is the metric ball with respect to the metric $\smetric$, and the same comment applies to the balls $B_1(0), B_r(p)$ in \cref{cor:14TopInfo}.}.
Assume that being orthogonal to $\Pi(0)$ with respect to the Euclidean metric is equivalent\footnote{\label{fn:FermiChart}In the applications we obtain this orthogonality condition working in a Fermi chart, both here and in \cref{cor:14TopInfo} below.} to being orthogonal to $\Pi(0)$ with respect to $\smetric$.
Given $0<r<1/4$, consider a connected, embedded surface $\ms^2\subset \amb\setminus B_r^{\R^3}(0)$ with $\partial\ms=\ms\cap\partial(\amb\setminus B_r^{\R^3}(0))$ such that
\begin{enumerate} [label={\normalfont(\roman*)}]
\item \label{pb:FB} $\ms$ is free boundary with respect to $\Pi(0)$;
\item \label{pb:cptcomp} $\ms\cap \partial B_r^{\R^3}(0)$ contains a compact component $\Gamma$ and $\ms$ intersects $\partial B_r^{\R^3}(0)$ transversely along $\Gamma$; 
\item \label{pb:curv} for every $x\in\ms$ it holds $\abs{\A_\ms}(x) d_\smetric(x,0) \le 1/4$.
\end{enumerate}
Then $\ms$ is properly embedded in $B_1(0)\setminus B_r^{\R^3}(0)$ and is either a topological disc or a topological annulus.
\end{corollary}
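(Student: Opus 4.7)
The plan is to turn the Euclidean squared distance $f(x) = \abs{x}^2$ into a Morse-type function on $\ms$ without critical points, and to produce a product structure on $\ms$ by flowing along its gradient. First, I apply \cref{lem:GradEst} with $p = 0$ and $\eps = 1/4$: since $\smetric$ is close to Euclidean and orthogonality with respect to $\Pi(0)$ is unchanged, the Hessian computation in the proof of that lemma carries over to the metric $\smetric$ up to harmless multiplicative constants. The transversality of $\ms$ with $\partial B_r^{\R^3}(0)$ along the compact component $\Gamma$ (hypothesis~(ii)) guarantees $\min_\Gamma \abs{\grad_\ms f} > 0$, so the stronger of the two bounds in \cref{lem:GradEst} gives $\abs{\grad_\ms f}(x) > 0$ for every $x \in \ms$. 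In particular $f$ has no critical points on $\ms$, and any tangential contact of $\ms$ with $\partial B_r^{\R^3}(0)$ is ruled out, because such a contact at $y$ would force $y \perp T_y\ms$ and hence $\grad_\ms f(y) = 0$.

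Next, I analyze the behaviour near the free boundary and construct the flow. At $x \in \partial\ms \cap \Pi(0)$ the free boundary condition forces the outward conormal $\eta \in T_x\ms$ to be orthogonal to $\Pi(0)$ (equivalently in both metrics, by hypothesis), whereas the Euclidean gradient $\grad^{\R^3}f(x) = 2x$ lies in $T_x\Pi(0)$; projecting $2x$ onto $T_x\ms$ therefore yields a vector tangent to $\partial\ms \cap \Pi(0)$, so the gradient flow preserves the free boundary. Consider the reparametrized vector field $X = \abs{\grad_\ms f}^{-2}\grad_\ms f$, whose flow $\phi_s$ translates $f$ at unit speed: each trajectory $\phi_s(x)$ starting on $\Gamma$ remains in $\{f > r^2\}$ for $s>0$, and thus avoids every other component of $\ms\cap\partial B_r^{\R^3}(0)$, and it must exit $\ms$ through $\ms \cap \partial B_1(0)$ at some finite time $T(x)$ depending continuously on $x$. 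The assignment
\[
\Phi\colon \Omega \eqdef \{(x,s) \st x\in\Gamma,\ 0\le s\le T(x)\} \longrightarrow \ms,\qquad \Phi(x,s) = \phi_s(x),
\]
is smooth and injective (by ODE uniqueness together with strict monotonicity of $f$ along trajectories), and its image $V = \Phi(\Omega)$ is compact, hence closed in $\ms$.

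To finish, I show that $V$ is also open in $\ms$, which forces $V = \ms$ by connectedness. The case analysis is: (a) at $\Phi(x,s)$ with $0<s<T(x)$ and $x$ in the interior of $\Gamma$, openness is the inverse function theorem applied to $\Phi$; (b) at a point $y \in \Gamma$, a relative neighborhood in $\ms$ is a half-disc bounded by $\Gamma \subset \partial\ms$, which is swept out by trajectories starting at nearby points of $\Gamma$; (c) at a terminal point $\Phi(x, T(x)) \in \ms \cap \partial B_1(0)$, an analogous half-disc is covered by flow trajectories, using transversality of $\ms$ with $\partial B_1(0)$ (which follows for $\smetric$ sufficiently close to Euclidean from the nonvanishing of $\grad_\ms f$); and (d) at a corner where $\Gamma$ or $\phi_{T(\cdot)}(\Gamma)$ meets $\Pi(0)$, the sector neighborhood in $\ms$ is filled by the flow because $\grad_\ms f$ is tangent to $\partial\ms \cap \Pi(0)$. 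Once $V = \ms$, the map $\Phi$ identifies $\ms$ with $\Gamma \times \cc{0}{1}$: a disc if $\Gamma$ is an arc with endpoints on $\Pi(0)$, and an annulus if $\Gamma \cong S^1$; compactness of $\ms$ then yields proper embedding in $B_1(0)\setminus B_r^{\R^3}(0)$. The main technical obstacle I anticipate is the case analysis for openness at the corners~(d) and at the terminal stratum~(c), in particular the transversality with $\partial B_1(0)$: this is the point where I need to invoke the closeness of $\smetric$ to the Euclidean metric beyond the Hessian estimate used in step one.
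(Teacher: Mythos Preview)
Your proposal is correct and follows essentially the same approach as the paper: both apply \cref{lem:GradEst} to show $f(x)=\abs{x}^2$ has no critical points on $\ms$, observe that $\grad_\ms f$ is tangent to $\ms\cap\Pi(0)$ by the free boundary condition and points outward along $\partial B_1(0)\setminus\Pi(0)$, and then flow from $\Gamma$ to identify $\ms\cong\Gamma\times\cc01$. The only cosmetic differences are that the paper uses the unnormalized gradient flow with an exit-time map while you reparametrize to unit $f$-speed, and you spell out the open/closed argument for surjectivity where the paper just invokes connectedness.
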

\begin{proof}
Taking the metric $\smetric$ sufficiently close to the Euclidean metric we can assume that $\abs{A_\ms^{\R^3}}(x)\abs{x}\le 1/3$.
Let $f\colon \ms\to \R$ be given by $f(x) \eqdef \abs{x}^2$. Then, thanks to \cref{lem:GradEst} and \ref{pb:cptcomp}, $f$ has no critical points on $\ms$; in particular it holds
\begin{equation}\label{eq:GradAwayFromZero}
\abs{\grad_\ms^{\R^3}f(x)} \ge \min_{y\in\Gamma}\ \abs{\grad_\ms^{\R^3}f(y)} > 0 \quad \text{for all $x\in\ms$}\point
\end{equation}
Moreover we obtain that $\abs{\grad_\ms^{\R^3}f(x)} \ge 1/2$ for all $x\in\ms\cap (\partial B_1(0)\setminus \Pi(0))$;
hence, taking $\smetric$ possibly closer to the Euclidean metric (independently of $\ms$), we can assume that $\grad_\ms^{\R^3}f(x)$ points strictly out of $\ms$ along $\partial B_1(0)\setminus \Pi(0)$.
Observe also that $\grad_\ms^{\R^3}f$ is parallel to $\Pi(0)$ along $\ms\cap\Pi(0)$ by \ref{pb:FB}.

Thus, defining $\Phi(t,x)$ as the flow of the vector field $\grad_\ms^{\R^3}f$ on $\ms$,
we have that the exit-time map $\ms\to \co 0\infty$ given by $x\mapsto t(x)$ (i.e., $t(x)$ is the supremum of the times $t$ for which $\Phi(t,x)$ is well-defined in $\ms$) is continuous. Furthermore $0 < t(y) <\infty$ for every $y\in\Gamma$ by \eqref{eq:GradAwayFromZero}.
Hence the map $\mathcal F\colon \Gamma\times\cc 01 \to \ms$ given by $\mathcal F(y,t) \eqdef \Phi(t(y)t,y)$ is a homeomorphism with its image, which must coincide with all of $\ms$ by connectedness.
This concludes the proof, showing that $\ms$ is a properly embedded topological disc if $\Gamma$ is an arc and a properly embedded topological annulus if $\Gamma$ is a circle.
\end{proof}
\begin{remark} \label{rem:14PunctBallVariant}
A variation of the previous proof works in the case when $\ms^2\subset \amb \setminus\{0\}$ is a connected embedded surface satisfying assumptions \ref{pb:FB}, \ref{pb:curv} and such that $\ms\cap (\partial B_1(0)\setminus\Pi(0))$ contains a compact component $\Gamma$. What we obtain is that $\ms$ is properly embedded in $B_1(0)\setminus\{0\}$ and is either a punctured disc (the puncture being in the interior) or a punctured half-disc (the puncture being on the boundary).
Observe that, in this case, a combination of the first estimate of \cref{lem:GradEst} and a flow from $\Gamma$ along $-\grad_\ms^{\R^3}f$ is needed.
\end{remark}

\section{Curvature bounds imply topological control}

We shall further need the following refinement of the previous statement in order to transfer topological information from a small to a bigger scale. Observe that in the free boundary case a standard Morse-theoretic argument as in \cite[Lemma 3.1]{ChodoshKetoverMaximo2017} is not sufficient. Indeed, given (for example) a surface $\ms\subset B_1^{\R^3}(0)\cap\Xi(0)\subset\R^3$, the number of connected components of $\ms\cap\partial B_1^{\R^3}(0)$ can be arbitrarily large even if $\ms\cap (\partial B_1^{\R^3}(0)\cup \Pi(0))$ consists of a single connected component.

\begin{definition} \label{def:StrongEq}
Let $\smetric$ be a metric on $\amb^3\eqdef \{\abs x\le 2\}\cap \Xi(a)\subset\R^3$ sufficiently close to the Euclidean one, for some $0\ge a\ge-\infty$, and let $B_r(p)\subset \amb$ be the metric ball with respect to $\smetric$ with center $p\in\amb$ and radius $r>0$. Given a surface $\ms^2\subset \amb$, we say that $\ms\cap (\partial B_r(p)\setminus\Pi(a))$ is \emph{$\eps$-strongly equatorial} for some $\eps>0$ if the following properties hold:
\begin{enumerate} [label={\normalfont(\roman*)}]
    \item each connected component of $\ms$ intersects $\partial B_r(p)\setminus\Pi(a)$;
    \item there exists a plane $\Delta\subset \R^3$ passing through $p$ which is either orthogonal or parallel to $\Pi(a)$ such that each component of $\ms\cap (B_{2r}(p)\setminus B_{r/2}(p))$ is a graph over $\Delta\cap (B_{2r}(p)\setminus B_{r/2}(p))$ with $C^0$ and $C^1$ norm less than $2\eps r$ and $2\eps$ respectively.
\end{enumerate}
\end{definition}

\begin{proposition} \label{cor:14TopInfo}
There exists a universal constant $\mu_0>0$ with the following property.

Let $\smetric$ be a metric on $\{\abs x \le 2\}\cap \Xi(a) \subset \R^3$ sufficiently close to the Euclidean metric, for some $0\ge a \ge -\infty$, and denote by $M^3\eqdef B_1(0)\subset \Xi(a)$ the unit ball with respect to this metric.
Suppose that being orthogonal to $\Pi(a)$ with respect to the Euclidean metric is equivalent to being orthogonal to $\Pi(a)$ with respect to $\smetric$.
Let $\ms^2\subset \amb$ be a compact, connected, embedded surface having free boundary with respect to $\Pi(a)$, with $\partial\ms=\ms\cap \partial\amb$.
Assume that there exist $p\in B_{\mu_0}(0)$  and $0< r\le \mu_0$ so that:
\begin{enumerate} [label={\normalfont(\roman*)}]
    \item $\abs{A_\ms}(x) d_\smetric(x,p) \le \mu_0$ for all $x\in\ms\setminus B_r(p)$; \label{tp:curv}
    \item \label{tp:equat1} $\ms\cap (\partial B_r(p)\setminus\Pi(a))$ is $\mu_0$-strongly equatorial.
\end{enumerate}
Then, if the genus of $\ms\cap B_r(p)$ and the number of connected components of $\ms\cap (\partial B_r(p)\setminus\Pi(a))$ are bounded by $\kappa$, then also $\ms$ has genus and number of connected components of $\ms\cap(\partial B_1(0)\setminus\Pi(a))$ bounded by $\kappa$.
Moreover, if we have also that the number of connected components of $\ms\cap (\Pi(a)\cap B_r(p))$ is bounded by $\kappa$, then the number of connected components of $\ms\cap \Pi(a)$ is bounded by $2\kappa$.
\end{proposition}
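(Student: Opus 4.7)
The plan parallels and extends the argument of Corollary \ref{cor:14PuncturedBall}: I would use the Euclidean squared distance $f(x) = \abs{x-p}^2$ as a height function on $\ms\setminus B_r(p)$ (working in Fermi coordinates, so that for $\mu_0$ small the metric is $C^2$-close to Euclidean and the Euclidean statement of Lemma \ref{lem:GradEst} applies with an $\eps$ slightly larger than $\mu_0$), combined with the normalized flow $\Phi_t$ of $X = \grad_\ms f / \abs{\grad_\ms f}^2$. Hypothesis \ref{tp:curv} together with Lemma \ref{lem:GradEst} then yields $\abs{\grad_\ms f}>0$ on $\ms\setminus B_r(p)$, so $f$ has no critical points in the annular region. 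Hypothesis \ref{tp:equat1} forces $\ms\cap\partial B_r(p)\setminus\Pi(a)$ to be a disjoint union of at most $\kappa$ smooth simple curves, each of them a $C^1$-small perturbation of a (semi)circle of $\Delta\cap\partial B_r(p)$: either a full circle (if $\Delta\parallel\Pi(a)$, or if $\Delta\perp\Pi(a)$ but the component does not reach $\Pi(a)$) or an arc meeting $\partial B_r(p)\cap\Pi(a)$ transversely at two points.

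For each such component $\gamma$, the flowout $S(\gamma)=\bigcup_{t\geq 0}\Phi_t(\gamma)\subset\ms\setminus B_r(p)$ is, by the non-vanishing of $\grad_\ms f$ and an exit-time argument analogous to the one in the proof of Corollary \ref{cor:14PuncturedBall}, homeomorphic to $\gamma\times[r^2,1]$, hence topologically a disc (if $\gamma$ is an arc) or an annulus (if $\gamma$ is a circle), whose outer boundary is a single component of $\ms\cap\partial B_1(0)\setminus\Pi(a)$. These flowouts are pairwise disjoint and cover $\ms\setminus B_r(p)$ up to the side pieces lying on $\Pi(a)\setminus B_r(p)$. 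Since attaching discs and annuli to $\ms\cap B_r(p)$ along boundary curves neither increases the genus nor modifies the count of outer boundary components, one obtains at once $\genus(\ms)\leq\kappa$ and the claimed bound on the number of components of $\ms\cap(\partial B_1(0)\setminus\Pi(a))$.

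For the final assertion, each component of $\ms\cap(\Pi(a)\cap B_r(p))$ is either a closed loop inside $B_r(p)$ or an arc with endpoints on $\partial B_r(p)\cap\Pi(a)$, each coinciding with an endpoint of some arc component of $\ms\cap\partial B_r(p)\setminus\Pi(a)$; the boundary flow extends these endpoints to arcs of $\ms\cap\Pi(a)\setminus B_r(p)$ which either reach $\partial B_1(0)\cap\Pi(a)$ or re-enter $B_r(p)$, thereby reconstructing all components of $\ms\cap\Pi(a)$ that meet $B_r(p)$. Components of $\ms\cap\Pi(a)$ disjoint from $B_r(p)$ are controlled similarly, by tracing them back via $-\grad_\ms f$; the worst case pairs each internal component with at most one additional external component, yielding the doubled bound $2\kappa$.

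The main technical obstacle is the behavior of the flow along $\partial\ms\cap\Pi(a)$ when $p\notin\Pi(a)$: the Euclidean gradient $\grad^{\R^3}f(x)=2(x-p)$ is \emph{not} parallel to $\Pi(a)$, so a priori flow lines of $\grad_\ms f$ need not remain inside $\ms$ upon reaching its boundary. I would address this by replacing $f$ with $\hat f(x)=\abs{x-p'}^2$, where $p'=(a,p^2,p^3)$ is the orthogonal projection of $p$ onto $\Pi(a)$: this makes $\grad^{\R^3}\hat f$ parallel to $\Pi(a)$, and the free boundary condition then forces $\grad_\ms\hat f$ to be tangent to $\partial\ms\cap\Pi(a)$, so that the flow of $\hat f$ preserves the boundary structure. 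A routine adaptation of Lemma \ref{lem:GradEst} to the basepoint $p'$ (using $\abs{p-p'}\leq\mu_0$ together with the curvature bound \ref{tp:curv}) ensures $\abs{\grad_\ms\hat f}>0$ outside $B_r(p)$ once $\mu_0$ is taken sufficiently small. Equivalently, one could first double $\ms$ across $\Pi(a)$ via Lemma \ref{lem:ReflectionPrinciple} to remove the boundary on $\Pi(a)$ altogether, run the flow argument on the resulting boundary-free surface $\check\ms$, and translate the topological counts back to $\ms$.
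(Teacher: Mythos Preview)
Your flowout argument correctly shows that $\ms\setminus B_r(p)$ is a union of at most $\kappa$ discs and annuli, so the genus bound follows; but the assertion that the outer boundary of each flowout $S(\gamma)$ is \emph{a single component of $\ms\cap(\partial B_1(0)\setminus\Pi(a))$} is precisely the hard part, and your proposal does not establish it. The outer boundary of $S(\gamma)$ is indeed one connected curve in $\partial M$, but $\partial M$ splits as $(\partial B_1(0)\setminus\Pi(a))\cup(\Pi(a)\cap\overline{B_1(0)})$, and when $p\notin\Pi(a)$ flow lines can exit through $\Pi(a)$, so that single curve may cross back and forth and trace many arcs on the spherical portion---this is exactly the failure of the closed-case Morse argument that the paper flags just before Definition~\ref{def:StrongEq}. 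Your fix~(a), moving the basepoint to the projection $p'\in\Pi(a)$, does not work in general: the bound $\abs{p-p'}\le\mu_0$ holds only when $B_r(p)$ actually meets $\Pi(a)$, whereas the problematic case is when $\gamma$ consists of full circles while $\Pi(a)$ still cuts through $B_1(0)$ further out; then $\abs{p-p'}$ can be of order~$1$, and since $\abs{A_\ms}$ near $\partial B_r(p)$ can be of order $\mu_0/r$, the hypothesis of Lemma~\ref{lem:GradEst} with basepoint $p'$ fails. Fix~(b), doubling across $\Pi(a)$, replaces the single concentration point $p$ by two (its reflection included), so the single-basepoint gradient argument no longer applies.

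The paper's proof handles this with a genuinely different idea. From the gradient estimate one extracts the almost-radial relation $\abs{\grad_\ms f(x)-2(x-p)}\le 8\sqrt{\mu_0}$ (see \eqref{eq:AlmostTangent}), which in turn bounds the geodesic curvature of each trace arc $\alpha\subset\ms\cap(\partial B_1(0)\setminus\Pi(a))$ inside the sphere $\partial B_1(0)$. Together with the free-boundary orthogonality at its endpoints this forces every such arc to pass through a fixed small neighbourhood of the ``north pole'' of $\partial B_1(0)$. If two distinct arcs existed, one flows their near-north-pole points backward along $\grad_\ms f$; the almost-radial relation makes the pre-images on $\partial B_r(p)$ close, and the $\mu_0$-strongly-equatorial hypothesis~\ref{tp:equat1}---which your argument never actually uses---then furnishes a short arc in $\gamma$ joining them. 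Flowing that short arc forward connects the two supposedly distinct outer components, a contradiction. This is the step you are missing.
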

\begin{proof}
First observe that, taking $\smetric$ sufficiently close to the Euclidean metric, by \ref{tp:curv} and \ref{tp:equat1} we can assume that $\abs{A_\ms^{\R^3}}(x) \abs{x-p} \le 2\mu_0$ for all $x\in\ms\setminus B_r^{\R^3}(p)$ and that $\ms\cap (\partial B_r^{\R^3}(p)\setminus\Pi(a))$ is $\mu_0$-strongly equatorial.
That being said, and since (as will be clear) the argument we are about to present runs exactly the same in $B_1(0)$ and $B_1^{\R^3}(0)$, for notational convenience we will work with respect to the Euclidean metric (in particular $B_r(p)$, $B_1(0)$ will be balls in $\Xi(a)\subset \R^3$, the Euclidean space). 

By \cref{lem:GradEst}, denoting by $f\colon \ms\to\R$ the function $f(x)\eqdef\abs{x-p}^2$, for all $x\in\ms\setminus B_r(p)$ we have
\begin{equation*}
\abs{\grad_\ms f(x)} \ge 2(1-2\mu_0)(\abs{x-p}-r) + \min_{y\in\ms\cap \partial B_r(p)}\  \abs{\grad_\ms f(y)} \ge 2(1-2\mu_0)\abs{x-p} > 0\comma
\end{equation*}
where we have used that $\grad_\ms f(x)$ is the orthogonal projection of the vector $2(x-p)$ on $T_x\ms$ and thus we can choose $\mu_0$ small enough that $\abs{\grad_\ms f(y)} \ge 2(1-2\mu_0)r$ for all $y\in\ms\cap (\partial B_r(p)\setminus\Pi(a))$, by definition of $\mu_0$-strongly equatorial.
Hence, we have that
\begin{equation} \label{eq:AlmostTangent}
\begin{split}
\abs{\grad_\ms f(x)-2(x-p)} &= \sqrt{4\abs{x-p}^2 - \abs{\grad_\ms f(x)}^2} \\
&\le \sqrt{4\abs{x-p}^2 - 4(1-2\mu_0)^2\abs{x-p}^2} \\
&= 2\abs{x-p}\sqrt{4\mu_0-4\mu_0^2} \le 4 \abs{x-p}\sqrt{\mu_0} \le 8 \sqrt{\mu_0} 
\end{split}
\end{equation}
for all $x\in \ms\setminus B_r(p)$.

Now let us denote by $\Phi(t,x)$ the flow of the vector field $\grad_\ms f$ on $\ms$.
Observe that $\grad_\ms f$ points (strictly) towards $\ms\setminus B_r (p)$ along $\partial B_r (p)\setminus\Pi(a)$ and out of $\ms$ along $\partial B_1(0)\setminus\Pi(a)$ (as long as $\mu_0$ is sufficiently small). Moreover, $\grad_\ms f$ points (strictly) out of $\ms$ along $\Pi(a)\setminus \partial B_r (p)$ if $p\not\in \Pi(a)$ and is parallel to $\Pi(a)$ if $p\in\Pi(a)$ thanks to the free boundary property.
Therefore we can argue similarly to \cref{cor:14PuncturedBall} (keeping in mind that, this time, the exit-time is zero in particular for points in $\ms\cap \partial (\Pi(a)\cap B_r (p))$ to obtain that $\ms$ has genus and number of boundary components in $\partial B_1(0)\setminus B_r (p)$ bounded by $\kappa$.
We only need to show that the number of connected components of $\ms\cap(\partial B_1(0)\setminus \Pi(a))$ is also bounded by $\kappa$.

Since we can work separately on each connected component of $\ms\setminus B_r (p)$ and different connected components correspond to different connected components of $\ms\cap (\partial B_r (p)\setminus\Pi(a))$, for the sake of simplicity we can assume that $\Gamma\eqdef \ms\cap (\partial B_r (p)\setminus\Pi(a))$ consists of only one compact curve (i.e., an $S^1$ or an arc).
Then we want to prove that $\ms\cap (\partial B_1(0)\setminus \Pi(a))$ also consists of a single connected component.

If $\ms\cap (\partial B_1(0)\setminus \Pi(a))$ contains a closed curve (i.e., an $S^1$), then this must be the only connected component of $\ms\cap(\partial B_1(0)\setminus B_r (p))$ and thus we obtain what we want.
Otherwise all components of $\ms\cap (\partial B_1(0)\setminus \Pi(a))$ are arcs with endpoints in $\partial(\Pi(a)\cap B_1(0))$.
Consider one of these arcs and parametrize it with a unit-speed curve $\alpha\colon \cc 0l\to\partial B_1(0)\setminus \Pi(a)$ in such a way that $\alpha(0),\alpha(l)\in \partial(\Pi(a)\cap B_1(0))$.

Denote by $\beta\colon \cc 0l\to S^2$ a choice of the unit normal vector field along $\alpha$ orthogonal to $\alpha'$ in $\partial B_1(0)$. Then, thanks to \eqref{eq:AlmostTangent} and for $\mu_0$ sufficiently small, we can assume that $\scal{\beta(t)}{\nu(\alpha(t))}\ge 1/2$ for all $t\in\cc 0l$, where $\nu$ is a choice unit normal to $\ms$.
Therefore, we have that
\[
\begin{split}
\abs{\cov_{\alpha'}^{\partial B_1(0)}{\alpha'}} &= \abs{\scal{\cov_{\alpha'}^{\partial B_1(0)} \alpha'}{\beta}} = \scal{\beta}{\nu(\alpha)}^{-1} \abs{\scal{\cov_{\alpha'}^{\partial B_1(0)} \alpha'}{\nu(\alpha)}} \\
&\le 2(\abs{\scal{\Diff_{\alpha'}\alpha'}{\nu(\alpha)}} + \abs{\scal{\cov_{\alpha'}^{\partial B_1(0)}\alpha' - \Diff_{\alpha'}\alpha'}{\nu(\alpha)}})\\
&\le 2(\abs {A_{\ms}} + \abs{\scal{\alpha}{\nu(\alpha)}}) \point
\end{split}
\]
Hence note that we can choose $\mu_0$ small enough that $\abs{\cov_{\alpha'}^{\partial B_1(0)}{\alpha'}}\le \eps_1$, for some constant $\eps_1>0$ to be chosen later.

Therefore, since $\alpha'(0)$ is almost (depending on $\mu_0$) orthogonal to $\partial(\Pi(a)\cap B_1(0))$ by the free boundary condition, $\alpha$ remains close to the geodesic in $\partial B_1(0)\setminus\Pi(a)$ connecting $\alpha(0)$ and the north pole of $\partial B_1(0)$, which is the intersection between the normal to $\Pi(a)$ passing through the origin and $\partial B_1(0)\setminus\Pi(a)$. In particular, given $\eps_2>0$, we can choose $\eps_1$ (and $\mu_0$) in such a way that the $\eps_2$-neighborhood of the north pole of $\partial B_1(0)$ contains $\alpha(t_0)$ for some $t_0\in\cc 0l$.

Now assume by contradiction that $\ms\cap (\partial B_1(0)\setminus\Pi(a))$ consists of two or more connected components. Performing the above argument for two of them, we find two points $y,z\in\ms\cap (\partial B_1(0)\setminus\Pi(a))$ in two different connected components of $\ms\cap (\partial B_1(0)\setminus\Pi(a))$ that are both contained in an $\eps_3$-neighborhood of the north pole of $\partial B_1(0)$.

Consider the points $y',z'\in \ms\cap(\partial B_r (p)\setminus\Pi(a))$ such that $\Phi(t(y'),y') = y$ and $\Phi(t(z'),z') = z$, where $t(x)$ is the exit-time of $x\in\ms\setminus B_r(p)$ as in \cref{cor:14PuncturedBall}. By \eqref{eq:AlmostTangent}, given any constant $\eps_3>0$, we can choose $\mu_0$ sufficiently small such that $\abs{\Phi(t,x) - \hat\Phi(t,x)}\le \eps_3$ for all $x\in\ms$ and $t\le t(x)$, where $\hat\Phi$ is the flow of the vector field $2(x-p)$ in $\R^3$.
Then we have that
\[
\abs{\hat \Phi(t(y'),y') - \hat\Phi(t(z'),z')} \le 2(\eps_2+\eps_3).
\]
Now, choose $\eps_2,\eps_3$ so that $\abs{y'-z'} \le r/20$.
Hence, since $\ms\cap(\partial B_r (p)\setminus\Pi(a))$ consists of a single connected component and it is $\mu_0$-strongly equatorial, there exists an arc $\Gamma'\subset \ms\cap(\partial B_r (p)\setminus\Pi(a))$ of length less than $r/10$ that connects $y'$ and $z'$.
Therefore $\Gamma'' \eqdef \{ \Phi(t(x),x) \st x\in\Gamma'\}$ is a connected subset of $\ms\cap (\partial B_1(0)\setminus\Pi(a))$ containing $y$ and $z$, and contained in a $(1/4)$-neighborhood of the north pole of $\partial B_1(0)$. However, this contradicts the fact that $y$ and $z$ are contained in two different connected components of $\ms\cap (\partial B_1(0)\setminus \Pi(a))$ and thus proves what we wanted. Finally, the last assertion of the statement is easily verified given all the previous information.
\end{proof}

%%%%%%%%%%%%%%%%%%%%%%%%%%%%%%%%%%%%%%%%%%%%%%%%%
%%% I - Macro and micro description
%%%%%%%%%%%%%%%%%%%%%%%%%%%%%%%%%%%%%%%%%%%%%%%%%

\chapter{Degeneration analysis}

%%%%%%%%%%%%%%%%%%%%%%%%%%%%%%%%%%%%%%%%%%%%%%%%%%%%%%%%%%%%%%%%
%%% Macroscopic behavior
%%%%%%%%%%%%%%%%%%%%%%%%%%%%%%%%%%%%%%%%%%%%%%%%%%%%%%%%%%%%%%%%

This chapter is devoted to the analysis of a sequence of free boundary minimal surfaces with bounded index in a three-dimensional ambient manifold, as described in \cref{sec:TopologicalDegeneration}.

\section{Macroscopic behavior} \label{sec:MacroDescr}

In this section we study the behavior at macroscopic scale of a sequence of free boundary minimal surfaces with bounded index. 

\subsection{Smooth blow-up sets}

Similarly to \cite[Section 2.1.1]{ChodoshKetoverMaximo2017}, we first need to define a finite set of `bad points', away from which a sequence of free boundary minimal surfaces with bounded index is well-controlled.

\begin{definition} \label{def:BlowUpSets}
Let $(\amb^3,\smetric)$ be a compact three-dimensional Riemannian 
manifold and suppose that $\ms_j^2\subset\amb$ is a sequence 
of properly embedded \emph{edged} free boundary minimal surfaces.
A sequence of finite sets of points $\set_j \subset \ms_j$ is said to be a 
\emph{sequence of smooth blow-up sets} if
\begin{enumerate} [label={\normalfont(\roman*)}]
\item The second fundamental form blows up at points in $\set_j$, i.e., \[\liminf_{j\to\infty}\, \min_{p\in \set_j}\ \abs{\A_{\ms_j}}(p) = \infty\point\]
\item Chosen a sequence of points $p_j \in \set_j$, the 
rescaled surfaces $\bu \ms_j \eqdef \abs{\A_{\ms_j}}(p_j) (\ms_j - p_j)$ 
converge (up to subsequence) locally smoothly with multiplicity one (in the sense of graphs) to a complete, nonflat, properly embedded, free 
boundary minimal surface $\bu \ms_\infty \subset \Xi(a)$, for some $0\ge a \ge -\infty$, satisfying 
\[\abs{\A_{\bu \ms_\infty}}(x) \le \abs{\A_{\bu \ms_\infty}} (0)\] for all $x\in \bu 
\ms_\infty$. \label{bus:BlowUpLimit}
\item Points in $\set_j$ do not appear in the blow-up limit of other points in $\set_j$, i.e., \[\liminf_{j\to\infty}\, \min_{p\neq q\in \set_j}\ \abs{\A_{\ms_j}}(p) d_{\smetric_j}(p,q) = 
\infty\point\] \label{bus:PointsAreFar}
\end{enumerate}
\end{definition}

\subsection{Curvature estimates}

The starting point for our study is a curvature estimate for stable minimal surfaces, here stated in the free boundary setting.
\begin{theorem}[{\cite[Theorem 1.2]{GuangLiZhou2020}}]\label{thm:CurvEstStable}
Let $(\amb^3, \smetric)$ be a compact Riemannian manifold with boundary. Then there 
exists a constant $C =C(\amb,\smetric)$ such that, if $\ms^2\subset \amb$ is a 
compact, properly embedded, stable, \emph{edged}, free boundary minimal surface, then
\[
\sup_{x\in \ms}\ \abs{\A_\ms}(x) \min\{ 1, d_\smetric(x, \partial \ms \setminus \partial 
\amb) \} \le C \point
\]
\end{theorem}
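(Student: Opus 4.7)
The plan is to argue by contradiction, combined with a Choi--Schoen-type point-picking and blow-up procedure, ultimately reducing the statement to a Bernstein-type rigidity theorem for complete stable minimal surfaces.

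Suppose the conclusion fails. Then there is a sequence of compact, properly embedded, stable, edged free boundary minimal surfaces $\ms_j \subset \amb$ and points $p_j \in \ms_j$ with
\[
\abs{\A_{\ms_j}}(p_j)\,r_j \to \infty, \qquad r_j \eqdef \min\{1,\, d_\smetric(p_j,\, \partial\ms_j\setminus\partial\amb)\}.
\]
A standard point-picking argument applied inside the intrinsic ball $B^{\ms_j}_{r_j/2}(p_j)$ lets me replace $p_j$ by a nearby point around which $\abs{\A_{\ms_j}}$ is essentially maximal on a geodesic disc of radius comparable to $\abs{\A_{\ms_j}}(p_j)^{-1}R_j$, for some sequence $R_j\to\infty$. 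Setting $\lambda_j \eqdef \abs{\A_{\ms_j}}(p_j)$, this yields $\abs{\A_{\ms_j}} \le 2\lambda_j$ on $B_{R_j/\lambda_j}(p_j)\cap\ms_j$, and the ball $B_{R_j/\lambda_j}(p_j)\cap\ms_j$ meets no edge points of $\ms_j$ (that is, no points of $\partial\ms_j\setminus\partial\amb$).

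Next I would rescale by $\lambda_j$, working in geodesic normal coordinates around $p_j$ if $\lambda_j\,d_\smetric(p_j,\partial\amb)\to\infty$, and in Fermi coordinates around the nearest boundary point otherwise. The rescaled ambient metrics converge locally smoothly to the flat metric on $\R^3$ or on a half-space $\Xi(a) \subset \R^3$ for some $0\ge a \ge -\infty$, and the rescaled surfaces $\bu\ms_j \eqdef \lambda_j(\ms_j - p_j)$ are stable (edged) free boundary minimal surfaces with $\abs{\A_{\bu\ms_j}}(0)=1$, with uniformly bounded $\abs{\A_{\bu\ms_j}}$ on balls of radius $R_j \to \infty$, and without edge points therein. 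Standard compactness for sequences of minimal surfaces with uniformly bounded second fundamental form extracts a subsequence converging locally smoothly to a complete, nonflat, properly embedded minimal surface $\bu\ms_\infty$, which is either an entire stable minimal surface in $\R^3$ or a stable free boundary minimal surface in $\Xi(a)$, and in either case satisfies $\abs{\A_{\bu\ms_\infty}}(0)=1$. Two-sidedness, which will be needed in the last step, is obtained by passing to the orientation double cover (one can arrange the rescaled balls to be simply connected in the ambient manifold).

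The contradiction then comes from a Bernstein-type theorem. In the boundaryless case the classical result of Fischer-Colbrie--Schoen and do Carmo--Peng forces a complete two-sided stable minimal surface in $\R^3$ to be a plane. In the free boundary case I would double $\bu\ms_\infty$ across $\Pi(a)$ via \cref{lem:ReflectionPrinciple}, which produces a complete minimal surface in $\R^3$; its stability is inherited from $\bu\ms_\infty$ along the lines of \cite[Section 2]{AmbrozioBuzanoCarlottoSharp2018}, so the classical Bernstein statement again forces it to be a plane, and hence $\bu\ms_\infty$ to be a half-plane. Either outcome contradicts $\abs{\A_{\bu\ms_\infty}}(0)=1$. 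The main technical obstacle is the bookkeeping near $\partial\amb$: one must verify that the free boundary condition passes to the limit in Fermi coordinates, and ensure that the point-picking does not drag any edge points into the rescaled balls. This is precisely where the hypothesis that the distance in the statement is measured only to $\partial\ms_j\setminus\partial\amb$ (and not to the free boundary) is essential, as is the edgedness of $\ms_j$.
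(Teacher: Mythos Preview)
The paper does not prove this theorem: it is quoted as \cite[Theorem 1.2]{GuangLiZhou2020}, with only a remark that Remark 1.3 therein covers the edged case. So there is no proof in the paper to compare against.

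That said, your sketch is exactly the standard contradiction/blow-up argument that underlies such curvature estimates (Schoen's original method, and the Guang--Li--Zhou extension to the free boundary setting). A couple of points worth tightening if you want this to stand on its own: (i) the limit of the rescaled surfaces is a priori a \emph{lamination}, not a single surface, so you should say that you take the leaf through the origin, which inherits stability and satisfies $\abs{\A}(0)=1$; (ii) two-sidedness is more cleanly obtained by noting that the rescaled ambient balls are simply connected (as the paper does in other blow-up arguments), rather than invoking a double cover; (iii) in the free boundary case you correctly reflect via \cref{lem:ReflectionPrinciple} and appeal to the stability transfer in \cite[Section 2]{AmbrozioBuzanoCarlottoSharp2018}, which is precisely the ingredient encapsulated in \cref{cor:StableLam}. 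With these clarifications the argument is sound.
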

\begin{remark}
Actually, in \cite{GuangLiZhou2020} the theorem is stated only for \emph{nonedged} free 
boundary minimal surfaces, but it is observed in Remark 1.3 therein
that the conclusion still holds in such more general setting.
\end{remark}

We will need the following extension of the previous result.

\begin{lemma} \label{lem:CurvEstBoundedInd}
Let $(\amb^3,\smetric)$ be a compact Riemannian manifold with boundary and fix $I\in 
\N$. Suppose that $\ms_j^2\subset \amb$ is a sequence of compact, properly 
embedded, \emph{edged}, free boundary minimal surfaces with $\ind(\ms_j) \le I$. Then, up 
to subsequence, there exist a constant $C=C(\amb,\smetric)$ and a sequence of smooth 
blow-up sets $\set_j\subset \ms_j$ with $\abs {\set_j} \le I$ and
\[
\sup_{x\in \ms_j} \ \abs{\A_{\ms_j}}(x) \min\{ 1, d_\smetric(x, \set_j \cup (\partial \ms_j 
\setminus \partial 
\amb)) \} \le C \point
\]
\end{lemma}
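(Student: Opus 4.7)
The plan is to argue by induction on the number of points already selected, with the base case provided by \cref{thm:CurvEstStable}. More precisely, for $k \ge 0$ I would prove: if along some subsequence (still denoted $\ms_j$) there is no uniform bound
\[
\sup_{x\in\ms_j}\ \abs{\A_{\ms_j}}(x)\min\{1,\, d_\smetric(x, \set_j^{(k)}\cup (\partial\ms_j\setminus\partial\amb))\} \le C,
\]
with $\abs{\set_j^{(k)}} \le k$, then one can extract an additional blow-up point $p_j^{(k+1)}\in\ms_j$ so that the enlarged sets $\set_j^{(k+1)} \eqdef \set_j^{(k)} \cup \{p_j^{(k+1)}\}$ continue to verify properties (i)--(iii) of \cref{def:BlowUpSets}. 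The key will then be that the iteration must terminate at some $k \le I$, because each new bubble absorbs at least one unit of Morse index of $\ms_j$.

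For the extraction I would carry out a Schoen-type point-picking argument, selecting $p_j^{(k+1)}\in\ms_j$ to essentially realize the supremum of the scale-invariant curvature quantity appearing on the left-hand side above. Rescaling $\ms_j$ at the scale $\lambda_j \eqdef \abs{\A_{\ms_j}}(p_j^{(k+1)})$ centered at $p_j^{(k+1)}$, the surfaces $\bu\ms_j \eqdef \lambda_j (\ms_j - p_j^{(k+1)})$ have curvature bounded by $1 + o(1)$ on intrinsic balls of arbitrary fixed radius, have index at most $I$, and satisfy $\abs{\A_{\bu\ms_j}}(0) \to 1$. Depending on whether $\lambda_j d_\smetric(p_j^{(k+1)}, \partial\amb)$ stays bounded or diverges along a subsequence, the rescaled metrics converge to the Euclidean metric on a half-space $\Xi(a)$ for some $a \in [-\infty, 0]$ (the case $a = -\infty$ corresponding to the full $\R^3$). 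A subsequential compactness argument, combined with \cref{cor:MinSurfInR3WithFiniteIndex} (applied after reflection through \cref{lem:ReflectionPrinciple} when $a > -\infty$), yields a limit $\bu\ms_\infty \subset \Xi(a)$ that is a complete, nonflat, properly embedded free boundary minimal surface of finite index, giving property (ii) of \cref{def:BlowUpSets}. Property (iii) is built into the selection of the successive $p_j^{(\ell)}$ as points of maximum normalized curvature (so any other point already in $\set_j^{(k)}$ must escape to infinity in the rescaled picture), and property (i) is immediate from $\lambda_j \to \infty$.

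The crux of the argument lies in controlling the number of iterations. Each blow-up limit $\bu\ms_\infty^{(\ell)}$ is nonflat and of finite index, hence has index at least one: via \cref{lem:ReflectionPrinciple} this reduces, in the half-space case, to the classical theorem of Fischer-Colbrie, do Carmo--Peng and Pogorelov that two-sided complete stable minimal surfaces in $\R^3$ are planes. Fixing for each $\ell \le k$ a compactly supported $V^\ell \in \Gamma(N\bu\ms_\infty^{(\ell)})$ with $Q^{\bu\ms_\infty^{(\ell)}}(V^\ell, V^\ell) < 0$ and pulling these back through the blow-up charts around each $p_j^{(\ell)}$, one obtains $k$ sections of $N\ms_j$ with pairwise disjoint supports (the separation being ensured by property (iii) of the smooth blow-up set). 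For $j$ large, these span a $k$-dimensional subspace on which $Q^{\ms_j}$ is negative definite, so that $k \le \ind(\ms_j) \le I$. Once the iteration stops, the desired curvature estimate holds with a universal constant $C = C(\amb, \smetric)$, as otherwise yet another bubble could be extracted, contradicting termination.

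The main technical hurdle I expect concerns the boundary case: one must handle uniformly blow-up points whose distance to $\partial\amb$ is of order $\lambda_j^{-1}$, and correctly pass from the free boundary condition along $\partial\amb$ to the one along $\Pi(a)$. This is carried out working in Fermi coordinates along $\partial\amb$ and invoking the regularity results of \cref{sec:ReflPrinc} to guarantee that the blow-up limit is a properly embedded (nonedged) free boundary minimal surface in $\Xi(a)$. Additionally, the \emph{edged} hypothesis on $\ms_j$ must be tracked under the rescaling, since the relative boundary $\partial\ms_j \setminus \partial\amb$ escapes to infinity at scale $\lambda_j$ and disappears in the limit---this is precisely the reason the limit $\bu\ms_\infty$ appearing in \cref{def:BlowUpSets}(ii) is a genuine free boundary minimal surface. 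Once these points are addressed, the combinatorial bookkeeping follows the pattern already developed for closed minimal surfaces in \cite{ChodoshKetoverMaximo2017}.
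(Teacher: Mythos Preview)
Your proposal is correct and matches the paper's argument in all essential respects: point-picking, blow-up to a nonflat finite-index free boundary minimal surface in a half-space, and termination via index absorption. The only organizational difference is that the paper inducts on $I$ rather than iterating on $k$: after locating the first blow-up point $p_j$, it physically removes a small ball and applies the inductive hypothesis to the \emph{edged} surface $\ms_j' \eqdef \ms_j \setminus B_{R_0/\lambda_j}(p_j)$, which satisfies $\ind(\ms_j') \le I-1$. This framing is exactly what motivates the edged hypothesis in the statement (so that the inductive step lands back in the same class), whereas in your iterative version that hypothesis enters only through the weight $d_\smetric(x,\partial\ms_j\setminus\partial\amb)$; the two packagings are equivalent and your pull-back of disjoint negative variations is precisely what the paper's inequality $\ind(\ms_j')\le I-1$ encodes.
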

\begin{remark}
Note that the reason to state the lemma with \emph{edged} free boundary minimal surfaces is to perform the inductive procedure in the proof.
\end{remark}
\begin{proof}
We proceed by induction on $I\in \N$. If $I=0$ the statement is exactly 
\cref{thm:CurvEstStable}, thus assume $I>0$.
Note that we can suppose that \[\rho_j \eqdef \max_{x\in\ms_j}\ \abs{\A_{\ms_j}}(x) \min\{ 1, d_\smetric(x, \partial 
\ms_j \setminus \partial \amb) \} \to \infty\comma \] for otherwise one can take $\set_j=\emptyset$ and there is nothing left to do.
Take $q_j \in \ms_j$, a point where $\rho_j$ is attained, and define $R_j 
\eqdef d_\smetric(q_j, \partial 
\ms_j \setminus \partial \amb)/2$.

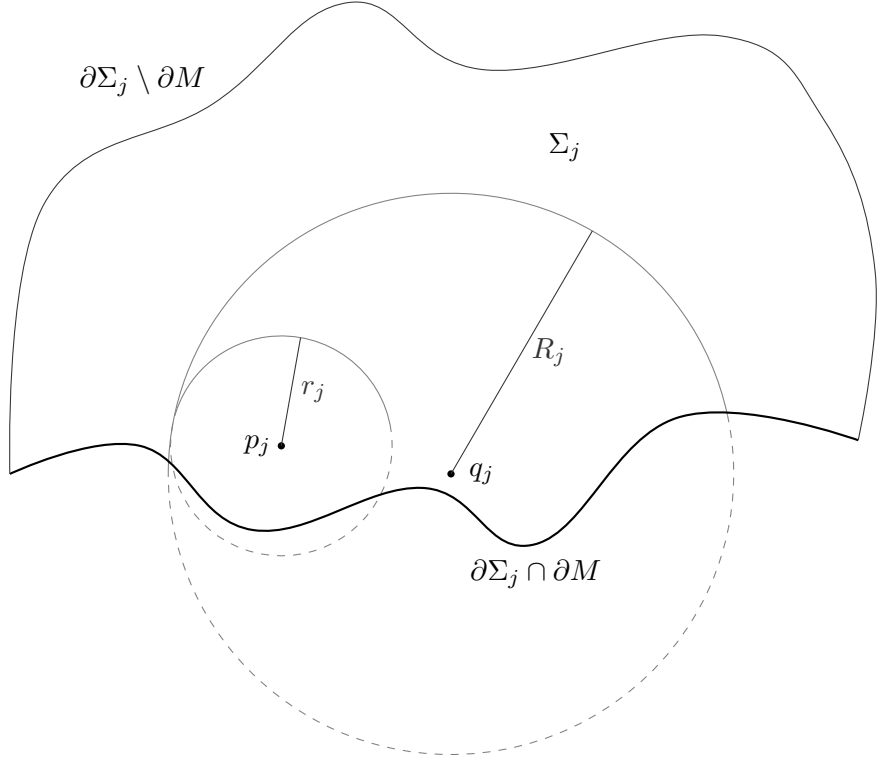
\begin{figure}[htpb]
\centering
%%%%%%%%%%%%%%%%%%%%%%%%%%%%%%%%%%%%%%%%%%%%%%%%%%%%%
%%% pick_pt.tikz
%%%%%%%%%%%%%%%%%%%%%%%%%%%%%%%%%%%%%%%%%%%%%%%%%%%%%
\begin{tikzpicture} [scale=1.24]
\pgfmathsetmacro{\R}{3}
\pgfmathsetmacro{\r}{\R*(1-sqrt(37/100))}
\pgfmathsetmacro{\k}{1.2}
\coordinate(Q)at(0,0);
\coordinate(P)at(-0.6*\R,0.1*\R);

\coordinate(C1)at(-1.3*\R*\k,0*\k);
\coordinate(C2)at(-1.1*\R,0.1*\R);
\coordinate(C3)at(-0.7*\R,-0.2*\R);
\coordinate(C4)at(-0.1*\R,-0.05*\R);
\coordinate(C5)at(0.3*\R,-0.25*\R);
\coordinate(C6)at(0.8*\R,0.2*\R);
\coordinate(C7)at(1.2*\R*\k, 0.1*\R*\k);

\coordinate(D1)at(1.25*\R*\k,0.6*\R*\k);
\coordinate(D2)at(1.1*\R*\k,1.05*\R*\k);
\coordinate(D3)at(0.8*\R*\k,1.3*\R*\k);
\coordinate(D4)at(0.1*\R*\k,1.2*\R*\k);
\coordinate(D5)at(-0.3*\R*\k,1.4*\R*\k);
\coordinate(D6)at(-0.7*\R*\k,1.1*\R*\k);
\coordinate(D7)at(-1.2*\R*\k, 0.8*\R*\k);

\pgfmathsetmacro{\ta}{12}
\pgfmathsetmacro{\tb}{179}
\draw[myGray] ({\R*cos(\ta)}, {\R*sin(\ta)}) arc (\ta:\tb:\R);
\draw[dashed, myGray] ({\R*cos(\tb)}, {\R*sin(\tb)}) arc (\tb:360+\ta:\R);

\pgfmathsetmacro{\sa}{8}
\pgfmathsetmacro{\sb}{160}
\draw[myGray] ({-0.6*\R+\r*cos(\sa)}, {0.1*\R+\r*sin(\sa)}) arc (\sa:\sb:\r);
\draw[dashed, myGray] ({-0.6*\R+\r*cos(\sb)}, {0.1*\R+\r*sin(\sb)}) arc (\sb:360+\sa:\r);

\fill (Q) circle[radius=0.1em];
\node[right=3pt] at (Q) {$q_j$};
\fill (P) circle[radius=0.1em];
\node[left] at (P) {$p_j$};

\draw[black!80] (Q)--({cos(60)*\R},{sin(60)*\R}) node [midway,right] {$R_j$};
\draw[black!80] (P)--({-0.6*\R+cos(80)*\r},{0.1*\R+sin(80)*\r}) node [midway,right] {$r_j$};

\draw[thick] plot [smooth, tension=0.7] coordinates {(C1) (C2) (C3) (C4) (C5) (C6) (C7)};
\draw[black!80] plot [smooth, tension=0.7] coordinates {(C7) (D1) (D2) (D3) (D4) (D5) (D6) (D7) (C1)};

\node[below=2pt] at (C5) {$\partial\ms_j\cap\partial \amb$};
\node[above left=-1pt] at (D6) {$\partial\ms_j\setminus\partial \amb$};

\node[below right=20pt] at (D4) {$\ms_j$};
\end{tikzpicture}
%%%%%%%%%%%%%%%%%%%%%%%%%%%%%%%%%%%%%%%%%%%%%%%%%%%%%
%%%%%%%%%%%%%%%%%%%%%%%%%%%%%%%%%%%%%%%%%%%%%%%%%%%%%
\caption{Point picking argument.} \label{fig:PickPt}
\end{figure}

Now consider $p_j\in \ms_j \cap B_{R_j}(q_j)$ which 
realizes
\[
\max_{x\in \ms_j \cap B_{R_j}(q_j)}\, \abs{\A_{\ms_j}}(x) d_{\smetric}(x, \partial B_{R_j}(q_j)\setminus \partial \amb)
\]
and define $r_j \eqdef d_{\smetric}(p_j,  \partial B_{R_j}(q_j) \setminus\partial \amb)$ and 
$\lambda_j \eqdef \abs{\A_{\ms_j}}(p_j)$.
Note that 
\[
\begin{split}
\lambda_jr_j &= \abs{\A_{\ms_j}}(p_j) d_{\smetric}(p_j,  \partial B_{R_j}(q_j)\setminus\partial\amb) \ge  \abs{\A_{\ms_j}}(q_j) d_{\smetric}(q_j,  \partial B_{R_j}(q_j)\setminus \partial\amb) \\ &= \abs{\A_{\ms_j}}(q_j) R_j = \frac 12 \abs{\A_{\ms_j}}(q_j)d_\smetric(q_j, 
\partial 
\ms_j \setminus \partial \amb) \ge \frac{\rho_j}2 \to\infty \point
\end{split}
\]
Thus in particular $\abs{\A_{\ms_j}}(p_j) \min\{ 1, d_\smetric(p_j, \partial 
\ms_j \setminus \partial \amb) \}\to \infty$, where we are using that $d_\smetric(p_j, 
\partial 
\ms_j \setminus \partial \amb)\ge r_j$, since there are no points of $\partial 
\ms_j\setminus \partial \amb$ in $B_{R_j}(q_j)$.

Now, we have that $d_{\smetric}(x, \partial B_{r_j}(p_j) \setminus\partial\amb) \le 
d_{\smetric}(x, \partial B_{R_j}(q_j)\setminus\partial\amb)$ for every $x\in B_{r_j}(p_j)$ with equality in $p_j$. Therefore $p_j$ also realizes 
\[
\max_{x\in \ms_j \cap B_{r_j}(p_j)}\, \abs{\A_{\ms_j}}(x) d_{
g}(x, \partial B_{r_j}(p_j)\setminus\partial\amb) \point
\]

\clearpage

Hence, we can now perform a blow-up argument around the points 
$p_j$. In particular, let us define 
$\bu \ms_j \eqdef \lambda_j(\ms_j - p_j)$, a surface in the manifold $\amb_j \eqdef 
\lambda_j (\amb - p_j)$ endowed with the rescaled\footnote{One can think of $M$ as isometrically embedded in some $\R^K$ and consider the blow-ups in this Euclidean space. Thus, in particular, we have that $\smetric_j(x)=\lambda_j g(x+p_j)$.} metric $\smetric_j$. Then we have that
\[
\abs{\A_{\bu \ms_j}}(x)\, d_{
\smetric_j}(x, \partial B_{\lambda_jr_j}(0)\setminus \partial M_j) \le \lambda_jr_j \to\infty
\] 
for all $x \in \bu \ms_j \cap B_{\lambda_jr_j}(0)$. 
Note that here $B_{\lambda_jr_j}(0)$ is the ball in $\amb_j$ with respect to the metric $\smetric_j$. We do not write explicitly the dependence on $j$ since it is always clear from the context.

Hence, fixing $R>0$, for all $x\in \bu \ms_j \cap B_{R}(0)$ it 
holds that
\begin{equation} \label{eq:BehaviourSecFundBlowUp}
\abs{\A_{\bu \ms_j}}(x) \le \frac{\lambda_jr_j}{\lambda_jr_j - R} \nearrow 1
\end{equation}
as $j \to \infty$. Moreover, observe that $\abs{\A_{\bu \ms_j}}(0) = 1$ and the domains $B_{\lambda_jr_j}(0) \subset \amb_j$ do not contain points of $\partial\bu\ms_j\setminus \partial\amb_j$, since $\ms_j$ has no points of 
$\partial \ms_j\setminus \partial \amb$ in $B_{r_j}(p_j)$.
By \cref{thm:LamCptness} together with \cref{prop:LimLamInR3} (we are in case \ref{llir:multone} since the limit lamination cannot be flat), this implies that the surfaces $\bu\ms_j$ converge locally smoothly (in the sense of graphs) with multiplicity one to a properly embedded free boundary minimal surface $\bu\ms_\infty\subset \Xi(a)$ (for some $-\infty\leq a\leq 0$). Furthermore, the index of $\bu\ms_\infty$ is strictly positive and less or equal than $I$.

Hence, with a standard argument as in \cite[Proposition 1]{FischerColbrie1985}, there exists $R_0>0$ such that $\bu \ms_\infty \cap B_{R_0}(0)$ has 
index greater than $0$ and $\bu \ms_\infty\setminus B_{R_0}(0)$ is stable. 
Moreover we can assume, without loss of generality, that $\bu \ms_\infty$ intersects $\partial B_{R_0}(0)$ 
transversely and, thanks to Proposition 28 in \cite{AmbrozioBuzanoCarlottoSharp2018}, also that 
\begin{equation}\label{eq:SmallSecFundSigmaInfty}
\abs{\A_{\bu \ms_\infty}}(x) \le 1/4
\end{equation}
on $\bu \ms_\infty\setminus B_{R_0}(0)$.

Now consider $\ms_j' \eqdef \ms_j\setminus B_{R_0/\lambda_j}(p_j)$. Choosing 
$j$ sufficiently large, we can assume that $\partial B_{R_0/\lambda_j}(p_j)$ intersects $\ms_j$ transversely and that the ball $B_{R_0/\lambda_j}(p_j)$ does not intersect the portion of the boundary of $\ms_j$ that is not contained in 
$\partial \amb$. Indeed we know that $\lambda_j d_{\smetric}(p_j, \partial 
\ms_j\setminus \partial\amb)\to\infty$.
Therefore $\ms_j'$ is a sequence of manifolds which still fulfills the 
assumptions of the lemma, but with $\ind(\ms_j')\le I-1$ for $j$ 
sufficiently large.
Hence, by the inductive hypothesis, up to subsequence there exist a constant 
$C'>0$ and a sequence of smooth blow-up sets $\set_j'\subset \ms_j'$ 
with $\abs{\set_j'} \le I-1$ and
\begin{equation} \label{eq:EstimateInductive}
 \abs{\A_{\ms_j'}} (x) \min\{ 1, d_{ \smetric}(x,  \set_j'\cup (\partial 
\ms_j'\setminus\partial\amb)) \} \le C' \point
\end{equation}

We now want to show that $\set_j\eqdef  \set_j'\cup \{ p_j \}$ is the desired 
sequence of blow-up sets. The only nonobvious point to check is 
point \ref{bus:PointsAreFar} of \cref{def:BlowUpSets}, for which it suffices to verify that
\[
\lim_{j\to\infty}\, \min_{q\in \set_j'}\ \abs{\A_{\ms_j}}(p_j) d_{\smetric}(p_j,q) = 
\lim_{j\to\infty}\, \min_{q\in \set_j'}\ \abs{\A_{\ms_j}}(q) d_{\smetric}(p_j,q) = 
\infty \point
\]
We first observe that indeed
\[
\liminf_{j\to\infty}\, \min_{q\in \set_j'}\ \abs{\A_{\ms_j}}(q) d_{\smetric}(p_j,q)\geq  \liminf_{j\to\infty}\, \min_{q\in \set_j'}\ \abs{\A_{\ms_j}}(q) d_{\smetric}(\partial \ms'_j\setminus\partial M,q)=
\infty 
\]
based on \ref{bus:BlowUpLimit} for $q\in\set'_j$ (the associated limit surface is \emph{not edged}). However, thanks to \eqref{eq:BehaviourSecFundBlowUp} and \eqref{eq:SmallSecFundSigmaInfty}, we derive
 $\abs{\A_{\ms_j}}(q) \le 
\abs{\A_{\ms_j}}(p_j)/2$ for all $q\in \set_j'$ provided one takes $j$ sufficiently large; hence $\min_{q\in \set_j'}\ \abs{\A_{\ms_j}}(p_j) d_{\smetric}(p_j,q)$ could not be uniformly bounded either.

Therefore, it remains to check that there exists $C>0$ such that
\[
\abs{\A_{\ms_j}} (x) \min\{ 1, d_{\smetric}(x, \set_j\cup (\partial \ms_j\setminus \partial\amb)) \} \le 
C
\]
for all $x\in \ms_j$.
This inequality easily holds on $B_{R_0/\lambda_j}(p_j)$, thus it is sufficient 
to check it for points $x\in \ms_j'$.
Assume by contradiction that there exists a sequence of points $z_j\in \ms_j'$ such that
\begin{equation} \label{eq:ContradictionAssumption}
\limsup_{j\to\infty}\ \abs{\A_{\ms_j}}(z_j) \min\{ 1, d_{\smetric}(z_j, \set_j\cup 
(\partial \ms_j\setminus \partial \amb)) \} = \infty\point
\end{equation}
First observe that $\liminf_{j\to\infty}\lambda_jd_\smetric(z_j,p_j) = \infty$, because otherwise $\lambda_j(z_j-p_j)$ would converge to some point $\bar z\in \bu\ms_\infty$ and we would obtain
\[
\begin{split}
\limsup_{j\to\infty}\ &\abs{\A_{\ms_j}}(z_j) \min\{ 1, d_{\smetric}(z_j, \set_j\cup 
(\partial \ms_j\setminus \partial \amb)) \} \le \limsup_{j\to\infty}\ \abs{\A_{\ms_j}}(z_j) d_{\smetric}(z_j,p_j) \\
& = \limsup_{j\to\infty}\ \abs{\A_{\bu\ms_j}}(\lambda_j(z_j-p_j))  d_{\smetric_j}(\lambda_j(z_j-p_j),0) = \abs{\A_{\bu\ms_\infty}}(\bar z) d_{{\R^3}}(\bar z,0) < \infty \point
\end{split}
\]
Moreover, since both \eqref{eq:EstimateInductive} and \eqref{eq:ContradictionAssumption} hold and $[\set_j'\cup(\partial\ms_j'\setminus\partial\amb)]\setminus [\set_j\cup(\partial\ms_j\setminus\partial\amb)] = \partial\ms_j'\setminus\partial\ms_j$, we have that
\[
d_\smetric(z_j,\set_j'\cup(\partial\ms_j'\setminus\partial\amb)) = d_\smetric(z_j,\partial\ms_j'\setminus\partial\ms_j) = d_\smetric(z_j,p_j) - \frac{R_0}{\lambda_j} \point
\]
Thus, we can conclude that
\begingroup
\allowdisplaybreaks
\[
\begin{split}
\limsup_{j\to\infty}\ &\abs{\A_{\ms_j}}(z_j) \min\{ 1, d_{\smetric}(z_j, \set_j\cup 
(\partial \ms_j\setminus \partial \amb)) \} \le\limsup_{j\to\infty}\ \abs{\A_{\ms_j}}(z_j) d_{\smetric}(z_j,p_j) \\
&\le\limsup_{j\to\infty}\ \frac{C'}{d_\smetric(z_j,\set_j'\cup(\partial\ms_j'\setminus\partial\amb))}d_{\smetric}(z_j,p_j) \\
&= \limsup_{j\to\infty}\ \frac{C'}{d_{\smetric}(z_j,p_j) - R_0/\lambda_j}d_{\smetric}(z_j,p_j) \\
& = \limsup_{j\to\infty}\ \frac{C'}{1 - R_0/(\lambda_jd_{\smetric}(z_j,p_j) )} = C'\comma
\end{split}
\]
\endgroup
which is a contradiction and completes the proof.
\end{proof}

Given the previous lemma and the tools to handle free boundary minimal laminations presented in \cref{sec:FBMLam}, we can conclude the description of the limit picture at macroscopic scale.
\begin{corollary} \label{cor:ExistenceBlowUpSetAndCurvatureEstimate}
Let $(\amb^3,\smetric)$ be a compact Riemannian manifold with boundary and fix $I\in 
\N$. Suppose that $\ms_j^2\subset \amb$ is a sequence of compact, properly 
embedded, free boundary minimal surfaces with $\ind(\ms_j) \le I$. Then, up 
to subsequence, there exist a constant $C=C(\amb,\smetric)$ and a sequence of smooth 
blow-up sets $\set_j\subset \ms_j$ with $\abs {\set_j} \le I$ and
\begin{equation}\label{eq:CurvEstSj}
\sup_{x\in \ms_j} \ \abs{\A_{\ms_j}}(x) \min\{ 1, d_\smetric(x, \set_j ) \} \le C \point
\end{equation}
Moreover, the sets $\set_j$ converge to a set of points $\set_\infty$ and the surfaces $\ms_j$ converge locally smoothly away from $\set_\infty$ to some smooth free boundary minimal lamination $ \lam$ of $\amb$. Furthermore, if the ambient manifold satisfies \hypP{} then $\partial L = L \cap \partial\amb$ for all $L\in\lam$. 
\end{corollary}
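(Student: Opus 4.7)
The plan is to combine the curvature estimate from \cref{lem:CurvEstBoundedInd} with the lamination compactness machinery of \cref{thm:LamCptness} and the removable singularities analysis of \cref{thm:RemSingLimLam}.

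First I would apply \cref{lem:CurvEstBoundedInd} directly to the sequence $\ms_j$: since these are properly embedded we have $\partial\ms_j\setminus\partial\amb=\emptyset$, so the conclusion of that lemma reduces exactly to the assertion that, up to subsequence, there exist smooth blow-up sets $\set_j\subset\ms_j$ with $\abs{\set_j}\le I$ for which \eqref{eq:CurvEstSj} holds. Exploiting the uniform cardinality bound on $\set_j$ together with compactness of $\amb$, a further extraction yields Hausdorff convergence $\set_j\to\set_\infty$ for some finite set $\set_\infty\subset\amb$ with $\abs{\set_\infty}\le I$.

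Next, for any $x\in\amb\setminus\set_\infty$ I would pick $r>0$ small enough that $B_{2r}(x)\cap\set_j=\emptyset$ for all $j$ large; by \eqref{eq:CurvEstSj} the surfaces $\ms_j\cap B_{2r}(x)$ have uniformly bounded second fundamental form, so \cref{thm:LamCptness} applies and produces $C^{0,\alpha}$ subsequential convergence in $B_r(x)$ to a smooth free boundary minimal lamination, with leafwise $C^\infty$ convergence. A diagonal argument over a countable cover of $\amb\setminus\set_\infty$ by such balls yields a single subsequence along which $\ms_j$ converges locally smoothly, away from $\set_\infty$, to a free boundary minimal lamination $\hat\lam$ of $\amb\setminus\set_\infty$. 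Invoking \cref{thm:RemSingLimLam} then extends $\hat\lam$ smoothly across the finitely many points of $\set_\infty$ to a free boundary minimal lamination $\lam$ of all of $\amb$, in the sense of \cref{def:fbmlam}.

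Finally, assuming property \hypP{}, each leaf $L\in\lam$ is a smooth, connected, embedded minimal surface in $\amb$ satisfying the orthogonality condition along its boundary; the leaf $L$ is complete (possibly noncompact) as a submanifold, since its closure in $\amb$ is a union of leaves of $\lam$ and $L$ itself is a maximal connected component. Property \hypP{} therefore forbids $L$ from touching $\partial\amb$ in its interior, yielding $\partial L=L\cap\partial\amb$ as desired. The only delicate step in this outline is the extension across $\set_\infty$, but since \cref{thm:RemSingLimLam} has already been established (leveraging the half-space blow-up analysis of \cref{prop:LimLamInR3} and the stable-leaf removability in \cref{prop:RemovableSingStab}), it can be applied here as a black box.
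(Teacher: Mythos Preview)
Your proposal is correct and follows essentially the same approach as the paper's proof: apply \cref{lem:CurvEstBoundedInd} (noting, as you do, that $\partial\ms_j\setminus\partial\amb=\emptyset$ so the estimate collapses to \eqref{eq:CurvEstSj}), extract a subsequence so that $\set_j\to\set_\infty$, use \cref{thm:LamCptness} for local lamination convergence away from $\set_\infty$, and then invoke \cref{thm:RemSingLimLam} to extend across the singular points. Your justification of the final claim under \hypP{} is slightly more detailed than the paper's (which simply calls it straightforward), but the reasoning is sound.
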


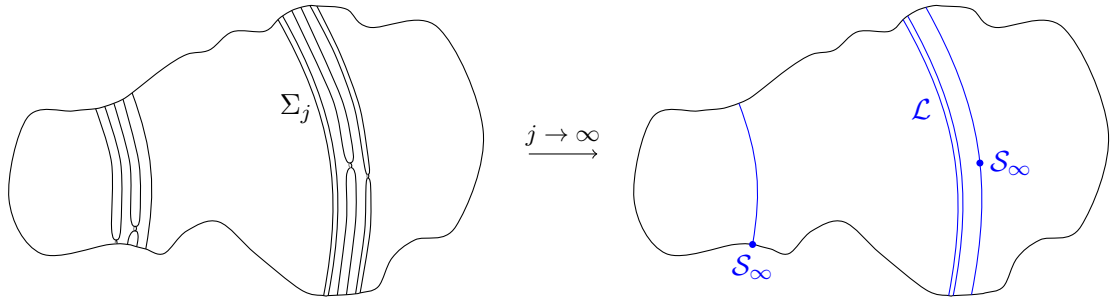
\begin{figure}[htpb]
\centering
%%%%%%%%%%%%%%%%%%%%%%%%%%%%%%%%%%%%%%%%%%%%%%%%%%%%%
%%% macro_descr.tikz
%%%%%%%%%%%%%%%%%%%%%%%%%%%%%%%%%%%%%%%%%%%%%%%%%%%%%
\begin{tikzpicture}[scale=0.60]
\coordinate (A1) at (0,0); 
\coordinate (B1) at (0.2,0.05); 
\coordinate (C1) at (0.4,0.12);
\coordinate (D1) at (0.6,0.21);
\coordinate (E1) at (0.8,0.35);

\coordinate (F1) at (4,1.97);
\coordinate (G1) at (4.1,2.05);
\coordinate (H1) at (4.3,2.17);
\coordinate (I1) at (4.5,2.24);
\coordinate (J1) at (4.7,2.28);
\coordinate (K1) at (4.85,2.27);

\coordinate (F2) at (5,-4.13);
\coordinate (G2) at (5.1,-4.13);
\coordinate (H2) at (5.3,-4.12);
\coordinate (I2) at (5.5,-4.11);
\coordinate (J2) at (5.7,-4.09);
\coordinate (K2) at (5.8,-4.07);

\coordinate (C2) at (0.7,-3);
\coordinate (D2) at (0.9,-3.05);
\coordinate (E2) at (1.1,-3.1);

\coordinate (X) at (-1.5,-0.2);
\coordinate (Y) at (-1.2, -3.2);
\coordinate (U) at (7.5, 1.8);
\coordinate (V) at (7.9, -2.6);

\draw plot [smooth cycle, tension=0.6] coordinates {(A1) (B1) (C1) (D1) (E1)
			      (1.5,0.9) (2,1.2) (2.5,1.3) (3,1.7) (3.7,1.71)
			      (F1) (G1) (H1) (I1) (J1) (K1)
			      (5.5,2.2) (6,1.8)
			      (U) (8.5,-0.5) (V)
			      (7,-2.9) (6.5, -3.3) (6.2, -3.93) 
			      (K2) (J2) (I2) (H2) (G2) (F2)
			      (4.5,-4) (3.5,-3.2) (2.7, -2.5) (2,-2.7) (1.5, -3.2) 
			      (E2) (D2) (C2)
			      (0.3, -3)
			      (Y) (-1.9,-2) (X) (-0.5,-0.05)};
			      
\draw (F1) to [bend left=20] (F2);
\draw (G1) to [bend left=20] (G2);

\draw plot [smooth, tension = 0.5] coordinates {(H1) (5.2,0.2) (5.45,-1) (5.57, -1.2) (5.65, -1) (5.4,0.2) (I1)};
\draw plot [smooth, tension = 0.5] coordinates {(H2) (5.5,-2.5) (5.5,-1.5) (5.6, -1.3) (5.7, -1.5) (5.7,-2.5) (I2)};
\pgfmathsetmacro{\t}{atan(0.03/0.1)}
\begin{scope}[rotate around={\t:(5.57,-1.2)}]
\draw[gray] (5.57, -1.2) arc (90:270:0.03 and {sqrt(0.03*0.03+0.1*0.1)/2});
\draw (5.57, -1.2) arc (90:-90:0.03 and {sqrt(0.03*0.03+0.1*0.1)/2});
\end{scope}

\draw plot [smooth, tension = 0.5] coordinates {(J1) (5.25,1.2) (5.68,0) (5.9,-1.29) (5.97, -1.45) (6, -1.27) (5.8,0) (5.37,1.2) (K1)};
\draw plot [smooth, tension = 0.5] coordinates {(J2)  (5.9,-2.7) (5.93, -1.7) (5.97, -1.53) (6.03,-1.7) (6.02, -2.7) (K2)};
\draw[gray] (5.97, -1.45) arc (90:270:0.02 and {0.04});
\draw (5.97, -1.45) arc (90:-90:0.02 and {0.04});

\draw[gray] (0.45, -2.9) arc (90:182:0.02 and {0.08});
\draw (0.45, -2.9) arc (90:0:0.02 and {0.08});
\draw plot [smooth, tension = 0.55] coordinates {(A1) (0.35,-1.2) (0.35,-2.65) (0.45, -2.9) (0.55, -2.65) (0.55,-1.2) (B1)};

\pgfmathsetmacro{\t}{atan(0.01/0.1)}
\begin{scope}[rotate around={-\t:(0.87, -2.6)}]
\draw[gray] (0.87,-2.6) arc (90:270:0.03 and {sqrt(0.01*0.01+0.1*0.1)/2});
\draw (0.87,-2.6) arc (90:-90:0.03 and {sqrt(0.01*0.01+0.1*0.1)/2});
\end{scope}
\draw plot [smooth, tension = 0.55] coordinates {(C1) (0.75,-1.2) (0.77,-2.4) (0.87, -2.6) (0.97, -2.4) (0.95,-1.2) (D1)};
\draw plot [smooth, tension = 0.9] coordinates {(C2) (0.75, -2.8) (0.86, -2.7) (0.93, -2.8) (D2)};

\draw (E1) to [bend left = 15] (E2);

\node at (4.4,0) {$\ms_j$};

\draw[->] (9.5,-1) -- (11,-1);
\node at (10.25,-0.6) {\footnotesize $j\to\infty$};

\begin{scope}[xshift=390]
\coordinate (A1) at (0,0); 
\coordinate (B1) at (0.2,0.05); 
\coordinate (C1) at (0.4,0.12);
\coordinate (D1) at (0.6,0.21);
\coordinate (E1) at (0.8,0.35);

\coordinate (F1) at (4,1.97);
\coordinate (G1) at (4.1,2.05);
\coordinate (H1) at (4.3,2.17);
\coordinate (I1) at (4.5,2.24);
\coordinate (J1) at (4.7,2.28);
\coordinate (K1) at (4.85,2.27);

\coordinate (F2) at (5,-4.13);
\coordinate (G2) at (5.1,-4.13);
\coordinate (H2) at (5.3,-4.12);
\coordinate (I2) at (5.5,-4.11);
\coordinate (J2) at (5.7,-4.09);
\coordinate (K2) at (5.8,-4.07);

\coordinate (C2) at (0.7,-3);
\coordinate (D2) at (0.9,-3.05);
\coordinate (E2) at (1.1,-3.1);

\coordinate (X) at (-1.5,-0.2);
\coordinate (Y) at (-1.2, -3.2);
\coordinate (U) at (7.5, 1.8);
\coordinate (V) at (7.9, -2.6);

\draw plot [smooth cycle, tension=0.6] coordinates {(A1) (B1) (C1) (D1) (E1)
			      (1.5,0.9) (2,1.2) (2.5,1.3) (3,1.7) (3.7,1.71)
			      (F1) (G1) (H1) (I1) (J1) (K1)
			      (5.5,2.2) (6,1.8)
			      (U) (8.5,-0.5) (V)
			      (7,-2.9) (6.5, -3.3) (6.2, -3.93) 
			      (K2) (J2) (I2) (H2) (G2) (F2)
			      (4.5,-4) (3.5,-3.2) (2.7, -2.5) (2,-2.7) (1.5, -3.2) 
			      (E2) (D2) (C2)
			      (0.3, -3)
			      (Y) (-1.9,-2) (X) (-0.5,-0.05)};
			      
\draw[blue] (C1) to [bend left = 15] (C2);
\draw[blue] (F1) to [bend left = 20] (F2);
\draw[blue] (G1) to [bend left = 20] (G2);
\draw[blue] (I1) to [bend left = 20] (I2);

\fill[blue] (C2) circle [radius=0.2em] node[below] {$\set_\infty$};
\fill[blue] (5.685, -1.2) circle [radius=0.2em] node[right] {$\set_\infty$};
\node[blue] at (4.4,0) {$\lam$};
\end{scope}
\end{tikzpicture}
%%%%%%%%%%%%%%%%%%%%%%%%%%%%%%%%%%%%%%%%%%%%%%%%%%%%%
%%%%%%%%%%%%%%%%%%%%%%%%%%%%%%%%%%%%%%%%%%%%%%%%%%%%%
\caption{Macroscopic description of degeneration.} \label{fig:MacroDescr}
\end{figure}

\begin{proof}
The first part of the statement is a special case of \cref{lem:CurvEstBoundedInd}. Then, possibly extracting a further subsequence, we can assume that the sets $\set_j$ converge to a set $\set_\infty$ (of cardinality at most $I$) and, thanks to \cref{thm:LamCptness}, the surfaces $\ms_j$ converge to a free boundary minimal lamination $\hat\lam$ in $\amb\setminus \set_\infty$ smoothly away from $\set_\infty$. 
However, \cref{thm:RemSingLimLam} ensures that the lamination $\hat\lam$ extends smoothly through $\set_\infty$; namely, there exists a smooth free boundary minimal lamination $\lam$ in $\amb$ extending $\hat\lam$. The last claim is straightforward.
\end{proof}

%%%%%%%%%%%%%%%%%%%%%%%%%%%%%%%%%%%%%%%%%%%%%%%%%%%%%%%%%%%%%%%%
%%% Microscopic behavior
%%%%%%%%%%%%%%%%%%%%%%%%%%%%%%%%%%%%%%%%%%%%%%%%%%%%%%%%%%%%%%%%

\section{Microscopic behavior} \label{sec:LocalDegeneration}

In this section we study the behavior of our minimal surfaces at small scales around the points where concentration of curvature occurs, that is to say around the points in $\set_\infty$ in \cref{cor:ExistenceBlowUpSetAndCurvatureEstimate}.

\subsection{Setting description} \label{HypN}

We denote by $(\mathfrak{N})$ the following set of assumptions:
\begin{enumerate} [label={\normalfont(\roman*)}]
\item $\smetric_j$ is a sequence of metrics on $\amb_j^3 \eqdef \Xi(a_j)\cap \{\abs x< R_j\}\subset \R^3$, with $0\ge a_j\ge -\infty$ and $R_j\to\infty$, locally smoothly converging to the Euclidean metric as 
$j\to\infty$.
\item $\ms_j^2\subset \amb_j$ is a sequence of properly embedded \emph{edged} minimal surfaces (with $\partial\ms_j\subset\partial\amb_j$) that have free boundary with respect to $\Pi(a_j)$. 
\item \label{N:BlowUpSet} $\ind(\ms_j)\le I$ for some natural constant $I>0$ independent of $j$, and
$\set_j\subset \ms_j\cap B_{{\mu_0}}(0)$, {for $\mu_0$ given by \cref{cor:14TopInfo}}, is a sequence of nonempty smooth blow-up sets\footnote{Hereafter we denote by $B_r(p)$ the ball of center $p$ and radius $r$ in the metric $\smetric_j$, without specifying $j$ when this is clear from the context. Also, note that the setting in \cref{def:BlowUpSets} is slightly different from the setting here, but the definition can be easily adapted to this context.} 
with $\abs {\set_j}\le I$ and \[\abs{\A_{\ms_j}}(x) d_{\smetric_j}(x, \set_j\cup 
(\partial\ms_j\setminus \Pi(a_j) )) \le C\] for all $x\in \ms_j$, for some constant 
$C>0$ independent of $j$.
\end{enumerate}

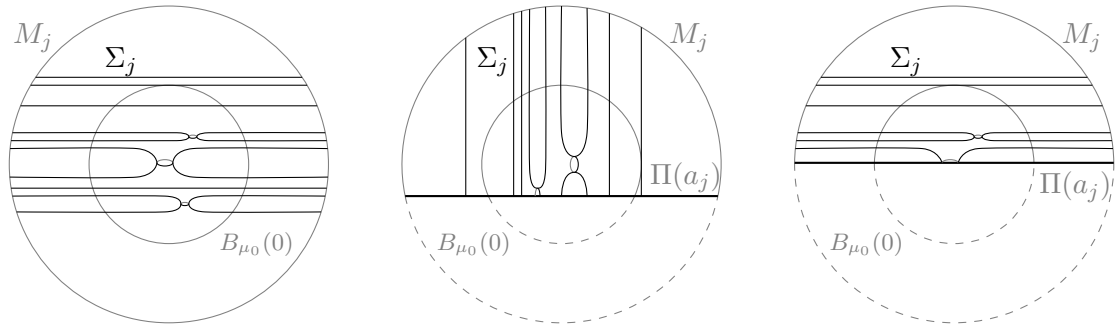
\begin{figure}[htpb]
\centering
%%%%%%%%%%%%%%%%%%%%%%%%%%%%%%%%%%%%%%%%%%%%%%%%%%%%%
%%% micro_setting.tikz
%%%%%%%%%%%%%%%%%%%%%%%%%%%%%%%%%%%%%%%%%%%%%%%%%%%%%
\begin{tikzpicture}[scale=1.05]
\pgfmathsetmacro{\R}{2}
\pgfmathsetmacro{\r}{1}

% No boundary

\coordinate (OA) at (0,0);

\draw[gray] (OA) circle (\R);
\node[gray] at (-0.85*\R,0.8*\R) {$\amb_j$};
\draw[gray] (OA) circle (\r);
\node[gray] at (1.1*\r,-1*\r) {\footnotesize $B_{\mu_0}(0)$};

\node at (-0.3*\R, 0.65*\R) {$\ms_j$};

\def\h{{-0.15, 0.37, 0.5, 0.55}}

\foreach \i in {0,...,3}
  \draw ({-sqrt(\R*\R*(1-\h[\i]*\h[\i]))},{\R*\h[\i]}) -- ({sqrt(\R*\R*(1-\h[\i]*\h[\i]))},{\R*\h[\i]});

% Lower cat
\pgfmathsetmacro{\ha}{-0.3*\R}
\pgfmathsetmacro{\xa}{sqrt(\R*\R-\ha*\ha)}
\pgfmathsetmacro{\hb}{-0.20*\R}
\pgfmathsetmacro{\xb}{sqrt(\R*\R-\hb*\hb)}

\draw[gray] (0.15,{(\ha+\hb)/2}) arc (180:0:0.05 and 0.02);
\draw (0.15,{(\ha+\hb)/2}) arc (180:360:0.05 and 0.02);
\draw plot [smooth, tension=0.35] coordinates {(-\xa,\ha) (-0.05,\ha) (0.15,{(\ha+\hb)/2}) (-0.05,\hb) (-\xb,\hb)};
\draw plot [smooth, tension=0.4] coordinates {(\xb,\hb) (0.45,\hb) (0.25,{(\hb+\ha)/2}) (0.45,\ha) (\xa,\ha)};

% Middle cat
\pgfmathsetmacro{\hd}{-0.08*\R}
\pgfmathsetmacro{\xd}{sqrt(\R*\R-\hd*\hd)}
\pgfmathsetmacro{\he}{0.10*\R}
\pgfmathsetmacro{\xe}{sqrt(\R*\R-\he*\he)}

\draw[gray] (-0.15,{(\he+\hd)/2}) arc (180:0:0.1 and 0.04);
\draw (-0.15,{(\he+\hd)/2}) arc (180:360:0.1 and 0.04);
\draw plot [smooth, tension=0.4] coordinates {(-\xe,\he) (-0.4,\he) (-0.15,{(\he+\hd)/2}) (-0.4,\hd) (-\xd,\hd)};
\draw plot [smooth, tension=0.4] coordinates {(\xe,\he) (0.3,\he) (0.05, {(\he+\hd)/2}) (0.3,\hd) (\xd,\hd)};

% Higher cat 
\pgfmathsetmacro{\hf}{0.15*\R}
\pgfmathsetmacro{\xf}{sqrt(\R*\R-\hf*\hf)}
\pgfmathsetmacro{\hg}{0.20*\R}
\pgfmathsetmacro{\xg}{sqrt(\R*\R-\hg*\hg)}

\draw[gray] (0.25,{(\hf+\hg)/2}) arc (180:0:0.05 and 0.02);
\draw (0.25,{(\hf+\hg)/2}) arc (180:360:0.05 and 0.02);
\draw plot [smooth, tension=0.25] coordinates {(-\xg,\hg) (0.1,\hg) (0.25,{(\hg+\hf)/2}) (0.1,\hf) (-\xf,\hf)};
\draw plot [smooth, tension=0.32] coordinates {(\xg,\hg) (0.5,\hg) (0.35,{(\hg+\hf)/2}) (0.5,\hf) (\xf,\hf)};

\begin{scope}[xshift=70*\R]
% Boundary

\pgfmathsetmacro{\hl}{-0.20*\R}
\pgfmathsetmacro{\xl}{sqrt(\R*\R-\hl*\hl)}
\coordinate (OA) at (0,0);

\pgfmathsetmacro{\t}{atan(-\hl/(\xl))}
\draw[gray] (\xl,\hl) arc(-\t:180+\t:\R);
\draw[gray, dashed] (-\xl,\hl) arc(180+\t:360-\t:\R);
\node[gray] at (0.8*\R,0.8*\R) {$\amb_j$};

\pgfmathsetmacro{\s}{atan(-\hl/(sqrt(\r*\r-\hl*\hl)))}
\draw[gray] ({(sqrt(\r*\r-\hl*\hl))},\hl) arc(-\s:180+\s:\r);
\draw[gray, dashed] (-{(sqrt(\r*\r-\hl*\hl))},\hl) arc(180+\s:360-\s:\r);
\node[gray] at (-1.1*\r,-1*\r) {\footnotesize $B_{\mu_0}(0)$};

\draw[thick] (-\xl,\hl) -- (\xl,\hl);
\node[gray] at (0.78*\R,\hl+0.12*\R) {$\Pi(a_j)$};

\node at (-0.43*\R, 0.65*\R) {$\ms_j$};

\def\c{{-0.3,-0.25, 0.3, -0.6,0.5}}
 
\foreach \i in {0,...,4}
 \draw ({\R*\c[\i]}, {sqrt(\R*\R*(1-\c[\i]*\c[\i]))}) -- ({\R*\c[\i]}, \hl);
 
% Left cat
\pgfmathsetmacro{\ca}{-0.2*\R}
\pgfmathsetmacro{\ya}{sqrt(\R*\R-\ca*\ca)}
\pgfmathsetmacro{\cb}{-0.10*\R}
\pgfmathsetmacro{\yb}{sqrt(\R*\R-\cb*\cb)}

\draw[gray] ({(\ca+\cb)/2}, {\hl+0.05*\R}) arc (90:180:0.03 and {0.05*\R});
\draw ({(\ca+\cb)/2}, {\hl+0.05*\R}) arc (90:0:0.03 and {0.05*\R});
\draw plot [smooth, tension=0.4] coordinates {(\ca,\ya) (\ca, {\hl+0.17*\R}) ({(\ca+\cb)/2}, {\hl+0.05*\R}) (\cb,{\hl+0.17*\R}) (\cb,\yb)};

% Right cat
\pgfmathsetmacro{\cc}{0*\R}
\pgfmathsetmacro{\yc}{sqrt(\R*\R-\cc*\cc)}
\pgfmathsetmacro{\cd}{0.16*\R}
\pgfmathsetmacro{\yd}{sqrt(\R*\R-\cd*\cd)}

\draw[gray] ({(\cc+\cd)/2}, {\hl+0.25*\R}) arc (90:270:0.05 and {0.05*\R});
\draw ({(\cc+\cd)/2}, {\hl+0.25*\R}) arc (90:-90:0.05 and {0.05*\R});
\draw plot [smooth, tension=0.5] coordinates {(\cc,\yc) (\cc, {\hl+0.4*\R}) ({(\cc+\cd)/2}, {\hl+0.25*\R}) (\cd,{\hl+0.4*\R}) (\cd,\yd)};
\draw plot [smooth, tension=1.8] coordinates {(\cc,\hl) ({(\cc+\cd)/2}, {\hl+0.15*\R}) (\cd,\hl)};
\end{scope}

\begin{scope}[xshift=140*\R]
% Boundary ``bad''

\pgfmathsetmacro{\hl}{0.01*\R}
\pgfmathsetmacro{\xl}{sqrt(\R*\R-\hl*\hl)}

\coordinate (OA) at (0,0);

\pgfmathsetmacro{\t}{atan(-\hl/(\xl))}
\draw[gray] (\xl,\hl) arc(-\t:180+\t:\R);
\draw[gray, dashed] (-\xl,\hl) arc(180+\t:360-\t:\R);
\node[gray] at (0.8*\R,0.8*\R) {$\amb_j$};

\pgfmathsetmacro{\s}{atan(-\hl/(sqrt(\r*\r-\hl*\hl)))}
\draw[gray] ({(sqrt(\r*\r-\hl*\hl))},\hl) arc(-\s:180+\s:\r);
\draw[gray, dashed] (-{(sqrt(\r*\r-\hl*\hl))},\hl) arc(180+\s:360-\s:\r);
\node[gray] at (-1.1*\r,-1*\r) {\footnotesize $B_{\mu_0}(0)$};

\draw[thick] (-\xl,\hl) -- (\xl,\hl);
\node[gray] at (0.76*\R,\hl-0.15*\R) {$\Pi(a_j)$};

\node at (-0.3*\R, 0.65*\R) {$\ms_j$};

\def\h{{0.37, 0.5, 0.55}}

\foreach \i in {0,...,2}
  \draw ({-sqrt(\R*\R*(1-\h[\i]*\h[\i]))},{\R*\h[\i]}) -- ({sqrt(\R*\R*(1-\h[\i]*\h[\i]))},{\R*\h[\i]});

% Middle cat
\pgfmathsetmacro{\hd}{-0.08*\R}
\pgfmathsetmacro{\xd}{sqrt(\R*\R-\hd*\hd)}
\pgfmathsetmacro{\he}{0.10*\R}
\pgfmathsetmacro{\xe}{sqrt(\R*\R-\he*\he)}

\draw[gray] (-0.15,{(\he+\hd)/2}) arc (180:0:0.1 and 0.04);
\draw plot [smooth, tension=0.4] coordinates {(-\xe,\he) (-0.4,\he) (-0.15,{(\he+\hd)/2})};
\draw plot [smooth, tension=0.4] coordinates {(\xe,\he) (0.3,\he) (0.05, {(\he+\hd)/2})};

% Higher cat 
\pgfmathsetmacro{\hf}{0.15*\R}
\pgfmathsetmacro{\xf}{sqrt(\R*\R-\hf*\hf)}
\pgfmathsetmacro{\hg}{0.20*\R}
\pgfmathsetmacro{\xg}{sqrt(\R*\R-\hg*\hg)}

\draw[gray] (0.25,{(\hf+\hg)/2}) arc (180:0:0.05 and 0.02);
\draw (0.25,{(\hf+\hg)/2}) arc (180:360:0.05 and 0.02);
\draw plot [smooth, tension=0.25] coordinates {(-\xg,\hg) (0.1,\hg) (0.25,{(\hg+\hf)/2}) (0.1,\hf) (-\xf,\hf)};
\draw plot [smooth, tension=0.32] coordinates {(\xg,\hg) (0.5,\hg) (0.35,{(\hg+\hf)/2}) (0.5,\hf) (\xf,\hf)};
\end{scope}
\end{tikzpicture}
%%%%%%%%%%%%%%%%%%%%%%%%%%%%%%%%%%%%%%%%%%%%%%%%%%%%%
%%%%%%%%%%%%%%%%%%%%%%%%%%%%%%%%%%%%%%%%%%%%%%%%%%%%%
\caption{Different possible situations in setting \hypN{}.} \label{fig:MicroSett}
\end{figure}

\begin{proposition} \label{prop:StructureLimLam}
Let us consider the set of assumptions \hypN{}. Then, up to subsequence, the smooth blow-up sets $\set_j$ converge to a finite set of points $\set_\infty$, of cardinality at most $I$, and there 
is a free boundary minimal lamination $\lam$ in $\Pi(a)$ (where we assume the existence of $a = \lim_{j\to\infty}a_j\in \cc{-\infty} 0$) consisting of parallel planes or half-planes and such that $\ms_j$ locally converges (in the sense of laminations) to $\lam$ away from $\set_\infty$.
\end{proposition}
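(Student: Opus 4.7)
My plan is to extract converging subsequences, apply lamination compactness on the complement of the limit blow-up set, invoke \cref{prop:LimLamInR3} to extend and classify the limit, and then rule out the nonflat alternative using the nonemptiness of $\set_j$.

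First, since each $\set_j$ has cardinality at most $I$ and is contained in $B_{\mu_0}(0)$, up to subsequence $\set_j$ converges in the Hausdorff sense to a finite set $\set_\infty$ with $\abs{\set_\infty}\le I$; passing to a further subsequence I also assume $a_j \to a \in \cc{-\infty}{0}$, so that the ambient domains $\amb_j = \Xi(a_j) \cap \{\abs x < R_j\}$ exhaust $\Xi(a)$. The curvature bound \ref{N:BlowUpSet} of \hypN{} then yields a local uniform curvature estimate for $\ms_j$ away from $\set_\infty$: for any compact $K \subset \Xi(a) \setminus \set_\infty$ and all $j$ large, one has $d_{\smetric_j}(x,\set_j) \ge c(K) > 0$ for $x \in \ms_j \cap K$, while $d_{\smetric_j}(x, \partial\ms_j \setminus \Pi(a_j)) \ge R_j - \sup_K \abs y \to \infty$, whence $\abs{\A_{\ms_j}} \le C(K)$ on $\ms_j \cap K$. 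Applying \cref{thm:LamCptness}, a further subsequence of $\ms_j$ converges locally in the sense of laminations to a free boundary minimal lamination $\hat\lam$ of $\Xi(a) \setminus \set_\infty$.

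At this point all hypotheses of \cref{prop:LimLamInR3} are verified (with $\Delta_j = \amb_j$), and that proposition produces a free boundary minimal lamination $\lam$ of $\Xi(a)$ extending $\hat\lam$, together with the dichotomy: either $\lam$ consists of parallel planes or half-planes, or $\lam$ is a nonflat connected properly embedded free boundary minimal surface of positive index at most $I$ to which $\ms_j$ converges locally smoothly as graphs with multiplicity one. The remaining substantive step is to rule out the second alternative. If it occurred, multiplicity-one smooth graphical convergence on all of $\Xi(a)$ would imply a uniform bound on $\abs{\A_{\ms_j}}$ on every fixed compact subset, in particular on $\overline{B_{\mu_0}(0)} \cap \amb_j$ for $j$ large; but by the curvature blow-up condition in \cref{def:BlowUpSets}, for any sequence $p_j \in \set_j \subset \overline{B_{\mu_0}(0)}$ we have $\abs{\A_{\ms_j}}(p_j) \to \infty$, a contradiction. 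Hence $\lam$ consists of parallel planes or half-planes, as required.

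The main obstacle is essentially bookkeeping rather than any deep point: the removable-singularities and dichotomy result of \cref{prop:LimLamInR3}, together with the lamination compactness of \cref{thm:LamCptness}, do the heavy lifting. The only subtle point is recognizing that in case \emph{(2)} of \cref{prop:LimLamInR3} the multiplicity-one smooth convergence is asserted on all of $\Xi(a)$ (and not merely off $\set_\infty$), which is precisely what lets the nonemptiness of the blow-up sets rule that alternative out.
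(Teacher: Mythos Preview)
Your proof is correct and follows essentially the same approach as the paper: extract a convergent subsequence of blow-up sets, use the curvature bound from \hypN{} together with \cref{thm:LamCptness} and \cref{prop:LimLamInR3} to obtain the limit lamination with the dichotomy, and rule out the nonflat alternative by observing that multiplicity-one smooth convergence would give local uniform curvature bounds contradicting the blow-up condition at points of $\set_j$. Your explicit justification of the curvature bound away from $\set_\infty$ (handling both the distance to $\set_j$ and to the edge boundary) and your remark that the multiplicity-one convergence in alternative~\ref{llir:multone} holds on all of $\Xi(a)$ are useful clarifications that the paper leaves implicit.
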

\begin{proof}
First observe that, up to subsequence, we can assume that $\set_j$ converge to a finite set of points $\set_\infty$ of cardinality at most $I$ and contained in the open unit ball centered at the origin.
Then, because of the curvature assumptions we are making, thanks to \cref{prop:LimLamInR3} we gain smooth (subsequential) convergence to a free boundary minimal lamination $\hat\lam$ in $\Pi(a)\setminus\set_\infty$;
in fact $\hat\lam$ extends to a smooth lamination $\lam$ of $\Pi(a)$. 
We now need to rule out alternative \ref{llir:multone} of the proposition, namely the possibility that $\lam$ consists of a (single) nonflat, two-sided, properly embedded, free boundary minimal surface.
However, in this case the convergence must be locally smooth and graphical with multiplicity one at (all points of) such unique leaf: hence this would imply locally uniform curvature estimates for the sequence $\Sigma_j$, which is in contradiction with the presence of a smooth blow-up set in \hypN{}, though.
Thus the only possibility is that $\lam$ is a lamination in $\Xi(a)$ of parallel planes or half-planes, which concludes the proof.
\end{proof}

Later on, we will separately study the components where `bad things' happen (but which are in finite number) and the others. Therefore it will be useful to introduce the following definition. 
\begin{definition}
In the setting \hypN{}, let us denote by $\ms_j^{\euno}$ the union of the connected components of 
$\ms_j\cap B_1(0)$ that contain at least one point in $\set_j$ (informally: the ones with the necks in \cref{fig:MicroSett}) and with
$\ms_j^{\edue}$ the union of the connected components of $\ms_j\cap B_1(0)$ that do not.
\end{definition}
\begin{remark}
It is sufficient to work in $B_1(0)$, since the information about $\ms_j$ in $B_1(0)^c$ are then obtained in the applications thanks to suitable Morse-theoretic arguments (see \cref{sec:MorseTheory}).
\end{remark}

\begin{remark} \label{rem:ConvSigma1}
Observe that the surfaces $\ms_j^{\euno}$ locally converge (in the sense of laminations) to the union of the leaves of the lamination $\lam$ (given by \cref{prop:StructureLimLam}) passing through points of $\set_\infty$, the convergence happening away from $\set_\infty$. Indeed, the convergence to any other component of $\lam$ in $B_1(0)$ is uniformly smooth (in the sense of laminations), but each component of $\ms_j^{\euno}$ contains a point where the curvature diverges.

In particular, if the number of components of $\ms_j^{\euno}\cap(\partial B_1(0)\setminus\Pi(a_j))$ is uniformly bounded, then $\ms_j^{\euno}\cap(\partial B_1(0)\setminus\Pi(a_j))$ is $\mu_0$-strongly equatorial (as per \cref{def:StrongEq}) for any $j$ sufficiently large.
\end{remark}

\subsection{Neck components}

In this section, we deal with the behavior of $\ms_j^{\euno}$, which is the part of $\ms_j\cap B_1(0)$ that `carries the topology' of the surface $\ms_j$. 
The components $\ms_j^{\edue}$ are instead well-controlled, in the sense that they have uniformly bounded curvature and they are topological discs. We postpone the investigation of these properties of $\ms_j^{\edue}$ to the proof \cref{thm:GlobalDeg}.

The following proposition is essentially the base case of the induction to prove \cref{prop:LocDeg}, which is the full description of what happens around the origin along the sequence $\ms_j^{\euno}$. 

\begin{lemma} \label{lem:LocDegOnePt}
Let us assume to be in the setting \hypN{} with $\abs {\set_j} = 1$ for all $j$.
Then there exists $\kappa(I)\ge0$ (depending on $I$) such that, for $j$ sufficiently large, the following assertions hold true:
\begin{enumerate} [label={\normalfont(\arabic*)}]
\item The surfaces $\ms_j^{\euno}$ have genus at most $\kappa(I)$.
\item {The surfaces $\ms_j^{\euno}$ intersect both $\partial B_{1}(0)\setminus \Pi(a_j)$ and $\Pi(a_j)$ transversely in at most $\kappa(I)$ components.}
\end{enumerate}

\end{lemma}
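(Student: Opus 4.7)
The plan is to perform a single blow-up at the unique point $p_j$ of $\mathcal{S}_j$, extract the topology from the microscopic limit, and then propagate the bounds up to scale $1$ by means of \cref{cor:14TopInfo}.

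Set $\lambda_j := |A_{\Sigma_j}|(p_j)$. By \cref{def:BlowUpSets}\ref{bus:BlowUpLimit} together with \cref{prop:LimLamInR3}, the rescaled surfaces $\bar\Sigma_j := \lambda_j(\Sigma_j - p_j)$ subconverge locally smoothly with multiplicity one to a complete, nonflat, properly embedded free boundary minimal surface $\bar\Sigma_\infty \subset \Xi(\bar a)$ of index at most $I$. By \cref{prop:GeometryOfHalfBubble} we obtain a constant $\kappa_0 = \kappa_0(I)$ bounding the genus, the number of ends and the number of boundary components of $\bar\Sigma_\infty$, and by \cref{cor:MinSurfInR3WithFiniteIndex} the surface $\bar\Sigma_\infty$ has finite total curvature, so its ends are asymptotic to planes or half-planes and $|A_{\bar\Sigma_\infty}|(y)|y| \to 0$ as $|y| \to \infty$. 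Thus one can fix a radius $R_0 = R_0(I)$ so large that $\bar\Sigma_\infty \cap B_{R_0}(0)$ contains all of the topology, that $\bar\Sigma_\infty \cap (\partial B_{R_0}(0)\setminus \Pi(\bar a))$ and $\bar\Sigma_\infty \cap (\Pi(\bar a)\cap B_{R_0}(0))$ each have at most $\kappa_0$ connected components (with the former being $(\mu_0/2)$-strongly equatorial in the sense of \cref{def:StrongEq}), and $|A_{\bar\Sigma_\infty}|(y)|y|\le \mu_0/2$ on $\bar\Sigma_\infty \setminus B_{R_0}(0)$.

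The multiplicity-one smooth convergence on $\overline{B_{R_0}(0)}$ then transfers these properties, with $\mu_0$ replacing $\mu_0/2$, to $\Sigma_j^{\euno} \cap B_{R_0/\lambda_j}(p_j)$ for $j$ large. Provided one can additionally establish the uniform curvature decay
\begin{equation}\label{eq:clm}
|A_{\Sigma_j^{\euno}}|(x) \, d_{\smetric_j}(x, p_j) \le \mu_0 \quad\text{for every } x \in \Sigma_j^{\euno} \setminus B_{R_0/\lambda_j}(p_j),
\end{equation}
the conclusion follows by applying \cref{cor:14TopInfo} at $p = p_j$ with $r = R_0/\lambda_j$, which propagates the topology count from the microscopic scale $R_0/\lambda_j$ up to scale $1$, yielding the statement with $\kappa(I) := 2\kappa_0$.

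The core task is therefore \eqref{eq:clm}, which we view as a no-neck estimate away from the unique point of curvature concentration. On $B_1(0) \setminus B_\delta(0)$ for any fixed $\delta > 0$, the bound follows from \cref{prop:StructureLimLam}, since the smooth convergence of $\Sigma_j$ to a lamination of planes/half-planes forces $|A_{\Sigma_j}| \to 0$ uniformly there. On the annular region $R_0/\lambda_j \le d(x,p_j) \le R/\lambda_j$ for any fixed $R$, the bound follows from the smooth multiplicity-one convergence of $\bar\Sigma_j$ to $\bar\Sigma_\infty$ combined with the choice of $R_0$. It remains to handle the intermediate regime where $d(x,p_j) \to 0$ but $\lambda_j d(x,p_j) \to \infty$. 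Here we argue by contradiction: were \eqref{eq:clm} to fail, point-picking at the maximum of $|A_{\Sigma_j}|(x) d(x,p_j)$ would produce a sequence $x_j$ with $\mu_j := |A_{\Sigma_j}|(x_j) \to \infty$, $r_j := d(x_j, p_j) \to 0$, $\lambda_j r_j \to \infty$ and $\mu_0 < \mu_j r_j \le C$. Rescaling by $\mu_j$ at $x_j$, the surfaces $\mu_j(\Sigma_j - x_j)$ subconverge, via \cref{prop:LimLamInR3}, either to a union of parallel planes/half-planes---ruled out by the fact that the rescaled second fundamental form at the origin has unit length---or to a complete nonflat, properly embedded free boundary minimal surface of positive index in some half-space. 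The second alternative produces a genuine second bubble whose support is disjoint (at the blow-up scale of $p_j$, because $\lambda_j r_j \to \infty$) from the one already built at $p_j$: iterating this procedure, each additional bubble contributes at least one unit to the Morse index, so the process terminates after at most $I$ steps, eventually forcing \eqref{eq:clm} to hold, a contradiction.

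The main obstacle is precisely this intermediate-scale estimate \eqref{eq:clm}: the pure blow-up analysis at the microscopic scale $\lambda_j^{-1}$ and the pure lamination convergence at the macroscopic scale both give uniform curvature control in their respective domains, but bridging them requires a careful point-picking argument which crucially exploits both the index bound and the single-point nature of $\mathcal{S}_j$ to rule out further bubbling. The argument parallels \cite[Lemma 3.3]{ChodoshKetoverMaximo2017} in the closed case, with the extra care that the iterated bubbles are allowed to form near (or indeed on) the boundary $\Pi(a_j)$, a case accommodated by the half-space analysis and the reflection principle of \cref{sec:ReflPrinc}.
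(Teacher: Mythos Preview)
Your overall strategy mirrors the paper's closely: blow up at the unique point $p_j$, extract the topology of the limit via \cref{prop:GeometryOfHalfBubble}, establish the curvature decay \eqref{eq:clm} in the intermediate annulus, and then invoke \cref{cor:14TopInfo} to transfer the bounds to scale $1$. The treatment of the macroscopic regime (via \cref{prop:StructureLimLam}) and the microscopic regime (via smooth convergence of the blow-up) is correct.

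The gap lies in your handling of the intermediate regime, specifically case~(2) of the dichotomy of \cref{prop:LimLamInR3} after you rescale by $\mu_j$ at $x_j$. Your proposed ``bubble iteration'' is not a valid argument: you have not shown that the purported second bubble's instability region is disjoint from the first one (indeed, since $\mu_j r_j\le C$, the image of $p_j$ sits at \emph{bounded} distance from the origin in your rescaling, so any ball large enough to capture the instability of the nonflat limit $\lam$ will typically contain it), and ``iterating this procedure'' has no clear meaning when the hypothesis $|\set_j|=1$ is fixed and you are trying to derive a contradiction for a fixed sequence $\Sigma_j$. Finding another bubble does not alter whether \eqref{eq:clm} holds.

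In fact, case~(2) is directly impossible and no iteration is needed: in your rescaling the curvature at the image of $p_j$ equals $\lambda_j/\mu_j\to\infty$, which is incompatible with the locally smooth multiplicity-one convergence asserted in case~(2) (this is exactly the mechanism in the proof of \cref{prop:StructureLimLam}). The paper sidesteps this entirely by rescaling at $p_j$ (not at $x_j$) by $\delta_j^{-1}$, where $\delta_j:=d(p_j,z_j)$. The rescaled sequence $\check\Sigma_j=\delta_j^{-1}(\Sigma_j-p_j)$ then has its single curvature-concentration point at the origin, so it sits again in setting~\hypN{}; \cref{prop:StructureLimLam} applies verbatim and forces a planar limit away from $0$. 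Since the image of $z_j$ lies at unit distance from the origin, its curvature must tend to zero, contradicting $|A_{\check\Sigma_j}|(\delta_j^{-1}(z_j-p_j))\ge\mu_0$. This is both shorter and avoids the dichotomy altogether.
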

\begin{proof}
Let $\set_j = \{p_j\}$, $\set_\infty = \{p_\infty\}$ and $\lambda_j \eqdef \abs{\A_{\ms_j}}(p_j)$. By definition of smooth blow-up set, the surfaces $\bu\ms_j\eqdef \lambda_j(\ms_j-p_j)$ converge up to subsequence to a complete, nonflat, properly embedded, free boundary minimal surface $\bu\ms_\infty$ in $\Xi(a)$ for some $0\ge a\ge -\infty$. Furthermore $\bu\ms_\infty$ has index at most $I$.
Therefore, by \cref{prop:GeometryOfHalfBubble}, the genus, number of ends and number of boundary components of $\bu\ms_\infty$ are all bounded by $\kappa(I)$.

Consider $\mu_0$ given by \cref{cor:14TopInfo} and take $R_0>0$ such that 
\begin{equation} \label{eq:CurvEstSigmaBarInfty}
\abs{\A_{\bu\ms_\infty}} (x) d_{\R^3}(0,x) < {\mu_0}
\end{equation}
for $x\in\bu\ms_\infty \setminus B_{R_0}(0)$.
{Assume $R_0$ large enough that the genus and the number of connected components of both $\bu\ms_\infty\cap { (\partial B_{R_0}(0)\setminus \Pi(a))}$ and $\bu\ms_\infty\cap \Pi(a)$ are bounded by $\kappa(I)$.
{Moreover, thanks to \cite[Proposition 1]{Schoen1983Uniqueness}, we can also suppose that $\bu\ms_\infty\cap (\partial B_{R_0}(0)\setminus\Pi(a))$ is $(\mu_0/2)$-strongly equatorial (as per \cref{def:StrongEq}).}
Hence observe that, for $j$ sufficiently large, $\ms_j\cap B_{R_0/\lambda_j}(p_j)$ has genus and number of boundary components on $\partial B_{R_0/\lambda_j}(p_j)\setminus \Pi(a_j)$ both bounded by $\kappa(I)$, and $\ms_j\cap(\partial B_{R_0/\lambda_j}(p_j)\setminus\Pi(a_j))$ is $\mu_0$-strongly equatorial.}
In order to transfer this information to all $B_1(0)$ we want to prove that the estimate
\begin{equation}\label{eq:14EstMsJ}
\abs{\A_{\ms_j}}(x) d_{\smetric_j} (p_j,x) < {\mu_0}
\end{equation}
holds for every $x\in \ms_j \cap ( B_{1}(0)\setminus B_{R_0/\lambda_j}(p_j) )$, for $j$ sufficiently large.

To this purpose, it is enough to prove that there exists $\delta>0$ such that the estimate holds in $\ms_j \cap ( B_\delta(p_j)\setminus B_{R_0/\lambda_j}(p_j) )$. Then we deduce the desired estimate using that $\ms_j$ converges (in the sense of laminations) in $B_1(0)\setminus B_\delta (p_j)$ to a lamination consisting of planes (indeed $p_\infty\in B_\delta(p_j)$ for $j$ sufficiently big).

Assume by contradiction that such $\delta>0$ does not exist. Then one would find a sequence $z_j\in \ms_j\setminus B_{R_0/\lambda_j}(p_j)$ such that $\delta_j\eqdef d_{\smetric_j}(p_j,z_j) \to 0$ and $\abs{\A_{\ms_j}}(z_j)\delta_j \ge {\mu_0}$.
Then consider $\check\ms_j \eqdef \delta_j^{-1}(\ms_j-p_j)$, for which we have \begin{equation}\label{eq:BoundBelowCurvature}
\abs{\A_{\check\ms_j}}(\delta_j^{-1}(z_j-p_j)) \ge {\mu_0}\point
\end{equation}
Note that there cannot possibly be a uniform curvature bound for the sequence $\check\ms_j$ around $0$, otherwise the scales $\lambda_j$ and $\delta^{-1}_j$ would be comparable, hence the surfaces $\check\ms_j$ would converge to a homothety of $\bu\ms_\infty$, but this is not possible for the choice of $z_j$ together with \eqref{eq:CurvEstSigmaBarInfty}.
As a result, possibly extracting a subsequence (which we do not rename) $\check\ms_j$ still fulfills the assumptions of the setting \hypN{} and therefore it converges to a lamination consisting of planes away from $0$. However, observe that this implies $\abs{\A_{\check\ms_j}}(\delta_j^{-1}(z_j-p_j))\to 0$, which contradicts \eqref{eq:BoundBelowCurvature} and thus proves \eqref{eq:14EstMsJ}.

Thus, all the assumptions of \cref{cor:14TopInfo} are satisfied and therefore the genus and the number of connected components of both $\ms_j^{\euno}\cap(\partial B_1(0)\setminus \Pi(a_j))$ and $\ms_j^{\euno}\cap\Pi(a_j)$ are bounded by $\kappa(I)$ (possibly renaming $\kappa(I)$ as the double of the constant introduced above).
\end{proof}

We can now proceed and prove the corresponding result for any set $\set_j$ of uniformly bounded cardinality.

\begin{proposition} \label{prop:LocDeg}
Assume to be in the setting \hypN{}. 
Then there exists $\kappa(I)\ge0$ such that, for $j$ sufficiently large, the following assertions hold true:
\begin{enumerate} [label={\normalfont(\arabic*)}]
\item The surfaces $\ms_j^{\euno}$ have genus at most $\kappa(I)$.
\item {The surfaces $\ms_j^{\euno}$ intersect both $\partial B_{1}(0)\setminus \Pi(a_j)$ and $\Pi(a_j)$ transversely in at most $\kappa(I)$ components.}
\end{enumerate}
\end{proposition}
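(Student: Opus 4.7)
The natural approach is induction on $I$, the uniform upper bound on $\abs{\set_j}$; the base case $I=1$ is exactly \cref{lem:LocDegOnePt}. For the inductive step, assume the statement for bounds $\le I-1$ and consider a sequence with $\abs{\set_j}=I$ (extracting a subsequence we may assume the cardinality is constant). The strategy is to isolate the blow-up point with the fastest curvature concentration, apply the one-point analysis there, and then rescale around the remaining points to invoke the inductive hypothesis. The glueing of topological information across scales will be carried out via the Morse-theoretic \cref{cor:14TopInfo}.

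Concretely, pick $p_j^{\ast}\in\set_j$ realizing $\lambda_j^{\ast}\eqdef\max_{p\in\set_j}\abs{\A_{\ms_j}}(p)$. Property \ref{bus:PointsAreFar} of \cref{def:BlowUpSets} gives $c_j\eqdef\lambda_j^{\ast}\min_{q\in\set_j\setminus\{p_j^{\ast}\}} d_{\smetric_j}(p_j^{\ast},q)\to\infty$, so one can choose an intermediate scale, for instance $r_j\eqdef\sqrt{c_j}/\lambda_j^{\ast}$, with $r_j\lambda_j^{\ast}\to\infty$ and $r_j/d_{\smetric_j}(p_j^{\ast},q)\to 0$ for every other $q\in\set_j$. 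In $B_{r_j}(p_j^{\ast})$, then, $p_j^{\ast}$ is the only blow-up point for $j$ large, and repeating verbatim the argument of \cref{lem:LocDegOnePt} (with $B_{r_j}(p_j^{\ast})$ in place of $B_1(0)$, which is permissible since $r_j\lambda_j^{\ast}\to\infty$) produces the curvature estimate $\abs{\A_{\ms_j}}(x)\,d_{\smetric_j}(x,p_j^{\ast})<\mu_0$ on the annulus $B_{r_j}(p_j^{\ast})\setminus B_{R_0/\lambda_j^{\ast}}(p_j^{\ast})$, the topological control on $\ms_j\cap B_{r_j}(p_j^{\ast})$, and the fact that $\ms_j\cap(\partial B_{r_j}(p_j^{\ast})\setminus\Pi(a_j))$ is $\mu_0$-strongly equatorial.

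For the remaining $I-1$ points of $\set_j\setminus\{p_j^{\ast}\}$ we distinguish two subcases according to how they accumulate. Passing to a subsequence, either (a) at least one other point $q_j\in\set_j$ stays at distance bounded below from $p_j^{\ast}$, in which case by shrinking the ambient ball about each cluster point of $\set_\infty\setminus\{\lim p_j^{\ast}\}$ and rescaling we obtain disjoint sub-problems each with cardinality $\le I-1$ and can apply the inductive hypothesis; or (b) all points of $\set_j\setminus\{p_j^{\ast}\}$ converge to the same limit as $p_j^{\ast}$, in which case we rescale around the outermost point of the cluster by the factor $\rho_j\eqdef\max_{q\in\set_j\setminus\{p_j^{\ast}\}}d_{\smetric_j}(p_j^{\ast},q)\to 0$, so that in the new scale the remaining $I-1$ blow-up points are confined to a bounded region while $p_j^{\ast}$ still contributes a genuine blow-up. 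The inductive hypothesis applied to this rescaled sequence (restricted to a ball of fixed radius slightly larger than $1$) bounds the topology of $\ms_j$ on $B_{C\rho_j}(p_j^{\ast})\setminus B_{r_j}(p_j^{\ast})$. In each subcase, the final step is to transport topological information from the small scale to the full ball $B_1(0)$ through the intermediate plane-lamination region, which has uniformly bounded curvature by \hypN{}\ref{N:BlowUpSet} and \cref{prop:StructureLimLam}: this is exactly the setting of \cref{cor:14TopInfo} (applied in Fermi charts when the relevant ball touches $\Pi(a_j)$), which translates the $\mu_0$-strongly equatorial control at the inner boundary to a bound on the number of components of $\ms_j^{\euno}\cap(\partial B_1(0)\setminus\Pi(a_j))$ and $\ms_j^{\euno}\cap\Pi(a_j)$, and to a genus bound.

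The main technical obstacle is the clean handling of the multi-scale rescaling when the remaining blow-up points cluster at a single limit point: one must ensure that after rescaling by $\rho_j$, the reduced sequence still fits the framework of \hypN{} (curvature estimate with respect to the smaller blow-up set, edged free boundary condition, existence of a plane-lamination limit) so that the inductive hypothesis applies literally. A secondary subtlety is that \cref{cor:14TopInfo} requires a specific Fermi-chart set-up near $\partial\amb$ and a hypothesis on the number of components of $\ms_j\cap(\Pi(a)\cap B_r(p))$; ensuring this last quantity is already bounded at the inner scale (which the refined form of the one-point argument does provide) is essential to ultimately control the number of components of $\ms_j^{\euno}\cap\Pi(a_j)$ at scale $1$.
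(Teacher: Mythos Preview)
Your inductive architecture (base case \cref{lem:LocDegOnePt}, rescaling, transfer via \cref{cor:14TopInfo}) is the right one, and your subcase (a) is essentially the paper's Case~1. The gap is in subcase (b). After you rescale by $\rho_j=\max_{q\in\set_j\setminus\{p_j^{\ast}\}}d_{\smetric_j}(p_j^{\ast},q)$, the point $p_j^{\ast}$ is still a blow-up point, so in the rescaled ball of radius slightly larger than $1$ you have \emph{all $I$ points} of the blow-up set. You then write ``the inductive hypothesis applied to this rescaled sequence\dots'', but the inductive hypothesis is for sequences with index (hence blow-up cardinality) at most $I-1$; nothing you have done reduces that number. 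Having already controlled the topology in $B_{r_j}(p_j^{\ast})$ via the one-point lemma does not let you excise $p_j^{\ast}$ from the rescaled picture: removing that ball produces an edged surface not in the class \hypN{}, and keeping it leaves you at level $I$.

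The paper's argument avoids this by not trying to peel off a single point. If $\abs{\set_\infty}\ge 2$ (your subcase (a)), each small ball $B_\delta(p_\infty)$ around a point of $\set_\infty$ has $\ind(\ms_j\cap B_\delta(p_\infty))\le I-1$ because at least one unit of index is spent at the other cluster; this is where the induction actually bites. If $\abs{\set_\infty}=1$ (your subcase (b)), choose $p_j,q_j\in\set_j$ realizing the maximum pairwise distance and rescale by $r_j=2\,d_{\smetric_j}(p_j,q_j)/\mu_0\to 0$. In the rescaled sequence $\check\set_\infty$ now has at least two points (the images of $p_j$ and $q_j$ are at fixed distance $\mu_0/2$), so one is back in the first case \emph{with the same bound $I$}; only inside that case does the index drop to $I-1$ near each limit point. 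Thus the correct reduction in (b) is ``rescale to force $\abs{\check\set_\infty}\ge 2$'' rather than ``apply the inductive hypothesis directly''. Once you make that change, your isolation of $p_j^{\ast}$ via the one-point lemma becomes superfluous.
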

\begin{proof}
Let us proceed by induction on $I> 0$. 
The case $I=1$ has been treated in \cref{lem:LocDegOnePt}, thus let us assume $I>1$.
We distinguish two cases, and we first consider the case when $\abs{\set_\infty} \ge2$. 
Choose $\delta>0$ small enough that $\min_{p,q\in \set_\infty}d_\smetric(p,q)\ge 4\delta$, moreover fix one point $p_\infty\in \set_\infty$. Since $\ind(\ms_j\cap B)\ge 1$ for every connected component $B$ of $B_\delta(\set_\infty)$, then $\ind(\ms_j\cap B_\delta(p_\infty)) \le I-1$.

Now choose positive numbers $r_j\to 0$ such that $\set_j\subset B_{\mu_0r_j}(\set_\infty)$ and 
\[\liminf_{j\to\infty} \left(r_j \min_{p\in \set_j} \abs{\A_{\ms_j}} (p)\right) = \infty\point\]
Note that the surfaces $r_j^{-1}(\ms_j-p_\infty)$ still fulfill the assumptions \hypN{} (with blow-up sets that are rescalings of the blow-up sets $\set_j$) thanks to the choice of $r_j$ and thus we can apply the inductive hypothesis to these surfaces. In particular we obtain that the components $\ms_j^{\euno}\cap B_{r_j}(p_\infty)$ have genus at most $\kappa(I-1)$ and {intersect both $\partial B_{r_j}(p_\infty){\setminus \Pi(a_j)}$ and $\Pi(a_j)$ transversely in at most $\kappa(I-1)$ components.}

We now prove that, choosing $\delta>0$ possibly smaller, we have that
\[
\abs{A_{\ms_j}}(x) d_{\smetric_j}(x,p_\infty) < {\mu_0}
\]
for all $x \in B_\delta(p_\infty)\setminus B_{r_j}(p_\infty)$.
If this is not the case, then there exists a sequence of points $z_j\in \ms_j$ with $\delta_j\eqdef d_{\smetric_j}(z_j,p_\infty) >r_j$, $\delta_j\to0$ and $\abs{A_{\ms_j}}(z_j)\delta_j\ge {\mu_0}$. The rescaled surfaces $\bu \ms_j \eqdef \delta_j^{-1}(\ms_j-p_\infty)$ still satisfy \hypN{} with blow-up set $\delta_j^{-1}(\set_j-p_\infty)$ and therefore they converge away from $0$ to a lamination consisting of parallel planes by \cref{prop:StructureLimLam}. In particular $\abs{A_{\bu\ms_j}}(\delta_j^{-1}(z_j-p_\infty)) \to 0$, which contradicts the choice of $z_j$.

{Note that, choosing $j$ sufficiently large, we can assume that $\ms_j^{\euno}\cap(\partial B_{r_j}(p_\infty)\setminus \Pi(a_j))$ is $\mu_0$-strongly equatorial by \cref{rem:ConvSigma1}.
As a result, we can invoke \cref{cor:14TopInfo} and conclude that the components $\ms_j^{\euno}\cap B_{\delta}(p_\infty)$ also have genus at most $\kappa(I-1)$ and intersect both $\partial B_{\delta}(p_\infty)\setminus \Pi(a_j)$ and $\Pi(a_j)$ transversely in at most $2\kappa(I-1)$ components.}

Now we follow the very same argument on each ball of radius $\delta$ (as above) and centered at a point of $\set_{\infty}$. We obtain analogous bounds, hence (keeping in mind that we have uniform curvature estimates away from such balls) we can exploit the topological bounds we have gained
to get that $\ms_j^{\euno}$ converges graphically smoothly with finite multiplicity in $B_1(0)\setminus B_\delta(\set_\infty)$ to the leaves passing through $\set_\infty$ of the limit lamination $\lam$ in \cref{prop:StructureLimLam} and conclude with the basic topological tools presented in \cref{sec:EulChar}.

Therefore, now we have to deal only with the case $\abs{\set_\infty} = 1$.
We can assume $\abs{\set_j}\ge 2$, since otherwise we could just apply \cref{lem:LocDegOnePt}. Take $p_j,q_j\in\set_j$ that realize the maximum distance between points in $\set_j$ and define $r_j\eqdef  2 d_{\smetric_j}(p_j,q_j) \mu_0^{-1}\to 0$.
The sequence of surfaces $\check\ms_j\eqdef r_j^{-1}(\ms_j-p_j)$ still satisfy the assumptions \hypN{} with index $I$. Moreover $\abs{\check \set_\infty}\ge 2$, therefore we can apply the first part of the proposition to $\check\ms_j$, obtaining all the desired information in $B_{r_j}(p_j)$. However, now we can argue as above to obtain the ${\mu_0}$-curvature estimate in $\ms_j\cap (B_{1}(0)\setminus B_{r_j}(p_j))$ and transfer the information to $B_1(0)$.
\end{proof}

\subsection{Fine description} \label{sec:GlobalScheme}

We are now ready to put together all the information obtained above and present a fine description of degeneration for a sequence of surfaces with bounded index.

\begin{theorem} \label{thm:GlobalDeg}
Given $I\in \N$ there exists $\kappa(I)\ge 0$ such that the following assertions hold true.

Let $(\amb^3,\smetric)$ be a three-dimensional Riemannian manifold with boundary that satisfies \hypP{}. Let $\ms_j^2\subset \amb$ be a sequence of compact, properly embedded, free boundary minimal surfaces with $\ind(\ms_j)\le I$, for a fixed constant $I\in\N$.
 In the setting of \cref{cor:ExistenceBlowUpSetAndCurvatureEstimate}, one can find a constant $\eps_0>0$ (depending on the sequence in question) so that $\min_{p,q\in\set_\infty}d_\smetric(p,q)\ge 4\eps_0$ and such that, taken $\eps\leq \eps_0$ and defined $\ms_j^{\euno}$ the union of the components of $\ms_j\cap B_{\eps}(\set_\infty)$ which contain at least one point of $\set_j$ and $\ms_j^{\edue}$ the union of the other components of $\ms_j\cap B_{\eps}(\set_\infty)$, for $j$ sufficiently large: 
\begin{enumerate} [label={\normalfont{\arabic*.}}]
\item 
\begin{enumerate} [label={\normalfont{(\alph*)}},ref=(\arabic{enumi}\alph*)]
\item No component of $\ms_j^{\euno}$ is a disc or a half-disc.
\item The genus of $\ms_j^{\euno}$ is bounded by $\kappa(I)$.
\item \label{gd:NumCompSigmaI}
{$\ms_j^{\euno}$ intersects both $\partial B_\eps(\set_\infty)\setminus\partial M$ and $\partial\amb\cap B_\eps(\set_\infty)$ transversely in at most $\kappa(I)$ components.}
\item \label{gd:AreaBoundSigmaI}
$\ms_j^{\euno}$ has uniformly bounded area, namely
\[ 
\limsup_{j\to\infty}\ \area(\ms_j^{\euno}) \le 4\pi \kappa(I) \eps^2\point
\]
\end{enumerate}
\item 
\begin{enumerate} [label={\normalfont{(\alph*)}},ref=(\arabic{enumi}\alph*)]
\item Each component of $\ms_j^{\edue}$ is a disc or a half-disc.
\item $\ms_j^{\edue}$ has uniformly bounded curvature, that is 
\[
\limsup_{j\to\infty}\,\sup_{x\in\ms_j^{\edue}}\ \abs{\A_{\ms_j}}(x) < \infty \point
\]
\item Each component of $\ms_j^{\edue}$ has area uniformly bounded, namely
\[ 
\limsup_{j\to\infty}\, \sup_{\substack{C\subset \ms_j^{\edue}\\ \text{connected}}} \area(C) \le 2\pi \eps^2\point
\]
 \label{gd:AreaBoundSigmaII}
\end{enumerate}
\end{enumerate}
\end{theorem}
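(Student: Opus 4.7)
The plan is to reduce the statement to the local setting \hypN{} by rescaling around each $p\in\set_\infty$, so that Propositions \ref{prop:LocDeg} and \ref{prop:StructureLimLam} become applicable. I would first choose $\eps_0>0$ so small that $\min_{p\ne q\in\set_\infty}d_\smetric(p,q)\ge 4\eps_0$ and each ball $B_{4\eps_0}(p)$ admits Fermi (or geodesic normal) coordinates around the respective $p\in\set_\infty$. Fixing $\eps\le\eps_0$, for each $p\in\set_\infty$ the rescaled sequence $\check\ms_j\eqdef \eps^{-1}(\ms_j-p)$ read in such a chart satisfies \hypN{} with rescaled blow-up sets $\check\set_j$ of cardinality at most $I$, since the curvature estimate \eqref{eq:CurvEstSj} of \cref{cor:ExistenceBlowUpSetAndCurvatureEstimate} rescales trivially.

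Applying \cref{prop:LocDeg} to each of the (at most $|\set_\infty|\le I$) rescaled sequences would provide, inside $B_1(0)$, a uniform genus bound for $\check\ms_j^\euno$ together with bounds on the number of transverse intersection components with $\partial B_1(0)\setminus \Pi(\check a_j)$ and with $\Pi(\check a_j)$. Undoing the rescaling and summing over $\set_\infty$ yields (1b) and (1c) after harmlessly enlarging $\kappa(I)$. For (1a), I would fix any component $C_j\subset \ms_j^\euno$ containing some $p_j\in\set_j$ and use condition \ref{bus:BlowUpLimit} of \cref{def:BlowUpSets}: the rescalings $\lambda_j(C_j-p_j)$ converge with multiplicity one to a complete, nonflat, properly embedded free boundary minimal surface $\bu\ms_\infty$ in a half-space, which by \cref{cor:MinSurfInR3WithFiniteIndex} is two-sided and of finite total curvature. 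Combining the reflection principle \cref{lem:ReflectionPrinciple} with the fact that a complete embedded minimal surface in $\R^3$ diffeomorphic to a sphere must be flat shows that $\bu\ms_\infty$ cannot be a disc or a half-disc, and the multiplicity-one convergence transfers the nontrivial topology of $\bu\ms_\infty\cap B_R(0)$ to $C_j$ for $R$ large and $j\to\infty$.

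For $\ms_j^\edue$, by construction no component contains a point of $\set_j$. I would argue that these components have uniformly bounded curvature: if not, a point-picking à la \cref{lem:CurvEstBoundedInd} would produce a further blow-up point at which $\ms_j$ develops a nonflat limit, violating \ref{bus:PointsAreFar} of \cref{def:BlowUpSets} once we remember that the limit away from $\set_\infty$ is a lamination of parallel planes/half-planes by \cref{prop:StructureLimLam}. With the curvature bound in hand, the same lamination convergence, combined with \cref{lem:MultOneConvExtends}, forces each component of $\ms_j^\edue$ to be, for $j$ large, a small graphical perturbation of a flat disc or half-disc, proving (2a), (2b), and that the area of each such component is at most $\pi\eps^2(1+o(1))\le 2\pi\eps^2$, which is (2c). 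The area bound (1d) would follow similarly: away from $\set_\infty$, the graphical convergence with at most $\kappa(I)$ sheets (by (1c)) gives a total area bounded by $\kappa(I)$ times the area of a disc/half-disc of radius $\eps$, while the neck region contributes $o(1)$ thanks to the rescaling of $\bu\ms_\infty\cap B_{\lambda_j\delta_j}(0)$ for a suitable $\delta_j\to 0$.

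The main obstacle I anticipate is the uniform curvature bound for $\ms_j^\edue$: although intuitively clear from the planar lamination convergence of \cref{prop:StructureLimLam}, rigorously excluding that a component not containing a point of $\set_j$ develops curvature concentration requires a careful diagonal argument that couples the maximality of the smooth blow-up set with the multiplicity-one description provided by \cref{prop:LimLamInR3}. The remaining pieces (topology of $\ms_j^\euno$ and area bounds) then follow by rather standard combinations of \cref{prop:LocDeg}, the Morse-theoretic tools in \cref{sec:MorseTheory}, and the elementary gluing computation of \cref{sec:EulChar}.
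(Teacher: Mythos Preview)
Your overall strategy is sound and closely mirrors the paper's, but there are two genuine gaps.

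First, rescaling by the \emph{fixed} factor $\eps^{-1}$ around $p\in\set_\infty$ does not place you in setting \hypN{}: condition (i) there requires the ambient domains to exhaust a half-space ($R_j\to\infty$) and the metrics to converge to the Euclidean one, neither of which holds for a fixed dilation. The paper instead chooses an intermediate scale $r_j\to 0$ (with $\set_j\subset B_{\mu_0 r_j}(\set_\infty)$ and $r_j\min_{p\in\set_j}\abs{\A_{\ms_j}}(p)\to\infty$), applies \cref{prop:LocDeg} at that scale, then establishes the decay estimate $\abs{\A_{\ms_j}}(x)\,d_\smetric(x,\set_\infty)<\mu_0$ on the annulus $B_{\eps_0}(\set_\infty)\setminus B_{r_j}(\set_\infty)$ (exactly as in the proof of \cref{prop:LocDeg}), and finally uses \cref{cor:14TopInfo} to \emph{transfer} the topological bounds from scale $r_j$ up to scale $\eps$. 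This two-scale transfer is the missing mechanism in your reduction; without it \cref{prop:LocDeg} is not applicable.

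Second, your proposed contradiction for the curvature bound on $\ms_j^{\edue}$ --- that a further concentration point would ``violate \ref{bus:PointsAreFar}'' --- does not work as stated: the blow-up sets $\set_j$ from \cref{cor:ExistenceBlowUpSetAndCurvatureEstimate} are not claimed to be maximal, so a new concentration point in $\ms_j^{\edue}$ does not by itself contradict anything about $\set_j$. The paper's argument is different (and is precisely the ``careful diagonal argument'' you anticipate): if $\lambda_j^{\edue}\eqdef\sup_{\ms_j^{\edue}}\abs{\A_{\ms_j}}\to\infty$ at points $z_j$, one rescales by $\lambda_j^{\edue}$ around $z_j$ and invokes alternative \ref{llir:multone} of \cref{prop:LimLamInR3} to get multiplicity-one convergence to a single connected nonflat surface $\bu\ms_\infty$. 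The curvature estimate \eqref{eq:CurvEstSj} forces $\lambda_j^{\edue}\, d_\smetric(z_j,\set_j)$ to stay bounded, so the rescaled $\ms_j^{\euno}$ also has nonempty limit; but $\ms_j^{\euno}$ and $\ms_j^{\edue}$ lie in distinct connected components near $z_j$, which contradicts the connectedness and multiplicity-one of $\bu\ms_\infty$. Once this is in place, the area bound \ref{gd:AreaBoundSigmaI} is obtained by combining \ref{gd:NumCompSigmaI} with the monotonicity formula (bounding the area in $B_\eps$ by twice the area in $B_\eps\setminus B_{\eps/2}$), rather than via a separate neck-region estimate.
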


\begin{proof}

Let $\eps_0>0$ be such that $\min_{p,q\in\set_\infty}d_\smetric(p,q)\ge 4\eps_0$ and each component of $B_{\eps_0}(\set_\infty)$ admits a chart to the unit ball in $\R^3$ (around the points of $\set_\infty$ in $\amb\setminus\partial\amb$) or in $\Xi(0)$ (around the points of $\set_\infty$ in $\partial\amb$) where the limit lamination (cf. statement of \cref{cor:ExistenceBlowUpSetAndCurvatureEstimate}) $\lam$ is sufficiently close (in the sense of laminations) to a union of parallel planes or half-planes. 
Moreover, choose $r_j\to 0$ such that $\set_j\subset B_{ \mu_0r_j}(\set_\infty)$ and
\[
\liminf_{j\to\infty}\ \left( r_j \min_{p\in\set_j}\ \abs{\A_{\ms_j}}(p) \right) = \infty \point
\]
Observe that, fixing a point $p_\infty\in \set_\infty$, the rescaled surfaces $r_j^{-1}(\ms_j - p_\infty)$ in the ambient manifolds $r_j^{-1}(B_{\eps_0}(p_\infty)-p_\infty)$ (seeing everything in chart) fit in the setting \hypN{}.
Therefore, by \cref{prop:LocDeg}, we obtain that the surfaces $\ms_j^{\euno} \cap B_{r_j}(p_\infty)$ have genus at most $\kappa(I)$ and intersect $\partial\amb$ and $ \partial B_{r_j}(p_\infty)\setminus\partial M$ transversely in at most $\kappa(I)$ components.

\begin{figure}[htpb]
\centering
%%%%%%%%%%%%%%%%%%%%%%%%%%%%%%%%%%%%%%%%%%%%%%%%%%%%%
%%% global_descr.tikz
%%%%%%%%%%%%%%%%%%%%%%%%%%%%%%%%%%%%%%%%%%%%%%%%%%%%%

\begin{tikzpicture}[scale=0.9]
\coordinate (A1) at (0,0); 
\coordinate (B1) at (0.2,0.05); 
\coordinate (C1) at (0.4,0.12);
\coordinate (D1) at (0.6,0.21);
\coordinate (E1) at (0.8,0.35);

\coordinate (F1) at (4,1.97);
\coordinate (G1) at (4.1,2.05);
\coordinate (H1) at (4.3,2.17);
\coordinate (I1) at (4.5,2.24);
\coordinate (J1) at (4.7,2.28);
\coordinate (K1) at (4.85,2.27);

\coordinate (F2) at (5,-4.13);
\coordinate (G2) at (5.1,-4.13);
\coordinate (H2) at (5.3,-4.12);
\coordinate (I2) at (5.5,-4.11);
\coordinate (J2) at (5.7,-4.09);
\coordinate (K2) at (5.8,-4.07);

\coordinate (C2) at (0.7,-3);
\coordinate (D2) at (0.9,-3.05);
\coordinate (E2) at (1.1,-3.1);

\coordinate (X) at (-1.5,-0.2);
\coordinate (Y) at (-1.2, -3.2);
\coordinate (U) at (7.5, 1.8);
\coordinate (V) at (7.9, -2.6);

\draw plot [smooth cycle, tension=0.6] coordinates {(A1) (B1) (C1) (D1) (E1)
			      (1.5,0.9) (2,1.2) (2.5,1.3) (3,1.7) (3.7,1.71)
			      (F1) (G1) (H1) (I1) (J1) (K1)
			      (5.5,2.2) (6,1.8)
			      (U) (8.5,-0.5) (V)
			      (7,-2.9) (6.5, -3.3) (6.2, -3.93) 
			      (K2) (J2) (I2) (H2) (G2) (F2)
			      (4.5,-4) (3.5,-3.2) (2.7, -2.5) (2,-2.7) (1.5, -3.2) 
			      (E2) (D2) (C2)
			      (0.3, -3)
			      (Y) (-1.9,-2) (X) (-0.5,-0.05)};
			      
\draw (F1) to [bend left=20] (F2);
\draw (G1) to [bend left=20] (G2);

\draw plot [smooth, tension = 0.5] coordinates {(H1) (5.2,0.2) (5.45,-1) (5.57, -1.2) (5.65, -1) (5.4,0.2) (I1)};
\draw plot [smooth, tension = 0.5] coordinates {(H2) (5.5,-2.5) (5.5,-1.5) (5.6, -1.3) (5.7, -1.5) (5.7,-2.5) (I2)};
\pgfmathsetmacro{\t}{atan(0.03/0.1)}
\begin{scope}[rotate around={\t:(5.57,-1.2)}]
\draw[gray] (5.57, -1.2) arc (90:270:0.03 and {sqrt(0.03*0.03+0.1*0.1)/2});
\draw (5.57, -1.2) arc (90:-90:0.03 and {sqrt(0.03*0.03+0.1*0.1)/2});
\end{scope}

\draw plot [smooth, tension = 0.5] coordinates {(J1) (5.25,1.2) (5.68,0) (5.9,-1.29) (5.97, -1.45) (6, -1.27) (5.8,0) (5.37,1.2) (K1)};
\draw plot [smooth, tension = 0.5] coordinates {(J2)  (5.9,-2.7) (5.93, -1.7) (5.97, -1.53) (6.03,-1.7) (6.02, -2.7) (K2)};
\draw[gray] (5.97, -1.45) arc (90:270:0.02 and {0.04});
\draw (5.97, -1.45) arc (90:-90:0.02 and {0.04});

\draw[gray] (0.45, -2.9) arc (90:182:0.02 and {0.08});
\draw (0.45, -2.9) arc (90:0:0.02 and {0.08});
\draw plot [smooth, tension = 0.55] coordinates {(A1) (0.35,-1.2) (0.35,-2.65) (0.45, -2.9) (0.55, -2.65) (0.55,-1.2) (B1)};

\pgfmathsetmacro{\t}{atan(0.01/0.1)}
\begin{scope}[rotate around={-\t:(0.87, -2.6)}]
\draw[gray] (0.87,-2.6) arc (90:270:0.03 and {sqrt(0.01*0.01+0.1*0.1)/2});
\draw (0.87,-2.6) arc (90:-90:0.03 and {sqrt(0.01*0.01+0.1*0.1)/2});
\end{scope}
\draw plot [smooth, tension = 0.55] coordinates {(C1) (0.75,-1.2) (0.77,-2.4) (0.87, -2.6) (0.97, -2.4) (0.95,-1.2) (D1)};
\draw plot [smooth, tension = 0.9] coordinates {(C2) (0.75, -2.8) (0.86, -2.7) (0.93, -2.8) (D2)};
\draw (E1) to [bend left = 15] (E2);
\node at (-0.2,-0.6) {$\ms_j$};

\pgfmathsetmacro\R{0.7}
\pgfmathsetmacro\r{0.5}
\fill[gray] (C2) circle [radius=0.1em];
\draw[gray] (C2) circle (\R);

\node[gray] at (1.75, -2.4) {\scriptsize $B_\varepsilon(p_\infty)$};

\fill[gray] (5.685, -1.2) circle [radius=0.1em];
\draw[gray] (5.685, -1.2) circle (\R);
\node[gray] at (4.4, -1) {\scriptsize $B_\varepsilon(p_\infty)$};

\draw[blue] (0.17,-2.5) to [bend left=20] (-2,-3.85);
\draw[blue] (1.1,-3.6) to [bend right=20] (-0.39,-5.57);

\draw[blue] (6,-0.55) to [bend right=20] (10,-1.2);
\draw[blue] (5.7,-1.92) to [bend left=20] (9.5,-3.42);

\begin{scope}[scale=0.8,xshift=-2.5cm,yshift=-6.8cm]
% Boundary

\pgfmathsetmacro{\R}{2}
\pgfmathsetmacro{\r}{1}

\pgfmathsetmacro{\hl}{-0.20*\R}
\pgfmathsetmacro{\xl}{sqrt(\R*\R-\hl*\hl)}
\coordinate (OA) at (0,0);

\pgfmathsetmacro{\t}{atan(-\hl/(\xl))}
\draw[gray] (\xl,\hl) arc(-\t:180+\t:\R);
\draw[gray, dashed] (-\xl,\hl) arc(180+\t:360-\t:\R);

\pgfmathsetmacro{\s}{atan(-\hl/(sqrt(\r*\r-\hl*\hl)))}
\draw[gray] ({(sqrt(\r*\r-\hl*\hl))},\hl) arc(-\s:180+\s:\r);
\draw[gray, dashed] (-{(sqrt(\r*\r-\hl*\hl))},\hl) arc(180+\s:360-\s:\r);

\draw[thick] (-\xl,\hl) -- (\xl,\hl);

\def\c{{-0.3,-0.25, 0.3, -0.6,0.5}}

\foreach \i in {0,...,4}
  \draw ({\R*\c[\i]}, {sqrt(\R*\R*(1-\c[\i]*\c[\i]))}) -- ({\R*\c[\i]}, \hl);

% Left cat
\pgfmathsetmacro{\ca}{-0.2*\R}
\pgfmathsetmacro{\ya}{sqrt(\R*\R-\ca*\ca)}
\pgfmathsetmacro{\cb}{-0.10*\R}
\pgfmathsetmacro{\yb}{sqrt(\R*\R-\cb*\cb)}

\draw[gray] ({(\ca+\cb)/2}, {\hl+0.05*\R}) arc (90:180:0.03 and {0.05*\R});
\draw ({(\ca+\cb)/2}, {\hl+0.05*\R}) arc (90:0:0.03 and {0.05*\R});
\draw plot [smooth, tension=0.4] coordinates {(\ca,\ya) (\ca, {\hl+0.17*\R}) ({(\ca+\cb)/2}, {\hl+0.05*\R}) (\cb,{\hl+0.17*\R}) (\cb,\yb)};

% Right cat
\pgfmathsetmacro{\cc}{0*\R}
\pgfmathsetmacro{\yc}{sqrt(\R*\R-\cc*\cc)}
\pgfmathsetmacro{\cd}{0.16*\R}
\pgfmathsetmacro{\yd}{sqrt(\R*\R-\cd*\cd)}

\draw[gray] ({(\cc+\cd)/2}, {\hl+0.25*\R}) arc (90:270:0.05 and {0.05*\R});
\draw ({(\cc+\cd)/2}, {\hl+0.25*\R}) arc (90:-90:0.05 and {0.05*\R});
\draw plot [smooth, tension=0.5] coordinates {(\cc,\yc) (\cc, {\hl+0.4*\R}) ({(\cc+\cd)/2}, {\hl+0.25*\R}) (\cd,{\hl+0.4*\R}) (\cd,\yd)};
\draw plot [smooth, tension=1.8] coordinates {(\cc,\hl) ({(\cc+\cd)/2}, {\hl+0.15*\R}) (\cd,\hl)};
\end{scope}

\begin{scope}[scale=0.8,yshift=-3.2cm, xshift=13.6cm]
\pgfmathsetmacro{\R}{2}
\pgfmathsetmacro{\r}{1}

% No boundary
\coordinate (OA) at (0,0);
\draw[gray] (OA) circle (\R);
\draw[gray] (OA) circle (\r);

\def\h{{-0.15, 0.37, 0.5, 0.55}}

\foreach \i in {0,...,3}
  \draw ({-sqrt(\R*\R*(1-\h[\i]*\h[\i]))},{\R*\h[\i]}) -- ({sqrt(\R*\R*(1-\h[\i]*\h[\i]))},{\R*\h[\i]});  

% Lower cat
\pgfmathsetmacro{\ha}{-0.3*\R}
\pgfmathsetmacro{\xa}{sqrt(\R*\R-\ha*\ha)}
\pgfmathsetmacro{\hb}{-0.20*\R}
\pgfmathsetmacro{\xb}{sqrt(\R*\R-\hb*\hb)}

\draw[gray] (0.15,{(\ha+\hb)/2}) arc (180:0:0.05 and 0.02);
\draw (0.15,{(\ha+\hb)/2}) arc (180:360:0.05 and 0.02);
\draw plot [smooth, tension=0.35] coordinates {(-\xa,\ha) (-0.05,\ha) (0.15,{(\ha+\hb)/2}) (-0.05,\hb) (-\xb,\hb)};
\draw plot [smooth, tension=0.4] coordinates {(\xb,\hb) (0.45,\hb) (0.25,{(\hb+\ha)/2}) (0.45,\ha) (\xa,\ha)};

% Middle cat
\pgfmathsetmacro{\hd}{-0.08*\R}
\pgfmathsetmacro{\xd}{sqrt(\R*\R-\hd*\hd)}
\pgfmathsetmacro{\he}{0.10*\R}
\pgfmathsetmacro{\xe}{sqrt(\R*\R-\he*\he)}

\draw[gray] (-0.15,{(\he+\hd)/2}) arc (180:0:0.1 and 0.04);
\draw (-0.15,{(\he+\hd)/2}) arc (180:360:0.1 and 0.04);
\draw plot [smooth, tension=0.4] coordinates {(-\xe,\he) (-0.4,\he) (-0.15,{(\he+\hd)/2}) (-0.4,\hd) (-\xd,\hd)};
\draw plot [smooth, tension=0.4] coordinates {(\xe,\he) (0.3,\he) (0.05, {(\he+\hd)/2}) (0.3,\hd) (\xd,\hd)};

% Higher cat 
\pgfmathsetmacro{\hf}{0.15*\R}
\pgfmathsetmacro{\xf}{sqrt(\R*\R-\hf*\hf)}
\pgfmathsetmacro{\hg}{0.20*\R}
\pgfmathsetmacro{\xg}{sqrt(\R*\R-\hg*\hg)}

\draw[gray] (0.25,{(\hf+\hg)/2}) arc (180:0:0.05 and 0.02);
\draw (0.25,{(\hf+\hg)/2}) arc (180:360:0.05 and 0.02);
\draw plot [smooth, tension=0.25] coordinates {(-\xg,\hg) (0.1,\hg) (0.25,{(\hg+\hf)/2}) (0.1,\hf) (-\xf,\hf)};
\draw plot [smooth, tension=0.32] coordinates {(\xg,\hg) (0.5,\hg) (0.35,{(\hg+\hf)/2}) (0.5,\hf) (\xf,\hf)};
\end{scope}
\end{tikzpicture}
%%%%%%%%%%%%%%%%%%%%%%%%%%%%%%%%%%%%%%%%%%%%%%%%%%%%%
%%%%%%%%%%%%%%%%%%%%%%%%%%%%%%%%%%%%%%%%%%%%%%%%%%%%%
\caption{Blow-up around the points of degeneration.} \label{fig:GlobalDescr}
\end{figure}
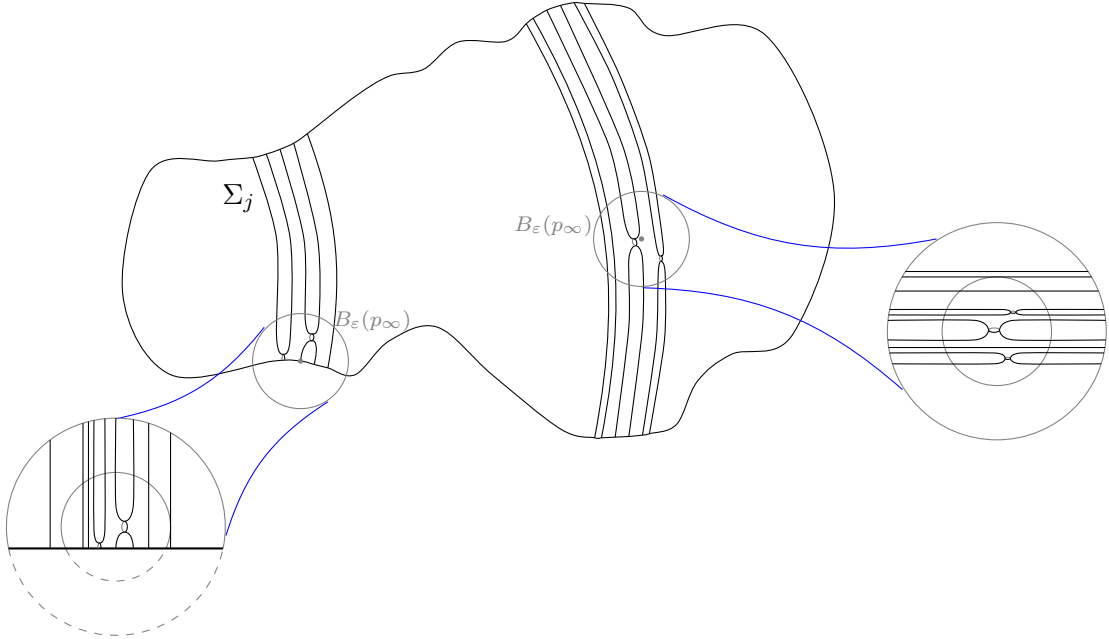
Then note that, exactly as in the proof of \cref{prop:LocDeg}, one can choose $\eps_0>0$, possibly smaller than before, in such a way that 
\[
\abs{\A_{\ms_j}}(x) d_\smetric(x,\set_\infty) < {\mu_0}
\]
for $x\in \ms_j \cap (B_{\eps_0}(\set_\infty) \setminus B_{r_j}(\set_\infty))$, for $j$ sufficiently large.
{Therefore, thanks to \cref{rem:ConvSigma1}, we can apply \cref{cor:14TopInfo} and thus transfer the information on the genus and the boundary components of $\ms_j^{\euno}\cap B_{r_j}(p_\infty)$ to $\ms_j^{\euno}\cap B_{\eps_0}(p_\infty)$ (possibly taking $\kappa(I)$ as double the constant given by \cref{prop:LocDeg}).}

Let us now prove that the components of $\ms_j^{\edue}$ have uniformly bounded curvature, i.e.,
\[
\limsup_{j\to\infty}\sup_{x\in \ms_j^{\edue}} \abs{A_{\ms_j}}(x) < \infty \point
\]
Assume by contradiction that (possibly after passing to a subsequence) there exists a sequence $z_j\in \ms_j^{\edue}$ satisfying
\[
\lambda_j^{\edue}\eqdef \abs{\A_{\ms_j}}(z_j) = \sup_{x\in \ms_j^{\edue}} \abs{\A_{\ms_j}}(x) \to \infty \point
\]
Observe that, by \eqref{eq:CurvEstSj}, this implies that (up to subsequence) the sequence $z_j$ converges to some point $p_\infty\in \set_\infty$. In particular the distance between $z_j$ and $\partial\ms_j^{\edue}\setminus\partial\amb$ is bounded from below by a positive constant, hence $\lambda_j^{\edue}(B_{\eps_0}(z_j)-z_j)$ is an exhausting sequence of domains of $\Pi(a)$ for some $0\ge a \ge -\infty$.

Now consider the rescaled surfaces $\bu\ms_j\eqdef \lambda_j^{\edue}(\ms_j -z_j)$: they have bounded curvature away from a finite set of points and that $\abs{A_{\bu\ms_j}}(0) = 1$. Therefore, by \cref{thm:LamCptness} and \cref{prop:LimLamInR3}, $\bu\ms_j$ converges locally smoothly with multiplicity one (in the sense of graphs) to a complete, nonflat, connected, properly embedded, two-sided, free boundary minimal surface $\bu \ms_\infty$ in $\Xi(a)$, for some $0\ge a \ge-\infty$.

The limit of the surfaces $\lambda_j^{\edue}(\ms_j^{\euno}-z_j)$ must be nonempty since \[\limsup_{j\to\infty} \min_{p\in \set_j} \ \lambda_j^{\edue} d_{\smetric}(z_j,p) < \infty \] by \eqref{eq:CurvEstSj}. 
However, this contradicts the fact that $\lambda_j^{\edue}(\ms_j -z_j)$ converges to $\bu\ms_\infty$ with multiplicity one since $\lambda_j^{\edue}(\ms_j^{\euno} -z_j)$ and $\lambda_j^{\edue}(\ms_j^{\edue} -z_j)$ are two different connected components of $\lambda_j^{\edue}(\ms_j^{\euno}-z_j)$ and the limit of both of them is nonempty.

Given the bound on the curvature of the components of $\ms_j^{\edue}$, the information on their topology follows easily (possibly taking $\eps_0$ smaller), using that the convergence of $\ms_j^{\edue}$ must be smooth (in the sense of lamination) everywhere in $B_{\eps_0}(\set_\infty)$.
Therefore each component of $\ms_j^{\edue}$ converges smoothly with multiplicity one to a leaf of $\lam$ (by simply connectedness of the components of $\lam$ in $B_\eps(p_\infty)$).

There only remains to prove the bounds \ref{gd:AreaBoundSigmaI} and \ref{gd:AreaBoundSigmaII} on the area.
Fixing $\eps\le\eps_0$, the surfaces $\ms_j^{\euno}$ intersects $\partial B_{\eps}(p_\infty)\setminus \partial\amb$ in at most $\kappa(I)$ components that are simple closed curves or arcs. This implies that $\ms_j^{\euno} \cap (B_\eps(p_\infty)\setminus B_{\eps/2}(p_\infty))$ converges graphically smoothly with finite multiplicity bounded by $\kappa(I)$ to the leaf of $\lam$ in $B_\eps(p_\infty)$ passing through $p_\infty$ (similarly to \cref{rem:ConvSigma1}). As a result, choosing $\eps_0$ sufficiently small such that all the leaves of $\lam$ in $B_{\eps}(p_\infty)$ are sufficiently close to discs or half-discs, we have that 
\[
\limsup_{j\to\infty}\ \area(\ms_j^{\euno} \cap (B_{\eps}(\set_\infty)\setminus B_{\eps/2}(\set_\infty))) \le 2\kappa(I)\pi\eps^2\point
\]
Finally the estimate on $\area(\ms_j^{\euno}\cap B_\eps(\set_\infty))$ follows from the monotonicity formula since, for $\eps>0$ sufficiently small, it holds
\[
\area(\ms_j^{\euno}\cap B_\eps(\set_\infty)) \le 2 \area(\ms_j^{\euno}\cap (B_{\eps}(\set_\infty)\setminus B_{\eps/2}(\set_\infty))) \point
\]
The area estimate for the components of $\ms_j^{\edue}$ is even easier since $\ms_j^{\edue}$ converges (in the sense of laminations) to $\lam$ everywhere in $B_\eps(\set_\infty)$ (in particular, each component of $\ms_j^{\edue}$ converges smoothly with multiplicity one to a leaf of $\lam$ in $B_\eps(\set_\infty)$, as observed above), thus we omit the details.
\end{proof}

\section{Surgery procedure}\label{sec:Surgery}

As a corollary of the degeneration description in \cref{thm:GlobalDeg}, we are able to perform surgeries on the surfaces $\ms_j$ to obtain new surfaces `similar' to $\ms_j$ but with bounded curvature.

\begin{corollary} \label{cor:Surgery}

Given $I\in \N$ there exists $\tilde\kappa(I)\ge 0$ such that the following assertions hold true.

In the setting of \cref{thm:GlobalDeg}, and for the same value of $\eps_0$,
for all $0< \eps\le \eps_0$ and $j$ sufficiently large, there exist properly embedded surfaces $\tilde \ms_j\subset \amb$ satisfying the following properties:
\begin{enumerate} [label={\normalfont(\arabic*)}]
\item $\tilde\ms_j$ coincides with $\ms_j$ outside $B_\eps(\set_\infty)$.
\item The curvature of $\tilde\ms_j$ is uniformly bounded, i.e.,
\[
\limsup_{j\to\infty}\sup_{x\in \tilde\ms_j} \abs{\A_{\tilde\ms_j}}(x) < \infty \point
\]
\item The genus, the number of boundary components, the area and the number of connected components of $\tilde\ms_j$ are controlled by the ones of $\ms_j$, namely \label{s:SimChar}
\begin{align*}
&\genus(\ms_j) - \tilde\kappa(I) \le \genus(\tilde\ms_j) \le \genus(\ms_j)\comma \\
&\bdry( \ms_j) - \tilde\kappa(I) \le \bdry(\tilde\ms_j) \le \bdry( \ms_j) \comma \\
&\area( \ms_j) - \tilde\kappa(I) \le \area(\tilde\ms_j) \le \area(\ms_j) + \tilde\kappa(I) \comma \\
&\abs{\pi_0(\ms_j)} \le \abs{\pi_0(\tilde\ms_j)} \le \abs{\pi_0(\ms_j)} + \tilde\kappa(I) \point
\end{align*}
\item The surfaces $\tilde\ms_j$ locally converge (in the sense of laminations) to the lamination $\lam$ in \cref{cor:ExistenceBlowUpSetAndCurvatureEstimate}.

\end{enumerate}

\end{corollary}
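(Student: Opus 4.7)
The plan is to perform the surgery locally around each point $p_\infty\in\set_\infty$, exploiting the fine description of $\ms_j$ inside $B_\eps(p_\infty)$ provided by \cref{thm:GlobalDeg}. Outside $B_\eps(\set_\infty)$ we keep $\tilde\ms_j = \ms_j$; there the curvature is already uniformly bounded by \cref{cor:ExistenceBlowUpSetAndCurvatureEstimate}. Inside, the components in $\ms_j^{\edue}$ are harmless (bounded curvature, each a topological disc or half-disc close to a leaf of $\lam$) and we leave them essentially unchanged, while the `bad' components $\ms_j^{\euno}$ require replacement.

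For the core construction, I would fix $p_\infty\in\set_\infty$, let $L_\infty\in\lam$ be the unique leaf of the limit lamination through $p_\infty$, and work in a Fermi chart around $p_\infty$ in which $L_\infty$ corresponds to a flat disc or half-disc. By \cref{thm:GlobalDeg}\ref{gd:NumCompSigmaI} and the proof of the area bound therein, for $j$ large $\ms_j^{\euno}\cap (B_\eps(p_\infty)\setminus B_{\eps/2}(p_\infty))$ is the union of at most $\kappa(I)$ smooth graphs $u_j^{(k)}$ over $L_\infty$, converging smoothly to $0$; in particular their $C^2$-norms are uniformly bounded. The idea is then to replace $\ms_j^{\euno}\cap B_{3\eps/4}(p_\infty)$ with the union of graphs over $L_\infty$ obtained by applying a smooth radial cutoff $\chi(r)$ equal to $1$ for $r\ge 3\eps/4$ and equal to $0$ for $r\le \eps/2$ to each $u_j^{(k)}$, and extending by zero to all of $B_{\eps/2}(p_\infty)\cap L_\infty$. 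This yields a finite collection of disjoint embedded discs/half-discs inside $B_\eps(p_\infty)$ with uniformly bounded curvature, meeting $\partial\amb$ orthogonally near boundary points of $\set_\infty$ (because $L_\infty$ does and the graphical interpolation preserves this property in Fermi coordinates, using the assumption \hypP{} and the free-boundary nature of the limit leaves from \cref{cor:ExistenceBlowUpSetAndCurvatureEstimate}). Glueing these new sheets to $\ms_j\cap (\amb\setminus B_\eps(p_\infty))$ and to the components in $\ms_j^{\edue}$ (left unchanged) produces $\tilde\ms_j$.

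To verify property \ref{s:SimChar}, I would argue topologically using the Euler characteristic formulas of \cref{sec:EulChar}. Each `neck component' of $\ms_j^{\euno}$ has genus bounded by $\kappa(I)$, at most $\kappa(I)$ boundary arcs/circles on $\partial B_\eps(p_\infty)\setminus\partial\amb$ and at most $\kappa(I)$ on $\partial\amb\cap B_\eps(p_\infty)$, so replacing it by a disjoint union of at most $\kappa(I)$ discs/half-discs changes the genus, boundary component count and number of connected components of $\ms_j$ each by at most a universal constant multiple of $\kappa(I)$, bounded say by $\tilde\kappa(I)\eqdef C\kappa(I)\abs{\set_\infty} \le CI\kappa(I)$. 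For the area, \cref{thm:GlobalDeg}\ref{gd:AreaBoundSigmaI} bounds $\area(\ms_j^{\euno}\cap B_\eps(\set_\infty))$ by $4\pi\kappa(I)\eps^2$, while the replacement sheets have area at most $\kappa(I)\pi\eps^2$ each (being nearly flat discs or half-discs), so both inequalities on the area hold, provided $\tilde\kappa(I)$ is enlarged accordingly. Convergence of $\tilde\ms_j$ to $\lam$ in the sense of laminations is inherited from the convergence of $\ms_j$ outside $\set_\infty$ (unchanged) together with the fact that inside $B_\eps(\set_\infty)$ the surgered pieces are by construction graphical perturbations (going to zero in $C^k$ as $j\to\infty$) of the leaves of $\lam$ through $\set_\infty$.

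The main technical obstacle will be the graphical interpolation near $\partial\amb$ when $p_\infty\in\partial\amb$: one has to carry out the cutoff in Fermi coordinates adapted to $\partial\amb$ so that the interpolated sheets still meet $\partial\amb$ orthogonally along their boundary, and one must control the curvature of the resulting half-disc-type sheets uniformly in $j$. This requires the free-boundary property of $L_\infty$ (guaranteed by \hypP{} through \cref{cor:ExistenceBlowUpSetAndCurvatureEstimate}) and the fact that the graphs $u_j^{(k)}$ over $L_\infty$ satisfy zero Neumann-type boundary conditions along $\partial L_\infty$ up to higher order; once this is set up, the $C^2$-bounds for the cutoffs $\chi\cdot u_j^{(k)}$ follow from those of $u_j^{(k)}$, closing the argument.
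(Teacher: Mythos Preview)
Your overall strategy is the same as the paper's, but there is one genuine gap: embeddedness of the replacement sheets. You propose to cut off each graph $u_j^{(k)}$ to the \emph{same} limit (namely $0$, the leaf $L_\infty$) inside $B_{\eps/2}(p_\infty)$. Then all the interpolated graphs $\chi(r)\,u_j^{(k)}$ coincide on $\{r\le\eps/2\}$, so the resulting surface is not embedded (and in particular is not a properly embedded surface in $M$). Since $\ms_j^{\euno}$ generically has several sheets over $L_\infty$ in the annular region, this is not a corner case but the typical situation.

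The paper fixes exactly this point: it interpolates the $k$-th sheet not towards $0$ but towards $w_j(x)+\tfrac{k}{n(j)}\delta_j$, where $w_j\to 0$ smoothly and $\delta_j\to 0$, so that the replacement graphs stay pairwise disjoint and strictly ordered. Moreover, $w_j$ is chosen so that its graph lies strictly between the two adjacent \emph{disc} components of $\ms_j^{\edue}$ bracketing the neck packet; this guarantees that the surgered sheets do not collide with the untouched components of $\ms_j^{\edue}$ either, which your proposal does not address. Finally, note that you do not need the replacement sheets to meet $\partial M$ orthogonally (the statement asks for properly embedded, not free boundary, surfaces); the paper only requires the graph of $w_j$ to intersect $\Pi(0)$ transversely in Fermi chart. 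Your worry about preserving the free boundary condition is therefore misplaced, while the disjointness issue above is the real obstruction.
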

\begin{proof}
Consider $p_\infty\in \set_\infty$; then, if $p_\infty\in\amb\setminus\partial\amb$, we can perform the surgery as in \cite[Corollary 1.19]{ChodoshKetoverMaximo2017} (possibly restricting $\eps_0$). Therefore, let us assume that $p_\infty\in \partial\amb$. Pick the leaf $L$ of $ \lam \cap B_\eps(p_\infty)$ passing through $p_\infty$, which satisfies $\partial L = L\cap\partial\amb$ thanks to property \hypP{}.
Then fix a diffeomorphism $\psi\colon B_\eps(p_\infty)\to B_3(0)\cap \Xi(0)\subset \R^3$ such that $\psi$ maps $B_{\eps/3}(p_\infty)$ diffeomorphically onto $B_1(0)\cap \Xi(0)$ and $L$ onto the flat half-disc $\{ x^3 = 0\} \cap (B_3(0)\cap \Xi(0))$.

Consider all the connected components of $\ms_j\cap B_\eps(p_\infty)$ which are converging smoothly to $L$ in $B_\eps(p_\infty)\setminus \bar B_{\eps/3}(p_\infty)$. These include all the neck components, called $\ms_j^{\euno}$ in \cref{thm:GlobalDeg}, as observed in the proof of the theorem.
Note that the convergence to the leaf $L$ in $B_\eps(p_\infty)\setminus \bar B_{\eps/3}(p_\infty)$ might possibly occur with infinite multiplicity; however, the convergence of the components relative to $\ms_j^{\euno}$ occurs with uniformly bounded multiplicity (for example thanks to the area bound \ref{gd:AreaBoundSigmaI} in \cref{thm:GlobalDeg}, or from the proof of \ref{gd:AreaBoundSigmaI} itself).

Let $\Gamma_j$ be the image through $\psi$ of the union of the necks components of $\ms_j\cap B_\eps(p_\infty)$ together with the disc components directly above or below a neck component.
We are going to perform our surgery on $\Gamma_j$ leaving invariant its disc components, in this way we are sure that also all the other disc components remain untouched.
For the sake of convenience, let us identify $L$ with its image in $B_3(0)\cap \Xi(0)$ and let us denote by $D(2)$ the disc with radius $2$ and center $0$ in $L$, that is $D(2)\eqdef L\cap \{\abs x < 2\}$, and $A(2,1)$ the annulus between radii $1$ and $2$ in $L$, namely $A(2,1)\eqdef L\cap \{1<\abs x < 2\}$. 
Then choose $\chi\colon \R\to \cc 01$ a smooth, nondecreasing cutoff function with $\chi(t) = 0$ for $t\le 5/4$ and $\chi(t) = 1$ for $t\ge 7/4$.

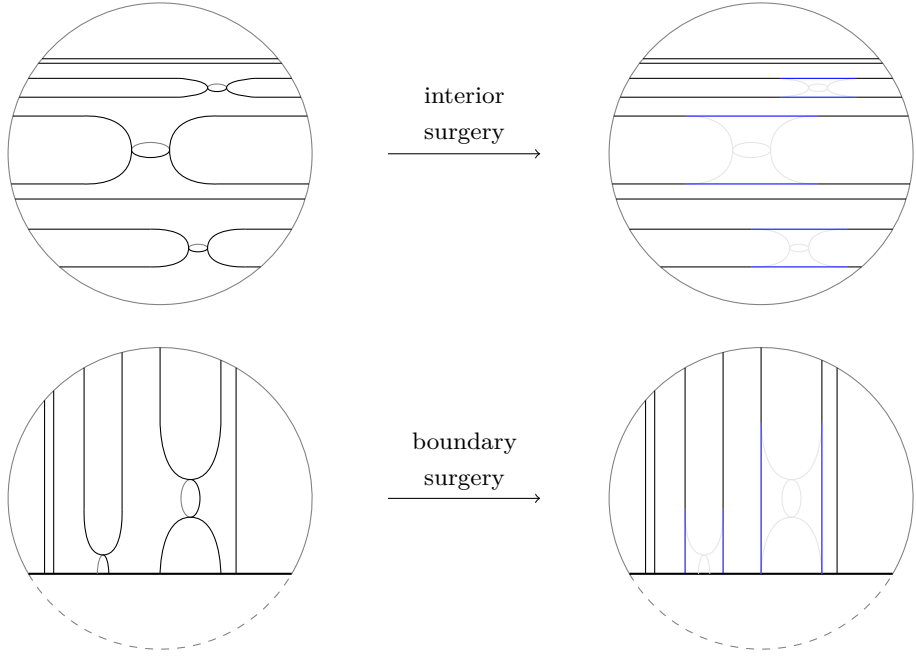
\begin{figure}[htpb]
\centering
%%%%%%%%%%%%%%%%%%%%%%%%%%%%%%%%%%%%%%%%%%%%%%%%%%%%%
%%% surgery_micro.tikz
%%%%%%%%%%%%%%%%%%%%%%%%%%%%%%%%%%%%%%%%%%%%%%%%%%%%%
\begin{tikzpicture}[scale=2.5]
\pgfmathsetmacro{\R}{2}
\pgfmathsetmacro{\r}{0.8}

% No boundary

\coordinate (OA) at (0,0);

\draw[gray] (OA) circle (\r);
\def\h{{-0.3, 0.6, 0.63}}

\foreach \i in {0,...,2}
  \draw ({-sqrt(\r*\r*(1-\h[\i]*\h[\i]))},{\r*\h[\i]}) -- ({sqrt(\r*\r*(1-\h[\i]*\h[\i]))},{\r*\h[\i]});

% Lower cat
\pgfmathsetmacro{\ha}{-0.3*\R}
\pgfmathsetmacro{\xa}{sqrt(\r*\r-\ha*\ha)}
\pgfmathsetmacro{\hb}{-0.20*\R}
\pgfmathsetmacro{\xb}{sqrt(\r*\r-\hb*\hb)}

\begin{scope}
\draw[gray]  (0.15,{(\ha+\hb)/2}) arc (180:0:0.05 and 0.02);
\draw  (0.15,{(\ha+\hb)/2}) arc (180:360:0.05 and 0.02);
\draw plot [smooth, tension=2] coordinates {(-0.05,\ha) (0.15,{(\ha+\hb)/2}) (-0.05,\hb)};
\draw plot [smooth, tension=2] coordinates {(0.45,\hb) (0.25,{(\hb+\ha)/2}) (0.45,\ha)};

\draw (-\xa,\ha) -- (-0.05,\ha);
\draw (-0.05,\hb) -- (-\xb,\hb);
\draw (\xa,\ha) -- (0.45,\ha);
\draw (0.45,\hb) -- (\xb,\hb);

% Middle cat
\pgfmathsetmacro{\hd}{-0.08*\R}
\pgfmathsetmacro{\xd}{sqrt(\r*\r-\hd*\hd)}
\pgfmathsetmacro{\he}{0.10*\R}
\pgfmathsetmacro{\xe}{sqrt(\r*\r-\he*\he)}
\draw[gray] (-0.15,{(\he+\hd)/2}) arc (180:0:0.1 and 0.04);
\draw  (-0.15,{(\he+\hd)/2}) arc (180:360:0.1 and 0.04);
\draw  plot [smooth, tension=2] coordinates {(-0.4,\he) (-0.15,{(\he+\hd)/2}) (-0.4,\hd)};
\draw  plot [smooth, tension=2] coordinates {(0.3,\he) (0.05, {(\he+\hd)/2}) (0.3,\hd)};
\draw (-\xe,\he) -- (-0.4,\he);
\draw (-0.4,\hd) -- (-\xd,\hd);
\draw (\xe,\he) -- (0.3,\he);
\draw (0.3,\hd) -- (\xd,\hd);

% Higher cat 
\pgfmathsetmacro{\hf}{0.15*\R}
\pgfmathsetmacro{\xf}{sqrt(\r*\r-\hf*\hf)}
\pgfmathsetmacro{\hg}{0.20*\R}
\pgfmathsetmacro{\xg}{sqrt(\r*\r-\hg*\hg)}
\draw[gray]  (0.25,{(\hf+\hg)/2}) arc (180:0:0.05 and 0.02);
\draw  (0.25,{(\hf+\hg)/2}) arc (180:360:0.05 and 0.02);
\draw plot [smooth, tension=1.5] coordinates {(0.1,\hg) (0.25,{(\hg+\hf)/2}) (0.1,\hf)};
\draw plot [smooth, tension=1.5] coordinates {(0.5,\hg) (0.35,{(\hg+\hf)/2}) (0.5,\hf)};
\draw (-\xg,\hg) -- (0.1,\hg);
\draw (\xg,\hg) -- (0.5,\hg);
\draw (0.1,\hf) -- (-\xf,\hf);
\draw (0.5,\hf) -- (\xf,\hf);
\end{scope}

\draw[->] (1.2,0) -- (2,0);
\node at (1.6,0.2) [text width=1.5cm,align=center]{\footnotesize interior surgery};

\begin{scope}[xshift=45*\R]
% No boundary

\coordinate (OA) at (0,0);
\draw[gray] (OA) circle (\r);
\def\h{{-0.3, 0.6, 0.63}}

\foreach \i in {0,...,2}
  \draw ({-sqrt(\r*\r*(1-\h[\i]*\h[\i]))},{\r*\h[\i]}) -- ({sqrt(\r*\r*(1-\h[\i]*\h[\i]))},{\r*\h[\i]});

% Lower cat
\pgfmathsetmacro{\ha}{-0.3*\R}
\pgfmathsetmacro{\xa}{sqrt(\r*\r-\ha*\ha)}
\pgfmathsetmacro{\hb}{-0.20*\R}
\pgfmathsetmacro{\xb}{sqrt(\r*\r-\hb*\hb)}

\begin{scope}
\draw[gray!20]  (0.15,{(\ha+\hb)/2}) arc (180:0:0.05 and 0.02);
\draw[gray!20]  (0.15,{(\ha+\hb)/2}) arc (180:360:0.05 and 0.02);
\draw[gray!20] plot [smooth, tension=2] coordinates {(-0.05,\ha) (0.15,{(\ha+\hb)/2}) (-0.05,\hb)};
\draw[gray!20] plot [smooth, tension=2] coordinates {(0.45,\hb) (0.25,{(\hb+\ha)/2}) (0.45,\ha)};

\draw (-\xa,\ha) -- (-0.05,\ha);
\draw (-0.05,\hb) -- (-\xb,\hb);
\draw (\xa,\ha) -- (0.45,\ha);
\draw (0.45,\hb) -- (\xb,\hb);

% Middle cat
\pgfmathsetmacro{\hd}{-0.08*\R}
\pgfmathsetmacro{\xd}{sqrt(\r*\r-\hd*\hd)}
\pgfmathsetmacro{\he}{0.10*\R}
\pgfmathsetmacro{\xe}{sqrt(\r*\r-\he*\he)}

\draw[gray!20] (-0.15,{(\he+\hd)/2}) arc (180:0:0.1 and 0.04);
\draw[gray!20]  (-0.15,{(\he+\hd)/2}) arc (180:360:0.1 and 0.04);
\draw[gray!20]  plot [smooth, tension=2] coordinates {(-0.4,\he) (-0.15,{(\he+\hd)/2}) (-0.4,\hd)};
\draw[gray!20]  plot [smooth, tension=2] coordinates {(0.3,\he) (0.05, {(\he+\hd)/2}) (0.3,\hd)};

\draw (-\xe,\he) -- (-0.4,\he);
\draw (-0.4,\hd) -- (-\xd,\hd);
\draw (\xe,\he) -- (0.3,\he);
\draw (0.3,\hd) -- (\xd,\hd);

% Higher cat 
\pgfmathsetmacro{\hf}{0.15*\R}
\pgfmathsetmacro{\xf}{sqrt(\r*\r-\hf*\hf)}
\pgfmathsetmacro{\hg}{0.20*\R}
\pgfmathsetmacro{\xg}{sqrt(\r*\r-\hg*\hg)}

\draw[gray!20]  (0.25,{(\hf+\hg)/2}) arc (180:0:0.05 and 0.02);
\draw[gray!20]  (0.25,{(\hf+\hg)/2}) arc (180:360:0.05 and 0.02);
\draw[gray!20] plot [smooth, tension=1.5] coordinates {(0.1,\hg) (0.25,{(\hg+\hf)/2}) (0.1,\hf)};
\draw[gray!20] plot [smooth, tension=1.5] coordinates {(0.5,\hg) (0.35,{(\hg+\hf)/2}) (0.5,\hf)};

\draw (-\xg,\hg) -- (0.1,\hg);
\draw (\xg,\hg) -- (0.5,\hg);
\draw (0.1,\hf) -- (-\xf,\hf);
\draw (0.5,\hf) -- (\xf,\hf);

\begin{scope}[solid,blue]
\draw[blue] (-0.05,\ha) -- (0.45,\ha);
\draw[blue] (-0.05,\hb) -- (0.45,\hb);
\draw[blue] (-0.4,\he) -- (0.3,\he);
\draw[blue] (-0.4,\hd) -- (0.3,\hd);
\draw[blue] (0.1,\hg) -- (0.5,\hg);
\draw[blue] (0.1,\hf) -- (0.5,\hf);
\end{scope}
\end{scope}
\end{scope}

\begin{scope}[yshift=-26*\R]
% Boundary
\pgfmathsetmacro{\hl}{-0.20*\R}
\pgfmathsetmacro{\xl}{sqrt(\r*\r-\hl*\hl)}
\coordinate (OA) at (0,0);
\pgfmathsetmacro{\s}{atan(-\hl/(sqrt(\r*\r-\hl*\hl)))}
\draw[gray] ({(sqrt(\r*\r-\hl*\hl))},\hl) arc(-\s:180+\s:\r);
\draw[gray, dashed] (-{(sqrt(\r*\r-\hl*\hl))},\hl) arc(180+\s:360-\s:\r);
\draw[thick] (-\xl,\hl) -- (\xl,\hl);

\def\c{{-0.76,-0.7, 0.5}}
\foreach \i in {0,...,2}
\draw ({\r*\c[\i]}, {sqrt(\r*\r*(1-\c[\i]*\c[\i]))}) -- ({\r*\c[\i]}, \hl);

% Left cat
\pgfmathsetmacro{\ca}{-0.2*\R}
\pgfmathsetmacro{\ya}{sqrt(\r*\r-\ca*\ca)}
\pgfmathsetmacro{\cb}{-0.10*\R}
\pgfmathsetmacro{\yb}{sqrt(\r*\r-\cb*\cb)}

\draw[gray] ({(\ca+\cb)/2}, {\hl+0.05*\R}) arc (90:180:0.03 and {0.05*\R});
\draw ({(\ca+\cb)/2}, {\hl+0.05*\R}) arc (90:0:0.03 and {0.05*\R});
\draw plot [smooth, tension=2] coordinates {(\ca, {\hl+0.17*\R}) ({(\ca+\cb)/2}, {\hl+0.05*\R}) (\cb,{\hl+0.17*\R})};

\draw (\ca,\ya) -- (\ca, {\hl+0.17*\R});
\draw (\cb,{\hl+0.17*\R}) -- (\cb,\yb);

% Right cat
\pgfmathsetmacro{\cc}{0*\R}
\pgfmathsetmacro{\yc}{sqrt(\r*\r-\cc*\cc)}
\pgfmathsetmacro{\cd}{0.16*\R}
\pgfmathsetmacro{\yd}{sqrt(\r*\r-\cd*\cd)}

\draw[gray] ({(\cc+\cd)/2}, {\hl+0.25*\R}) arc (90:270:0.05 and {0.05*\R});
\draw ({(\cc+\cd)/2}, {\hl+0.25*\R}) arc (90:-90:0.05 and {0.05*\R});
\draw plot [smooth, tension=1.8] coordinates {(\cc, {\hl+0.4*\R}) ({(\cc+\cd)/2}, {\hl+0.25*\R}) (\cd,{\hl+0.4*\R})};
\draw plot [smooth, tension=1.8] coordinates {(\cc,\hl) ({(\cc+\cd)/2}, {\hl+0.15*\R}) (\cd,\hl)};

\draw (\cc,\yc) -- (\cc, {\hl+0.4*\R});
\draw (\cd,{\hl+0.4*\R}) -- (\cd,\yd);

\draw[->] (1.2,0) -- (2,0);
\node at (1.6,0.2) [text width=1.5cm,align=center]{\footnotesize boundary surgery};

\begin{scope} [xshift=45*\R]
% Boundary

\pgfmathsetmacro{\hl}{-0.20*\R}
\pgfmathsetmacro{\xl}{sqrt(\r*\r-\hl*\hl)}
\coordinate (OA) at (0,0);

\pgfmathsetmacro{\s}{atan(-\hl/(sqrt(\r*\r-\hl*\hl)))}
\draw[gray] ({(sqrt(\r*\r-\hl*\hl))},\hl) arc(-\s:180+\s:\r);
\draw[gray, dashed] (-{(sqrt(\r*\r-\hl*\hl))},\hl) arc(180+\s:360-\s:\r);
\draw[thick] (-\xl,\hl) -- (\xl,\hl);

\def\c{{-0.76,-0.7, 0.5}}

\foreach \i in {0,...,2}
\draw ({\r*\c[\i]}, {sqrt(\r*\r*(1-\c[\i]*\c[\i]))}) -- ({\r*\c[\i]}, \hl);

% Left cat
\pgfmathsetmacro{\ca}{-0.2*\R}
\pgfmathsetmacro{\ya}{sqrt(\r*\r-\ca*\ca)}
\pgfmathsetmacro{\cb}{-0.10*\R}
\pgfmathsetmacro{\yb}{sqrt(\r*\r-\cb*\cb)}

\draw[gray!20] ({(\ca+\cb)/2}, {\hl+0.05*\R}) arc (90:180:0.03 and {0.05*\R});
\draw[gray!20] ({(\ca+\cb)/2}, {\hl+0.05*\R}) arc (90:0:0.03 and {0.05*\R});
\draw[gray!20] plot [smooth, tension=2] coordinates {(\ca, {\hl+0.17*\R}) ({(\ca+\cb)/2}, {\hl+0.05*\R}) (\cb,{\hl+0.17*\R})};

\draw[blue] (\ca, {\hl+0.17*\R}) -- (\ca,\hl);
\draw[blue] (\cb, {\hl+0.17*\R}) -- (\cb,\hl);
\draw (\ca,\ya) -- (\ca, {\hl+0.17*\R});
\draw (\cb,{\hl+0.17*\R}) -- (\cb,\yb);

% Right cat
\pgfmathsetmacro{\cc}{0*\R}
\pgfmathsetmacro{\yc}{sqrt(\r*\r-\cc*\cc)}
\pgfmathsetmacro{\cd}{0.16*\R}
\pgfmathsetmacro{\yd}{sqrt(\r*\r-\cd*\cd)}

\draw[gray!20] ({(\cc+\cd)/2}, {\hl+0.25*\R}) arc (90:270:0.05 and {0.05*\R});
\draw[gray!20] ({(\cc+\cd)/2}, {\hl+0.25*\R}) arc (90:-90:0.05 and {0.05*\R});
\draw[gray!20] plot [smooth, tension=1.8] coordinates {(\cc, {\hl+0.4*\R}) ({(\cc+\cd)/2}, {\hl+0.25*\R}) (\cd,{\hl+0.4*\R})};
\draw[gray!20] plot [smooth, tension=1.8] coordinates {(\cc,\hl) ({(\cc+\cd)/2}, {\hl+0.15*\R}) (\cd,\hl)};

\draw[blue] (\cc, {\hl+0.4*\R}) -- (\cc,{\hl});
\draw[blue] (\cd, {\hl+0.4*\R}) -- (\cd,{\hl});

\draw (\cc,\yc) -- (\cc, {\hl+0.4*\R});
\draw (\cd,{\hl+0.4*\R}) -- (\cd,\yd);
\end{scope}
\end{scope}
\end{tikzpicture}
%%%%%%%%%%%%%%%%%%%%%%%%%%%%%%%%%%%%%%%%%%%%%%%%%%%%%
%%%%%%%%%%%%%%%%%%%%%%%%%%%%%%%%%%%%%%%%%%%%%%%%%%%%%
\caption{Surgery at small scales.} \label{fig:SurgeryMicro}
\end{figure}

We can assume to have only two disc components in $\Gamma_j$, one at the top and one at the bottom. If this is not the case, we work separately on subsets of the components of $\Gamma_j$ in this form, eventually adding the extremal disc components if missing.
Pick a smooth function $w_j\colon D(2)\to \R$ such that 
\begin{itemize}
 \item the graph of $w_j$ is contained in $B_3(0)\cap \Xi(0)$ and it lies strictly between the two disc components of $\Gamma_j$;
 \item the graph of $w_j$ intersects transversely $\Pi(0)$;
 \item $w_j$ smoothly converges to $0$ as $j\to\infty$.
\end{itemize}
Moreover choose real numbers $\delta_j\to 0$ such that $w_j+\delta$ has the same properties for every $0\le \delta \le\delta_j$.

Now, since the convergence of $\Gamma_j$ to $L$ is smooth graphical with finite multiplicity on $A(2,1)$, we can find functions $u_{j,1},\ldots,u_{j,n(j)}\colon A(2,1)\to \R$ so that the non-disc components of $\Gamma_j$ on the cylinder over $A(2,1)$ are exactly the graphs of $u_{j,k}$ for $k=1,\ldots,n(j)$.
Moreover observe that
\begin{itemize}
\item $n(j)$ is uniformly bounded for $j\to\infty$ for what we said before;
\item for all $h\in\N$ we have $\sup_{k=1,\ldots,n(j)} \norm{u_{j,k}}_{C^h(A(2,1))}\to 0$ as $j\to\infty$;
\item by embeddedness of $\Gamma_j$ we can assume $u_{j,1}(x) < u_{j,2}(x) < \ldots < u_{j,n(j)}(x)$ for all $x\in A(2,1)$.
\end{itemize}
We then interpolate, i.e., we define
\[
\tilde u_{j,k}(x) \eqdef \chi(\abs{x}) u_{j,k}(x) + (1-\chi(\abs x)) \left(w_j(x) + \frac{k}{n(j)} \delta_j\right)\comma 
\]
for all $x\in A(2,1)$, and the surface $\tilde \Gamma_j$ as the union of the graphs of $\tilde u_{j,k}$ for $k=1,\ldots,n(j)$ inside the cylinder over $D(2)$ and coinciding with $\Gamma_j$ outside that cylinder.
Now just define $\tilde\ms_j$ as $\ms_j$ outside $B_\eps(\set_\infty)$ and as the preimages of $\tilde\Gamma_j$ through $\psi$ inside the balls $B_\eps(\set_\infty)$.
All the properties required to $\tilde\ms_j$ are easily fulfilled by construction thanks to the bounds on the genus and the number of boundary components of $\ms^{\euno}_j$ inside $B_\eps(\set_\infty)$, proven in \cref{thm:GlobalDeg}.
\end{proof}

%%%%%%%%%%%%%%%%%%%%%%%%%%%%%%%%%%%%%%%%%%%%%%%%%
%%% I - Positive scalar curvature
%%%%%%%%%%%%%%%%%%%%%%%%%%%%%%%%%%%%%%%%%%%%%%%%%

\chapter{`Weakly positive geometry' setting}

%%%%%%%%%%%%%%%%%%%%%%%%%%%%%%%%%%%%%%%%%%%%%%%%%%%%%%%%%%%%%%%%
%%% Diameter bounds for stable FBMS
%%%%%%%%%%%%%%%%%%%%%%%%%%%%%%%%%%%%%%%%%%%%%%%%%%%%%%%%%%%%%%%%

In this chapter, we focus on three-dimensional ambient manifolds with `weakly positive geometry', namely manifolds with nonnegative scalar curvature and mean convex boundary.

\section[Diameter bounds for stable FBMS]{Diameter bounds for stable free boundary minimal surfaces} \label{sec:CptnessStable}

In this section we prove \cref{prop:StableImpliesCptness}, which is key to derive the area bounds claimed in the statement of \cref{thm:AreaBound}. The main idea of the proof is that, under the assumptions of the proposition, the stable minimal surface $\ms$  admits a (complete) conformal metric with nonnegative curvature and convex boundary. Moreover, at least one of the two inequalities is strict: either the curvature is strictly positive or the boundary is strictly convex, hence we expect the size of $\Sigma$ to be bounded as a consequence of this property.
In order to implement this heuristic idea, we employ several different techniques, mainly from \cite{FischerColbrie1985} and \cite{White1986} and, of course, from the same result in the closed case \cite{SchoenYau1983}, in the form proposed in \cite[Proposition 2.12]{Carlotto2015}.

\begin{proof} [Proof of \cref{prop:StableImpliesCptness}]

Suppose by contradiction that $\ms$ is noncompact\footnote{It is straightforward to note that the same proof goes through, to provide the bound for $\text{diam}(\ms)$ in the case when $\Sigma$ is a compact, properly embedded free boundary minimal surface: indeed in that case we know that $\ms$ is a (two-sided) disc, and it suffices to take $\omega$ the first eigenfunction of the Jacobi operator $\jac_{\ms}$ (subject to the usual oblique boundary condition).}. Then by standard arguments as in \cite{FischerColbrieSchoen1980} (cf. also \cite[Section 2.2.2]{Volkmann2015}), there exists a positive function $\omega$ on $\ms$ such that
\begin{equation} \label{eq:EllProb}
\begin{cases} 
\jac_\ms (\omega) = \lapl \omega + (\frac 12 \Scal_\smetric + \frac 12 \abs \A^2 - K)\omega = 0 & \text{on $\ms$}\\
\frac{\partial \omega}{\partial \eta} = -\II^{\partial\amb}(\nu,\nu) \omega &\text{on $\partial\ms$} \point
\end{cases}
\end{equation}
Recall that we can choose $\omega$ strictly positive in all $\ms$ (including on $\partial\ms$) by the strong maximum principle and the Hopf boundary point lemma.

Consider on $\ms$ the conformal change of metric $\tilde \smetric = \omega^2\smetric$ where $\smetric$ (which we also denote by $\scal\cdot\cdot$) is the metric on $\ms$ that is induced by the ambient metric on $\amb$.
Then we know (see e.g. \cite[pp. 126-127]{FischerColbrieSchoen1980}) that the curvature of $(\ms,\tilde \smetric)$ is given by
\begin{equation}\label{eq:ChangeSectional}
\begin{split}
\tilde K &= \omega^{-2}(K-\lapl \log \omega) = \omega^{-2}\left( K - \frac{\omega\lapl\omega - \abs{\grad \omega}^2 }{\omega^2} \right) \\
&=\omega^{-2}\left(\frac 12 \Scal_\smetric + \frac 12 \abs A^2\right) + \frac{\abs{\grad \omega}^2}{\omega^4} \ge \omega^{-2}\frac{\Scal_\smetric}2 \ge\omega^{-2}\frac{\varrho_0}2\ge 0\point
\end{split}
\end{equation}
Now let $\tau$ be a local choice of unit vector tangent to $\partial\ms$ in $(\ms, \smetric)$ and, as usual, let $\eta$ be the outward unit normal to $\partial\ms$.
After the conformal change of coordinates, $\tau/\omega$ and $\eta/\omega$ are an orthonormal basis. Hence we can compute the geodesic curvature of $\partial\ms$ in $(\ms,\tilde \smetric)$ as follows
\begin{equation}\label{eq:ChangeGeodCurv}
\begin{split}
\tilde k &= -\tilde \smetric({\tilde\cov_{\tau/\omega}\tau/\omega},{\eta/\omega}) = -\omega^2 \scal{\tilde\cov_{\tau/\omega}\tau/\omega}{\eta/\omega}
= -\omega^{-1}\scal{\tilde \cov_\tau \tau}\eta \\
&=-\omega^{-1}\left(\scal{\cov_\tau\tau}{\eta} - \omega^{-1} \DerParz{\omega}\eta\right) = -\omega^{-1}(\II^{\partial\amb}(\tau,\tau) + \II^{\partial\amb}(\nu,\nu)) \\ &= \omega^{-1}H^{\partial\amb}\ge \omega^{-1}\sigma_0\ge 0\point
\end{split}
\end{equation}
Therefore $(\ms,\tilde \smetric)$ is a surface with nonnegative Gaussian curvature and convex boundary and, as claimed above, either of the two functions $\tilde{K}$ or $\tilde k$ is strictly positive.

\begin{lemma}
The surface $(\ms,\tilde \smetric)$ with boundary is complete (as a metric space).
\end{lemma}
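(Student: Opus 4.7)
The plan is to argue by contradiction, extracting a $\tilde\smetric$-geodesic of finite length that cannot be extended and then showing that its existence is inconsistent with the hypotheses. Concretely, assume $(\ms,\tilde\smetric)$ is not complete, so there exists a unit-speed $\tilde\smetric$-geodesic $\gamma\colon [0,L)\to\ms$, with $L<\infty$, which cannot be prolonged to $t=L$. First I would check that $\gamma$ must diverge in the metric $\smetric$, i.e., $d_\smetric(\gamma(0),\gamma(t))\to\infty$ as $t\to L^{-}$: indeed, the identity $\partial\ms=\ms\cap\partial\amb$ prevents $\gamma$ from exiting $\ms$ through any `virtual' edge, so a failure of $\smetric$-divergence would, by $\smetric$-completeness of $\ms$, yield a $\smetric$-limit $p^{*}\in\ms$; but then $\omega(p^{*})>0$, together with the Robin boundary condition in \eqref{eq:EllProb}, would imply that $\tilde\smetric$ extends smoothly across $p^{*}$, allowing $\gamma$ to be prolonged past $L$, which is absurd.

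The second step is to obtain quantitative control on $\omega$ along $\gamma$. Since $\partial\ms\setminus\partial\amb=\emptyset$ and $\ms$ is stable and two-sided, an exhaustion of $\ms$ by compact edged subdomains (whose non-free edges can be pushed arbitrarily far) combined with \cref{thm:CurvEstStable} yields a global bound $\sup_\ms\abs{\A_\ms}\le C_0$. In particular the coefficient $\tfrac12\Scal_\smetric+\tfrac12\abs{\A}^2-K$ in the Jacobi equation is uniformly bounded on $\ms$; the interior Moser--Harnack inequality, together with its boundary counterpart tailored to the oblique Robin condition $\DerParz{\omega}{\eta}=-\II^{\partial\amb}(\nu,\nu)\omega$ (via a standard straightening of $\partial\ms$ and a reflection argument), then produces constants $r_0,C_H>0$ such that
\[
\sup_{B_{r_0}^\smetric(p)\cap\ms}\,\omega\le C_H\inf_{B_{r_0}^\smetric(p)\cap\ms}\,\omega \qquad\text{for every }p\in\ms.
\]
Partitioning the $\smetric$-arc of $\gamma$ into consecutive segments of $\smetric$-length $r_0/2$, and letting $q_k$ denote the initial point of the $k$-th segment, this Harnack bound forces $\omega\ge C_H^{-1}\omega(q_k)$ along segment $k$. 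Since $\gamma$ has unit $\tilde\smetric$-speed, the relation $\de\tilde s=\omega\,\de s_\smetric$ gives
\[
L\;=\;\int_0^L\omega\de s_\smetric\;\ge\;\frac{r_0}{2C_H}\sum_{k}\omega(q_k)\comma
\]
so the series $\sum_k\omega(q_k)$ is summable and in particular $\omega(q_k)\to 0$ along $\gamma$.

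The main obstacle, and the final step, is then to rule out this decay by exploiting the strict positivity of $\varrho_0$ or $\sigma_0$. The plan is to read the curvature bounds \eqref{eq:ChangeSectional}--\eqref{eq:ChangeGeodCurv} quantitatively along $\gamma$: as $\omega(q_k)\to 0$, either $\tilde K\ge \omega^{-2}\varrho_0/2$ or the boundary geodesic curvature $\tilde k\ge \omega^{-1}\sigma_0$ becomes arbitrarily large on the $\tilde\smetric$-segment $\gamma|_{[t_k,t_{k+1}]}$, of fixed $\tilde\smetric$-length (essentially $r_0/2$ in $\tilde\smetric$-units up to the factor $\omega$). A second-variation computation along this segment, of Bonnet--Myers type in the interior case and of focal-point-comparison type against the strictly convex boundary $\partial\ms\subset(\ms,\tilde\smetric)$ in the boundary case, then yields a conjugate or focal point at distance $\lesssim C\sqrt{\omega(q_k)}$ (or $\lesssim C\omega(q_k)$) along $\gamma$, contradicting the length-minimizing nature of a $\tilde\smetric$-geodesic between any two of its points. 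The delicate point in carrying out this last step is to localize the variation to a single segment and treat separately the cases in which $\gamma$ accumulates in the interior of $\ms$ or, alternatively, approaches $\partial\ms$, the latter requiring a focal comparison for curves meeting the boundary.
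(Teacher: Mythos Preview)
Your approach has two genuine gaps that prevent it from going through in the stated generality.

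First, the curvature estimate you invoke via \cref{thm:CurvEstStable} requires control on the ambient geometry (that theorem is stated for a \emph{compact} ambient manifold, and the Schoen-type estimates underlying it need at least a bound on the ambient curvature tensor). In \cref{prop:StableImpliesCptness} the only assumptions on $(\amb,\smetric)$ are lower bounds $\varrho_0$ and $\sigma_0$; no upper bound on $\abs{\mathrm{Rm}}$, no injectivity radius bound, no compactness. So you cannot conclude $\sup_\ms\abs{\A_\ms}\le C_0$, and consequently the uniform Harnack inequality for $\omega$ (with fixed scale $r_0$ and constant $C_H$) is unavailable. This undermines the whole second step.

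Second, even granting the Harnack inequality, the Bonnet--Myers step does not close. On each $\smetric$-segment of length $r_0/2$, the Harnack bound gives $\tilde K\ge C_H^{-2}\omega(q_k)^{-2}\varrho_0/2$, so conjugate points occur at $\tilde\smetric$-distance of order $C_H\,\omega(q_k)\sqrt{2/\varrho_0}$. But the $\tilde\smetric$-length of that same segment is of order $\omega(q_k)\,r_0/2$: the $\omega(q_k)$ factors cancel, and you are left with the fixed inequality $r_0\gtrsim C_H^2/\sqrt{\varrho_0}$, which there is no reason to expect. The decay $\omega(q_k)\to 0$ buys you nothing here. Chaining segments does not help either, because Harnack only controls $\omega$ up to $C_H$ per step and the accumulated constants blow up. (You seem aware this step is fragile, since you end by calling it ``delicate'' without resolving it.)

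The paper's argument avoids both issues by a different reduction. Since $\tilde k\ge 0$, the boundary $\partial\ms$ is convex in $(\ms,\tilde\smetric)$; hence a $\tilde\smetric$-minimizing geodesic starting at an interior point stays in the interior. One then constructs such a geodesic ray $\beta$ (as a limit of minimizers to $\partial B_R(x_0)\setminus\partial\ms$, using an auxiliary complete metric to guarantee existence), and the problem reduces to showing $\int_0^\infty\omega(\beta(t))\,dt=\infty$ for an \emph{interior} geodesic. This is exactly Fischer-Colbrie's computation in \cite{FischerColbrie1985}, which uses only the Jacobi equation \eqref{eq:EllProb} along $\beta$ and requires no global curvature bound.
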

\begin{proof}
The proof is similar to that of Theorem 1 in \cite{FischerColbrie1985} with some precautions to be taken when dealing with the boundary.
Consider a point $x_0\in \ms\setminus \partial\ms$ and let $B_R(x_0)\subset \ms$ be the intrinsic ball of center $x_0$ and radius $R>0$ with respect to the complete metric $\smetric$. Then $\{B_R(x_0)\}_{R>0}$ is an exhaustion of $\ms$.

Now consider the shortest geodesic $\beta_R$ in the metric $\tilde \smetric$ connecting $x_0$ to the closure of $\partial B_R(x_0)\setminus\partial\ms$, which exists by the following argument. Let us consider $\omega_R = \omega + \epsilon_R$ where $0\le\epsilon_R\le 1$ is a smooth function with $\epsilon_R = 0$ in $B_R(x_0)$ and $\epsilon_R = 1$ in $B_{R+1}(x_0)^c$. Then $\omega_R$ is bounded from below and thus the metric $\tilde \smetric_R \eqdef \omega_R^2 \smetric$ is complete on $\ms$. 
Based on the fact, and on the convexity of the boundary (that we just saw above), there exists a length-minimizing geodesic connecting $x_0$ to the closure of $\partial B_R(x_0)\setminus \partial\ms$ with respect to the metric $\tilde \smetric_R$. Since this geodesic is obviously contained in $B_R(x_0)$ it is also length-minimizing with respect to $\tilde \smetric$, since $\tilde \smetric$ coincides with $\tilde \smetric_R$ in $B_R(x_0)$.

Let us assume $\beta_R$ to be parametrized by arc length with respect to $\smetric$. By standard compactness arguments, we can find a sequence $R_i\to\infty$ such that $\beta_{R_i}$ locally smoothly converge to a curve $\beta\colon  \co 0{\infty} \to \ms$ such that $\beta(0)=x_0$, minimizing length with respect to $\tilde \smetric$ between any two of its points, and that is also parametrized by arc length with respect to $\smetric$.
Observe that $\beta$ cannot touch the boundary $\partial\ms$ since it starts at an interior point and $\partial\ms$ is convex.

To prove the completeness of $(\ms,\tilde \smetric)$ it is now sufficient to prove that $\beta$ has infinite length with respect to $\tilde \smetric$, that is to say
\[
\int_0^{\infty}\omega(\beta(t))\de t = \infty \semicolon
\]
indeed, by construction, any divergent ray starting from $x_0$ has length equal or bigger than $\beta$.
However, to prove this, we can apply exactly the same argument as in the first part of the proof of \cite[Theorem 1]{FischerColbrie1985}, since $\beta$ is actually contained in $\ms\setminus\partial\ms$. \end{proof}

Given the completeness of $(\ms,\tilde \smetric)$, we can apply the Gauss-Bonnet theorem on its metric balls as follows.
Fix $x_0\in \ms$ and let consider $\Omega_r \eqdef \tilde B_r(x_0)$, the metric ball of radius $r$ and center $x_0$ in metric $\tilde \smetric$.

\begin{remark}
We are now going to apply some results on geodesic balls of a Riemannian manifold taken from \cite[Section 4.4]{ShiohamaShioyaTanaka2003}, which are stated for complete manifolds without boundary.
The following observation clarifies why the same results hold in our case.

Given $r_0>0$, we can regard our manifold as a smooth subdomain of a complete manifold $(\check\ms,\check \smetric)$ without boundary (the extension depending on $r_0>0$), such that, thanks to the convexity of $\partial \Sigma$, for $r$ close to $r_0$ the set
$\tilde B_r(x_0)\subset\ms$ is the intersection of the metric ball $\check B_r(x_0)\subset \check \ms$ with respect to the metric $\check \smetric$ with $\ms$.
\end{remark}

Thanks to this remark, we can invoke a classical theorem by Hartman \cite{Hartman1964} (cf. also Theorem 4.4.1 in \cite{ShiohamaShioyaTanaka2003}) and obtain that the boundary of $\Omega_r$ is a piecewise smooth embedded closed curve for almost every $r>0$. 
Moreover the length $l(r)$ of $\partial\Omega_r$ is differentiable almost everywhere with derivative given by
\[
l'(r) = \int_{\partial\Omega_r\setminus\partial\ms} (\text{geodesic curvature of $\partial\Omega_r\setminus\partial\ms$}) + \sum (\text{exterior angles of $\Omega_r$});
\]
note that \[\limsup_{r\to\infty}\ l'(r)\ge 0\comma\] since $l$ only attains positive values.

Let us now pick a radius $r$ for which $\partial\Omega_r$ is a piecewise smooth embedded closed curve. Then, by the Gauss-Bonnet theorem on $\Omega_r$, it holds that
\begin{multline*}
l'(r) = \int_{\partial\Omega_r\setminus \partial\ms} (\text{geodesic curvature of $\partial\Omega_r\setminus \partial\ms$})+ \sum (\text{exterior angles of $\Omega_r$}) =\\ = 2\pi \chi (\Omega_r) - \int_{\Omega_r} \tilde K \de\tilde \Haus^2 - \int_{\partial\Omega_r\cap \partial\ms} \tilde k \de\tilde\Haus^1\comma
\end{multline*}
where $\tilde\Haus^1$ and $\tilde \Haus^2$ are the Hausdorff measures with respect to $\tilde \smetric$ in $\ms$.
Hence, taking the upper limit on both sides and using that $\tilde K, \tilde k\ge 0$, we can conclude that
\[
0\le 2\pi\limsup_{r\to\infty}\chi(\Omega_r) - \int_\ms \tilde K\de\tilde\Haus^2 - \int_{\partial\ms} \tilde k\de\tilde\Haus^1 \point
\]
Recalling that $\chi(\Omega_r)\le 1$, this implies that both the integrals $\int_\ms \tilde K\de\tilde\Haus^2$ and $\int_{\partial\ms}\tilde k\de\tilde\Haus^1$ are finite.

Now observe that $\d\tilde\Haus^1 = \omega\d\Haus^1$ and $\d\tilde\Haus^2 = \omega^2\d\Haus^2$ by definition of $\tilde \smetric$, where $\d\Haus^1$ and $\d\Haus^2$ are the one-dimensional and two-dimensional Hausdorff measures on $(\ms, \smetric)$.
Hence, applying \eqref{eq:ChangeSectional} and \eqref{eq:ChangeGeodCurv}, we obtain that
\[
\int_\ms\tilde K \de\tilde\Haus^2 \ge \int_\ms \frac{\varrho_0}{2\omega^2}\de\tilde\Haus^2 = \int_\ms \frac{\varrho_0}{2\omega^2}\omega^2\de\Haus^2 = \frac{\varrho_0}2 \Haus^2(\ms)
\]
and that
\[
\int_{\partial\ms} \tilde k\de\tilde\Haus^1  = \int_{\partial\ms} \frac{\sigma_0}\omega\de\tilde\Haus^1 = \int_{\partial\ms} \frac{\sigma_0}\omega \omega\de\Haus^1 = \sigma_0 \Haus^1(\partial\ms) \point
\]
In particular we deduce\footnote{Observe that this inequality in the compact case follows directly from the stability inequality applied to a constant function.} that
\begin{equation} \label{eq:GBFinal}
0< \frac{\varrho_0}2 \Haus^2(\ms) + \sigma_0 \Haus^1(\partial\ms) \le 2\pi\limsup_{r\to\infty}\chi(\Omega_r) \le 2\pi\point
\end{equation}
Note that this also proves that $\ms$ has well-defined Euler characteristic equal to $\chi(\ms) = \limsup_{r\to\infty}\chi(\Omega_r)=1$.

In particular we have obtained that $\partial\ms$ is compact if $\sigma_0>0$. 
However, since we might only have $\sigma_0=0$ and, in any case, a priori we are not in the position to invoke the isoperimetric inequality in \cite{White2009}, we actually need the following lemma.

\begin{lemma} \label{lem:DistBdryEst}
Let $\ms^2 \subset \amb^3$ be as in \cref{prop:StableImpliesCptness} and let $x_0\in \ms$. Then $x_0$ has distance from the boundary of $\ms$ bounded by a constant depending only on $\varrho_0 = \inf_\amb R_\smetric$ and $\sigma_0 = \inf_{\partial\amb} H^{\partial\amb}$. Namely it holds
\[
d_\ms(x_0,\partial\ms) \le \min\left\{ \frac{2\sqrt 2 \pi}{\sqrt{3\varrho_0}}, \frac{4}{3\sigma_0} \right\} \point
\]
\end{lemma}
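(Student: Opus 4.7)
The plan is to prove the two bounds in the minimum independently, each by applying the stability inequality of $\ms$ with a test function tailored to the relevant curvature hypothesis.

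For the scalar-curvature bound $d_\ms(x_0,\partial\ms)\leq 2\sqrt{2}\pi/\sqrt{3\varrho_0}$, the key observation is that for any $R<d_\ms(x_0,\partial\ms)$ the intrinsic geodesic ball $B_R(x_0)\subset\ms$ is entirely disjoint from $\partial\ms$. Hence a radial test function $f(x)=\phi(d_\ms(x_0,x))$ compactly supported inside $B_R(x_0)$ kills the boundary contribution $-\int_{\partial\ms}\II^{\partial\amb}(\nu,\nu)f^2$ in $Q^\ms(f,f)$, and the stability inequality takes exactly the form used in the closed case. The argument of \cite{Carlotto2015}*{Proposition 2.12} (building on \cite{SchoenYau1983}) then applies verbatim: combining the Gauss equation $\Ric_\amb(\nu,\nu)+\abs A^2 = \Scal_\amb/2+\abs A^2/2 - K$, the coarea formula, and Gauss--Bonnet on $B_R(x_0)$ (which is a topological disc by the conformal argument used in the main proof of \cref{prop:StableImpliesCptness}) produces an ODE obstruction that forces $R\leq 2\sqrt{2}\pi/\sqrt{3\varrho_0}$. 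Letting $R\uparrow d_\ms(x_0,\partial\ms)$ gives the desired estimate.

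For the mean-convexity bound $d_\ms(x_0,\partial\ms)\leq 4/(3\sigma_0)$, I would instead use a test function supported in a tubular neighbourhood of $\partial\ms$ that is nonzero on $\partial\ms$, so as to actually see the convexity of $\partial \amb$. Let $L=d_\ms(x_0,\partial\ms)$ and consider the profile $f(x)=\psi(d_\ms(x,\partial\ms))$ for an affine $\psi\colon[0,L]\to\co 0\infty$ with $\psi(L)=0$. Plugging this into the stability inequality and rewriting the bulk term via the Gauss equation yields an expression in which the boundary contribution becomes $\int_{\partial\ms}(H^{\partial\amb}-k_g)\psi(0)^2$, thanks to the identity $\II^{\partial\amb}(\nu,\nu)=k_g-H^{\partial\amb}$ along $\partial\ms$, which follows from the orthogonal meeting $\ms\pitchfork\partial\amb$ together with $H^{\partial\amb}=-\II^{\partial\amb}(\tau,\tau)-\II^{\partial\amb}(\nu,\nu)$ and $k_g=-\II^{\partial\amb}(\tau,\tau)$ (here $k_g$ denotes the geodesic curvature of $\partial\ms$ inside $\ms$ and $\tau$ the unit tangent to $\partial\ms$). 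Expanding $\int_\ms K\psi^2$ by the coarea formula and Gauss--Bonnet on the collar sublevel sets $\{d_\ms(\cdot,\partial\ms)\leq t\}$, and combining with the pointwise lower bound $H^{\partial\amb}\geq\sigma_0$ together with the global constraint $\int_{\partial\ms}k_g\leq 2\pi$ (coming from $\chi(\ms)=1$ and the nonnegativity of the total Gauss curvature in the conformal metric $\tilde\smetric$, as already established in the main part of the proof), the inequality reduces to a one-dimensional integral estimate whose optimisation over admissible affine profiles $\psi$ produces the explicit constant $4/(3\sigma_0)$.

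The main technical obstacle is that, unlike in the closed case, the intrinsic geodesic curvature $k_g$ of $\partial\ms$ in $\ms$ has no a priori sign even when $\partial\amb$ is strictly mean convex, because of the unsigned contribution of $\II^{\partial\amb}(\nu,\nu)$. Consequently the boundary term in $Q^\ms$ cannot be estimated pointwise by $\sigma_0 f^2$; instead one has to balance the pointwise lower bound $H^{\partial\amb}\geq\sigma_0$ against the global upper bound $\int_{\partial\ms}k_g\leq 2\pi$ from Gauss--Bonnet. This interplay of pointwise and integral information is the novelty with respect to the Schoen--Yau closed-case argument, and it is precisely what fixes the numerical value $4/3$ in the final constant.
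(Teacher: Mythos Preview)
Your approach differs substantially from the paper's, and the $\sigma_0$ half has a genuine gap. The paper does not use two-dimensional test functions on $\ms$; it works one-dimensionally. One takes a curve $\beta$ from $x_0$ to $\partial\ms$ minimizing the conformal length $\tilde l(\beta)=\int_\beta\omega$, where $\omega>0$ is the positive Jacobi solution with Robin condition $\partial_\eta\omega=-\II^{\partial\amb}(\nu,\nu)\omega$, and computes the second variation of $\tilde l$ along transverse directions $X=\psi\tau$. After an eigenfunction substitution and integration by parts this yields
\[
\tfrac{1}{2}\varrho_0\int_0^l\xi^2\de t + \sigma_0\,\xi(l)^2\;\le\;\tfrac{4}{3}\int_0^l(\xi')^2\de t \qquad\text{for all }\xi\text{ with }\xi(0)=0,
\]
and both bounds follow (take $\xi(t)=t$ for the $\sigma_0$ one). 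The essential mechanism for the appearance of $H^{\partial\amb}$ is that the second variation of length at the endpoint $\beta(l)\in\partial\ms$ produces $\II^{\partial\amb}(\tau,\tau)$, while the Robin condition on $\omega$ produces $\II^{\partial\amb}(\nu,\nu)$; their sum is $-H^{\partial\amb}\le-\sigma_0$, so the boundary contribution has the correct sign with no residual term.

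Your two-dimensional route cannot close in the same way. In the surface stability inequality the boundary term only sees $\II^{\partial\amb}(\nu,\nu)=k_g-H^{\partial\amb}$, so a leftover $\int_{\partial\ms}k_g$ must be controlled. You claim $\int_{\partial\ms}k_g\le 2\pi$ from ``$\chi(\ms)=1$ and nonnegativity of total Gauss curvature in $\tilde\smetric$'', but this conflates the two metrics: the conformal argument yields $\int_{\partial\ms}\tilde k\de\tilde\Haus^1\le 2\pi$ with $\tilde k=\omega^{-1}H^{\partial\amb}$, which is not the original geodesic curvature $k_g=-\II^{\partial\amb}(\tau,\tau)$. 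In the original metric $K$ has no sign, $\ms$ is (by the standing contradiction hypothesis) noncompact, and Gauss--Bonnet on the collars $\{d_\ms(\cdot,\partial\ms)\le t\}$ injects the uncontrolled Euler characteristics $\chi(C_t)$; there is no visible way these terms combine to give the constant $4/(3\sigma_0)$. (A smaller point: the argument of \cite[Proposition 2.12]{Carlotto2015} you invoke for the $\varrho_0$ bound is itself a one-dimensional second-variation-of-length argument along a $\tilde\smetric$-geodesic, not a radial coarea/Gauss--Bonnet computation on balls, and the claim that $B_R(x_0)$ is a topological disc is not supplied by the conformal discussion, which concerns balls in $\tilde\smetric$ rather than in $\smetric$.)
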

\begin{proof}
Let us consider the functional
\[
\tilde l(\beta) \eqdef \int_\beta\omega
\]
on the class of $W^{1,2}$-curves $\beta$ lying in $\ms$ and connecting $x_0$ to a point in $\partial\ms$. Note that here with $\int_\beta \omega$ we mean the integral with respect to the arc length (in metric $\smetric$) of $\beta$ and thus $\tilde l(\beta)$ is instead the length of the curve $\beta$ with respect to $\tilde \smetric$.
In particular let $\beta$ be a curve minimizing this functional. Observe that this curve is smooth and has finite length since $\omega$ is strictly positive on the closure of $\ms$.

We now compute the first and second variation of the functional $\tilde l$ along $\beta$. Without loss of generality, we can assume that $\beta$ is parametrized by arc length, that is $\abs{\beta'} = 1$, and it has length $l$.
Thus let us choose a variation $\alpha\colon  \oo {-\eps}{\eps}\times \cc 0l  \to \ms$ with $\alpha(0,\cdot) = \beta$, $\alpha(s,0) =x_0$ and $\alpha(s,1)\in\partial\ms$.
Computing the first variation we obtain\footnote{We always omit the dependence from $s$ and $t$ if clear from the context.}
\begin{align*}
\frac{\d}{\d s} \tilde l(\alpha(s, \cdot)) &= \frac{\d}{\d s} \int_0^l \omega(\alpha(s,t))\left|{\frac{\partial \alpha}{\partial t}(s,t)}\right| \de t\\
&= \int_0^l \d\omega(\alpha)\left[ \DerParz{\alpha}{s} \right]\left|{\DerParz{\alpha}{t}}\right| + \omega(\alpha) \frac{\scal{\cov_{\DerParz \alpha s} \DerParz\alpha t}{\DerParz\alpha t}}{\left|{\DerParz{\alpha}{t}}\right|} \de t\\
&= \int_0^l \Bigg( \d\omega(\alpha)\left[ \DerParz{\alpha}{s} \right]\left|{\DerParz{\alpha}{t}}\right| 
-\d\omega(\alpha)\left[ \DerParz\alpha t \right] \frac{\scal{ \DerParz\alpha s}{\DerParz\alpha t}}{\left|{\DerParz{\alpha}{t}}\right|}  +{}\\
&\pheq{}-\omega(\alpha) \frac{\scal{\DerParz\alpha s}{\cov_{\DerParz\alpha t}\DerParz\alpha t}}{\left|{\DerParz{\alpha}{t}}\right|} +\omega(\alpha) \frac{\scal{\DerParz\alpha s}{\DerParz\alpha t}  \scal{\cov_{\DerParz\alpha t}\DerParz\alpha t}{\DerParz\alpha t}}{\left|{\DerParz{\alpha}{t}}\right|^3} \Bigg) \de t  +{}\\
&\pheq {}+  \omega(\alpha(s,l)) \frac{\scal{ \DerParz\alpha s(s,l)}{\DerParz\alpha t(s,l)}}{\left|{\DerParz{\alpha}{t}}(s,l)\right|} \point
\end{align*}
In particular, evaluating at $s=0$ and setting $X(t)=\DerParz\alpha s(t,0)$, we have that
\[
\begin{split}
0 = \frac{\d}{\d s}\Big|_{s=0} \tilde l(\alpha(s,\cdot)) &= \int_0^l \Big(\d\omega(\beta)[X] - \d\omega(\beta)[\beta'] \scal{\beta'}{X} -\omega(\beta)\Big\langle{\cov_{\beta'}\beta'},{X}\Big\rangle +{}\\
&\pheq +\omega(\beta) \Big\langle{\cov_{\beta'}\beta'},{\beta'}\Big\rangle\scal{\beta'}{X} \Big)\de t + \omega(\beta(l)) \scal{\beta'(l)}{X(l)}\\
&= \int_0^l \Big(\d\omega(\beta)[X] - \d\omega(\beta)[\beta'] \scal{\beta'}{X} -\omega(\beta)\Big\langle{\cov_{\beta'}\beta'},{X}\Big\rangle\Big)\de t  +{}\\
&\pheq{} +\omega(\beta(l)) \scal{\beta'(l)}{X(l)}\comma
\end{split}
\]
which holds for all variations $\alpha$ as above. Note that we have used that $0 = \frac{\d}{\d t}\abs{\beta'}^2 = 2\scal{\cov_{\beta'}\beta'}{\beta'}$.

As a result, since $X(l)$ is tangent to $\partial\ms$, we have that
\[
\begin{cases}
\grad\omega(\beta) - \d\omega(\beta)[\beta'] \beta' -\omega(\beta)\cov_{\beta'}\beta' = 0 \\
\beta'(l)\perp \partial\ms \point
\end{cases}
\]

We can then compute the second variation for $s=0$, obtaining
\[
\begin{split}
\frac{\d^2}{\d s^2}\Big|_{s=0} &\tilde l(\alpha(s,\cdot))=\\ 
&=\int_0^l \Big\langle \cov_{\DerParz\alpha s}\Big|_{s=0} \bigg( 
\grad\omega(\alpha) \left|{\DerParz{\alpha}{t}}\right|^4 - \d\omega(\alpha)\left[{\DerParz{\alpha}{t}}\right] \left|{\DerParz{\alpha}{t}}\right|^2 {\DerParz{\alpha}{t}} +{}\\
&\pheq - \omega(\alpha) \left|{\DerParz{\alpha}{t}}\right|^2 \cov_{\DerParz\alpha t}\DerParz\alpha t + \omega(\alpha) \Big\langle{\cov_{\DerParz\alpha t}\DerParz\alpha t },{\DerParz\alpha t}\Big\rangle \DerParz\alpha t
\bigg),
X\Big\rangle \de t+{}\\
&\pheq + \omega(\beta(l)) \left( \Big\langle{\cov_{\DerParz\alpha s}\Big|_{s=0}\DerParz\alpha s (s,l)},{\beta'(l)}\Big\rangle + \Big\langle{X(l)},{\cov_{\DerParz\alpha s}\Big|_{s=0} \DerParz\alpha t(s,l)} \Big\rangle\right) \point
\end{split}
\]
Now assume that $X(t)$ is of the form $\psi(t)\tau$ where $\tau$ is a unit vector field orthogonal to $\beta'$. Then we have that
\[
\begin{split}
\frac{\d^2}{\d s^2}\Big|_{s=0} \tilde l(\alpha(s,\cdot)) &= \int_0^l (\psi^2\lapl \omega - \psi^2\omega'' - \psi\psi'\omega' - \psi\psi''\omega - \psi^2K\omega)\de t +{}\\
&\pheq{}+\omega(l)(\psi\psi' + \psi^2\II^{\partial\amb}(\tau,\tau))\\
&= \int_0^l \left(\psi^2\jac_\ms\omega - \frac 12\psi^2(R_\smetric + \abs \A^2) \omega - \psi^2\omega'' - \psi\psi'\omega' - \psi\psi''\omega\right)\de t  +{}\\
&\pheq{}+\omega(l)(\psi\psi'(l) + \psi^2(l)\II^{\partial\amb}(\tau,\tau))
\point
\end{split}
\]

Let $h\colon \cc 0l\to \R$ be the first (positive) eigenfunction for the eigenvalue problem
\[
\begin{cases}
\psi'' + \omega^{-1}\omega'\psi' + (\frac 12 R_\smetric + \frac 12 \abs \A^2- \omega^{-1}\jac_\ms\omega + \omega^{-1}\omega'') \psi + \lambda\psi = 0\\
\psi(0) = 0\\
\psi'(l) = -\II^{\partial\amb}(\tau,\tau)\psi(l) \point
\end{cases}
\]
Then, since $\frac{\d^2}{\d s^2}|_{s=0} \tilde{l}(\alpha(s,\cdot))\ge 0$ for all variations $\alpha$, we have in particular that
\begin{multline*}
 h^{-1}h'' + \omega^{-1}\omega'h^{-1}h' + \frac 12 \varrho_0 +\omega^{-1}\omega''\le \\
 \le h^{-1}h'' + \omega^{-1}\omega'h^{-1}h' + \frac 12 R_\smetric + \frac 12 \abs \A^2- \omega^{-1}\jac_\ms\omega + \omega^{-1}\omega''=-\lambda \le 0.
\end{multline*}
Therefore, multiplying this inequality by a test function $\xi\in C^\infty(\cc 0l)$ with $\xi(0)=0$ and integrating by parts, we obtain
\begingroup
\allowdisplaybreaks
\begin{align*}
0&\ge \int_0^l \left(h^{-1}h'' + \omega^{-1}\omega'h^{-1}h' + \frac 12 \varrho_0 + \omega^{-1}\omega'' \right)\xi^2 \de t\\
& = \int_0^l  \left( h^{-2}(h')^2+ \omega^{-2}(\omega')^2+ \omega^{-1}\omega'h^{-1}h'  \right)\xi^2 + \frac 12\varrho_0\xi^2 - 2 (h^{-1}h' + \omega^{-1}\omega')\xi\xi' \de t+ {}\\
&\pheq + (h^{-1}(l)h'(l)+\omega^{-1}(l)\omega'(l)) \xi^2(l) \\ 
& = \int_0^l  \frac 12\left( h^{-2}(h')^2+ \omega^{-2}(\omega')^2 \right)\xi^2+ \frac 12\varrho_0 \xi^2 + \frac 12\left(\frac{\d}{\d t}(\log(\omega h))\right)^2\xi^2 \de t+{}\\
&\pheq- 2\int_0^l \frac{\d}{\d t}(\log(\omega h))\xi\xi' \de t- (\II^{\partial\amb}(\tau,\tau)+\II^{\partial\amb}(\nu,\nu)) \xi^2(l) \\
& \ge \int_0^l  \frac 12\left( h^{-2}(h')^2+ \omega^{-2}(\omega')^2 \right)\xi^2+ \frac 12\varrho_0 \xi^2 + \frac 12\left(\frac{\d}{\d t}(\log(\omega h))\right)^2\xi^2 \de t+{} \\
&\pheq- 2\int_0^l \frac{\d}{\d t}(\log(\omega h))\xi\xi' \de t+ \sigma_0 \xi^2(l)\comma
\end{align*}
\endgroup
where we have used the boundary assumptions on $h$ and $\omega$, noting that $\omega'(l) = \scal{\grad\omega}{\beta'(l)} = \DerParz\omega\eta$, since $\beta'(l) = \eta(\beta(l))$ ($\beta'(l)$ is orthogonal to $\partial\ms$ and outward-pointing).
Furthermore, recall that $-\II^{\partial\amb}(\tau,\tau)-\II^{\partial\amb}(\nu,\nu) = H^{\partial\amb} \ge\sigma_0$.

We can then rely on the inequality
\[
2\left| \frac{\d}{\d t}(\log(\omega h))\xi\xi' \right| \le \frac 12\left( h^{-2}(h')^2+ \omega^{-2}(\omega')^2 \right)\xi^2+ \frac 12\left(\frac{\d}{\d t}(\log(\omega h))\right)^2\xi^2 + \frac 43(\xi')^2
\]
to conclude that
\[
\frac 12 \varrho_0\int_0^l \xi^2 \de t +  \sigma_0 \xi^2(l) \le \frac 43 \int_0^l (\xi')^2 \de t
\]
for all $\xi$ as above.
This proves that 
\[
l\le \delta_0 \eqdef \min \left\{\frac{2\sqrt 2\pi}{\sqrt{3\varrho_0}}, \frac 4{3\sigma_0}\right\} < \infty \comma
\]
as in \cite[Proposition 2.12]{Carlotto2015} for the first term and for example by taking $\xi(t) = t$ for the second term.
As a result we have proven that $\ms$ is contained in the $\delta_0$-neighborhood of $\partial\ms$ with respect to the intrinsic distance.
\end{proof}

 If $\varrho_0>0$ pick two points $x_0,y_0\in\ms\setminus\partial\ms$ and consider the curve $\beta$ minimizing $\tilde l(\beta)$ and connecting $x_0$ to $y_0$. Since $\beta$ is a minimizing curve in the metric $\tilde \smetric$ of $\ms$, then it cannot touch the convex boundary $\partial\ms$. Therefore we can follow the very same argument as in \cite[Proposition 2.12]{Carlotto2015} to prove that the length of $\beta$ in the metric $\smetric$ is bounded by $\frac{2\sqrt 2 \pi}{\sqrt{3\varrho_0}}$.

If $\sigma_0>0$ we conclude by observing that \cref{lem:DistBdryEst}, together with the compactness of $\partial\ms$, proves the compactness of $\ms$. The diameter estimate follows by simply combining equation \eqref{eq:GBFinal}, with \cref{lem:DistBdryEst}.
\end{proof}

%%%%%%%%%%%%%%%%%%%%%%%%%%%%%%%%%%%%%%%%%%%%%%%%%%%%%%%%%%%%%%%%
%%% Area bound for `positive geometry'
%%%%%%%%%%%%%%%%%%%%%%%%%%%%%%%%%%%%%%%%%%%%%%%%%%%%%%%%%%%%%%%%

\section{Ambient manifolds with `weakly positive geometry'} \label{sec:AreaBound}
We can now capitalize our efforts and present the proof of \cref{thm:AreaBound}, which crucially exploits \cref{prop:StableImpliesCptness}.

\begin{proof}[Proof of \cref{thm:AreaBound}]
	We only need to prove the area bound, since all other conclusions then follow from \cref{thm:TopFromArea}.
	Assume by contradiction that there exists a sequence of connected, compact, properly embedded, free boundary minimal surfaces $\Sigma_j^2\subset\amb$ with nonempty boundary and with $\ind(\ms_j)\le I$ and $\area(\ms_j)\to \infty$. Then, there exists a point $x_0\in\amb$ such that $\Haus^2(\ms_j\cap B_r(x_0))\to\infty$ for all $r>0$.
	
	Denote by $\lam$ the limit lamination given by \cref{cor:ExistenceBlowUpSetAndCurvatureEstimate} and consider the leaf $L\in\lam$ passing through $x_0$. Thanks to \cref{thm:RemSingLimLam}, $L$ must have stable universal cover (the other case is excluded since the area is diverging around $x_0$), which we can represent by a stable free boundary minimal immersion $\varphi\colon  \ms\to M$.
	By means of a variation of the pull-back construction presented e.g. in \cite[Section 6]{AmbrozioCarlottoSharp2018Compactness}, we can then reduce to applying \cref{prop:StableImpliesCptness} so to conclude that $\ms$ must be a disc, hence the map $\varphi$ must be an embedding (therefore $L$ is a stable, free boundary minimal surface in $(M,\smetric)$).
	
	Let then $\tilde\ms_j$ be the sequence obtained from $\ms_j$ by means of the surgery procedure, as per
	\cref{cor:Surgery}. Observe that the new sequence satisfies uniform curvature bounds, and still has diverging area. Also, since \cref{cor:Surgery} provides a uniform bound $\tilde{\kappa}(I)+1$ on the number of connected components of $\tilde\ms_j$, we can select and rename $\tilde\ms_j$ so that it is connected for every $j\in\mathbb{N}$, and the area concentrates near the point $x_0$.
	
	Now, fix $r>0$ sufficiently small, and assume to consider a connected component of the intersection $\tilde\ms_j\cap B_r(x_0)$ which smoothly converges with multiplicity one to $L\cap B_r(x_0)$. Since $L$ is a disc, a standard monodromy argument allows to conclude that, a posteriori, (the whole component) $\tilde\ms_j$ converges to $L$ smoothly with multiplicity one. From this fact, we derive a uniform bound for the areas of $\tilde\ms_j$, which is a contradiction.
\end{proof}

%%%%%%%%%%%%%%%%%%%%%%%%%%%%%%%%%%%%%%%%%%%%%%%%%
%%% I - Counterexample
%%%%%%%%%%%%%%%%%%%%%%%%%%%%%%%%%%%%%%%%%%%%%%%%%

\chapter[Noncompact families of FBMS of fixed topology] {Noncompact families of free \\boundary minimal surfaces of fixed topology}\label{sec:Counterexample}

%%%%%%%%%%%%%%%%%%%%%%%%%%%%%%%%%%%%%%%%%%%%%%%%%%%%%%%%%%%%%%%%
%%% Counterexamples
%%%%%%%%%%%%%%%%%%%%%%%%%%%%%%%%%%%%%%%%%%%%%%%%%%%%%%%%%%%%%%%%

\cref{thm:Counterexample} is proven via a suitable, rather explicit, gluing construction aimed at attaching some elementary blocks. In all cases we shall now list, the word \emph{block} refers to an ambient manifold together with a sequence of minimal surfaces (closed or having free boundary) satisfying additional requirements.

\section{The building blocks}

\subsection{Spiraling spheres}\label{subs:BlocksSphere}

\begin{lemma}[cf. {\cite[Proposition 8]{ColdingDeLellis2005}}] \label{lem:intblock}
On $S^3$ there exists a Riemannian metric $\smetric_0$ of positive scalar curvature such that $(S^3, \smetric_0)$ contains a sequence of minimal spheres $\Sigma_j$ with arbitrarily large area and index, and converging to a singular lamination $\lam$ whose singular set consists of exactly two points lying on a leaf which is a strictly stable (two-dimensional) sphere. Furthermore, the metric in question coincides with the unit round metric in a neighborhood of two distinct points $x_0$ and $y_0$, and on both those neighborhoods such minimal spheres can be completed to a local foliation by great spheres parallel at $x_0$ and $y_0$ respectively.
\end{lemma}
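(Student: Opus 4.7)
The plan is to adapt the construction of Colding--De Lellis (\cite{ColdingDeLellis2005}*{Proposition 8}) with only cosmetic modifications, taking care to leave two spherical caps untouched so that the resulting metric is round near two prescribed points $x_0$ and $y_0$. As a starting point, I would view $S^3$ as the doubly warped product $[-1,1]\times S^2$ with the poles collapsed, equipped with a metric of the form $\smetric = \d r^2 + \varphi(r)^2 g_{S^2}$, where $\varphi$ is smooth, even, positive on $(-1,1)$, vanishes to first order at $r=\pm 1$, and attains a nondegenerate minimum at $r=0$. A direct computation shows that $\{r=0\}$ is then a totally geodesic two-sphere whose Jacobi operator reduces, via $\varphi''(0)>0$ and the ambient Ricci contribution, to a strictly positive Schr\"odinger operator on $S^2$, so this leaf is strictly stable. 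Choosing $\varphi$ to agree with $\cos(\pi r/2)$ (i.e.\ the round warping factor of $(S^3,g_{\mathrm{round}})$) on two small intervals near $r=\pm 1$ and interpolating smoothly in between, one obtains a metric that is isometric to the round unit sphere on neighborhoods $U_{x_0},U_{y_0}$ of $x_0$ and $y_0$, with slices $\{r=\text{const}\}$ restricting there to a local foliation by (parallel) great spheres. Positive scalar curvature is achieved by keeping $\varphi$ within a $C^2$-neighborhood of $\cos(\pi r/2)$, since $R_\smetric = -4\varphi''/\varphi + 2(1-(\varphi')^2)/\varphi^2$ equals $6$ for the round choice and depends continuously on the profile.

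Next I would produce the spiraling minimal spheres. In the unperturbed warped metric above, the slices $\{r=c\}$ are minimal only at $c=0$, so to create an infinite family of minimal two-spheres accumulating on $\{r=0\}$ I would introduce a small, axially symmetric perturbation of $\smetric$ supported in the middle annular region $\{r\in[-1/2,1/2]\}\setminus(\{r=0\}\cup U_{x_0}\cup U_{y_0})$, designed exactly as in \cite{ColdingDeLellis2005}. The perturbation twists the product structure in such a way that, writing the axis joining $x_0$ and $y_0$ and intersecting $\{r=0\}$ at two antipodal points $p_{\pm}$, one obtains embedded minimal two-spheres $\Sigma_j$ that wind $j$ times around this axis before closing up. By construction, outside any fixed neighborhood of $\{p_+,p_-\}$ the surfaces $\Sigma_j$ converge smoothly to $\{r=0\}$, while near $p_{\pm}$ they spiral; thus the associated closed subset $\overline{\bigcup_j \Sigma_j}$ carries the structure of a singular minimal lamination $\mathcal L$ whose only singular set is $\{p_+,p_-\}$ and whose sole limit leaf is the strictly stable sphere $\{r=0\}$. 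Because the perturbation is supported away from $U_{x_0}\cup U_{y_0}$, each $\Sigma_j$ meets these round caps along portions of great spheres, which can be completed there to the local parallel great-sphere foliations required in the statement.

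Finally, I would verify the quantitative growth of area and Morse index. Since $\Sigma_j$ makes $j$ windings over a region of definite thickness in the $r$-direction, its area is bounded below by a constant multiple of $j$ and diverges as $j\to\infty$. The lower bound on the index is obtained by the standard disjoint-bump test-function argument: inside each of the $j$ windings one can construct a compactly supported function that is a negative direction for the Jacobi quadratic form of $\Sigma_j$, and these functions have pairwise disjoint supports, hence $\mathrm{ind}(\Sigma_j)\ge j$. The main technical obstacle is the simultaneous control of these three features: the twisting perturbation must be large enough to force the spiraling configurations to be minimal, small enough in $C^2$ to preserve $R_\smetric>0$ and the strict stability of $\{r=0\}$, and supported in the right region so as not to disturb the round metric near $x_0$ and $y_0$. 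This delicate balance is precisely the content of \cite{ColdingDeLellis2005}*{Proposition 8}, and is the step we import as is.
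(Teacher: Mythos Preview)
The paper does not give a proof of this lemma; it is imported directly from \cite[Proposition 8]{ColdingDeLellis2005}. Your sketch is therefore to be compared with the Colding--De Lellis construction, and there is a concrete geometric mismatch in the part you fill in yourself.

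You place $x_0,y_0$ at the poles $r=\pm 1$ of the warped product and assert that the slices $\{r=\text{const}\}$ near those poles form a ``local foliation by (parallel) great spheres.'' This is not correct: in the round region near a pole the level sets $\{r=c\}$ are small concentric geodesic spheres (with nonzero mean curvature and area tending to zero), not pieces of great spheres, and the pole itself lies on none of them---so the condition ``parallel at $x_0$'' (which, per the remark following the lemma, requires $x_0$ to lie on the central great-sphere leaf) cannot hold there. Relatedly, your outline does not say what the $\Sigma_j$ actually look like in those caps. If they enter the unperturbed region $\{|r|>1/2\}$, the maximum principle against the strictly mean-convex slices forces each $\Sigma_j$ through the pole, so the family meets there in a book rather than foliating a neighborhood; if they do not enter, there is nothing near $x_0,y_0$ to complete to a foliation at all.

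What is missing is the separate \emph{local modification} step by which Colding--De Lellis make the metric round near two specially chosen points on the family of minimal spheres (not at any warped-product pole), simultaneously turning the family there into a parallel great-sphere foliation. This step is spelled out for the analogous blocks in this paper (see the constructions behind \cref{lem:tori} and \cref{lem:fbpiece}), where the metric and the minimal family are built first and only afterwards deformed locally via the interpolation of \cite[Appendix B]{ColdingDeLellis2005}. Choosing $\varphi$ to equal the round profile near $r=\pm 1$ does not substitute for this: the round neighborhoods must be engineered at points where the $\Sigma_j$ already look like a transverse one-parameter family.
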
 

\begin{remark}
An important aspect is to clarify what we mean when writing that a local foliation is \emph{parallel at a point} (cf. \cite[Definition 7]{ColdingDeLellis2005}). Given $z\in\Omega$, an open subset of the round three-sphere, and $\mathscr{F}$, a local foliation of $\Omega$ by great spheres, we say that the foliation is parallel at $z\in F$ (for some $ F\in\mathscr{F}$, to be called the \emph{central leaf}) if 
\[
\sup_{w\in F} d(w, F')=d(z, F')
\]
for any $ F'\in\mathscr{F}$. 

The geometric picture this definition captures is easily described. Take $S^3\subset \R^4$ isometrically embedded as unit sphere and consider the foliation of $S^3\setminus \left\{ (0,0,0,\pm 1)\right\}$ consisting of the two-spheres obtained by slicing via vertical hyperplanes. Given any point $x=(x^1,x^2,x^3,0)$ and any open set $\Omega\ni x$ then the restriction of the above foliation to $\Omega$ is parallel at $x$, and the unique great sphere passing through $x$ is the central leaf.
\end{remark}

\begin{remark}\label{rmk:half}
For later use (cf. \cref{rmk:bmodel} and \cref{subs:BlocksAnnulus}), it is helpful to introduce some related terminology. We consider the set
$\Omega^+\eqdef \left\{x \in \Omega \st  x^4\geq 0 \right\}
$
and the corresponding foliation $\mathscr{F}^+$ obtained by considering the intersection of each leaf of $\mathscr{F}$ with the domain $\Omega^+$.
We will say that $\mathscr{F}^+$ is a foliation of $\Omega^+$ by half great spheres, parallel at $x$.
\end{remark}

\begin{remark}\label{rmk:index}
It follows from the construction (see, specifically, the second paragraph of \cite[pp. 30]{ColdingDeLellis2005}) that for any open set $\Omega$ containing the limit sphere, one has that $\ind(\Sigma_j\cap\Omega)\to\infty$ as $j\to\infty$, where it is understood that one only considers variations that are compactly supported in $\Omega$. Of course, it is also true that $\area({\Sigma_j\cap\Omega})\to\infty$ as $j\to\infty$.
\end{remark}

\subsection{Minimal tori}\label{subs:BlocksTorus}

We first provide the relevant statement and then mention the key points in the construction, to the extent this is needed in \cref{subs:BlocksAnnulus} below to produce, for any given $\notb>1$, a Riemannian metric of positive scalar curvature on the three-ball so that the resulting three-manifold contains a family of free boundary surfaces of genus $0$ and exactly $\notb$ boundary components.

\begin{lemma} [cf. {\cite[Lemma 12]{ColdingDeLellis2005}}]
\label{lem:tori} 
On $S^3$ there exists a Riemannian metric $\smetric_{1}$ of positive scalar curvature such that:
\begin{enumerate} [label={\normalfont(\arabic*)}]
\item {$(S^3, \smetric_{1})$ contains a family of minimal tori $\Pi_\theta$ parametrized by $\theta\in (-\theta_0,\theta_0)$, for some $\theta_0>0$;}
\item {the metric in question coincides with the unit round metric in a neighborhood of given points $x_1$ and $y_1$ and on both those neighborhoods such minimal tori provide a local foliation by great spheres parallel at $x_1$ and $y_1$, respectively.}	
\end{enumerate}	
\end{lemma}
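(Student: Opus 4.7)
The plan is to construct $\smetric_1$ as a carefully chosen perturbation of the round metric on $S^3$. Viewing $S^3$ minus two linked great circles in toroidal coordinates $(r, \phi_1, \phi_2) \in (0, \pi/2) \times S^1 \times S^1$, the round metric reads $dr^2 + \cos^2(r)\, d\phi_1^2 + \sin^2(r)\, d\phi_2^2$. My first step is to consider warped-product metrics of the form $dr^2 + f(r)^2 d\phi_1^2 + h(r)^2 d\phi_2^2$ and observe that the level torus $\{r = r_0\}$ is minimal for such a metric exactly when $(fh)'(r_0) = 0$. Taking $f, h$ smooth, satisfying the usual endpoint conditions at $r \in \{0, \pi/2\}$ so that the metric extends smoothly across the axial circles to all of $S^3$, and arranging that $(fh)' \equiv 0$ on a small interval $(\pi/4 - \theta_0, \pi/4 + \theta_0)$ around the Clifford leaf, produces the desired one-parameter family of minimal tori $\Pi_\theta = \{r = \pi/4 + \theta\}$. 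The perturbation from $(\cos r, \sin r)$ required can be taken $C^2$-arbitrarily small, so the explicit expression for the scalar curvature of the warped product (a linear combination of $f''/f$, $h''/h$ and $f'h'/(fh)$) gives $\Scal_{\smetric_1} > 0$ throughout by continuity from the round case (where $\Scal = 6$).

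The second step is to arrange that $\smetric_1$ coincides with the unit round metric in neighborhoods of two prescribed points $x_1, y_1$, and that the tori $\Pi_\theta$ restrict there to a local foliation by great spheres parallel at $x_j$. I would select $x_1, y_1$ on the unperturbed Clifford leaf in positions related by a convenient symmetry (say antipodal with respect to the natural isometric involution), then in each neighborhood introduce a diffeomorphism supported there which straightens the nearby pieces of $\Pi_\theta$ into actual pieces of parallel great spheres of the round metric; the tangency of the Clifford torus to a suitable great sphere at $x_j$ makes such first-order straightening possible via a diffeomorphism $C^1$-close to the identity. Pulling back the round metric via the inverse of this diffeomorphism then yields, in the relevant neighborhood, a metric that is round and with respect to which the tori are pieces of great spheres; the required parallelism-at-$x_j$ property follows by placing $x_j$ at the tangential contact point of the central leaf with its osculating great sphere, in accordance with the definition recalled after \cref{lem:intblock}.

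The main obstacle, as for the analogous constructions in \cite{ColdingDeLellis2005}, is the simultaneous compatibility of the three requirements in the transition region where the warped-product ansatz giving the minimal torus family must be smoothly interpolated with the round patches near $x_j$: one has to preserve minimality of \emph{every} $\Pi_\theta$ (not merely of the central Clifford leaf), maintain pointwise positivity of $\Scal_{\smetric_1}$, and respect the prescribed local great-sphere structure. I would implement this through a standard cutoff construction with transition region of arbitrarily small width, exploiting the fact that the required perturbations are $C^2$-small and that the neighborhoods of $x_1, y_1$ can be chosen far enough apart that the two local modifications can be specified independently. The upshot is a metric $\smetric_1$ of positive scalar curvature with the claimed family of minimal tori and prescribed round behavior near $x_1, y_1$.
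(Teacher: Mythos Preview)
Your approach is genuinely different from the paper's, and the interpolation step has a real gap.

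The paper does \emph{not} perturb the round metric. Following \cite{ColdingDeLellis2005} it starts from a metric on $S^3$ that, in a whole tubular neighbourhood of a central torus, is \emph{exactly} the Riemannian product $S^2\times S^1$; the minimal tori are then the totally geodesic tori $(\text{great circle through two fixed antipodal points of }S^2)\times S^1$, so in particular they all share two circles rather than forming a global foliation by level sets. The local round patches near $x_1,y_1$ are produced by the explicit interpolation between the product metric and the round metric described in Appendix~B of \cite{ColdingDeLellis2005}, which is engineered so that the tori remain minimal (indeed totally geodesic) throughout the transition. That explicit interpolation is the whole point of the construction; it is not a generic cutoff.

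Your proposal founders precisely there. You want a diffeomorphism $\psi$ near $x_1$ carrying the level tori $\{r=\text{const}\}$ to pieces of parallel great spheres and then to set the metric equal to $\psi^*g_{\mathrm{round}}$ on a small ball. But the Clifford-type torus and its tangent great sphere at $x_1$ differ at second order (the torus has nonvanishing second fundamental form in the round metric, the great sphere is totally geodesic), so $\psi$ can be taken $C^1$-close to the identity but \emph{not} $C^2$-close. Consequently $\psi^*g_{\mathrm{round}}$ differs from your warped product $g_W$ at order $C^1$ in the transition annulus, and a cutoff $\chi g_W+(1-\chi)\psi^*g_{\mathrm{round}}$ produces mean curvature of order one for the level tori there: minimality is not a convex condition on metrics, and shrinking the transition width only makes the derivatives of $\chi$ larger. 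The same issue threatens your positive-scalar-curvature claim, since $\Scal$ sees second derivatives of the interpolated metric. So the assertion that ``the required perturbations are $C^2$-small'' is false for $\psi^*g_{\mathrm{round}}$, and the ``standard cutoff'' you invoke does not resolve the obstacle you correctly identified. The paper's route via $S^2\times S^1$ and the Colding--De~Lellis interpolation is not incidental; it is exactly what makes the transition controllable.
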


The construction can be schematically described as follows:
\begin{itemize}
\item{On the (topological) product manifold $\cc{-\pi/2}{\pi/2}\times S^1\times S^1$, one can consider the equivalence relation $\sim$ given by
\[
(-\pi/2,p,q) \sim (-\pi/2,p,q') \quad \forall \ p \in S^1, \ \forall \ q,q'\in S^1\comma
\]
and
\[
(\pi/2,p,q)\sim (\pi/2, p', q) \quad \forall \ p, p' \in S^1,\  \forall \ q\in S^1\comma
\]
that corresponds to `collapsing vertical (resp. horizontal) fibers on $\left\{-\pi/2\right\}\times S^1\times S^1$ (resp. on $\left\{\pi/2\right\}\times S^1\times S^1$)'. Set $\tilde{M}=M/\sim $, this manifold can be endowed with a smooth Riemannian metric $\tilde{\smetric}$ so that $(\tilde{M},\tilde{\smetric})$ is a three-sphere of positive scalar curvature, and in a neighborhood of $\left\{0\right\}\times S^1\times S^1$ the metric is isometric to the Riemannian product of $S^2\times S^1$. Hence $(\tilde{M},\tilde{\smetric})$ contains a one-parameter family of \emph{totally geodesic} tori all having two circles in common, say $\left\{0\right\}\times \left\{0\right\}\times S^1$ and $\left\{0\right\}\times \left\{\pi\right\}\times S^1$ (where we are conveniently identifying the round unit $S^1$ with the interval $\cc{0}{2\pi}$ with endpoints attached).
}
\item{At this stage one can perform a \emph{local} modification of the metric $\tilde{\smetric}$ near the points $(0,\pi/2,0)$ and $(0,3\pi/2,0)$ so to make it round; with the family of tori being locally isometric to a family of standard great spheres in such neighborhoods. The construction is performed by explicitly interpolating between the metric of round $S^3$ and the product metric of $S^2\times S^1$.}	
\end{itemize}	

\begin{remark}\label{rmk:divide}
We observe that:
\begin{itemize}
\item {In $(S^3,\smetric_1)$ the surface 
\[
\tilde{\Sigma}\eqdef (\cc{-\pi/2}{\pi/2}\times\left\{\pi/2\right\}\times S^1 \cup \cc{-\pi/2}{\pi/2}\times\left\{3\pi/2\right\}\times S^1) /\sim
\]  
is a totally geodesic two-sphere, which divides the closed manifold into two (pairwise isometric) three-dimensional balls.}
\item {Denoted by $\Omega$ one of such balls, for any $\theta\in(-\theta_0,\theta_0)$ the intersection $\Xi_\theta\eqdef \Pi_\theta \cap \overline{\Omega}$ is checked to be a free boundary minimal annulus.}
\end{itemize}

Hence, the Riemannian manifold $(\overline{\Omega}, \smetric_1)$ contains a family of minimal annuli $\Xi_{\theta}$ parametrized by $\theta\in (-\theta_0,\theta_0)$, for some $\theta_0>0$; the metric $\smetric_1$ coincides with the unit round metric in a neighborhood of boundary points $x_1$ and $y_1$ (these points belonging to the two boundary circles of $\Xi_{0}$) and on both those neighborhoods $\partial\Omega$ is isometric to an equatorial two-sphere and the minimal annuli provide \emph{half of} a local foliation by great spheres parallel at $x_1$ and $y_1$, respectively.
\end{remark}

\subsection{Free boundary minimal discs}\label{subs:BlocksDisk}

Let us now, instead, move to the free boundary models. We prove this ancillary result.

\begin{lemma}\label{lem:fbpiece}For any $\eps\in (0,\pi/4)$ there exists a smooth Riemannian metric $\smetric_2=\smetric_2(\eps)$ on the closed ball $\overline{B^3}= (\cc{0}{1+\pi/2}\times S^2)/\sim$ with coordinates $(r,\omega)$ (where $\sim$ is the equivalence relation collapsing $\left\{1+\pi/2\right\}\times S^2$ to a point) having positive scalar curvature, and such that the following properties are satisfied:
\begin{enumerate} [label={\normalfont(\arabic*)}]
\item {$\smetric_2$ coincides with the unit round metric of $S^3$ on the domain $(\cc{1+\eps}{1+\pi/2}\times S^2)/\sim$, and coincides with the cylindrical metric on the domain $([0,1/3]\times S^2)/\sim$;}	
\item {the resulting manifold $(\overline{B^3}, \smetric_2)$ contains a one-parameter family $\Delta^{\euno}_\theta$ of embedded free boundary minimal discs (here $\theta\in (-\theta_0,\theta_0)$ for some $\theta_0>0$);}
\item {there exist two points $x^{\euno}_2, y^{\euno}_2$ with $r(x^{\euno}_2)= r(y^{\euno}_2)=1/2$ and $\omega(x^{\euno}_2)=-\omega(y^{\euno}_2)$, and open neighborhoods $\Omega(x^{\euno}_2)$, $\Omega(y^{\euno}_2)$ respectively, where $\smetric_2$ is round (isometric to domains of the unit round metric of $S^3$) and the minimal discs above restrict to give two local foliations by great spheres that are parallel at $x^{\euno}_2, y^{\euno}_2$ respectively.}
\end{enumerate}
\end{lemma}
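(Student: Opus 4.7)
The plan is to realize $(\overline{B^3},\smetric_2)$ as a rotationally symmetric warped product, providing the free-boundary backbone analogue of the construction underlying \cref{lem:intblock}. Parametrize $\overline{B^3}=(\cc{0}{1+\pi/2}\times S^2)/\sim$ (with $\sim$ collapsing $\{1+\pi/2\}\times S^2$ to a point) and seek
\[
\smetric_2 = dr^2+\phi(r)^2\, d\omega_{S^2}^2,
\]
where $d\omega_{S^2}^2$ is the unit round metric on $S^2$ and $\phi\colon\cc{0}{1+\pi/2}\to\co{0}{\infty}$ is smooth, positive on $\co{0}{1+\pi/2}$, with $\phi(1+\pi/2)=0$ and $\phi'(1+\pi/2)=-1$ (tip-smoothness). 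The scalar curvature reads $R_{\smetric_2}=2(1-(\phi')^2)/\phi^2 - 4\phi''/\phi$, and the task reduces to selecting $\phi$ so as to ensure $R_{\smetric_2}>0$ throughout while meeting several local prescriptions.

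I prescribe $\phi$ in four regimes: (a) $\phi\equiv 1$ on $\cc{0}{1/3}$ (cylindrical collar); (b) $\phi(r)=\sin(r+c)$ on a subinterval $J\subset\oo{1/3}{1+\eps}$ containing $1/2$ in its interior, with $c\eqdef\pi/2-1/2$, so that on $J\times S^2$ the metric $\smetric_2$ is isometric to a domain of the unit round $S^3$ via $\rho=r+c$, and $\phi(1/2)=1$; (c) $\phi(r)=\sin(1+\pi/2-r)$ on $\cc{1+\eps}{1+\pi/2}$ (the round spherical cap collapsing smoothly at the tip); and (d) smooth interpolations on the two transition intervals. Since the round-strip profile satisfies $\phi(1/2)=1$ with $\phi'(1/2)=0$ and decreases on either side of $1/2$, the left-transition requires $\phi$ to briefly exceed $1$ on a portion of $\oo{1/3}{1/2}$ in order to glue smoothly from $\phi\equiv 1$ to the rising branch of the round strip; this mild convex bump can be made arbitrarily small, and positivity of $R_{\smetric_2}$ is preserved because the inequality $2(1-(\phi')^2) > 4\phi\phi''$ holds by continuity whenever $|\phi'|$ and $\phi''$ are small enough. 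The right-transition, from $\phi=\sin(r+c)$ at the end of $J$ to $\phi=\sin(1+\pi/2-r)$ at $r=1+\eps$, is handled analogously.

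Fix antipodal $n,s\in S^2$, and for $\theta\in\oo{-\theta_0}{\theta_0}$ (with $\theta_0>0$ small) let $\gamma_\theta\subset S^2$ be the great circle through $n$ and $s$ obtained from a fixed $\gamma_0$ by rotation by angle $\theta$ about the $ns$-axis. Reflection of $S^2$ across $\gamma_\theta$ is an isometry that lifts (trivially on $r$) to an isometric involution of $(\overline{B^3},\smetric_2)$; its fixed locus
\[
\Delta^{\euno}_\theta\eqdef(\cc{0}{1+\pi/2}\times\gamma_\theta)/\sim
\]
is therefore embedded and totally geodesic. Since $\phi\equiv 1$ near $r=0$, the outward unit normal to $\{0\}\times S^2$ is $-\partial_r$, which is tangent to each $\Delta^{\euno}_\theta$ along $\{0\}\times\gamma_\theta$, so every $\Delta^{\euno}_\theta$ is a free boundary minimal disc. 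Set $x^{\euno}_2\eqdef(1/2,\omega^*)$ and $y^{\euno}_2\eqdef(1/2,-\omega^*)$, where $\omega^*\in\gamma_0$ is at geodesic distance $\pi/2$ from both $n$ and $s$ in $\gamma_0$ (equivalently, $\omega^*\perp n$ in $\R^3$). Then $r(x^{\euno}_2)=r(y^{\euno}_2)=1/2$ and $\omega(x^{\euno}_2)=-\omega(y^{\euno}_2)$. Small neighborhoods of $x^{\euno}_2, y^{\euno}_2$ sit inside $J\times S^2$ and are therefore isometric to open domains of the unit round $S^3$. Under the identification $\rho=r+c$, the points $x^{\euno}_2, y^{\euno}_2$ correspond to $(\omega^*,0), (-\omega^*,0)\in S^3\subset\R^4$, while each $\Delta^{\euno}_\theta$ restricts to a great sphere and the family $\{\Delta^{\euno}_\theta\}$ becomes a pencil of great spheres containing the common great circle $C\eqdef\mathrm{span}(n,e_4)\cap S^3$. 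Because $\omega^*\perp n$, an elementary computation in $\R^4$ shows that the $S^3$-distance from $w\in\Delta^{\euno}_0$ to $\Delta^{\euno}_\theta$ is $\arcsin(|\beta|\,|\sin\theta|)$ where $w=\alpha n+\beta\omega^*+\delta e_4$, and is therefore maximized at $w=\pm(\omega^*,0)$; this is precisely the parallelism at $x^{\euno}_2$ and $y^{\euno}_2$ required by \cref{lem:intblock}.

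The genuinely delicate aspect is the consistency of the four-regime prescription in Step 2 with $R_{\smetric_2}>0$ across the two transition intervals, owing to the fact that the convex bump on the left-transition breaks the naïve concavity argument. This is however an open condition, satisfied by an explicit construction employing low-degree polynomial patches with matching second-order data and sufficiently small amplitude; verification reduces to a direct, if tedious, continuity check on the scalar curvature.
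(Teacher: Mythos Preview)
Your approach is correct and takes a genuinely different route from the paper. The paper first caps the cylinder $I\times S^2$ with a round hemisphere of $S^3$ and invokes Miao's smoothing theorem to obtain a smooth warped product of positive scalar curvature that is cylindrical on $[0,1-\eps]$ and round on $[1+\eps,1+\pi/2]$; crucially, the metric at $r=1/2$ is still \emph{cylindrical} at this stage, and the paper then performs a \emph{separate local deformation} near $x_2^{\euno}$ and $y_2^{\euno}$ (following Appendix~B of \cite{ColdingDeLellis2005}) to make it round there, which breaks the global warped-product form. By contrast, you insert a round slab $J\times S^2$ containing $r=1/2$ directly into the warping profile $\phi$, so $\smetric_2$ remains a warped product throughout and no external smoothing or local-modification lemma is needed; the parallelism of the resulting pencil of great spheres at $(\pm\omega^*,0)$ then follows from your explicit $\R^4$ computation. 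Your route is more elementary and self-contained; the cost is that the two transition intervals must be handled by hand, whereas the paper offloads that work to the cited references. One minor point: your claim that $\phi$ must briefly exceed $1$ on the left transition is not actually forced---one can instead let $\phi$ dip slightly below $\cos\delta_1$ and return with positive slope, keeping $|\phi'|$ and $|\phi''|$ small throughout---but either construction yields positive scalar curvature for $\delta_1$ small, so this does not affect correctness.
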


\begin{proof}
	
Consider the cylinder $I\times S^2$ (where $I=[0,1]$ and the sphere $S^2$ is endowed with the standard unit round metric), fix two antipodal points $p, q\in S^2$ and consider the family of circles passing through $(0,p)$ and $(0,q)$. Said $\Gamma$ any of those circles, then $I\times \Gamma$ is a smooth, free boundary minimal surface $\Delta$ in $I\times S^2$ (with two boundary components). Now, cap off $I\times S^2$ by identifying its upper boundary with the boundary of a hemisphere in $S^3$ (with its unit round metric). The resulting metric $\hat{\smetric}$ is smooth away from the interface (and has, near it, the form of a warped product with the warping factor being a $C^{1,1}$ function of the distance coordinate from the interface). Furthermore, one can attach a two-dimensional half-hemisphere to $\Delta$ so to get a free boundary minimal surface, which we shall not rename, that is not yet smooth along the connecting circle but has a mild singularity there. 
Now, since (in the resulting three-manifold) the mean curvature of the interface on both sides match (for the interface is totally geodesic on both sides), we can apply the smoothing theorem by P. Miao \cite{Miao2002} in the simplest possible case (see in particular Theorem 1 therein) to get a smooth metric $\check{\smetric}$ on the closed three-dimensional ball, that coincides with $\hat{\smetric}$ away from a small neighborhood of the gluing interface, and whose scalar curvature is positive. In fact, by the very way the construction is defined, namely by fiberwise convolution, it follows that the regularized metric $\check{\smetric}$ takes the form of a warped product, i.e., we have (in the coordinates $(r,\omega)$ introduced in the statement)
\[
\check{\smetric}=\d r^2+f^2(r)\smetric_{S^2}
\]
where 
\[
f(r)=
\begin{cases}
1 & \text{if $r\in \cc{0}{1-\eps}$} \\
\sin(r-1+\pi/2) & \text{if $r\in \cc{1+\eps}{1+\pi/2}$}
\end{cases}
\]
for small $\eps>0$. The \emph{set} $\Delta$ is then a totally geodesic surface in this smooth Riemannian manifold, and since the metric has not been modified near the boundary component which has not been capped off one has that the free boundary condition is still fulfilled.

At this stage, let us consider the construction above starting from a one parameter family of circles $\Gamma_{\theta}$ (for $\theta$ varying in a subset of $S^1$, say $\theta\in (-\theta_0,\theta_0)$ for some $\theta_0>0$) passing through the points $(0,p), (0,q) \in \left\{0\right\} \times S^2$ and let $p', q' \in S^2$ be two antipodal points (on the great circle equidistant from $p$ and $q$) chosen so that $(0,p'), (0,q')$ have small neighborhoods that are foliated by such a family. Let $\Delta^{\euno}_{\theta}$ be the corresponding free boundary minimal surfaces.

Since the cylindrical metric has, so far, not been modified away from a small neighborhood of the gluing interface we can consider the points $x^{\euno}_2=(1/2,p')$, $y^{\euno}_2=(1/2,q')$, and open neighborhoods thereof (named $\Omega(x^{\euno}_2)$, $\Omega(y^{\euno}_2)$ respectively) that are foliated by the surfaces $\Delta^{\euno}_{\theta}$ as $\theta$ varies. Hence, we perform a local deformation of the metric in each of these neighborhoods to make it round (for which it is enough to follow, without any modification, the argument given in the second part of Appendix B of \cite{ColdingDeLellis2005}). Possibly renaming those open neighborhoods (to be taken slightly smaller than we had originally defined), the metric $\smetric_2=\smetric_2(\eps)$, resulting from such local modifications, satisfies all desired properties.
\end{proof}

\begin{remark}\label{rmk:bmodel}
Said $(M^{\euno}_2, \smetric^{\euno}_2)$ the manifold constructed in \cref{lem:fbpiece}, we will also need the following variant: the local modifications of the metric are performed at \emph{one interior point} (as above) and at \emph{one boundary point}. Thereby, one obtains a three-manifold $(M^{\euno}_{2,\text{bdry}}, \smetric^{\euno}_{2,\text{bdry}})$ of positive scalar curvature which contains a family of free boundary minimal discs, still denoted by ${\Delta^{\euno}_{\theta}}$, and having two points:
\begin{itemize}
\item $x^{\euno}_{2,\text{bdry}}$ (in the interior) having a full neighborhood where such minimal discs provide a local foliation by great spheres (parallel at $x^{\euno}_{2,\text{bdry}}$) and 
\item $y^{\euno}_{2,\text{bdry}}$ (on the boundary) having a half neighborhood where such minimal discs provide  a local foliation by half great spheres (parallel at $y^{\euno}_{2,\text{bdry}}$).
\end{itemize}
With respect to the notation employed in the proof above, one can take (for instance)
\[
x^{\euno}_{2,\text{bdry}}=(1/2,p')\comma \quad  y^{\euno}_{2,\text{bdry}}=(0,q') \point
\]
\end{remark}

\subsection{Free boundary minimal \texorpdfstring{$k$}{k}-annuli}\label{subs:BlocksAnnulus}

In order to prove \cref{thm:Counterexample}, we also need to construct metrics on the closed three-ball having positive scalar curvature and containing families of free boundary minimal surfaces of genus zero and any pre-assigned number $\notb$ of boundary components. So far, this has only been accomplished for $\notb=1$ (in \cref{subs:BlocksDisk}) and for $\notb=2$, as a result of \cref{rmk:divide}. To proceed we need the following \emph{free boundary analogue} of Lemma 11 in \cite{ColdingDeLellis2005}.

\begin{lemma}\label{lem:join}
Let $\delta>0$ and let $\Omega^+_1\eqdef  B_{\delta}(u_1)$, $\Omega^+_2\eqdef  B_{\delta}(u_2)$ be two (relatively) open half-balls of round unit hemispheres $N^+_1, N^+_2$, centered at boundary points $u_1, u_2$ respectively. Suppose that $\mathscr{F}^+_1$ and $\mathscr{F}^+_2$ are (locally defined) foliations of $\Omega_1$ and, respectively, $\Omega_2$ by half great spheres parallel at $u_1$ and, respectively, $u_2$. Then we can join those hemispheres to obtain a smooth Riemannian manifold with boundary (having the topology of $\overline{B^3}$) of positive scalar curvature, and (possibly by considering smaller neighborhoods) the leaves of $\mathscr{F}^+_1$ and $\mathscr{F}^+_2$ can pairwise be matched to obtain free boundary minimal surfaces in the ambient manifold.
\end{lemma}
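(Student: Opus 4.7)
The natural strategy is to reduce the construction to Lemma 11 of \cite{ColdingDeLellis2005} via a doubling argument. Each hemisphere $N^+_i$ is a three-ball whose boundary $\partial N^+_i$ is a totally geodesic great two-sphere inside a round $S^3$; doubling $N^+_i$ across its boundary therefore produces a smooth closed round three-sphere $N_i \eqdef D(N^+_i)$, endowed with a canonical isometric involution $\sigma_i$ whose fixed set is precisely $\partial N^+_i$. Since the leaves of $\mathscr{F}^+_i$ are half great spheres meeting $\partial N^+_i$ orthogonally, they double smoothly to full great spheres, and so $\mathscr{F}^+_i$ extends to a $\sigma_i$-invariant local foliation $\mathscr{F}_i$ of a neighborhood of $u_i$ in $N_i$ by great spheres parallel at $u_i$ in the sense of \cref{lem:intblock}.

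Next, I would carry out the Colding--De Lellis joining procedure of Lemma 11 on $(N_1, \smetric_{S^3})$ and $(N_2, \smetric_{S^3})$ in a $\mathbb Z / 2$-equivariant fashion, with respect to the combined involution $\sigma = \sigma_1 \sqcup \sigma_2$. Their bridge is, on its central portion, a warped product of the form $\d r^2 + f(r)^2 \smetric_{S^2}$ joining small round neighborhoods of $u_1$ and $u_2$ after excising small balls around them, and the matching minimal surfaces are cross-sections compatible with the two parallel great-sphere foliations. One arranges $u_1, u_2$ to lie on the foliation axes and chooses the involution on each factor to act as the reflection of the $S^2$ fiber across the great circle that contains the foliation axis direction (so that its fixed set near $u_i$ coincides with $\partial N^+_i$). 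Every ingredient of the Colding--De Lellis interpolation, namely the warping function $f$, the explicit smooth matching of the metric with the round three-spheres, and the selection of the minimal cross-sections, is either axially symmetric or can be chosen so, and is therefore invariant under such a reflection of the $S^2$ factor. The construction thus produces a closed Riemannian manifold $(M, \smetric)$ of positive scalar curvature carrying an isometric involution (still denoted $\sigma$) extending $\sigma_1$ and $\sigma_2$, together with a family of embedded closed minimal surfaces matching the leaves of $\mathscr{F}_1$ and $\mathscr{F}_2$, each of which is $\sigma$-invariant by construction.

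The last step is to take the quotient $M^+ \eqdef M / \sigma$. The fixed set of $\sigma$ consists of $\partial N^+_1 \setminus B(u_1)$ and $\partial N^+_2 \setminus B(u_2)$ (two two-spheres each with a small disc removed) together with the totally geodesic annulus in the bridge obtained as the product of the $r$-interval with the fixed great circle in the $S^2$ fiber; these glue smoothly along their boundary circles to form a single embedded two-sphere in $M$. Consequently $M^+$ is a smooth Riemannian three-manifold with boundary, of positive scalar curvature, whose boundary is the image of this fixed two-sphere, and $M^+$ has the topology of $\overline{B^3}$, since it is realized as the boundary connected sum of the two three-balls $N^+_1$ and $N^+_2$. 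The $\sigma$-invariant closed minimal surfaces in $M$ descend to embedded minimal surfaces in $M^+$ that are transverse to the fixed set of $\sigma$; any such $\sigma$-invariant minimal surface meets the fixed set orthogonally, hence the descended surfaces are free boundary minimal surfaces pairwise matching $\mathscr{F}^+_1$ and $\mathscr{F}^+_2$ as required.

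The main technical point is to verify that the Colding--De Lellis construction can genuinely be run equivariantly without affecting the positive scalar curvature estimates of Lemma 11 in \cite{ColdingDeLellis2005}. This reduces to checking that every pointwise ingredient (interpolation profiles, cutoff functions, curvature perturbations) can be chosen symmetric under the reflection of the $S^2$ fiber across a great circle, which is automatic for axially symmetric data. Should any specific step resist direct symmetrization, one can instead repeat the proof line by line in the half-space setting, using a warped product bridge of the form $\d r^2 + f(r)^2 \smetric_{S^2_+}$ between the half-balls $B(u_i) \cap N^+_i$, with the required scalar curvature positivity and boundary smoothness computations carrying over unchanged.
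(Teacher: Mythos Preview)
Your approach is correct and essentially the same as the paper's: double each hemisphere to a round $S^3$, apply Lemma 11 of \cite{ColdingDeLellis2005}, and then halve. The only difference is one of packaging. Rather than framing the second step as an equivariant construction followed by a quotient (which, as you note, requires checking that each ingredient of the Colding--De Lellis interpolation can be symmetrized), the paper simply observes that in the neck coordinates $(r,\phi,\theta)\in[-\tau,\tau]\times[0,\pi]\times S^1$ the matched minimal surfaces already have the form $\{(r,\sigma(r),\theta)\}$ for a graphical curve $\sigma$ independent of $\theta$; hence restricting to $\theta\in S^1_+$ and cutting along the resulting totally geodesic two-sphere $(\partial N_1^+\setminus\Omega_1^+)\cup([-\tau,\tau]\times[0,\pi]\times\partial S^1_+)\cup(\partial N_2^+\setminus\Omega_2^+)$ immediately yields the desired free boundary minimal surfaces, with no separate equivariance verification needed.
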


\begin{proof}
Let us double each of the two given manifolds with boundary to round spheres $N_1$ and $N_2$ and consider the families $\mathscr{F}_1, \mathscr{F}_2$ obtained by extending $\mathscr{F}^+_1$ and $\mathscr{F}^+_2$ in the obvious fashion (each leaf of $\mathscr{F}^+_i$ is extended to an equatorial two-sphere in $N_i$, for $i=1,2$). Since our construction is purely local, this does not affect the generality of the argument.

We know (by Lemma 11 in \cite{ColdingDeLellis2005}) that one can construct a connected sum $N_1\# N_2$ and pairwise match the leaves of the foliations $\mathscr{F}_1$ and $\mathscr{F}_2$. The connecting neck $N$, diffeomorphic to $\cc{-\tau}{\tau}\times S^2$, can be described by means of spherical coordinates
\[ 
(r,\phi,\theta) \in \cc{-\tau}{\tau}\times \cc 0\pi \times S^1
\]
and each minimal surface that we produce is, when restricted to the neck, the lift of a \emph{graphical} curve $\sigma\colon \cc{-\tau}{\tau}\to \cc{0}{\pi}$ solving a suitable ODE (that is nothing but a geodesic equation in a degenerate metric). In other words, each such minimal surface takes (in the neck) the form
\[
\Sigma\eqdef \left\{(r,\sigma(r),\theta) \st r\in \cc{-\tau}\tau \comma \theta\in S^1 \right\} \point
\] 
That being said, these coordinates can be chosen so that the condition $\theta\in S^1_+$ (for $S^1_+\subset S^1$ a half-circle that is fixed now and for all) identifies the half-sphere $\partial\Omega^+_1$, and the same conclusion holds true for $\partial\Omega^+_2$ as well. Now, defining 
$
N^+\eqdef \left\{(r,\phi,\theta) \in \cc{-\tau}{\tau}\times \cc 0\pi \times S^1_{+} \right\}
$,
the totally geodesic two-sphere $(\partial N^+_1 \setminus\Omega^+_1 ) \cup \partial_{\ell} N^+ \cup (\partial N^+_2\setminus \Omega^+_2), \text{where} \ \partial_{\ell} N^{+}=[{-\tau},{\tau}]\times \cc 0\pi \times \partial S^1_{+}$, divides the connected sum into two, mutually isometric balls. If we consider the one, among those, containing $N^+_1 \setminus\Omega^+_1$ (which we might call the \emph{upper copy}) it is straightforward to check that
\[
\Sigma^+\eqdef \left\{(r,\sigma(r),\theta) \st r\in\cc{-\tau}{\tau}\comma \theta\in S^1_+ \right\}
\] 
is indeed a free boundary minimal surface. This correspondence holds true for any closed minimal surface that is obtained by means of the \emph{wire-matching} argument by Colding--De Lellis; in particular, the matching is certainly possible for the central leaves and a family of nearby leaves, so the proof is complete.
\end{proof}

We shall now present the main, straightforward application of this gluing lemma.

\begin{lemma}
\label{lem:2model} Given any $\notb\geq 2$ there exists, 
on $\overline{B^3}$, a Riemannian metric $\smetric^{\ebi}_{2}$ of positive scalar curvature such that:
\begin{enumerate} [label={\normalfont(\arabic*)}]
\item {$(\overline{ B^3}, \smetric^{\ebi}_{2})$ contains a family of embedded, free boundary minimal surfaces $\Delta^{\ebi}_\theta$, having genus zero and $b$ boundary components, parametrized by $\theta\in (-\theta_0,\theta_0)$ for some $\theta_0>0$;}
\item {the metric in question coincides with the unit round metric in a neighborhood of given points $x^{\ebi}_2$ (in the interior) and $y^{\ebi}_2$ (on the boundary) and on both those neighborhoods such minimal annuli provide a local foliation by (half-)great spheres parallel at $x^{\ebi}_2$ and $y^{\ebi}_2$, respectively.}	
\end{enumerate}	
\end{lemma}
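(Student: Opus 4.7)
The plan is to proceed by induction on $\notb \geq 2$, using the joining procedure of \cref{lem:join} to successively attach annulus blocks. For the \emph{base case} $\notb = 2$, I would glue together two building blocks: the free boundary disc block $(M^{\euno}_{2,\text{bdry}}, \smetric^{\euno}_{2,\text{bdry}})$ from \cref{rmk:bmodel} and the annulus block $(\overline\Omega, \smetric_1)$ from \cref{rmk:divide}. Recall that the first block carries an interior distinguished point $x^{\euno}_{2,\text{bdry}}$ and a boundary distinguished point $y^{\euno}_{2,\text{bdry}}$, both with round neighborhoods where the minimal discs locally foliate by (full or half) great spheres parallel at those points. The second block carries two boundary distinguished points $x_1, y_1$, with neighborhoods foliated by half great spheres. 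Applying \cref{lem:join} at the pair $(y^{\euno}_{2,\text{bdry}}, x_1)$ produces a smooth Riemannian metric $\smetric^{\edue}_2$ of positive scalar curvature on $\overline{B^3}$ together with a one-parameter family of embedded free boundary minimal surfaces, each obtained by pairwise matching a disc $\Delta^{\euno}_\theta$ with an annulus $\Xi_\theta$ through a half-tube. The surviving distinguished points $x^{\edue}_2 \eqdef x^{\euno}_{2,\text{bdry}}$ (interior) and $y^{\edue}_2 \eqdef y_1$ (boundary) retain their round neighborhoods and parallel foliations, since \cref{lem:join} is purely local and can be arranged to be performed far from them.

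For the \emph{inductive step} $\notb \to \notb+1$, suppose the conclusion holds for some $\notb \geq 2$ with data $(\overline{B^3}, \smetric^{\ebi}_2, x^{\ebi}_2, y^{\ebi}_2)$ and family $\Delta^{\ebi}_\theta$. I would again apply \cref{lem:join}, this time gluing the current ball to a fresh copy of the annulus block $(\overline\Omega, \smetric_1)$ at the pair $(y^{\ebi}_2, x_1)$. The resulting ball inherits positive scalar curvature and contains a family of embedded free boundary minimal surfaces obtained by pairwise matching $\Delta^{\ebi}_\theta$ with $\Xi_\theta$ via a half-tube. One then sets $x^{(\notb+1)}_2 \eqdef x^{\ebi}_2$, which is still an interior distinguished point since the gluing does not affect a neighborhood of it, and $y^{(\notb+1)}_2 \eqdef y_1$, the remaining boundary distinguished point of the freshly glued annulus block.

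The main point to verify is the topological bookkeeping at each step, namely that each glued minimal surface is of genus $0$ with precisely one more boundary component than before. Inspecting the half-tube description $\Sigma^+ = \{(r,\sigma(r),\theta) \st r\in\cc{-\tau}{\tau},\ \theta\in S^1_+\}$ constructed in the proof of \cref{lem:join}, this tube is topologically a rectangle attached to each side along an arc on the inner half-sphere $\partial\Omega^+_i\setminus\partial M$, with its two remaining edges lying on $\partial M$. Removing the half-disc $\Delta^{\ebi}_\theta \cap \Omega^+$ from each side (which intersects the boundary $\partial M$ along a single arc belonging to one boundary component of the surface) and inserting this rectangle is precisely a \emph{boundary connected sum}: it merges one boundary circle from each side into a single new boundary circle, while adding $\chi=+1$ worth of topology, so that the number of boundary components decreases by $1$ and the genus is preserved. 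Applied to a genus $0$ surface with $\notb$ boundary components joined to an annulus, this yields a genus $0$ surface with $\notb+2-1=\notb+1$ boundary components, closing the induction. The main technical subtlety, and arguably the only genuine work left, is to verify that the wire-matching argument producing $\sigma(r)$ in \cref{lem:join} extends consistently to the whole inductively constructed family, but this follows from the fact that both foliations at the matched points are parallel at those points and the matching in each gluing is performed in an arbitrarily small neighborhood, so that the parameter interval $(-\theta_0,\theta_0)$ only needs to be shrunk finitely many times.
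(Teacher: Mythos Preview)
Your proposal is correct and follows essentially the same approach as the paper: the paper's proof is just two sentences, applying \cref{lem:join} to glue $(M^{\euno}_{2,\text{bdry}}, \smetric^{\euno}_{2,\text{bdry}})$ near $y^{\euno}_{2,\text{bdry}}$ to $(\overline{\Omega},\smetric_1)$ near $x_1$ for $\notb=2$, and then repeatedly joining further copies of $(\overline{\Omega},\smetric_1)$ for $\notb>2$. Your version fills in the topological bookkeeping that the paper leaves implicit; one small wording issue is that the boundary connected sum satisfies $\chi = \chi_1 + \chi_2 - 1$ (not $+1$), though your conclusion about genus and boundary count is correct.
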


\begin{proof}
When $\notb=2$ this follows by applying \cref{lem:join} to the blocks $(M^{\euno}_{2,\text{bdry}}, \smetric^{\euno}_{2,\text{bdry}})$ near $y^{\euno}_{2,\text{bdry}}$ (see \cref{rmk:bmodel}) and $(\overline{\Omega},\smetric_1)$ near $x_1$ (see \cref{rmk:divide}).
The case $\notb>2$ follows by simply repeating the operation, namely joining the resulting manifold with further copies of $(\overline{\Omega},\smetric_1)$. 
\end{proof}

\section{Construction of the counterexamples}

\begin{proof}[Proof of \cref{thm:Counterexample}] It is convenient to divide the argument in two steps.

{\vspace{2mm}\textbf{Step 1.}}
Given integers $\notg\geq 0$ and $\notb>0$ as in the statement, let us consider:
\begin{itemize}
\item {one copy of the Riemannian three-sphere of positive scalar curvature as per \cref{lem:intblock}, which we shall refer to as $(M_0,\smetric_0)$ and let $(x_0, y_0)$ be the pair of points mentioned in that statement;}
\item{$\notg$ copies of the Riemannian three-sphere of positive scalar curvature produced by \cref{lem:tori}, denoted by $(M^1_1, \smetric^1_1),\ldots ,(M^{\notg}_1, \smetric^{\notg}_1)$, and let $(x^1_1, y^1_1)$,\ldots,$(x_1^{\notg},y_1^{\notg})$ be the pairs of points mentioned in that statement, respectively; each $(M_1^{i}, \smetric_1^i)$ contains a family of minimal tori that provide a foliation by great two-spheres of suitably small neighborhoods of $x_1^i$ and $y_1^i$;}
\item {one copy of the Riemannian (closed) three-ball of positive scalar curvature and totally geodesic boundary produced via \cref{lem:2model}, which we shall refer to as $(M^{\ebi}_2, \smetric^{\ebi}_2)$ and let $(x^{\ebi}_2, y^{\ebi}_2)$ be the pair of points mentioned in that statement.}
\end{itemize}	

\begin{figure}[htpb]
\centering
%%%%%%%%%%%%%%%%%%%%%%%%%%%%%%%%%%%%%%%%%%%%%%%%%%%%%
%%% counterexample.tikz
%%%%%%%%%%%%%%%%%%%%%%%%%%%%%%%%%%%%%%%%%%%%%%%%%%%%%
\begin{tikzpicture}[scale=0.29]
\newcommand\connect{%
    \draw (-0.4,0.2) to[bend right=30] (0.4,0.2);
    \draw (-0.4,-0.2) to[bend left=30] (0.4,-0.2);
}
\pgfmathsetmacro\R{3}
\pgfmathsetmacro\a{0.6}
\pgfmathsetmacro\b{1.3}
\pgfmathsetmacro\c{1.2}

\begin{scope} [xshift=0]
\coordinate (C1) at (0,0);

\draw (C1) circle (\R);
\begin{scope}[blue]
\draw (-0.8,0.05) to[bend left=20] (0.8,0.05);
  \draw (-0.9,.1) to[bend right=25] (0.9,.1);
\draw (C1) ellipse ({\R-0.2} and {\R/3});
\end{scope}
\node at ($(C1)+(\a*\R,\b*\R)$) {$(\amb_1^1,\smetric_1^1)$};
\node at ($(C1)+(-\c*\R,0)$) {$x_1^1$};
\node at ($(C1)+(\c*\R,0)$) {$y_1^1$};
\end{scope}

\begin{scope} [xshift=140] \connect \end{scope}

\begin{scope} [xshift=280]
\coordinate (C1) at (0,0);
\draw (C1) circle (\R);
\begin{scope}[blue]
\draw (-0.8,0.05) to[bend left=20] (0.8,0.05);
\draw (-0.9,.1) to[bend right=25] (0.9,.1);
\draw (C1) ellipse ({\R-0.2} and {\R/3});
\end{scope}
\node at ($(C1)+(\a*\R,\b*\R)$) {$(\amb_1^2,\smetric_1^2)$};
\node at ($(C1)+(-\c*\R,0)$) {$x_1^2$};
\node at ($(C1)+(\c*\R,0)$) {$y_1^2$};
\end{scope}

\begin{scope} [xshift=420] \connect \end{scope}

\begin{scope} [xshift=560]
\coordinate (C1) at (0,0);
\draw (C1) circle (\R);
\begin{scope}[blue]
\draw plot [smooth cycle, tension = 0.55] coordinates {(-0.9*\R, 0) (-0.65*\R, 0.4*\R) (-0.5*\R, 0*\R) (0,0.5*\R) (0.3*\R, -0.1*\R) (0.6*\R, 0.3*\R) (0.95*\R, 0) (0.6*\R, 0*\R) (0.3*\R, -0.5*\R) (-0.05*\R,0.05*\R) (-0.5*\R, -0.4*\R) (-0.7*\R, 0.1*\R)};
\end{scope}
\node at ($(C1)+(\a*\R,\b*\R)$) {$(\amb_0,\smetric_0)$};
\node at ($(C1)+(-\c*\R,0)$) {$x_0$};
\node at ($(C1)+(\c*\R,0)$) {$y_0$};
\end{scope}

\begin{scope} [xshift=700] \connect \end{scope}

\begin{scope} [xshift=865]
\coordinate (C1) at (0,-0.5*\R);
\pgfmathsetmacro\r{1.1*\R}

\draw ($(C1)+(-\r,{0.4*\r})$) arc (180:0:{\r});
\draw ($(C1)+(-\r,{0.4*\r})$) -- ($(C1)+(-\r,0)$);
\draw ($(C1)+(\r,{0.4*\r})$) -- ($(C1)+(\r,0)$);

\draw ($(C1)+(-\r,0)$) arc (180:360:{\r} and {0.3*\r});
\draw[dashed] ($(C1)+(-\r,0)$) arc (180:0:{\r} and {0.3*\r});

\begin{scope}[blue]

\coordinate (C2) at (-0.7*\r,-0.5*\R);
\coordinate (C3) at (-0.25*\r,-0.5*\R);
\coordinate (C4) at (0.25*\r,-0.5*\R);
\coordinate (C5) at (0.7*\r,-0.5*\R);

\pgfmathsetmacro\l{0.15*\R}

\draw ($(C2)+(-\l,0)$) arc (180:360:{\l} and {0.3*\l});
\draw[dashed] ($(C2)+(-\l,0)$) arc (180:0:{\l} and {0.3*\l});

\draw ($(C3)+(-\l,0)$) arc (180:360:{\l} and {0.3*\l});
\draw[dashed] ($(C3)+(-\l,0)$) arc (180:0:{\l} and {0.3*\l});

\draw ($(C4)+(-\l,0)$) arc (180:360:{\l} and {0.3*\l});
\draw[dashed] ($(C4)+(-\l,0)$) arc (180:0:{\l} and {0.3*\l});

\draw ($(C5)+(-\l,0)$) arc (180:360:{\l} and {0.3*\l});
\draw[dashed] ($(C5)+(-\l,0)$) arc (180:0:{\l} and {0.3*\l});

\draw plot [smooth, tension = 1] coordinates {($(C2)+(\l,0)$)  (-0.48*\r,-0.35*\R) ($(C3)+(-\l,0)$) };

\draw plot [smooth, tension = 1] coordinates {($(C3)+(\l,0)$)  (0*\r,-0.35*\R) ($(C4)+(-\l,0)$) };

\draw plot [smooth, tension = 1] coordinates {($(C4)+(\l,0)$)  (0.48*\r,-0.35*\R) ($(C5)+(-\l,0)$) };

\draw plot [smooth, tension = 1] coordinates {($(C2)+(-\l,0)$)  (-0.5*\r,0.3*\R) (0.5*\r,0.3*\R) ($(C5)+(\l,0)$) };

\end{scope}
\node at ($(0,0.1)+(\a*\R,\b*\R)$) {$(\amb_2^{\scriptscriptstyle(4)},\smetric_2^{\scriptscriptstyle(4)})$};
\node at ($(-0.5,0)+(-\c*\R,0)$) {$x_2^{\scriptscriptstyle(4)}$};
\end{scope}

\begin{scope} [xshift=990,yshift=-40] \connect \end{scope}

\begin{scope} [xshift=1080,scale=1.2]
\coordinate (C1) at (0,-0.5*\R);
\pgfmathsetmacro\r{0.5*\R}

\draw ($(C1)+(-\r,0)$) arc (180:360:{\r} and {0.3*\r});
\draw[dashed] ($(C1)+(-\r,0)$) arc (180:0:{\r} and {0.3*\r});

\draw plot [smooth, tension = 0.75] coordinates {($(C1)+(-\r,0)$)  ($(C1)+(-0.9*\r,1*\R)$) ($(C1)+(0.2*\r,1.5*\R)$) ($(C1)+(1.8*\r,1.15*\R)$) ($(C1)+(3*\r,1.5*\R)$) ($(C1)+(3.9*\r,1.2*\R)$) ($(C1)+(3.5*\r,0.5*\R)$) ($(C1)+(2.5*\r,0.25*\R)$) ($(C1)+(1.3*\r,0.3*\R)$) ($(C1)+(\r,0)$)};

\begin{scope}[scale=0.8,rotate=10,yshift=20,xshift=10]
\draw (-0.8,0.05) to[bend left=20] (0.8,0.05);
  \draw (-0.9,.1) to[bend right=25] (0.9,.1);
\end{scope}

\begin{scope}[rotate=20,yshift=-10,xshift=110]
\draw (-0.8,0.05) to[bend left=20] (0.8,0.05);
  \draw (-0.9,.1) to[bend right=25] (0.9,.1);
\end{scope}

\node at (1.4*\R,-0.6*\R) {$(\amb,\smetric_T)$};

\end{scope}

\draw [decorate,decoration={brace,mirror,amplitude=10pt}] (-3,-4) -- (13,-4) node [black,midway,yshift=-20] {$(\amb',\smetric')$};

\draw [decorate,decoration={brace,mirror,amplitude=10pt}] (-3,-6.5) -- (23,-6.5) node [black,midway,yshift=-20] {$(\amb'',\smetric'')$};

\draw [decorate,decoration={brace,mirror,amplitude=10pt}] (-3,-9) -- (33,-9) node [black,midway,yshift=-20] {$(\amb''',\smetric''')$};
\end{tikzpicture}
%%%%%%%%%%%%%%%%%%%%%%%%%%%%%%%%%%%%%%%%%%%%%%%%%%%%%
%%%%%%%%%%%%%%%%%%%%%%%%%%%%%%%%%%%%%%%%%%%%%%%%%%%%%
\caption{Scheme of the construction in the proof of \cref{thm:Counterexample} for $\notg=2$ and $\notb=4$.} \label{fig:CountEx}
\end{figure}

Invoking Lemma 11 in \cite{ColdingDeLellis2005}, we proceed as follows (see \cref{fig:CountEx}).
We first attach $(M^i_1,\smetric^i_1)$, near $y_1^i$, to $(M^{i+1}_1, \smetric^{i+1}_1)$, near $x_1^{i+1}$, as $i$ varies from $1$ to $\notg-1$; let $(M',\smetric')$ be the resulting manifold (of positive scalar curvature and empty boundary); let $y'\in M'$ be the point corresponding to $y^{\notg}_1\in M^{\notg}_1$, thus with a neighborhood that is foliated by great spheres.
Similarly we attach $(M',\smetric')$, near $y'$, to $(M_0, \smetric_0)$ near $x_0$; let $(M'', \smetric'')$ be the resulting manifold (of positive scalar curvature and empty boundary) and let $y''\in M''$ be the point corresponding to $y_0\in M_0$, thus with a neighborhood that is foliated by great spheres.
Lastly, we attach $(M'', \smetric'')$, near $y''$, to $(M^{\ebi}_2, \smetric^{\ebi}_2)$, near $x^{\ebi}_2$; let $(M''', \smetric''')$ be the resulting manifold (of positive scalar curvature and totally geodesic boundary).
The manifold $(M''',\smetric''')$ is connected, has the topology of a ball, and it contains a sequence of free boundary minimal surfaces of genus $\notg$, exactly $\notb$ boundary components, that have unbounded area and Morse index (cf. \cref{rmk:index} above).

\vspace{2mm}\textbf{Step 2.}
Let $M$ be as in the statement: possibly applying Lemma C.1 in \cite{CarlottoLi2019} we can, and we shall, assume that this manifold comes endowed with a Riemannian metric of positive scalar curvature, and such that $\partial M$ is strictly mean convex. At that stage we know, by virtue of Theorem 5.7 in \cite{GromovLawson1980Spin}, that there exists a new metric $\smetric_{T}$ on $M$ still having positive scalar curvature but totally geodesic boundary (in fact this manifold can be \emph{doubled} to a smooth Riemannian manifold $(M_D, \smetric_D)$ without boundary).
Hence, we just observe that one can perform the Gromov--Lawson connected sum of $(M''',\smetric''')$ and $(M,\smetric_T)$ so to obtain a compact three-manifold with positive scalar curvature and totally geodesic boundary. 

The combination of the two steps above allows to complete the proof of the first assertion of \cref{thm:Counterexample}. Instead, to obtain strictly positive mean curvature, it suffices to have the previous construction followed by the perturbation argument given in Lemma C.1 in \cite{CarlottoLi2019}. 
\end{proof}

Concerning the claim in \cref{rmk:ConvexBC}, it suffices to observe that when $\notb=1$ one modifies the block $(M^{\euno}_2, \smetric^{\euno}_2)$ constructed in \cref{subs:BlocksDisk} as follows: by the statement of \cref{lem:fbpiece}, the metric we obtained equals that of the cylinder $I\times S^2$ near the boundary sphere, thus we can just consider a smooth warping factor $f_W\colon \cc 0\eps \times S^2\to \mathbb{R}$, only depending on the first coordinate, that is monotone decreasing and equals $f$ on $\cc{\eps/2}{\eps}$. For \emph{any} such choice the boundary is convex (umbilic, with constant mean curvature); furthermore if the derivative of $f_W$ is small enough then the scalar curvature of the ambient manifold shall still be positive.

%%%%%%%%%%%%%%%%%%%%%%%%%%%%%%%%%%%%%%%%%%%%%%%%%
%%% Part II
%%%%%%%%%%%%%%%%%%%%%%%%%%%%%%%%%%%%%%%%%%%%%%%%%

\part[Existence of FBMS via equivariant min-max]{Existence of free boundary minimal surfaces via equivariant min-max}

Given a three-dimensional Riemannian manifold with boundary and a finite group of orientation-preserving isometries of this manifold, we present a proof of the regularity of a free boundary minimal surface obtained via an equivariant min-max procedure à la Simon--Smith with $n$-parameters and we show that its equivariant index is bounded above by $n$.
Moreover, we employ these min-max techniques to show that the unit ball in $\R^3$ contains embedded free boundary minimal surfaces with connected boundary and arbitrary genus.

\begin{flushright}
{Based on \cite{CarlottoFranzSchulz2020} and \cite{Franz2021}}    
\end{flushright}

\label{part:Existence}

%%%%%%%%%%%%%%%%%%%%%%%%%%%%%%%%%%%%%%%%%%%%%%%%%
%%% II - Introduction
%%%%%%%%%%%%%%%%%%%%%%%%%%%%%%%%%%%%%%%%%%%%%%%%%

\chapter{Context and results} \label{intro:existence}

Over the last decade, the theory of free boundary minimal surfaces has been developed in various interesting directions, yet many fundamental questions remain open. One of the most basic ones regards the existence of free boundary minimal surfaces of any given topological type.
We refer the reader to the \hyperref[chpt:intro]{Introduction} and to Section 3 of \cite{Li2020} for a broad overview of existence results, including those in higher-dimensional Euclidean balls or in the general setting of compact Riemannian manifolds with boundary, while we will focus here on the special case of $B^3$. 

The existence question in this setting can be phrased as follows (see also \emph{Open Question 1} in the survey \cite{Li2020}): does the unit ball of $\R^3$ contain free boundary minimal surfaces of any given genus $g\geq 0$ and any number of boundary components $b\geq 1$? 
In spite of significant advances, which we survey below, the answer to such a question has proven to be quite elusive.

There is a well-known analogy between the free boundary theory for the unit ball $B^3\subset\R^3$ and the theory concerning closed minimal surfaces in the round three-dimensional sphere $S^3$: since for the latter Lawson proved in \cite{Lawson1970} that there indeed exist in the sphere embedded minimal surfaces of arbitrary genus, there might be some reason to lean towards an affirmative answer.
Unfortunately, an approach similar to the one adopted by Lawson does not work in the free boundary setting. 
For this reason, many different methods to prove existence of free boundary minimal surfaces in $B^3$ are being investigated.

\subsection{Free boundary minimal surfaces and Steklov problem} \label{method:Steklov}
The first nontrivial examples of (embedded) free boundary minimal surfaces, besides the flat disc and the critical catenoid, were obtained by Fraser and Schoen in \cite{FraserSchoen2016} (see also \cite{GirouardLagace2021}*{Appendix A}): these have genus zero and $b\geq 2$ boundary components and converge in the sense of varifolds to the sphere $S^2$ as $b\to\infty$. 
The proof is based on the relation between free boundary minimal surfaces and the extremes for Steklov eigenvalues (see \cite{FraserSchoen2011, FraserSchoen2013, FraserSchoen2016}). 
These methods have been pushed till proving by Matthiesen--Petrides in \cite{MatthiesenPetrides2020} that, for any compact surface $\Sigma^2$ with nonempty boundary, there exists a branched free boundary minimal immersion $\Sigma\to B^{m+1}$, for some $m\ge 2$. However, so far, one cannot control the dimension $m+1$ of the ambient ball, nor it is known that the map is an embedding, apart from the aformentioned case where $\Sigma$ as genus zero (see \cite{KarpukhinKokarevPolterovich2014}*{Corollary 1.3} and \cite{FraserSchoen2016}*{Proposition 8.1}).

\subsection{Gluing method} \label{method:Perturbation}
Another very fruitful procedure to construct free boundary minimal surface in $B^3$ is the \emph{gluing method}. Here is a list of the existence results obtained with this approach.
\begin{itemize}
\item Folha--Pacard--Zolotareva constructed in \cite{FolhaPacardZolotareva2017} examples having genus zero or one and any \emph{sufficiently large} number of boundary components, by doubling the equatorial disc.\vspace{-1ex}

\item Kapouleas and Li developed in \cite{KapouleasLi2017} methods to desingularize the formal union of a disc and a critical catenoid to obtain free boundary minimal surfaces in $B^3$ with large genus and exactly three boundary components. To get a pictorial description, this family can be regarded as a free boundary version of the Costa--Hoffman--Meeks minimal surfaces in $\R^3$.\vspace{-1ex}

\item In \cite{KapouleasWiygul2017}, Kapouleas--Wiygul constructed free boundary minimal surfaces in $B^3$ having connected boundary and prescribed high genus, by tripling the equatorial disc. In the introduction of the same article, they also describe a desingularization scheme to construct free boundary minimal surfaces having connected boundary and sufficiently large genus by regularising the intersection of two orthogonal discs in $B^3$ (by means of a suitable Scherk surface).\vspace{-1ex}

\item Kapouleas--McGrath, in \cite{KapouleasMcGrath2020}, generalized the \emph{linearized doubling approach}, which, roughly speaking, produces a minimal surface resembling two copies of a minimal surface (with certain conditions) joined by many small catenoidal bridges. 
Then, they applied this result to construct a family of free boundary minimal surfaces in $B^3$ with four boundary components and sufficiently large genus, by doubling the critical catenoid.\vspace{-1ex}

\item Kapouleas and Zou in \cite{KapouleasZou2021} constructed another family of free boundary minimal surfaces with genus zero and sufficiently large number of boundary components, converging in the sense of varifolds to the unit sphere $S^2$.\vspace{-1ex}

\item Recently, Carlotto--Schulz--Wiygul in \cite{CarlottoSchulzWiygul2022} constructed another family of free boundary minimal surfaces in $B^3$ with three boundary components and sufficiently large genus, which have the same symmetry group as the ones constructed in \cite{KapouleasLi2017}. In particular, this proves that the topology and the symmetry group does not determine a free boundary minimal surface in $B^3$ uniquely.
\end{itemize}

\subsection{Equivariant min-max theory} \label{method:EquivMinMax}

The third and last method that has been employed to construct free boundary minimal surfaces in $B^3$, and in which we focus in this thesis, is an \emph{equivariant min-max theory} à la Simon--Smith and Colding--De Lellis \cite{ColdingDeLellis2003}.
This procedure was first proposed by Pitts--Rubinstein in \cite{PittsRubinstein1988} and then developed in \cite{Ketover2016Equivariant} for the closed case and in \cite{Ketover2016FBMS} for the free boundary one, with the goal of constructing new families of minimal surfaces in $S^3$ and of free boundary minimal surfaces in $B^3$. 
Indeed, encoding the right symmetry group in the min-max procedure allows us to produce surfaces with fully controlled topology.
In particular, in \cite{Ketover2016FBMS}, Ketover proved the existence of free boundary minimal surfaces in $B^3$ with three boundary components and arbitrary large genus. In the same article, the author describes also the construction of examples with symmetry groups of the Platonic solids and of examples with genus zero and any number of boundary components, resembling the doubling of the equatorial disc.

The min-max theory developed by Simon--Smith and Colding--De Lellis differs from the one by Almgren--Pitts \cite{Almgren1965,Pitts1981} and Marques--Neves \cite{MarquesNeves2014,MarquesNeves2017} since, in the former one, stronger regularity and convergence conditions are imposed on the sweepouts (cf. \cite{MarquesNeves2014}*{Section 2.11}) and the ambient dimension is assumed to be equal to three. 
This makes the Simon--Smith approach not suitable for certain applications; indeed, note for example that the sweepout constructed by Marques and Neves in \cite{MarquesNeves2014} to prove the Willmore conjecture does not satisfy the Simon--Smith assumptions.

On the other hand, the stronger regularity makes proofs easier and enables to obtain a partial control on the topology of the resulting surface. This has been an advantage of the Simon--Smith approach when one wants to construct a minimal surface with a certain topological type. Indeed, the only way we know so far to obtain some information on the topology of the resulting minimal hypersurface in the Almgren--Pitts min-max theory is to first control the index and then use that the index bounds the topology (see for example \cite{AmbrozioCarlottoSharp2018Index,Maximo2018,Song2020}, and \cite{AmbrozioCarlottoSharp2018IndexFBMS} for the free boundary case), although such bounds (even when effective) are far from sharp.

Here, we outline a proof of the equivariant min-max theorem (see \cref{thm:EquivMinMax} below), recovering and unifying the methods in \cite{ColdingDeLellis2003,DeLellisPellandini2010,Li2015,Ketover2019,ColdingGabaiKetover2018,Ketover2016Equivariant,Ketover2016FBMS}.
We decided to do so because many arguments are fragmented in the literature and because, sometimes, we need variations of the known results.

Furthermore, we enhance the equivariant min-max theorem by obtaining also an upper bound on the \emph{equivariant index} of the surface resulting from this procedure. 
The proof consists in adapting the arguments adopted in the Almgren--Pitts setting by \cite{MarquesNeves2016} (see also \cite{MarquesNeves2021}, and \cite{GuangLiWangZhou2021} for the free boundary setting) to our `smoother' setting (see \cite{MarquesNeves2016}*{Section 1.3}), but with the additional presence of a group of isometries imposed to the min-max procedure.

Finally, we employ this machinery to obtain a new family of free boundary minimal surfaces in $B^3$, fully solving the existence problem in $B^3$ for the class of free boundary minimal surfaces with connected boundary. 

Let us now describe precisely the statement of the equivariant min-max theorem in a three-dimensional manifold with boundary. To this purpose, we first give some preliminary definitions.
Afterwards, we present our application to the construction of a new family of free boundary minimal surfaces in $B^3$.

\section{Setting and definitions}

Our setting is a three-dimensional compact Riemannian manifold $(M^3,\smetric)$ with strictly mean convex boundary and with a \emph{finite} group $G$ of orientation-preserving isometries.

\begin{remark}
We assume the manifold $M$ to have strictly mean convex boundary because we want it to satisfy property \hypP.
Here we ask for more, namely, the strict mean convexity of the boundary of the ambient manifold (which implies \hyperref[HypP]{$(\mathfrak{P})$}), because we want it to be an open property with respect to the class of smooth metrics. 
\end{remark}

\begin{remark} \label{rem:NotProperInLi}
Li in \cite{Li2015} studies the free boundary min-max problem without curvature assumptions on the ambient manifold $M$. However, as remarked in \cref{sec:Properness} (see also the introduction), there are problems in defining the Morse index for free boundary minimal surfaces that are not properly embedded in $M$. Moreover, the compactness results in \cite{AmbrozioCarlottoSharp2018Compactness}, which we use for example in \cref{sec:BumpyIsGGeneric}, need property \hyperref[HypP]{$(\mathfrak{P})$}.
That is another good reason why we require \hyperref[HypP]{$(\mathfrak{P})$} to hold.
\end{remark}

\begin{remark}
Here we assume the group $G$ to be finite, but it is due mentioning that the equivariant min-max theory has been developed also in the case of a compact \emph{connected} Lie group with cohomogeneity not $0$ or $2$ in \cite{Liu2021} and in the Almgren--Pitts setting with a compact Lie group of cohomogeneity greater or equal than $3$ in \cite{Wang2020}.
\end{remark}

In what follows, we denote by $I^n=[0,1]^n$ the product of $n$-copies of the unit interval. Moreover, when we say that $\Sigma^2$ is a \emph{surface} in an ambient manifold $M^3$, we mean that $\Sigma$ is smooth, complete and properly embedded, i.e., $\partial\Sigma=\Sigma\cap\partial M$.

\begin{definition} \label{def:GSweepout}
A family $\{\Sigma_t\}_{t\in I^n}$ of subsets of a three-dimensional Riemannian manifold $M$ is said to be a \emph{generalized family} of surfaces if there are a finite subset $T$ of $I^n$ and a finite set of points $P$ of $M$ such that:
\begin{enumerate}[label={\normalfont(\roman*)}]
    \item $t\mapsto\Sigma_t$ is continuous in the sense of varifolds;
    \item $\Sigma_t$ is a surface for every $t\not\in T\cup \partial I^n$;
    \item for $t\in T\setminus\partial I^n$, $\Sigma_t$ is a surface in $M\setminus P$.
\end{enumerate}

Moreover, the generalized family $\{\Sigma_t\}_{t\in I^n}$ is said to be \emph{smooth} if it holds also that:
\begin{enumerate}[resume*]
    \item $t\mapsto\Sigma_t$ is smooth for $t\not\in T\cup\partial I^n$;
    \item for $t\in T\setminus \partial I^n$, $\Sigma_s\to\Sigma_t$ smoothly in $M\setminus P$ as $s\to t$.
\end{enumerate}

Finally, we say that $\{\Sigma_t\}_{t\in I^n}$ is a \emph{$G$-sweepout} if it is a smooth generalized family of surfaces and $\Sigma_t$ is $G$-equivariant for all $t\in I^n$, i.e., $h(\Sigma_t) = \Sigma_t$ for all $h\in G$ and $t\in I^n$, and \emph{orientable} for all $t\in I^n\setminus T$.
\end{definition}

\begin{remark}
Slightly different variations of this definition are given in several references (see for example \cite{ColdingDeLellis2003}*{Definition 1.2}, \cite{DeLellisPellandini2010}*{Definition 0.5}, \cite{ColdingGabaiKetover2018}*{pp. 2836}). 
In order to have the regularity and the index bound on the resulting surface, it is sufficient to consider generalized families of surfaces, however smoothness is needed for the genus bound (see \cite{DeLellisPellandini2010}*{Definition 0.5}).
\end{remark}

\begin{remark}
The assumption that $\Sigma_t$ is orientable for all $t\in I^n\setminus T$ is needed to prove the genus bound \eqref{eq:GenusBound} below. Probably one could deal with nonorientable surfaces, but the equation would be modified and the proof more complicated. 
Note that the orientability is required also in the other references stating the genus bound, see for example \cite{DeLellisPellandini2010}*{Definition 0.5}, \cite{Li2015}*{Section 9}, \cite{Ketover2019}*{Section 1}.
\end{remark}

\begin{definition} \label{def:GEquivIsotopy}
We say that a smooth map $\Phi\colon I^n\times M\to M$ is a \emph{$G$-equivariant isotopy} if, for all $t\in I^n$, $\Phi_t\eqdef \Phi(t,\cdot)\colon M\to M$ is a diffeomorphism such that $\Phi_t\circ h = \Phi_t$ for all $h\in G$. We denote by $\Is_G(I^n, M)$ the set of all such $G$-equivariant isotopies. See also \cref{def:EquivariantObjects}.
\end{definition}

\begin{definition} \label{def:SaturationWidth}
Given a $G$-sweepout $\{\Sigma_t\}_{t\in I^n}$, we define its \emph{$G$-saturation} as
\[
\Pi \eqdef \{ \{\Phi_t(\Sigma_t)\}_{t\in I^n} \st \Phi\in \Is_G(I^n,M), \ \Phi_t=\id \ \forall t\in \partial I^n \}.
\]
Then the \emph{min-max width} of $\Pi$ is defined as
\[
W_\Pi \eqdef \inf_{\{\Lambda_t\}\in \Pi} \sup_{t\in I^n} \area(\Lambda_t).
\]
\end{definition}
\begin{remark}
Note that here we consider isotopies $\Phi$ such that $\Phi_t\colon M\to M$ is a diffeomorphism of the ambient manifold. This choice does not coincide with the definition in \cite{Li2015}, where \emph{outer isotopies} are considered (see also \cref{rem:NotProperInLi}).
\end{remark}
\begin{remark}
Note that the uniform bound on the `bad points' (the points belonging to $P$ in \cref{def:GSweepout}), which is required in \cite{ColdingDeLellis2003}*{Remark 1.3} (see also \cite{DeLellisPellandini2010}*{Remark 0.2}), is trivially satisfied for the $G$-saturation of a $G$-sweepout.
\end{remark}

\section{Equivariant min-max theorem with index bound}

\vspace{-1ex}

We obtain the following equivariant min-max theorem, which is the combination of regularity, genus bound and equivariant index bound, the latter being original work of this thesis. Indeed, the regularity and the genus bound were addressed in \cite{Ketover2016FBMS}*{Theorem 3.2} (see also \cite{Ketover2016Equivariant}*{Theorem 1.3}, where Ketover dealt with the closed case, for which many arguments are similar), as the result of many previous contributions.

\begin{theorem} \label{thm:EquivMinMax}
Let $(M^3, \smetric)$ be a three-dimensional Riemannian manifold with strictly mean convex boundary and let $G$ be a finite group of orientation-preserving isometries of $M$.
Let $\{\Sigma_t\}_{t\in I^n}$ be a $G$-sweepout and let $\Pi$ be its $G$-saturation. Assume that\vspace{-1ex}
\[
W_\Pi > \sup_{t\in \partial I^n} \area(\Sigma_t).
\]

Then, there exist a minimizing sequence $\{\{\Sigma^j_t\}_{t\in I^n}\}_{j\in\N}\subset \Pi$ (which means that $\lim_{j\to\infty} \sup_{t\in I^n} \area(\Sigma_t^j) = W_\Pi$) and a sequence $\{t_j\}_{j\in\N}\subset I^n$ such that $\{\Sigma^j = \Sigma^j_{t_j}\}_{j\in\N}$ is a min-max sequence (i.e., $\lim_{j\to\infty} \area(\Sigma^j) = W_\Pi$) converging in the sense of varifolds to $\limitv\eqdef \sum_{i=1}^k m_i\limitv_i$, where $\limitv_i$ are disjoint free boundary minimal surfaces and $m_i$ are positive integers.
Moreover, the $G$-equivariant index of the support $\operatorname{spt}(\limitv) = \bigcup_{i=1}^k \limitv_i$ of $\limitv$ is less or equal than $n$, namely \vspace{-1ex}
\begin{equation} \label{eq:IndexBound}
\ind_G(\operatorname{spt}(\Xi)) = \sum_{i=1}^k \ind_G(\Xi_i) \le n,\vspace{-2ex}
\end{equation}
and the following genus bound holds
\begin{equation} \label{eq:GenusBound}
\sum_{i\in\mathcal O} \genus(\limitv_i) + \frac 12 \sum_{i\in \mathcal N} (\genus(\limitv_i)-1) \le \liminf_{j\to+\infty}\liminf_{\substack{\tau\to t_j\\\tau\not\in T}} \genus(\Sigma^j_{\tau}), 
\end{equation}\vspace{-1ex}
where $\mathcal O$ is the set of indices $i\in\{1,\ldots,k\}$ such that $\Xi_i$ is orientable and $\mathcal N$ is the set of indices $i\in\{1,\ldots,k\}$ such that $\Xi_i$ is nonorientable.
\end{theorem}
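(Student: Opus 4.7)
The plan is to decouple the three conclusions and handle them in order: varifold existence and regularity (following the Simon--Smith/Colding--De Lellis scheme in its free boundary and equivariant variants), genus bound (via Simon's Lifting Lemma, as in \cite{DeLellisPellandini2010,Ketover2019}), and finally the equivariant index bound, which is the main novel part and adapts the Almgren--Pitts argument of \cite{MarquesNeves2016}. First, starting from any minimizing sequence $\{\{\Sigma^j_t\}_{t\in I^n}\}_{j\in\N}\subset\Pi$, I would run a $G$-equivariant pull-tight procedure: associate to each varifold $V$ in $M$ that is not stationary with respect to $\vf(M)^G$ (the $G$-invariant vector fields tangent to $\partial M$) a choice of $G$-equivariant deformation strictly decreasing area, then glue these choices continuously in $t$ via an equivariant partition of unity on the space of varifolds, using that the $G$-averaging operator $X\mapsto |G|^{-1}\sum_{h\in G}h_*X$ preserves the pull-tight property. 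This yields a new minimizing sequence in $\Pi$ along which every convergent subsequence of slices with area approaching $W_\Pi$ produces a varifold $\Xi$ stationary in $\vf(M)^G$; the strict mean convexity of $\partial M$ supplies property \hypP{} and, via the standard reflection, ensures that $\Xi$ is fully stationary with respect to $\vf(M)$ and hence free boundary.

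Second, regularity of $\operatorname{spt}(\Xi)$ as an embedded $G$-equivariant free boundary minimal surface (with integer multiplicities $m_i$) comes from the combined $G$-equivariant almost-minimizing property in small $G$-orbits of annuli: at a suitable min-max slice $\Sigma^j=\Sigma^j_{t_j}$ one verifies the equivariant analogue of the Pitts replacement condition in each $G$-saturated annulus, either at an interior point (cf.~\cite{ColdingDeLellis2003,Ketover2016Equivariant}) or at a boundary point (cf.~\cite{Li2015,Ketover2016FBMS}), and then invokes the corresponding interior and free boundary regularity theorems inside a fundamental domain; the $G$-action propagates regularity to the full orbit. The genus bound \eqref{eq:GenusBound} follows from the smoothness of the sweepout by applying Simon's Lifting Lemma in the free boundary setting to the convergence $\Sigma^j_{\tau}\to\Xi$: handles in a leaf $\Xi_i$ lift to handles in nearby smooth orientable slices; for nonorientable $\Xi_i$ the orientability of $\Sigma^j_\tau$ forces the lifts to cover $\Xi_i$ two-to-one, which produces the factor $\tfrac12(\genus(\Xi_i)-1)$.

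The hardest point is the equivariant index bound $\ind_G(\operatorname{spt}(\Xi))\le n$, which I would prove by contradiction following the strategy of \cite{MarquesNeves2016}*{Theorem A} adapted to the smooth, equivariant setting. Assuming $\ind_G(\operatorname{spt}(\Xi))\geq n+1$, the quadratic form $Q^{\operatorname{spt}(\Xi)}$ is negative definite on an $(n+1)$-dimensional subspace $V\subset \Gamma(N\operatorname{spt}(\Xi))^G$ of $G$-equivariant normal sections; extending these sections to $G$-equivariant compactly supported ambient vector fields and integrating their flows, one constructs a smooth map $F\colon\bar B^{n+1}\to \Is_G(M)$ with $F(0)=\id$ and such that the area of $F(v)(\Xi')$ decreases strictly (and quadratically in $|v|$) for every surface $\Xi'$ in a fixed varifold neighborhood $\mathcal U$ of $\operatorname{spt}(\Xi)$, uniformly in $v\in\bar B^{n+1}\setminus\{0\}$. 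The main technical obstacle is to promote this \emph{pointwise} area-decreasing family into a \emph{global} competitor sweepout in $\Pi$: one cuts off $F$ away from the times where $\Sigma^j_t\in\mathcal U$, parametrizes the remaining freedom by a deformation retract argument on $B^{n+1}$, and uses the $G$-equivariance of $F$ together with a degree/linking argument (as in \cite{MarquesNeves2016}*{Section 1.5}) to show that no choice of parameter can avoid lowering the maximum area uniformly below $W_\Pi$. Since every intermediate sweepout is $G$-equivariant and coincides with $\Sigma_t$ on $\partial I^n$ by construction, this contradicts the definition of $W_\Pi$ and completes the proof.
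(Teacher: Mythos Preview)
Your outline for the existence, regularity, and genus bound is essentially the paper's approach: equivariant pull-tight (where $G$-stationarity implies stationarity by Palais' symmetric criticality rather than any ``reflection''), $G$-almost minimizing in $G$-orbits of annuli, replacements, and Simon's Lifting Lemma. That part is fine.

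The index bound argument, however, has a real gap. You claim that the family $F\colon\bar B^{n+1}\to\Is_G(M)$ built from $n+1$ negative $G$-equivariant directions, combined with a degree argument, produces competitor sweepouts whose maximum area is \emph{uniformly below} $W_\Pi$, contradicting the definition of width. This is not what the deformation gives you. The map $F$ only decreases area for slices $\Sigma^j_t$ lying in a small varifold neighborhood $\mathcal U$ of $\Xi$; slices far from $\Xi$ are untouched. The dimensional/transversality argument (your degree step) shows that the parameter $v_j(t)$ can be chosen to never sit at the unique maximum $m(\Sigma^j_t)$ of $A^{\Sigma^j_t}$ on $\bar B^{n+1}$, which lets you push slices in $\mathcal U$ \emph{out} of a smaller ball $\bar B^{\vard}_\varepsilon(\Xi)$ without increasing their area beyond $\sup_t\area(\Sigma^j_t)+o(1)$. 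The outcome is a new \emph{minimizing} sequence whose critical set misses $\bar B^{\vard}_\varepsilon(\Xi)$, not a sweepout with $\sup_t\area<W_\Pi$. In particular, the new sequence may well converge to a \emph{different} high-index surface $\Xi'$, and your contradiction evaporates.

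The paper closes this gap by a two-step reduction you are missing. First, one perturbs to a $G$-equivariantly bumpy metric, so that the set of $G$-equivariant free boundary minimal surfaces (with multiplicity) of area $W_\Pi$ and $G$-index $\ge n+1$ is at most countable; one then iterates the Deformation Theorem (\cref{thm:Deformation}) to produce a minimizing sequence whose critical set avoids \emph{all} of them, and applies the regularity/genus machinery to \emph{that} sequence. Second, for a general metric one approximates by bumpy metrics, uses the min-max outputs $\Xi_k$ with $\ind_G(\operatorname{spt}\Xi_k)\le n$, and passes to the limit via the compactness of $G$-equivariant free boundary minimal surfaces with bounded $G$-index and area (\cref{thm:ConvBoundGIndex}), which transfers the index bound to the limit $\Xi$. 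Without the countability (bumpy) step and the iteration, your direct contradiction cannot be completed.
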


The precise definition of equivariant index is given in \cref{sec:EquivSpectrum}, but it is what one would expect, i.e., the maximal dimension of a linear subspace of the \emph{$G$-equivariant vector fields} where the second variation of the area functional is negative definite.

\begin{remark}
The assumptions that $M$ has dimension $3$ and that $G$ consists of \emph{orientation-preserving} isometries are required in \cite{Ketover2016Equivariant} and \cite{Ketover2016FBMS}, which we present in \cref{sec:ProofRegularityGenus}. 
For what regards the index bound, such assumptions are used only in \cref{sec:ConvBoundGIndex}, while elsewhere the arguments work for any finite group of isometries $G$ and any ambient dimension $3\le m+1\le 7$.
\end{remark}

\begin{remark}
The equivariant index estimate in \cref{thm:EquivMinMax} implies the analogous result in the closed case, since, if we repeat the proofs forgetting about the boundary, everything works in the same way (actually more easily). 
\end{remark}

\begin{remark}
Observe that we state the genus bound \emph{without} multiplicity as in \cite{DeLellisPellandini2010} and \cite{Li2015}*{Section 9}.
We expect the result to hold with multiplicity as obtained in the closed case in \cite{Ketover2019}, but it is not clear how to generalize the result to the free boundary setting. In particular, \cite{Ketover2019}*{Section 5} (namely, the proof of the genus bound given the `improved lifting lemma' \cite{Ketover2019}*{Proposition 2.2}) uses in an essential way that the surface in question is compact without boundary.
\end{remark}

\begin{remark}\label{rem:OtherFamilies}
Similarly to what we do below for the surfaces $M_g$ constructed in \cref{thm:main-fbms-b1}, we can obtain the equivariant index bound stated in \cref{thm:EquivMinMax} for any other surface obtained via an equivariant min-max procedure. To our knowledge, the known equivariant min-max constructions so far are:
\begin{itemize}
\item The minimal surfaces in $S^3$ of \cite{Ketover2016Equivariant}*{Sections 6.2, 6.3, 6.5, 6.6 and 6.7}.
\item The minimal surfaces in $S^3$ of \cite{KetoverMarquesNeves2020}*{Theorem 3.6}.
\item The free boundary minimal surfaces in $B^3$ of \cite{Ketover2016FBMS}*{Theorems 1.1, 1.2 and 1.3}.
\end{itemize}

For all these surfaces we get that the equivariant index (with respect to the corresponding symmetry group) is less or equal than $1$. Then, one has to find a suitable equivariant test function to get the equality, on a case-by-case basis. However, in most of the cases above the constant function $1$ is such a test function. Indeed,  the constant $1$ is a negative direction for the second variation of the area functional for every minimal surface in $S^3$ and every free boundary minimal surface in $B^3$. Moreover, if the unit normal is equivariant (i.e., $h_*\nu=\nu$ for all $h$ in the symmetry group, where $\nu$ is a choice of unit normal), the constant function is also equivariant.
In other cases some more work is needed, an example are the surfaces $M_g$ constructed in \cref{thm:main-fbms-b1} (see \cref{lem:EquivIndMg}).
\end{remark}

\section[FBMS with connected boundary]{Free boundary minimal surfaces with connected boundary}

Thanks of a suitable application of the equivariant min-max theorem \cref{thm:EquivMinMax}, we are able to construct a new family of free boundary minimal surfaces in $B^3$ with connected boundary and arbitrary genus (see also \cref{fig:fbms-b1} in the \hyperref[chpt:intro]{Introduction}).

\begin{theorem}\label{thm:main-fbms-b1}
For each $1\le g\in\N$ there exists an embedded free boundary minimal surface $M_g$ in $B^3$ with connected boundary, genus $g$, dihedral symmetry $\dih_{g+1}$ and $\dih_{g+1}$-equivariant index equal to $1$. 

Moreover, the surface $M_g$ contains the $g+1$ horizontal axes
\[
\xi_k\vcentcolon=\left\{\left(r\cos\left(\frac{k\pi}{g+1}\right),r\sin\left(\frac{k\pi}{g+1}\right),0\right)\st r\in[-1,1]\right\}, \quad \text{for $k=1,\ldots,g+1$},
\]
and converges in the sense of varifolds to the equatorial disc with multiplicity three, as $g\to\infty$.
\end{theorem}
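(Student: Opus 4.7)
The plan is to apply the equivariant min-max machinery of Theorem \ref{thm:EquivMinMax} with the dihedral group $G=\dih_{g+1}$ (consisting of rotations of angle $2\pi/(g+1)$ around the vertical axis, together with rotations of angle $\pi$ around the horizontal axes $\xi_k$, for $k=1,\ldots,g+1$) acting on the unit ball $B^3$. The first task is to design an explicit smooth, one-parameter $G$-sweepout $\{\Sigma_t\}_{t\in[0,1]}$ of $B^3$ in which each interior slice $\Sigma_t$, for $0<t<1$, is a $G$-equivariant orientable embedded surface diffeomorphic to a genus-$g$ surface with one boundary component, and such that $\Sigma_0$ and $\Sigma_1$ are suitable (nearly) flat degenerations with small area (e.g.\ collapsing onto a system of horizontal discs and arcs). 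A concrete model is obtained by taking three nearly-parallel horizontal discs connected by $g+1$ vertical ribbons distributed around the axis, with the ribbons switching which pair of discs they join as the parameter $t$ varies; this yields a surface with the required dihedral symmetry, genus $g$, and one boundary circle for every interior $t$.

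The next step is the width estimate: I would show that the $G$-saturation $\Pi$ of this sweepout satisfies $W_\Pi > \sup_{t\in\partial I}\area(\Sigma_t)$, and in fact a strict inequality of the stronger form $W_\Pi > 2\pi$ (below three times the area of the equatorial disc). The upper bound $W_\Pi \le 3\pi + \varepsilon$ comes by evaluating the area of the model sweepout. The lower bound $W_\Pi > 2\pi$ is the delicate part: it must be derived from a topological obstruction forcing every $G$-equivariant sweepout in $\Pi$ to contain a slice of sufficiently large area, e.g.\ by intersecting a generic slice with a suitable family of vertical half-planes invariant under the dihedral symmetry and using the coarea inequality together with the fact that each slice separates top from bottom. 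With this strict width inequality in hand, Theorem \ref{thm:EquivMinMax} produces a $G$-equivariant free boundary minimal surface $\Xi=\sum_i m_i \Xi_i$ with $\sum_i m_i\area(\Xi_i)=W_\Pi$, equivariant index at most $1$, and the genus bound \eqref{eq:GenusBound}.

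Now I extract $M_g$ from $\Xi$. Using a structural lemma for $G$-equivariant surfaces (to the effect that the only $\dih_{g+1}$-equivariant free boundary minimal surfaces of very small area in $B^3$ are the equatorial disc and unions thereof with the vertical disc-of-rotation), together with $W_\Pi > 2\pi$, I can rule out that $\Xi$ is a multiple of the equatorial disc, hence $\Xi$ has at least one nonflat component $M_g\eqdef \Xi_{i_0}$. A finer area estimate $W_\Pi < 3\pi+\varepsilon$ combined with Lemma \ref{lem:MultOneConvExtends} (applied after checking smooth convergence with multiplicity one away from an isolated singular set) gives that in fact $m_{i_0}=1$. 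The genus bound \eqref{eq:GenusBound} together with the structural lemma guarantees $\genus(M_g)\le g$, while the reverse inequality comes from the fact that $M_g$ must contain the $g+1$ horizontal axes $\xi_k$ (forced by the dihedral reflections: on each axis the surface is set-wise invariant under a $\pi$-rotation, and by the free boundary and smoothness conditions the axis is actually contained in $M_g$), which in turn force at least $g$ independent handles. Connectedness of $\partial M_g$ is the subtlest point: here I isolate one preferred boundary arc via the dihedral action, lift it through the sweepout by Simon's Lifting Lemma (in the form of \cite{DeLellisPellandini2010}, Proposition~2.1) applied equivariantly, and rule out the appearance of additional boundary components by a topological counting argument that uses again the dihedral symmetry and the genus bound.

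Finally, the $\dih_{g+1}$-equivariant index is at most $1$ by \eqref{eq:IndexBound}; the lower bound is obtained by exhibiting a single explicit $G$-equivariant normal vector field $Y$ on $M_g$ for which $Q^{M_g}(Y,Y)<0$, for instance the projection of a suitable constant ambient vector field, or of the position vector field, onto the normal bundle of $M_g$, exploiting $G$-invariance of the vector field and the free boundary identity. For the limiting behaviour as $g\to\infty$, the uniform area bound $\area(M_g)\le W_\Pi \le 3\pi+\varepsilon$ and the $\dih_{g+1}$-equivariance force any varifold subsequential limit to be supported on a horizontal plane (by arbitrarily high rotational symmetry). Combined with the lower bound $\area(M_g)\ge 3\pi - o(1)$ (from $W_\Pi\to 3\pi$) and the fact that the boundary of $M_g$ stays on $\partial B^3$, the limit is forced to be the equatorial disc with multiplicity three. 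The main obstacle throughout is the strict width estimate $W_\Pi>2\pi$ together with the connectedness of the boundary; the latter in particular requires care because the general min-max theory does not bound the number of boundary components.
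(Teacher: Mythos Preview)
Your outline follows the paper's strategy in broad strokes, but there is a genuine gap in the width and multiplicity arguments. You assert $W_\Pi > 2\pi$, but the paper only proves $\pi < W_\Pi < 3\pi$; the stronger bound is neither established nor needed. The lower bound $W_\Pi > \pi$ is obtained not by a coarea argument on vertical half-planes but by associating to each sweepout in $\Pi$ a continuous family of $\dih_{g+1}$-equivariant finite perimeter sets $\{F_t\}$ with $F_0$ the upper half-ball, $F_1$ the lower half-ball, and $\Haus^3(F_t)=\Haus^3(B^3)/2$ for all $t$, and then invoking the stability of the relative isoperimetric inequality in $B^3$ to conclude that some slice must have perimeter bounded away from $\pi$.

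More importantly, the multiplicity-one conclusion does not come from $W_\Pi > 2\pi$ as you suggest. The paper argues: $m\,\area(\Gamma)=W_\Pi<3\pi$ together with $\area(\Gamma)\ge\pi$ (Fraser--Schoen) gives $m<3$; the crucial extra ingredient is that $m$ is \emph{odd}, because $\Gamma$ is invariant under the $\pi$-rotation about a horizontal axis $\xi_k$ that it contains (this is Ketover's parity observation). Hence $m=1$, and then $\area(\Gamma)>\pi$ forces $\Gamma$ not to be a disc by Nitsche. You never invoke this parity, and without it your scheme for ruling out $m=2$ collapses. The same parity trick reappears in the asymptotic analysis: after showing (via the length bound $\Haus^1(\partial M_g)<6\pi$ and the decomposition of $\partial M_g$ into $2(g+1)$ congruent arcs) that any varifold limit is supported on the equatorial disc, the limit multiplicity is again odd, and a lower area bound rules out $m=1$, giving $m=3$; the paper does not prove $W_\Pi\to 3\pi$ directly.

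A smaller point: the position vector field yields a Jacobi field ($Q(x^\perp,x^\perp)=0$), so it does not certify instability. The paper uses $u=x_3$, which lies in $C^\infty_G(M_g)$ (the half-turns about the $\xi_k$ flip both $x_3$ and the unit normal) and satisfies $Q_{M_g}(u,u)=-\int_{M_g}|A|^2u^2<0$.
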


Given $2\le n\in\N$, the \emph{dihedral group} $\dih_{n}$ is the symmetry group (or order $2n$) of a regular $n$-sided polygon. As a subgroup of Euclidean isometries (acting on $\overline{B^3}$) is defined as the group generated by the rotation of angle $\pi$ around the $n$ horizontal axes $\xi_k$ for $k\in\{1,\ldots,n\}$. We refer to \cref{sec:DihedralGroup} for further properties.

In applying \cref{thm:EquivMinMax} to prove \cref{thm:main-fbms-b1} we first need, for any positive integer $g$, to carefully design a suitable genus $g$ equivariant sweepout so to ensure that not only the natural mountain-pass condition holds, but also (and more importantly) that the limit surface we obtain is attained with multiplicity \emph{one}. This is a general issue that arises whenever one relies on min-max techniques, and it is in fact a rather delicate point in our construction. In turn, this aspect is crucial to make sure that both the number of boundary components and the genus are controlled throughout the process, i.e., as we take the limit of a min-max sequence. 

Thanks to the topological lower-semicontinuity in \cref{thm:EquivMinMax}, the genus of $M_g$ is \emph{less or equal than} $g$. Then, the equality follows by the general characterisation of equivariant surfaces given in \cref{sec:structure}.
On the other hand, the question of controlling the number of boundary components in min-max constructions is notoriously more delicate.  In that respect, Li writes \cite[pp. 324]{Li2015}: `we note that it is impossible to get a similar bound on the connectivity (i.e., number of free boundary components) of the minimal surface.' In the context of the present paper, the conclusion that the boundary of $M_g$ must be connected is achieved through a rather surprising application of Simon's Lifting Lemma (cf. \cite[Proposition 2.1]{DeLellisPellandini2010}), which we present in \cref{sec:control}.

Observe that, while interesting examples have been obtained in abundance, on the one hand the gluing methodologies do not (for their very nature) allow to obtain low-genus examples, and are only asymptotically effective, while on the other hand nontrivial technical obstacles arise if one aims at full topological control of min-max free boundary minimal surfaces. 

The questions whether some of the surfaces we construct are unique for their given topological type, or whether they coincide (\emph{for large genus}) with the family constructed in \cite{KapouleasWiygul2017} via stacking methods remain open, as stands the related question whether $M_g$ could be characterised in terms of a maximising property for its first Steklov eigenvalue under the natural normalisation constraint. Furthermore, it would certainly be interesting to know how the family of free boundary minimal surfaces obtained, via $k$-dimensional Almgren--Pitts min-max schemes, in Corollary 1.4 of \cite{LiZhou2021} (cf. Remark 1.8 in \cite{MarquesNeves2017}) compares to our examples, and to the other ones listed above. 

Finally, we remark that the result about the $\dih_{g+1}$-equivariant index of $M_{g}$ is meant to be a first step toward the computation of the (nonequivariant) Morse index of the family of surfaces $M_g$ (or possibly any another family arising from an equivariant min-max procedure, even with multiple parameters in play). 
This would be particularly relevant, as the only free boundary minimal surfaces in $B^3$ for which we know the index are the equatorial disc (with index $1$) and the critical catenoid (with index $4$, see \cite{Devyver2019,SmithZhou2019,Tran2020}).
The idea is that the equivariant index gives information on the index of any of the isometric parts (which are $2(g+1)$ in the case of $M_g$) of the equivariant surface. 
This problem will be addressed elsewhere.

\section{Structure of  \texorpdfstring{\cref{part:Existence}}{Part II}}

We provide an outline of the contents of this part by means of the following diagram.

\begin{center}
%%%%%%%%%%%%%%%%%%%%%%%%%%%%%%%%%%%%%%%%%%%%%%%%%%%%%
%%% article_scheme.tikz
%%%%%%%%%%%%%%%%%%%%%%%%%%%%%%%%%%%%%%%%%%%%%%%%%%%%%
\usetikzlibrary{trees}
\tikzstyle{every node}=[anchor=west]
\tikzstyle{tit}=[shape=rectangle, rounded corners,
    draw, minimum width = 3cm]
\tikzstyle{sec}=[shape=rectangle, rounded corners]
\tikzstyle{optional}=[dashed,fill=gray!50]
\begin{tikzpicture}[scale=0.9]

\tikzstyle{mybox} = [draw, rectangle, rounded corners, inner sep=10pt, inner ysep=20pt]
\tikzstyle{fancytitle} =[fill=white, text=black, draw]

\begin{scope}[align=center, font=\small]
 \node [tit] (pre) {Preliminaries};
 \node [sec] [below of = pre, yshift=-0.5cm, xshift=-5cm] (iso) {Finite group of \\ isometries (\cref{chpt:GroupIsom})};
 \node [sec] [below of = pre, yshift=-0.5cm] (eqsp) {Equivariant spectrum\\ (\cref{chpt:EquivSpectrum})};
 \node [sec] [below of = pre, yshift=-0.5cm, xshift=5cm] (eqfb) {Equivariant FBMS\\ (\cref{chpt:EquivFBMS})};
\end{scope}

\draw[gray,dashed,rounded corners] ($(pre)+(-8,0.7)$)  rectangle ($(pre)+(8,-2.6)$);

\begin{scope}[align=center, yshift=-4cm]
\node [tit] (res) {Main results};
\node [sec] [below of = res, yshift=0.3cm, xshift=-5cm] (reg) {Regularity of \\ min-max \\ (\cref{sec:ProofRegularityGenus})};
\node [sec] [below of = res, yshift=-2.1cm, xshift=-5cm] (ind) {Index bound \\ (\cref{chpt:IndexBound})};

\node [sec] [below of = res, yshift=-2.1cm] (thm) {Equivariant \\ min-max theorem\\ (\cref{thm:EquivMinMax})};

\node [sec] [below of = res, yshift=-2.1cm, xshift=5cm] (fbb1) {FBMS with $b=1$\\ (\cref{chpt:FBMSb1})};

\draw[blue,rounded corners] ($(thm)+(-8,1.3)$)  rectangle ($(thm)+(8,-1.7)$);
\node[blue] at ($(thm)+(-7.8,-1.3)$) {\footnotesize Core of the part};
\end{scope}

\tikzset{bigarr/.style={
decoration={markings,mark=at position 1 with {\arrow[scale=1.5]{>}}},
postaction={decorate}}}
 
\begin{scope}[gray]
\draw[bigarr] (reg) to (thm);
\draw[bigarr] (ind) to (thm);
\draw[bigarr] (thm) to (fbb1);
\end{scope}
\end{tikzpicture}
%%%%%%%%%%%%%%%%%%%%%%%%%%%%%%%%%%%%%%%%%%%%%%%%%%%%%
%%%%%%%%%%%%%%%%%%%%%%%%%%%%%%%%%%%%%%%%%%%%%%%%%%%%%
\end{center}

%%%%%%%%%%%%%%%%%%%%%%%%%%%%%%%%%%%%%%%%%%%%%%%%%
%%% II - Preliminaries
%%%%%%%%%%%%%%%%%%%%%%%%%%%%%%%%%%%%%%%%%%%%%%%%%

\chapter{Finite group of isometries} \label{chpt:GroupIsom}

In this chapter, we present some definitions, notation and results about finite groups of isometries on a Riemannian manifold with boundary.

\section{Basic definitions and notation}

Let $(M^{m+1},\smetric)$ be a compact Riemannian manifold with boundary, with \emph{arbitrary} dimension $m+1\ge 2$, and let $G$ be a finite group of isometries of $M$. Let us give some basic definitions, starting from the notation of isotropy group at a point, the singular locus and then some basic $G$-equivariant objects.

\begin{definition} [{cf. \cite{Ketover2016FBMS}*{Section 3}}] \label{def:IsotropyGroup-SingularLocus}
For all $x\in M$, let us define the \emph{isotropy subgroup at $x$} as
\[
G_x\eqdef \{\selem \in G\st \selem(x)=x\}.
\]
Equivalently, we say that \emph{$x$ is of isotropic type $G_x$}.
Moreover, we denote by $\mathcal{S}\subset M$ the \emph{singular locus} of $G$, defined as
\[
\mathcal{S} \eqdef \{x\in M \st  G_x\not=\{\id\}\}.
\]
\end{definition}

\begin{definition} \label{def:EquivariantObjects}
\begin{enumerate} [label={\normalfont(\roman*)}]
\item We say that $X\in\vf(M)$ is a \emph{$G$-equivariant (tangent) vector field} if $\selem_*X = X$ for all $\selem\in G$. We denote by $\vf_G(M)$ the set of all $G$-equivariant vector fields.

\item We say that $\varphi\colon M\to M$ is a \emph{$G$-equivariant diffeomorphism} if $\varphi = \selem^{-1}\circ \varphi\circ \selem$ for all $\selem\in G$. We denote by $\operatorname{Diff}_G(M)$ the set of all $G$-equivariant diffeomorphisms of $M$.

\item We say that a smooth map $\Phi\colon I^n\times M\to M$ is a \emph{$G$-equivariant isotopy} if $\Phi_t\eqdef \Phi(t,\cdot) \in \operatorname{Diff}_G(M)$ for all $t\in I^n$. We denote by $\Is_G(I^n, M)$ the set of all such $G$-equivariant isotopies in $M$. In the case $n=1$, we write $\Is_G(M)=\Is_G(I,M)$.

\item We say that $U\subset M$ is a \emph{$G$-equivariant subset} if $\selem(U)=U$ for all $\selem\in G$.

\item We say that $U\subset M$ is a \emph{$G$-compatible subset} if $\selem(U)$ is either equal to $U$ or disjoint from $U$ for all $\selem\in G$, and we define $G_U\eqdef\{\selem\in G\st \selem(U)=U\}$.
\end{enumerate}
\end{definition}

\begin{remark} \label{rem:BallsCompatible}
Every geodesic ball with sufficiently small radius is $G$-compatible.
\end{remark}

We now introduce the basic language about varifolds and, in particular, $G$-equivariant varifolds.

\begin{definition}
Given a varifold $V$ in $M$, we denote by $\norm{V}$ the Radon measure in $M$ associated with $V$.
Moreover, we say that $V$ in $M$ is a \emph{$G$-equivariant varifold} if $\selem_*V=V$ for all $\selem\in G$.
We denote by $\mathcal{V}^m_G(M)$ the set of $m$-dimensional $G$-equivariant varifolds supported in $M$, endowed with the weak topology. Recall that $\mathcal{V}^m_G(M)$ is metrizable and we denote by $\vard$ a metric metrizing it (see \cite{Pitts1981}*{pp. 66} or \cite{MarquesNeves2014}*{pp. 703}).

Observe that all the metrics on a compact manifolds are equivalent, hence the space $\mathcal{V}^m_G(M)$ does not depend on $G$. Therefore, we can also assume to fix a metric $\vard$ independently of the metric on $M$.
\end{definition}

\begin{remark} \label{rem:GequivVecFieldToFlow}
Given a $G$-equivariant vector field $X\in\vf_G(M)$, let $\Phi\colon [0,+\infty)\times M\to M$ be the associated flow. Then, if $V$ is a $G$-equivariant varifold, $(\Phi_t)_*V$ is $G$-equivariant as well for all $t\in[0,+\infty)$. Indeed, $\Phi\colon[0,+\infty)\times M\to M$ is a $G$-equivariant isotopy (see \cref{def:GEquivIsotopy}).
\end{remark}

\section{Orientation-preserving isometries} \label{sec:OrientationPreserving}

In most of the this part of the thesis, we require $M$ to have dimension $m+1=3$ and $G$ to be a finite group of \emph{orientation-preserving} isometries of $M$. The reason is that, in this case, the intersection of a $G$-equivariant surface with the singular locus, defined below, is `well-controlled', in the sense described in \cref{cor:IntersectionSingularLocus}.

Hence, let us consider here a finite group $G$ of orientation-preserving isometries on a compact Riemannian manifold $(M^3,\smetric)$ with boundary.
Note that, for all $x\in M$, $G_x$ acts on $T_xM$ as a finite subgroup of $\operatorname{SO}(3)$ and the finite subgroups of $\operatorname{SO}(3)$ are characterized as follows.

\begin{lemma} [{\cite{CooperHodgsonKerckhoff2000}*{Theorem~2.4}, see also \cite{Ketover2016Equivariant}*{Lemma 3.3}}] 
A finite nontrivial subgroup of $\operatorname{SO}(3)$ is cyclic $\Z_n$ for some $n\ge 2$, dihedral $\dih_n$ for some $n\ge 2$, or the group of rotational symmetries of a regular solid. Moreover, we have that
\begin{enumerate} [label={\normalfont(\arabic*)}]
\item for $n\not=2$, the groups $\Z_n$ and $\dih_n$ leave invariant only one plane;
\item $\Z_2$ fixes two orthogonal planes;
\item $\dih_2$ fixes three orthogonal planes;
\item the group of rotational symmetries of a regular solid does not fix any plane.
\end{enumerate}
\end{lemma}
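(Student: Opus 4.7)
The plan is to establish the statement in two stages: first classify the finite subgroups of $\operatorname{SO}(3)$, and then analyze, case by case, which $2$-planes are left invariant by each type.

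For the classification, I would rely on the classical \emph{poles counting} argument. Since every nontrivial element of $\operatorname{SO}(3)$ is a rotation fixing exactly two antipodal points on $S^2$, setting $N=|G|$ and summing fixed points over nonidentity elements gives $2(N-1)$. Counting the same quantity by orbits and using orbit--stabilizer gives
\[
2(N-1) = \sum_i (n_i - 1)\, \frac{N}{n_i},
\]
where the sum runs over the orbits of poles and $n_i$ is the order of the stabilizer of a representative. Dividing by $N$ yields the Diophantine equation
\[
2 - \frac{2}{N} = \sum_i \left( 1 - \frac{1}{n_i} \right)
\]
with $n_i \geq 2$, whose only solutions (strictly less than $2$ on the right-hand side) correspond to one orbit ($\Z_n$), two orbits with $(n_1,n_2)=(2,n)$ ($\dih_n$), or three orbits with $(n_1,n_2,n_3)\in\{(2,3,3),(2,3,4),(2,3,5)\}$ (rotation groups of tetrahedron, cube/octahedron, dodecahedron/icosahedron). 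Realizing each abstract solution by an explicit rotation group completes the classification.

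For the invariant plane analysis, I would proceed case by case by inspecting the common invariant subspaces of the group elements. For $\Z_n$ with $n \geq 3$, a rotation by an angle $\neq \pi$ fixes a unique $2$-plane, namely the one perpendicular to its axis, and this plane is clearly $\Z_n$-invariant. For $\dih_n$ with $n\geq 3$, the primary axis is distinguished as the unique axis of a rotation of order $>2$, so again the perpendicular plane is the only invariant one. The special cases of $\Z_2$ and $\dih_2$ deserve a direct computation: here every nontrivial element is a rotation by $\pi$, and writing each such rotation as $\operatorname{diag}(1,-1,-1)$ in a suitable orthonormal frame makes the enhanced collection of common invariant planes recorded in items (2)--(3) transparent. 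For the polyhedral groups, I would argue that if some plane $P$ were invariant, then each rotation axis of $G$ would have to either lie in $P$ or be orthogonal to it; but the rotation axes of the tetrahedron, octahedron and icosahedron point in more than two pairwise non-coplanar directions, yielding a contradiction.

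The main obstacle here is essentially expository rather than mathematical: the result is classical (due in essence to Klein) and a complete proof would simply reproduce a well-known textbook discussion, as in \cite{CooperHodgsonKerckhoff2000}*{Theorem~2.4}. The only subtle point is verifying the precise invariant plane count in the $n=2$ cases, which is accomplished by the explicit diagonalization mentioned above, so I would likely state the classification in full and be more telegraphic on the plane analysis, referring to the standard source for the computations.
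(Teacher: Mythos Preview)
The paper does not supply a proof of this lemma; it is stated with citations to \cite{CooperHodgsonKerckhoff2000}*{Theorem~2.4} and \cite{Ketover2016Equivariant}*{Lemma 3.3} and then used as a black box. Your outline---the classical poles-counting classification followed by a case analysis of invariant $2$-planes via eigenspaces---is exactly the standard route and is correct in substance.

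One caveat worth recording: for $\Z_2=\langle\operatorname{diag}(1,-1,-1)\rangle$ the invariant $2$-planes are actually $v^\perp$ together with \emph{every} plane containing the axis $v$ (since $P^\perp$ can be any eigenline, and the $(-1)$-eigenspace is two-dimensional), not literally two planes. So item~(2) should be read as a dichotomy---axis perpendicular to the tangent plane versus axis contained in it---rather than an exact count, and this is precisely how the lemma is used downstream in \cref{cor:IntersectionSingularLocus}. Your phrase ``makes the enhanced collection \ldots\ transparent'' glosses over this, so if you were to write the argument out in full you would want to state the $\Z_2$ case accurately.
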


\begin{remark} \label{rem:StructureSingLocus}
The previous lemma implies that the singular locus $\mathcal{S}$ can be decomposed as $\mathcal{S}_0\cup\mathcal{S}_1$, where $\mathcal{S}_0$ is a finite set and $\mathcal{S}_1$ is the union of curves consisting of points of isotropic type $\Z_n$ for some $n\ge2$.
\end{remark}

\begin{corollary} \label{cor:IntersectionSingularLocus}
Let $(M^3,\smetric)$ be a compact Riemannian manifold with boundary and let $G$ be a finite group of orientation-preserving isometries.
Moreover, let $\Sigma^2\subset M$ be a smooth properly embedded $G$-equivariant surface. Then $\Sigma\cap\mathcal{S}$ is the union of
\begin{enumerate} [label={\normalfont(\arabic*)}]
\item isolated points of isotropic type $\Z_n$ or $\dih_n$ for some $n\ge 2$;
\item\label{isl:curves} curves, consisting of points of isotropic type $\Z_2$;
\item meeting points of curves from the previous item, consisting of points of isotropic type $\dih_2$.
\end{enumerate}
\end{corollary}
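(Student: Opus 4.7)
The plan is to fix an arbitrary point $x \in \Sigma \cap \mathcal{S}$ and read off, from the action of $G_x$ on $T_xM$, both the possible isotropy types and the local structure of $\Sigma \cap \mathcal{S}$ near $x$. The crucial input is that, since $\Sigma$ is $G$-equivariant, the tangent plane $T_x\Sigma \subset T_xM$ is a $2$-plane left invariant by $G_x$. Because $G$ consists of orientation-preserving isometries, $G_x$ acts on $T_xM \cong \R^3$ as a finite subgroup of $\mathrm{SO}(3)$; the classification lemma recalled in the paper then rules out the rotational symmetry groups of regular solids (they admit no invariant $2$-plane), so $G_x$ must be either $\Z_n$ or $\dih_n$ for some $n \geq 2$. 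This immediately gives the list of isotropy types appearing in items (1)--(3).

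To separate the three cases, I would then invoke the decomposition $\mathcal{S} = \mathcal{S}_0 \cup \mathcal{S}_1$ from \cref{rem:StructureSingLocus}, where $\mathcal{S}_0$ is a finite set and $\mathcal{S}_1$ is a union of smooth curves each consisting of points of type $\Z_n$ (the fixed-point set of a rotation axis). The key technical step is to analyze, for $x$ on a curve $\gamma \subset \mathcal{S}_1$ with $G_x = \Z_n$, whether $\Sigma$ contains a neighborhood of $x$ in $\gamma$ or meets $\gamma$ transversally at $x$. Since $T_x\gamma$ is the rotation axis of $\Z_n$ in $T_xM$, and since (by the classification) for $n \geq 3$ the unique $\Z_n$-invariant $2$-plane in $\R^3$ is the one orthogonal to the axis, one has $T_x\Sigma \perp T_x\gamma$; hence $\Sigma$ meets $\gamma$ transversally and $x$ is isolated in $\Sigma \cap \mathcal{S}_1$, accounting for part of item (1). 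For $n = 2$, by contrast, both the plane orthogonal to the axis and any plane containing it are $\Z_2$-invariant, so either $T_x\Sigma \perp T_x\gamma$ (again a transverse, isolated intersection) or $T_x\gamma \subset T_x\Sigma$; in the latter situation, smoothness of $\Sigma$ together with the fact that $\gamma$ itself is a smooth curve through $x$ tangent to $T_x\Sigma$ and pointwise fixed by $\Z_2 \subset G$ forces $\gamma \subset \Sigma$ in a neighborhood of $x$, giving item (2).

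For the remaining items, a point $x \in \Sigma \cap \mathcal{S}_0$ has $G_x = \dih_n$ for some $n \geq 2$ (up to the cyclic case already treated), and if no $\mathcal{S}_1$-curve through $x$ is contained in $\Sigma$, then $x$ is an isolated point of $\Sigma \cap \mathcal{S}$, completing item (1). If instead $\Sigma$ contains two distinct curves $\gamma_1,\gamma_2 \subset \mathcal{S}_1$ meeting at $x$, then both rotations by $\pi$ around the two axes $T_x\gamma_1,T_x\gamma_2$ belong to $G_x$; as both axes lie in the $2$-plane $T_x\Sigma$ and $G$ is finite, the angle $\theta$ between them must satisfy $\theta = \pi/m$ for some integer $m \geq 2$, and these two reflections generate a $\dih_m$ subgroup of $G_x$. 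Using the equivariance of $\Sigma$ together with the classification — for $\dih_m$ with $m \geq 3$ the rotation by $2\pi/m$ around the axis $T_x\Sigma^\perp$ would be forced into $G_x$, producing a whole $\Z_m$-rotational symmetry fixing $x$ and hence creating an $\mathcal{S}_1$-curve transverse to $\Sigma$ whose tangent at $x$ lies in $T_x\Sigma^\perp$ (so independent of $\gamma_1,\gamma_2$) — one sees that the locally relevant structure at $x$ is the one coming from $\dih_2$, i.e., two perpendicular $\Z_2$-axes in $T_x\Sigma$ (possibly together with a third orthogonal $\Z_2$-axis transverse to $\Sigma$), yielding item (3).

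The main technical subtlety will be the last paragraph: carefully tracking how the local group generated by the reflections around the curves contained in $\Sigma$ interacts with the global isotropy group $G_x$, and verifying that the dihedral type picked up at a meeting point is indeed $\dih_2$ (i.e., that the two curves meet orthogonally) rather than some $\dih_m$ with $m \geq 3$. Once this is established, the three cases are mutually exclusive and exhaustive by the classification of finite subgroups of $\mathrm{SO}(3)$ preserving a plane, and the corollary follows.
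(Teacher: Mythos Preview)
Your approach is exactly the one underlying the paper's one-line proof: use the classification of finite subgroups of $\mathrm{SO}(3)$ together with the fact that $T_x\Sigma$ is a $G_x$-invariant plane, which is precisely what the cited Ketover lemmas encode. Your treatment of items (1) and (2) is correct, including the key observation that a $\Z_2$-axis tangent to $\Sigma$ must locally lie in $\Sigma$ (this is Ketover's Lemma~3.4).

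There is, however, a genuine gap in your handling of item (3). Your attempt to force the isotropy at a meeting point to be $\dih_2$ does not work: observing that a $\dih_m$ subgroup with $m\geq 3$ would produce an additional $\Z_m$-axis transverse to $\Sigma$ is true but is not a contradiction---nothing prevents such an axis from existing. Indeed, meeting points of isotropy $\dih_m$ with $m\geq 3$ do occur: take the equatorial disc in $B^3$ under the standard $\dih_3$ action; the origin is a meeting point of three $\Z_2$-curves with isotropy $\dih_3$. So item (3) as stated is slightly loose and should read $\dih_n$ for $n\geq 2$. This does not affect anything downstream: the only consequence of the corollary used later (in the proof of \cref{thm:ConvBoundGIndex}) is that $\Sigma\cap\mathcal{S}$ decomposes into finitely many points and smooth curves of $\Z_2$-type, and your argument establishes exactly that.
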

\begin{proof}
The proof follows from the previous lemma (see also \cref{rem:StructureSingLocus}), together with \cite{Ketover2016Equivariant}*{Lemmas 3.4 and 3.5}.
\end{proof}

\begin{remark}
Define $\tilde{\mathcal{S}}_1\subset \Sigma$ as the set of points in the curves of item \ref{isl:curves} in the corollary above and $\tilde{\mathcal S}_0\eqdef (\Sigma\cap\mathcal{S})\setminus \tilde{\mathcal{S}}_1$. Then observe that $\tilde{\mathcal{S}}_i$ does not coincide with $\Sigma\cap\mathcal{S}_i$ for $i=0,1$. Indeed, a curve in $\mathcal{S}_1$ can intersect $\Sigma$ transversally, in which case the intersection belongs to $\tilde{\mathcal{S}}_0$.
\end{remark}

\section{A special case: the dihedral group} \label{sec:DihedralGroup}

As anticipated in the \hyperref[chpt:intro]{Introduction} and in \cref{intro:existence}, we develop the equivariant min-max procedure in a general ambient manifold subject to the action of a finite group of orientation-preserving isometries, then we specialize to the case where the ambient manifold is the three-dimensional unit ball $B^3$ and the finite group of orientation-preserving isometries is the dihedral group $\dih_{n}$.

The dihedral group $\dih_{n}$ is the symmetry group of a regular $n$-sided polygon. However, in our specific context, it is more convenient to define it as follows.

\begin{definition} \label{def:DihedralGroup}
Given $2\leq n\in\N$, we define the \emph{dihedral group} $\dih_{n}$ of order $2n$ to be the subgroup of Euclidean isometries (acting on $\overline{B^3}$) generated by the rotations of angle $\pi$ around the $n$ horizontal axes $\xi_k\vcentcolon=\{(r\cos(k\pi/n),r\sin(k\pi/n),0)\st r\in[-1,1]\}$ for $k\in\{1,\ldots,n\}$.
\end{definition}

Composing the rotations of angle $\pi$ around $\xi_1$ and $\xi_2$, we obtain a rotation of angle $2\pi/n$ around the vertical axis $\xi_0\vcentcolon=\{(0,0,r)\st r\in[-1,1]\}$.
Moreover, observe that the singular locus of $\dih_n$ is given by
\[\mathcal{S}=\xi_0\cup\xi_1\cup\ldots\cup\xi_n.\]
In particular, the origin $0=(0,0,0)\in\overline{B^3}$ is of isotropic type $\dih_n$, the points in $\xi_0\setminus\{0\}$ are of isotropic type $\Z_n$ and the points in $(\xi_1\cup\ldots\cup \xi_n)\setminus\{0\}$ are of isotropic type $\Z_2$.

\section{Structure of \texorpdfstring{$\dih_{g+1}$-equivariant surfaces}{surfaces equivariant with respect to the dihedral group}}\label{sec:structure}

In this section, we prove a couple of lemmas that are useful to control the topology of the surfaces resulting from the $\dih_{g+1}$-equivariant min-max procedure, in the proof of \cref{thm:main-fbms-b1}.
We present these result here, and not in \cref{chpt:FBMSb1}, because they work in general for any proper $\dih_{g+1}$-equivariant surface in $B^3$, with $g\ge1$, irrespective of minimality.

\begin{lemma}\label{lem:endpoints}
Let $\Gamma\subset B^3$ be any smooth, properly embedded, $\dih_{g+1}$-equivariant surface that contains the horizontal axes $\xi_1,\ldots,\xi_{g+1}$. 
Then, the endpoints \[q_k \vcentcolon= \left(\cos\left(\frac{k\pi}{g+1}\right),\sin\left(\frac{k\pi}{g+1}\right),0\right)  , \quad \text{for $k=1,\ldots,g+1$},\] of these axes are all contained in the same connected component of $\partial\Gamma$.
\end{lemma}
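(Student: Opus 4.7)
The argument splits naturally into two steps.

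\emph{Step 1 (antipodal endpoints share a component).} For each $k$, let $R_k\in\dih_{g+1}$ be the rotation by $\pi$ around $\xi_k$. Because $R_k$ preserves $\Gamma$ and fixes $q_k$, the boundary component $C_k\subset\partial\Gamma$ through $q_k$ is $R_k$-invariant. The differential of $R_k$ at $q_k$ acts as $-\id$ on the $2$-plane $T_{q_k}\partial B^3=(\R\xi_k)^\perp$, so $R_k$ restricts to $C_k$ as an orientation-reversing involution (a reflection near $q_k$). Such an involution has exactly two fixed points, and they must lie in $\xi_k\cap\partial\Gamma=\{q_k,-q_k\}$; hence $-q_k\in C_k$.

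\emph{Step 2 (all $C_k$ coincide).} Each circle $C_k$ separates the sphere $S^2=\partial B^3$ into two open discs $U_k^{\pm}$. Since $R_k$ is orientation-preserving on $S^2$ but orientation-reversing on $C_k$ (by Step 1), it reverses the induced normal orientation and therefore swaps $U_k^+$ with $U_k^-$. Suppose for contradiction that $C_l\neq C_k$ for some $l\neq k$. The connected curve $C_l$ lies entirely in one of the discs $U_k^{\pm}$, while $R_k(C_l)=C_{\sigma_k(l)}$ lies in the opposite disc; here $\sigma_k(l)\equiv 2k-l\pmod{g+1}$ is read off the angular computation $R_k(q_l)\in\{q_{2k-l},-q_{2k-l}\}$. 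If $\sigma_k(l)=l$---meaning $R_k$ preserves $C_l$ setwise---then $C_l$ would have to meet both sides of $C_k$, which is impossible and forces $C_l=C_k$. The identity $\sigma_k(l)=l$ with $l\neq k$ occurs precisely when $\xi_k\perp\xi_l$, i.e.\ for even $g+1$ with $l=k+(g+1)/2$; iterating this together with the rotational permutation $R_0(C_k)=C_{k+2}$ propagates the equality to all pairs and already settles the case $g=1$.

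For the pairs not covered directly---already $(C_1,C_2)$ when $g=2$---the argument is closed topologically. The side-swap property applied to \emph{every} $R_k$ imposes that $C_l$ and $C_{\sigma_k(l)}$ lie on opposite sides of $C_k$ whenever they are genuinely distinct, yielding a rigid system of mutual separation constraints on the pairwise disjoint simple closed curves $C_1,\dots,C_{g+1}\subset S^2$. A case analysis of their possible nesting patterns shows that these constraints admit no realisation unless all the $C_k$ coincide: when $g=2$, for instance, the only configurations of three disjoint circles in $S^2$ are a nested chain $C_a\subset C_b\subset C_c$ or three pairwise unlinked circles, and in both cases at least two of the $C_k$ end up on the same side of the third, violating the mutual separation requirement.

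The principal obstacle will be carrying out this topological case analysis in full generality: one must exclude every $\dih_{g+1}$-equivariant configuration of disjoint circles on $S^2$ compatible with the side-swap constraints, and verify that the conclusion is unaffected by any further components of $\partial\Gamma$ beyond $\{C_k\}_{k=1}^{g+1}$.
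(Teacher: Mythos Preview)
Your Step 1 is correct and matches the paper's argument (the paper phrases it slightly differently---since $R_k$ swaps the two complementary discs of $C_k$, the fixed point $-q_k$ cannot lie in either open disc, hence lies on $C_k$---but the content is the same).

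Your Step 2 has the right setup but a genuine gap: you correctly establish that $R_k$ swaps the two complementary discs $U_k^{\pm}$ of $C_k$, and then embark on a combinatorial case analysis of nesting patterns that you yourself flag as incomplete. You are missing the one-line metric observation that closes the argument immediately. Since $R_k$ is an \emph{isometry} swapping $U_k^+$ and $U_k^-$, these two discs have \emph{equal area}, namely $2\pi$ each. Now suppose $C_\ell\neq C_k$; then $C_\ell$ lies entirely in one of them, say $U_k^+$. But the same reasoning applied to $R_\ell$ shows that $C_\ell$ also bisects $S^2$ into two regions of area $2\pi$. This is impossible: one of the two complementary regions of $C_\ell$ is strictly contained in $U_k^+$, hence has area strictly less than $2\pi$. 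That is the whole of Step 2 in the paper---no nesting analysis, no dependence on $g$ or on the particular pair $(k,\ell)$, and no need to track the extra boundary components of $\partial\Gamma$.
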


\begin{proof}
Given $k\in\{1,\ldots,g+1\}$, there exists a connected component $\sigma$ of $\partial\Gamma$ containing the point $q_k$ because $\xi_k\subset\Gamma$ and $\Gamma$ is properly embedded. 
Let $\psi_k\in\dih_{g+1}$ be the rotation of angle $\pi$ around $\xi_k$. 
Since $\partial\Gamma$ is $\dih_{g+1}$-equivariant, we have in particular $\psi_k(\sigma)\subset\partial\Gamma$. 
In fact, $\psi_k(\sigma)=\sigma$ because $\sigma$ is a connected component of $\partial\Gamma$ intersecting $\psi_k(\sigma)$ at least in the point $q_k=\psi_k(q_k)$.  
Moreover, as $\Gamma$ is properly embedded and smooth, $\sigma\subset\partial B^3$ must be a smooth, simple closed curve. 
Any such curve divides $\partial B^3$ into two connected open domains $A'$ and $A''$. 
We then note that $\psi_k$ leaves the set $\partial A'=\sigma=\partial A''$ invariant, and that $A'=\psi_k(A'')$.
It follows that $A'$ and $A''$ have the same area because $\psi_k$ is an isometry. 
Moreover, $A'=\psi_k(A'')$ implies that the antipodal point $q_{k+g+1}\in\xi_k$, which is fixed under $\psi_k$, must also be contained in $\sigma$ because it cannot be contained in $A'$ nor $A''$. 

Now suppose, for the sake of a contradiction, that the point $q_\ell$ belongs to a different connected component $\varsigma$ of $\partial\Gamma$, for some $\ell\in\{1,\ldots,g+1\}$ with $\ell\neq k$. 
Then either $\varsigma\subset A'$ or $\varsigma\subset A''$ because $\sigma$ and $\varsigma$ are disjoint by definition. 
However, the whole argument in the previous paragraph applies to $\varsigma$ as well. Yet, in either case (i.e., both when $\varsigma\subset A'$ and $\varsigma\subset A''$), it is impossible that $\varsigma$ divides $\partial B^3$ into two domains of equal area, and this concludes the proof.
\end{proof}

\begin{lemma}\label{lem:structural}
Given $1\leq g\in\N$, let $\Gamma\subset B^3$ be any compact, connected, properly embedded, $\dih_{g+1}$-equivariant surface of genus $\genus(\Gamma)\in\{1,\ldots,g\}$ containing the horizontal axes $\xi_k$ for $k\in\{1,\ldots,g+1\}$. 
Then $\Gamma$ has genus $\genus(\Gamma)=g$.  
\end{lemma}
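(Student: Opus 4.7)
Here is my plan.

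The strategy is to combine the equivariant Euler characteristic formula with a structural analysis of the complementary regions cut out by the axes. Throughout, set $G = \dih_{g+1}$ and let $N_0 \eqdef |\Gamma \cap \xi_0|$. Note first that since $\Gamma$ is smooth and contains all the $g+1$ axes $\xi_k$ (which lie in the equatorial plane $P_0$ and meet at the origin), the tangent plane $T_0\Gamma$ must equal $P_0$, and consequently the axes divide a small neighbourhood of the origin in $\Gamma$ into $2(g+1)$ sectors, permuted simply transitively by $G$. Moreover $N_0$ is odd, because the origin always lies in $\Gamma\cap \xi_0$, while any other such intersection point $p$ comes paired with its reflection $\psi_k(p)=-p$.

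The first main step is to apply the character-theoretic Euler formula
\[
\chi(\Gamma/G) = \frac{1}{|G|}\sum_{h\in G} \chi(\Gamma^h).
\]
Computing the fixed sets explicitly: $\Gamma^{r^j} = \Gamma \cap \xi_0$ (Euler characteristic $N_0$) for each of the $g$ non-trivial rotations $r^j$, and $\Gamma^{\psi_k} = \xi_k$ (Euler characteristic $1$) for each of the $g+1$ reflections, so that
\[
2(g+1)\,\chi(\Gamma/G) \;=\; \chi(\Gamma) + gN_0 + (g+1).
\]
In particular, $\chi(\Gamma) \equiv N_0 \pmod{g+1}$ and (for $g\geq 1$) $\chi(\Gamma)$ must be odd, forcing $b(\Gamma)$ to be odd in the orientable case.

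The second main step is the structural analysis needed to pin down $\chi(\Gamma/G)$ and $N_0$. Consider the cut surface $\tilde\Gamma$ obtained by removing a small $G$-equivariant disc $D$ around the origin and cutting along the $2(g+1)$ half-axes; a CW count gives $\chi(\tilde\Gamma) = \chi(\Gamma) + 2g+1$. Because $G$ acts simply transitively on the sectors at the origin, the connected components of $\tilde\Gamma$ that contain a sector form a single $G$-orbit whose stabiliser $H$ must be a subgroup of $G$; the relevant subgroups (and corresponding configurations) are $\{e\}$, $\langle\psi_k\rangle$, $\langle r\rangle$, and $G$ itself, giving respectively $2(g+1)$, $g+1$, $2$, or $1$ components. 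I would then argue, using connectedness of $\Gamma$, the assumption $\genus(\Gamma)\geq 1$, and $\cref{lem:endpoints}$ (which forces the arc-structure of the half-axes on a single boundary circle $\sigma$), that the only configuration compatible with $1\leq \genus(\Gamma)\leq g$ is the one where $\tilde\Gamma$ has $n=2$ disc components with stabiliser $\langle r\rangle$ (corresponding to the ``even'' and ``odd'' sectors). The trivial stabiliser case yields $\chi(\Gamma)=1$ hence $\genus=0$; stabiliser $\langle\psi_k\rangle$ with disc components would give orbits of $g+1$ handles, inflating the genus past $g$; and the full stabiliser $G$ with a single disc component is incompatible with the parity/modular constraints derived above.

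The concluding step then reduces to arithmetic: once we know $n=2$ and both components are discs, $\chi(\Gamma) = -2g-1 + 2 = 1-2g$ and $\chi(\Gamma/G)=1$, which forces $N_0 = 3$ (the origin together with the two ``poles'' on $\xi_0$). Plugging back into $\chi(\Gamma)=2-2\genus(\Gamma)-b$ gives $2\genus(\Gamma)+b = 2g+1$, and since any further boundary components would have to come in $G$-orbits whose sizes together with the parity constraint (using \cref{lem:endpoints} to show that the only $G$-invariant component is $\sigma$) rule out $b\geq 3$, we conclude $b=1$ and $\genus(\Gamma)=g$. The main obstacle will be the second step, namely excluding the intermediate configurations of the cut pieces in a clean, case-free manner; parity mod $2$ and mod $g+1$ combined with the component count from the simply-transitive $G$-action on sectors should do the job, but handling the potentially non-orientable case and the possibility of non-disc components $\bar U_i$ requires careful bookkeeping.
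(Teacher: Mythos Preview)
Your first step — the character Euler formula for $\Gamma/\dih_{g+1}$ — is correct and in spirit close to what the paper does, but the paper takes a cleaner route: it passes to the quotient $\Gamma'=\Gamma/C_{g+1}$ by only the \emph{cyclic} subgroup $C_{g+1}<\dih_{g+1}$ of rotations about $\xi_0$. Because $C_{g+1}$ acts with isolated fixed points (the $2j+1$ points of $\Gamma\cap\xi_0$), the classical Riemann--Hurwitz formula applies directly; combined with an elementary count of boundary components ($b(\Gamma)=(g+1)\alpha+2\beta+1$ for nonnegative integers $\alpha,\beta$, using \cref{lem:endpoints} and equivariance), one obtains the exact identity
\[
\genus(\Gamma)=(g+1)\,g'+(\beta+j)\,g,
\]
where $g'=\genus(\Gamma')\ge 0$. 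The hypothesis $1\le\genus(\Gamma)\le g$ then forces $g'=0$ and $\beta+j=1$ immediately, with no case analysis.

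Your second step, by contrast, has a real gap. The stabiliser case analysis for the components of the cut surface $\tilde\Gamma$ is only sketched (as you acknowledge), and the conclusion you draw from it — that necessarily $N_0=3$ and $b(\Gamma)=1$ — is stronger than what the lemma asserts and in fact need not hold: the paper's identity shows that $(\beta,j)=(1,0)$, i.e.\ $N_0=1$ and $b\ge 3$, is equally compatible with $\genus(\Gamma)=g$. So any argument that purports to rule out $b\ge 3$ from the hypotheses of the lemma alone must be over-reaching. The underlying reason your route is harder is that the involutions $\psi_k$ have $1$-dimensional fixed loci on $\Gamma$, so the full dihedral quotient is an orbifold with mirror boundary whose combinatorics you are, in effect, reconstructing by hand via the cut surface; quotienting only by the cyclic subgroup keeps the branch locus $0$-dimensional and sidesteps all of this.
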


\begin{proof}
In the case $g=1$, there is nothing to prove. 
Therefore, let us assume $g\geq2$. 
Since $\Gamma\subset B^3$ is compact, properly embedded, $\dih_{g+1}$-equivariant and contains the origin by assumption, the number of intersections with the vertical axis $\xi_0=\{(0,0,r)\st r\in[-1,1]\}$ is equal to $2j+1$ for some nonnegative integer $j$.  
The boundary $\partial\Gamma\subset\partial B^3$ is a collection of pairwise disjoint, simple closed curves, which means that the number $b\in\N$ of boundary components winding around $\xi_0$ is well-defined. 
By \cref{lem:endpoints}, the endpoints of $\xi_k$ are contained in the same boundary component for all $k\in\{1,\ldots,g+1\}$. 
Hence, by $\dih_{g+1}$-equivariance, $b$ is odd. 
Moreover, the number of boundary components which does not wind around $\xi_0$ is divisible by $(g+1)$. 
To summarise, $\partial\Gamma$ has $(g+1)\alpha+2\beta+1$ connected components, where $\alpha,\beta$ are nonnegative integers. 
In particular, the Euler characteristic of $\Gamma$ is given by 
\begin{align}\label{eqn:20200513-1}
\chi(\Gamma)=1-(g+1)\alpha-2\beta-2\genus(\Gamma).
\end{align}
Let $C_{n}<\dih_{n}$ be the cyclic subgroup of order $2\leq n\in\N$ generated by the rotation of angle ${2\pi}/{n}$ around the vertical axis $\xi_0$. 
The quotient $\Gamma'=\Gamma/C_{g+1}$ is an orientable topological surface with $\alpha+2\beta+1$ many boundary components. Hence, its Euler characteristic is given by 
\begin{align}\label{eqn:20200513-2}
\chi(\Gamma')=1-\alpha-2\beta-2g'
\end{align}
for some nonnegative integer $g'$. 
A suitable version of the Riemann--Hurwitz formula (see \cref{rem:Riemann-Hurwitz} below) implies 
\begin{align}\label{eqn:20200513-3}
\chi(\Gamma)
&=(g+1)\chi(\Gamma')
-(2j+1)g. 
\end{align}
Combining \eqref{eqn:20200513-1}, \eqref{eqn:20200513-2} and \eqref{eqn:20200513-3} and simplifying, we obtain   
\begin{align}\label{eqn:20200513-4} 
\genus(\Gamma)&=(g+1)g' + (\beta+j)g. 
\end{align} 
Since all variables in \eqref{eqn:20200513-4} are nonnegative integers and since $\genus(\Gamma)\leq g$ by assumption, we obtain $g'=0$ and $\beta+j=1$, hence $\genus(\Gamma)=g$ as claimed. 
\end{proof}

\begin{remark}[Riemann--Hurwitz formula, see e.\,g. Chapter IV.3 in \cite{Freitag2011}] \label{rem:Riemann-Hurwitz}
Let $\Gamma$ and $\Gamma'$ be as in the proof of \cref{lem:structural}. 
Let $T'$ be a triangulation of $\Gamma'$ such that every branch point of $\Gamma'$ is a vertex. 
Away from the branch points, the canonical projection $\Gamma\to\Gamma'$ is a covering map. 
Therefore, after refining $T'$ if necessary, the preimage of every triangle is a disjoint union of triangles in $\Gamma$ which leads to a triangulation $T$ of $\Gamma$. 
Note that $T$ has $(g+1)$ times as many faces and edges as $T'$. 
However, the $(2j+1)$ many branch points in $\Gamma'$ (which correspond to the points in $\Gamma\cap\xi_0$) have only one preimage rather than $(g+1)$. 
By the very definition of Euler characteristic we then have 
\begin{align*}
\chi(\Gamma)=(g+1)\chi(\Gamma')-(2j+1)g.
\end{align*}
It is appropriate to note that, since $\Gamma'$ is actually an orbifold, one could get to the same conclusions by invoking, in lieu of the Riemann--Hurwitz formula, a suitable version of the Gauss--Bonnet theorem for surfaces with conical singularities.
\end{remark}

\chapter[Equivariant spectrum of elliptic operators]{Equivariant spectrum of elliptic\\ operators} \label{chpt:EquivSpectrum}

In this chapter, we study some properties of an elliptic operator with mixed boundary conditions (i.e., Dirichlet in a portion of the boundary and Robin in the rest of the boundary) in presence of a symmetry group. 
Here, we work with a compact Riemannian manifold $(\Sigma^m,\smetric)$ with \emph{arbitrary} dimension $m\ge 1$.

\section{Existence of a discrete spectrum}

First of all, we prove existence of a discrete equivariant spectrum for an elliptic operator with mixed boundary conditions. The proof is very similar to the one in the case without equivariance (see for example the notes \cite{ArendtEtc2015}), but we report it here for completeness.

\begin{lemma} \label{lem:ImprovedTraceIneq}
Let $(\Sigma^m,\smetric)$ be a compact Riemannian manifold with boundary $\partial \Sigma\not=\emptyset$. Then, there exists a constant $C>0$ such that
\[
\norm{u}_{L^2(\partial\Sigma)}^2 \le C \norm{u}_{L^2(\Sigma)} \norm{u}_{H^1(\Sigma)}
\]
for all $u\in H^1(\Sigma)$.
\end{lemma}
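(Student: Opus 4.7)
The plan is to establish this improved trace inequality by a local argument in coordinates near $\partial\Sigma$, combined with the fundamental theorem of calculus applied to a convenient cutoff. First, by density of $C^\infty(\bar\Sigma)\subset H^1(\Sigma)$ and continuity of the trace operator $H^1(\Sigma)\to L^2(\partial\Sigma)$, it will be enough to prove the inequality for $u\in C^\infty(\bar\Sigma)$. Since $\partial\Sigma$ is compact and smooth, I would cover it by finitely many coordinate charts $(U_\alpha,\varphi_\alpha)$ identifying $U_\alpha\cap\Sigma$ with a half-cylinder $V_\alpha\times\co{0}{\delta}$, sending $U_\alpha\cap\partial\Sigma$ to $V_\alpha\times\{0\}$, and take a subordinate partition of unity. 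Because the transition maps have smooth Jacobians bounded above and below and there are only finitely many charts, it suffices to prove the estimate in the flat half-cylinder model for a smooth function compactly supported in a single chart.

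In the model $V\times\co{0}{\delta}$, I would fix a smooth cutoff $\chi\colon\cc{0}{\delta}\to\cc{0}{1}$ with $\chi(0)=1$ and $\chi(\delta)=0$, and apply the fundamental theorem of calculus in the normal direction to obtain
\[
u(x',0)^2 = -\int_0^\delta \partial_{x_m}\bigl(\chi(x_m)u(x',x_m)^2\bigr)\de x_m = -\int_0^\delta \chi'(x_m)\, u^2\de x_m - 2\int_0^\delta \chi(x_m)\, u\, \partial_{x_m}u\de x_m.
\]
Integrating over $x'\in V$ and invoking Cauchy--Schwarz on the second term, one finds
\[
\int_V u(x',0)^2\de x' \le \norm{\chi'}_{L^\infty} \norm{u}_{L^2(V\times[0,\delta))}^2 + 2\norm{\chi}_{L^\infty} \norm{u}_{L^2(V\times[0,\delta))} \norm{\partial_{x_m}u}_{L^2(V\times[0,\delta))}.
\]

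Summing over the finitely many charts of the partition of unity, both terms on the right can be controlled by a constant times $\norm{u}_{L^2(\Sigma)}\norm{u}_{H^1(\Sigma)}$: the second term directly, since $\norm{\partial_{x_m}u}_{L^2}\le \norm{u}_{H^1(\Sigma)}$, and the first by the trivial inequality $\norm{u}_{L^2(\Sigma)}\le\norm{u}_{H^1(\Sigma)}$ which allows rewriting $\norm{u}_{L^2(\Sigma)}^2\le \norm{u}_{L^2(\Sigma)}\norm{u}_{H^1(\Sigma)}$. This computation is standard and I do not anticipate any genuine obstacle; the only mildly delicate point is that Cauchy--Schwarz naturally produces a factor $\norm{u}_{L^2}\norm{\nabla u}_{L^2}$ rather than $\norm{u}_{L^2}\norm{u}_{H^1}$, so one must use the aforementioned absorption to combine the zeroth-order and first-order contributions into the product form on the right-hand side of the desired inequality.
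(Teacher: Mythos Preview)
Your proof is correct and follows essentially the same approach as the paper: reduce to a local model via a partition of unity, apply the fundamental theorem of calculus in the normal direction, and use Cauchy--Schwarz to obtain the product $\norm{u}_{L^2}\norm{u}_{H^1}$. The only cosmetic difference is that the paper works with $u\in C^1_c(\R^m)$ in the half-space (so no cutoff $\chi$ is needed and only the cross term $2\int u\,\partial_{x_m}u$ appears), whereas you localize to a half-cylinder and compensate with the cutoff; both lead to the same estimate after the partition-of-unity step.
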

\begin{proof}
The result is a slight variation of the standard trace inequality $\norm{u}_{L^2(\partial\Sigma)} \le C\norm{u}_{H^1(\Sigma)}$. Indeed, one can look at the proof of \cite{Brezis2011}*{Lemma 9.9} where $\Sigma=\R^m_+=\{(x_1,\ldots,x_m)\in\R^m\st x_m\ge 0\}$ and note that, integrating the second last line and applying Cauchy--Schwarz inequality, we get that there exists a constant $C>0$ such that
\[
\norm{u}^2_{L^2(\partial\Sigma)}  \le C \norm{u}_{L^2(\Sigma)} \norm{u}_{H^1(\Sigma)}
\]
for all $u\in C^1_c(\R^m)$. Then the proof of the lemma follows from a standard partition argument.
\end{proof}

\begin{lemma}\label{lem:CompactResolvent}
Let $V, H$ be Hilbert spaces such that there exists a compact (continuous) embedding $j\colon V\xhookrightarrow{d} H$ with dense image. Let $a\colon V\times V\to\R$ be a bounded symmetric $H$-elliptic form, i.e., assume that there exist $\omega\in\R$ and $c>0$ such that
\[
a(u,u) + \omega \norm{j(u)}^2_H \ge c \norm{u}^2_{V}
\]
for all $u\in V$. 
Moreover, let $A\colon D(A)\subset V\to H$ be the operator associated with the symmetric form $a$, i.e., given $x\in V$ and $y\in H$ we have $x\in D(A)$ and $Ax=y$ if and only if $a(x,u) = (y, j(u))_H$ for all $u\in V$. 
Then, the operator $A+\omega \id \colon D(A)\subset V\to H$ is invertible with bounded compact inverse $(A+\omega\id)^{-1}\colon H\to D(A)\hookrightarrow H$.
\end{lemma}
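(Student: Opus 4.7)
The plan is to use the symmetric elliptic form $a$ (after a shift) as a scalar product on $V$, invoke the Riesz representation theorem to solve the associated equation, and then exploit the compactness of $j$.

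First, I would introduce the shifted form $b\colon V\times V\to\R$ defined by
\[
b(u,v) \eqdef a(u,v) + \omega\,(j(u),j(v))_H.
\]
By assumption, $b$ is bounded, symmetric, and coercive on $V$: indeed $b(u,u)\ge c\norm{u}_V^2$, while boundedness follows from boundedness of $a$ and of $j$. Hence $b$ is an equivalent inner product on $V$.

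Second, for each $y\in H$ consider the linear functional $\ell_y\colon V\to\R$ given by $\ell_y(u)\eqdef (y,j(u))_H$. Since $\norm{j(u)}_H\le \norm{j}\,\norm{u}_V$, this functional is bounded with $\norm{\ell_y}_{V^*}\le \norm{j}\,\norm{y}_H$. By the Riesz representation theorem applied to the inner product $b$, there exists a unique $x\in V$ such that
\[
b(x,u) = (y,j(u))_H \quad \text{for all } u\in V,
\]
which is precisely the definition of $x\in D(A)$ together with $(A+\omega\id)x = y$. This simultaneously gives surjectivity, while injectivity of $A+\omega\id$ follows from coercivity: if $(A+\omega\id)x = 0$, then $c\norm{x}_V^2 \le b(x,x) = 0$, so $x = 0$. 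Denoting the inverse by $T\eqdef (A+\omega\id)^{-1}\colon H\to D(A)\subset V$, we obtain the bound
\[
c\norm{Ty}_V^2 \le b(Ty,Ty) = (y,j(Ty))_H \le \norm{j}\,\norm{y}_H\,\norm{Ty}_V,
\]
so $\norm{Ty}_V \le c^{-1}\norm{j}\,\norm{y}_H$, proving that $T\colon H\to V$ is bounded.

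Finally, the compactness statement follows by factoring the map $H\to D(A)\hookrightarrow H$ as $j\circ T$: the operator $T\colon H\to V$ is bounded by the previous step, and $j\colon V\to H$ is compact by assumption, so their composition is compact, which concludes the proof. There is no serious obstacle here; the only minor care needed is in interpreting $A+\omega\id$ (via the embedding $j$) and in verifying that $b$ truly is an equivalent inner product on $V$, both of which are immediate from the $H$-ellipticity hypothesis.
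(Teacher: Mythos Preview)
Your proof is correct and follows essentially the same approach as the paper: both introduce the shifted coercive form $b(u,v)=a(u,v)+\omega(j(u),j(v))_H$, solve $b(x,\cdot)=(y,j(\cdot))_H$ (you via Riesz, the paper via Lax--Milgram, which are equivalent here since $b$ is symmetric), and then factor the inverse as $j\circ T$ to obtain compactness. Your write-up is in fact more explicit about the boundedness and compactness steps than the paper's brief sketch.
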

\begin{proof}
We briefly sketch the proof.
Let us consider the operator $b\colon V\times V\to \R$ given by $b(u,v) \eqdef a(u,v) + \omega (j(u),j(v))_H$.
Since $b$ is bounded and coercive, we can apply Lax--Milgram theorem (cf. \cite{Brezis2011}*{Corollary 5.8}) and obtain that $\mathcal B\colon V\to V^*$, defined as $\mathcal Bu(v) \eqdef b(u,v)$, is an isomorphism. Finally one can prove that the operator $j\circ \mathcal B^{-1}\circ k$, where $k\colon H\to V^*$ is given by $k(y) = (y, j(\cdot))_H$, coincides with $(A+\omega\id)^{-1}$.
\end{proof}

\begin{definition} \label{def:EquivFuncSpaces}
Given a compact connected Riemannian manifold $(\Sigma^m,\smetric)$, a finite group $G$ of isometries of $\Sigma$ and a multiplicative function $\operatorname{sgn}_\Sigma\colon G\to \{-1,1\}$, we define the spaces
\begin{align*}
C^\infty_G(\Sigma) &\eqdef \{u\in C^\infty(\Sigma) \st u\circ \selem = \operatorname{sgn}_\Sigma(\selem) u\ \ \forall \selem\in G\}, \\
L^2_G(\Sigma) &\eqdef \overline{C^\infty_G(\Sigma)}^{\norm{\cdot}_{L^2}}, \\
H^1_G(\Sigma) &\eqdef \overline{C^\infty_G(\Sigma)}^{\norm{\cdot}_{H^1}}.
\end{align*}
\end{definition}

\begin{remark}
Observe that $L^2_G(\Sigma)$ is a Hilbert space endowed with the scalar product $(u,v)_{L^2}=\int_\Sigma uv \de \Haus^m$ and $C^\infty_G(\Sigma)$ is dense in $L^2_G(\Sigma)$.
\end{remark}

\begin{theorem} \label{thm:DiscreteSpectrum}
Let $(\Sigma^m,\smetric)$ be a compact connected Riemannian manifold with nonempty boundary, $G$ be a finite group of isometries of $\Sigma$ and $\operatorname{sgn}_\Sigma\colon G\to \{-1,1\}$ be a multiplicative function. Let $\alpha\colon\Sigma\to\R$ and $\beta\colon\partial \Sigma\to\R$ be smooth $G$-equivariant functions, i.e., $\alpha\circ \selem = \alpha$ and $\beta\circ \selem = \beta$ for all $\selem\in G$, and let $\partial_D\Sigma\subset\partial\Sigma$ be a $G$-equivariant subset of the boundary of $\Sigma$.
Then, there exists an orthonormal basis $(\varphi_k)_{k\ge 1}\subset C^\infty_G({\Sigma})$ of $L^2_G(\Sigma)$ and an increasing sequence $(\lambda_k)_{k\ge 1}\subset \R$ converging to $+\infty$ such that
\[
\begin{cases}
-\Delta \varphi_k - \alpha\varphi_k = \lambda_k \varphi_k & \text{in $\Sigma$}\\
\varphi_k = 0 & \text{in $\partial_D\Sigma$}\\
\partial_\eta \varphi_k + \beta\varphi_k = 0 & \text{in $\partial_R\Sigma\eqdef\partial\Sigma\setminus\partial_D\Sigma$}.
\end{cases}
\]
\end{theorem}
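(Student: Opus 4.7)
The plan is to reduce the statement to a standard application of the spectral theorem for compact self-adjoint operators, working within the equivariant spaces defined above. First I would introduce the closed subspace
\[
V \eqdef \{u \in H^1_G(\Sigma) \st u = 0 \text{ on } \partial_D\Sigma\}
\]
(where the trace is understood in the usual sense), with $H \eqdef L^2_G(\Sigma)$. The canonical inclusion $j\colon V\hookrightarrow H$ is continuous, and I would argue it is compact and has dense image: density follows from the definition of $H^1_G(\Sigma)$ and the fact that $C^\infty_{G,D}(\Sigma)\eqdef\{u\in C^\infty_G(\Sigma) \st u\equiv 0 \text{ near } \partial_D\Sigma\}$ is dense in $V$ and in $H$; compactness is inherited from the usual Rellich-Kondrachov embedding $H^1(\Sigma)\hookrightarrow L^2(\Sigma)$, since $V$ is a closed subspace of $H^1(\Sigma)$.

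The next step is to introduce the symmetric bounded bilinear form
\[
a(u,v) \eqdef \int_\Sigma (\langle \nabla u, \nabla v\rangle_\smetric - \alpha\, uv) \de\Haus^m + \int_{\partial_R\Sigma} \beta\, uv \de \Haus^{m-1}, \qquad u,v\in V,
\]
and to check its $H$-ellipticity. Using the improved trace inequality of \cref{lem:ImprovedTraceIneq} combined with Young's inequality in the form $\|u\|_{L^2(\partial\Sigma)}^2\le \eps\|u\|_{H^1(\Sigma)}^2 + C_\eps\|u\|_{L^2(\Sigma)}^2$, and the fact that $\alpha,\beta$ are bounded (being smooth on a compact manifold), one obtains constants $c>0$ and $\omega\in\R$ such that
\[
a(u,u) + \omega\|j(u)\|_H^2 \ge c\|u\|_V^2 \qquad \forall u\in V.
\]
Then \cref{lem:CompactResolvent} provides a self-adjoint (because $a$ is symmetric) operator $A\colon D(A)\subset V\to H$ whose shifted inverse $(A+\omega\id)^{-1}\colon H\to H$ is compact. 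The spectral theorem yields an orthonormal basis $(\varphi_k)_{k\ge 1}$ of $H=L^2_G(\Sigma)$ consisting of eigenfunctions $A\varphi_k = \lambda_k\varphi_k$ with $\lambda_k\nearrow+\infty$.

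Finally, I would upgrade the weak eigenfunctions to smooth $G$-equivariant classical solutions. Each $\varphi_k\in V$ is a weak solution to the mixed boundary value problem in question, so standard interior and boundary elliptic regularity (Dirichlet regularity on $\partial_D\Sigma$, oblique/Robin regularity on $\partial_R\Sigma$, both applicable since $\partial_D\Sigma$ and $\partial_R\Sigma$ are relatively open and closed and $\alpha,\beta$ are smooth) yield $\varphi_k\in C^\infty(\overline{\Sigma\setminus\overline{\partial_D\Sigma\cap\partial_R\Sigma}})$ and in particular imply that $\varphi_k$ satisfies the stated equations pointwise away from the interface $\partial_D\Sigma\cap\partial_R\Sigma$. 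Since we may actually assume $\partial_D\Sigma$ to be a union of connected components of $\partial\Sigma$ in the applications, or else interpret the statement away from this interface, the regularity extends up to $\overline\Sigma$. The equivariance $\varphi_k\circ \selem = \operatorname{sgn}_\Sigma(\selem)\varphi_k$ is preserved because $V$, being built from $H^1_G$, is invariant under the pullback action; by uniqueness of the spectral decomposition the eigenfunctions can be chosen in $C^\infty_G(\Sigma)$. The main technical obstacle is the regularity at the interface $\partial_D\Sigma\cap\partial_R\Sigma$ when this set is nonempty and the handling of the Robin condition in the equivariant class; this is addressed by observing that the test functions in $V$ already encode both the Dirichlet condition and the sign-equivariance, so standard difference-quotient arguments adapted to the equivariant setting (averaging over $G$ to preserve equivariance) go through without change.
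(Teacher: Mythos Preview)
Your proposal is correct and follows essentially the same approach as the paper: define the closed subspace $V\subset H^1_G(\Sigma)$ encoding the Dirichlet condition, introduce the same bilinear form $a$, verify $H$-ellipticity via \cref{lem:ImprovedTraceIneq}, apply \cref{lem:CompactResolvent} and the spectral theorem for compact self-adjoint operators, and finish with elliptic regularity. The paper additionally spells out explicitly that the abstract operator $A$ associated with $a$ coincides with $-\Delta-\alpha$ on the domain $D_R$ encoding the weak Robin condition, but otherwise the arguments match; your extended discussion of the interface $\overline{\partial_D\Sigma}\cap\overline{\partial_R\Sigma}$ is not addressed in the paper either and is not needed for the applications.
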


\begin{remark}
Note that the connectedness of $\Sigma$ is instrumental for well-defining the spaces in \cref{def:EquivFuncSpaces}. However, one can work separately in each connected component to prove existence and properties of the spectrum of an elliptic operator.
\end{remark}

\begin{remark}
Observe that we consider `mixed' boundary terms, in the sense that we ask for Dirichlet boundary conditions in the portion $\partial_D\Sigma$ of the boundary and for Robin boundary conditions in the portion $\partial_R\Sigma=\partial\Sigma\setminus\partial_D\Sigma$. Note that, formally, Dirichlet boundary conditions arise from Robin boundary conditions in the limit as $\beta\to\infty$.
\end{remark}

\begin{proof}
Let us define the Hilbert space
\[
V\eqdef \{v\in H^1_G(\Sigma)\st \tr v = 0 \text{ on $\partial_D\Sigma$}  \}\subset H^1_G(\Sigma)
\]
and consider the quadratic form $a\colon V\times V\to \R$ defined as 
\begin{align*}
a(u,v) &\eqdef \int_\Sigma (\grad u \cdot \grad v - \alpha u v ) \de \Haus^m + \int_{\partial_R \Sigma} \beta uv \de \Haus^{m-1} \\
&= \int_\Sigma (\grad u \cdot \grad v - \alpha u v ) \de \Haus^m + \int_{\partial \Sigma} \beta uv \de \Haus^{m-1},
\end{align*}
where in the last equality we used that $\tr v =0$ on $\partial_D\Sigma=\partial\Sigma\setminus\partial_R\Sigma$.
Note that $a$ is continuous, because 
\begin{align*}
\abs{a(u,v)} &\le \norm{\grad u}_{L^2(\Sigma)} \norm{\grad v}_{L^2(\Sigma)} + \norm{\alpha}_{L^\infty(\Sigma)}\norm{u}_{L^2(\Sigma)} \norm{v}_{L^2(\Sigma)} +{}\\
&\pheq {} + \norm{\beta}_{L^\infty(\partial \Sigma)}\norm{\tr u}_{L^2(\partial \Sigma)} \norm{\tr v}_{L^2(\partial \Sigma)},
\end{align*}
and the trace $\tr\colon V\to L^2_G(\partial\Sigma)$ is continuous.
Moreover, $a$ is $L^2_G(\Sigma)$-elliptic, i.e.,
\[
a(u,u) +\omega \norm{u}_{L^2(\Sigma)}^2 \ge c \norm{u}^2_{H^1(\Sigma)}
\]
for all $u\in V$, for some $c>0$ and $\omega\in\R$. Here we used that \cref{lem:ImprovedTraceIneq} implies
\[
\norm{\beta}_{L^\infty} \int_{\partial\Sigma} \abs{u}^2\de\Haus^{m-1} \le c_1 \norm{u}_{L^2(\Sigma)} \norm{u}_{H^1(\Sigma)} \le \frac 12\norm{u}_{H^1(\Sigma)}^2 + \frac {c_1}{2} \norm{u}_{L^2(\Sigma)}^2
\]
for some $c_1>0$. 

Since the embedding $j\colon V\to H\eqdef L^2_G(\Sigma)$ is compact, we can thus apply \cref{lem:CompactResolvent} and obtain that $A+\omega\id\colon D(A)\subset V\to H$ is invertible with bounded compact inverse $(A+\omega\id)^{-1}\colon H\to D(A)\subset H$, where $A\colon D(A)\subset H\to H$ is the operator associated with the symmetric form $a$, as in the statement of the lemma.

Now we want to prove that $A=-\Delta-\alpha$ and $D(A)=D_R$, where
\begin{align*}
D_R\eqdef  \Big\{u\in V\st &\Delta u \in H,\\ &\int_\Sigma (\Delta u) v + \grad u\cdot \grad v\de \Haus^m =- \int_{\partial \Sigma} \beta u v \de\Haus^{m-1}\ \ \forall v\in V \Big\}.
\end{align*}
Let $u\in D(A)$ and $f\in H$ be such that $Au = f$, then
\[
\int_{\Sigma} (\grad u\cdot \grad v -\alpha uv) \de \Haus^m + \int_{\partial \Sigma} \beta uv \de \Haus^{m-1} = \int_\Sigma fv\de \Haus^m
\]
for all $v\in V$. In particular, if we consider $v\in C^\infty_G(\Sigma)$ with compact support in $\operatorname{int}(\Sigma)$, we get that $\Delta u\in H$ and $-\Delta u - \alpha u = f$. Exploiting this equation we obtain
\[
\int_\Sigma (\Delta u) v + \grad u\cdot \grad v\de \Haus^m =- \int_{\partial \Sigma} \beta u v \de\Haus^{m-1}
\]
for all $v\in V$. This proves that $u\in D_R$ and $Au=-\Delta u-\alpha u$.
Conversely, given $u\in D_R$, we can easily show that $u\in D(A)$.

Now observe that the bounded compact operator $(A+\omega\id)^{-1}\colon H\to H$ is positive definite.
Therefore, by the spectral theorem, there exists an orthonormal basis $(\varphi_k)_{k\in\N}$ of $H$ of eigenvectors of $(A+\omega\id)^{-1}$ with corresponding eigenvalues $(\mu_k)_{k\in\N}$ such that $\mu_0\ge \mu_1\ge\mu_2\ge \ldots\to0$. Then note that $\varphi_k\in D_R$, in particular $\varphi_k=0$ on $\partial_D\Sigma$, and it is an eigenvector of $A$ with corresponding eigenvalue $\lambda_k\eqdef 1/\mu_k-\omega$ for all $k\in\N$.
Finally, the fact that $\varphi_k\in C_G^\infty(\Sigma)$ follows from standard regularity theory and, from the fact that $\varphi_k\in D_R$, we obtain also that $\partial_\eta\varphi_k +\beta\varphi_k=0$ in $\partial_R\Sigma$.
\end{proof}

\begin{remark}
Observe that any \emph{$G$-equivariant} eigenfunction as in the statement of \cref{thm:DiscreteSpectrum} is actually an eigenfunction. However, in general, we cannot say to which nonequivariant eigenvalue the $k$-th equivariant eigenfunction corresponds.
\end{remark}

\section{Equivariant Courant's nodal domain theorem}

In this section we want to prove an equivariant version of Courant's nodal domain theorem. This is not directly used in the thesis, but we think it gives a better insight on how the equivariant spectrum behaves and it could be useful in further applications.

\begin{theorem} \label{thm:EquivariantCourant}
Let $\Sigma^m$ be a compact Riemannian manifold with nonempty boundary and let $G$ be a finite group of isometries of $\Sigma$. Let $\alpha\colon\Sigma\to\R$ and $\beta\colon\partial\Sigma\to\R$ be $G$-equivariant smooth functions, i.e., $\alpha\circ h=\alpha$ and $\beta\circ h =\beta$ for all $h\in G$, and let $\partial_D\Sigma\subset\partial\Sigma$ be $G$-equivariant. Let $\varphi_k\in C^\infty_G(\Sigma)$ be an eigenfunction of $-\Delta -\alpha$ with Dirichlet boundary conditions on $\partial_D\Sigma$ and with Robin boundary conditions $\partial_\eta + \beta = 0$ on $\partial_R\Sigma\eqdef\partial\Sigma\setminus\partial_D\Sigma$, associated to the $k$-th eigenvalue $\lambda_k$ for some $k\ge1$ (see \cref{thm:DiscreteSpectrum}). Namely
\[
\begin{cases}
-\Delta \varphi_k - \alpha\varphi_k = \lambda_k \varphi_k & \text{in $\Sigma$}\\
\varphi_k=0 & \text{in $\partial_D\Sigma$}\\
\partial_\eta \varphi_k + \beta\varphi_k = 0 & \text{in $\partial_R\Sigma$}.
\end{cases}
\]
Then, the number of nodal domains of $\varphi_k$ is at most $k \abs G$.
\end{theorem}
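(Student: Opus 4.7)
I will follow the classical proof of Courant's nodal domain theorem, adapted to keep track of the $G$-action through orbits of nodal domains. First, since $\varphi_k\circ h=\operatorname{sgn}_\Sigma(h)\varphi_k$ for every $h\in G$, the zero set $\{\varphi_k=0\}$ is $G$-invariant, so $G$ permutes the $N$ nodal domains $\Omega_1,\ldots,\Omega_N$ of $\varphi_k$. Denote by $N_G$ the number of orbits; since each orbit has at most $|G|$ elements we have $N\le N_G|G|$, and it suffices to prove $N_G\le k$. For each orbit $\mathcal{O}_i$ set $\tilde{\mathcal{O}}_i\eqdef\bigcup_{\Omega\in\mathcal{O}_i}\Omega$ and define the trial function $U_i\eqdef\varphi_k\,\chi_{\tilde{\mathcal{O}}_i}$. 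Because $\tilde{\mathcal{O}}_i$ is $G$-invariant as a set, $U_i$ inherits the equivariance $U_i\circ h=\operatorname{sgn}_\Sigma(h)U_i$, and since $\varphi_k$ vanishes on $\partial\tilde{\mathcal{O}}_i$ the function $U_i$ is continuous and its weak gradient $\nabla\varphi_k\,\chi_{\tilde{\mathcal{O}}_i}$ lies in $L^2$. Thus each $U_i$ belongs to the admissible space $V\eqdef\{v\in H^1_G(\Sigma)\st v|_{\partial_D\Sigma}=0\}$ introduced in the proof of \cref{thm:DiscreteSpectrum}, and the $U_i$ are linearly independent as they have pairwise disjoint supports.

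Next I would carry out the Rayleigh-quotient computation. An integration by parts on each nodal domain $\Omega_j\subset\tilde{\mathcal{O}}_i$, using $-\Delta\varphi_k-\alpha\varphi_k=\lambda_k\varphi_k$ in $\Omega_j$, the boundary conditions $\varphi_k|_{\partial_D\Sigma}=0$ and $\partial_\eta\varphi_k=-\beta\varphi_k$ on $\partial_R\Sigma$, and the vanishing of $\varphi_k$ on the interior nodal boundary $\partial\Omega_j\cap\Sigma^\circ$, yields $a(U_i,U_i)=\lambda_k\|U_i\|_{L^2}^2$, where $a$ is the bilinear form from \cref{thm:DiscreteSpectrum}. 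Since disjointness of supports gives $a(U_i,U_j)=0$ for $i\ne j$, every nontrivial linear combination $u=\sum c_i U_i$ satisfies $R(u)\eqdef a(u,u)/\|u\|_{L^2}^2=\lambda_k$. Now assume by contradiction that $N_G\ge k+1$. Fixing the orbit $\mathcal{O}_1$, consider the subspace $W$ of $\operatorname{span}(U_1,\ldots,U_{N_G})$ cut out by the $k$ linear conditions $c_1=0$ and $\langle u,\varphi_j\rangle_{L^2}=0$ for $j=1,\ldots,k-1$; then $\dim W\ge N_G-k\ge 1$ and I can pick a nonzero $u\in W$. The variational characterization of the $k$-th $G$-equivariant eigenvalue supplied by \cref{thm:DiscreteSpectrum},
\[
\lambda_k=\min\{R(v)\st v\in V\setminus\{0\},\ v\perp\varphi_1,\ldots,\varphi_{k-1}\text{ in }L^2_G(\Sigma)\},
\]
combined with $R(u)=\lambda_k$, forces $u$ to attain the minimum; expanding $u$ in the eigenbasis $(\varphi_\ell)_{\ell\ge 1}$ then pins it to the $\lambda_k$-eigenspace, so $u$ is a (smooth, by elliptic regularity) eigenfunction of $-\Delta-\alpha$ with eigenvalue $\lambda_k$ and the prescribed mixed boundary conditions. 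Since $u$ vanishes identically on the nonempty open set $\tilde{\mathcal{O}}_1$, the strong unique continuation principle for second-order linear elliptic operators forces $u\equiv 0$, contradicting $u\in W\setminus\{0\}$. Hence $N_G\le k$ and $N\le k|G|$.

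The main obstacle I anticipate is the step promoting $u$ to a genuine weak eigenfunction. The disjoint-support identity $R(u)=\lambda_k$ does not by itself imply this, since the individual $U_i$ typically have jumps in their normal derivative across the interior nodal set of $\varphi_k$ and hence fail the weak eigenvalue equation; one genuinely needs the Lagrange-multiplier content of the min-max principle, combined with interior elliptic regularity and unique continuation, to close the argument. Once this point is secured, the remaining technicalities, such as checking the admissibility $U_i\in V$ across the interface between $\partial_D\Sigma$ and $\partial_R\Sigma$, follow from standard elliptic boundary regularity.
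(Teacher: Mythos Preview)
Your proof is correct and essentially identical to the paper's: both group the nodal domains into $G$-orbits, use $\varphi_k$ restricted to these orbits as equivariant trial functions, find a nontrivial combination orthogonal to $\varphi_1,\ldots,\varphi_{k-1}$ whose Rayleigh quotient equals $\lambda_k$, conclude via the eigenbasis expansion that it lies in the $\lambda_k$-eigenspace, and reach a contradiction with unique continuation since it vanishes on an entire orbit. The concern you flag about promoting $u$ to a genuine eigenfunction is resolved precisely by the eigenbasis expansion you already sketch, and the paper treats this step with the same brevity (the only cosmetic difference is that the paper uses only the first $k$ orbit functions and lets $u$ vanish on the $(k+1)$-st, whereas you impose $c_1=0$ and let $u$ vanish on the first).
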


In order to prove the theorem, we need two preliminary lemmas.

\begin{lemma} \label{lem:AttachTwoLipschitz}
Let $X$ be a geodesic metric space and $C_1$, $C_2$ be closed subsets of $X$ such that $C_1\cup C_2=X$. Consider another metric space $Y$ and two Lipschitz functions $u_1\colon C_1\to Y$ and $u_2\colon C_2\to Y$ such that $u_1 = u_2$ in $C_1\cap C_2$.
Then, the function $u\colon X\to Y$ defined as
\[
u(x) = \begin{cases}
           u_1(x) & \text{for $x\in C_1$}\\
           u_2(x) & \text{for $x\in C_2$}
       \end{cases}
\]
is Lipschitz.
\end{lemma}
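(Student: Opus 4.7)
The plan is to show that $u$ is Lipschitz with constant $L \eqdef \max\{L_1, L_2\}$, where $L_i$ denotes the Lipschitz constant of $u_i$. First I would dispose of the easy cases: when $x,y$ both belong to $C_1$ (respectively $C_2$), the estimate $d_Y(u(x),u(y)) \leq L_1 d_X(x,y)$ (respectively $L_2 d_X(x,y)$) is immediate from the Lipschitz property of $u_1$ (respectively $u_2$). Hence the only remaining case, up to swapping roles, is $x \in C_1 \setminus C_2$ and $y \in C_2 \setminus C_1$.

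In this case, I would exploit the geodesic hypothesis on $X$ to produce a ``gluing point''. Namely, take a minimizing geodesic $\gamma\colon [0, d_X(x,y)] \to X$ parametrized by arc length with $\gamma(0)=x$ and $\gamma(d_X(x,y))=y$, and set $A_i \eqdef \gamma^{-1}(C_i)$ for $i=1,2$. Both $A_1$ and $A_2$ are closed subsets of $[0, d_X(x,y)]$, and their union is the whole interval since $C_1 \cup C_2 = X$. Let $t^* \eqdef \sup A_1$, which is a maximum because $A_1$ is closed and $0 \in A_1$; then $\gamma(t^*) \in C_1$. On the other hand, every $t \in (t^*, d_X(x,y)]$ must lie in $A_2$ by definition of $t^*$, and hence, by closedness of $A_2$, also $\gamma(t^*) \in C_2$. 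Setting $z \eqdef \gamma(t^*) \in C_1 \cap C_2$, the hypothesis gives $u_1(z) = u_2(z) = u(z)$.

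At this point, the triangle inequality in $Y$ together with the Lipschitz property of each $u_i$ and the additivity $d_X(x,z) + d_X(z,y) = t^* + (d_X(x,y) - t^*) = d_X(x,y)$ (which holds because $z$ lies on a minimizing geodesic from $x$ to $y$) yields
\[
d_Y(u(x), u(y)) \le d_Y(u_1(x), u_1(z)) + d_Y(u_2(z), u_2(y)) \le L_1 d_X(x,z) + L_2 d_X(z,y) \le L\cdot d_X(x,y),
\]
which concludes the argument. There is no real obstacle here; the only substantive use of the hypotheses is that $X$ being geodesic allows one to find the interpolating point $z$ on a segment realizing the distance between $x$ and $y$, while closedness of $C_1$ and $C_2$ guarantees that $z$ lies in both.
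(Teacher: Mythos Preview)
Your argument is correct. The paper actually states this lemma without proof, so there is nothing to compare against; your approach---connecting $x\in C_1\setminus C_2$ and $y\in C_2\setminus C_1$ by a minimizing geodesic and locating an interpolation point $z\in C_1\cap C_2$ as the last time the geodesic is in $C_1$---is the natural one and works as written. One tiny remark for completeness: when you invoke closedness of $A_2$ to conclude $t^*\in A_2$, you are implicitly using that $(t^*,d_X(x,y)]$ is nonempty, i.e.\ that $t^*<d_X(x,y)$; this holds because $y\in C_2\setminus C_1$ forces $d_X(x,y)\notin A_1$.
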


\begin{proposition}
Let $\Sigma^m$ be a compact Riemannian manifold, possibly with smooth boundary, and let $u\colon \Sigma\to\R$ be a smooth function. Consider an open subset $\Omega\subset\Sigma$ such that $u=0$ on $\partial\Omega\setminus\partial\Sigma$. Then 
\[
\int_{\Omega} u\Delta u \de \Haus^m+ \int_\Omega \abs{\grad u}^2 \de \Haus^m= \int_{\partial\Sigma\cap\Omega} u\partial_\eta u \de \Haus^{m-1}.
\]
\end{proposition}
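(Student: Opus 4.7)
The statement is essentially Green's first identity applied on $\Omega$, where the boundary integral over $\partial\Omega$ reduces to $\int_{\partial\Sigma\cap\Omega}u\partial_\eta u$ because $u$ vanishes on the remaining part of $\partial\Omega$. The subtlety is that $\Omega$ is only assumed to be open, hence $\partial\Omega\setminus\partial\Sigma$ need not be smooth. My plan is to approximate $\Omega$ by subdomains with piecewise smooth boundary, apply the classical divergence theorem on them, and pass to the limit.

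\emph{Step 1 (Approximating subdomains).} By Sard's theorem applied to the smooth function $u^2\colon\Sigma\to\R$, almost every $\epsilon>0$ is a regular value. For such $\epsilon$, set
\[
\Omega_\epsilon\eqdef \Omega\cap\{x\in\Sigma\st u(x)^2>\epsilon^2\}.
\]
Since $u=0$ on $\partial\Omega\setminus\partial\Sigma$, for $\epsilon$ small enough $\Omega_\epsilon$ is compactly contained in $\Omega\cup(\partial\Sigma\cap\Omega)$, and its topological boundary decomposes as $\partial\Omega_\epsilon=(\{|u|=\epsilon\}\cap\Omega)\cup(\partial\Sigma\cap\overline{\Omega_\epsilon})$. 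The first piece is a smooth hypersurface in $\operatorname{int}(\Sigma)$ by the regular value condition, and $\Omega_\epsilon$ is a domain with piecewise smooth boundary (to which the divergence theorem applies; if needed one first rounds the corners along $\partial\Sigma\cap\{|u|=\epsilon\}$ and passes to the limit).

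\emph{Step 2 (Green's identity on $\Omega_\epsilon$).} Applying Green's first identity to $u$ on $\Omega_\epsilon$ yields
\[
\int_{\Omega_\epsilon}(u\lapl u+\abs{\grad u}^2)\de\Haus^m=\int_{\partial\Sigma\cap\Omega_\epsilon}u\partial_\eta u\de\Haus^{m-1}+\int_{\{|u|=\epsilon\}\cap\Omega}u\partial_\nu u\de\Haus^{m-1},
\]
where $\nu$ is the unit outward normal to $\Omega_\epsilon$ along the interior level set. Since $\Omega_\epsilon=\{u^2>\epsilon^2\}\cap\Omega$, one has $\nu=-\grad(u^2)/\abs{\grad(u^2)}=-\operatorname{sgn}(u)\grad u/\abs{\grad u}$ there, so $u\partial_\nu u=-|u|\,\abs{\grad u}=-\epsilon\,\abs{\grad u}$ on that set.

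\emph{Step 3 (Passage to the limit).} Since $u$, $\grad u$ and $\lapl u$ are bounded on the compact manifold $\overline{\Sigma}$, dominated convergence gives
\[
\int_{\Omega_\epsilon}(u\lapl u+\abs{\grad u}^2)\de\Haus^m\to\int_{\Omega}(u\lapl u+\abs{\grad u}^2)\de\Haus^m,
\]
and similarly $\int_{\partial\Sigma\cap\Omega_\epsilon}u\partial_\eta u\to\int_{\partial\Sigma\cap\Omega}u\partial_\eta u$ as $\epsilon\to0$. It remains to show that the error term $E(\epsilon)\eqdef\epsilon\int_{\{|u|=\epsilon\}\cap\Omega}\abs{\grad u}\de\Haus^{m-1}$ can be made to vanish along a suitable subsequence. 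By the coarea formula applied to $|u|$,
\[
\int_0^\delta\left(\int_{\{|u|=t\}\cap\Omega}\abs{\grad u}\de\Haus^{m-1}\right)dt=\int_{\{0<|u|<\delta\}\cap\Omega}\abs{\grad u}^2\de\Haus^m<\infty,
\]
so the function $g(t)\eqdef\int_{\{|u|=t\}\cap\Omega}\abs{\grad u}\de\Haus^{m-1}$ lies in $L^1(0,\delta)$. A Chebyshev-type averaging argument on dyadic intervals $(0,2^{-j})$ then produces regular values $\epsilon_j\to0$ such that $\epsilon_j\,g(\epsilon_j)\to0$, so $E(\epsilon_j)\to 0$. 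Passing to the limit along this sequence in the identity of Step 2 yields the claim.

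The only genuine difficulty is this last step, namely extracting a sequence of regular values along which the interior boundary contribution vanishes; this is a standard, if slightly delicate, combination of Sard's theorem with the coarea formula.
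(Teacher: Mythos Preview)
Your proof is correct, but it takes a substantially longer route than the paper's. The paper avoids approximation altogether: it defines the vector field
\[
X=\begin{cases}u\grad u & \text{in }\Omega,\\ 0 & \text{in }\Sigma\setminus\Omega,\end{cases}
\]
observes that $X$ is Lipschitz on all of $\Sigma$ (since $u=0$ on $\partial\Omega\setminus\partial\Sigma$ and $u,\grad u$ are smooth and bounded, the two pieces agree on the overlap; a one-line gluing lemma then gives Lipschitz continuity), and applies the divergence theorem directly on $\Sigma$, whose boundary $\partial\Sigma$ is smooth. Then $\div X=u\Delta u+\abs{\grad u}^2$ a.e.\ in $\Omega$ and $\div X=0$ a.e.\ outside, while the flux term is $\int_{\partial\Sigma}X\cdot\eta=\int_{\partial\Sigma\cap\Omega}u\partial_\eta u$, and the identity follows in one line.

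What each buys: the paper's trick of extending by zero sidesteps any regularity issue for $\partial\Omega$ and needs no coarea or Sard argument. Your level-set approximation is more hands-on and would adapt more easily if $u$ were only, say, $W^{1,2}$ rather than smooth (where $u\grad u$ need not be Lipschitz). A minor point: your phrase ``compactly contained in $\Omega\cup(\partial\Sigma\cap\Omega)$'' is just $\Omega$, and what you actually use (and what holds) is $\overline{\Omega_\epsilon}\cap\operatorname{int}(\Sigma)\subset\Omega$; this suffices for the boundary decomposition you need.
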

\begin{proof}
Let us consider the vector field $X$ on $\Sigma$ defined as
\[
X = \begin{cases}
        u\grad u & \text{in $\Omega$}\\
        0 & \text{in $\Sigma\setminus\Omega$}.
       \end{cases}
\]
By \cref{lem:AttachTwoLipschitz}, $X$ is Lipschitz and therefore the divergence theorem holds
\[
\int_\Sigma\div(X) \de \Haus^m = \int_{\partial\Sigma} X \cdot \eta\de\Haus^{m-1}.
\]
However note that $\div(X) = 0$ almost everywhere in $\Sigma\setminus\Omega$ and $\div(X) = \div(u\grad u)$ in $\Omega$, which leads to the desired result.
\end{proof}

\begin{proof} [Proof of \cref{thm:EquivariantCourant}]
We follow the lines of the proof of Courant's theorem in \cite{SchoenYau1994}*{Chapter III Section 6}. Assume by contradiction that $\varphi_k$ has more than $k\abs G$ nodal domains $\Omega_1,\Omega_2,\ldots, \Omega_h$ with $h>k\abs G$.
Now define the $G$-equivariant subset $\tilde\Omega_1 \eqdef \bigcup_{h\in G} h(\Omega_1)$. Then, without loss of generality, we can assume that $\Omega_2\not\subset \tilde\Omega_1$ and define $\tilde\Omega_2 \eqdef \bigcup_{h\in G} h(\Omega_2)$. We repeat and find $\tilde\Omega_1,\tilde\Omega_2,\ldots,\tilde\Omega_l$ until $\Omega_i\subset \bigcup_{j=1}^l\tilde\Omega_j$ for all $i=1,\ldots,h$.
Note that $l>k$ since $\tilde \Omega_j$ consists of the disjoint union of at most $\abs{G}$ nodal domains for all $j=1,\ldots,l$.

At this point define the functions
\[
\varphi_k^j = \begin{cases}
                  \varphi_k & \text{in $\tilde\Omega_j$}\\
                  0 &\text{otherwise}
              \end{cases}
\]
and note that $\varphi_k^j\in L^2_G(\Sigma)$ for all $j=1,\ldots,l$.
Since we have that $\dim \operatorname{span}\{\varphi_k^1,\ldots,\varphi_k^k\} > \dim\operatorname{span}\{\varphi_1,\ldots,\varphi_{k-1}\}$, we can find a $G$-equivariant nontrivial linear combination $u\eqdef\sum_{j=1}^k a_j\varphi_k^j$ of $\varphi_k^1,\ldots,\varphi_k^k$ orthogonal to $\varphi_1,\ldots,\varphi_{k-1}$. Then, by the variational characterization of the eigenvalues, we get
\begin{align*}
\lambda_k \int_{\Sigma}u^2\de\Haus^m&\le \int_\Sigma (\abs{\grad u}^2 -\alpha u^2) \de \Haus^m + \int_{\partial_R\Sigma}\beta u^2\de\Haus^{m-1} \\
&= \sum_{j=1}^ka_j^2\int_{\Sigma} (\abs{\grad \varphi_k^j}^2-\alpha(\varphi_k^j)^2) \de \Haus^m + a_j^2\int_{\partial_R\Sigma}\beta (\varphi_k^j)^2\de\Haus^{m-1}\\
&=\sum_{j=1}^k a_j^2\int_{\tilde\Omega_j} (\abs{\grad \varphi_k}^2-\alpha\varphi_k^2) \de \Haus^m + a_j^2\int_{\partial_R\Sigma\cap\tilde\Omega_j}\beta \varphi_k^2\de\Haus^{m-1} \\
&= \lambda_k\sum_{j=1}^k a_j^2\int_{\tilde\Omega_j} \varphi_k^2 \de \Haus^m =\lambda_k\sum_{j=1}^k a_j^2\int_{\tilde\Omega_j} (\varphi_k^j)^2 \de \Haus^m = \lambda_k\int_\Sigma u^2\de\Haus^m.
\end{align*}
Therefore the inequality above is an equality and thus $u$ is an eigenfunction relative to the eigenvalue $\lambda_k$. However, this contradicts the fact that $u=0$ in the open set $\tilde\Omega_{k+1}$, by unique continuation.
\end{proof}

\begin{remark}
In the proof of Courant's theorem presented in \cite{SchoenYau1994}*{Chapter~III Section~6}, there is a mistake in Cheng's argument at pp. 123. In fact it is not true for $m\ge 3$ that, if $f\colon\R^m\to\R$ is a smooth function such that $f(x) = P_N(x)+\bigO(\abs{x}^{N+\varepsilon})$ for some homogeneous harmonic polynomial $P_N$ of degree $N$ and $0<\varepsilon<1$, then there exists a $C^1$-diffeomorphism $\phi$ such that $f(x) = P_N(\phi(x))$. A counterexample is given by $f(x,y,z) = xz-z^4$ and $P_2(x,y,z) = xy$; this is presented in \cite{BerardMeyer1982}*{Appendice E}.
A discussion on the regularity of the nodal domains for Robin boundary conditions in dimension $m=2$ can be found in \cite{GittinsHelffer2019}*{Appendix B}.
For a discussion about this issue see also Remark 2 in \cite{Chavel1984}*{Chapter I, Section 5}.
\end{remark}

\chapter{Equivariant free boundary minimal surfaces} \label{chpt:EquivFBMS}

In this chapter, we present some preliminaries about free boundary minimal hypersurfaces that are invariant with respect to the action of a finite group of isometries of the ambient manifold $(M^{m+1},\smetric)$, which has \emph{arbitrary} dimension $m+1\ge 3$.

\section{Equivariant Morse index} \label{sec:EquivSpectrum}

In this section, first we observe that being free boundary stationary with respect to $G$-equivariant variations is equivalent to being free boundary stationary, then we introduce the notion of $G$-equivariant index.
Throughout the section, we assume that $(M^{m+1},\smetric)$ is a compact Riemannian manifold with boundary and $G$ is a finite group of isometries of $M$.

\begin{definition}
Given a $G$-equivariant varifold $V\in\mathcal{V}^m_G(M)$, we say that $V$ is \emph{free boundary $G$-stationary} if $\delta V(X)=0$ for all $G$-equivariant vector fields $X\in \vf_G(M)$.
\end{definition}

\begin{remark} \label{rem:stationaryGstationary}
By the principle of symmetric criticality by Palais (see \cite{Palais1979}, or \cite{Ketover2016Equivariant}*{Lemma 3.8} for the result in this setting), we have that $V\in\mathcal{V}^m_G(M)$ is free boundary $G$-stationary if and only if it is free boundary stationary.
\end{remark}

\begin{definition} \label{def:GequivIndex}
Let $\Sigma^m\subset M$ be a $G$-equivariant free boundary minimal hypersurface in $M$ and let $\Gamma_G(N\Sigma)$ denote the sections of the normal bundle of $\Sigma$ obtained as restriction to $\Sigma$ of $G$-equivariant vector fields in $M$.
Then, the \emph{$G$-equivariant (Morse) index} $\ind_G(\Sigma)$ of $\Sigma$ is defined as the maximal dimension of a linear subspace of $\Gamma_G(N\Sigma)$ where $Q^\Sigma$ (defined in \cref{sec:SecondVariation}) is negative definite.
\end{definition}

\begin{remark} \label{rem:GequivExtension}
Note that, given a $G$-equivariant submanifold $\Sigma$, any $G$-equivariant vector field defined along $\Sigma$ can be extended to a $G$-equivariant vector field on the ambient manifold $M$. Indeed, we can first extend it to a vector field $\tilde X\in\vf(M)$ (not necessarily $G$-equivariant) and then define \[X\eqdef\frac{1}{\abs{G}} \sum_{\selem\in G} \selem_*\tilde X.\]
\end{remark}

\subsection{Equivariant index for two-sided hypersurfaces} \label{sec:twoside}

Given a $G$-equivariant, \emph{two-sided}, free boundary minimal hypersurface $\Sigma^m\subset M^{m+1}$ and a $G$-equivariant section $Y\in\Gamma_G(N\Sigma)$ of the normal bundle, we can write $Y=u\nu$, where $u\in C^\infty(\Sigma)$ and $\nu$ is a choice of unit normal to $\Sigma$.
Then, by \cref{sec:SecondVariation}, we know that the second variation of the volume of $\Sigma$ along $Y$ is given by
\begin{align*}
Q^\Sigma(Y,Y)=Q_\Sigma(u,u) = -\int_\Sigma u \jac_\Sigma u \de \Haus^m + \int_{\partial \Sigma}(u\partial_\eta u + \II^{\partial M}(\nu,\nu) u^2 ) \de \Haus^{m-1},
\end{align*}
where $\jac_\Sigma \eqdef \lapl + \Ric_M(\nu,\nu) + \abs{A}^2$ is the Jacobi operator associated to $\Sigma$.

Now, let us further assume that $\Sigma$ is connected. Note that, if $Y=u\nu$ is $G$-equivariant, for all $h\in G$ the fact that $\selem_*(u\nu) = u\nu$ implies
\[
u(\selem(x))\nu(\selem(x)) =  \d \selem_x[u(x)\nu(x)] = u(x) \d \selem_x[\nu(x)] = \operatorname{sgn}_\Sigma(\selem) u(x) \nu(\selem(x)),
\]
where $\operatorname{sgn}_\Sigma(\selem) = 1$ if $\selem_*\nu = \nu$ and $\operatorname{sgn}_\Sigma(\selem)=-1$ if $\selem_*\nu = -\nu$. Indeed, $h_*\nu $ is equal to $\nu$ or $-\nu$, because $h$ is an isometry and $h(\Sigma)=\Sigma$ (hence $h_*(N\Sigma)=N\Sigma$), and $\Sigma$ is connected (thus the sign does not depend on the point on $\Sigma$). 
Note that $\operatorname{sgn}_\Sigma(\selem_1\circ\selem_2) = \operatorname{sgn}_\Sigma(\selem_1)\operatorname{sgn}_\Sigma(\selem_2)$.

In particular, we can define $C^\infty_G(\Sigma)$, $L^2_G(\Sigma)$ and $H^1_G(\Sigma)$ as in \cref{def:EquivFuncSpaces} and get that the $G$-equivariant (Morse) index of $\Sigma$ coincides with the maximal dimension of a subspace of $C^\infty_G(M)$ where $Q_\Sigma$ is negative definite.
Moreover, thanks to \cref{thm:DiscreteSpectrum}, the elliptic problem
\[
\begin{cases}
-\jac_\Sigma\varphi = \lambda \varphi & \text{in $\Sigma$}\\
\partial_\eta \varphi = -   \II^{\partial M}(\nu,\nu)\varphi & \text{in $\partial\Sigma$}
\end{cases}
\]
admits a discrete spectrum $\lambda_1\le \lambda_2\le \ldots\le \lambda_k\le \ldots\to+\infty$ with associated $L^2_G(\Sigma)$-orthonormal basis of eigenfunctions $(\varphi_k)_{k\ge 1}\subset C^\infty_G(\Sigma)$ of $L^2_G(\Sigma)$. The $G$-equivariant index equals the number of negative eigenvalues.

\begin{remark}
Note that the assumption of connectedness of $\Sigma$ is not restrictive. Indeed, if $\Sigma$ consists of several connected components $\Sigma_1,\ldots,\Sigma_k$ for some $k\ge 2$, then we can consider each connected component separately and $\ind_G(\Sigma) = \sum_{i=1}^k\ind_G(\Sigma_i)$.
\end{remark}

\subsection{Equivariant index for one-sided hypersurfaces}
Given a $G$-equivariant, \emph{one-sided}, free boundary minimal hypersurface $\Sigma^m\subset M^{m+1}$, the elliptic problem
\begin{equation}\label{eq:OneSidedProblem}
\begin{cases}
-\lapl_\Sigma^\perp Y -\Ric_M^\perp(Y,\cdot) - \abs{A}^2Y = \lambda Y & \text{in $\Sigma$}\\
\grad^\perp_\eta Y = -   (\II^{\partial M}(Y,\cdot))^\sharp & \text{in $\partial\Sigma$},
\end{cases}
\end{equation}
on the $G$-equivariant sections $\Gamma_G(N\Sigma)$ of the normal bundle, admits a discrete spectrum $\lambda_1\le \lambda_2\le \ldots\le \lambda_k\le \ldots\to+\infty$ and the $G$-equivariant index coincides with the number of negative eigenvalues. 

This follows from the two-sided case applied to the double cover of $\Sigma$.
Indeed, let us consider the double cover $\pi\colon\tilde\Sigma\to\Sigma$, then every isometry $\selem\in G$ lifts to two isometries $\tilde \selem_1, \tilde \selem_2$ of $\tilde\Sigma$, where $\tilde \selem_1\circ\tilde \selem_2^{-1}$ is given by the isometric involution $i\colon\tilde\Sigma\to\tilde\Sigma$ associated to the universal cover. We denote by $\tilde G$ the finite group generated by these isometries.
Every $Y\in\Gamma_G(N\Sigma)$ lifts to a vector field $\tilde Y\in\Gamma(N\tilde\Sigma)$ and we can write it as $\tilde Y= u\tilde\nu$, where $\tilde \nu$ is a global unit normal to $\tilde\Sigma$ and $u\in C^\infty_{\tilde G}(\tilde\Sigma)$.
Vice versa, for all $u\in C^\infty_{\tilde G}(\tilde\Sigma)$, the vector field $Y\eqdef \pi_*(u\tilde\nu)$ is well-defined and belongs to $\Gamma_G(N\Sigma)$. Hence we can just apply \cref{thm:DiscreteSpectrum} to $\tilde\Sigma$ and $\tilde G$ (as we did in the previous subsection) to obtain the desired properties on the spectrum of problem \eqref{eq:OneSidedProblem}.

\section[FBMS with bounded equivariant index]{Free boundary minimal surfaces with bounded equivariant index} \label{sec:ConvBoundGIndex}

In this section, we restrict again to the case of a three-dimensional compact Riemannian manifold $(M^3,\smetric)$ with boundary satisfying property \hyperref[HypP]{$(\mathfrak{P})$} and we consider a finite group of \emph{orientation-preserving} isometries of $M$, as in \cref{sec:OrientationPreserving}.
Roughly speaking, we prove that a limit of free boundary minimal surfaces with bounded equivariant index satisfies the same bound on the equivariant index.

Given a free boundary minimal surface $\Sigma^2\subset M$ and an open subset $U\subset M$, we let $\mu_1(\Sigma\cap U)$ denote the first eigenvalue of the problem
\begin{equation} \label{eq:LocalEigenvProb}
\begin{cases}
-\lapl_{\Sigma}^\perp Y -\Ric_M^\perp(Y,\cdot) - \abs{A}^2Y = \mu Y & \text{in $\Sigma\cap U$}\\
\grad^\perp_\eta Y = -   (\II^{\partial M}(Y,\cdot))^\sharp & \text{in $\partial\Sigma \cap U$}\\
Y=0 & \text{in $\partial(\Sigma\cap U)\setminus \partial \Sigma$}
\end{cases}
\end{equation}
on the sections $\Gamma(N(\Sigma\cap U))$ of the normal bundle. Furthermore, if $\Sigma$ and $\Sigma\cap U$ are $G$-equivariant, let $\lambda_1(\Sigma\cap U)$ be the first eigenvalue of the same problem \eqref{eq:LocalEigenvProb} on the \emph{$G$-equivariant} sections $\Gamma_G(N(\Sigma\cap U))$ of the normal bundle (see \cref{sec:EquivSpectrum} and \cref{thm:DiscreteSpectrum}).

\begin{remark}
We stress the fact that we use the letters $\lambda$ and $\mu$ to denote the eigenvalues respectively \emph{taking} and \emph{not taking} into account the symmetry group.
\end{remark}

\begin{lemma}[{cf. \cite{Ketover2016Equivariant}*{Proposition 4.6}}] \label{lem:FirstEigenvalues}
In the setting above, let $\Sigma^2\subset M$ be a $G$-equivariant free boundary minimal surface in $M$ and fix $x\in\Sigma\setminus\mathcal{S}$.  
Then, there exists $\varepsilon_0>0$ such that, for all $0<\varepsilon<\varepsilon_0$, the $G$-equivariant subset $U_\varepsilon(x)\eqdef \bigcup_{\selem\in G} \selem(B_\varepsilon(x))$ consists of exactly $\abs{G}$ disjoint balls $\{B_\varepsilon(\selem(x))\}_{\selem\in G}$, and it holds
\[
\mu_1(\Sigma\cap B_\varepsilon(x)) = \mu_1(\Sigma\cap U_\varepsilon(x)) = \lambda_1(\Sigma\cap U_\varepsilon(x)).
\]
\end{lemma}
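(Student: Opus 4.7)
\medskip

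\textbf{Proof plan.} The argument naturally splits into two assertions: first, that for sufficiently small $\varepsilon$ the set $U_\varepsilon(x)$ is a disjoint union of exactly $\abs{G}$ isometric balls; and second, the triple equality of first eigenvalues. The first assertion is purely elementary. Since $x \in \Sigma \setminus \mathcal{S}$, the isotropy subgroup $G_x$ is trivial, so $\selem(x) \neq \selem'(x)$ whenever $\selem \neq \selem'$ in $G$ (otherwise $\selem^{-1}\selem' \in G_x$). Setting $\delta \vcentcolon= \tfrac{1}{3}\min_{\selem\neq\selem'} d_\smetric(\selem(x),\selem'(x)) > 0$, one immediately obtains that for every $0 < \varepsilon < \delta$ the balls $\{B_\varepsilon(\selem(x))\}_{\selem\in G}$ are pairwise disjoint, and they form a $G$-equivariant set because $\selem(B_\varepsilon(\selem'(x))) = B_\varepsilon(\selem\selem'(x))$ by isometricity. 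Hence $U_\varepsilon(x)$ is precisely their disjoint union for every $\varepsilon < \varepsilon_0 \vcentcolon= \delta$.

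The equality $\mu_1(\Sigma \cap B_\varepsilon(x)) = \mu_1(\Sigma \cap U_\varepsilon(x))$ follows by a standard decoupling argument. Since $U_\varepsilon(x)$ is the disjoint union of $\abs{G}$ mutually isometric components, the eigenvalue problem \eqref{eq:LocalEigenvProb} on $\Sigma \cap U_\varepsilon(x)$ decomposes as the direct sum of the corresponding problems on each $\Sigma \cap B_\varepsilon(\selem(x))$, and each of the latter is isometric (through $\selem$) to the problem on $\Sigma \cap B_\varepsilon(x)$. Thus the spectra on the connected pieces coincide, and so do their first eigenvalues; in particular, any first eigenfunction on one component can be extended by zero to give a first eigenfunction on $U_\varepsilon(x)$.

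The heart of the matter is to prove the equality $\mu_1(\Sigma \cap U_\varepsilon(x)) = \lambda_1(\Sigma \cap U_\varepsilon(x))$. The inequality $\lambda_1 \geq \mu_1$ is tautological because the $G$-equivariant Rayleigh quotient is just the restriction of the full Rayleigh quotient to the subspace of $G$-equivariant sections. For the reverse inequality, I would exhibit an explicit $G$-equivariant minimizer. Let $Y$ be a first eigenfunction of \eqref{eq:LocalEigenvProb} on the single component $\Sigma \cap B_\varepsilon(x)$ and define $\tilde Y$ on $\Sigma \cap U_\varepsilon(x)$ by setting
\[
\tilde Y|_{\Sigma \cap B_\varepsilon(\selem(x))} \vcentcolon= \selem_* Y \qquad \text{for each } \selem \in G.
\]
This is unambiguously defined by the disjointness established above (and the fact that distinct $\selem$ give distinct centres). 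On each component $\selem_* Y$ is again a first eigenfunction by isometry-invariance of the operator in \eqref{eq:LocalEigenvProb}, and satisfies the required boundary conditions because $\selem(\partial\Sigma) = \partial\Sigma$ and $\selem(B_\varepsilon(x)) = B_\varepsilon(\selem(x))$. Finally $\tilde Y$ is $G$-equivariant: for $\selem' \in G$,
\[
(\selem')_*\bigl(\tilde Y|_{\Sigma\cap B_\varepsilon(\selem(x))}\bigr) = (\selem'\selem)_* Y = \tilde Y|_{\Sigma\cap B_\varepsilon(\selem'\selem(x))}.
\]
Hence $\tilde Y \in \Gamma_G(N(\Sigma\cap U_\varepsilon(x)))$ is an eigenfunction with eigenvalue $\mu_1(\Sigma\cap B_\varepsilon(x)) = \mu_1(\Sigma\cap U_\varepsilon(x))$, proving $\lambda_1 \leq \mu_1$.

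The only genuinely nontrivial point is to ensure that a first eigenfunction $Y$ on $\Sigma\cap B_\varepsilon(x)$ actually exists in the appropriate class so that the construction produces a smooth $G$-equivariant section; however, since $x \notin \mathcal{S}$ and $\varepsilon < \varepsilon_0$ the ball $B_\varepsilon(x)$ is disjoint from its $G$-translates, so no equivariance constraint is imposed on $Y$ itself and the existence of a first eigenfunction is guaranteed by the discrete spectrum result (\cref{thm:DiscreteSpectrum} applied with trivial group, in each of the two-sided and one-sided cases as discussed in \cref{sec:EquivSpectrum}). I would therefore expect the main technical care to lie in checking smoothness and the correct transformation of the vector field $\selem_* Y$ along the normal bundle, which is nevertheless automatic from the fact that $\selem$ is an orientation-preserving isometry of $M$ preserving $\Sigma$.
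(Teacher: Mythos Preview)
Your proof is correct and follows essentially the same strategy as the paper's: establish disjointness of the $\abs{G}$ balls from triviality of the isotropy at $x$, then transport a first eigenfunction from $\Sigma\cap B_\varepsilon(x)$ to all of $\Sigma\cap U_\varepsilon(x)$ via the group action to produce a $G$-equivariant first eigenfunction. The only cosmetic difference is that the paper first shrinks $\varepsilon$ so that $\Sigma\cap B_\varepsilon(x)$ is two-sided, writes sections as $u\nu$, and transports the scalar eigenfunction via $\tilde\varphi(h(y)) = \operatorname{sgn}_\Sigma(h)\,\varphi(y)$ (then invokes the sign characterization of the first eigenfunction), whereas you push forward the normal section directly via $h_*$ and argue through the Rayleigh quotient; these are equivalent formulations of the same construction.
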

\begin{remark}
Observe that the result is not true if $x\in\mathcal{S}$.
\end{remark}

\begin{proof}
First, note that $B_\varepsilon(\selem(x)) = \selem(B_\varepsilon(x))$ since every $\selem\in G$ is an isometry. Moreover, if $\varepsilon_0$ is sufficiently small (in particular smaller than half of the distance between $x$ and $\selem(x)$ for all $\selem\in G$),
then we have that $\selem(B_\varepsilon(x)) \cap B_\varepsilon(x) = B_\varepsilon(h(x)) \cap B_\varepsilon(x) \not= \emptyset$ for some $\selem\in G$ if and only if 
$h(x)=x$, which implies that $h=\id$ because $x\not\in\mathcal{S}$.
In particular, we get that the subset $U_\varepsilon(x)\eqdef \bigcup_{\selem\in G} \selem(B_\varepsilon(x))$ consists of exactly $\abs{G}$ disjoint balls.

Now, we let $\varepsilon>0$ be sufficiently small such that $\Sigma\cap B_\varepsilon(x)$ is two-sided. Then, we can write any section $X\in \Gamma(N(\Sigma\cap B_\varepsilon(x))$ of the normal bundle as $X = u\nu$, where $u\in C^\infty(\Sigma\cap B_\varepsilon(x))$ and $\nu$ is a choice of unit normal, as in \cref{sec:twoside}.
Consider the first nonnegative eigenfunction $\varphi\in C^\infty(\Sigma\cap B_\varepsilon(x))$ relative to the eigenvalue $\mu_1(\Sigma\cap B_\varepsilon(x))$, and define $\tilde\varphi\in C^\infty(\Sigma\cap U_\varepsilon(x))$ as
\[
\tilde\varphi(\selem(x)) \eqdef \operatorname{sgn}_\Sigma(\selem) \varphi(x) 
\]
for all $\selem\in G$. Note that $\tilde\varphi$ is well-defined and $G$-equivariant, i.e., $\tilde\varphi\in C^\infty_G(\Sigma\cap U_\varepsilon(x))$.
As a result, since $\tilde\varphi\in C^\infty_G(\Sigma\cap U_\varepsilon(x))$ is nonnegative and $G$-equivariant, we obtain that $\tilde\varphi$ is the first eigenfunction relative to the eigenvalue $\mu_1(\Sigma\cap U_\varepsilon(x))$, and also relative to $\lambda_1(\Sigma\cap U_\varepsilon(x))$.
This implies that the three numbers $\mu_1(\Sigma\cap B_\varepsilon(x))$, $\mu_1(\Sigma\cap U_\varepsilon(x))$ and $\lambda_1(\Sigma\cap U_\varepsilon(x))$ actually coincide.
\end{proof}

\begin{theorem} \label{thm:ConvBoundGIndex}
In the setting above, let $\{\Sigma_k\}_{k\in\N}$ be a sequence of $G$-equivariant free boundary minimal surfaces in $M$, with uniformly bounded area, such that $\ind_G(\Sigma_k)\le n$ for some fixed $n\in\N$. Then (up to subsequence) $\Sigma_k$ converges locally graphically and smoothly, possibly with multiplicity, to a free boundary minimal surface $\tilde\Sigma\subset M\setminus(\mathcal{S}\cup \mathcal{Y})$ in $M\setminus(\mathcal{S}\cup \mathcal{Y})$, where $\mathcal{S}\subset M$ is the singular locus of $G$ and $\mathcal{Y}$ is a finite subset of $M$ with $\abs{\mathcal{Y}}\le n\abs{G}$.
Furthermore, if there exists a $G$-equivariant free boundary minimal surface $\Sigma\subset M$ such that $\tilde\Sigma = \Sigma\setminus (\mathcal{S}\cup\mathcal{Y})$ (namely if $\tilde \Sigma$ extends smoothly to $M$), then $\ind_G(\Sigma)\le n$.
\end{theorem}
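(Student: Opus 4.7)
The plan is to prove the two assertions in sequence, following the structure of the nonequivariant compactness theorem of \cite{AmbrozioCarlottoSharp2018Compactness} but with the crucial difference that the bound $\ind_G(\Sigma_k)\le n$ controls only the $G$-equivariant index; the bridge between equivariant and nonequivariant first eigenvalues on $G$-invariant neighborhoods is supplied by \cref{lem:FirstEigenvalues}.

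For the \textbf{convergence statement}, call a point $x\in M\setminus\mathcal{S}$ \emph{bad} if, for every sufficiently small $\varepsilon>0$, $\mu_1(\Sigma_k\cap B_\varepsilon(x))<0$ for infinitely many $k$. The key claim is that the set of bad orbits $G\cdot x$ in $(M\setminus\mathcal{S})/G$ has cardinality at most $n$. Suppose by contradiction there are $n+1$ distinct bad orbits with representatives $x_1,\ldots,x_{n+1}$. Choose $\varepsilon>0$ small enough that the $G$-invariant sets $U_\varepsilon(x_i)\eqdef\bigcup_{\selem\in G}\selem(B_\varepsilon(x_i))$ are pairwise disjoint and each consists of $\abs G$ disjoint balls (see \cref{rem:BallsCompatible,lem:FirstEigenvalues}). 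A diagonal extraction yields a subsequence along which $\mu_1(\Sigma_k\cap B_\varepsilon(x_i))<0$ for all $i$ and $k$ large. By \cref{lem:FirstEigenvalues}, this coincides with $\lambda_1(\Sigma_k\cap U_\varepsilon(x_i))$. The first $G$-equivariant eigenfunctions on $\Sigma_k\cap U_\varepsilon(x_i)$, extended by zero, span an $(n+1)$-dimensional subspace of $\Gamma_G(N\Sigma_k)$ on which $Q^{\Sigma_k}$ is negative definite, contradicting $\ind_G(\Sigma_k)\le n$. Taking $\mathcal{Y}$ to be the union of the bad orbits then yields $\abs{\mathcal{Y}}\le n\abs G$. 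Outside $\mathcal{S}\cup\mathcal{Y}$, the surfaces $\Sigma_k$ are locally stable for $k$ large, so the curvature estimate for stable free boundary minimal surfaces (cf.~\cref{thm:CurvEstStable} and \cite{GuangLiZhou2020}), combined with the area bound and Allard-type compactness, provides subsequential smooth convergence (possibly with multiplicity) to a free boundary minimal surface $\tilde\Sigma\subset M\setminus(\mathcal{S}\cup\mathcal{Y})$.

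For the \textbf{index bound}, assume $\tilde\Sigma$ extends to a smooth $G$-equivariant FBMS $\Sigma\subset M$, and suppose by contradiction $\ind_G(\Sigma)\ge n+1$. Pick linearly independent $G$-equivariant sections $Y_1,\ldots,Y_{n+1}\in\Gamma_G(N\Sigma)$ on which $Q^\Sigma$ is negative definite, and extend each to an ambient $G$-equivariant vector field $\hat Y_i\in\vf_G(M)$ via \cref{rem:GequivExtension}. Since the flows of the $\hat Y_i$ are ambient diffeomorphisms, the second variation, viewed as a bilinear form computed via these flows, is continuous with respect to varifold weak convergence. Concretely, if $[\Sigma_k]\rightharpoonup V_\infty=\sum_\ell m_\ell[\Sigma^\ell]$ for the connected components $\Sigma^\ell$ of $\Sigma\setminus(\mathcal{S}\cup\mathcal{Y})$ and multiplicities $m_\ell\in\N$ (which, by $G$-equivariance, are constant on $G$-orbits of components), then $Q^{\Sigma_k}(\hat Y_i|_{\Sigma_k},\hat Y_j|_{\Sigma_k})\to\sum_\ell m_\ell Q^{\Sigma^\ell}(\hat Y_i,\hat Y_j)$. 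In the typical min-max situation where the $m_\ell$ are all equal to a common $m$, this limit equals $m\cdot Q^\Sigma(Y_i,Y_j)$, still negative definite on the span of $\hat Y_i$; by continuity, $Q^{\Sigma_k}$ is negative definite on the span of $\hat Y_i|_{\Sigma_k}$ for $k$ large, forcing $\ind_G(\Sigma_k)\ge n+1$, a contradiction.

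The \textbf{main obstacle} is the multiplicity issue: by \cref{cor:IntersectionSingularLocus}, $\Sigma\cap\mathcal{S}$ may include smooth $\mathbb{Z}_2$-curves across which the convergence multiplicity can jump, so $V_\infty$ need not be a constant multiple of $[\Sigma]$. When multiplicities vary, the weighted sum $\sum_\ell m_\ell Q^{\Sigma^\ell}(Y,Y)$ need not inherit negativity from $Q^\Sigma(Y,Y)$. Resolving this in full generality requires either restricting attention to the constant-multiplicity regime, choosing the test sections $Y_i$ compatible with the multiplicity profile, or reducing to a constant-multiplicity setting via an equivariant cover near the $\mathbb{Z}_2$-curves.
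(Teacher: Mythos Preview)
Your Part~1 (the convergence statement and the bound $\abs{\mathcal{Y}}\le n\abs{G}$) is correct and matches the paper; you phrase the counting in terms of bad orbits whereas the paper counts bad points and then pigeonholes into orbits, but the content is identical.

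In Part~2 your framework is right, but the ``main obstacle'' you flag is not an obstacle at all --- and seeing why is precisely the missing step.  You note that the multiplicities $m_\ell$ are constant on $G$-orbits of components, yet then assert that $m_\ell$ can jump across a curve $c\subset\tilde{\mathcal{S}}_1$.  It cannot.  The varifold limit $V_\infty$ is free boundary stationary (as a limit of free boundary minimal surfaces); testing its first variation against a field pointing along the conormal to $c$ inside $\Sigma$ forces the multiplicities on the two sides of $c$ to agree.  Equivalently, the very $\Z_2$-element $h$ that makes $c$ part of $\tilde{\mathcal{S}}_1$ satisfies $h_*\nu=-\nu$ there, so $h$ exchanges the two components of $\Sigma$ adjacent along $c$; since $h(\Sigma_k)=\Sigma_k$, the sheet counts on the two sides coincide.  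Because removing the finite set $\tilde{\mathcal{S}}_0\cup\mathcal{Y}$ does not disconnect a surface, connectedness of $\Sigma$ then forces a single global multiplicity $m$, so $V_\infty=m[\Sigma]$ and your varifold-continuity argument for the second variation goes through.

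The paper organises Part~2 differently: it exploits the same fact $h_*\nu=-\nu$ along $\tilde{\mathcal{S}}_1$ to deduce that every $G$-equivariant normal field $X_i$ vanishes identically there, and then performs an equivariant $H^1$-cutoff (first near the curves $\tilde{\mathcal{S}}_1$, where the $X_i$ already vanish, then a log-cutoff near the finite set $\tilde{\mathcal{S}}_0\cup\mathcal{Y}$) so that the $X_i$ become compactly supported in $M\setminus(\mathcal{S}\cup\mathcal{Y})$.  On that support the convergence is smooth and the comparison of $Q^{\Sigma_k}$ with $Q^\Sigma$ is immediate.  Your ambient-vector-field/varifold approach has the pleasant feature of bypassing this cutoff, but you must actually close the multiplicity argument rather than leave it as an open issue.
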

\vspace{-1.2ex}
\begin{remark} \label{rem:AlsoForConvMetrics}
The previous theorem holds even when $\Sigma_k$ is a free boundary minimal surface with respect to a $G$-equivariant Riemannian metric $\smetric_k$ on $M$, for all $k\in\N$, and the sequence $\{\smetric_k\}_{k\in\N}$ converges smoothly to $\smetric$.
\end{remark}
\begin{proof}
Let us consider the set
\[
\mathcal{Y}\eqdef  \left\{x\in M \setminus\mathcal{S}\st \forall \varepsilon>0,\ \limsup_{k\to\infty} \mu_1(\Sigma_k\cap B_\varepsilon(x)) < 0\right\}\subset M\setminus \mathcal{S}.
\]
By Theorems 18 and 19 in \cite{AmbrozioCarlottoSharp2018Compactness}, having uniformly bounded area, the surfaces $\Sigma_k$ converge (up to subsequence) locally graphically and smoothly in $M\setminus(\mathcal{S}\cup\mathcal{Y})$ with finite multiplicity to a free boundary minimal surface $\tilde\Sigma\subset M\setminus(\mathcal{S}\cup\mathcal{Y})$.

Assume by contradictions that $\mathcal{Y}$ contains $n'\eqdef n\abs{G}+1$ distinct points $x_1,\ldots,x_{n'}\in M\setminus\mathcal{S}$. Then, there exists $\varepsilon>0$ sufficiently small, such that the balls $B_\varepsilon(x_1),\ldots,B_\varepsilon(x_{n'})\subset M\setminus\mathcal{S}$ are disjoint and (up to subsequence) $\mu_1(\Sigma_k\cap B_\varepsilon(x_i))<0$ for all $k$ sufficiently large and $i=1,\ldots,n'$.
Now, defining the $G$-equivariant subsets $U_i =U_\varepsilon(x_i)\eqdef \bigcup_{\selem\in G} \selem(B_\varepsilon(x_i))$ for all $i=1,\ldots,n'$ as in \cref{lem:FirstEigenvalues} (taking $\varepsilon$ possibly smaller), we have that $\lambda_1(\Sigma_k\cap U_i) = \mu_1(\Sigma_k\cap B_\varepsilon(x_i))<0$.
Moreover, $U_i\cap U_j\not= \emptyset$ for some $i,j\in\{1,\ldots,n'\}$ if and only if $U_i=U_j$. Note that, fixed $i\in\{1,\ldots,n'\}$, there are at most $\abs{G}$ values of $j\in\{1,\ldots,n'\}$ such that $U_i=U_\varepsilon(x_i) = U_\varepsilon(x_j) = U_j$. Therefore, at least $n+1 = \lceil \frac{n'}{\abs{G}}\rceil$, say $U_1,\ldots,U_{n+1}$, of the sets $U_1,\ldots, U_{n'}$ are disjoint.
In particular, this contradicts the assumption $\ind_G(\Sigma_k) \le n$, since $\Sigma_k\cap U_1,\ldots,\Sigma_k\cap U_{n+1}$ are disjoint $G$-equivariant $G$-unstable subsets of $\Sigma_k$.  
Hence, we have that $\abs{\mathcal{Y}}\le n\abs{G}$.

Let us now suppose that $\tilde\Sigma$ extends smoothly to $\Sigma$ in $M$, and assume by contradiction that $\ind_G(\Sigma)>n$. Then, there exist $G$-equivariant vector fields $X_1,\ldots,X_{n+1}\in\Gamma_G(N\Sigma)$ on $\Sigma$ such that $\sum_{i=1}^{n+1}a_i X_i$ is a negative direction for the second variation of the area of $\Sigma$ for all $(a_1,\ldots,a_{n+1})\in \R^{n+1}\setminus\{0\}$. 
Since the isometries in $G$ are orientation-preserving, $\Sigma\cap\mathcal{S}$ consists of a finite union $\tilde{\mathcal{S}}_0$ of isolated points and of a finite union $\tilde{\mathcal{S}}_1=(\Sigma\cap\mathcal{S})\setminus\tilde{\mathcal{S}}_0$ of smooth curve segments (cf. \cref{cor:IntersectionSingularLocus}). 
In particular, the points on $\tilde{\mathcal{S}}_1$ are of type $\Z_2$, therefore for all $x\in\tilde{\mathcal{S}}_1$ there exists $\selem\in G$ such that $h(x)=x$ and $h_*\nu = -\nu$, where $\nu$ is a choice of unit normal to $\Sigma$ at the point $x$.
Therefore, by equivariance of $X_1,\ldots,X_{n+1}\in\Gamma_G(N\Sigma)$, we have that $X_i = 0$ on $\tilde{\mathcal{S}}_1$ for all $i=1,\ldots,n+1$.
As a result, thanks to a standard cutoff argument, we can assume without loss of generality that $X_1,\ldots,X_{n+1}$ are compactly supported in $M\setminus(\mathcal{S}\cup \mathcal{Y})$. Indeed, we can first suppose that $X_1,\ldots,X_{n+1}$ are compactly supported in $M\setminus\tilde{\mathcal{S}}_1$, because every continuous function on a surface that is zero along a smooth curve can be approximated in the $H^1$-norm with functions that are zero in a neighborhood of such curve (see e.g. the proof of Theorem 2 in \cite{Evans2010PDE}*{Section 5.5}).
Secondly, we can suppose that $X_1,\ldots,X_{n+1}$ are zero in a neighborhood of the finite set $\tilde{\mathcal{S}}_0\cup\mathcal{Y}$ using a standard log-cutoff argument. Observe that all these operations can be made in an equivariant way, because both $\tilde{\mathcal{S}}_1$ and $\tilde{\mathcal{S}}_0\cup\mathcal{Y}$ are $G$-equivariant.
Since $\Sigma_k$ smoothly converges (possibly with multiplicity) to $\Sigma$ in $M\setminus (\mathcal{S}\cup \mathcal{Y})$, we thus get that $\ind_G(\Sigma_k) > n$ for $k$ sufficiently large, which contradicts our assumption. This proves $\ind_G(\Sigma)\le n$, as desired.
\end{proof}

\begin{corollary} \label{cor:ConvGStableSurf}
In the setting above, let $\{\Sigma_k\}_{k\in\N}$ be a sequence of $G$-equivariant, $G$-stable, free boundary minimal surfaces in $M$, with uniformly bounded area and uniformly bounded genus. Then (up to subsequence) $\Sigma_k$ converges in the sense of varifolds to a $G$-equivariant, $G$-stable, free boundary minimal surface $\Sigma$. Moreover, the convergence is smooth away from finitely many points on the singular locus $\mathcal{S}$.
\end{corollary}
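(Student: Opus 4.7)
The plan is to deduce this corollary as an essentially direct consequence of \cref{thm:ConvBoundGIndex}, coupled with a removable-singularities argument along the singular locus $\mathcal{S}$ that exploits the uniform genus bound.

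First, I would observe that $G$-stability of $\Sigma_k$ means precisely $\ind_G(\Sigma_k)=0$ for every $k$. Applying \cref{thm:ConvBoundGIndex} with $n=0$ then yields $|\mathcal{Y}|\le 0\cdot |G|=0$, i.e., $\mathcal{Y}=\emptyset$, so that (up to subsequence) $\Sigma_k$ converges locally graphically and smoothly in $M\setminus\mathcal{S}$ to a free boundary minimal surface $\tilde\Sigma\subset M\setminus\mathcal{S}$. Using the uniform area bound and the compactness of Radon measures, I would extract a further subsequence so that $\Sigma_k$ converges in the sense of varifolds to some varifold $V$. Since each $\Sigma_k$ is $G$-equivariant and free boundary stationary, so is $V$; in particular $V$ is free boundary $G$-stationary (equivalently free boundary stationary, by \cref{rem:stationaryGstationary}), and its support restricted to $M\setminus\mathcal{S}$ agrees with $\tilde\Sigma$ (counted with locally constant integer multiplicity, by the smooth graphical convergence).

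The main step would then be to show that $\tilde\Sigma$ extends smoothly across all but finitely many points of $\mathcal{S}$ to a $G$-equivariant, $G$-stable free boundary minimal surface $\Sigma$ with $V=m\,\Sigma$ (with $m$ locally constant). Recall from \cref{cor:IntersectionSingularLocus} and \cref{rem:StructureSingLocus} that $\mathcal{S}=\mathcal{S}_0\cup\mathcal{S}_1$, where $\mathcal{S}_0$ is finite and $\mathcal{S}_1$ is a finite union of smooth curves consisting of points with isotropy $\Z_2$. At any interior point $p$ of such a curve, the corresponding reflection $h\in G_p$ acts on a small $h$-invariant neighborhood $U$ of $p$ in a way that makes $h_*\nu_k=-\nu_k$ on any component of $\Sigma_k\cap U$ that contains $p$. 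Arguing as in \cref{lem:FirstEigenvalues} on a slightly modified pair consisting of a component of $\Sigma_k\cap U$ and its reflection, $G$-stability of $\Sigma_k$ yields ordinary stability of the reflected (doubled) surface in $U$. Schoen's curvature estimate (or rather its free boundary variant \cref{thm:CurvEstStable}), together with the uniform area bound and the uniform genus bound, then gives uniform curvature bounds on $\Sigma_k$ in a neighborhood of $p$; therefore the convergence is smooth across $\mathcal{S}_1$ as well.

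It remains to control the behavior near the isolated points of $\mathcal{S}_0$, which form a finite subset $\mathcal{Y}'\subset\mathcal{S}$. Around such a point $p$, the uniform area bound combined with the uniform genus bound and stability allows one to invoke a standard removable singularity argument (see e.g.\ the techniques used in \cref{thm:RemSingLimLam}), which extends $\tilde\Sigma$ smoothly across $p$. Possibly after passing to a further subsequence, the smooth convergence is then global away from $\mathcal{Y}'$. Finally, $G$-stability of the limit $\Sigma$ is inherited from the $G$-stability of $\Sigma_k$ by passing to the limit in the second variation inequality, using the smooth convergence away from $\mathcal{Y}'$ together with a standard log-cutoff argument to deal with the finitely many bad points; the fact that $\mathcal{Y}'\subset\mathcal{S}$ together with the fact that $G$-equivariant sections of $N\Sigma$ vanish along $\mathcal{S}_1$ allows one to approximate any given $G$-equivariant test section by sections compactly supported in $M\setminus(\mathcal{S}_0\cup\mathcal{Y}')$, thereby preserving $G$-stability in the limit. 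The main technical obstacle is precisely this removable singularity analysis at the points of $\mathcal{S}_0$, where equivariance with respect to non-cyclic finite subgroups of $\operatorname{SO}(3)$ (coming from Platonic-type isotropy) has to be handled carefully.
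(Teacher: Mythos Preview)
Your approach diverges from the paper's in a significant way, and the divergence creates a genuine gap.

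The paper does \emph{not} attempt to extend $\tilde\Sigma$ across $\mathcal{S}$ by hand. Instead it uses the uniform \emph{genus} bound together with the area bound via the Ejiri--Micallef/Lima estimate (cited through Lemmas~1 and~2 of Ilmanen): bounded area plus bounded genus forces a uniform bound on the (ordinary, non-equivariant) Morse index, so standard compactness for free boundary minimal surfaces already gives that the varifold limit is a smooth free boundary minimal surface $\Sigma$, with smooth convergence away from finitely many points. Only \emph{then} does $G$-stability enter, to show that these finitely many bad points must lie on $\mathcal{S}$ (curvature estimates hold away from $\mathcal{S}$ by \cref{lem:FirstEigenvalues}). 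Finally $\ind_G(\Sigma)\le 0$ follows from \cref{thm:ConvBoundGIndex}. In particular, the extension across $\mathcal{S}$ is never argued directly.

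Your plan, by contrast, first applies \cref{thm:ConvBoundGIndex} with $n=0$ to get smooth convergence off $\mathcal{S}$, and then tries to extend across $\mathcal{S}$ via a reflection/doubling argument along $\mathcal{S}_1$. That step does not work as stated. For a point $p\in\mathcal{S}_1$ with isotropy $\Z_2$, any component of $\Sigma_k\cap U$ containing $p$ is already $\Z_2$-invariant (it is its own reflection), so there is nothing to ``double''; moreover, since $h_*\nu_k=-\nu_k$ there, $G$-equivariant normal sections correspond to \emph{odd} functions, and $G$-stability only yields $Q(u,u)\ge 0$ for odd $u$, not ordinary stability. So you cannot invoke Schoen-type curvature estimates directly, and your route to curvature control along $\mathcal{S}_1$ is missing. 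The fix is precisely the paper's: use the genus bound to get an actual index bound, after which the limit is automatically smooth and the only issue is locating the finitely many bad points.
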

\begin{proof}
Thanks to Lemmas 1 and 2 in \cite{Ilmanen1998} (see also \cite{EjiriMicallef2008}, or \cite{Lima2017} for the free boundary case), $\Sigma_k$ converges in the sense of varifolds to a $G$-equivariant free boundary minimal surface $\Sigma$ and the convergence is smooth away from finitely many points. We can assume that these points lie on the singular locus $\mathcal{S}$, because $G$-stability of $\Sigma_k$ implies curvature estimate away from $\mathcal{S}$ and therefore (up to subsequence) we have smooth convergence away from the singular locus. Finally, $\Sigma$ is $G$-stable by \cref{thm:ConvBoundGIndex}.
\end{proof}

\begin{remark} \label{rem:GStableAsStable}
Thanks to the previous corollary, in most of the arguments we can treat $G$-stable free boundary minimal surfaces as stable free boundary minimal surfaces. Indeed, an important property of stable free boundary minimal surfaces is that they are compact with respect to the smooth converge. This property is replaced by \cref{cor:ConvGStableSurf} in the case of $G$-stable free boundary minimal surfaces. Observe that the convergence is not smooth everywhere, but this is sufficient in the applications.
\end{remark}

\section{Bumpyness is \texorpdfstring{$G$}{G}-generic} \label{sec:BumpyIsGGeneric}

Let $M^{m+1}$ be a smooth, compact, connected manifold with boundary and let $G$ be a finite group of diffeomorphisms of $M$.
The aim of this section is to prove that, for a generic choice of a $G$-equivariant metric, $M$ contains countably many $G$-equivariant free boundary minimal hypersurfaces.

\begin{definition} [{cf. \cite{AmbrozioCarlottoSharp2018Compactness}*{Theorem 9}}] \label{def:BumpyMetrics}
Let $q$ be a positive integer $\ge 3$ or $q=\infty$. Denote by $\Gamma^q_G$ be the set of $G$-equivariant $C^q$ metrics on $M$ endowed with the $C^q$ topology. Moreover, let $\mathcal B^q_G\subset \Gamma^q_G$ be the subset of metrics $\smetric\in \Gamma^q_G$ such that no compact, smooth, $G$-equivariant manifolds with boundary that are $C^q$ properly embedded as free boundary minimal hypersurfaces in $(M,\smetric)$, and no finite covers of any such hypersurface, admit a nontrivial Jacobi field.
\end{definition}
\begin{remark}
Note that we \emph{do not} require the Jacobi field in the previous definition to be $G$-equivariant.
\end{remark}

\begin{proposition} \label{prop:CountableGequivSurfaces}
Let $\gamma\in \mathcal{B}_G^\infty$ be such that $(M,\smetric)$ satisfies property \hyperref[HypP]{$(\mathfrak{P})$}. Then $(M,\smetric)$ contains countably many $G$-equivariant free boundary minimal hypersurfaces.
\end{proposition}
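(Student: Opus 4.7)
The plan is to partition the collection $\mathcal{F}_G$ of all $G$-equivariant free boundary minimal hypersurfaces in $(M,\gamma)$ according to bounds on area and $G$-equivariant Morse index, and then to show that each resulting subfamily is in fact finite. More precisely, for any $\Lambda, I \in \mathbb{N}$ let
\[
\mathcal{F}_G(\Lambda, I) \eqdef \{ \Sigma \in \mathcal{F}_G \st \area(\Sigma) \le \Lambda,\ \ind_G(\Sigma) \le I \},
\]
so that $\mathcal{F}_G = \bigcup_{\Lambda, I \in \mathbb{N}} \mathcal{F}_G(\Lambda,I)$. It then suffices to prove that each $\mathcal{F}_G(\Lambda,I)$ is finite, since a countable union of finite sets is countable.

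To that end, I would argue by contradiction and extract from any given infinite sequence $\{\Sigma_k\}_{k\in\N} \subset \mathcal{F}_G(\Lambda,I)$ of pairwise distinct hypersurfaces a subsequence satisfying the hypotheses of \cref{thm:ConvBoundGIndex}. Applying that theorem yields a $G$-equivariant limit free boundary minimal hypersurface $\tilde{\Sigma}$ defined on $M \setminus (\mathcal{S}\cup\mathcal{Y})$, to which $\Sigma_k$ converges graphically smoothly, possibly with integer multiplicity. The next step will be to show that $\tilde\Sigma$ extends smoothly across the finitely many points of $\mathcal{S}\cup\mathcal{Y}$ to a $G$-equivariant free boundary minimal hypersurface $\Sigma$ in $M$: the uniform area bound (together with the bound on genus and number of boundary components provided by \cref{thm:TopFromArea}, which applies thanks to \hypP{}) rules out nontrivial concentration phenomena at such points, and an Allard/removable singularity argument, coupled with the constraints imposed by the symmetry $G$ near the singular locus $\mathcal{S}$, identifies $\Sigma$ as a smooth extension of $\tilde{\Sigma}$.

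At this point I would conclude by producing a nontrivial Jacobi field on $\Sigma$ (or on a suitable finite cover thereof), in contradiction with the bumpyness hypothesis $\gamma \in \mathcal{B}_G^\infty$. In the case of multiplicity-one convergence, I would write each $\Sigma_k$ (near $\Sigma$) as the normal graph of a function $u_k$ over $\Sigma$, observe that $u_k \to 0$ smoothly, and note that the rescaled sequence $u_k / \norm{u_k}_{C^2}$ solves the linearized free boundary minimal surface equation and subsequentially converges to a nontrivial Jacobi field on $\Sigma$. When the multiplicity is $m\ge 2$, the same construction applied to (successive) differences of graphs representing the $m$ sheets, after lifting to the appropriate two-fold cover if $\Sigma$ is one-sided, yields a nontrivial Jacobi field on a finite cover of $\Sigma$, again contradicting \cref{def:BumpyMetrics}.

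The main obstacles will be twofold: first, ensuring that the limit $\tilde\Sigma$ truly extends to a smooth properly embedded object of $M$ across the points of $\mathcal{S}\cup\mathcal{Y}$, where both the lack of a uniform curvature bound (at points of $\mathcal{Y}$) and the presence of the symmetry-induced singularities (at points of $\mathcal{S}$) need to be handled carefully; and second, the multiplicity-$m\ge 2$ case of the Jacobi-field extraction, where it must be verified that the Jacobi field obtained on the relevant finite cover is genuinely nontrivial and descends to a surface covered by \cref{def:BumpyMetrics}.
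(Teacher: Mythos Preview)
Your overall strategy---partition by area and an index bound, apply compactness, extract a Jacobi field on the limit, and contradict bumpyness---is correct in spirit and matches the paper's route. The issue is your choice to partition by the \emph{equivariant} index $\ind_G$ rather than the ordinary Morse index.

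Concretely, your invocation of \cref{thm:TopFromArea} to obtain a genus bound on the sequence $\Sigma_k$ does not go through: that theorem requires a uniform bound on the \emph{ordinary} index, whereas your hypothesis only gives $\ind_G(\Sigma_k)\le I$. Since $\ind_G(\Sigma)\le\ind(\Sigma)$ (equivariant variations form a subspace of all variations), a bound on $\ind_G$ does not yield one on $\ind$, so the genus bound---and hence your removable-singularity step across $\mathcal{S}\cup\mathcal{Y}$---is unjustified as written. This is precisely the first obstacle you flagged, and it is genuine.

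The paper avoids the problem entirely by partitioning instead according to area and \emph{ordinary} Morse index (every compact free boundary minimal hypersurface has finite index, so the union still exhausts $\mathcal{F}_G$). Then the standard compactness result for free boundary minimal hypersurfaces with bounded area and index (Theorem~5 in \cite{AmbrozioCarlottoSharp2018Compactness}, applied as in Corollary~8 therein) yields smooth subconvergence, possibly with multiplicity, to a smooth properly embedded limit $\Sigma$; since every $\Sigma_k$ is $G$-equivariant, so is $\Sigma$. Failure to stabilize produces a nontrivial Jacobi field on $\Sigma$ or a finite cover, and because the definition of $\mathcal{B}^\infty_G$ only requires the \emph{surface} (not the Jacobi field) to be $G$-equivariant, this contradicts $\gamma\in\mathcal{B}^\infty_G$. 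This route bypasses \cref{thm:ConvBoundGIndex} and the extension across $\mathcal{S}\cup\mathcal{Y}$ altogether.
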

\begin{proof}
This follows from Theorem 5 in \cite{AmbrozioCarlottoSharp2018Compactness}, similarly to Corollary 8 therein.
\end{proof}

\begin{theorem} \label{thm:BumpyIsGGeneric}
Let $q$ be a positive integer $\ge 3$ or $q=\infty$.
The subset $\mathcal B^q_G$ defined in \cref{def:BumpyMetrics} is comeagre in $\Gamma^q_G$.
\end{theorem}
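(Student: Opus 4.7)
The plan is to adapt the framework of White's bumpy metric theorem (in the form used by Ambrozio--Carlotto--Sharp, cf. Theorem 9 of \cite{AmbrozioCarlottoSharp2018Compactness}) to the equivariant setting, working throughout within the closed subspace $\Gamma^q_G\subset \Gamma^q$ of $G$-equivariant metrics. By a standard countable exhaustion over topological types, volume bounds and $C^q$-norm bounds, it suffices to prove the following: for every positive integer $N$ and every diffeomorphism type of smooth, compact, $G$-manifold with boundary $(\Sigma,G\text{-action})$, the set $\mathcal{A}_{\Sigma,N}\subset \Gamma^q_G$ of metrics admitting a $C^q$ $G$-equivariant proper free boundary minimal immersion $f:\Sigma\hookrightarrow M$ with $\Vert f\Vert_{C^q}\leq N$ and admitting a nontrivial Jacobi field is nowhere dense in $\Gamma^q_G$. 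The case of finite covers is handled identically by enlarging the list of admissible $(\Sigma,G\text{-action})$.

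First I would define the Banach manifold
\[
\mathcal{E}_{\Sigma}\eqdef \{(\smetric,f)\st \smetric\in\Gamma^q_G,\ f:\Sigma\hookrightarrow M\text{ is a $G$-equivariant FB minimal }C^q\text{-immersion}\}
\]
and verify that it is a smooth Banach manifold with the natural projection $\pi:\mathcal{E}_\Sigma\to\Gamma^q_G$ being a $C^1$ Fredholm map of index $0$. These verifications go through by the same implicit function theorem / elliptic regularity arguments as in White's paper and in \cite{AmbrozioCarlottoSharp2018Compactness}, performed equivariantly: all function spaces are replaced by their $G$-invariant subspaces, and the linearized operator $\jac_\Sigma$ (with oblique boundary condition along $\partial\Sigma$) is still elliptic and self-adjoint on these spaces.

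The main step, and the one I expect to be the technical core, is to show that the linearization of $\pi$ is surjective modulo the kernel of $\jac_\Sigma$ restricted to the Jacobi fields. Concretely, given $(\smetric,f)\in\mathcal{E}_\Sigma$ with a nontrivial Jacobi field $Y\in\Gamma(N\Sigma)$ (not assumed $G$-equivariant!), one must produce a $G$-equivariant metric perturbation $h\in T_{\smetric}\Gamma^q_G$ whose induced linearized mean curvature $\mu(h)\in\Gamma(N\Sigma)$ satisfies $\int_\Sigma \langle \mu(h), Y\rangle\,d\Haus^m\neq 0$. To this end, one decomposes $Y$ along $G$-isotypic components of $\Gamma(N\Sigma)$ (which makes sense because $\jac_\Sigma$ commutes with $G$), picks out a nonzero isotypic piece $Y_0$, and builds $h$ by averaging a localized symmetric $2$-tensor $h_0$ supported in a small ball $B_\varepsilon(x)$ centered at a regular point $x\in\Sigma\setminus\mathcal{S}$ where $Y_0(x)\neq 0$: namely $h\eqdef \sum_{\selem\in G}\selem^*h_0$. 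Because $x\notin\mathcal{S}$, the translates $\selem(B_\varepsilon(x))$ are pairwise disjoint for $\varepsilon$ small (cf.\ \cref{lem:FirstEigenvalues}), so $\int_\Sigma \langle \mu(h), Y\rangle$ reduces to a single local integral against the $Y_0$-component, which can be made nonzero by choosing $h_0$ generically. This produces the desired transversality.

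Granted this transversality statement, the regular values of $\pi$ form a comeagre subset of $\Gamma^q_G$ by the Sard--Smale theorem, and a metric $\smetric$ is a regular value precisely when every $G$-equivariant free boundary minimal immersion $f:\Sigma\hookrightarrow M$ (of the given topological type, with the $C^q$-bound) has no nontrivial Jacobi field. Intersecting over the countable exhaustion and adding the analogous conclusions for finite covers yields that $\mathcal{B}^q_G$ is comeagre, as claimed. For $q=\infty$ one passes to the limit as in \cite{AmbrozioCarlottoSharp2018Compactness}*{Theorem 9}, invoking the fact that $\Gamma^\infty_G$ is a Baire space in its own Fr\'echet topology.
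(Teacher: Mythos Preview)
There is a genuine gap in your transversality step. Recall that the linearised mean curvature map $\mu: T_{\smetric}\Gamma^q\to \Gamma(N\Sigma)$ is $G$-equivariant (since $\Sigma$ is $G$-invariant), so for any $G$-invariant perturbation $h$ one has $\mu(h)\in \Gamma_G(N\Sigma)$. Your averaged tensor $h=\sum_{\selem\in G}\selem^*h_0$ therefore satisfies
\[
\int_\Sigma \langle \mu(h), Y\rangle \de\Haus^m
= \sum_{\selem}\int_\Sigma \langle \mu(h_0), \selem^{-1}\cdot Y\rangle \de\Haus^m
= \abs{G}\int_\Sigma \langle \mu(h_0), Y_{\mathrm{inv}}\rangle \de\Haus^m,
\]
where $Y_{\mathrm{inv}}$ is the $G$-invariant part of $Y$. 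If $Y$ lies entirely in a nontrivial isotypic component this integral vanishes for \emph{every} equivariant $h$, so your construction cannot detect such $Y$. Consequently, the Sard--Smale argument applied to $\pi:\mathcal{E}_\Sigma\to\Gamma^q_G$ only yields that generically no $G$-equivariant free boundary minimal hypersurface carries a nontrivial \emph{$G$-equivariant} Jacobi field; your claim in the final paragraph that regular values correspond to ``no nontrivial Jacobi field'' is not justified.

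This matters because $\mathcal{B}^q_G$ is defined (see \cref{def:BumpyMetrics} and the remark following it) by the absence of \emph{any} Jacobi field, not merely equivariant ones. The paper handles this gap by invoking \cite{White2017}*{Lemma 2.6}, which is exactly the bridge from ``no equivariant Jacobi field'' to ``no Jacobi field at all'' in the equivariant bumpy-metric argument; the paper's proof explicitly points out that this is where the equivariant Structure Theorem enters and that White already works equivariantly, so his lemma applies verbatim in the free boundary setting. That step is not a routine transversality computation, and your sketch does not supply a substitute for it.
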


\begin{remark}
In \cite{White2017}*{Theorem 2.1}, White proved a stronger result in the closed case, namely that a generic, $G$-equivariant, $C^q$ Riemannian metric on a smooth closed manifold is bumpy in the following sense: no closed, minimal \emph{immersed} submanifold has a nontrivial Jacobi field. We decided to state \cref{thm:BumpyIsGGeneric} only for properly embedded $G$-equivariant submanifolds, since White's generalization requires more technical work and we do not need it here. However, a similar proof should works also in the case with boundary.
\end{remark}

Before proceeding to the proof of \cref{thm:BumpyIsGGeneric}, we need to introduce some notation, which are the adaptations to the $G$-equivariant setting to the ones given in \cite{AmbrozioCarlottoSharp2018Compactness}*{pp. 22}.
Consider a compact, connected, smooth manifold $\Sigma^m$ with boundary and fix any $\alpha\in (0,1)$. For $w\in C^{q-1,\alpha}(\Sigma,M)$, let
\[
[w]\eqdef \{w\circ\varphi \st \varphi\in \operatorname{Diff}(\Sigma)\}.
\]
Then define
\begin{align*}
\mathcal{PE}^q_G \eqdef \{[w] \st &w\in C^{q-1,\alpha}(\Sigma,M) \text{ is a proper embedding, $w(\Sigma)$ is $G$-equivariant}\}
\end{align*}
and
\begin{align*}
\mathcal{S}^q_G \eqdef \{(\gamma,[w]) \in \Gamma^q_G\times \mathcal{PE}^q_G \st &w \text{ is a free boundary minimal} \\ &\text{proper embedding w.r.t. $\gamma$} \}.
\end{align*}
Finally denote by $\pi^q_G\colon \mathcal{S}^q_G\to \Gamma^q_G$ the projector onto the first factor, namely $\pi^q(\gamma,[w]) = \gamma$.

Given these definitions, we can now state the main ingredient to prove \cref{thm:BumpyIsGGeneric}. This is a structure theorem, whose first version was discovered by White in \cite{White1987Space}, stating the relation between critical points of $\pi^q$ and degenerate minimal hypersurfaces.

\begin{theorem} [Structure Theorem, cf. \cite{White2017}*{Theorem 2.3} and \cite{AmbrozioCarlottoSharp2018Compactness}*{Theorem 35}] \label{thm:StructureThm}
In the setting described above, $\mathcal S^q_G$ is a separable $C^1$ Banach manifold and $\pi^q_G\colon \mathcal{S}^q_G\to \Gamma^q_G$ is a $C^1$ Fredholm map of Fredholm index $0$.
Furthermore, $(\gamma, [w])$ is a critical point of $\pi^q_G$ if and only if $w(\Sigma)$ admits a nontrivial $G$-equivariant Jacobi field.
\end{theorem}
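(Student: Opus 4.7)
The plan is to adapt the classical structure theorem of White \cite{White1987Space} and its free boundary extension \cite{AmbrozioCarlottoSharp2018Compactness}*{Theorem 35} to the $G$-equivariant setting, exploiting that every relevant object (normal vector fields, Jacobi operator, boundary conditions) admits a natural $G$-equivariant counterpart. The starting point is to set up, near a point $(\gamma_0,[w_0])\in \mathcal{S}^q_G$, a local chart for $\mathcal{S}^q_G$ as the zero set of a Fredholm map between Banach spaces. Up to replacing $\Sigma$ with $w_0(\Sigma)$, we can identify a neighborhood of $[w_0]$ with the space of $G$-equivariant sections of the normal bundle $N\Sigma$ of small $C^{q-1,\alpha}$-norm. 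More precisely, using Fermi coordinates along $w_0(\Sigma)$ in $(M,\gamma_0)$ — and, close to $\partial\Sigma$, along $\partial M$ so as to preserve properness — one constructs a slice of embeddings $w_u$, with $u\in C^{q-1,\alpha}_G(N\Sigma)$ small, which inherits the $G$-equivariance of $w_0$ by averaging the parametrization over $G$ (cf.~\cref{rem:GequivExtension}).

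The next step is to introduce the smooth map
\[
\Phi\colon \Gamma^q_G\times C^{q-1,\alpha}_G(N\Sigma)\longrightarrow C^{q-3,\alpha}_G(N\Sigma)\times C^{q-2,\alpha}_G(N\Sigma|_{\partial\Sigma})
\]
whose first component is the mean curvature vector of $w_u(\Sigma)$ (in the metric $\gamma$) and whose second component encodes the failure of the free boundary condition along $\partial w_u(\Sigma)$. By the principle of symmetric criticality (\cref{rem:stationaryGstationary}), the vanishing of $\Phi(\gamma,u)$ among $G$-equivariant $u$ is equivalent to $w_u(\Sigma)$ being a free boundary minimal surface in $(M,\gamma)$, so the zero set of $\Phi$ locally coincides with $\mathcal{S}^q_G$. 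A direct computation shows that the partial derivative $D_u\Phi(\gamma_0,0)$ is precisely the $G$-equivariant Jacobi operator on sections of $N\Sigma$, coupled with the oblique boundary condition \[\nabla^\perp_\eta Y + (\II^{\partial M}(Y,\cdot))^\sharp\] discussed in \cref{sec:SecondVariation} and \cref{chpt:EquivSpectrum}.

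Fredholmness of index zero follows from the elliptic theory developed in \cref{thm:DiscreteSpectrum} applied to the $G$-equivariant spaces $L^2_G, H^1_G$: the operator is self-adjoint and has a discrete real spectrum accumulating at $+\infty$, so kernel and cokernel are finite-dimensional of the same size. The implicit function theorem (or its Fredholm-regularity refinement as in White's argument) then provides the $C^1$ Banach manifold structure on $\mathcal{S}^q_G$, and shows that $\pi^q_G$ is $C^1$ Fredholm of index zero. For the characterization of critical points, one computes the full differential $d\pi^q_G$ at $(\gamma_0,[w_0])$: by standard Fredholm alternative, it fails to be surjective exactly when $(D_u\Phi)^*$ has nontrivial kernel, which by self-adjointness amounts to the existence of a nontrivial $G$-equivariant Jacobi field on $w_0(\Sigma)$, as desired. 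The same argument applied on a $G$-equivariant finite cover of $\Sigma$ takes care of the covers mentioned in \cref{def:BumpyMetrics}.

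The main technical obstacle is the careful bookkeeping required at the boundary: one has to parametrize perturbations so that each $w_u(\Sigma)$ remains properly embedded with $\partial w_u(\Sigma)\subset\partial M$, which forces the use of Fermi coordinates adapted to $\partial M$ and a decomposition of the normal section into tangential and normal components along $\partial\Sigma$. Once this is done $G$-equivariantly — which requires verifying that the averaging procedure is compatible with the Fermi construction — the algebraic structure of the proof reduces to the nonequivariant case treated in \cite{AmbrozioCarlottoSharp2018Compactness}*{Theorem 35} and \cite{White2017}*{Theorem 2.3}, with the function spaces replaced throughout by their $G$-equivariant counterparts.
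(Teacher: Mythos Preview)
Your proposal is correct and follows essentially the same approach as the paper: both argue that the proof of the nonequivariant Structure Theorem in \cite{AmbrozioCarlottoSharp2018Compactness}*{Theorem 35} (building on \cite{White2017}*{Theorem 2.3}) goes through verbatim once every function space, metric, and section is replaced by its $G$-equivariant counterpart, with the Fredholm property of the linearized operator between the $G$-equivariant H\"older spaces being the key point. The paper's proof is terser, simply listing which propositions of \cite{AmbrozioCarlottoSharp2018Compactness} require the equivariant substitution, whereas you spell out the Fermi-coordinate parametrization and the map $\Phi$ more explicitly; but the content is the same.
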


\begin{remark}
Theorem 2.3 in \cite{White2017} is the version of the theorem in the case without boundary, while Theorem 35 in \cite{AmbrozioCarlottoSharp2018Compactness} is the case with boundary but where no group of symmetries is considered.
\end{remark}

\begin{proof}
As observed by White in the proof of \cite{White2017}*{Theorem 2.3}, the proof of the Structure Theorem works the same in the equivariant case replacing `metric' with `$G$-equivariant metric', `functions' with `$G$-equivariant functions' and working on a $G$-equivariant hypersurface instead of on any hypersurface. In fact, the reader can follow the proof of the Structure Theorem in the case with boundary contained in \cite{AmbrozioCarlottoSharp2018Compactness}*{Section 7.2} with these modifications. In particular note that:
\begin{itemize}
\item Here we used $M^{m+1}$ for the ambient manifold and $\Sigma^m$ for the embedded hypersurface, instead of $\mathcal N^{n+1}$ and $M^n$ as in \cite{AmbrozioCarlottoSharp2018Compactness}.
\item The background metric $\gamma_*$ shall be chosen $G$-equivariant. This way, the exponential map with respect to $\gamma_*$ induces a diffeomorphism between $V^r$ (as defined in \cite{AmbrozioCarlottoSharp2018Compactness}*{pp. 25}) and a $G$-equivariant open neighborhood of $w(\Sigma)$. Then a class $[w]$ such that $w\in C^{q-1,\alpha}(\Sigma,M)$ is a proper embedding with $G$-equivariant image corresponds to a function $u\in C_G^{q-1,\alpha}(\Sigma,V)$ with $G$-equivariant symmetry (cf. \cref{def:EquivFuncSpaces}). 
\item As observed in \cref{rem:stationaryGstationary}, being free boundary $G$-stationary is the same as being free boundary stationary. Hence \cite{AmbrozioCarlottoSharp2018Compactness}*{Proposition 41} does not require modifications.
\item The computations in \cite{AmbrozioCarlottoSharp2018Compactness}*{Proposition 45} are the same. The only change that we need is in point (2), where the operator $L(\gamma,u)$ should be consider between the spaces $C^{q-1,\alpha}_G(\Sigma,V)$ and $C^{q-3,\alpha}_G(\Sigma,V)\times C^{q-2,\alpha}_G(\partial\Sigma,V)$. Then, similarly as in \cite{AmbrozioCarlottoSharp2018Compactness}, one can prove that $L(\gamma,u)\colon C^{q-1,\alpha}_G(\Sigma,V)\to C^{q-3,\alpha}_G(\Sigma,V)\times C^{q-2,\alpha}_G(\partial\Sigma,V)$ is a Fredholm operator of Fredholm index $0$.
\item Finally, \cite{AmbrozioCarlottoSharp2018Compactness}*{Proposition 46} and the final proof of Structure Theorem itself (at \cite{AmbrozioCarlottoSharp2018Compactness}*{pp. 31}) can be modified accordingly to fit the equivariant setting. Just note that the functions in $\ker L(\gamma, u)$, where $L(\gamma,u)$ is seen as a map between the equivariant spaces, correspond exactly to the $G$-equivariant Jacobi fields.  \qedhere
\end{itemize}
\end{proof}

\begin{proof}[Proof of \cref{thm:BumpyIsGGeneric}]
Given \cref{thm:StructureThm}, the proof is exactly the same as the proof of Theorem 9 in \cite{AmbrozioCarlottoSharp2018Compactness}*{Section 7.1} substituting $\Gamma^q$, $\mathcal S^q$, $\mathcal B^q$ with $\Gamma^q_G$, $\mathcal S^q_G$, $\mathcal B^q_G$. 
In fact, the only difference in the proofs is hidden in the application of \cite{White2017}*{Lemma 2.6}, which is the point where the Structure Theorem (in the different variants: \cref{thm:StructureThm}, \cite{White2017}*{Theorem 2.3} or \cite{AmbrozioCarlottoSharp2018Compactness}*{Theorem 35}) is used. However, White in \cite{White2017} works in the equivariant setting as we do here, hence the argument there can be applied in our context without changes, since the presence of the boundary does not play a role in this lemma, as observed also in \cite{AmbrozioCarlottoSharp2018Compactness}*{pp. 24}.
As said above (in the proof of \cref{thm:StructureThm}), the reader should pay attention to the fact that here we used $M^{m+1}$ for the ambient manifold and $\Sigma^m$ for the embedded hypersurface, instead of $\mathcal N^{n+1}$ and $M^n$ as in \cite{AmbrozioCarlottoSharp2018Compactness}.
\end{proof}

%%%%%%%%%%%%%%%%%%%%%%%%%%%%%%%%%%%%%%%%%%%%%%%%%
%%% II - Regularity
%%%%%%%%%%%%%%%%%%%%%%%%%%%%%%%%%%%%%%%%%%%%%%%%%

\chapter{Regularity of equivariant min-max surfaces} \label{sec:ProofRegularityGenus}

In this chapter, we outline the results about the regularity and the genus bound for the surface obtained via a Simon--Smith equivariant min-max procedure, as described in \cref{thm:EquivMinMax}. 
Besides being interesting on its own, we also need to unfold some of those arguments for the proof of the equivariant index bound in \cref{thm:EquivMinMax}. Indeed, in \cref{chpt:IndexBound}, we do not prove that any surface obtained from the min-max procedure has controlled equivariant index, but we show that it is possible to properly modify the proof in order to obtain a surface with the desired bound on the equivariant index.

We also include some detailed proofs for completeness, even if they are very similar to existing ones. This is because sometimes we need slight variations of existing results (for example here we consider $n$-dimensional sweepouts) or the proofs are split in many different references.

\section[Min-max sequence converging to a free boundary stationary varifold]{Min-max sequence converging to a free boundary stationary\\ varifold} \label{sec:LimitStatVar}

The first step is to show that there exists a min-max sequence converging to a free boundary stationary varifold.

\begin{definition}
In the setting of \cref{thm:EquivMinMax}, given a minimizing sequence $\{\so^j\}_{j\in\N}$ of sweepouts $\so^j=\{\Sigma^j_t\}_{t\in I^n}\in\Pi$ (namely such that $\lim_{j\to\infty}\sup_{t\in I^n} \area(\Sigma_t^j) = W_\Pi$), we define its \emph{critical set} as
\begin{align*}
    C(\{\so^j\}_{j\in\N}) = \{ & V\in\mathcal{V}^2_G(M) \st \text{$\exists$ $j_k\to+\infty$, $t_{j_k}\in I^n$ with $\vard(\Sigma^{j_k}_{t_{j_k}},V)\to 0$},\\ &\norm{V}(M)=W_\Pi\}.
\end{align*}
\end{definition}

\begin{proposition} [{cf. \cite{Ketover2016Equivariant}*{Proposition 3.9}}] \label{prop:ConvergenceToStationary}
In the setting of \cref{thm:EquivMinMax}, given a minimizing sequence $\{\boldsymbol{\Lambda}^j\}_{j\in\N}=\{\{\Lambda_t^j\}_{t\in I^n}\}_{j\in\N}$, there exists another minimizing sequence $\{\so^j\}_{j\in\N}=\{\{\Sigma_t^j\}_{t\in I^n}\}_{j\in \N}$ such that, if $\{\Sigma_{t_j}^j\}_{j\in\N}$ is a min-max sequence, then (up to subsequence) $\Sigma^j_{t_j}$ converges in the sense of varifolds to a free boundary stationary varifold.
Moreover, it holds $C(\{\so^j\}_{j\in\N}) \subset C(\{\boldsymbol{\Lambda}^j\}_{j\in\N})$.
\end{proposition}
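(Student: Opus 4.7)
The proof follows the classical \emph{pull-tight} procedure (see e.g. \cite{Pitts1981,ColdingDeLellis2003,Ketover2016Equivariant,Li2015}), adapted to our $G$-equivariant free boundary setting. The plan is to start from the given minimizing sequence $\{\boldsymbol{\Lambda}^j\}_{j\in\N}$ and continuously deform each sweepout via a $G$-equivariant isotopy that strictly decreases area near the `non-stationary part' of the varifold space, while leaving both the slices of small area and the free boundary $G$-stationary varifolds essentially unchanged. The output of this deformation will be the desired sequence $\{\so^j\}_{j\in\N}$.

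First, I would partition $\mathcal{V}^2_G(M)$ by mass. Fix $\delta>0$ small and set
\[
A_\delta \eqdef \{V\in\mathcal{V}^2_G(M) \st W_\Pi - 2\delta \le \norm{V}(M) \le W_\Pi + 2\delta\},
\]
together with the subset $V_\Pi \subset A_\delta$ of free boundary $G$-stationary varifolds. By \cref{rem:stationaryGstationary}, being free boundary $G$-stationary is equivalent to being free boundary stationary, so $V_\Pi$ is compact in the weak topology. For every $V\in A_\delta\setminus V_\Pi$, there exists $X_V\in\vf_G(M)$ with $\norm{X_V}_{C^1}\le 1$ and $\delta V(X_V) < 0$; by lower semicontinuity of $W\mapsto \delta W(X_V)$, the same vector field witnesses nonstationarity in a weak-neighborhood of $V$. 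Using separability of $\mathcal V^2_G(M)$, I would then build, as in \cite{Pitts1981}*{pp.~153} or \cite{ColdingDeLellis2003}*{Proposition 4.3}, a continuous map $V\mapsto X_V\in \vf_G(M)$ on $A_\delta\setminus V_\Pi$ such that $\delta V(X_V)<0$ whenever $V\notin V_\Pi$, with $X_V$ supported in regions where $V$ is genuinely unstable. Equivariance is preserved by averaging over $G$, cf. \cref{rem:GequivExtension}.

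Second, I would use $V\mapsto X_V$ to produce a continuous map $\Psi\colon A_\delta\times[0,1]\to \Is_G(M)$ such that $\Psi(V,0)=\id$, $\Psi(V,\cdot)=\id$ for $V\in V_\Pi$, and
\[
\norm{\Psi(V,1)_*V}(M) \le \norm{V}(M) - L(\operatorname{dist}(V,V_\Pi))
\]
for a continuous, monotone function $L\colon[0,\infty)\to[0,\infty)$ with $L(0)=0$ and $L(s)>0$ for $s>0$. By \cref{rem:GequivVecFieldToFlow}, the flow of a $G$-equivariant vector field preserves $G$-equivariance of varifolds, so each $\Psi(V,t)$ lies in $\operatorname{Diff}_G(M)$.

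Third, I would define the new sweepouts. Take a continuous cutoff $\chi\colon\R\to[0,1]$ with $\chi\equiv 1$ on $[W_\Pi-\delta,W_\Pi+\delta]$ and $\chi\equiv 0$ outside $[W_\Pi-2\delta,W_\Pi+2\delta]$, and set
\[
\Sigma^j_t \eqdef \Psi(\Lambda^j_t,\chi(\area(\Lambda^j_t)))(\Lambda^j_t).
\]
The family $\{\Sigma^j_t\}_{t\in I^n}$ is a $G$-sweepout in $\Pi$ because $\Psi(\cdot,\cdot)\in\Is_G(M)$ and $\chi$ vanishes on $\partial I^n$ (since $\sup_{t\in\partial I^n}\area(\Lambda^j_t)<W_\Pi-2\delta$ for $j$ large, by the mountain-pass assumption). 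By construction $\sup_t \area(\Sigma^j_t)\le \sup_t\area(\Lambda^j_t)$, so $\{\so^j\}_{j\in\N}$ is still minimizing, and $C(\{\so^j\})\subset C(\{\boldsymbol\Lambda^j\})$ because the deformation only decreases area and only affects slices in $A_\delta$.

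Finally, I would verify the stationarity of min-max limits. Let $\Sigma^j_{t_j}$ be a min-max sequence converging subsequentially to some $V$ with $\norm{V}(M)=W_\Pi$. If $V\notin V_\Pi$, then $\operatorname{dist}(V,V_\Pi)\ge\rho>0$, hence $\operatorname{dist}(\Lambda^j_{t_j},V_\Pi)\ge\rho/2$ for $j$ large, so
\[
\area(\Sigma^j_{t_j})\le \area(\Lambda^j_{t_j}) - L(\rho/2),
\]
contradicting $\area(\Sigma^j_{t_j})\to W_\Pi$ and $\sup_t\area(\Lambda^j_t)\to W_\Pi$. Therefore $V\in V_\Pi$, i.e., $V$ is free boundary stationary.

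The main obstacle is the second step: producing a \emph{continuous} selection $V\mapsto X_V$ of $G$-equivariant vector fields on the noncompact (but separable) set $A_\delta\setminus V_\Pi$, with quantitative control of $\delta V(X_V)$ in terms of $\operatorname{dist}(V,V_\Pi)$. This is carried out by a countable partition-of-unity argument on $\mathcal V^2_G(M)\setminus V_\Pi$, exactly as in \cite{ColdingDeLellis2003}*{Section 4} and \cite{Ketover2016Equivariant}*{Section 3}, with the only additional input being the averaging procedure described in \cref{rem:GequivExtension} to keep every vector field $G$-equivariant and tangent to $\partial M$.
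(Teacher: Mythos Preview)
Your approach is essentially the same pull-tight argument the paper uses, and the overall strategy is correct. Two technical points deserve attention, though.

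First, your map $t\mapsto \Psi(\Lambda^j_t,\chi(\area(\Lambda^j_t)))$ is only \emph{continuous} in $t$, since $V\mapsto X_V$ (and hence $V\mapsto\Psi(V,\cdot)$) is merely continuous on $\mathcal V^2_G(M)$. But the saturation $\Pi$ requires $\Phi\in\Is_G(I^n,M)$ to be \emph{smooth} in $(t,x)$, so as written $\{\Sigma^j_t\}_{t\in I^n}$ need not lie in $\Pi$. The paper handles this explicitly by convolving the resulting isotopy with a smooth kernel in the parameter $t$, producing a smooth $\Phi^j$ with $\|\Phi^j(t,\cdot)-\tilde\Phi^j(t,\cdot)\|_{C^1}\le 1/(j+1)$; this preserves $G$-equivariance (the convolution is only in $t$) and the area estimates survive in the limit.

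Second, in your final step the implication ``$\operatorname{dist}(V,V_\Pi)\ge\rho \Rightarrow \operatorname{dist}(\Lambda^j_{t_j},V_\Pi)\ge\rho/2$'' does not follow: $V$ is the limit of $\Sigma^j_{t_j}$, not of $\Lambda^j_{t_j}$, and the pull-tight may move varifolds substantially. The correct route (which the paper takes) is to pass to a subsequence so that $\Lambda^j_{t_j}\to W$, use continuity of $\Psi$ to get $V=\Psi(W,1)_*W$, and then argue: if $W$ were nonstationary one would have $\|V\|(M)<\|W\|(M)\le W_\Pi$, contradicting $\|V\|(M)=W_\Pi$; hence $W\in V_\Pi$, so $\Psi(W,1)=\id$ and $V=W$ is stationary. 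This also yields $C(\{\so^j\})\subset C(\{\boldsymbol\Lambda^j\})$ directly, since $\Lambda^j_{t_j}\to V$.
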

\begin{proof}
We include the proof for completeness. However, the argument is the same as in the proof of \cite{ColdingDeLellis2003Errata}*{Proposition 2.1}, given the suitable modifications to the equivariant setting.

We first restrict to the compact space
\[
\mathcal{X} \eqdef \{V\in \mathcal{V}^2_G(M) \st \norm{V}(M)\le 2W_\Pi\}
\]
and we denote by $\mathcal{V}_\infty$ the subspace of free boundary $G$-stationary varifolds of $\mathcal{X}$, which coincides with the subspace of $G$-equivariant free boundary stationary varifolds by \cref{rem:stationaryGstationary}, and we define $\mathcal{Y}\eqdef \mathcal{X}\setminus \mathcal{V}_\infty$.
Now, for every $V\in \mathcal{Y}$, consider a $G$-equivariant smooth vector field $\xi_V\in\vf_G(M)$ such that $\delta V(\xi_V) < 0$. Up to rescaling, we can assume without loss of generality that $\norm{\xi_V}_{C^k}\le 1/k$, whenever $\vard(V,\mathcal{V}_\infty)\le 2^{-k}$ and $k\ge 1$ is any positive integer.

Then, for every $V\in \mathcal{Y}$, let $0 < \rho(V) < \vard(V,\mathcal{V}_\infty)/2$ be such that $\delta W(\xi_V)<0$ for every $W\in B_{\rho(V)}^{\vard}(V)$. Note that $\{B_{\rho(V)}^{\vard}(V)\}_{V\in \mathcal{Y}}$ is a cover of $\mathcal{Y}$ and thus it admits a subordinate partition of unity $\{\varphi_V\}_{V\in \mathcal{Y}}$. Therefore, for every $V\in \mathcal{Y}$, we can define the smooth $G$-equivariant vector field
\[
H_V\eqdef \sum_{W\in \mathcal{Y}} \varphi_W(V) \xi_W.
\]
Note that the map $\mathcal{Y}\ni V \mapsto H_V \in \vf_G(M)$ is continuous. Moreover, observe that $\delta V(H_V) < 0$ for every $V\in \mathcal{Y}$ and that, if $\vard(V,\mathcal{V}_\infty) \le 2^{-k-1}$ for some positive integer $k\ge 1$, then $\norm{H_V}_{C^k}\le 1/k$. 
This last property follows from the fact that, if $\vard(V,\mathcal{V}_\infty) \le 2^{-k-1}$ for some positive integer $k\ge 1$ and $\varphi_W(V) > 0$ for some $W\in \mathcal{Y}$, then $\vard(W,\mathcal{V}_\infty)\le 2\, \vard(V,\mathcal{V}_\infty) \le 2^{-k}$ and therefore $\norm{\xi_W}_{C^k}\le 1/k$.

As a result, the map $V\mapsto H_V$ defined on $\mathcal{Y}$ can be extended to a continuous function $\mathcal{X}\to \vf_G(M)$ by setting it identically equal to $0$ on $\mathcal{V}_\infty$.
Then, for every $V\in \mathcal{X}$, the flow $\Psi_V\colon[0,+\infty)\to \operatorname{Diff}(M)$ generated by $H_V$ is a $G$-equivariant isotopy. Since the map $V\in \mathcal{X}\mapsto H_V\in \vf_G(M)$ is continuous, then also the map $V\mapsto \Psi_V$ is continuous. In particular the function $V\mapsto \delta(\Psi_V(s,\cdot)_\sharp V)(H_V)$ is continuous as well for all $s\ge 0$ and $\Psi_V(0,\cdot)_\sharp V = V$. 
Hence, for every $V\in \mathcal{Y}$, let $0<\tau(V)\le 1$ be the maximum such that
\[
\delta(\Psi_V(s,\cdot)_\sharp V)(H_V) \le \frac 12 \delta V(H_V) < 0 \ \text{for all $s\in [0,\tau(V)]$}.
\]
The map $\tau\colon \mathcal{Y}\to [0,1]$ is continuous, hence we can redefine $H_V$ multiplying it by $\tau(V)$ in $\mathcal{Y}$, i.e., we reset it to be equal to $\tau(V)H_V$ in $\mathcal{Y}$ and equal to $0$ elsewhere. Note that this gives a well-defined vector field $\mathcal{X}\ni V\mapsto H_V\in \vf_G(M)$, because $\tau$ is bounded in $\mathcal{Y}$ and therefore the newly-defined $H_V$ is continuous if we set it equal to $0$ in $\mathcal{V}_\infty$.
As a result, we also redefine the flow $\Psi_V$ and it holds
\[
\delta(\Psi_V(s,\cdot)_\sharp V) (H_V) < 0 \ \text{for all $V\in \mathcal{Y}$ and $s\in [0,1]$}.
\]

Now let $\{\boldsymbol{\Lambda}^j\}_{j\in\N}=\{\{\Lambda_t^j\}_{t\in I^n}\}_{j\in\N}\subset \Pi$ be a minimizing sequence and define the $G$-equivariant surface $\tilde\Sigma_t^j \eqdef \Psi_{\Lambda_t^j}(1,\Lambda_t^j)$ for all $j\in \N$ and $t\in I^n$. Observe that the map $(t,x)\mapsto \tilde \Phi^j(t,x)\eqdef \Psi_{\Lambda_t^j}(1,x)$ is $C^1$ in the parameter $t$;
however, it is not necessarily smooth, hence $\{\tilde\Sigma_t^j\}_{t\in I^n}$ is not necessarily contained in $\Pi$.
Therefore, for all $j\in\N$, let us define a smooth map $(t,x)\mapsto \Phi^j(t,x)$ by convolving $\tilde \Phi^j$ with a smooth kernel in the parameter $t\in I^n$ in such a way that
\begin{equation} \label{eq:hnearc1}
\norm{\Phi^j(t,\cdot)-\tilde \Phi^j(t,\cdot)}_{C^1} \le \frac{1}{j+1}.
\end{equation}
Note that $\Phi^j(t,\cdot)$ is $G$-equivariant, because we are convoluting in the parameter $t$ and $\tilde\Phi^j(t,\cdot)$ is $G$-equivariant.
Then, define the $G$-equivariant surface $\Sigma_t^j\eqdef \Phi^j(t,\Lambda_t^j)$ for all $j\in\N$ and $t\in I^n$. By smoothness of the map $t\mapsto \Phi^j(t,\cdot)$, we have that $\{\Sigma_t^j\}_{t\in I^n}\in\Pi$ for all $j\in\N$.
Moreover, by \eqref{eq:hnearc1}, it holds that
\begin{equation} \label{eq:TildeNearSigma}
\lim_{j\to+\infty} \sup_{t\in I^n} \vard(\Sigma_t^j,\tilde \Sigma_t^j) = 0.
\end{equation}
As a result, using that $\area(\tilde\Sigma_t^j) = \area(\Psi_{\Lambda_t^j}(1,\Lambda_t^j)) \le \area(\Lambda_t^j)$, we have that
\[
m_0\le \limsup_{j\in\N}\sup_{t\in I^n} \area(\Sigma_t^j) = \limsup_{j\in\N}\sup_{t\in I^n} \area(\tilde\Sigma_t^j) \le \limsup_{j\in\N}\sup_{t\in I^n} \area(\Lambda_t^j) = m_0,
\]
thus, a posteriori, all the inequalities are equalities; in particular we have that \[\limsup_{j\in\N}\sup_{t\in I^n} \area(\Sigma_t^j) = m_0.\]

Our aim now is to prove that the minimizing sequence $\{\boldsymbol{\Sigma}^j\}_{j\in\N}=\{\{\Sigma_t^j\}_{t\in I^n}\}_{j\in \N}$ has the properties claimed in the statement. Let $t_j\in I^n$ be such that $\{\Sigma_{t_j}^j\}_{j\in\N}$ is a min-max sequence. Up to subsequence, $\Sigma_{t_j}^j$ converges in the sense of varifolds to a varifold $V$, which coincides with the limit in the sense of varifolds of $\tilde\Sigma_{t_j}^j$ by \eqref{eq:TildeNearSigma}.
Possibly passing to a subsequence, we can also assume that $\Lambda_{t_j}^j$ converges in the sense of varifolds to some varifold $W$ and, by continuity of the map $\Psi$, we have that $\Psi_W(1,\cdot)_\sharp W = V$.

First consider the case when $W$ is not free boundary stationary.
Then $\norm{W}(M) = m_0$, but on the other hand it holds
\begin{align*}
m_0 &= \lim_{j\to+\infty} \area(\tilde\Sigma_{t_j}^j) = \norm{V}(M) = \norm{\Psi_W(1,\cdot)_\sharp W}(M) \\
&= \norm{W}(M) + \int_0^1 \delta(\Psi_W(s,\cdot)_\sharp W)(H_W) \de s < m_0,
\end{align*}
which is a contradiction. As a result $W$ is free boundary stationary and therefore $W=\Psi_W(1,\cdot)_\sharp W= V$, which implies that $V$ is free boundary stationary, as desired. Moreover, since $\Lambda_{t_j}^j\to W =V$, we also get that $C(\{\so^j\}_{j\in\N}) \subset C(\{\boldsymbol{\Lambda}^j\}_{j\in\N})$.
\end{proof}

\section{Min-max sequence \texorpdfstring{$G$}{G}-almost minimizing in annuli} \label{sec:SeqAlmMin}

The second step is to prove that we can further assume that the min-max sequence found in the previous section has a certain `regularizing property', namely that of being $G$-almost minimizing in suitable subsets. The precise definitions and statements are given below.

\begin{definition} \label{def:AlmostMinimizing}
Given $\delta,\varepsilon>0$, a $G$-equivariant open set $U\subset M$, and a $G$-equivariant surface $\Sigma\subset U$, we say that $\Sigma$ is \emph{$(G,\delta,\varepsilon)$-almost minimizing} in $U$ if there does not exist any $G$-equivariant isotopy $\Psi\in \Is_G(M)$ supported in $U$ such that
\begin{enumerate}[label={\normalfont(\roman*)}]
\item $\area(\Psi(s,\Sigma)) \le \area(\Sigma) + \delta$ for all $s\in[0,1]$;
\item $\area(\Psi(1,\Sigma)) \le \area(\Sigma) - \varepsilon$.
\end{enumerate}
A sequence $\{\Sigma^j\}_{j\in\N}$ is said to be \emph{$G$-almost minimizing} in $U$ if there exist sequences of positive numbers $\delta_j,\varepsilon_j>0$ with $\varepsilon_j\to 0$ such that $\Sigma^j$ is $(G,\delta_j,\varepsilon_j)$-almost minimizing in $U$.
\end{definition}

\begin{definition}
We say that a subset $\mathrm{An}\subset M$ is a \emph{$G$-equivariant annulus} if there are $x\in M$ and $0<r<s$ such that $\mathrm{An} = \bigcup_{g\in G} g(\mathrm{An}(x,r,s))$, where $\mathrm{An}(x,r,s) = B_s(x) \setminus \overline{B_r(x)}$ is the open annulus of center $x$, inner radius $r$ and outer radius $s$. 
Note that, for $r,s$ sufficiently small $\bigcup_{g\in G} g(\mathrm{An}(x,r,s))$ consists of a finite union of disjoint open annuli (note that some annuli in the union could possibly be the same). 
Given $x\in M$, we denote by $\mathcal{AN}^G_{r}(x)$ the union of all $G$-equivariant annuli $\bigcup_{g\in G} g(\mathrm{An}(x,r_1,r_2))$ as the radii $r_1,r_2$ vary in the range $0<r_1<r_2\le r$.

We say that a family of $G$-equivariant annuli is \emph{$L$-admissible} for some positive integer $L$, if it consists of $L$ $G$-equivariant annuli $\mathrm{An}_1,\ldots, \mathrm{An}_L$ such that $\mathrm{An}_i = \bigcup_{g\in G} g(\mathrm{An}(x,r_i,s_i))$ for some $x\in M$ and $r_i,s_i>0$ with $r_{i+1}>2s_i$ for all $i=1,\ldots,L-1$.
\end{definition}
\begin{definition}
We say that a surface $\Sigma\subset M$ is $(G,\delta,\varepsilon)$-almost minimizing in an $L$-admissible family $\mathcal{A}$ of $G$-equivariant annuli if it is $(G,\delta,\varepsilon)$-almost minimizing in at least one $G$-equivariant annulus of $\mathcal{A}$. Moreover, we say that a sequence $\{\Sigma^j\}_{j\in\N}$ is $G$-almost minimizing in an $L$-admissible family $\mathcal{A}$ of $G$-equivariant annuli if there exist sequences $\delta_j,\varepsilon_j>0$ with $\varepsilon_j\to 0 $ such that $\Sigma^j$ is $(G,\delta_j,\varepsilon_j)$-almost minimizing in $\mathcal{A}$; equivalently, $\{\Sigma^j\}_{j\in\N}$ is $G$-almost minimizing in $\mathcal{A}$ if it is $G$-almost minimizing in at least one $G$-equivariant annulus of $\mathcal{A}$.
\end{definition}

\begin{lemma} [{cf. \cite{ColdingGabaiKetover2018}*{Lemma A.1}}] \label{lem:AlmostMinSeqInAdmissible}
Let $\{\so^j\}_{j\in\N}=\{\{\Sigma_t^j\}_{t\in I^n}\}_{j\in \N}$ be a minimizing sequence obtained from \cref{prop:ConvergenceToStationary},
then there exists $t_j\in I^n$ for all $j\in\N$ such that $\{\Sigma^j_{t_j}\}_{j\in\N}$ is a $G$-equivariant min-max sequence that is $G$-almost minimizing in every $L$-admissible family of $G$-equivariant annuli, where $L=(3^n)^{3^n}$.
\end{lemma}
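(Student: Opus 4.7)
The plan is to prove this by contradiction, combining a Pitts-style combinatorial argument with the pull-down/interpolation technique developed by Colding--De Lellis (in the case $n=1$) and generalised to $n$-parameter sweepouts in \cite{ColdingGabaiKetover2018}. Suppose the statement fails: then for every choice of $t_j$ producing a min-max sequence, one can find some $L$-admissible family of $G$-equivariant annuli in which $\Sigma^j_{t_j}$ is not $G$-almost minimizing. Equivalently, the set $K^j\subset I^n$ of parameters $t$ for which $\area(\Sigma^j_t)$ is close to $W_\Pi$ is, for large $j$, covered by ``bad'' parameters, each one admitting (simultaneously in all $L$ annuli of some admissible family) $G$-equivariant isotopies supported in those annuli that reduce area by at least some $\varepsilon>0$ without raising it past $\area(\Sigma^j_t)+\delta$.

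Next I would run the Pitts discretization. For each large $j$, subdivide $I^n$ into a dyadic grid of mesh size $2^{-N_j}$, with $N_j\to\infty$, and associate to each vertex $v$ of the grid either a ``good'' status (if $\Sigma^j_v$ has area well below $W_\Pi$) or the data of a bad admissible family and corresponding pull-down isotopies. Using the continuity in $t$ of $\Sigma^j_t$, the smooth isotopies, the $G$-equivariance (so that the flows produced by \cref{rem:GequivVecFieldToFlow} preserve equivariance), and a standard interpolation lemma (cf.\ \cite{ColdingDeLellis2003}*{Proposition 4.4}, \cite{ColdingGabaiKetover2018}*{Section~3.4}), one can glue the local pull-down isotopies across faces of the grid to obtain a \emph{single} new sweepout $\tilde{\so}^j\in\Pi$ whose maximum area is strictly less than $\sup_{t\in I^n}\area(\Sigma^j_t)-\varepsilon/2$. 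Iterating this improvement then contradicts $\sup_t\area(\Sigma^j_t)\to W_\Pi$ being the infimum over $\Pi$.

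The combinatorial heart is precisely where the constant $L=(3^n)^{3^n}$ appears. Gluing the local isotopies at a given grid point requires compatibility with all its neighbours in the grid, of which there are at most $3^n$ (the number of cubes in $I^n$ sharing a common face with a given cube). At each such overlap one must be able to choose a $G$-equivariant annulus in which both neighbouring vertices admit compatible area-decreasing isotopies; if the admissible families at adjacent vertices share a common annulus, one can proceed directly. If not, a pigeonhole/iterative selection on the $3^n$ neighbours, repeated at most $3^n$ times so as to propagate compatibility through second-neighbours, forces a common annulus provided each admissible family has at least $(3^n)^{3^n}$ members; the disjointness condition $r_{i+1}>2s_i$ built into the definition of admissibility guarantees that the isotopies in the chosen annulus do not interfere with those carried out elsewhere in the sweepout.

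The hard part will be this last combinatorial/gluing step: making the interpolation genuinely $G$-equivariant (which is where we use that $H_V$ in the proof of \cref{prop:ConvergenceToStationary} was constructed to be equivariant, and that $G$-equivariant annuli are $G$-equivariant by construction), while simultaneously controlling the area along the whole family of interpolating surfaces and ensuring that the discrete-to-continuous passage of Pitts does not destroy the smoothness requirements built into the definition of $G$-sweepout. Once the improved sweepout is produced, the contradiction with the definition of $W_\Pi$ is immediate, and \cref{prop:ConvergenceToStationary} ensures that the resulting $t_j$ can be chosen so that $\Sigma^j_{t_j}$ still converges in varifold sense to a free boundary stationary $G$-equivariant varifold.
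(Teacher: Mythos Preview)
Your overall strategy matches the paper's: argue by contradiction, cover the ``high-area'' set $K^j\subset I^n$ by open neighbourhoods on which area-decreasing $G$-equivariant isotopies are available in every annulus of some $L$-admissible family, select one annulus per cell so that adjacent cells receive disjoint annuli, and then glue the isotopies into a competitor sweepout in $\Pi$ whose maximal area drops below $W_\Pi$. A few points, however, deserve correction.

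First, the grid should be \emph{triadic}, not dyadic: the paper uses Pitts' CW complex $I(n,m)$ with mesh $3^{-m}$ and the associated open sets $T(\sigma)$ (subdividing each $1$-cell into thirds). This is precisely what makes the adjacency count equal to $3^n$ and yields $L=(3^n)^{3^n}$; with a dyadic grid the combinatorics and the constant would be different.

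Second, your pigeonhole description of the annulus selection is not quite how the argument runs. One does not look for an annulus common to adjacent families; rather, each cell $\sigma$ keeps its own family $\mathcal A(\sigma)$, and one must choose $\mathrm{An}(\sigma)\in\mathcal A(\sigma)$ so that $\mathrm{An}(\sigma_1)\cap\mathrm{An}(\sigma_2)=\emptyset$ whenever $\sigma_1,\sigma_2$ are faces of a common cell. The paper invokes this as a black box (Pitts, \emph{Existence and Regularity\ldots}, Proposition~4.9), and the admissibility condition $r_{i+1}>2s_i$ is what allows the inductive selection to succeed.

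Third, no iteration is needed: a single application already produces, for every $j>J$, a sweepout with $\sup_t\area\le \max\{m_0-1/(2^{n+1}J),\,m_0-1/(2J)\}<m_0$, contradicting the definition of $W_\Pi$. Finally, the equivariance issue you flag in the last paragraph is essentially automatic here, since the competing isotopy is built by composing finitely many $G$-equivariant isotopies supported in disjoint $G$-equivariant annuli with cutoff functions in $t$; no reference to the vector field $H_V$ from \cref{prop:ConvergenceToStationary} is needed.
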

\begin{proof}
Let $\{\so^j\}_{j\in\N}$ be the minimizing sequence given by \cref{prop:ConvergenceToStationary}. Fixed any $J>0$, we prove that there exist $j>J$ and $t_j\in I^n$ such that $\Sigma^j_{t_j}$ is $(G,1/(2^{n+2}J),1/J)$-almost minimizing in every $L$-admissible family of $G$-equivariant annuli and $\area(\Sigma^j_{t_j}) \ge m_0 - 1/J$.
To this purpose, consider any $j>J$ and define $K\eqdef \{t\in I^n\st \area(\Sigma^j_t) \ge m_0 - 1/J\}$. Assume that for all $t\in K$ there exists an $L$-admissible family $\mathcal{A}_t=\{\mathrm{An}_t^1,\ldots,\mathrm{An}_t^L\}$ of $G$-equivariant annuli where $\Sigma_t^j$ is not $(G,1/(2^{n+2}J),1/J)$-almost minimizing, namely for all $t\in K$ and $i=1,\ldots,L$ there exists a $G$-equivariant isotopy $\Psi_t^i\in\Is_G(M)$ supported in $\mathrm{An}_t^i$ such that
\begin{itemize}
    \item $\area(\Psi_t^i(s,\Sigma_t^j)) \le \area(\Sigma_t^j) + 1/(2^{n+2}J)$ for all $s\in[0,1]$;
    \item $\area(\Psi_t^i(1,\Sigma_t^j)) \le \area(\Sigma_t^j) - 1/J$.
\end{itemize}
By continuity in the sense of varifolds of the map $t\mapsto \Sigma_t^j$, for every $t\in K$ we can find a neighborhood $U_t\subset I^n$ such that
\begin{itemize}
    \item $\area(\Psi_t^i(s,\Sigma_r^j)) \le \area(\Sigma_r^j) + 1/(2^{n+1}J)$ for all $s\in[0,1]$, $r\in U_t$;
    \item $\area(\Psi_t^i(1,\Sigma_r^j)) \le \area(\Sigma_r^j) - 1/(2J)$ for all $r\in U_t$.
\end{itemize}
Since $\{U_t\}_{t\in K}$ is an open cover of $K$ and $K$ is compact, we can extract a finite subcover $\mathcal{U}$. We now want to find another cover $\mathcal{O}$ of $K$ and for every $O\in\mathcal{O}$ we want to choose a $G$-equivariant annulus $\mathrm{An}_O$ such that
\begin{enumerate}[label={\normalfont(\roman*)}]
\item\label{opencoverK:i} for all $O\in\mathcal{O}$ there exists $U_t\in\mathcal{U}$ such that $O\subset U_t$ and $\mathrm{An}_O = \mathrm{An}_t^i$ for some $i=1,\ldots,L$;
\item\label{opencoverK:ii} for all $O_1,O_2\in\mathcal{O}$ with nonempty intersection we have that $\mathrm{An}_{O_1} \cap \mathrm{An}_{O_2} = \emptyset$.
\end{enumerate}

To this purpose, we follow \cite{Pitts1981}*{Chapter 4}). For all $m\in\N$, let $I(1,m)$ be the CW complex structure, supported on the $1$-dimensional interval $I^1=[0,1]$, whose $0$-cells are $[0], [3^{-m}], [2\cdot 3^{-m}], \ldots, [1-3^{-m}],[1]$ and whose $1$-cells are the intervals $[0,3^{-m}], [3^{-m}, 2\cdot 3^{-m}], \ldots, [1-3^{-m}, 1]$. Then consider the CW complex structure $I(n,m)$ on the $n$-dimensional interval $I^n$, whose $p$-cells for $0\le p\le n$ can be written as $\sigma_1\otimes\sigma_2\otimes\ldots\otimes \sigma_n$, where $\sigma_1,\ldots,\sigma_n$ are cells of the CW complex $I(1,m)$ such that $\sum_{i=1}^n \dim(\sigma_i) = p$.

For every cell $\sigma = \sigma_1\otimes\sigma_2\otimes\ldots\otimes \sigma_n\in I(n,m)$, let $T(\sigma)$ be the open subset of $I_n$ defined as $T(\sigma)\eqdef T(\sigma_1)\times T(\sigma_2)\times \ldots \times T(\sigma_n)$, where 
\[
T(\sigma_i) \eqdef \begin{cases}
                       (x-3^{-m-1},x+3^{-m-1}) & \text{if $\sigma_i = [x]$ is a $0$-cell}\\
                       (x,y) & \text{if $\sigma_i = [x,y]$ is a $1$-cell}. 
                   \end{cases}
\]
Note that $\{T(\sigma)\st \sigma\in I(n,m), T(\sigma)\cap K \not=\emptyset\}$ is an open cover of $K$. Then define $\mathcal{O}$ to be such an open cover for some $m$ sufficiently large such that $3^{-m}\sqrt{n}$ is less than the Lebesgue number of $\mathcal{U}$, thus $O$ has diameter less than the Lebesgue number of $\mathcal{U}$ for all $O\in\mathcal{O}$.
Then, for all $O\in\mathcal{O}$, let $\mathcal{A}_O$ be an $L$-admissible family of $G$-equivariant annuli in $M$ such that there exists $U_t\in\mathcal{U}$ with $O\subset U_t$ and $\mathcal{A}_O=\mathcal{A}_t$.

Now, note that $T(\sigma_1)\cap T(\sigma_2)\not=\emptyset$ for some $\sigma_1,\sigma_2\in I(n,m)$ if and only if $\sigma_1$ and $\sigma_2$ are faces (possibly of different dimension) of a cell in $I(n,m)$. 
We can thus exploit the following proposition to associate to every $O=T(\sigma)\in\mathcal{O}$ a $G$-equivariant annulus $\mathrm{An}_O\in\mathcal{A}_O$.

\begin{proposition} [\cite{Pitts1981}*{Proposition 4.9}] \label{prop:PittsChoice}
For every $\sigma\in I(n,m)$, let $\mathcal{A}(\sigma)$ be an $L$-admissible family of $G$-equivariant annuli in $M$ with $L=(3^n)^{3^n}$. Then for every $\sigma\in I(n,m)$ it is possible to choose a $G$-equivariant annulus $\mathrm{An}(\sigma)\in\mathcal{A}(\sigma)$ such that $\mathrm{An}(\sigma_1)\cap \mathrm{An}(\sigma_2)=\emptyset$ for every $\sigma_1\not=\sigma_2\in I(n,m)$ that are faces of a cell in $I(n,m)$.
\end{proposition}

Hence, for all $O = T(\sigma)\in \mathcal{O}$, let us define $\mathrm{An}_O \eqdef \mathrm{An}(\sigma)$, where $\mathrm{An}(\sigma)$ is given from \cref{prop:PittsChoice}. This proves the existence of the open cover $\mathcal{O}$ of $K$ with properties \ref{opencoverK:i} and \ref{opencoverK:ii} above. Moreover, by definition of $\mathcal{U}$ and $\mathcal{O}$, for all $O\in\mathcal{O}$ there exists a $G$-equivariant isotopy $\Psi_O\in\Is_G(M)$ supported in $\mathrm{An}_O$ such that
\begin{itemize}
\item $\area(\Psi_O(s,\Sigma_r^j))\le \area(\Sigma_r^j) + 1/(2^{n+1}J)$ for all $s\in[0,1]$, $r\in O$;
\item $\area(\Psi_O(1,\Sigma_r^j))\le \area(\Sigma_r^j) - 1/(2J)$ for all $r\in O$.
\end{itemize}

Now, for all $O\in \mathcal O$ choose a $C^\infty$ function $\varphi_O\colon I^n\to [0,1]$ supported in $O$, such that for all $s\in K$ there exists $O\in\mathcal{O}$ such that $\varphi_O(s) = 1$.
Then, given any $t\in I^n$, consider the function $\Phi_t\in \operatorname{Diff}_G(M)$ defined as
\[
\Phi_t(x) \eqdef \begin{cases}
                     \Psi_O(\varphi_O(t), x) & \text{if $x\in \mathrm{An}_O$ for some $O\in\mathcal{O}$ with $t\in O$,}\\
                     x &\text{if $t\not\in O$ for all $O\in\mathcal O$.}
                 \end{cases}
\]
Observe that $\Phi_t$ is well-defined since $\mathrm{An}_{O_1}\cap \mathrm{An}_{O_2}=\emptyset$ whenever $O_1,O_2\in\mathcal O$ are not disjoint.
Moreover, $\Phi=\{t\in I^n\mapsto\Phi_t\}\in\Is_G(I^n,M)$ is a $G$-equivariant isotopy with $\Phi_t=\id$ for $t\in\partial I^n$. Indeed, note that $d(K,\partial I^n) > 0$, hence we can choose $m$ possibly larger in such a way that $O\cap \partial I^n=\emptyset$ for all $O\in\mathcal O$.
Therefore, we can define $\tilde\Sigma_t^j = \Phi_t(\Sigma_t^j)$ and we have that $\{\tilde\Sigma_t^j\}_{t\in I^n}\in \Pi$. 
Now note that, for every $t\in K$, there exists $O\in \mathcal O$ such that $\varphi_O(t) = 1$, moreover $t\in O$ for at most other $2^n-1$ sets $O\in\mathcal O$. As a result, we can estimate
\[
\area(\tilde\Sigma_t^j) \le \area(\Sigma_t^j) - \frac{1}{2J} + \frac{2^n-1}{2^{n+1}J} =\area(\Sigma_t^j) - \frac{1}{2^{n+1}J}
\]
for all $t\in K$, and
\[
\area(\tilde\Sigma_t^j) \le \area(\Sigma_t^j) + \frac{2^n}{2^{n+1}J} < m_0-\frac 1{2J}
\]
for all $t\in I^n\setminus K$.
By arbitrariness of $j$, this would give \[\liminf_{j\to+\infty}\sup_{t\in I^n} \area(\tilde\Sigma_t^j) < m_0,\] which is a contradiction. This proves that, for every $J>0$, there exists $j>J$ and $t_j\in I^n$ such that $\Sigma_{t_j}^j$ is $(G,1/(2^{n+2}J),1/J)$-almost minimizing in every $L$-admissible family of $G$-equivariant annuli and $\area(\Sigma_{t_j}^j) \ge m_0-1/J$. In particular, we have that $\{\Sigma^j_{t_j}\}_{j\in\N}$ is a $G$-equivariant min-max sequence that is $G$-almost minimizing in every $L$-admissible family of $G$-equivariant annuli.
\end{proof}

\begin{remark}
Note that the min-max sequence that we obtain is $(G,1/(2^{n+2}J),1/J)$-almost minimizing. This coincides with the result obtained in \cite{ColdingDeLellis2003} (see Definition 3.2 therein) in the case of $1$-parameters sweepouts, i.e., $n=1$.
\end{remark}

\section{Equivariant Meeks--Simon--Yau reductions}

In this section, we present the adaptation to the equivariant setting of the reductions introduced by Meeks--Simon--Yau in \cite{MeeksSimonYau1982}. The idea is to perform a `topological simplification' on a surface, modifying it only slightly with respect to the varifold sense.
Let us start recalling a preliminary lemma about `fillings' of a surface in small balls.

\begin{lemma} [{cf. \cite{MeeksSimonYau1982}*{Lemma 1}, \cite{Li2015}*{Lemma 7.5}}] \label{lem:CompactFilling}
For every $r_0>0$ sufficiently small (depending on $M$), there exists $\delta\in(0,1)$ such that, given any surface $\Sigma^2\subset M$ with $\partial \Sigma=\Sigma\cap\partial M$ and $\area(\Sigma\cap B_{r_0}(x))<\delta^2r_0^2$ for all $x\in M$, there exists a unique compact set $K_\Sigma\subset M$ such that
\begin{enumerate} [label={\normalfont(\arabic*)}]
\item $\partial K_\Sigma \cap \operatorname{int}(M)=\Sigma$;
\item $\Haus^3(K_\Sigma\cap B_{r_0}(x))\le\delta^2r_0^3$ for all $x\in M$;
\item $\Haus^3(K_\Sigma)\le c_0\area(\Sigma)^{3/2}$, where $c_0$ is a constant depending only on $M$.
\end{enumerate}
Moreover, if $\Sigma$ is homeomorphic to a two-dimensional sphere, then $K_\Sigma$ is homeomorphic to a three-dimensional ball.
\end{lemma}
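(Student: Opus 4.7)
The plan is to adapt the classical Meeks--Simon--Yau argument, in the free-boundary form given by Li, by first proving a local statement in small geodesic (half-)balls and then gluing together the resulting local ``fillings.'' Choose $r_0$ small enough that every metric ball $B_{r_0}(x)\subset M$ is bi-Lipschitz close to a Euclidean ball (when $x$ is in the interior) or to a nearly-flat Euclidean half-ball (when $x\in\partial M$). In both cases, the relative isoperimetric inequality --- together with a doubling argument across $\partial M$ to handle the free-boundary case and the condition $\partial\ms\cap B_{r_0}(x)\subset\partial M$ --- implies the following dichotomy: each connected component of $B_{r_0}(x)\setminus \ms$ has volume $\le c_1\area(\ms\cap B_{r_0}(x))^{3/2}$ or volume $\ge \eta\, r_0^3$, for universal constants $c_1,\eta>0$. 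Choosing $\delta>0$ so small that $c_1(\delta^2 r_0^2)^{3/2}=c_1\delta^3 r_0^3<\eta\, r_0^3$, these two alternatives are mutually exclusive, so we may canonically define $V_x\subset B_{r_0}(x)$ as the union of those components whose volume is at most $c_1\delta^3 r_0^3$; this is the ``small side'' of $\ms$ inside $B_{r_0}(x)$.

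Next, I would establish the consistency of these local choices on overlapping balls. If $B_{r_0}(x)\cap B_{r_0}(y)\ne\emptyset$ and $W$ is a connected component of $B_{r_0}(x)\setminus \ms$ meeting $B_{r_0}(y)$, then $W\cap B_{r_0}(y)$ is a union of components of $B_{r_0}(y)\setminus\ms$, and a volume comparison (one side cannot simultaneously satisfy the ``small'' bound in $B_{r_0}(x)$ and the ``large'' lower bound in $B_{r_0}(y)$ without contradicting the uniform area hypothesis) forces $W\cap B_{r_0}(y)\subset V_y$ if and only if $W\subset V_x$. Setting $K_\ms\eqdef\bigcup_{x\in M}V_x$, this is a closed subset of $M$ whose topological boundary inside $\operatorname{int}(M)$ coincides with $\ms$ (giving (1)), and (2) is immediate from the pointwise volume bound on each $V_x$. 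Uniqueness follows by observing that the complementary region $M\setminus K_\ms$ necessarily violates property (2) in some ball once $\delta$ is chosen small enough, using a covering argument on $M$. The global volume bound (3) is obtained by applying a Vitali-type covering argument to $K_\ms$ and summing the local estimates $\Haus^3(V_x)\le c_1(\area(\ms\cap B_{r_0}(x)))^{3/2}$, using the subadditivity of $t\mapsto t^{3/2}$ on the partition of $\area(\ms)$ induced by the cover.

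Finally, for the topological statement when $\ms$ is a two-sphere: the hypothesis implies that $\ms$ is contained in a tubular neighborhood of itself of width $\ll r_0$, and $K_\ms$ deformation retracts onto a small thickening of $\ms$ on its ``inner'' side; combining the fact that $\ms\cong S^2$ with the Alexander--Schoenflies theorem, applied inside a Euclidean chart covering this thickening (which exists because $K_\ms$ has small diameter by the volume bound and the isoperimetric inequality), yields that $K_\ms$ is homeomorphic to a three-ball. The main obstacle will be the careful treatment of points of $\ms$ meeting $\partial M$: the local isoperimetric inequality has to be formulated for edged surfaces in half-balls respecting the relation $\partial \ms\cap B_{r_0}(x)\subset\partial M$, and the ``small side'' $V_x$ must include a portion of $\partial M$ in a way that is consistent with the gluing across overlapping boundary balls. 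This is resolved by the doubling trick across $\partial M$ (reflecting the half-ball to a full ball, and applying the interior isoperimetric inequality to the reflected surface), which turns the free-boundary case into a corollary of the interior one at the price of slightly degrading the universal constants $c_1,\eta$.
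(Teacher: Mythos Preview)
The paper does not give its own proof of this lemma; it is quoted with references to \cite{MeeksSimonYau1982}*{Lemma~1} and \cite{Li2015}*{Lemma~7.5} and used as a black box. Your outline for items (1)--(3) follows the Meeks--Simon--Yau argument faithfully: the local relative isoperimetric dichotomy, the canonical ``small side'' $V_x$ in each ball, the consistency on overlaps, and a covering-and-summation argument are exactly the ingredients of the cited proofs. One terminological slip: the inequality you need, $\sum_i a_i^{3/2}\le(\sum_i a_i)^{3/2}$, comes from \emph{super}additivity of $t\mapsto t^{3/2}$, not subadditivity; the direction you use is correct. Your consistency paragraph is also a bit too quick---the genuine content is the \emph{reverse} implication (if $p$ lies in a small component in some $B_{r_0}(x)$ then it lies in the small component of its own ball $B_{r_0}(p)$), and this requires balancing the constants with a bit more care than ``volume comparison'' suggests---but this is within the spirit of the cited argument.

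There is, however, a real error in your argument for the final topological claim. You assert that ``$K_\ms$ has small diameter by the volume bound and the isoperimetric inequality,'' and then appeal to Alexander--Schoenflies in a single Euclidean chart. Small volume does not imply small diameter: a long thin cigar-shaped sphere in $\R^3$ has arbitrarily small area in every unit ball and bounds a region of arbitrarily small volume but arbitrarily large diameter. What property~(2) does buy you is that $K_\ms$ contains no metric ball of radius comparable to $r_0$, so every point of $K_\ms$ lies within distance $O(r_0)$ of $\partial K_\ms$; but that is far from placing $K_\ms$ inside one chart, and it does not produce the deformation retraction you assert either. The actual argument in \cite{MeeksSimonYau1982} is different: one uses that each local piece $K_\ms\cap B_{r_0}(x)$ is a finite union of small (hence topologically standard) cells to show, via a covering argument, that the compact $3$-manifold $K_\ms$ with boundary $\ms\cong S^2$ is simply connected; the identification with a $3$-ball then follows from Alexander's theorem (equivalently, irreducibility of the ambient), not from a single-chart application of Schoenflies.
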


Observe that, without loss of generality, up to rescaling the manifold $M$ we can assume that $r_0=1$. From now on, we make this assumption and we consider the associated $\delta$ given by \cref{lem:CompactFilling}. Moreover, we choose $0<\gamma<\delta^2/9$.
We can now define the notion of equivariant $\gamma$-reduction in the interior of $M$ and away from points of type $\dih_n$ for some $n\ge 2$. The reason why it is sufficient to work under these assumptions will be clear in the next sections. The idea is that the points at the boundary away from the singular locus are treated in \cite{Li2015}, and the (finitely many) points of type $\dih_n$ with $n\ge 2$ and points at the intersection between the boundary and the singular locus can be avoided.

Therefore, for the rest of the section let us assume that $U\subset M$ is a $G$-compatible open subset such that $U\subset\operatorname{int}(M)$ and $G_U=\Z_n$ for some $n\ge 2$. Since here we only work in $U$, we drop the subscript $U$ when we talk about $G_U$-equivariant objects. Moreover, if not otherwise stated, when we consider a surface $\Sigma\subset U$, we assume that $\Sigma$ is smooth, compact, embedded, possibly disconnected, and $G$-equivariant (i.e., $G_U$-equivariant).

\begin{definition}[{cf. \cite{MeeksSimonYau1982}*{Section 3}, \cite{Li2015}*{Definition 7.6}}] \label{def:EquivariantGammaReduction}
We say that a $G$-equivariant surface $\Sigma_2\subset U$ is a \emph{$G$-equivariant $\gamma$-reduction} of a $G$-equivariant surface $\Sigma_1\subset U$ in $U$ and write
\[
\Sigma_2 \reduc[\gamma] \Sigma_1
\]
(omitting the dependence on $G$ and $U$), if $\Sigma_2$ is obtained from $\Sigma_1$ through a $G$-equivariant surgery in $U$ satisfying the following conditions.
\begin{enumerate} [label={\normalfont(\roman*)}]
\item $\overline{\Sigma_1\setminus \Sigma_2} = A\subset U$ is diffeomorphic to the disjoint union of $k$ annuli $A_1,\ldots,A_k$ and it is $G$-equivariant.
\item $\overline{\Sigma_2\setminus\Sigma_1} = D\subset U$ is diffeomorphic to the disjoint union of $2k$ discs $D_1^1$, $D_1^2$, \ldots, $D_k^1$, $D_k^2$ and $\cup_{i=1}^k D_i^1$ and $\cup_{i=1}^k D_i^2$ are $G$-equivariant.
\item $\partial A_i = \partial D_i^1\cup \partial D_i^2$ for all $i=1,\ldots,k$ and there exist $k$ compact sets $Y_1,\ldots,Y_k$ homeomorphic to the closed three-dimensional ball such that $\partial Y_i = A_i\cup D_i^1\cup D_i^2$ and $Y\eqdef Y_1\cup\ldots\cup Y_k$ satisfies $(Y\setminus\partial Y)\cap(\Sigma_1\cup\Sigma_2)=\emptyset$.
\item $\area(A) + \area(D) < 2\gamma$.
\item\label{egr:nontrivial} Let $\Sigma_1^*$ be a connected component of $\Sigma_1$ intersecting $A$. Then, if $\Sigma_1^*\setminus A$ is disconnected, each connected component of $\Sigma_1^*\setminus A$ is either not simply connected or has area $\ge \delta^2/(2k)$.
\end{enumerate}

We say that a surface $\Sigma\subset M$ is \emph{$G$-equivariant $\gamma$-irreducible} in $U$ if there does not exist $\tilde\Sigma$ such that $\tilde \Sigma \reduc[\gamma] \Sigma$.
\end{definition}

\begin{proposition} \label{rem:EquivalenceIrreducible}
A surface $\Sigma\subset U$ is $G$-equivariant $\gamma$-irreducible in $U$ if and only if whenever $\Delta\subset U$ is $G$-equivariant and it is a union of closed discs with $\partial \Delta= \Delta\cap \Sigma$ and $\area(\Delta) < \gamma$, then there is a disjoint union of discs $\tilde\Delta\subset\Sigma\cap U$ with $\partial\Delta  = \partial \tilde\Delta $ and $\area(\tilde \Delta)<\delta^2/2$.
\end{proposition}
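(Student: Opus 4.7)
The proof will follow the classical strategy of Meeks--Simon--Yau \cite{MeeksSimonYau1982}, systematically adapted to the equivariant setting. Throughout, the assumption that $U$ is $G$-compatible with cyclic isotropy $G_U=\Z_n$ will be used to ensure that all auxiliary constructions (tubular neighborhoods, parallel push-offs, small disc fillings) can be carried out $G$-equivariantly; this is possible in particular because small discs on $\Sigma$ bounding a given simple closed curve are essentially unique thanks to \cref{lem:CompactFilling}, so they automatically inherit the symmetry of their boundary.

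For the direction ``$G$-equivariant $\gamma$-irreducible $\Rightarrow$ filling property'', I would argue as follows. Given a $G$-equivariant $\Delta=D^1\sqcup\ldots\sqcup D^k\subset U$ with $\partial\Delta=\Delta\cap\Sigma$ and $\area(\Delta)<\gamma$, I would construct a candidate $G$-equivariant $\gamma$-reduction by pushing $\Delta$ slightly to two parallel $G$-equivariant copies $\Delta^+,\Delta^-$ on opposite sides, joined to $\Sigma$ by a thin $G$-equivariant annular region $A\subset\Sigma$ along $\partial\Delta$. Conditions (i)--(iv) of \cref{def:EquivariantGammaReduction} are enforceable by construction (in particular (iv) holds, since the total surgery area is close to $2\area(\Delta)<2\gamma$ provided $A$ is chosen sufficiently thin). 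By $\gamma$-irreducibility of $\Sigma$, the only remaining condition, the nontriviality condition \ref{egr:nontrivial}, must then fail: some component of $\Sigma\setminus A$ must be simply connected and have area less than $\delta^2/(2k)$. Its closure is precisely a disc in $\Sigma$ filling one of the circles of $\partial\Delta$. By $G$-equivariance of the construction, this obstruction propagates along the whole $G$-orbit, and collecting these fillings across all orbits of discs of $\Delta$ yields the required $\tilde\Delta\subset\Sigma\cap U$ with $\partial\tilde\Delta=\partial\Delta$ and $\area(\tilde\Delta)<\delta^2/2$.

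For the converse, I would proceed contrapositively. Suppose $\Sigma$ admits a $G$-equivariant $\gamma$-reduction with surgery data $(A,D^1,D^2,Y)$. Since $\area(D^1)+\area(D^2)<2\gamma$, one of the two, say $D^1$, satisfies $\area(D^1)<\gamma$. Applying the filling property to $D^1$ produces a (necessarily $G$-equivariant, by uniqueness of small fillings) disjoint union of discs $\tilde D^1=\bigsqcup_i\tilde D_i^1\subset\Sigma\cap U$ with $\partial\tilde D_i^1=\partial D_i^1$ and $\area(\tilde D^1)<\delta^2/2$. The smallness of each $\tilde D_i^1$, combined with the compact-filling uniqueness of \cref{lem:CompactFilling} and the choice $\gamma<\delta^2/9$, forces $\tilde D_i^1$ to lie on the side of $\partial D_i^1$ in $\Sigma$ opposite to $A_i$, and not to meet any other $A_j$; consequently, $\tilde D_i^1$ is itself a simply connected component of $\Sigma\setminus A$ of area strictly smaller than $\delta^2/(2k)$, in direct contradiction with condition \ref{egr:nontrivial} of the reduction.

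The main technical obstacle is precisely the handling of the nontriviality condition \ref{egr:nontrivial} in both implications: in the forward direction, one must carefully show that among the conditions defining a reduction only \ref{egr:nontrivial} can fail for our candidate, and that its failure yields a full $G$-equivariant filling; in the reverse direction, one must argue that the fillings $\tilde D_i^1$ cannot cross the annuli $A_j$ or encompass larger portions of $\Sigma$, so that they really produce small simply connected components of $\Sigma\setminus A$. Both points rely crucially on the compact filling lemma \cref{lem:CompactFilling} and on the quantitative relation $0<\gamma<\delta^2/9$, while $G$-equivariance is preserved throughout by the uniqueness of small minimizing discs with prescribed equivariant boundary.
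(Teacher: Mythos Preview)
Your proposal is correct and takes essentially the same approach as the paper's (very brief) proof: construct the candidate reduction by cutting along $\partial\Delta$ and observe that the nontriviality condition \ref{egr:nontrivial} is precisely what links reducibility to the failure of the filling property. For the converse you match the paper exactly (taking $\Delta=\cup_i D_i^1$, the smaller of the two disc collections from a given reduction, and noting that a filling $\tilde\Delta$ would contradict \ref{egr:nontrivial}); your forward direction is the direct reformulation of the paper's one-line contrapositive ``if there exists $\Delta$ for which there is no such $\tilde\Delta$, then we can cut a neighborhood of $\partial\Delta$ and so $\Sigma$ is $G$-equivariant $\gamma$-reducible'', using the identical construction.
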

\begin{proof}
If there exists $\Delta$ for which there is no such $\tilde\Delta$, then we can cut a neighborhood of $\partial\Delta$ and so $\Sigma$ is $G$-equivariant $\gamma$-reducible in $U$.
Vice versa, assume that $\Sigma$ is $G$-equivariant $\gamma$-reducible in $U$. 
Take a surgery as in \cref{def:EquivariantGammaReduction} and assume that $\area(\cup_{i=1}^k D_i^1) \le \area(\cup_{i=1}^k D_i^2)$.
Now take $\Delta \eqdef \cup_{i=1}^k D_i^1$, for which it holds $\area(\Delta)= \area(\cup_{i=1}^k D_i^1) \le \area(D)/2 < \gamma$. Note that it cannot exist a disjoint union of discs $\tilde\Delta\subset \Sigma\cap U$ with $\partial\Delta =\partial \tilde\Delta$ and $\area(\tilde\Delta)<\delta^2/2$, otherwise \ref{egr:nontrivial} would be contradicted.
\end{proof}

\begin{definition}[{cf. \cite{MeeksSimonYau1982}*{Section 3}, \cite{Li2015}*{Definition 7.8}}]
We say that a $G$-equivariant surface $\Sigma_2\subset U$ is a \emph{strong $G$-equivariant $\gamma$-reduction} of a $G$-equivariant surface $\Sigma_1\subset U$ in $U$ and write
\[
\Sigma_2 \sreduc[\gamma] \Sigma_1
\]
(again, omitting the dependence on $G$ and $U$), if there is $\tilde\Sigma_1 \in \Is_G(\Sigma_1,U)$ with $\area(\tilde\Sigma_1\triangle\Sigma_1) < \gamma$ and $\Sigma_2\reduc[\gamma] \tilde\Sigma_1$.
\end{definition}

\begin{remark} \label{rem:StronglyIrrWithDiffGamma}
If $\gamma_1\ge \gamma_2$ and $\Sigma$ is (strongly) $G$-equivariant $\gamma_1$-irreducible in $U$, then it is also (strongly) $G$-equivariant $\gamma_2$-irreducible in $U$.
\end{remark}

We are now ready to prove that a strongly $G$-equivariant $\gamma$-irreducible surface that is close to be a minimizer of the area is roughly the union of disjoint discs.
This is an adaptation to the equivariant setting of \cite{MeeksSimonYau1982}*{Theorem 2} (see also \cite{Li2015}*{Theorem 7.10}).

\begin{theorem} \label{thm:StructureIrreducible}
In the setting above, let $B\subset U$ be a $G$-equivariant subset of $U$, diffeomorphic to the closed three-dimensional ball. Suppose that $\Sigma\subset U$ is a $G$-equivariant surface such that:
\begin{enumerate} [label={\normalfont(\roman*)}]
\item $E_G(\Sigma, U)\eqdef \area(\Sigma) - \inf_{\tilde\Sigma\in\Is_G(\Sigma,U)}\area(\tilde\Sigma)\le\gamma/4$;
\item $\Sigma$ is strongly $G$-equivariant $\gamma$-irreducible in $U$;
\item $\Sigma$ intersects $\partial B$ transversally;
\item for each component $\omega$ of $\Sigma\cap \partial B$, let $F_\omega$ be a disc in $\partial B$ with $\partial F_\omega=\omega$ and least area; then $\sum_{j=1}^q \area(F_{j}) \le\gamma/8$, where $F_j \eqdef F_{\omega_j}$ and $\omega_1,\ldots,\omega_q$ are the connected components of $\Sigma\cap\partial B$.
\end{enumerate}
Moreover, let $\Sigma_0$ be the union of all components of $\Sigma$ contained in a compact subset of $U$ diffeomorphic to the closed three-dimensional ball.
Then, $\area(\Sigma_0)\le E_G(\Sigma,U)$ and there exist pairwise disjoint closed discs $D_1,\ldots,D_p \subset U$ for some $p\ge 0$ such that
\begin{enumerate} [label={\normalfont(\arabic*)}]
\item \label{strongdisc:i} $D=\cup_{i=1}^pD_i$ is $G$-equivariant;
\item \label{strongdisc:ii}$D_i\subset\Sigma\setminus\Sigma_0$ and $\partial D_i\subset\partial B$ for $i=1,\ldots,p$;
\item \label{strongdisc:iii} $\sum_{i=1}^p \area(D_i) \le \sum_{j=1}^q \area(F_{j}) + E_G(\Sigma,U)$;
\item \label{strongdisc:iv} $(\cup_{i=1}^pD_i)\cap B = (\Sigma\setminus\Sigma_0)\cap B$;
\item \label{strongdisc:v} $\cup_{i=1}^p(\varphi_1(D_i)\setminus\partial D_i) \subset B\setminus \partial B$ for some isotopy $\{\varphi_t\}_{t\in[0,1]}$ of $U$ such that $\varphi_t(x) = x$ for all $(t,x)\in [0,1]\times W$, where $W$ is a neighborhood of $(\Sigma\setminus\Sigma_0) \setminus \cup_{i=1}^p(D_i\setminus\partial D_i)$.
\end{enumerate}
\end{theorem}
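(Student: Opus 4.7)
The approach follows the strategy of Meeks--Simon--Yau, as adapted to the free boundary context in \cite{Li2015}*{Theorem 7.10}, modified to preserve $G$-equivariance throughout. The hypothesis $G_U = \Z_n$ (cyclic, with no fixed points of type $\dih_m$ in $U$) together with the containment $U \subset \operatorname{int}(M)$ means that the extra structure compared to the nonequivariant case amounts to organizing all choices and surgeries along $G$-orbits. The key analytic input is \cref{rem:EquivalenceIrreducible}, which recasts strong $G$-equivariant $\gamma$-irreducibility as a statement about $G$-equivariant disc unions $\Delta$ with $\partial \Delta \subset \Sigma$ of small area.

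First, I would choose a $G$-equivariant family of least-area filling discs for the curves $\omega_1,\ldots,\omega_q \subset \partial B$. The group $G$ permutes the $\omega_j$'s; for each orbit, pick one least-area filling disc and spread it through the orbit via the $G$-action, with a symmetrization / uniqueness argument to handle curves stabilized by a subgroup of $G$. Order the $F_j$'s by inclusion in $\partial B$ so that the innermost ones come first. Next, push a small collection of innermost $F_j$'s slightly off $\Sigma$ to obtain a $G$-equivariant disc union $\Delta$ of area at most $\sum_j \area(F_j) + o(1) \le \gamma/8 + o(1)$, with $\partial \Delta \subset \Sigma$. The strong $G$-equivariant $\gamma$-irreducibility of $\Sigma$ (which allows a preliminary $G$-equivariant isotopy of $\Sigma$ of area change $< \gamma$, permissible since $E_G(\Sigma,U) \le \gamma/4$) combined with \cref{rem:EquivalenceIrreducible} then yields a $G$-equivariant disc union $\tilde D \subset \Sigma$ with $\partial \tilde D = \partial \Delta$ and $\area(\tilde D) < \delta^2/2$.

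An innermost--disc argument, performed orbit by orbit and iterated outward from the innermost $\omega_j$'s to the outermost, yields the required pairwise disjoint collection $D_1,\ldots,D_p$ and gives properties \ref{strongdisc:i}--\ref{strongdisc:iv} immediately. Property \ref{strongdisc:v} is proved by pushing each $D_i$ into $B \setminus \partial B$ across the three-ball $Y_i$ bounded between $D_i$ and $\Delta$, whose topology is guaranteed by \cref{lem:CompactFilling} because $\area(D_i) + \area(F_i)$ is small enough. The area bound \ref{strongdisc:iii} follows by tracking how areas change during the surgery: $\sum \area(D_i)$ exceeds $\sum\area(F_j)$ by at most the total area dissipated during the equivariant isotopy, which is bounded by $E_G(\Sigma, U)$. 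For the estimate $\area(\Sigma_0) \le E_G(\Sigma, U)$, observe that each component of $\Sigma_0$ lies inside a topological ball in $U$, so an entire $G$-orbit of such components can be simultaneously contracted to a point via a $G$-equivariant isotopy that drives its total area to zero; since this is a competitor in $E_G(\Sigma,U)$, the original total area of $\Sigma_0$ is bounded by $E_G(\Sigma,U)$.

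The main obstacle will be ensuring full $G$-equivariance of all choices, particularly the filling discs $F_j$ when $\omega_j$ has nontrivial stabilizer $H \le G_U$: the least-area filling must itself be $H$-equivariant, which requires either a uniqueness argument for the minimizer or an explicit symmetrization that keeps the area under the $\gamma/8$ threshold. A secondary technical point is coordinating the innermost-to-outermost processing with the orbit structure, since two curves in different orbits may be interlaced in $\partial B$, forcing the iteration to interleave orbits; this has to be arranged so that at every stage the remaining filling-disc area keeps the hypothesis of \cref{rem:EquivalenceIrreducible} applicable (i.e., stays below $\gamma - E_G(\Sigma,U)$). A final subtlety is that strong irreducibility, not plain irreducibility, is needed to absorb the small preliminary isotopy of $\Sigma$ required to make the pushoff of $\Delta$ transverse and disjoint from the rest of $\Sigma$ in an equivariant way.
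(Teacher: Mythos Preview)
Your outline is essentially the paper's argument: induction on $q$, processing one $G$-orbit of innermost curves $\tilde\omega = G(\omega_q)$ at a time, invoking \cref{rem:EquivalenceIrreducible} to locate the disc union $\tilde D \subset \Sigma$ with $\partial\tilde D = \tilde\omega$, and \cref{lem:CompactFilling} to produce the balls $Y_i$ and the push-in isotopy for \ref{strongdisc:v}. Two details differ from your sketch. First, for the nontrivial-stabilizer obstacle you flag, the paper does not symmetrize a least-area disc: it observes that if $\omega_q$ meets the singular locus then $G_U = \Z_2$, the curve passes through both singular points on $\partial B$, so $\area(F_q) = \area(\partial B)/2$; since $\omega_q$ was chosen with \emph{smallest} filling-disc area this forces $q=1$, and that base case is handled directly. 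Second, the role of \emph{strong} irreducibility is not to accommodate a preliminary pushoff of $\Delta$ but to guarantee that the post-surgery surface $\hat\Sigma_* \in \Is_G(\Sigma,U)$ is still $\gamma_*$-irreducible for the inductive step; the explicit bookkeeping $\tilde\gamma = \gamma/4 + E_G(\Sigma,U) + 4\sum_j \area(F_j)$ and $\gamma_* = \tilde\gamma - (2\area(\tilde F) - \area(\tilde D))$ is what propagates both the irreducibility and the area estimate \ref{strongdisc:iii} through the induction.
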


\begin{proof}
The proof is very similar to the one of \cite{MeeksSimonYau1982}*{Theorem 2}. Hence, we just present it briefly for completeness. The idea is to induct on $q$. Observe that we can assume $\Sigma_0=\emptyset$, as remarked in \cite{MeeksSimonYau1982}*{pp. 631}.
Moreover, up to relabeling, we can assume that $F_q\cap \omega_j=\emptyset$ for all $j\not=q$ and $\area(F_q)\le \area(F_j)$ for all $j=1,\ldots,q$.

First, consider the case in which $\omega_q$ intersects the singular locus, then the only possibility is that $G=\Z_2$ and $\omega_q$ is a $\Z_2$-equivariant curve intersecting the singular locus (in the two points of $\mathcal{S}\cap\partial B$). Then, $\area(F_q) = \area(\partial B)/2$ and therefore we have that $q=1$ (by choice of the order in which we are processing the indices). Observe that, there exists a $\Z_2$-equivariant disc $\Delta\subset U$ such that $\partial \Delta = \omega_1$ and $\area(\Delta)\le \area(F_1)\le \gamma/8$. Hence, by \cref{rem:EquivalenceIrreducible}, there is a $G$-equivariant disc $D_1\subset\Sigma\cap U$ with $\partial D_1=\omega_1$ and $\area(D_1) < \delta^2/2$. Then, $D_1$ easily satisfies the requirements in the statement (we give below more details in the other case, which is more complicated).

Now, let us assume that $\omega_q$ does not intersect the singular locus.
Define $\tilde\omega \eqdef G(\omega_q) = \cup_{\selem\in G}\selem(\omega_q)$, which we may assume to be equal to the $k$ components $\omega_{q-k+1},\ldots,\omega_q$, for some $k\ge1$. 
Observe that, in this case, we can assume that $F_s = \selem(F_q)$ for all $s=q-k+1,\ldots,q$, where $\selem\in G$ is such that $\omega_s=\selem(\omega_q)$. Hence, in particular $F_s\cap \omega_j=\emptyset$ for all $s=q-k+1,\ldots,q$ and $j\not=s$, and $\tilde F\eqdef \cup_{s=q-k+1}^qF_s$ is $G$-equivariant. 

Observe that, by \cref{rem:StronglyIrrWithDiffGamma}, $\Sigma$ is strongly $G$-equivariant $\tilde\gamma$-irreducible in $U$ with \[\tilde\gamma\eqdef\frac \gamma4+E_G(\Sigma,U)+4\sum_{j=1}^q\area(F_j)\le \frac\gamma 4 + \frac\gamma4+\frac\gamma2 = \gamma\] and it holds $\area(\tilde F) \le \sum_{j=1}^q\area(F_j)< \tilde\gamma$. Hence, by \cref{rem:EquivalenceIrreducible}, there exists a disjoint union of discs $\tilde D = \tilde D_{q-k+1}\cup\ldots\cup\tilde D_q\subset\Sigma\cap U$ with $\partial \tilde D_s = \omega_s$ for $s=q-k+1,\ldots,q$ and $\area(\tilde D) < \delta^2/2$.

Thus $\tilde D\cup \tilde F$ is homeomorphic to the disjoint union of two-dimensional spheres and has area less than $\delta^2$. Hence, by \cref{lem:CompactFilling}, we know that $\tilde D\cup \tilde F$ is the boundary of an open set $Y$ such that $\overline{Y}$ is homeomorphic to the disjoint union of closed three-dimensional balls. 

Since we assumed that $\Sigma_0=\emptyset$, we must have $(\Sigma\setminus\tilde D)\cap Y = \emptyset$. Now, consider $\Sigma_* \eqdef (\Sigma\setminus\tilde D)\cup \tilde F$ and $\tilde F_\varepsilon \eqdef \{x\in U\st d(x, \tilde F)<\varepsilon\}$ for $\varepsilon>0$. Then we can select a continuous isotopy $\varphi \in \Is_G(U)$ supported in $\tilde F_\varepsilon$, i.e., $\varphi_t(\tilde F_\varepsilon)\subset\tilde F_\varepsilon$ and $\varphi_t=\id$ in $U\setminus \tilde F_\varepsilon$, such that $\area(\Sigma_*\cap \tilde F_\varepsilon) \le \area(\varphi_1(\Sigma_*\cap \tilde F_\varepsilon))\le \area(\Sigma_*\cap \tilde F_\varepsilon)+\varepsilon$ and $\varphi_1(\Sigma_*\cap \tilde F_\varepsilon)\cap \partial B = \emptyset$. Hence, define the $G$-equivariant surface $\hat\Sigma_*\eqdef \varphi_1(\Sigma_*)\in \Is_G(\Sigma,U)$, for which it holds
\begin{itemize}
    \item $\hat\Sigma_* \cap \partial B = \cup_{j=1}^{q-k} \omega_j$;
    \item $\area(\hat\Sigma_*\triangle \Sigma) < \area(\tilde D) + \area(\tilde F)+\varepsilon \le \area(\tilde D) + 2\area(\tilde F)$, by choosing $\varepsilon \le\area(\tilde F)$;
    \item $\area(\hat\Sigma_*) < \area(\Sigma) + \area(\tilde F) - \area(\tilde D) +\varepsilon$, or equivalently $E_G(\hat\Sigma_*) < E_G(\Sigma) + \area(\tilde F) - \area(\tilde D) +\varepsilon \le E_G(\Sigma) + 2\area(\tilde F) - \area(\tilde D)$, again using $\varepsilon \le\area(\tilde F)$.
\end{itemize}
In particular, we deduce that $\hat\Sigma_*$ is strongly $G$-equivariant $\gamma_*$-irreducible with 
\begin{align*}
\gamma_* &= \tilde\gamma - (2\area(\tilde F)-\area(\tilde D)) =  \frac \gamma4+E_G(\Sigma)+4\sum_{j=1}^{q-k}\area(F_j) + 2\area(\tilde F) - \area(\tilde D)\\
&\ge \frac\gamma 4 + E_G(\hat\Sigma_*) +4\sum_{j=1}^{q-k}\area(F_j).
\end{align*}
Moreover, it holds $E_G(\hat\Sigma_*)\le \gamma/2 - 2\sum_{j=1}^{q-k+1}\area(F_j)$, hence $\hat\Sigma_*$ satisfies the inductive hypothesis and thus there exist pairwise disjoint discs $\hat\Delta_1,\ldots,\hat\Delta_{\hat p}$ for some $\hat p\ge 0$ satisfying \ref{strongdisc:i}, \ref{strongdisc:ii}, \ref{strongdisc:iii}, \ref{strongdisc:iv}, \ref{strongdisc:v} with respect to $\hat\Sigma_*$. In particular, we have
\[
\sum_{i=1}^p\area(\tilde\Delta_i) \le \sum_{j=1}^{q-k}\area(F_j) + E_G(\hat\Sigma_*) .   
\]
As a result, by construction of $\hat\Sigma_*$, it is easy to obtain pairwise disjoint discs $\Delta_1,\ldots,\Delta_p$ for some $p\ge 0$ such that 
\begin{itemize}
    \item their union is $G$-equivariant;
    \item $ \Delta_i\subset \Sigma_* = (\Sigma\setminus\tilde D)\cup\tilde F$ and $\partial \Delta_i\subset\partial B$ for $i=1,\ldots,p$;
    \item $\sum_{i=1}^p\area(\Delta_i) \le \sum_{j=1}^{q-k}\area(F_j) + E_G(\hat\Sigma_*)< \sum_{j=1}^{q-k}\area(F_j) + E_G(\Sigma) + \area(\tilde F) - \area(\tilde D) +\varepsilon$, and choosing $\varepsilon$ sufficiently small this implies 
    \begin{align*}\sum_{i=1}^p\area(\Delta_i) &\le  \sum_{j=1}^{q-k}\area(F_j) + E_G(\Sigma) + \area(\tilde F) - \area(\tilde D) \\ &=\sum_{j=1}^{q}\area(F_j) + E_G(\Sigma) - \area(\tilde D);\end{align*}
    \item $(\cup_{i=1}^p\Delta_i)\cap B = \Sigma_*\cap B$;
    \item $\cup_{i=1}^p(\psi_1(\Delta_i)\setminus\partial \Delta_i) \subset B\setminus \partial B$ for some isotopy $\{\psi_t\}_{t\in[0,1]}$ of $U$ such that $\psi_t(x) = x$ for all $(t,x)\in [0,1]\times W$, where $W$ is a neighborhood of $\Sigma_* \setminus \cup_{i=1}^p(\Delta_i\setminus\partial \Delta_i)\setminus\tilde F_\varepsilon$.
\end{itemize}
Finally, observe that $\Sigma$ is obtained by $\Sigma_*$ by a continuous $G$-equivariant isotopy, morally sending $\tilde D$ into $\tilde F$. Therefore, it is now easy to define the discs relative to $\Sigma$ as required in the statement, distinguishing the cases $\tilde F\subset\cup_{i=1}^p\Delta_i$ and $\tilde F\not\subset \cup_{i=1}^p \Delta_i$ (in the second case we need $p+k$ discs instead of $p$). 
\end{proof}

\section{Almost minimizing sequences of isotopic surfaces}

\begin{definition} [{\cite{DeLellisPellandini2010}*{Definition 3.1}}]
Let $\mathscr{I}$ be a class of isotopies of $M$ and $\Sigma^2\subset M$ a smooth embedded surface. Given $\{\Phi^k\}_{k\in\N}\subset \mathscr{I}$ we say that $\Phi^k(1,\Sigma)$ is a \emph{minimizing sequence for $\Prob(\Sigma,\mathscr{I})$} if
\[
\lim_{k\to\infty}\area(\Phi^k(1,\Sigma)) = \inf_{\Psi\in\mathscr{I}} \area(\Psi(1,\Sigma)).
\]
\end{definition}

\begin{definition}
Given a $G$-compatible open set $U\subset M$, a smooth embedded $G$-equivariant surface $\Sigma\subset U$ with $\partial\Sigma \subset \partial U$ and a positive real number $\delta>0$, we define
\[
\Is_G^\delta(U,\Sigma) \eqdef \{\Psi\in\Is_G(U)\st \area(\Psi(t,\Sigma)) \le \area(\Sigma)+\delta \ \ \forall t\in [0,1]\}.
\]
\end{definition}

\begin{theorem} \label{thm:AlmostMinimizing}
Let $U\subset M$ be a $G$-compatible open set such that $\overline{U}$ does not contain points in $\mathcal{S}\cap \partial M$ and points of type $\dih_n$ for some $n\ge 2$.
Moreover, let $\Sigma\subset U$ be a smooth embedded $G$-equivariant surface with $\partial\Sigma\subset\partial U$ and let $\delta>0$ be a positive real number. If $\{\Delta^k\eqdef\Phi^k(1,\Sigma)\}_{k\in\N}$ is a minimizing sequence for $\Prob(\Sigma,\Is_{G}^\delta(U,\Sigma))$ converging in the sense of varifolds to $V$, then $V$ is (up to multiplicity) a smooth $G$-stable free boundary minimal surface, with genus bounded by the genus of $\Sigma$.

Moreover, if $U$ is a ball with sufficiently small radius, then multiplicity does not occur, namely $V$ is a smooth free boundary minimal surface $\Delta^\infty$, and $\partial\Delta^\infty\setminus\partial M=\partial\Sigma\setminus\partial M$.
\end{theorem}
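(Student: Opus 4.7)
The strategy is a free boundary equivariant adaptation of the Meeks--Simon--Yau/Colding--De Lellis argument, combined with the boundary analysis of Li \cite{Li2015} and the equivariant compactness results from \cref{chpt:EquivFBMS}. The assumption that $\overline U$ avoids both $\mathcal S \cap \partial M$ and dihedral-type points of isotropy is what allows us to reduce everything to either a purely interior situation (where \cref{thm:StructureIrreducible} applies in its equivariant form) or a purely boundary situation (where the nonequivariant arguments of Li go through essentially verbatim, since in a neighborhood of a boundary point away from $\mathcal S$ the group $G_U$ acts trivially or by cyclic rotations not touching that point). Throughout I would work on the $G$-compatible decomposition of $\overline U$ into its interior part and the part meeting $\partial M$.

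First, I would argue that, up to replacing $\{\Delta^k\}$ by a new minimizing sequence (for a suitably smaller parameter $\delta'>0$), each $\Delta^k$ may be assumed to be strongly $G$-equivariant $\gamma_k$-irreducible in small $G$-compatible balls, for some $\gamma_k \to 0$. This is done by iteratively performing strong $G$-equivariant $\gamma$-reductions, as in \cref{def:EquivariantGammaReduction}: each reduction strictly decreases the number of `nontrivial' necks while altering the area by less than $2\gamma_k$, so the procedure terminates and the resulting surface is still a competitor in $\Is_G^{\delta}(U,\Sigma)$ up to negligible error. Crucially, each strong reduction is realised through an isotopy followed by a surgery localised in a $G$-equivariant ball, so the varifold limit $V$ is unchanged. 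This reduction step also implies the genus bound: each $\gamma$-reduction cannot increase the sum of the genera of the components (it either pinches a nontrivial handle into two spheres or splits off a component), hence $\genus(\Delta^k) \le \genus(\Sigma)$ for every $k$, and the genus semi-continuity passes to $V$.

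Next, I would derive uniform curvature estimates for the $\Delta^k$ away from $\mathcal S\cap U$ and finitely many boundary points. The key geometric input is the following: by \cref{thm:StructureIrreducible} applied inside any sufficiently small $G$-compatible geodesic ball $B\subset U$ (interior case) or its boundary analogue from \cite{Li2015} (near $\partial M$ and away from $\mathcal S$), the portion of $\Delta^k \cap B$ not contained in a contractible subdomain consists of $G$-equivariant discs that can be isotoped into $B\setminus \partial B$ at area cost controlled by $E_G(\Delta^k, U) + \sum_j \area(F_j)$. Since $\{\Delta^k\}$ is minimizing for $\Prob(\Sigma,\Is_G^\delta)$, one has $E_G(\Delta^k, U)\to 0$, so replacing $\Delta^k$ inside $B$ by the minimal disc competitors and using the $\delta$-bound on area give a uniform stability-type inequality on $\Delta^k \cap B$. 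Combined with standard Schoen--Simon curvature estimates for stable surfaces (in the interior, and the free boundary version in \cref{thm:CurvEstStable} near $\partial M$), this produces uniform $|A|$ bounds on $\Delta^k$ in $U$ away from a finite set $\mathcal P \subset \mathcal S \cup \partial M$. Passing to a subsequence, $\Delta^k \to V$ locally smoothly (with integer multiplicity) away from $\mathcal P$, to a $G$-stable free boundary minimal surface; the isolated singular points are then removed by the standard White--type removable singularities argument, using uniform area bounds coming from the monotonicity formula.

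The stability and $G$-stability of the support $\Delta^\infty$ of $V$ are consequences of the minimizing property: any $G$-equivariant compactly supported normal variation producing a strict area decrease could be realised, for large $k$, by a $G$-equivariant isotopy in $\Is_G^\delta(U, \Sigma)$ (since the variation is small and $G$-equivariant), contradicting minimality. The genus bound $\genus(\Delta^\infty) \le \genus(\Sigma)$ now follows from the semicontinuity under smooth subsequential convergence together with the bound $\genus(\Delta^k)\le\genus(\Sigma)$ established above. The main obstacle is expected to be the last assertion, i.e., ruling out multiplicity when $U$ is a small ball and showing $\partial\Delta^\infty\setminus\partial M = \partial\Sigma\setminus \partial M$: here one exploits that $\partial\Sigma\setminus \partial M \subset \partial U$ is fixed by every isotopy in $\Is_G(U)$, and that in a sufficiently small ball any two-sheeted (or higher multiplicity) convergence would force the existence of a positive Jacobi field on $\Delta^\infty$ in the sense of \cite[Proposition 2.1]{CarlottoChodoshEichmair2016}, which in a small enough ball can be used to build a strict area-decreasing $G$-equivariant isotopy in $\Is_G^\delta(U,\Sigma)$, again contradicting the minimizing property. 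This `multiplicity one in small balls' argument is where the most delicate work lies, and it is the $G$-equivariant analogue of the corresponding step in \cite[Section 7]{Li2015}.
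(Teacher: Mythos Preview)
Your proposal has the right high-level ingredients (the Meeks--Simon--Yau structure theorem, Li's boundary analysis, equivariant adaptations), but the logical architecture is off in a way that creates a genuine gap.

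\textbf{The main problem: you are trying to prove curvature estimates for the $\Delta^k$ themselves.} The surfaces $\Delta^k$ are arbitrary smooth surfaces isotopic to $\Sigma$ whose area is close to the constrained infimum; they are neither minimal nor stable, so Schoen--Simon estimates simply do not apply to them. The correct route (and the paper's) is not to control $|A_{\Delta^k}|$, but to \emph{replace} $\Delta^k$ inside each small ball by a union of area-minimizing discs (this is what property \ref{strongdisc:v} in \cref{thm:StructureIrreducible} allows, \emph{via isotopy}), and then invoke the regularity theory for minimizing discs (Almgren--Simon / \cite{MeeksSimonYau1982}*{Section 5}) to conclude that the limit $V$ of the replaced sequence is smooth in that ball. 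Since the replacement decreases area, the replaced sequence is still minimizing, and its varifold limit agrees with $V$.

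\textbf{A second, related gap: you assert $E_G(\Delta^k,U)\to 0$.} This is not automatic: $\{\Delta^k\}$ minimizes for the \emph{constrained} class $\Is_G^\delta(U,\Sigma)$, whose infimum can be strictly larger than the unconstrained one entering the definition of $E_G$. What bridges the two is precisely the Squeezing Lemma (\cref{lem:Squeezing}): it says that in a sufficiently small ball $B_{r_0}(x)$, any $G$-equivariant area-nonincreasing isotopy can be realised inside $\Is_G^\delta$. This is what gives $E_G(\Delta^k,B_{r_0}(x))\to 0$ locally and lets you feed the sequence into \cref{thm:StructureIrreducible} (at points of $\mathcal S$, where $G_{B_{r_0}(x)}=\Z_n$) or into the nonequivariant \cite{ColdingDeLellis2003}*{Theorem 7.3} / \cite{Li2015}*{Theorem 7.3} (away from $\mathcal S$). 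You never invoke this lemma, so the hypothesis of \cref{thm:StructureIrreducible} is not verified.

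Two smaller points. First, your ``reduce to strongly irreducible by iterated surgery'' step is delicate: $\gamma$-reductions are surgeries, not isotopies, so the reduced surface is no longer a competitor in $\Is_G^\delta(U,\Sigma)$; in the actual argument irreducibility is used only \emph{locally} after the Squeezing Lemma has removed the $\delta$-constraint. Second, the genus bound on $V$ is not a matter of controlling $\genus(\Delta^k)$ (that is trivially $\genus(\Sigma)$, since $\Delta^k$ is isotopic to $\Sigma$); the nontrivial content is lower semicontinuity of genus under the varifold limit, for which the paper appeals to Simon's Lifting Lemma as in \cite{DeLellisPellandini2010}. Similarly, the multiplicity-one statement in small balls is obtained there by following \cite{DeLellisPellandini2010}*{Proposition 3.2}, not by a Jacobi-field argument.
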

\begin{remark}
We can assume that $U$ does not contain points in $\mathcal{S}\cap \partial M$ and points of type $\dih_n$ for some $n\ge 2$, because in the arguments where we apply \cref{thm:AlmostMinimizing} we can avoid a finite number of points. Probably one can remove this assumption using a removable singularity theorem as in \cite{ColdingDeLellis2003}*{Proposition 6.3}, but we preferred to avoid that to keep the proof cleaner.
\end{remark}

\begin{lemma} [Squeezing Lemma, {\cite{ColdingDeLellis2003}*{Lemma 7.6} and \cite{DeLellisPellandini2010}*{Lemma 6.1}}] \label{lem:Squeezing}
Let $\{\Delta^k\}_{k\in\N}$ as in \cref{thm:AlmostMinimizing}, $x\in \overline{U}$ and $\delta>0$. Then there exists $r_0>0$ and $K\in\N$ with the following property.
If $k\ge K$ and $\Psi\in\Is_G(B_{r_0}(x)\cap U)$ is such that $\area(\Psi(1,\Delta^k)) \le \area(\Delta^k)$, then there exists $\Psi'\in \Is_G(B_{r_0}(x)\cap U)\cap \Is_G^\delta(U,\Delta^k)$ such that $\Psi'(1,\cdot)=\Psi(1,\cdot)$.
\end{lemma}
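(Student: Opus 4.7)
The plan is to adapt the classical (non-equivariant) squeezing construction of \cite{ColdingDeLellis2003}*{Lemma 7.6} and \cite{DeLellisPellandini2010}*{Lemma 6.1} by carrying out every geometric step in a $G$-equivariant manner. The essential input is that the minimizing sequence $\Delta^k$ converges to $V$ as varifolds, so that, for $r_0$ small and $K$ large, the mass $\area(\Delta^k\cap B_{r_0}(x))$ is arbitrarily small. This smallness is what allows us to modify $\Psi$ inside $B_{r_0}(x)\cap U$ without inflating the area beyond the $\delta$-budget, even though we must end up at the prescribed final map $\Psi(1,\cdot)$.

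First I would fix $r_0>0$ small enough that (i) the ball $B_{r_0}(x)$ is $G$-compatible, i.e., its $G$-orbit $G(B_{r_0}(x))\eqdef \bigcup_{h\in G} h(B_{r_0}(x))$ is a disjoint union of $\abs{G}/\abs{G_x}$ isometric balls (possible by \cref{rem:BallsCompatible}, using that the hypotheses on $\overline{U}$ in \cref{thm:AlmostMinimizing} exclude points of isotropic type $\dih_n$ and points in $\mathcal{S}\cap\partial M$, so that only cyclic $\Z_n$ stabilizers arise and each component of the orbit can be treated via a Fermi-type chart), and (ii) the limiting varifold satisfies $\norm{V}(\overline{B_{r_0}(x)\cap U})<\eta$ for some small $\eta=\eta(\delta)$ to be fixed. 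Upper semicontinuity of mass under varifold convergence then yields $K\in\N$ with $\area(\Delta^k\cap B_{r_0}(x))<\eta$ for all $k\ge K$, and this smallness is inherited on every $h$-translate of $B_{r_0}(x)$ by equivariance of $\Delta^k$.

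Next I would build a one-parameter family $\{\phi_s\}_{s\in[0,1]}\subset\operatorname{Diff}_{G_x}(B_{r_0}(x)\cap U)$ of \emph{squeezing maps} with $\phi_0=\id$, $\phi_s$ equal to the identity in a neighborhood of $\partial(B_{r_0}(x)\cap U)$, and $\phi_1$ a strong radial contraction of the inner ball $B_{r_0/2}(x)\cap U$ into a concentric ball $B_{\varepsilon r_0/2}(x)\cap U$, where the contraction factor $\varepsilon\in(0,1)$ is chosen at the end depending on $\Psi$. The Lipschitz constant of each $\phi_s$ can be bounded by a universal constant $C_0$ independent of $\varepsilon$. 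We then extend $\phi_s$ to a $G$-equivariant diffeomorphism on all of $G(B_{r_0}(x))$ by setting $\phi_s(h(y))\eqdef h(\phi_s(y))$ for every $h\in G$ and $y\in B_{r_0}(x)$, and by the identity elsewhere; this is well defined precisely because of the $G$-compatibility of $B_{r_0}(x)$ and the $G_x$-equivariance of $\phi_s$ on $B_{r_0}(x)$.

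Given $\Psi$ as in the statement, I would then splice $\phi_s$ and $\Psi$ into a three-phase isotopy
\begin{align*}
\Psi'(t,\cdot)\eqdef\begin{cases} \phi_{3t} & t\in[0,1/3],\\ \Psi(3t-1,\cdot)\circ\phi_1 & t\in[1/3,2/3],\\ \Psi(1,\cdot)\circ\phi_{3(1-t)} & t\in[2/3,1], \end{cases}
\end{align*}
which is smooth, $G$-equivariant, supported in $G(B_{r_0}(x))\cap U$, and satisfies $\Psi'(0,\cdot)=\id$ and $\Psi'(1,\cdot)=\Psi(1,\cdot)$. On Phase~1 the area inside $B_{r_0}(x)$ is bounded by $C_0^2\area(\Delta^k\cap B_{r_0}(x))$; on Phase~3 by $\mathrm{Lip}(\Psi(1,\cdot))^2\,C_0^2\,\area(\Delta^k\cap B_{r_0}(x))$; on Phase~2, after $\phi_1$ has compressed the inner portion of $\Delta^k$ by a factor $\varepsilon^2$, the area is bounded by a universal constant times $\bigl(\sup_{s\in[0,1]}\mathrm{Lip}(\Psi(s,\cdot))\bigr)^2\varepsilon^2\,\area(\Delta^k\cap B_{r_0}(x))$.

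The main obstacle, and where the squeezing trick earns its keep, is precisely that the Lipschitz constants of $\Psi$ and of $\Psi(1,\cdot)$ are arbitrary (they depend on the given $\Psi$) and cannot be bounded in terms of $r_0,K$. The plan is therefore to first fix $r_0$ and $K$ so that $\eta\le \delta/(2C_0^2\abs{G})$, which takes care of Phases~1 and~3 up to a factor $\mathrm{Lip}(\Psi(1,\cdot))^2$; then, for the given $\Psi$, choose $\varepsilon=\varepsilon(\Psi)$ so small that both the Phase~2 bound and the Lipschitz-inflation in Phase~3 combined contribute at most $\delta/2$ to the area. Summing the contributions over the $\abs{G}/\abs{G_x}$ translates of $B_{r_0}(x)$ and using that the $\Delta^k$-mass is equidistributed by equivariance, we obtain $\area(\Psi'(t,\Delta^k))\le\area(\Delta^k)+\delta$ for all $t\in[0,1]$, so that $\Psi'\in\Is_G^\delta(U,\Delta^k)$ as required. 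The adaptation to points $x\in\partial M$ (where $B_{r_0}(x)\cap U$ is a half-ball) proceeds identically via Fermi coordinates, since $\phi_s$ can be chosen to be tangent to $\partial M$ and the free-boundary constraint is preserved.
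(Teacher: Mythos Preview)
Your overall strategy matches the classical argument the paper defers to: a three-phase ``squeeze--apply--unsqueeze'' isotopy, made $G$-equivariant via the group action. However, there is a genuine gap in your area estimates. In Phase~2 you claim the area is bounded by a constant times $(\sup_t\mathrm{Lip}\,\Psi_t)^2\varepsilon^2\area(\Delta^k\cap B_{r_0})$; this is false, because $\phi_1$ compresses only the inner ball $B_{r_0/2}$ by the factor $\varepsilon^2$, while the annular portion of $\phi_1(\Delta^k)$ still has area of order $C_0^2\eta$, and $\Psi_t$ then inflates that by $\mathrm{Lip}(\Psi_t)^2$ to $\mathrm{Lip}(\Psi_t)^2 C_0^2\eta$---a quantity independent of $\varepsilon$. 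The same issue kills Phase~3: your stated bound $\mathrm{Lip}(\Psi(1,\cdot))^2 C_0^2\eta$ does not involve $\varepsilon$ at all, so your plan to ``choose $\varepsilon=\varepsilon(\Psi)$ small'' cannot absorb it.

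The fix, which is what the proofs in \cite{ColdingDeLellis2003,DeLellisPellandini2010} actually do, is two-fold. First, take the squeezing maps $\phi_s$ supported in a ball strictly larger than the one in which $\Psi$ is supported, so that $\phi_1$ is a pure dilation by $\varepsilon$ on the \emph{entire} support of $\Psi$. Second, reverse the order of composition: set Phase~2 equal to $\phi_1\circ\Psi_t$ and Phase~3 equal to $\phi_s\circ\Psi_1$. Then in Phase~2 the possibly large mass $L^2\eta$ produced by $\Psi_t$ lies inside the dilation region and is subsequently compressed by $\phi_1$ to $\varepsilon^2 L^2\eta$, which one now \emph{can} kill by choosing $\varepsilon$. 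In Phase~3 one uses the hypothesis $\area(\Psi_1(\Delta^k))\le\area(\Delta^k)$ directly: since $\Psi_1$ is the identity outside its support, this forces $\area(\Psi_1(\Delta^k)\cap B_{r_0})\le\eta$, and then $\phi_s$ (Lipschitz $\le C_0$) contributes at most $C_0^2\eta$, independently of $\Psi$. With this corrected construction your equivariance considerations go through unchanged.
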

\begin{proof}
The proof is exactly the same as in \cite{ColdingDeLellis2003}*{Lemma 7.6} and \cite{DeLellisPellandini2010}*{Lemma 6.1}, observing that the transformations considered in the arguments (in particular, radially symmetric transformations in sufficiently small balls) are $G$-equivariant (see \cite{Ketover2016Equivariant}*{Lemma 4.13}).
\end{proof}

\begin{proof}[Proof of \cref{thm:AlmostMinimizing}]
First observe that, since $\{\Delta^k\}_{k\in\N}$ is a minimizing sequence for $\Prob(\Sigma,\Is_G^\delta(U,\Sigma))$, $V$ is a free boundary stationary $G$-stable varifold.
Moreover, given the local versions of Meeks--Simon--Yau argument \cite{MeeksSimonYau1982} in the interior \cite{ColdingDeLellis2003}*{Theorem 7.3} and at the boundary \cite{Li2015}*{Theorem 7.3}, we conclude that $V$ is a smooth free boundary minimal surface up to multiplicity in $U\setminus\mathcal{S}$, with the same argument as in the proof of Lemma 7.4 in \cite{ColdingDeLellis2003}*{Section 7.3}. Hence, we are left to prove the regularity of $V$ at points in $\mathcal{S}$.

Given $x\in\mathcal{S}\cap U$, by \cref{lem:Squeezing}, there exists $r_0>0$ sufficiently small such that $\{\Delta^k\}_{k\in\N}$ is minimizing for $\Prob(\Sigma,\Is_G(B_{r_0}(x), \Sigma))$ and $B_{r_0}(x)$ is $G$-compatible with $G_{B_{r_0}(x)} = \Z_n$ for some $n\ge2$. As a result, one can peruse the proof in \cite{MeeksSimonYau1982}*{Section 5} using \cref{thm:StructureIrreducible} in place of \cite{MeeksSimonYau1982}*{Theorem 2} and obtain that $V$ is a smooth free boundary minimal surface in $B_{r_0}(x)$. Indeed, we remark that the convergence of a sequence of minimizing discs (object of \cite{AlmgrenSimon1979}*{Sections 5 and 6}) goes through also in the equivariant setting (cf. \cite{Ketover2016Equivariant}*{Section 4.3} and \cite{Ketover2016FBMS}*{Section 7.2}).

Finally, the fact that the multiplicity does not occur if $U$ is a ball with sufficiently small radius follows as in \cite{DeLellisPellandini2010}*{Proposition 3.2}. And, as a consequence, Simon's Lifting Lemma \cite{DeLellisPellandini2010}*{Proposition 2.1} holds in our equivariant setting and the genus bound follows.
\end{proof}

\section[Regularity and genus bound as a consequence of \texorpdfstring{$G$}{G}-almost minimality]{Regularity and genus bound as a consequence of \texorpdfstring{$G$}{G}-almost\\ minimality}\label{sec:ProofRegularityGenusFinal}

The third and final step is to show that the properties of the min-max sequence obtained in \cref{sec:LimitStatVar,sec:SeqAlmMin} are sufficient to prove that this sequence converges to a $G$-equivariant free boundary minimal surface (possibly with multiplicity) for which the genus bound in \cref{thm:EquivMinMax} holds.

The idea of the proof is the same as in \cite{ColdingDeLellis2003}*{Section 7}, namely constructing replacements for the varifold limit of the min-max sequence.
In our equivariant setting, \cite{ColdingDeLellis2003}*{Definition 6.1} is replaced by the following definition.

\begin{definition} \label{def:GReplacement}
Let $V\in\mathcal{V}_G(M)$ be $G$-equivariant free boundary stationary and let $U\subset M$ be an open subset. A $G$-equivariant free boundary stationary varifold $V'\in\mathcal{V}_G(M)$ is said to be a \emph{$G$-replacement} for $V$ in $U$ if the following properties hold:
\begin{enumerate} [label={\normalfont(\roman*)}]
\item $V'=V$ in $M\setminus\overline{U}$ and $\norm{V'}=\norm{V}$;
\item \label{replacement:stable} $V$ is a $G$-stable minimal surface $\Sigma$ (up to multiplicity) and $\overline{\Sigma}\setminus\Sigma\subset \partial U$.
\end{enumerate}
\end{definition}
\begin{remark} \label{rem:RegularityForGReplacements}
Observe that the difference between the definition of $G$-replacement and the one of replacement in \cite{ColdingDeLellis2003}*{Definition 6.1} is in condition \ref{replacement:stable}. However, thanks to \cref{cor:ConvGStableSurf} (see also \cref{rem:GStableAsStable}) in place of (8) in \cite{ColdingDeLellis2003}, the regularity theorem \cite{ColdingDeLellis2003}*{Proposition 6.3} for varifolds with good replacements works also with this equivariant definition of replacements.
\end{remark}

\begin{proposition} \label{prop:AlmostMinSeq}
In the setting of \cref{thm:EquivMinMax}, given a $G$-equivariant min-max sequence $\{\Sigma^j\}_{j\in\N}$ that is $G$-almost minimizing in every $L$-admissible family of $G$-equivariant annuli for $L=(3^n)^{3^n}$ (as the one obtained in \cref{lem:AlmostMinSeqInAdmissible}), there exists a $G$-equivariant function $r\colon M \to \R^+$ such that (up to subsequence)  $\{\Sigma^j\}_{j\in\N}$ is $G$-almost minimizing in $\mathrm{An}\in \mathcal{AN}_{r(x)}^G(x)$ for all $x\in M$.
Moreover, $\{\Sigma^j\}_{j\in\N}$ converges in the sense of varifolds as $j\to+\infty$ to $\limitv = \bigcup_{i=1}^k m_i\limitv_i$, where $\limitv_i$ is a $G$-equivariant free boundary minimal surface and $m_i$ is a positive integer for all $i=1,\ldots,k$, and the genus bound in \cref{thm:EquivMinMax} holds.
\end{proposition}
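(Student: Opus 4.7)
The plan splits naturally into three stages.

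First, I would extract the $G$-equivariant radius function $r\colon M\to\R^+$ by a pigeonhole/packing argument against the $L$-admissibility hypothesis, essentially as in \cite{ColdingDeLellis2003}*{Proposition 5.1}. Suppose by contradiction that for some $x\in M$ no such $r(x)>0$ works; then for every $k\in\N$ one can find a $G$-equivariant annulus $\mathrm{An}_k\in\mathcal{AN}^G_{1/k}(x)$ such that $\{\Sigma^j\}$ is \emph{not} $G$-almost minimizing in $\mathrm{An}_k$. Passing to a subsequence of scales, we may assume $\mathrm{An}_k=\bigcup_{\selem\in G}\selem(\mathrm{An}(x,r_k,s_k))$ with $s_{k+1}<r_k/2$. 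Any $L$ of these annuli then form an $L$-admissible family of $G$-equivariant annuli in which $\{\Sigma^j\}$ is by construction \emph{not} $G$-almost minimizing, contradicting the hypothesis. A standard diagonal argument against a countable dense subset, together with the fact that $G$ acts by isometries so that $\mathrm{An}$ being $G$-equivariant is preserved under the action, then yields (up to a further subsequence) a $G$-equivariant function $r$ with the claimed property.

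Second, I would use these small $G$-almost minimizing annuli to produce $G$-replacements in the sense of \cref{def:GReplacement}. Fix $x\in M$ and pick an annulus $\mathrm{An}\in\mathcal{AN}^G_{r(x)}(x)$ in which $\{\Sigma^j\}$ is $(G,\delta_j,\varepsilon_j)$-almost minimizing with $\varepsilon_j\to 0$. For each $j$, solve the constrained minimization problem $\Prob(\Sigma^j,\Is_G^{\delta_j}(\mathrm{An},\Sigma^j))$, obtaining a minimizing sequence; by \cref{thm:AlmostMinimizing} the corresponding subsequential varifold limit $V^j_\infty$ equals $\Sigma^j$ outside $\mathrm{An}$ and, inside $\mathrm{An}$, is (up to multiplicity) a smooth $G$-stable free boundary minimal surface with boundary in $\partial\mathrm{An}$. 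Letting $j\to\infty$ and extracting a further diagonal subsequence, we arrive at a $G$-equivariant free boundary stationary $V'$ which coincides with the weak limit $V$ outside $\mathrm{An}$, has $\norm{V'}(M)=\norm{V}(M)$ (since $\varepsilon_j\to 0$), and which, inside $\mathrm{An}$, is a $G$-stable free boundary minimal surface by \cref{cor:ConvGStableSurf}. This produces a $G$-replacement for $V$ on every $\mathrm{An}\in\mathcal{AN}^G_{r(x)}(x)$; moreover, one can iterate the replacement procedure on nested annuli to obtain the `replacement of a replacement' needed for regularity.

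Third, with $G$-replacements available on a cofinal family of annuli around every point, I would invoke the (equivariant version of the) regularity theorem for varifolds admitting sufficiently many replacements, namely the analog of \cite{ColdingDeLellis2003}*{Proposition 6.3} with the modifications pointed out in \cref{rem:RegularityForGReplacements} and, for points on $\partial M\setminus\mathcal{S}$, the free boundary counterpart from \cite{Li2015}. This yields that $V$ itself is, away from the finitely many isolated singular points of type $\dih_n$ ($n\ge 2$) and from $\mathcal{S}\cap\partial M$, a smooth $G$-equivariant free boundary minimal surface with integer multiplicity. The remaining isolated singularities are removable by the same removable singularity argument as in \cite{ColdingDeLellis2003,Li2015} applied in $G$-compatible balls, so we obtain the decomposition $\Xi=\sum_{i=1}^k m_i\Xi_i$ with each $\Xi_i$ a smooth connected $G$-equivariant free boundary minimal surface. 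Finally, the genus bound follows from the already established multiplicity-one convergence at the `regular' points together with Simon's Lifting Lemma as adapted in the last part of \cref{thm:AlmostMinimizing} and in \cite{DeLellisPellandini2010}*{Proposition 2.1}, applied in the equivariant setting exactly as in \cite{Ketover2016Equivariant,Ketover2016FBMS}. The main technical obstacle is the second step: producing the replacement inside annuli that may intersect the singular locus $\mathcal{S}_1$ of $\Z_n$-type axes and verifying that the limit is still $G$-stable and smooth across such axes; this is precisely where \cref{thm:AlmostMinimizing} (and, underlying it, the equivariant Meeks--Simon--Yau reductions of \cref{thm:StructureIrreducible}) does the heavy lifting.
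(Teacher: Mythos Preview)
Your proposal is correct and follows essentially the same three-stage architecture as the paper's proof: extract the radius function (the paper cites \cite{ColdingGabaiKetover2018}*{Lemma A.3} for what you sketch directly), build $G$-replacements via \cref{thm:AlmostMinimizing} and pass to the limit using \cref{cor:ConvGStableSurf}, then invoke the replacement-based regularity theorem and the genus bound machinery. One point the paper makes explicit that you leave implicit: to apply \cref{cor:ConvGStableSurf} to the sequence $\{V^j\}$ of replacements you need a uniform \emph{genus} bound, which comes from the genus bound in \cref{thm:AlmostMinimizing} together with the fact that $\genus(\Sigma^j)\le\sup_{t\in I^n\setminus\partial I^n}\genus(\Sigma_t)<\infty$; also, the paper arranges the annuli (via conditions on $r(x)$) to avoid $\mathcal{S}\cap\partial M$ and $\mathcal{S}_0$ from the outset rather than removing singularities afterward, but this is a cosmetic difference since the replacement regularity theorem handles the centers of annuli anyway.
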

\begin{proof}
Thanks to \cite{ColdingGabaiKetover2018}*{Lemma A.3}, we have that there exists a $G$-equivariant function $r\colon M\to \R^+$ such that (up to subsequence) $\{\Sigma^j\}_{j\in\N}$ is $G$-almost minimizing in every $\mathrm{An}\in \mathcal{AN}_{r(x)}^G(x)$ for all $x\in M$. Indeed, the only difference in the proof is that, instead of considering annuli $\mathrm{An}(x,r,s)$ centered at some point $x\in M$, here we have to consider $G$-equivariant annuli $\bigcup_{\selem\in G}\selem(\mathrm{An}(x,r,s))$ `centered' at some point $x\in M$.
Choosing $r\colon M\to\R^+$ sufficiently small, for all $\mathrm{An}\in\mathcal{AN}_{r(x)}^G(x)$ we can assume that:
\begin{enumerate} [label={\normalfont(\roman*)}]
\item \label{an:almin} for all $j$ sufficiently large, $\Sigma^j$ is a smooth $G$-equivariant $(G,\delta_j,\varepsilon_j)$-almost minimizing surface in $\mathrm{An}$, for some $\delta_j,\varepsilon_j>0$, $\varepsilon_j\to0$ (by definition of $G$-almost minimality);
\item \label{an:nobdry} for all $x\in M\setminus\partial M$, $r(x)<d(x,\partial M)$;
\item \label{an:nosing} for all $x\in M\setminus\mathcal{S}$, $r(x)<d(x,\mathcal{S})$;
\item \label{an:nobadsing} for all $x\in \mathcal{S}\setminus\mathcal{S}_0$\footnote{Recall the definition of $\mathcal{S}_0$ in \cref{rem:StructureSingLocus}.}, $r(x)<d(x,\mathcal{S}_0)$;
\item \label{an:nosingbdry} for all $x\in M\setminus(\mathcal{S}\cap \partial M)$, $r(x)<d(x,\mathcal{S}\cap\partial M)$.
\end{enumerate}

Now, up to subsequence, we can assume that $\Sigma^j$ converges in the sense of varifolds to $\Xi$, as $j\to\infty$.
We follow the proof in \cite{ColdingDeLellis2003}*{Section 7.2}, using \cref{thm:AlmostMinimizing} in place of the nonequivariant analogue \cite{ColdingDeLellis2003}*{Lemma 7.4}, to prove that $\Xi$ admits a replacement in any $G$-equivariant annulus $\mathrm{An}\in \mathcal{AN}_{r(x)}^G(x)$.
Note that $\mathrm{An}$ satisfies all the assumptions of \cref{thm:AlmostMinimizing}; in particular it does not contain any point in $\mathcal{S}\cap \partial M$ and points of type $\dih_n$ for $n\ge2$, thanks to \ref{an:nobdry}, \ref{an:nosing}, \ref{an:nobadsing}, \ref{an:nosingbdry} above.

At this point, we apply \cref{thm:AlmostMinimizing} to a minimizing sequence $\{\Sigma^{j,k}\}_{k\in\N}$ for $\operatorname{Prob}(\Sigma^j,\Is^{\delta_j}_G(\mathrm{An},\Sigma^j))$, obtaining that the minimizing sequence converges in the sense of varifolds to $V^j$, which is a smooth $G$-stable free boundary minimal surface, up to multiplicity.
Moreover, the surfaces $V^j$ have uniformly bounded area and uniformly bounded genus in $\mathrm{An}$, because the genus is bounded by the genus of $\Sigma^j$ and $\genus(\Sigma^j)\le \sup_{t\in I^n\setminus \partial I^n} \genus(\Sigma_t) < +\infty$, because the $G$-sweepout $\{\Sigma_t\}_{t\in I^n}$ depends smoothly on $t\in I^n\setminus\partial I^n$ apart from finitely many $t$ (see \cref{def:GSweepout}). 
Therefore, by \cref{cor:ConvGStableSurf}, (up to subsequence) $V^j$ converges in the sense of varifolds to a varifold $V$, which is a $G$-equivariant, $G$-stable, free boundary minimal surface (up to multiplicity) in $\mathrm{An}$.

Observe that $\Haus^2(\Sigma^j)-\varepsilon_j \le \norm{V^j}\le \Haus^2(\Sigma^j)$, since $\Sigma^j$ is $(G,\delta_j,\varepsilon_j)$-almost minimizing in $\mathrm{An}$ and $V^j$ coincides with $\Sigma^j$ outside of $\mathrm{An}$. This implies that $\norm{V} = \lim_{j\to\infty}\norm{V^j} = \lim_{j\to\infty} \norm{\Sigma^j}= \norm{\Xi}$, because $\varepsilon_j\to0$, and that $V$ coincides with $\Xi$ outside of $\mathrm{An}$.
As a result, to conclude that $V$ is a $G$-replacement for $\Xi$, we just need to show that $V$ is free boundary stationary. However, exactly the same proof as in \cite{ColdingDeLellis2003}*{Section 7.2} works here as well.
Then, we conclude that $\Xi$ is a finite union of $G$-equivariant free boundary minimal surfaces (possibly with multiplicity) with the same argument as in \cite{ColdingDeLellis2003}*{Section 7.5}, since \cite{ColdingDeLellis2003}*{Proposition 6.3} works also using \cref{def:GReplacement} instead of \cite{ColdingDeLellis2003}*{Definition 6.1} (see \cref{rem:RegularityForGReplacements}).

Finally, the proof of the genus bound in \cref{thm:EquivMinMax} follows as in \cite{DeLellisPellandini2010} using \cref{thm:AlmostMinimizing} (in particular the second part of the statement, in small balls) in place of \cite{DeLellisPellandini2010}*{Proposition 3.2}. See \cite{Li2015}*{Section 9} for the adaptation to the free boundary case.
\end{proof}

\begin{remark}
Note that in the references above about the regularity and the genus bound (namely \cite{ColdingDeLellis2003,Li2015,DeLellisPellandini2010,Ketover2019,Ketover2016Equivariant,Ketover2016FBMS}) the definition of almost minimizing sequence (corresponding to \cref{def:AlmostMinimizing} in this paper) requires $\delta_j=\varepsilon_j/8$. However, the relation between $\delta_j$ and $\varepsilon_j$ in the definition of almost minimality is not important in the proof of \cref{prop:AlmostMinSeq}.
\end{remark}

%%%%%%%%%%%%%%%%%%%%%%%%%%%%%%%%%%%%%%%%%%%%%%%%%
%%% II - Index
%%%%%%%%%%%%%%%%%%%%%%%%%%%%%%%%%%%%%%%%%%%%%%%%%

\chapter[Index of equivariant min-max surfaces]{Index of equivariant min-max\\ surfaces} \label{chpt:IndexBound}

This chapter is devoted to the proof of the equivariant index bound in \cref{thm:EquivMinMax}.

\section{Deformation theorem}

In this section we prove that, in the setting of \cref{thm:EquivMinMax}, it is possible to modify a minimizing sequence in a way that, so to say, it avoids a given free boundary minimal surface with $G$-equivariant index greater or equal than $n+1$. The deformation theorem, \cref{thm:Deformation}, is the analogue of Deformation Theorem A in \cite{MarquesNeves2016}.
Before presenting it, we recall three lemmas contained in \cite{MarquesNeves2016} that are needed in the proof.
\begin{lemma} \label{lem:IndexDiffeo}
Let $(M^3,\smetric)$ be a three-dimensional Riemannian manifold with strictly mean convex boundary and let $G$ be a finite group of isometries of $M$.
Given a finite union $\avoids^2\subset M$ of $G$-equivariant free boundary minimal surfaces (possibly with multiplicity) with $\ind_G(\operatorname{spt}(\avoids))\ge n+1$, there exist $0<c_0<1$, $\delta>0$ and a smooth family $\{F_v\}_{v\in \overline{B}^{n+1}}\subset \operatorname{Diff}_G(M)$ of $G$-equivariant diffeomorphisms with:
\begin{enumerate}[label={\normalfont(\arabic*)}]
\item $F_0=\id$, $F_{-v} = F_v^{-1}$ for all $v\in\overline{B}^{n+1}$;
\item for any $V\in \overline{B}_{2\delta}^{\vard}(\avoids)$, the smooth function $A^V\colon\overline{B}^{n+1}\to[0,+\infty)$ given by \[ A^V(v) = \norm{(F_v)_\# V}(M)\] has a unique maximum at $m(V)\in B^{n+1}_{c_0/\sqrt{10}}(0)$ and it satisfies $-c_0^{-1}\id \le \Diff^2 A^V(v) \le -c_0 \id$ for all $v\in\overline{B}^{n+1}$.
\end{enumerate}
\end{lemma}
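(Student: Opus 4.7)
The plan is to construct $\{F_v\}_{v\in\overline{B}^{n+1}}$ as the time-one flow of a linear family of $G$-equivariant vector fields adapted to $n+1$ negative directions of the equivariant second variation. Since $\ind_G(\operatorname{spt}(\avoids)) \ge n+1$, there exist sections $Y_1,\ldots,Y_{n+1}\in\Gamma_G(N\operatorname{spt}(\avoids))$ spanning an $(n+1)$-dimensional subspace on which $Q^{\operatorname{spt}(\avoids)}$ is negative definite. By \cref{rem:GequivExtension} each $Y_i$ extends to a $G$-equivariant ambient vector field $X_i\in\vf_G(M)$ with $X_i^\perp|_{\operatorname{spt}(\avoids)} = Y_i$. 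I would then set $X_v \eqdef \sum_{i=1}^{n+1}v^iX_i$ and $F_v \eqdef \Phi_v^1$, where $\Phi_v^t$ is the flow of $X_v$. Since $v\mapsto X_v$ is linear and each $X_i$ is $G$-equivariant, each $F_v$ is a $G$-equivariant diffeomorphism (\cref{rem:GequivVecFieldToFlow}), $F_0=\id$, and $F_{-v}=F_v^{-1}$ because $X_{-v}=-X_v$ generates the reversed autonomous flow; smoothness in $v$ is standard.

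Next I would analyze $A^V(v)=\norm{(F_v)_*V}(M)$ at $(v,V)=(0,\avoids)$. Using the Taylor expansion
\[
A^V(v) = \norm{V}(M) + \delta V(X_v) + \tfrac{1}{2}\delta^2 V(X_v,X_v) + \bigO(\abs{v}^3),
\]
together with free boundary stationarity of $\avoids$ (\cref{rem:stationaryGstationary}), I obtain $\partial_{v^i}A^{\avoids}(0) = \delta\avoids(X_i) = 0$, while the standard identification of the second variation with the Jacobi quadratic form yields $\partial_{v^i}\partial_{v^j}A^{\avoids}(0) = Q^{\operatorname{spt}(\avoids)}(Y_i,Y_j)$ (the $X_i^\perp$ along $\operatorname{spt}(\avoids)$ being exactly $Y_i$). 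This matrix is negative definite by construction, so there is $c_0'>0$ with $-(c_0')^{-1}\id \le \Diff^2 A^{\avoids}(0) \le -c_0' \id$. By joint continuity of $(v,V)\mapsto \Diff^2 A^V(v)$ for $v\in\overline{B}^{n+1}$ and $V$ in the varifold metric $\vard$, combined with compactness of $\overline{B}^{n+1}$, I can choose $0<c_0<c_0'$ and $\delta>0$ such that the uniform bound $-c_0^{-1}\id \le \Diff^2 A^V(v) \le -c_0 \id$ holds on $\overline{B}^{n+1}\times\overline{B}^{\vard}_{2\delta}(\avoids)$. Strict concavity then produces a unique critical point $m(V)\in\overline{B}^{n+1}$ of $A^V$, necessarily a maximum, and continuity of $V\mapsto m(V)$ with $m(\avoids)=0$ forces $m(V)\in B^{n+1}_{c_0/\sqrt{10}}(0)$ after possibly shrinking $\delta$.

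The main difficulty will be to upgrade the Hessian bound from the single point $(0,\avoids)$, where an explicit computation is available, to a uniform bound on the full parameter ball $\overline{B}^{n+1}$ and a full varifold neighborhood of $\avoids$. The saving observation is that $A^V$, viewed as an element of $C^2(\overline{B}^{n+1})$, depends continuously on $V$ in the varifold metric, because $\{F_v\}$ is a smooth family of ambient diffeomorphisms and pushing a varifold forward by a diffeomorphism is continuous in both arguments. Linearity of $v\mapsto X_v$ plays a crucial role as well: it guarantees that the mixed partials of $A^V$ reduce cleanly to $\delta^2 V(X_i,X_j)$, with no extra correction terms coming from derivatives of $X_v$ in $v$, which is what makes the Hessian identification with $Q^{\operatorname{spt}(\avoids)}$ clean in the first place.
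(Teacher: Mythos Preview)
Your approach mirrors the paper's, which extends $G$-equivariant negative directions to ambient $G$-equivariant vector fields via \cref{rem:GequivExtension} and then invokes \cite{MarquesNeves2016}*{Proposition 4.3}. The construction of $X_v$ and $F_v$, the verification of item (1), and the identification $\Diff^2 A^{\avoids}(0) = (Q^{\operatorname{spt}(\avoids)}(Y_i,Y_j))_{ij}$ are all correct.

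There is, however, a genuine gap in the step where you pass from the Hessian bound at the single point $(0,\avoids)$ to a uniform bound on all of $\overline{B}^{n+1}\times\overline{B}^{\vard}_{2\delta}(\avoids)$. Joint continuity of $(v,V)\mapsto\Diff^2 A^V(v)$ only gives control in a \emph{neighbourhood} of $(0,\avoids)$; compactness of $\overline{B}^{n+1}$ does nothing for you, because you have no information about $\Diff^2 A^V(v)$ at points $v$ away from $0$. For a generic choice of extensions $X_i$ the Hessian of $A^{\avoids}$ may well become indefinite somewhere on the unit ball.

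The missing ingredient, which is exactly what the Marques--Neves argument supplies, is a rescaling: replace each $X_i$ by $\epsilon X_i$ for a small $\epsilon>0$. Since $\epsilon X_v = X_{\epsilon v}$, the time-one flow of $\sum_i v^i(\epsilon X_i)$ coincides with the time-one flow of $X_{\epsilon v}$, so the rescaled family satisfies $A^V_\epsilon(v)=A^V_1(\epsilon v)$ and hence $\Diff^2 A^V_\epsilon(v)=\epsilon^2\,\Diff^2 A^V_1(\epsilon v)$. Now continuity of $\Diff^2 A^V_1$ near $(0,\avoids)$ genuinely produces a radius $\rho>0$ on which the Hessian is uniformly negative definite, and choosing $\epsilon\le\rho$ forces $\epsilon v\in\overline{B}_\rho(0)$ for every $v\in\overline{B}^{n+1}$. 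After this rescaling your continuity-in-$V$ argument and the location of $m(V)$ near $0$ go through as written.
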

\begin{proof}
Using that $\ind_G(\operatorname{spt}(\avoids))\ge n+1$, we can find $G$-equivariant normal vector fields $X_1,\ldots,X_{n+1}$ on $\operatorname{spt}(\avoids)$ such that 
\[
Q^{\operatorname{spt}(\avoids)}\left(\sum_{i=1}^{n+1}a_iX_i,\sum_{i=1}^{n+1}a_iX_i\right) < 0
\]
for all $(a_1,\ldots,a_{n+1})\not=0\in\R^{n+1}$. These vector fields can be extended to $G$-equivariant vector fields defined in all $M$ (see \cref{rem:GequivExtension}).
Furthermore recall that, given the flow $\Phi\colon[0,+\infty)\times M\to M$ of a $G$-equivariant vector field $Y$, we have that $\Phi_t\in \operatorname{Diff}_G(M)$ for all $t\in[0,+\infty)$ (see \cref{rem:GequivVecFieldToFlow}). 
Given these observations, the proof follows exactly as in \cite{MarquesNeves2016}*{Proposition 4.3}, see also \cite{MarquesNeves2016}*{Definition 4.1}.
\end{proof}

The following two lemmas coincide exactly with \cite{MarquesNeves2016}*{Lemmas 4.5 and 4.4}, since the $G$-equivariance does not play any role in these two results. We report them here for the sake of expository convenience.
\begin{lemma}[{\cite{MarquesNeves2016}*{Lemma 4.5}}] \label{lem:PushAway}
In the setting of \cref{lem:IndexDiffeo}, for every $G$-equivariant varifold $V\in \overline{B}_{2\delta}^{\vard}(\avoids)$, let $\Phi^V\colon[0,+\infty)\times \overline{B}^{n+1}\to\overline{B}^{n+1}$ be the one-parameter flow generated by the vector field
\[
u\mapsto -(1-\abs{u}^2)\grad A^V(u), \quad u\in \overline{B}^{n+1},
\]
as defined also in \cite{MarquesNeves2016}*{pp. 476}.
Then, for all $0<\eta<1/4$, there is $T=T(\eta,\delta,\avoids,\{F_v\}_{v\in \overline{B}^{n+1}}, c_0) \ge 0$ such that, for all $V\in \overline{B}_{2\delta}^{\vard}(\avoids)$ and $v\in \overline{B}^{n+1}$ with $\abs{v-m(v)}\ge \eta$, we have 
\[
A^V(\Phi^V(T,v)) < A^V(0) - \frac{c_0}{10} \quad\text{and} \quad \abs{\Phi^V(T,v)} >\frac{c_0}{4}.
\]
\end{lemma}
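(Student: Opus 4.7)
The plan is to exhibit the flow $\Phi^V$ as a `push away' from the unique maximizer $m(V)$ of $A^V$ towards $\partial B^{n+1}$, driving every trajectory whose starting point is at distance $\ge \eta$ from $m(V)$ into an arbitrarily small neighborhood of the boundary in a time $T$ depending only on $\eta$, $\delta$, $\avoids$, $\{F_v\}_{v\in\overline{B}^{n+1}}$ and $c_0$. The two inequalities in the statement will then follow from the strong concavity estimates provided by \cref{lem:IndexDiffeo} (throughout I read the typographical $m(v)$ in the hypothesis as $m(V)$, consistently with the previous lemma).

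The first step exploits the upper Hessian bound $\Diff^2 A^V \le -c_0\,\id$, which implies $c_0$-strong concavity of $A^V$ and thus, for every $u \in \overline{B}^{n+1}$,
\[
(u - m(V)) \cdot \grad A^V(u) \le -c_0 |u - m(V)|^2, \qquad |\grad A^V(u)| \ge c_0 |u - m(V)|.
\]
Differentiating $|\Phi^V(t,v) - m(V)|^2$ along the flow yields
\[
\frac{d}{dt} |\Phi^V(t,v) - m(V)|^2 = -2(1 - |\Phi^V|^2)(\Phi^V - m(V)) \cdot \grad A^V(\Phi^V) \ge 2(1-|\Phi^V|^2) c_0 |\Phi^V - m(V)|^2 \ge 0,
\]
so the distance from $m(V)$ is nondecreasing along the flow and in particular stays $\ge \eta$ whenever the initial condition does. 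Together with the gradient lower bound, this gives the quantitative descent
\[
\frac{d}{dt} A^V(\Phi^V(t,v)) = -(1-|\Phi^V|^2) |\grad A^V(\Phi^V)|^2 \le -\tau c_0^2 \eta^2 \quad \text{whenever } |\Phi^V(t,v)| \le 1 - \tau.
\]
Since $A^V$ is uniformly bounded on $\overline{B}^{\vard}_{2\delta}(\avoids) \times \overline{B}^{n+1}$ by a constant depending only on $\avoids$ and $\{F_v\}_{v\in\overline{B}^{n+1}}$, the Lebesgue measure of $\{t \ge 0 : |\Phi^V(t,v)| \le 1 - \tau\}$ is bounded above by a quantity of the form $C(\avoids,\{F_v\})/(\tau c_0^2 \eta^2)$.

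The second step converts these ingredients into the two desired inequalities. Strong concavity at $m(V)$ gives $A^V(w) \le A^V(m(V)) - \frac{c_0}{2}(1 - c_0/\sqrt{10})^2$ for every $w$ with $|w| = 1$, while the companion bound $\Diff^2 A^V \ge -c_0^{-1} \id$ together with $|m(V)| < c_0/\sqrt{10}$ gives $A^V(0) \ge A^V(m(V)) - c_0/20$; elementary arithmetic (using $0 < c_0 < 1$) then shows $A^V(w) < A^V(0) - c_0/10$ for every $|w|=1$, and by continuity the same inequality holds in some $\tau$-neighborhood of $\partial B^{n+1}$, uniformly in $V$ thanks to smooth dependence of $A^V$ on $V$ and compactness of $\overline{B}^{\vard}_{2\delta}(\avoids)$. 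Fixing any such $\tau < 1 - c_0/4$ and combining with the measure estimate of the previous paragraph yields a uniform time $T = T(\eta, \delta, \avoids, \{F_v\}, c_0)$ at which $|\Phi^V(T, v)| > 1 - \tau > c_0/4$ and $A^V(\Phi^V(T, v)) < A^V(0) - c_0/10$.

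The main technical obstacle I anticipate is promoting the integrated measure bound to a pointwise statement \emph{at} the specific time $T$: a priori $|\Phi^V(t,v)|$ could oscillate back and forth across $\{|u| = 1 - \tau\}$. The remedy is to observe that, once the flow enters a thinner shell $\{|u| > 1 - \tau'\}$ with $\tau' \ll \tau$, the prefactor $1 - |u|^2$ is of order $\tau'$, so returning to $\{|u| \le 1 - \tau\}$ requires a time that can be controlled quantitatively (for instance by a Gronwall-type estimate on $\log(1 - |u|^2)$); combining this with the measure bound, which forces the flow into such a thin shell in bounded time, yields the required stability near $\partial B^{n+1}$. Uniformity in $V$ then follows from continuous dependence of the vector field on $V$ together with compactness of the parameter space $\overline{B}^{\vard}_{2\delta}(\avoids)$.
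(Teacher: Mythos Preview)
The paper does not give its own proof of this lemma; it simply records that the statement coincides with \cite{MarquesNeves2016}*{Lemma 4.5} and that $G$-equivariance plays no role. So there is nothing to compare against, and your task is to produce a self-contained argument.

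Your ingredients are all correct: the monotonicity of $\rho(t):=|\Phi^V(t,v)-m(V)|$, the gradient bound $|\nabla A^V(u)|\ge c_0\rho$, the monotone decrease of $A^V$ along the flow, and the computation showing $A^V(w)<A^V(0)-c_0/10$ near $\partial B^{n+1}$. The gap is exactly the one you flag: the Lebesgue-measure bound on $\{t:|\Phi^V(t,v)|\le 1-\tau\}$ does not by itself produce a \emph{uniform} time $T$ at which $|\Phi^V(T,v)|>1-\tau$, and your proposed Gronwall remedy on $\log(1-|u|^2)$, while containing a correct estimate, does not close the argument as written---it gives a lower bound on the time needed to \emph{return} to $\{|u|\le 1-\tau\}$, but you still have no mechanism preventing infinitely many such returns occurring at arbitrarily large times for different $(V,v)$.

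The cleanest fix is already implicit in your first step: abandon $|\Phi^V|$ as the main variable and work exclusively with the \emph{monotone} quantity $\rho$. While $\rho(t)\le 3/5$ (say) one has $|\Phi^V|\le \rho+|m(V)|<3/5+c_0/\sqrt{10}<1$, so $1-|\Phi^V|^2$ is bounded below by a constant depending only on $c_0$; hence $\tfrac{d}{dt}\rho^2\ge \kappa(c_0)\rho^2$ on that set, and $\rho$ grows from $\eta$ to $3/5$ in time at most $T=\kappa(c_0)^{-1}\log(9/(25\eta^2))$, uniformly in $(V,v)$. Once $\rho(T)\ge 3/5$, it stays there, and a direct check using both Hessian bounds shows that $\rho\ge 3/5$ implies simultaneously $A^V(\Phi^V)\le A^V(m)-\tfrac{c_0}{2}(3/5)^2<A^V(0)-c_0/10$ and $|\Phi^V|\ge \rho-|m(V)|>3/5-c_0/\sqrt{10}>c_0/4$. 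This avoids any oscillation issue.
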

\begin{remark}
Note that $\Phi^V$ is smooth since $A^V$ is. Moreover, for all $u\in \overline{B}^{n+1}$, the map $s\mapsto A^V(\Phi^V(s,u))$ is nonincreasing.
\end{remark}

\begin{lemma}[{\cite{MarquesNeves2016}*{Lemma 4.4}}] \label{lem:FarIsFar}
There exists $\overline{\eta}=\overline{\eta}(\delta,\avoids,\{F_v\}_{v\in \overline{B}^{n+1}}) >0$ such that, for any $G$-equivariant varifold $V\in (B_\delta^{\vard}(\avoids))^c$ with
\[
\norm{(F_v)_\# V}(M)\le \norm{V}(M) + \overline{\eta}
\]
for some $v\in \overline{B}^{n+1}$, we have $\vard((F_v)_\#V, \avoids)\ge 2\overline{\eta}$.
\end{lemma}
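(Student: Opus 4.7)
My strategy is to argue by contradiction using the compactness of $\overline{B}^{n+1}$ and the mass space $\mathcal V^2_G(M)$ restricted to an appropriate mass bound, then to exploit the strict concavity of the functionals $A^V$ established in \cref{lem:IndexDiffeo}. So, suppose the conclusion fails. Then I can find a sequence of $G$-equivariant varifolds $V_k \in (B^{\vard}_\delta(\avoids))^c$ and parameters $v_k \in \overline{B}^{n+1}$ such that
\[
\norm{(F_{v_k})_\# V_k}(M) \le \norm{V_k}(M) + \frac{1}{k} \quad \text{and} \quad \vard((F_{v_k})_\# V_k, \avoids) < \frac{2}{k}.
\]
Since the family $\{F_v\}_{v\in\overline{B}^{n+1}}$ is uniformly bi-Lipschitz on $M$, the masses $\norm{V_k}(M)$ are uniformly bounded above in terms of $\norm{\avoids}(M)$. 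Extracting a subsequence, I would take $v_k \to v_\infty \in \overline{B}^{n+1}$ and, using the identity $V_k = (F_{-v_k})_\#((F_{v_k})_\# V_k)$ together with the continuity of the pushforward operation in $(v, W)\mapsto (F_v)_\# W$, pass to the limit $V_k \to V_\infty \eqdef (F_{-v_\infty})_\# \avoids$ in the varifold topology.

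The second step is to take the limit of the two inequalities. By the weak continuity of the mass on the appropriate ball and the identification $(F_{v_k})_\# V_k \to \avoids$, the first inequality yields
\[
\norm{\avoids}(M) \le \norm{(F_{-v_\infty})_\# \avoids}(M) = A^{\avoids}(-v_\infty),
\]
while $\norm{\avoids}(M) = A^{\avoids}(0)$ is the value at $v = 0$. Now comes the key input from \cref{lem:IndexDiffeo}: since $\avoids \in \overline{B}^{\vard}_{2\delta}(\avoids)$, the smooth function $A^{\avoids}$ has a unique maximum at some $m(\avoids) \in B^{n+1}_{c_0/\sqrt{10}}(0)$ and its Hessian is everywhere $\le -c_0\,\id$, hence $A^{\avoids}$ is strictly concave on $\overline{B}^{n+1}$. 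Moreover, since $\avoids$ is free boundary stationary (being a union of free boundary minimal surfaces) and each infinitesimal generator $\partial_{v_i} F_v|_{v=0}$ is a $G$-equivariant vector field in $\vf_G(M)$, one has $\nabla A^{\avoids}(0) = 0$, so the unique maximum of $A^{\avoids}$ is attained at $v = 0$. This forces $A^{\avoids}(-v_\infty) \le A^{\avoids}(0)$ with equality iff $v_\infty = 0$.

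Combining the two bounds, equality must hold, so $v_\infty = 0$ and therefore $V_\infty = (F_0)_\# \avoids = \avoids$, i.e., $V_k \to \avoids$ in the varifold sense. But this contradicts $\vard(V_k, \avoids) \ge \delta > 0$ from the assumption $V_k \in (B^{\vard}_\delta(\avoids))^c$, completing the argument.

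The main obstacle I anticipate is the careful bookkeeping needed to justify the continuity of $(v, W) \mapsto (F_v)_\# W$ and of the mass functional at the limit, especially given that varifold convergence is only weak; however, this is standard given the smoothness and uniform bi-Lipschitz bounds of the family $\{F_v\}$ on the compact manifold $M$, combined with the uniform mass bound provided by the assumption on $V_k$. The other subtle point is the verification that $m(\avoids) = 0$, which requires using the first-variation formula for free boundary stationary varifolds against the $G$-equivariant variation vector fields induced by the family $\{F_v\}$, but this follows from \cref{rem:stationaryGstationary} since $\avoids$ is $G$-equivariant and free boundary stationary.
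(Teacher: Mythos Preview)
The paper does not supply its own proof of this lemma; it simply cites \cite{MarquesNeves2016}*{Lemma 4.4} and remarks that the $G$-equivariance plays no role. Your compactness--contradiction argument is correct and self-contained: the uniform mass bound via the bi-Lipschitz family $\{F_v\}$, the continuity of $(v,W)\mapsto (F_v)_\# W$ and of the mass on the compact manifold $M$, and the identification $m(\avoids)=0$ from the stationarity of $\avoids$ all hold as you describe, and together yield the desired contradiction with $\vard(V_k,\avoids)\ge\delta$. This is essentially the standard argument behind the cited lemma.
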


\begin{theorem}[Deformation theorem] \label{thm:Deformation}
Let $\{\so^j\}_{j\in\N}=\{\{\Sigma^j_t\}_{t\in I^n}\}_{j\in\N}$ be a minimizing sequence in the setting of \cref{thm:EquivMinMax}. Moreover, assume that
\begin{enumerate}[label={\normalfont(\roman*)}]
\item $\avoids^2$ is a finite union of $G$-equivariant free boundary minimal surfaces (possibly with multiplicity) with $\ind_G(\operatorname{spt}(\avoids))\ge n+1$;
\item $\area({\avoids}) = W_\Pi$;
\item $K$ is a compact set of varifolds such that $\avoids\not\in K$ and $\Sigma_t^j\not\in K$ for all $j\in\N$, $t\in I^n$.
\end{enumerate}
Then there exist $\varepsilon>0$ and another minimizing sequence $\{\boldsymbol{\Lambda}^j\}_{j\in\N}=\{\{\Lambda_t^j\}_{t\in I^n}\}_{j\in\N}\subset \Pi$ such that $\Lambda_t^j\cap(\overline{B}^{\vard}_\varepsilon(\avoids)\cup K) =\emptyset$ for all $j$ sufficiently large.
\end{theorem}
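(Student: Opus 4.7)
The strategy is to adapt the deformation argument in \cite{MarquesNeves2016}*{Deformation Theorem A} to our equivariant Simon--Smith setting, using crucially the three preparatory lemmas and a dimension-counting argument based on the gap $\dim I^n = n < n+1 = \dim \overline B^{n+1}$.

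First I apply \cref{lem:IndexDiffeo} to obtain a smooth family $\{F_v\}_{v\in\overline B^{n+1}}\subset\operatorname{Diff}_G(M)$, constants $0<c_0<1$ and $\delta>0$, and the smooth concave function $A^V$ with unique maximizer $m(V)\in B^{n+1}_{c_0/\sqrt{10}}(0)$ for every $V\in\overline B^{\vard}_{2\delta}(\avoids)$. Then \cref{lem:FarIsFar} yields $\overline\eta>0$ controlling how much $F_v$ can push a distant varifold back toward $\avoids$. I pick $\varepsilon>0$ much smaller than $\min\{\delta,\overline\eta,c_0/20\}$ and small enough that $\overline B^{\vard}_{4\varepsilon}(\avoids)\cap K=\emptyset$ (possible since $\avoids\notin K$ and $K$ is compact). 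Using that $\Sigma_t^j\notin K$ for every $t,j$, applying $F_v$ with $v$ in a tiny neighborhood of $0$ keeps the deformed varifold out of $K$ by uniform continuity of $(t,v)\mapsto (F_v)_\#\Sigma_t^j$.

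Next I design the deformation. For each $j$ large and each $t\in I^n$ with $\Sigma_t^j\in\overline B^{\vard}_{2\delta}(\avoids)$, the map $t\mapsto m(\Sigma_t^j)\in B^{n+1}_{c_0/\sqrt{10}}(0)$ is continuous in $t$ (by continuity of $V\mapsto A^V$ and strict concavity giving a well-defined smooth selection of the maximizer). Because the source $I^n$ has dimension strictly less than $n+1$, the image of this map has empty interior in $\overline B^{n+1}$; hence we may pick a basepoint $\bar v\in B^{n+1}$ with $|\bar v-m(\Sigma_t^j)|\ge\eta_0$ for some uniform $\eta_0>0$ and all relevant $t,j$. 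Then \cref{lem:PushAway} gives a time $T=T(\eta_0,\ldots)$ such that, setting $w(t)\eqdef \Phi^{\Sigma_t^j}(T,\bar v)$, one has $A^{\Sigma_t^j}(w(t))<A^{\Sigma_t^j}(0)-c_0/10$ and $|w(t)|>c_0/4$. The natural candidate for the new sweepout is $\Lambda_t^j\eqdef (F_{\chi(t)w(t)})_\#\Sigma_t^j$, where $\chi\colon I^n\to[0,1]$ is a smooth cutoff equal to $1$ on $\{t\st \Sigma_t^j\in\overline B^{\vard}_\varepsilon(\avoids)\}$, equal to $0$ on $\{t\st \Sigma_t^j\notin B^{\vard}_{2\varepsilon}(\avoids)\}$, and vanishing near $\partial I^n$ (which is automatic since $\sup_{\partial I^n}\area(\Sigma_t)<W_\Pi$ forces $\Sigma_t^j\notin\overline B^{\vard}_{2\varepsilon}(\avoids)$ on $\partial I^n$ for $j$ large).

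This $\boldsymbol{\Lambda}^j$ lies in $\Pi$: each slice is obtained by post-composing $\Sigma_t^j$ with $F_{\chi(t)w(t)}\in\operatorname{Diff}_G(M)$, and the parameter $t\mapsto \chi(t)w(t)$ is continuous with $\chi=0$ near $\partial I^n$, so the composition with the ambient isotopy producing $\Sigma_t^j$ gives another $G$-equivariant isotopy with the boundary condition. For $t$ in the core region where $\chi(t)=1$, \cref{lem:PushAway} shows $\area(\Lambda_t^j)\le\area(\Sigma_t^j)-c_0/10+o(1)$ and $\vard(\Lambda_t^j,\avoids)\ge 2\overline\eta$ by \cref{lem:FarIsFar} applied at the endpoint of the push-away flow (this is the role of the bound $|w(t)|>c_0/4$, guaranteeing that $F_{w(t)}$ genuinely moves $\Sigma_t^j$ outside $\overline B^{\vard}_\varepsilon(\avoids)$). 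In the transition region $\chi(t)\in(0,1)$, one uses the concavity estimates from \cref{lem:IndexDiffeo} to bound $\area(\Lambda_t^j)\le\area(\Sigma_t^j)+o(1)$ as $j\to\infty$, so that $\sup_t\area(\Lambda_t^j)\to W_\Pi$ and $\{\boldsymbol\Lambda^j\}$ is still minimizing.

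\textbf{Main obstacle.} The delicate point is the continuous interpolation across the three regions (deep inside, in the annulus, far outside the $\vard$-neighborhood of $\avoids$): one needs the cutoff scheme to simultaneously (i) keep the areas below $W_\Pi+o(1)$ uniformly in $t$, (ii) guarantee $\Lambda_t^j\notin\overline B^{\vard}_\varepsilon(\avoids)$, and (iii) preserve the $G$-equivariant isotopy class required by $\Pi$. Properties (i) and (ii) are incompatible without the strict concavity of $A^V$ given by \cref{lem:IndexDiffeo} together with \cref{lem:FarIsFar}; property (iii) forces $\chi$ to vanish on $\partial I^n$, which is compatible only because $\sup_{\partial I^n}\area(\Sigma_t)<W_\Pi$. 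The dimensional inequality $n<n+1$ (which reflects the assumption $\ind_G(\operatorname{spt}(\avoids))\ge n+1$) is precisely what allows the selection of $\bar v$ and is the conceptual heart of the argument.
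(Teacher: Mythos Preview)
Your proposal has the right skeleton (use the $(n{+}1)$-dimensional family $\{F_v\}$ against the $n$-dimensional parameter space, then cut off), but the interpolation scheme is genuinely broken and the three lemmas are applied in the wrong regions.

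\textbf{Area control fails with your cutoff.} You define $\Lambda_t^j=F_{\chi(t)w(t)}(\Sigma_t^j)$ with $w(t)=\Phi^{\Sigma_t^j}(T,\bar v)$. In the transition region $\chi(t)\in(0,1)$ the point $\chi(t)w(t)$ lies on the segment $[0,w(t)]\subset\overline B^{n+1}$, and nothing prevents this segment from passing close to the concave maximum $m(\Sigma_t^j)$. The Hessian bounds from \cref{lem:IndexDiffeo} only give
\[
A^{\Sigma_t^j}\bigl(\chi(t)w(t)\bigr)\le A^{\Sigma_t^j}\bigl(m(\Sigma_t^j)\bigr)\le A^{\Sigma_t^j}(0)+\frac{1}{2c_0}\,|m(\Sigma_t^j)|^2\le \area(\Sigma_t^j)+\frac{c_0}{20},
\]
which is a \emph{fixed} increase, not $o(1)$. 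Since your transition region is at $\vard$-distance $\approx\varepsilon$ from $\avoids$ (so $\area(\Sigma_t^j)\approx W_\Pi$), this destroys the minimizing property. The paper avoids this by applying the cutoff $\rho$ to the \emph{starting point and flow time}, not to the endpoint: it sets $v_j(t)=\Phi^{\Sigma_t^j}\bigl(\rho\cdot T_j,\;\rho\cdot a_j(t)\bigr)$ with $|a_j(t)|\le 2^{-j}$. Because $s\mapsto A^V(\Phi^V(s,u))$ is nonincreasing, one gets $A^{\Sigma_t^j}(v_j(t))\le A^{\Sigma_t^j}(\rho\,a_j(t))\to A^{\Sigma_t^j}(0)$ uniformly in $t$ as $j\to\infty$. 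The point is that the basepoint must be chosen $j$-dependently close to $0$; a fixed $\bar v$ cannot play this role.

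\textbf{The role of the three regions and of \cref{lem:FarIsFar}.} You place the transition at scale $\varepsilon\ll\delta$ and invoke \cref{lem:FarIsFar} in the core region; but that lemma requires $V\notin B^{\vard}_\delta(\avoids)$, so it is inapplicable when $\Sigma_t^j$ is $\varepsilon$-close to $\avoids$. The paper instead puts the transition at scale $\delta$ (between $3\delta/2$ and $7\delta/4$): in the core region ($\vard\le 3\delta/2$) avoidance of $\overline B^{\vard}_\varepsilon(\avoids)$ comes from the \emph{area drop} $A^V(v_j(t))<A^V(0)-c_0/10$ and a choice of $\varepsilon$ linking $\vard$-closeness to area-closeness; \cref{lem:FarIsFar} is used only in the transition annulus $3\delta/2\le\vard\le 7\delta/4$, where its hypothesis $\vard\ge\delta$ holds. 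Your sketch also gives no mechanism for avoidance in the transition region.

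Finally, the claim of a \emph{uniform} $\eta_0$ over all $j$ is unjustified (the union of images $\bigcup_j m_j(I^n)$ may be dense), and $t\mapsto v_j(t)$ is only continuous, so an explicit smoothing step is needed to land in $\Pi$; the paper handles both by working $j$-by-$j$ with $\eta_j,T_j$ and then approximating $v_j$ by a smooth $\tilde v_j$.
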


\begin{proof}
As aforementioned, the result is the analogue of Deformation Theorem A in \cite{MarquesNeves2016} and the idea of the proof is to exploit the fact that we have many negative directions for the second variation of the area along $\avoids$, hence it is possible to push the minimizing sequence $\{\so^j\}_{j\in\N}$ away from $\avoids$, keeping the fact that it is a minimizing sequence. Indeed, the sweepout is $n$-dimensional, while we have $n+1$ negative directions.

Since $\operatorname{spt}(\avoids)$ has $G$-equivariant index greater or equal than $n+1$, we can apply \cref{lem:IndexDiffeo} and obtain $0<c_0<1$, $\delta>0$ and the family $\{F_v\}_{v\in\overline{B}^{n+1}}\subset \operatorname{Diff}_G(M)$ given by the lemma. 
Moreover, up to modifying $\delta$ and $\{F_v\}_{v\in\overline{B}^{n+1}}$, we can assume that 
\begin{equation} \label{eq:AvoidK}
\vard(\avoids,F_v(\tilde\avoids)) \le \vard(\avoids, K)/2 \quad\text{for all $\tilde\avoids\in \overline{B}^{\vard}_{2\delta}(\avoids)$ and $v\in \overline{B}^{n+1}$}.
\end{equation}

Fixed $j\in\N$, define the open subset $U_j\subset I^n$ given by
\[
U_j\eqdef \{t\in I^n\st \vard(\avoids,\Sigma_t^j)<7\delta/4\}.
\]
Consider the continuous function $m_j\colon U_j\to B^{n+1}_{c_0/\sqrt{10}}(0)$ given by $m_j(t) = m(\Sigma_t^j)$, where the function $m$ is defined in \cref{lem:IndexDiffeo}. Since $\dim U_j = n < n+1= \dim (B^{n+1}_{c_0/\sqrt{10}}(0))$, by the transversality theorem given e.g. in \cite{Hirsch1994}*{Theorem 2.1}, there exists $\tilde m_j\colon U_j\to B^{n+1}_{c_0/\sqrt{10}}(0)$ such that $\tilde m_j(t)\not= 0$ and $\abs{\tilde m_j(t) - m_j(t)} < 2^{-j}$ for all $t\in U_j$. Hence, consider the function $a_j\colon U_j\to B^{n+1}_{2^{-j}}(0)$ given by $a_j(t) = m_j(t)-\tilde m_j(t)$ and note that $a_j(t)\not=m_j(t)$ for all $t\in U_j$. In particular, we can assume that there is $\eta_j>0$ such that $\abs{a_j(t) - m_j(t)}\ge \eta_j$ for all $t\in U_j$ (possibly taking $\delta$, and so $U_j$, smaller).

Now, for all $t\in U_j$, consider the one-parameter flow $\{\Phi^{t,j}(s,\cdot)\}_{s\ge 0} = \{\Phi^{\Sigma_t^j}(s,\cdot)\}_{s\ge 0}\subset \operatorname{Diff}(\overline{B}^{n+1})$ defined in \cref{lem:PushAway} and \[T_j = T(\eta_j,\delta, \avoids,\{F_v\}_{v\in\overline{B}^{n+1}}, c_0) \ge 0\] given by the lemma. Then, given a nonincreasing smooth function $\rho\colon [0,+\infty)\to [0,1]$ that is $1$ in $[0,3\delta/2]$ and $0$ in $[7\delta/4,+\infty)$, we define the continuous function
\[
v_j\colon I^n\to\overline{B}^{n+1}, \quad v_j(t) = \begin{cases}
\Phi^{t,j}(\rho(\vard(\avoids,\Sigma_t^j))T_j,\rho(\vard(\avoids,\Sigma_t^j))a_j(t)) & \text{for $t\in U_j$}\\
0 &\text{for $t\not\in U_j$},
\end{cases}
\]
and then set 
\[
\Lambda_t^j = \begin{cases}
                  F_{v_j(t)}(\Sigma_t^j)& \text{for $t\in U_j$}\\
\Sigma_t^j &\text{for $t\not\in U_j$}.
              \end{cases}
\]
Note that $\Lambda_t^j$ is $G$-equivariant since $\Sigma_t^j$ is $G$-equivariant and $F_{v_j(t)}\in \operatorname{Diff}_G(M)$.
However, a priori $\{\Lambda_t^j\}_{t\in I^n}$ is not contained in $\Pi$, since $(t,x)\mapsto F_{v_j(t)}(x)$ is not necessarily smooth but only continuous.
Anyway, let us first show that $\lim_{j\to+\infty}\sup_{t\in I^n} \area(\Lambda_t^j) \le W_\Pi$ and that $\Lambda_t^j \cap \overline{B}_\varepsilon^{\vard}(\avoids) =\emptyset$ for all $t\in I^n$, for $j$ sufficiently large, where $0<\varepsilon<\delta$ has to be chosen. Later, we will describe a regularization argument to get a sequence of sweepouts with the same properties of $\{\boldsymbol{\Lambda}^j\}_{j\in\N}$, but also contained in $\Pi$.

Observe that $\area(\Lambda_t^j) = \area(\Sigma_t^j)$ for $t\not\in U_j$ and, for $t\in U_j$, we have
\[
\area(\Lambda_t^j) = \area(F_{v_j(t)}(\Sigma_t^j)) \le \area(F_{\rho(\vard(\avoids,\Sigma_t^j)) a_j(t)}(\Sigma_t^j)).
\]
However $\abs{\rho(\vard(\avoids,\Sigma_t^j)) a_j(t)}\le 2^{-j}$, which implies that 
\[
\lim_{j\to+\infty} \max_{t\in I^n} \area(\Lambda_t^j) \le \lim_{j\to+\infty} \max_{t\in I^n} \area(\Sigma_t^j) = W_\Pi.
\]

Now let us prove that $\Lambda_t^j\cap\overline{B}_\varepsilon^{\vard}(\avoids) = \emptyset$ for all $t\in I^n$, for $j$ sufficiently large. 
Up to taking $\delta>0$ possibly smaller (note that \cref{lem:IndexDiffeo} still holds for $\delta$ smaller), we can assume that $\abs{\area(\tilde\avoids)-\area(\avoids)} \le c_0/20$ for all $\tilde\avoids\in \overline{B}_{2\delta}^{\vard}(\avoids)$.
Then, let us distinguish three cases:
\begin{itemize}
\item If $t\in I^n$ is such that $\vard(\avoids,\Sigma_t^j)\ge 7\delta/4$, then $\Lambda_t^j=\Sigma_t^j$ and therefore $\vard(\avoids,\Lambda_t^j) = \vard(\avoids,\Sigma_t^j) \ge 7\delta/4> \varepsilon$.

\item If $t\in I^n$ is such that $\vard(\avoids,\Sigma_t^j)\le 3\delta/2$, then we have $v_j(t) = \Phi^{t,j}(T_j,a_j(t))$ and therefore, by \cref{lem:PushAway}, it holds
\begin{align*}
\area(\Lambda_t^j) &= \area(F_{v_j(t)}(\Sigma_t^j)) = A^{\Sigma_t^j}(\Phi^{t,j}(T_j,a_j(t)))) < A^{\Sigma_t^j}(0) - \frac{c_0}{10} \\
&= \area(\Sigma_t^j) -\frac{c_0}{10} \le \area(\avoids) - \frac{c_0}{20},
\end{align*}
where the last inequality holds for $j$ sufficiently large.
Hence, it is possible to choose $\varepsilon>0$ possibly smaller (depending on $\avoids$ and $c_0$) such that this implies that $\vard(\avoids,\Lambda_t^j)>\varepsilon$ (indeed note that $c_0$ does not depend on $\varepsilon$).

\item If $t\in I^n$ is such that $3\delta/2\le \vard(\avoids,\Sigma_t^j)\le 7\delta/4$, then we apply \cref{lem:FarIsFar}.
Indeed, given $\overline{\eta} = \overline{\eta}(\delta, \avoids, \{F_v\}_{v\in \overline{B}^{n+1}})$ as in the lemma, for $j$ sufficiently large it holds that
\[
\area(\Lambda_t^j) = \area(F_{v_j(t)}(\Sigma_t^j)) \le \area(F_{\rho(\vard(\avoids,\Sigma_t^j) )a_j(t)}(\Sigma_t^j)) \le \area(\Sigma_t^j)+\overline{\eta},
\]
since $\abs{\rho(\vard(\avoids,\Sigma_t^j) )a_j(t)}\le 2^{-j}\to 0$. This implies that $\vard(\Lambda_t^j, \avoids)\ge 2\overline{\eta}$. Choosing $\varepsilon<2\overline{\eta}$, we then get that $\vard(\Lambda_t^j, \avoids)>\varepsilon$ for $j$ sufficiently large, as desired.
\end{itemize}

To conclude the proof, we need to address the regularity issue. For all $j\in\N$, let $\tilde v_j\colon I^n\to \overline{B}^{n+1}$ be a smooth function such that $\tilde v_j = 0$ on $I^n\setminus U_j$ and $\abs{\tilde v_j(t)-v_j(t)} \le 2^{-j}$ for all $t\in U_j$. Then, define
\[
\tilde\Lambda_t^j = \begin{cases}
                  F_{\tilde v_j(t)}(\Sigma_t^j)& \text{for $t\in U_j$}\\
\Sigma_t^j &\text{for $t\not\in U_j$}.
              \end{cases}
\]
Note that $\{\tilde\Lambda_t^j\}_{t\in I^n}\in \Pi$ for all $j\in\N$. Moreover, $\sup_{t\in I^n} \vard(\tilde\Lambda_t^j, \Lambda_t^j) \to 0$ as $j\to+\infty$, which implies that 
\[
\lim_{j\to+\infty} \sup_{t\in I^n}\area(\tilde\Lambda_t^j) = \lim_{j\to+\infty} \sup_{t\in I^n} \area(\Lambda_t^j) \le W_\Pi,
\]
and that $\tilde\Lambda_t^j\cap \overline{B}_\varepsilon^{\vard}(\avoids) = \emptyset$ for all $t\in I^n$, for $j$ sufficiently large (possibly taking $\varepsilon>0$ smaller).

Finally note that, thanks to \eqref{eq:AvoidK}, for all $t\in U_j$ it holds that $\tilde \Lambda_t^j = F_{\tilde v_j(t)}(\Sigma_t^j) \not \in K$, since $\Sigma_t^j\in \overline{B}^{\vard}_{2\delta}(\avoids)$ and $\tilde v_j(t)\in\overline{B}^{n+1}$. Moreover, for all $t\in I^n\setminus U_j$, we have $\tilde\Lambda_t^j = \Sigma_t^j\not\in K$. 
Hence $\{\tilde\Lambda_t^j\}_{t\in I^n}$ also avoids $K$ and thus satisfies the desired properties.
\end{proof}

\section{Proof of the equivariant index bound}

We now have all the tools to complete the proof of \cref{thm:EquivMinMax}. Inspired by the proofs of \cite{MarquesNeves2016}*{Theorems 6.1 and 1.2}, the idea consists in repeatedly applying \cref{thm:Deformation} in order to obtain a minimizing sequence in $\Pi$ that stays away from the $G$-equivariant free boundary minimal surfaces with $G$-equivariant index greater than $n$.

\begin{proof}[Proof of \cref{thm:EquivMinMax}]
First of all, let us assume that the metric $\smetric$ on the ambient manifold $M$ is contained in $\mathcal{B}_G^\infty$, defined in \cref{def:BumpyMetrics}, i.e., it is bumpy.
Let us consider the set $\mathcal V^{n+1}$ of finite unions (possibly with multiplicity) of $G$-equivariant free boundary minimal surfaces in $M$ with area $W_\Pi$ and whose supports have $G$-equivariant index greater or equal than $n+1$. We want to prove that there exists a minimizing sequence $\{\so^j\}_{j\in\N}\subset \Pi$ such that $C(\{\so^j\}_{j\in\N})\cap \mathcal V^{n+1} = \emptyset$.
First note that, since $\smetric\in\mathcal{B}_G^\infty$, the set $\mathcal V^{n+1}$ is at most countable thanks to \cref{prop:CountableGequivSurfaces} (because $\mathcal{V}^{n+1}$ consists of finite unions with integer multiplicities of $G$-equivariant free boundary minimal surfaces). Therefore, we can write $\mathcal V^{n+1} = \{\avoids_1,\avoids_2,\ldots\}$. Now, the idea is to repeatedly apply \cref{thm:Deformation} in order to avoid all the elements in $\mathcal V^{n+1}$.

Let us consider a minimizing sequence $\{\so^j\}_{j\in\N}$ and apply \cref{thm:Deformation} with $\avoids=\avoids_1$. Then we get that there exist $\varepsilon_1>0$, $j_1\in\N$ and another minimizing sequence $\{\so^{1,j}\}_{j\in\N}\subset\Pi$ such that $\Sigma^{1,j}_t\cap \overline{B}^{\vard}_{\varepsilon_1}(\avoids_1)=\emptyset$ for all $j\ge j_1$ and $t\in I^n$. Moreover, we can assume that no $\avoids_k$ belongs to $\partial {B}^{\vard}_{\varepsilon_1}(\avoids_1)$.
Let us now consider $\avoids_2$: if it belongs to $\overline{B}^{\vard}_{\varepsilon_1}(\avoids_1)$, we choose $\varepsilon_2=\varepsilon_1-\vard(\avoids_1,\avoids_2)>0$ (here we use that $\avoids_2\not\in \partial {B}^{\vard}_{\varepsilon_1}(\avoids_1)$); otherwise we apply \cref{thm:Deformation} with $\avoids=\avoids_2$ and $K=\overline{B}^{\vard}_{\varepsilon_1}(\avoids_1)$. In both cases, we get $\varepsilon_2>0$, $j_2\in\N$ and another minimizing sequence $\{\so^{2,j}\}_{j\in\N}\subset\Pi$ such that $\Sigma^{2,j}_t\cap (\overline{B}^{\vard}_{\varepsilon_1}(\avoids_1)\cup \overline{B}^{\vard}_{\varepsilon_2}(\avoids_2))=\emptyset$ for all $j\ge j_2$ and $t\in I^n$. Moreover, we can assume again that no $\avoids_k$ belongs to $\partial {B}^{\vard}_{\varepsilon_2}(\avoids_2)$.

Then we proceed inductively for all $\avoids_k$'s and we have two possibilities:
\begin{itemize}
\item The process ends in finitely many steps. In this case there exist $m>0$, a minimizing sequence $\{\so^{m,j}\}_{j\in\N}\subset\Pi$, $\varepsilon_1,\ldots,\varepsilon_m>0$ and $j_m\in\N$ such that 
\[\Sigma^{m,j}_t\cap (\overline{B}^{\vard}_{\varepsilon_1}(\avoids_{1})\cup\ldots \cup \overline{B}^{\vard}_{\varepsilon_m}(\avoids_{m}))=\emptyset \]
for all $j\ge j_m$ and $t\in I^n$ and $\mathcal V^{n+1}\subset {B}^{\vard}_{\varepsilon_1}(\avoids_{1})\cup\ldots \cup {B}^{\vard}_{\varepsilon_m}(\avoids_{m})$.
\item The process continues indefinitely. In this case for all $m>0$ there exist a minimizing sequence $\{\so^{m,j}\}_{j\in\N}\subset\Pi$, $\varepsilon_m>0$ and $j_m\in\N$ such that $\Sigma^{m,j}_t\cap (\overline{B}^{\vard}_{\varepsilon_1}(\avoids_{1})\cup\ldots \cup \overline{B}^{\vard}_{\varepsilon_m}(\avoids_{m})) =\emptyset$ for all $j\ge j_m$ and $t\in I^n$ and no $\avoids_k$ belongs to $\partial {B}^{\vard}_{\varepsilon_1}(\avoids_{1})\cup\ldots \cup \partial {B}^{\vard}_{\varepsilon_m}(\avoids_{m})$.
\end{itemize}
In the first case we define $\boldsymbol{\Lambda}^i = \so^{m,i}$, while in the second case we set $\boldsymbol{\Lambda}^i= \so^{i,l_i}$ for all $i\in\N$, for some $l_i\ge j_i$ such that $\{\boldsymbol{\Lambda}^i\}_{i\in\N}\subset\Pi$ is a minimizing sequence and $C(\{\boldsymbol{\Lambda}^i\}_{i\in\N})\cap \mathcal V^{n+1}=\emptyset$. 
Hence, we can apply \cref{prop:ConvergenceToStationary}, \cref{lem:AlmostMinSeqInAdmissible,prop:AlmostMinSeq} to conclude the proof in the case of $\smetric\in\mathcal{B}_G^\infty$.

Now, consider the case of an arbitrary metric $\smetric$ and let $\{\smetric_k\}_{k\in\N}$ be a sequence of metrics in $\mathcal{B}_G^\infty$ converging smoothly to $\smetric$, which exists because of \cref{thm:BumpyIsGGeneric}. Thanks to the first part of the proof (in particular applying \cref{prop:ConvergenceToStationary,lem:AlmostMinSeqInAdmissible} to the minimizing sequence found in the first part of the proof with respect to $\gamma_k$), for every $k\in\N$ there exists a $G$-equivariant min-max sequence $\{\Lambda^{(k),j}_{t_j}\}_{j\in\N}\subset\Pi$ (i.e., $\area_{\gamma_k}(\Lambda^{(k),j}_{t_j}) \to W_{\Pi,\smetric_k}$, the width of $\Pi$ with respect to the metric $\smetric_k$) that is $G$-almost minimizing (with respect to $\smetric_k$) in every $L$-admissible family of $G$-equivariant annuli with $L=(3^n)^{3^n}$ and that converges in the sense of varifolds to a finite union $\limitv_k$ of $G$-equivariant free boundary minimal (with respect to $\gamma_k$) surfaces (possibly with multiplicity). Moreover, it holds $\ind_G(\operatorname{spt} (\limitv_k))\le n$ and $\area_{\gamma_k}(\limitv_k) = W_{\Pi, \smetric_k}$.

Note that $W_{\Pi, \smetric_k}$ converges to the width $W_\Pi = W_{\Pi, \smetric}$ (the proof is the same as in the Almgren--Pitts setting, for which one can see \cite{IrieMarquesNeves2018}*{Lemma 2.1}). Hence, since the varifolds $\limitv_k$ have uniformly bounded mass, up to subsequence $\limitv_k$ converges in the sense of varifolds to a varifold $\limitv$ with mass equal to $W_\Pi$. Moreover, taking a suitable diagonal subsequence of $\{\Lambda^{(k),j}_{t_j}\}_{j,k\in\N}$ we can obtain a min-max sequence $\{\Lambda^j\}_{j\in\N}$ for $\Pi$ with respect to $\smetric$, converging in the sense of varifolds to $\limitv$ and which is $G$-almost minimizing (with respect to $\smetric$) in every $L$-admissible family of $G$-equivariant annuli.
More precisely, we can get the min-max sequence $\{\Lambda^j\}_{j\in\N}$ in such a way that $\Lambda_j$ is $(G,\varepsilon_j/2^{n+3},2\varepsilon_j)$-almost minimizing in every $L$-admissible family of $G$-equivariant annuli, for some sequence of positive numbers $\varepsilon_j\to0$. 
Indeed, given $\varepsilon_j>0$, if $\smetric_k$ is sufficiently close (depending on $\varepsilon_j$) to $\smetric$, and $\Lambda_j$ is $(G,\varepsilon_j/2^{n+2},\varepsilon_j)$-almost minimizing in every $L$-admissible family of $G$-equivariant annuli with respect to $\smetric_k$, then $\Lambda_j$ is  $(G,\varepsilon_j/2^{n+3},2\varepsilon_j)$-almost minimizing in every $L$-admissible family of $G$-equivariant annuli with respect to $\smetric$.

Now, given the min-max sequence $\{\Lambda^j\}_{j\in\N}$, we can apply \cref{prop:AlmostMinSeq}, and obtain that $\limitv$ is a disjoint union of $G$-equivariant free boundary minimal surfaces (possibly with multiplicity) and the genus bound in the statement holds.
One can look at \cref{fig:scheme} for a scheme of the argument.

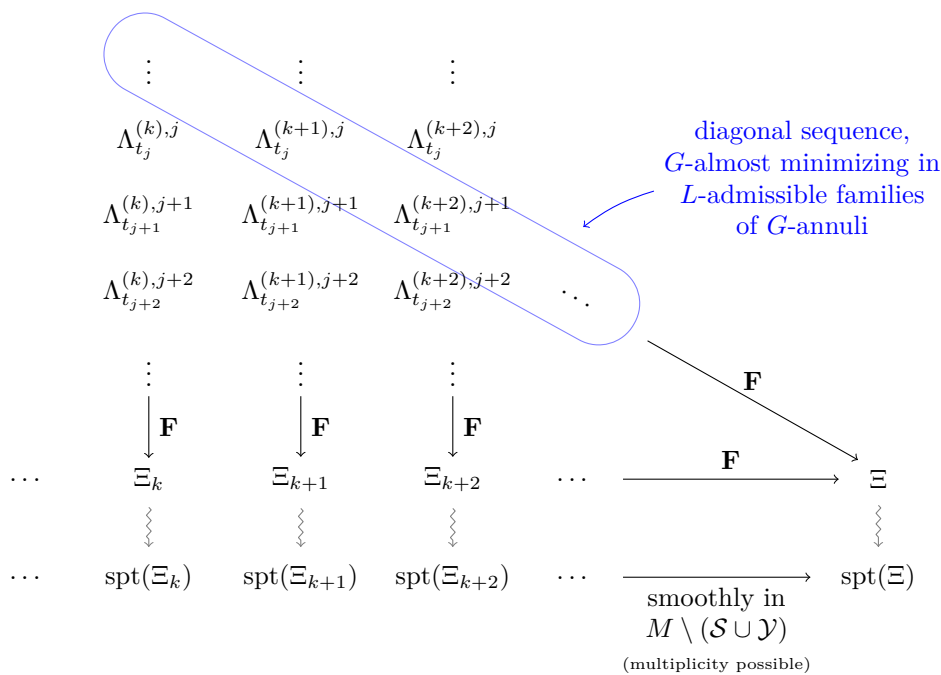
\begin{figure}[htpb]
\centering
\usetikzlibrary{trees}
\tikzstyle{every node}=[anchor=west]
\tikzstyle{tit}=[shape=rectangle, rounded corners,
    draw, minimum width = 3cm]
\tikzstyle{sec}=[shape=rectangle, rounded corners]
\tikzstyle{optional}=[dashed,fill=gray!50]
\begin{tikzpicture}[scale=0.9]

\tikzstyle{mybox} = [draw, rectangle, rounded corners, inner sep=10pt, inner ysep=20pt]
\tikzstyle{fancytitle} =[fill=white, text=black, draw]

\begin{scope}[align=center, font=\small]
\node[sec] (d1) {$\vdots$};
\node[sec] (lk0) [below of = d1] {$\Lambda_{t_j}^{(k),j}$};
\node[sec] [below of = lk0] (lk1) {$\Lambda_{t_{j+1}}^{(k),j+1}$};
\node[sec] [below of = lk1] (lk2) {$\Lambda_{t_{j+2}}^{(k),j+2}$};
\node[sec] [below of = lk2] (d12) {$\vdots$};
\node[sec] [below of = d12, yshift=-.5cm] (x0) {$\Xi_k$};
\node[sec] [below of = x0, yshift=-.3cm] (s0) {$\operatorname{spt}(\Xi_k)$};
\draw[->] (d12) -- node {$\vard$} (x0);

\node[sec] [right of = d1, xshift=1cm] (d2) {$\vdots$};
\node[sec] (l10) [below of = d2] {$\Lambda_{t_j}^{(k+1),j}$};
\node[sec] [below of = l10] (l11) {$\Lambda_{t_{j+1}}^{(k+1),j+1}$};
\node[sec] [below of = l11] (l12) {$\Lambda_{t_{j+2}}^{(k+1),j+2}$};
\node[sec] [below of = l12] (d22) {$\vdots$};
\node[sec] [below of = d22, yshift=-.5cm] (x1) {$\Xi_{k+1}$};
\node[sec] [below of = x1, yshift=-.3cm] (s1) {$\operatorname{spt}(\Xi_{k+1})$};
\draw[->] (d22) -- node {$\vard$} (x1);

\node[sec] [right of = d2, xshift=1cm] (d3) {$\vdots$};
\node[sec] (l20) [below of = d3] {$\Lambda_{t_j}^{(k+2),j}$};
\node[sec] [below of = l20] (l21) {$\Lambda_{t_{j+1}}^{(k+2),j+1}$};
\node[sec] [below of = l21] (l22) {$\Lambda_{t_{j+2}}^{(k+2),j+2}$};
\node[sec] [below of = l22] (d32) {$\vdots$};
\node[sec] [below of = d32, yshift=-.5cm] (x2) {$\Xi_{k+2}$};
\node[sec] [below of = x2, yshift=-.3cm] (s2) {$\operatorname{spt}(\Xi_{k+2})$};
\draw[->] (d32) -- node {$\vard$} (x2);

\node[sec] [left of = x0, xshift = -.6cm] (dd1) {\ldots};
\node[sec] [below of = dd1, yshift=-.3cm] {\ldots};
\node[sec] [right of = x2, xshift = .6cm] (do) {\ldots};
\node[sec] [below of = do, yshift=-.3cm] (dos) {\ldots};

\node[sec]  [right of = do, xshift = 3cm] (lim) {$\Xi$};
\node[sec] [below of = lim, yshift=-.3cm] (limspt) {$\operatorname{spt}(\Xi)$};
\draw[->] ([xshift=3mm]do.east) -- node[above] {$\vard$} ([xshift=-3mm]lim.west);
\draw[->] ([xshift=3mm]dos.east) -- node[below] {smoothly in \\ $M\setminus(\mathcal{S}\cup\mathcal{Y})$\\
\tiny(multiplicity possible)} ([xshift=-3mm]limspt.west);

\node[sec] [right of = l21, xshift=.6cm, yshift=-1cm, rotate=9] (diag) {$\ddots$};

\begin{scope}[on background layer]
\draw[blue!50!white,rounded corners=15pt,opacity=1]
    let \p1=($(l10)!-32mm!(diag)$),
        \p2=($(diag)!-10mm!(l10)$),
        \p3=($(\p1)!6mm!90:(\p2)$),
        \p4=($(\p1)!6mm!-90:(\p2)$),
        \p5=($(\p2)!6mm!90:(\p1)$),
        \p6=($(\p2)!6mm!-90:(\p1)$)
    in
    (\p3) -- (\p4)-- (\p5) -- (\p6) -- cycle;
\end{scope}

\draw[->] ([xshift=1cm,yshift=-.3cm]diag.south) -- node[above] {$\vard$} ([xshift=-.3cm]lim.north);
\node[sec] [blue,above of = diag,xshift=3cm,yshift=5mm] (nn) {diagonal sequence,\\$G$-almost minimizing in\\ $L$-admissible families\\ of $G$-annuli};
\draw[->,blue] ([yshift=-2mm]nn.west) to [bend right=10] ([yshift=5mm,xshift=2mm]diag.north);

\draw [->, gray, line join=round, decorate, decoration={
    zigzag, segment length=4, amplitude=.9,post=lineto, post length=2pt
}]  ([yshift=-1mm]x0.south) -- ([yshift=1mm]s0.north);
\draw [->, gray, line join=round, decorate, decoration={
    zigzag, segment length=4, amplitude=.9,post=lineto, post length=2pt
}]  ([yshift=-1mm]x1.south) -- ([yshift=1mm]s1.north);
\draw [->, gray, line join=round, decorate, decoration={
    zigzag, segment length=4, amplitude=.9,post=lineto, post length=2pt
}]  ([yshift=-1mm]x2.south) -- ([yshift=1mm]s2.north);
\draw [->, gray, line join=round, decorate, decoration={
    zigzag, segment length=4, amplitude=.9,post=lineto, post length=2pt
}]  ([yshift=-1mm]lim.south) -- ([yshift=1mm]limspt.north);
\end{scope}

\end{tikzpicture}
\caption{Scheme of the convergence argument in the proof of \cref{thm:EquivMinMax}.}
\label{fig:scheme}
\end{figure}

Note that, thanks to \cref{thm:ConvBoundGIndex} (see also \cref{rem:AlsoForConvMetrics}), up to extracting a further subsequence, we can assume that $\operatorname{spt}(\limitv_k)$ converges smoothly (possibly with multiplicity) to a free boundary minimal surface away from the singular locus $\mathcal{S}$ and, possibly, from finitely many additional points $\mathcal{Y}$. This free boundary minimal surface coincides with the limit of $\operatorname{spt}(\limitv_k)$ in the sense of varifolds, which is $\operatorname{spt}(\limitv)$, in $M\setminus(\mathcal{S}\cup\mathcal{Y})$. In particular, we get that $\ind_G(\operatorname{spt}(\limitv))\le n$, which concludes the proof.
\end{proof}

%%%%%%%%%%%%%%%%%%%%%%%%%%%%%%%%%%%%%%%%%%%%%%%%%
%%% II - FBMS with connected boundary
%%%%%%%%%%%%%%%%%%%%%%%%%%%%%%%%%%%%%%%%%%%%%%%%%

\chapter[FBMS with connected boundary]{Free boundary minimal surfaces\\ with connected boundary} \label{chpt:FBMSb1}

In this chapter, we employ \cref{thm:EquivMinMax} to construct free boundary minimal surfaces with connected boundary and arbitrary genus in the three-dimensional unit ball $B^3$.

\section{Effective sweepouts}
\label{sec:sweepout}

First of all, for all $g\ge 1$, we need to construct a $\dih_{g+1}$-sweepout, as defined in \cref{def:GSweepout}. This is achieved in the following lemma.

\begin{lemma}\label{lem:ConstructionSweepout}
Given $1\leq g\in\N$ there exists a $\dih_{g+1}$-sweepout $\{\Sigma_t\}_{t\in[0,1]}$ of $B^3$ such that $\Haus^2(\Sigma_0)=\Haus^2(\Sigma_1)=\pi$ and such that for every $0<t<1$ 
\begin{itemize} 
\item the surface $\Sigma_t$ has genus $g$, 
\item the boundary of $\Sigma_t$ is connected,
\item the area of $\Sigma_t$ is strictly less than $3\pi$. 
\end{itemize}
\end{lemma}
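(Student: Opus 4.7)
The plan is to construct $\{\Sigma_t\}_{t\in[0,1]}$ as a smooth one-parameter deformation of the equatorial disc $D$ to itself. For $t$ in the open interval, $\Sigma_t$ will consist of three nearly-horizontal discs $D_+(t)$, $D_0(t)$, $D_-(t)$ at heights $h(t)$, $0$, $-h(t)$ intersected with $B^3$, where $h\colon (0,1)\to (0,1)$ is smooth, positive, and satisfies $h(t)\to 0$ as $t\to 0,1$; these three discs will be joined by thin ribbons arranged according to the $\dih_{g+1}$-action. More precisely, in each of the $g+1$ vertical planes spanned by the vertical axis $\xi_0$ and a horizontal axis $\xi_k$, I insert a pair of ribbons --- one connecting $D_+(t)$ to $D_0(t)$ and one connecting $D_0(t)$ to $D_-(t)$ --- positioned so that the rotation by $\pi$ around $\xi_k$ (which exchanges $D_+(t)$ and $D_-(t)$) swaps these two ribbons, while the rotation by $2\pi/(g+1)$ around $\xi_0$ cyclically permutes the $g+1$ pairs. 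For sufficiently small ribbon widths the resulting union is an embedded, orientable, $\dih_{g+1}$-equivariant smooth surface.

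A direct Euler characteristic count confirms the topology. The three discs contribute $\chi=3$, and each of the $2(g+1)$ ribbons (a two-dimensional $1$-handle) lowers $\chi$ by $1$, so $\chi(\Sigma_t)=3-2(g+1)=1-2g$. One then checks that the first pair of ribbons merges the three discs into a single connected component with one boundary circle, and that the remaining $g$ pairs of ribbons can be attached, by routing each new ribbon around the previously-built structure in its own symmetry plane, so that all subsequent operations add handles without splitting the boundary. Orientability is automatic since none of the band attachments are twisted; combined with the relation $\chi = 2-2\genus-\bdry$ and $\bdry(\Sigma_t)=1$, this forces $\genus(\Sigma_t)=g$.

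For the area bound, observe that $\Haus^2(\{z=\pm h(t)\}\cap B^3) = \pi(1-h(t)^2)<\pi$, so the three flat discs together have area strictly less than $3\pi$. The ribbons may be chosen as thin as desired, contributing an arbitrarily small additional amount, and therefore $\area(\Sigma_t) < 3\pi$ for all $t\in(0,1)$. At the endpoints I set $\Sigma_0=\Sigma_1=D$, which has area exactly $\pi$; by letting $h(t)$ and the ribbon widths tend to zero as $t\to 0,1$, the family $\Sigma_t$ converges in the sense of varifolds to $D$, which takes care of the continuity requirement of \cref{def:GSweepout} at the endpoints.

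The main technical difficulty is to implement all of the above smoothly while meeting the conditions of \cref{def:GSweepout}. Inside the open interval, smoothness of $t\mapsto \Sigma_t$ is built into the construction. At the two endpoints, however, the family degenerates and acquires a single singular point at the origin where the three discs and the ribbons concentrate as they collapse; accordingly it is natural to take the exceptional set of \cref{def:GSweepout} to be $T=\{0,1\}$ together with a finite set $P=\{0\}$. The hardest part is the interpolation between the ribbon-connected configuration and the equatorial disc near the endpoints, carried out equivariantly and in a way that preserves the strict area bound $3\pi$ and the smoothness in $t$ for $t\in(0,1)$; once this is established, all the remaining properties claimed in the statement follow at once from the explicit nature of the construction.
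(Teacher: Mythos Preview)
Your proposal captures the right rough picture---three parallel discs joined by $2(g+1)$ ribbons---but has a genuine gap at the endpoints. You assert that letting $h(t)$ and the ribbon widths tend to $0$ as $t\to 0,1$ makes $\Sigma_t$ converge to the single equatorial disc $D$ in the sense of varifolds; this is false. Three horizontal discs at heights $\pm h(t),0$ with $h(t)\to 0$ converge as varifolds to $3D$ (mass $3\pi$), not to $D$ (mass $\pi$), so your family is discontinuous at the endpoints once you declare $\Sigma_0=\Sigma_1=D$. The paper handles the two ends asymmetrically, and this is essential. At $t=1$ the outer discs are pushed toward the poles (their height is $t$, not a function tending to $0$), so they shrink to points and $\Sigma_t\to D$ plus one-dimensional arcs, with mass $\pi$. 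Near $t=0$ one genuinely faces the problem of merging three nearby discs into one while keeping area strictly below $3\pi$; simply widening the ribbons fails because the available slack $3\pi-\area(\Sigma_{t_0})$ is $O(t_0^2)$ (\cref{lem:20191106-21:35}) while a naive neck of width $\varepsilon$ costs $O(\varepsilon t_0)$. The paper overcomes this with the \emph{catenoid estimate} (\cref{lem:maximal} and the surrounding discussion, following \cite{KetoverMarquesNeves2020}): replacing each thin neck by a slice of a one-parameter catenoid family opens the neck fully at area cost only $O(t_0^2/\lvert\log t_0\rvert)$, which does fit under the quadratic slack, and after that the remaining pieces retract to $D$ with decreasing area. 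Your final paragraph rightly flags the interpolation as ``the hardest part'', but without this mechanism (or an equivalent one) the construction cannot be completed.

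A secondary point: your placement of each up/down ribbon pair in the vertical plane of an axis $\xi_k$ differs from the paper's, where the up- and down-ribbons sit at distinct equatorial points $p_k^+,p_k^-$ straddling $\xi_k$. Your verification that $\partial\Sigma_t$ stays connected (``routing each new ribbon around the previously-built structure'') is hard to reconcile with $\dih_{g+1}$-equivariance, which forces all ribbon pairs to be placed simultaneously and identically; with the paper's offset placement the connectedness of the boundary can be traced directly.
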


The idea behind our construction is to equivariantly glue \emph{three} parallel discs through suitably controlled ribbons. 

\begin{remark}\label{rem:KetSweep}
In a partly similar way Ketover \cite[Theorem 5.1]{Ketover2016FBMS} glued two discs in order to variationally construct free boundary minimal surfaces of genus zero and $b\geq 2$ boundary components (to be compared with the existence result by Fraser and Schoen \cite[Theorem 1.1]{FraserSchoen2016}).
\end{remark}

Let $1\leq g\in\N$ be fixed. 
Let $\Eq=\{x\in \overline{B^3}\st x_3=0\}$ be the equatorial disc in the closed unit ball and let $\overline{B_\varepsilon}(p)=\{x\in\R^3\st \abs{x-p}\leq \varepsilon\}$ denote the closed ball of radius $\varepsilon>0$ around any given $p\in\R^3$. 
For all $k\in\{0,\ldots,g\}$ we consider the points 
\begin{align*}
p_{k}^{\pm}&\vcentcolon=\left(
\cos\left(\tfrac{2k\pm\frac{1}{2}}{g+1}\pi\right),
\sin\left(\tfrac{2k\pm\frac{1}{2}}{g+1}\pi\right),
0\right)
\end{align*}
on the equator and the subsets
\begin{align}\label{eqn:D+-epsilon}
\Eq_{\varepsilon}^{\pm}&\vcentcolon=\Eq\setminus\bigcup^{g}_{k=0} B_\varepsilon(p_k^{\pm}), 
\end{align}
as shown in \cref{fig:20190919}, which we then scale and translate upwards (or downwards) to define
\begin{align}\label{eqn:20191106-D}
\Eq_{t,\varepsilon}^{\pm}&\vcentcolon=\bigl(\sqrt{1-t^2}\Eq_\varepsilon^{\pm}\bigr)\pm(0,0,t)
\end{align}
for all $t\in[0,1]$.  
\begin{figure}%
\centering
\pgfmathsetmacro{\eps}{0.25}
\pgfmathsetmacro{\genus}{2} 
\pgfmathsetmacro{\alph}{2*asin(\eps/2)}
\pgfmathsetmacro{\globalscale}{3}
\begin{tikzpicture}[scale=\globalscale,line cap=round,line join=round,baseline={(0,0)},rotate=90/(\genus+1)]
\draw[fill=lightgray](-\alph:1)
\foreach\k in {0,...,\genus}{
arc({\k*360/(\genus+1)+180+180/2-\alph/2}:{\k*360/(\genus+1)+180-180/2+\alph/2}:\eps)arc(\k*360/(\genus+1)+\alph:{\k*360/(\genus+1)+1*360/(\genus+1)-\alph}:1)
};
\foreach\k in {0,...,\genus}{
\draw({\k*360/(\genus+1)}:1)node(p\k){$\scriptstyle\bullet$}
node[anchor=180+(2*\k+1/2)*180/(\genus+1)]{$p_{\k}^{+}$};
}
\draw[latex-latex](p0.center)--++({180-90+180/(\genus+1)}:\eps)node[midway,above]{$\varepsilon$};
\draw[latex-latex](0,0)--({-90/(\genus+1)}:1)node[midway,below]{$1$};
\draw[very thin](0:0.4)arc(0:{180/(\genus+1)}:0.4);
\draw({90/(\genus+1)}:0.27)node{$\frac{\pi}{g+1}$};
\draw[dotted]
(0,0)--(p0.center)
(0,0)--({180/(\genus+1)}:1)
(p0.center)--(p1.center)
;
\draw(0,0)node[below=3ex]{$\Eq_{\varepsilon}^{+}$};
\end{tikzpicture}
\hfill
\begin{tikzpicture}[scale=\globalscale,line cap=round,line join=round,baseline={(0,0)},rotate=-90/(\genus+1)]
\draw[fill=lightgray](-\alph:1)
\foreach\k in {0,...,\genus}{
arc({\k*360/(\genus+1)+180+180/2-\alph/2}:{\k*360/(\genus+1)+180-180/2+\alph/2}:\eps)arc(\k*360/(\genus+1)+\alph:{\k*360/(\genus+1)+1*360/(\genus+1)-\alph}:1)
};
\foreach\k in {0,...,\genus}{
\draw({\k*360/(\genus+1)}:1)node(p\k){$\scriptstyle\bullet$}
node[anchor=180+(2*\k-1/2)*180/(\genus+1)](pn\k){$p_{\k}^{-}$};
}
\draw(0,0)node[below=3ex]{$\Eq_{\varepsilon}^{-}$};
\foreach\k in {-0.5,0,...,\genus}{
\draw[dashed](0,0)--({(\k+1/4)*360/(\genus+1)}:1);
};
\foreach[count=\i]\k in {0,...,\genus}{
\draw ({(\k+1.5)*180/(\genus+1)}:0.55)node[inner sep=2pt,circle,anchor={180+\k*180/(\genus+1)}]{$\xi_\i$};
};
\end{tikzpicture}
\caption{Top view of the sets $\Eq_{\varepsilon}^{\pm}$  for $\varepsilon=1/4$ and $g=\genus$.} 
\label{fig:20190919} 
\end{figure}
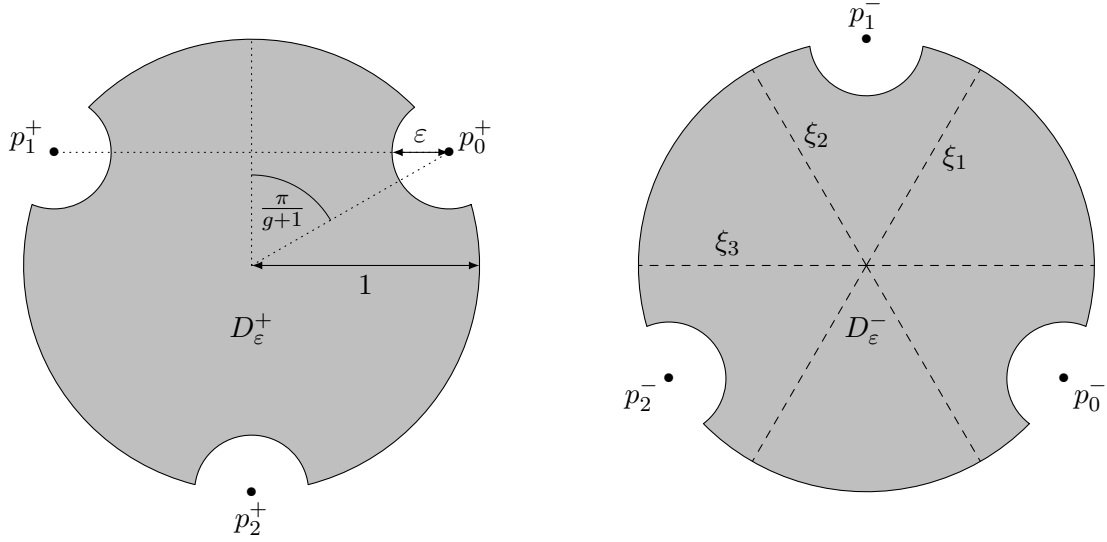
Now we connect the three sets $\Eq_{t,\varepsilon}^{+}$, $\Eq_{t,\varepsilon}^{-}$ and $(\Eq_{\varepsilon}^{+}\cap \Eq_{\varepsilon}^{-})$ in a $\dih_{g+1}$-equivariant way by means of $2(g+1)$ ribbons. 
Note that each of these sets is a nonempty, connected subset of $\overline{B^3}$ provided that $\varepsilon<\sin(\pi/(2g+2))$. 
Let $0<t_0<1$ be a fixed, small value which will be specified later in  \eqref{eqn:20191018-h0}. 
For each $t\in[t_0,1]$ we define 
\begin{align*} 
\Omega_{t,\varepsilon}^{\pm}&\vcentcolon=\bigcup_{s\in[0,t]}\Eq_{s,\varepsilon}^{\pm}, &
S_{t,\varepsilon}^{\pm}&\vcentcolon=\overline{\partial\Omega_{t,\varepsilon}^{\pm}\setminus(\partial B^3\cup \Eq)},
\end{align*}
where the symbol $\partial$ refers to the topological boundary in $\R^3$, and
\begin{align}\label{eqn:20191106-E}
\Sigma_{t}&\vcentcolon=S_{t,\varepsilon}^{+}\cup S_{t,\varepsilon}^{-}\cup(\Eq_{\varepsilon}^{+}\cap \Eq_{\varepsilon}^{-}).
\end{align}
Moreover, we allow $\varepsilon\colon\Interval{t_0,1}\to\intervaL{0,\varepsilon_0}$ to be a continuous function of $t$, bounded from above by some sufficiently small $\varepsilon_0>0$ which we choose later in \eqref{eqn:20191018-h0} depending on $t_0$ and $g$, such that $\varepsilon(t)\to0$ as $t\nearrow1$. 
Then we define $\Sigma_1$ to be the union of the equatorial disc $\Eq$ with the (shortest) geodesic arcs connecting $p_k^+$ with the north pole and $p_k^{-}$ with the south pole for each $k\in\{0,\ldots,g\}$. 
The construction is visualized in the first, second and third image of \cref{fig:sweepout1}. 

\begin{figure}
\centering
\includegraphics[page=1]{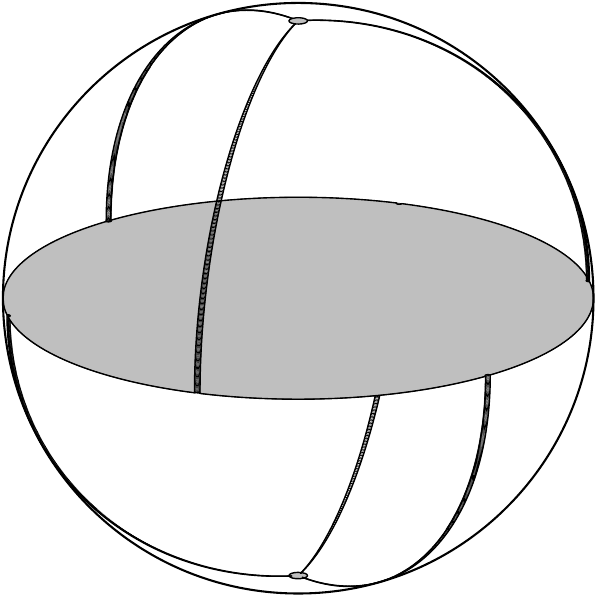}$\mathllap{\Sigma_{\mathrlap{0.9995}}}$%
\hfill
\includegraphics[page=2]{pic-sweepout1.pdf}$\mathllap{\Sigma_{\frac{2}{3}\hphantom{t_0}}}$%

\bigskip 

\includegraphics[page=3]{pic-sweepout1.pdf}$\mathllap{\Sigma_{\mathrlap{t_0}}}$%
\hfill
\includegraphics[page=4]{pic-sweepout1.pdf}$\mathllap{\Sigma_{\frac{1}{2}t_0}}$%

\bigskip 

\includegraphics[page=5]{pic-sweepout1.pdf}$\mathllap{\Sigma_{\mathrlap{\frac{1}{4}t_0}}}$%
\hfill
\includegraphics[page=2]{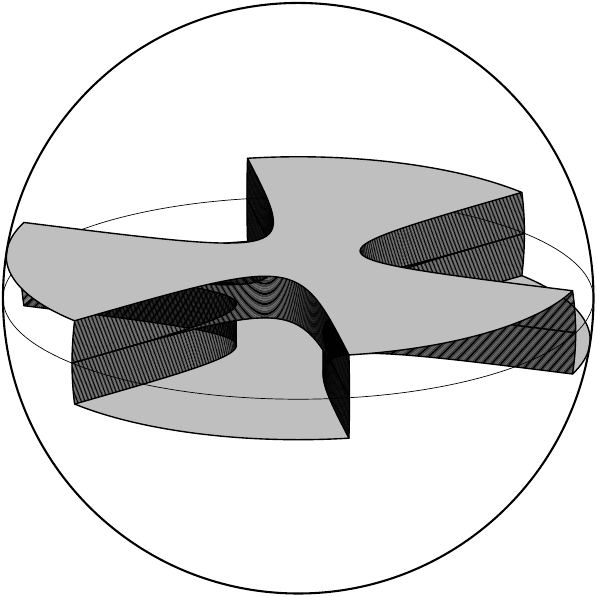}$\mathllap{\Sigma_{\frac{1}{16}t_0}}$%

\caption{Construction of an effective sweepout in the case $g=2$. 
In the first three images, $\varepsilon$ has been increased and relation \eqref{eqn:20191018-h0} ignored for the sake of clarity.  }%
\label{fig:sweepout1}%
\end{figure}

Arriving at $\Sigma_{t_0}$, one would like to increase $\varepsilon$ (as we further \emph{decrease} $t$) in order to retract the three sets $\Eq_{t,\varepsilon}^{+}$, $\Eq_{t,\varepsilon}^{-}$ and $(\Eq_{\varepsilon}^{+}\cap \Eq_{\varepsilon}^{-})$ as illustrated in the fourth image of \cref{fig:sweepout1}. 
However, this requires refined control on the area of the widening ribbons as we point out in the following statement.

\begin{lemma}\label{lem:20191106-21:35}
Let $\Sigma_{t}$ be as given in \eqref{eqn:20191106-E}. 
Then its area satisfies
\begin{align}
\label{eqn:20191018}
\Haus^2({\Sigma_t})&\leq 3\pi-2\pi\bigl(t^2-(g+1)\varepsilon t\bigr).
\end{align} 
\end{lemma}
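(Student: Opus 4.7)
The strategy is to decompose $\Sigma_t$ into manageable pieces and estimate each of them separately. From definitions \eqref{eqn:20191106-D} and \eqref{eqn:20191106-E}, $\Sigma_t$ splits naturally into: the top disc $\Eq_{t,\varepsilon}^+$, the bottom disc $\Eq_{t,\varepsilon}^-$, the equatorial middle piece $\Eq_\varepsilon^+\cap\Eq_\varepsilon^-$, and the $2(g+1)$ ``inner lateral'' ribbons $R_k^\pm$ comprising the remainder of $S_{t,\varepsilon}^\pm$, each one traced out near some $p_k^\pm$ by the scaled boundary of the corresponding hole as $s$ varies in $[0,t]$.

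The disc contributions are immediate. Since $\Eq_{t,\varepsilon}^\pm$ is the image of $\Eq_\varepsilon^\pm\subset\Eq$ under a scaling with factor $\sqrt{1-t^2}$ and a vertical translation, its area equals $(1-t^2)\Haus^2(\Eq_\varepsilon^\pm)\le(1-t^2)\pi$; likewise $\Haus^2(\Eq_\varepsilon^+\cap\Eq_\varepsilon^-)\le\pi$. Summing, the three discs together contribute at most $3\pi-2\pi t^2$, which already accounts for the first part of the bound.

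For the ribbon $R_k^+$ around $p_k^+=(a,b)$, I would parametrize it explicitly by $(\theta,s)\in[\theta_1,\theta_2]\times[0,t]$ via
\[
\Phi(\theta,s)\vcentcolon=\bigl(\sqrt{1-s^2}(p_k^++\varepsilon(\cos\theta,\sin\theta)),\,s\bigr),
\]
where $[\theta_1,\theta_2]$ is the set of angles such that $p_k^++\varepsilon(\cos\theta,\sin\theta)\in\overline{\Eq}$. A direct computation of $\partial_\theta\Phi\times\partial_s\Phi$ gives the area element $\varepsilon\sqrt{(1-s^2)+s^2(u+\varepsilon)^2}$, with $u\vcentcolon=a\cos\theta+b\sin\theta=\cos(\theta-\theta_k^+)$. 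The key observation is that the single constraint $|p_k^++\varepsilon(\cos\theta,\sin\theta)|^2\le1$, which expands to $u\le-\varepsilon/2$, plays a dual role: it yields $(u+\varepsilon)^2\le 1$, so that the area element is bounded pointwise by $\varepsilon$, and at the same time it forces $\theta_2-\theta_1\le\pi-2\arcsin(\varepsilon/2)\le\pi$. Consequently $\Haus^2(R_k^+)\le\pi\varepsilon t$, and an entirely analogous parametrization gives the same bound for each $R_k^-$. The $2(g+1)$ ribbons thus contribute at most $2\pi(g+1)\varepsilon t$, and combining with the disc estimate yields \eqref{eqn:20191018}.

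The argument is essentially a direct verification once the decomposition is in place, and the only delicate point worth emphasising is the double use of the ``inside the disc'' constraint $u\le-\varepsilon/2$: without exploiting it to bound the angular aperture by $\pi$ in addition to bounding the pointwise Jacobian by $\varepsilon$, a naive estimate would introduce extraneous $O(\varepsilon t^2)$ terms that the sharp inequality \eqref{eqn:20191018} does not accommodate.
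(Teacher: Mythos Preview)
Your proof is correct and follows essentially the same decomposition as the paper: the three disc pieces contribute at most $3\pi-2\pi t^2$, and each of the $2(g+1)$ ribbons contributes at most $\pi\varepsilon t$. The only difference is stylistic---the paper estimates each ribbon via the coarea formula (arc length $\le\pi\sqrt{1-s^2}\,\varepsilon$ at height $s$, times the slant factor $\sqrt{1+s^2/(1-s^2)}=1/\sqrt{1-s^2}$, integrated from $0$ to $t$), whereas you compute the Jacobian of an explicit parametrisation and bound it pointwise by $\varepsilon$; both routes yield the identical bound.
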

 
\begin{proof}
Obviously, the set $S_{t,\varepsilon}^{+}$ has the same area as $S_{t,\varepsilon}^{-}$, by symmetry. Furthermore $S_{t,\varepsilon}^{+}$ is the union of $\Eq_{t,\varepsilon}^{+}$ defined in \eqref{eqn:20191106-D} with $(g+1)$ ribbons. 
By construction, 
\begin{align}\label{eqn:20191108}
\Haus^2({\Eq_{t,\varepsilon}^{+}})&\leq \pi(1-t^2),  &
\Haus^2({\Eq_{\varepsilon}^{+}\cap \Eq_{\varepsilon}^{-}})&\leq\pi. 
\end{align}
The intersection of one ribbon with the horizontal plane at height $s\in[0,t]$ is an arc of length less than $\pi\sqrt{1-s^2}\varepsilon$.
Hence, using the coarea formula one gets at once that the area of one ribbon is bounded from above by
\begin{align*}
\int^{t}_{0}\sqrt{1+\frac{s^2}{1-s^2}}\sqrt{1-s^2}\,\pi\varepsilon\,ds=\pi\varepsilon t.
\end{align*} 
Therefore, 
\(
\Haus^2({\Sigma_t})=2\Haus^2({S_{t,\varepsilon}^{+}})+\Haus^2({\Eq_{\varepsilon}^{+}\cap \Eq_{\varepsilon}^{-}})
\leq 2\pi(1-t^2)+2(g+1)\pi\varepsilon t+\pi
\), which allows to conclude by simply rearranging the terms.
\end{proof}

\begin{remark}
\cref{lem:20191106-21:35} implies that if $\varepsilon>0$ is \emph{small} compared to $t_0$, more precisely, if $(g+1)\varepsilon<t_0$, then $\Haus^2({\Sigma_t})<3\pi$ holds for all $t\in[t_0,1]$. In estimate \eqref{eqn:20191108} we did not take into account that by definition~\eqref{eqn:D+-epsilon} of $D_{\varepsilon}^{\pm}$, small balls of radius $\varepsilon>0$ are removed around the points $p_k^{\pm}$ (and thus similarly for $\Eq_{t,\varepsilon}^{\pm}$). 
If we subtracted these contributions, one could then easily prove that the inequality $\Haus^2({\Sigma_{t_0}})<3\pi$ would also hold for $\varepsilon>0$ \emph{large} compared to $t_0$.
However, even the improved right-hand side of \eqref{eqn:20191018} would \emph{not} stay below $3\pi$ if we increased $\varepsilon$ continuously from small to large values. For instance, that bound would be violated if one took $\varepsilon=2t_0/(g+1)$.
This is the reason why we need to refine the construction for $t<t_0$ and appeal to the so-called \emph{catenoid estimate} instead. 
\end{remark}

At this stage, it would be possible to proceed by appealing to a suitable variant (for boundary points) of \cite[Theorem 2.4]{KetoverMarquesNeves2020}. However, for our specific scopes we work out the explicit construction in our Euclidean setting. 

Fix $0<r<\sin(\pi/(2g+2))$ and $0< h < \min\{\tanh(1)/2, 1/5 \}r = r/5$. Moreover we choose $h$ such that we also have that $-\log h > 8(g+1)$. For every $s\geq 0$ consider the surfaces
\begin{align}\label{eqn:20191115-18:51}
C_{s}^{r,h}\vcentcolon=\Bigl\{x\in\R^3\st \sqrt{x_1^2+x_2^2}=\frac{r\cosh(s x_3)}{\cosh(s h)},~ \abs{x_3}\leq h\Bigr\},
\end{align}
which all span two parallel circles of radius $r$ and distance $2h$. 
If $s$ is chosen such that 
\begin{align}\label{eqn:catenoid}
r s=\cosh(s h),
\end{align}
then $C_{s}^{r,h}$ is a subset of a (rescaled) catenoid and hence a minimal surface. 
For our choices of $r$ and $h$, equation \eqref{eqn:catenoid} has two positive solutions $s_1(r,h)<s_2(r,h)$. 
The smaller one corresponds to the stable catenoid and the larger one to the unstable catenoid.

The family of surfaces in question interpolates between the cylinder at $s=0$ and the union of a line segment with two discs of radius $r$ which we denote by $C_{\infty}^{r,h}$. 
The unstable catenoid can be regarded as the slice of largest area in this family, as we prove in \cref{sec:maximal} below.
The catenoid estimate given in \cite[Proposition\;2.1]{KetoverMarquesNeves2020}, combined with \cref{lem:maximal} (postponed to the next section), implies that we can choose $h$ possibly smaller (only depending on $r$) such that for all $s\geq0$
\begin{align}\label{eqn:20191115-18:52}
\Haus^2({C_s^{r,h}})&\leq \Haus^2({C_\infty^{r,h}})+\frac{4\pi h^2}{(-\log h)}
=2\pi r^2+\frac{4\pi h^2}{(-\log h)}. 
\end{align} 
Let $E_R\vcentcolon=\{x\in\R^3\st x_1^2+(x_2-R)^2+x_3^2< R^2\}$ be the ball of radius $R>1$ around the point $(0,R,0)$. 
By symmetry, the catenoid estimate holds under restriction to the half-space $E_\infty\vcentcolon=\{x\in\R^3\st x_2>0\}$, i.e,
\begin{align*}
\sup_{s\ge 0}\ \Haus^2({C_{s}^{r,h}\cap E_\infty})&\leq \Haus^2({C_{\infty}^{r,h}\cap E_\infty})+\frac{2\pi h^2}{(-\log h)}. 
\end{align*} 
By a simple continuity argument, there exists $R_0 = R_0(r,h)>1$ such that
\begin{align*}
\sup_{s\ge 0}\ \Haus^2({C_{s}^{r,h}\cap E_{R_0}})&\leq \Haus^2({C_{\infty}^{r,h}\cap E_{R_0}})+\frac{4\pi h^2}{(-\log h)}.
\end{align*}
Hence, renaming $r\mapsto {r}/{R_0}$ and $h\mapsto {h}/{R_0}$ (which corresponds to rescaling the whole picture by a factor of $1/R_0$), we obtain
\begin{equation}
\label{eqn:20191021-1051}
\begin{split}
\sup_{s\ge 0}\ \Haus^2({C_{s}^{r,h}\cap E_1})&\leq \Haus^2({C_{\infty}^{r,h}\cap E_1})+\frac{4\pi h^2}{(-\log h + \log R_0)}\\
&\le  \Haus^2({C_{\infty}^{r,h}\cap E_1})+\frac{4\pi h^2}{(-\log h)}.
\end{split}
\end{equation} 
Observe that the conditions imposed on the smallness of $r$, $h$ and $h/r$ are still fulfilled.

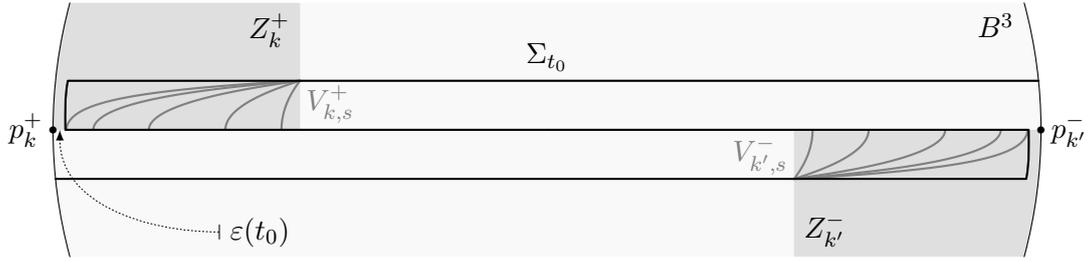
\begin{figure} 
\centering
\begin{tikzpicture}[line cap=round,line join=round,scale=6.5]
\pgfmathsetmacro{\h}{0.1}
\pgfmathsetmacro{\r}{0.5}
\pgfmathsetmacro{\eps}{0.025}
\pgfmathsetmacro{\alph}{15}
\pgfmathsetmacro{\Rh}{(-1+\eps)*sqrt(1-\h*\h)} 
\fill[lightgray!10]
(-\alph:1)arc(-\alph:\alph:1)--(180-\alph:1)arc(180-\alph:180+\alph:1);
\fill[lightgray!50]
(1-\r,0)--(1,0)arc(0:-\alph:1)-|node[midway,above right,black]{$Z_{k'}^-$}cycle
(-1+\r,0)--(-1,0)arc(180:180-\alph:1)-|node[midway,below left,black]{$Z_{k}^+$}cycle;
\path(-1+\r,0)|-(180+\alph:1)node[midway,above left](epsilon){$\varepsilon(t_0)$};
\begin{scope}[
thick,black!50
]
\foreach\s in {2,...,6}{
\draw[shift={(-1,0)},domain=0:\h,smooth,variable=\z]
plot({ \r*cosh(\s*\s*\z)/cosh(\s*\s*\h)},{\z});
\draw[shift={( 1,0)},domain=-\h:0,smooth,variable=\z]
plot({-\r*cosh(\s*\s*\z)/cosh(\s*\s*\h)},{\z});
} 
\draw
(-1+\r,0)node[above right,inner sep=2pt]{$V_{k,s}^+$}
( 1-\r,0)node[below left,inner sep=2pt]{$V_{k',s}^-$};
;
\end{scope}
\draw[thick]
(0,0)--(-1+\eps,0)to[out=90,in=-100](\Rh,\h)--({sqrt(1-\h*\h)},\h)
(0,0)--(1-\eps,0)to[out=-90,in= 80](-\Rh,-\h)--({-sqrt(1-\h*\h)},-\h);
\draw(180-\alph:1)arc(180-\alph:180+\alph:1);
\draw(-\alph:1)arc(-\alph:\alph:1)node[below left]{$B^3$};
\draw[](0,\h)node[above]{$\Sigma_{t_0}$};
\draw(1,0)node{$\scriptscriptstyle\bullet$}node[right]{$p_{k'}^-$}
(-1,0)node{$\scriptscriptstyle\bullet$}node[left]{$p_{k}^+$};
\draw[latex-|,densely dotted,thin](-1+\eps/1.8,0)to[out=-90,in=180](epsilon);
\end{tikzpicture}
\caption{Replacement of $\Sigma_{t_0}\cap Z_{k}^{\pm}$ with $V_{k,s}^{\pm}$. }
\label{fig:replacement}
\end{figure}
 
 We recall that $\Sigma_t$ was defined for all $t\in[t_0,1]$ in \eqref{eqn:20191106-E}, where we are free to choose first $t_0>0$ and then $\varepsilon_0>0$ such that 
\begin{align}\label{eqn:20191018-h0}
t_0&= h, &  \varepsilon_0&=\frac{t_0}{2(g+1)}.
\end{align}
By \cref{lem:20191106-21:35}, this choice for $\varepsilon_0$ ensures that for all $t\in[t_0,1]$
\begin{align}\label{eqn:20191107}
\Haus^2({\Sigma_t})\leq(3-t_0^2)\pi.
\end{align} 
For each $k\in\{0,\ldots,g\}$, let 
\begin{align*}
Z_k^+&\vcentcolon=\{x\in \overline{B^3}\st \operatorname{dist}((x_1,x_2,0),p_k^+)<r,~x_3>0\},  \\
Z_k^-&\vcentcolon=\{x\in \overline{B^3}\st \operatorname{dist}((x_1,x_2,0),p_k^-)<r,~x_3<0\}
\end{align*}
as shown in \cref{fig:replacement} and in \cref{fig:20191018} on the left. Note that, by the very way we have defined our parameters it follows that $r>5t_0 \ge 10(g+1)\varepsilon_0$.
We shall now replace $\Sigma_{t_0}\cap Z_k^+$ with a copy of the upper half of the surface $C_{s}^{r,t_0}\cap E_1$ after a suitable horizontal translation and rotation mapping $0\mapsto p_k^+$ and $E_1\mapsto B^3$. 
Similarly, $\Sigma_{t_0}\cap Z_k^-$ is replaced by a copy of the lower half of $C_{s}^{r,t_0}\cap E_1$. 
We denote those copies by $V_{k,s}^+$ and $V_{k,s}^-$, respectively.  
Initially, we choose $s=s_0$ such that  
\begin{align*}
\frac{r}{\cosh(s_0 t_0)}=\varepsilon(t_0)
\end{align*}
which ensures a continuous gluing of $V_{k,s_0}^{\pm}$ and $\Eq_{\varepsilon}^{+}\cap \Eq_{\varepsilon}^{-}\subset\Sigma_{t_0}$ at height $x_3=0$. 
Moreover, assuming that $\varepsilon(t_0)\in\interval{0,\varepsilon_0}$ is sufficiently small (so that $s_0$ will be very large), the surfaces $\Sigma_{t_0}\cap Z_k^\pm$ and $V_{k,s_0}^\pm$ are arbitrarily close such that we can continuously deform $\Sigma_{t_0}\cap Z_k^+$ into $V_{k,s_0}^+$ without significantly increasing the area. 
Then, as $t$ decreases further from $t_0$ to $t_0/2$, we decrease $s$ from $s_0$ to $0$ 
and define $\Sigma_t$ through similar gluings of $V_{k,s}^{\pm}$ and   $\Sigma_{t_0}\setminus Z_k^{\pm}$ as shown in \cref{fig:replacement} and \cref{fig:sweepout1}, third and fourth image. 
By \eqref{eqn:20191021-1051} and by \eqref{eqn:20191107}, we have 
for all $t\in\Interval{t_0/2,t_0}$ 
\begin{align}\notag
\Haus^2({\Sigma_t}) 
&\leq
\Haus^2({\Sigma_{t_0}})-(g+1)\Haus^2({C_{\infty}^{r,t_0}\cap E_1})+(g+1)\Haus^2({C_{s}^{r,t_0}\cap E_1})
\\
&\leq(3-t_0^2)\pi+(g+1)\frac{4\pi t_0^2}{(-\log t_0)}  
< 3\pi, \label{eqn:<3pi}
\end{align}
the last inequality relying on the fact that $-\log t_0 > 8(g+1)$.
Now, observe further that 
\begin{equation}\label{eq:lastregime}
\begin{split}
\Haus^2({\Sigma_{t_0/2}}) &\le 3\pi - 2(g+1)\left( \frac\pi 2 r^2 -2\pi r t_0 \right) \\
&= 3\pi - (g+1)\pi(r^2 - 4r t_0) < 3\pi,
\end{split}
\end{equation}
where we have used that $r> 5h = 5 t_0$. 
It is now easy to see that it is possible to define $\Sigma_t$ for $t\in [0,t_0/2]$ in such a way $\Haus^2({\Sigma_t})$ is decreasing as $t$ decreases and $\Sigma_0$ is the equatorial disc. Indeed, thanks to \eqref{eq:lastregime} we see at once that by increasing $r$ till the threshold value $\overline{r}=\sin(\pi/(2g+2))$, so by removing larger discs as we vary  $t\in [t_0/4,t_0/2]$ gives an area-decreasing deformation; then, for $t\in [0,t_0/4]$ we can just perform a simple retraction  (see \cref{fig:20191018} and \cref{fig:sweepout1}, fifth and sixth image).

\begin{figure}%
\centering
\pgfmathsetmacro{\eps}{0.075}
\pgfmathsetmacro{\genus}{2} 
\pgfmathsetmacro{\alph}{2*asin(\eps/2)}
\pgfmathsetmacro{\globalscale}{3}
\pgfmathsetmacro{\t}{0.3}
\pgfmathsetmacro{\radius}{sin(90/(\genus+1))/1.2}
\begin{tikzpicture}[scale=\globalscale,line cap=round,line join=round,baseline={(0,0)},rotate=90/(\genus+1)]
\draw[fill=lightgray]({-\alph-180/(\genus+1)}:1) 
\foreach\k in {-0.5,0,...,\genus}{
arc({\k*360/(\genus+1)+180+180/2-\alph/2}:{\k*360/(\genus+1)+180-180/2+\alph/2}:\eps)arc(\k*360/(\genus+1)+\alph:{\k*360/(\genus+1)+180/(\genus+1)-\alph}:1)
};
\draw[fill=lightgray] ({-\alph}:{sqrt(1-\t*\t)}) 
\foreach\k in {0,...,\genus}{
arc({\k*360/(\genus+1)+180+180/2-\alph/2}:{\k*360/(\genus+1)+180-180/2+\alph/2}:{\eps*sqrt(1-\t*\t)})arc(\k*360/(\genus+1)+\alph:{\k*360/(\genus+1)+360/(\genus+1)-\alph}:{sqrt(1-\t*\t)})
};
\foreach\k in {-0.5,0,...,\genus}{
\draw[dashed](0,0)--({(\k+1/4)*360/(\genus+1)}:1);
};
\foreach\k in {0,...,\genus}{
\draw({\k*360/(\genus+1)}:1)node[inner sep=0](p\k){$\scriptstyle\bullet$}
node[anchor=180+(\k+1/4)*360/(\genus+1)]{$p_{\k}^{+}$};
}
\begin{scope}
\clip(0,0)circle(1.15);
\foreach\k in {0,...,\genus}{
\draw[densely dotted]({\k*360/(\genus+1)}:1)circle(\radius);
\draw[latex-latex](p\k.center)--({\k*360/(\genus+1)}:{1-\radius})node[midway,anchor=90+\k*360/(\genus+1)]{$r$};
};
\end{scope}
\pgfresetboundingbox
\useasboundingbox[rotate=-90/(\genus+1)](-1.2,-1.2)rectangle(1.2,1);
\end{tikzpicture}
\hfill
\begin{tikzpicture}[scale=\globalscale,line cap=round,line join=round,baseline={(0,0)},rotate=90/(\genus+1)]
\draw[fill=lightgray!20]({-\alph-180/(\genus+1)}:1) 
\foreach\k in {-0.5,0,...,\genus}{
arc({\k*360/(\genus+1)+180+180/2-\alph/2}:{\k*360/(\genus+1)+180-180/2+\alph/2}:\eps)arc(\k*360/(\genus+1)+\alph:{\k*360/(\genus+1)+180/(\genus+1)-\alph}:1)
}; 
\draw[thick,fill=lightgray]({(0.5+1/4)*360/(\genus+1)}:1)
\foreach\k in {-0.5,0,...,\genus}{
--({(-\k+1/4)*360/(\genus+1)}:1)--({(-\k+1/4)*360/(\genus+1)}:0.9)..controls(0,0)and(0,0)..
({(-\k-1/4)*360/(\genus+1)}:0.9)--({(-\k-1/4)*360/(\genus+1)}:1)
}--cycle;
\foreach\k in {-0.5,0,...,\genus}{
\draw[dashed](0,0)--({(\k+1/4)*360/(\genus+1)}:1);
};
\begin{scope}
\clip(0,0)circle(1);
\foreach\k in {-0.5,0,...,\genus}{
\draw[densely dotted]({\k*360/(\genus+1)}:1)circle(\radius);
\draw[dotted]({\k*360/(\genus+1)}:1)circle({sin(90/(\genus+1))});
}
\end{scope}
\pgfresetboundingbox
\useasboundingbox[rotate=-90/(\genus+1)](-1.2,-1.2)rectangle(1.2,1);
\end{tikzpicture}
\caption{Implementing the catenoid estimate and making a further retraction.} 
\label{fig:20191018} 
\end{figure}
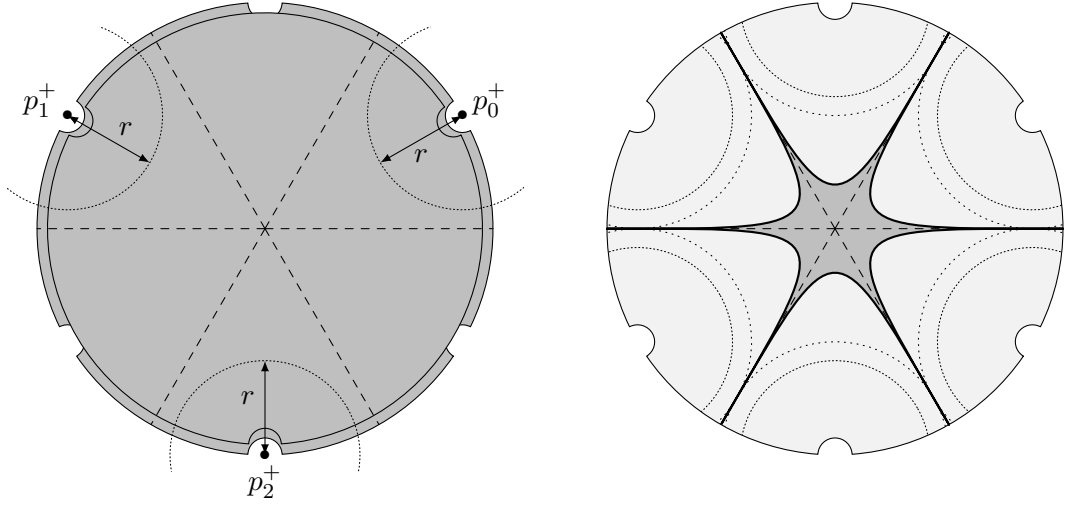

\begin{proof}[Proof of \cref{lem:ConstructionSweepout}]
Let $\{\Sigma_t\}_{t\in[0,1]}$ be as constructed above. 
We define the desired smooth surfaces by regularising $\Sigma_t$ for all $0<t<1$ equivariantly (without renaming), a process which can be performed without violating the strict $3\pi$ upper bound on the area.
We note that at $t=0$, the origin is a singular point, where the genus of $\Sigma_t$ collapses as $t\searrow0$. 
However, for $0<t<1$, we obtain a smooth family of genus $g$ surfaces, as claimed. 
\end{proof}

%%%%%%%%%%%%%%%%%%%%%%%%%%%%%%%%%

\section{Maximality of the unstable catenoid}\label{sec:maximal}

In \cref{sec:sweepout}, we introduced the surfaces $C_s^{r,h}$, parametrized by $s\geq0$, interpolating between the cylinder $C_0^{r,h}$ of radius $r$ and height $2h$ (for $s=0$) and the union of two discs of radius $r$ with a line segment (as one lets $s\to\infty$).
As we remarked, $C_s^{r,h}$ is minimal if $s$ is a solution to equation \eqref{eqn:catenoid}. Said $t_0>0$ the only positive solution of $\cosh(t)=t\sinh(t)$, equation \eqref{eqn:catenoid} has two positive solutions $s_1$ and $s_2$ provided that $0<h<r/\sinh(t_0)$ (note that the number $1/\sinh(t_0)$ is bounded from below by 0.6627).  
In our notation, $s_2$ is the larger solution and corresponds to the so-called \emph{unstable catenoid}. 
In this section we show that, as soon as we are willing to impose a slightly more restrictive condition on the ratio $h/r$, such a surface has largest area among all elements of the family in question. As a result, estimate \eqref{eqn:20191115-18:52}, which plays an essential role in \cref{sec:sweepout}, follows at once from \cite[Proposition 2.1]{KetoverMarquesNeves2020}. 

\begin{lemma}\label{lem:maximal}
The unstable catenoid has largest area among all surfaces in the family ${\{C_s^{r,h}\}}_{s\geq0}$ provided that $0<2h<r\tanh(1)$. 
\end{lemma}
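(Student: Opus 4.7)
The plan is to regard $A(s)\eqdef\Haus^2({C_s^{r,h}})$ as a smooth function of $s\in[0,\infty)$, to show that its only interior critical points are the two catenoidal values $s_1<s_2$ solving \eqref{eqn:catenoid}, and that the maximum of $A$ is attained at $s_2$. Writing $\rho_s(z)\eqdef r\cosh(sz)/\cosh(sh)$ so that $A(s)=2\pi\int_{-h}^{h}\rho_s\sqrt{1+(\partial_z\rho_s)^2}\de z$, I would first differentiate under the integral sign and integrate by parts in $z$, exploiting the fact that $\phi(s,z)\eqdef\partial_s\rho_s(z)$ vanishes at $z=\pm h$ (since $\rho_s(\pm h)=r$ is independent of $s$). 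A short simplification based on the identity $1+(\rho_s')^2-\rho_s\rho_s''=1-a^2 s^2$, with $a\eqdef r/\cosh(sh)$ and which is remarkably $z$-independent thanks to $\cosh^2-\sinh^2=1$, leads to
\[
A'(s)=2\pi(1-a^2 s^2)\int_{-h}^{h}\frac{\phi(s,z)}{(1+\rho_s'(z)^2)^{3/2}}\de z.
\]

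Next, I would analyse the sign of $\phi$ through the factorisation
\[
\phi(s,z)=a\cosh(sz)\bigl[z\tanh(sz)-h\tanh(sh)\bigr].
\]
Since $z\mapsto z\tanh(sz)$ is even on $[-h,h]$ and strictly increasing on $[0,h]$ (its derivative $\tanh(sz)+sz\operatorname{sech}^2(sz)$ is positive for $z>0$), one concludes $\phi(s,\cdot)\le 0$ with equality only at $z=\pm h$, so the above integral is strictly negative for every $s>0$. It follows that $\operatorname{sign}A'(s)=\operatorname{sign}(a^2 s^2-1)=\operatorname{sign}(rs-\cosh(sh))$. Setting $g(s)\eqdef rs-\cosh(sh)$, the function $g$ is strictly concave with $g(0)=-1$ and $g(s)\to-\infty$, while the hypothesis $2h<r\tanh(1)$ forces $r>2h\cosh(1)/\sinh(1)>h\cosh(1)$ (because $\sinh(1)<2$), hence $g(1/h)=r/h-\cosh(1)>0$. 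Thus $g$ has exactly two positive roots $s_1<1/h<s_2$, which are the catenoidal values, and $A$ is strictly decreasing on $[0,s_1]$, strictly increasing on $[s_1,s_2]$, and strictly decreasing on $[s_2,\infty)$.

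The global maximum of $A$ is therefore attained at one of $0$, $s_2$, or $\infty$. A direct calculation using $a=1/s_2$ and $\cosh\tau_2=rs_2$, with $\tau_2\eqdef s_2 h$, gives
\[
A(s_2)=2\pi ah+\pi a^2\sinh(2h/a)=2\pi r^2\Bigl[\tfrac{\tau_2}{\cosh^2\tau_2}+\tanh\tau_2\Bigr],
\]
whereas $A(0)=4\pi rh=4\pi r^2\,\tau_2/\cosh\tau_2$, and a short asymptotic analysis of the profile $\rho_s$ as $s\to\infty$ yields $\lim_{s\to\infty}A(s)=2\pi r^2$. The inequality $A(s_2)>2\pi r^2$ reduces to $\tau_2+\sinh\tau_2\,e^{-\tau_2}>1$, which holds for every $\tau_2>\tau_0$ (the unique positive solution of $\tau\tanh\tau=1$, which satisfies $\tau_0>1$), and such a lower bound on $\tau_2$ is automatic whenever the two roots $s_1<s_2$ exist. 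The sharper inequality $A(s_2)>A(0)$ is equivalent to $F(\tau_2)>2$ for $F(\tau)\eqdef\operatorname{sech}\tau+\sinh(\tau)/\tau$; since the hypothesis $2h<r\tanh(1)$ is the same as $\tau_2/\cosh\tau_2<\tanh(1)/2$ on the decreasing branch $\tau>\tau_0$ of $\tau\mapsto\tau/\cosh\tau$, it forces $\tau_2$ beyond the unique positive root of $F-2$. This last quantitative step—converting the explicit hypothesis on $h/r$ into a clean inequality for hyperbolic functions of $\tau_2$—is the main technical obstacle of the proof, and can be handled by monotonicity of the relevant elementary expressions.
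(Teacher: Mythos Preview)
Your monotonicity analysis is correct and equivalent to the paper's: the paper reads off the sign of $A'(s)$ from the mean curvature formula for surfaces of revolution, while you differentiate $A(s)$ directly and integrate by parts; both routes yield $\operatorname{sign} A'(s)=\operatorname{sign}\bigl(rs-\cosh(sh)\bigr)$, hence $A$ decreases on $[0,s_1]$, increases on $[s_1,s_2]$, and decreases on $[s_2,\infty)$. Note that this last fact already makes the comparison with $A(\infty)$ superfluous: the only competitor for the global maximum is $A(0)$.

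Where your argument genuinely stalls is the final inequality $A(s_2)>A(0)$. Reformulating it as $F(\tau_2)>2$ is correct but leaves you with the nontrivial task of showing that $\tau_2/\cosh\tau_2<\tfrac{1}{2}\tanh(1)$ forces $\tau_2$ past the root of $F-2$ (which is approximately $1.87$), and you do not actually carry this out. The paper's route is much shorter and uses ingredients you already have. You proved $s_1<1/h<s_2$, i.e.\ $\tau_2=s_2h>1$; in your own formula
\[
A(s_2)=2\pi r^2\Bigl[\tfrac{\tau_2}{\cosh^2\tau_2}+\tanh\tau_2\Bigr]
\]
simply drop the first (positive) term and use $\tanh\tau_2>\tanh(1)$ to obtain $A(s_2)>2\pi r^2\tanh(1)$. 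The hypothesis $2h<r\tanh(1)$ then gives $A(0)=4\pi rh<2\pi r^2\tanh(1)<A(s_2)$, and you are done. This is precisely the paper's endgame, and it bypasses the ``main technical obstacle'' you flagged.
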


\begin{proof}
As defined in \eqref{eqn:20191115-18:51}, the surface $C_s^{r,h}$ is obtained by rotating the graph of $\rho\colon[-h,h]\to\R$ given by 
\begin{align}\label{eqn:20191115-20:40}
\rho(z)=\frac{r\cosh(s z)}{\cosh(s h)}
\end{align}
around the vertical axis. Being a surface of revolution, the mean curvature of $C_s^{r,h}$ is easily computed to be 
\begin{align}\label{eqn:20191116-16:38}
H&=\frac{\rho\rho''-(\rho')^2-1}{\bigl(1+(\rho')^2\bigr)^{3/2}\rho}.
\end{align}

We notice that the denominator of \eqref{eqn:20191116-16:38} is strictly positive and that the numerator 
\begin{align*}
\rho\rho''-(\rho')^2-1&=\frac{r^2 s^2}{\cosh^2(s h)}-1
\end{align*}
is independent of $z$. 
In particular, it follows that $H$ has the same sign as the function $r s-\cosh(s h)$. 
Recalling equation \eqref{eqn:catenoid}, the inequality $r s>\cosh(s h)$ is equivalent to $s_1<s<s_2$ by strict convexity of $s\mapsto\cosh(s h)$. 
In this case $H>0$, which implies for $s\in\interval{s_1,s_2}$ the area of $C_s^{r,h}$ is an increasing function of $s$. 
Conversely, if $s>s_2$ or if $s<s_1$, then $H<0$ and the area of $C_s^{r,h}$ is decreasing in $s$. 
This shows that the area of $C_s^{r,h}$ has a local minimum at $s_1$ and a local maximum at $s_2$. 
In order to prove the claim that $s_2$ is in fact a global maximum, it remains to check $\Haus^2({C_{s_2}^{r,h}})>\Haus^2({C_{0}^{r,h}})$, i.e, that the unstable catenoid has larger area than the cylinder provided that ${h}/{r}$ is sufficiently small. 

The area $A(s)=\Haus^2({C_{s}^{r,h}})$ can be computed using the formula 
\begin{align*}
A(s)&=4\pi\int^{h}_{0}\rho\sqrt{(\rho')^2+1}\,dz. 
\end{align*}
With $\rho$ as defined in \eqref{eqn:20191115-20:40}, which in particular satisfies $\rho''=s^2\rho$, the function $f=4\pi\rho\sqrt{(\rho')^2+1}$ has a primitive given by 
\begin{align*}
F&=\frac{2\pi}{s^2}\left(\asinh\bigl(\rho'\bigr)+\rho'\sqrt{(\rho')^2+1}\right).
\end{align*}

Since $\rho'(0)=0$ and $\rho'(h)=r s\tanh(s h)$, we obtain 
\begin{align*}\label{eqn:20191117-17:01}
A(s)&=F(h)-F(0)\\
&=\frac{2\pi}{s^2}\left(\asinh\bigl(r s\tanh(s h)\bigr) +r s\tanh(s h)\sqrt{1 + r^2 s^2 \tanh^2(s h)}\right).
\end{align*}
Comparing the derivatives of $s\mapsto r s$ and $s\mapsto\cosh(sh)$ at the intersection $s=s_2$, we obtain  
\begin{align*}
r\leq h\sinh(s_2h)<h\cosh(s_2h)=r s_2 h
\end{align*}
which implies $s_2h>1$. 
In turn, this yields  
\begin{align*}
A(s_2)&=\frac{2\pi}{s_2^2}\Bigl(s_2 h+r s_2\tanh(s_2 h)\cosh(s_2 h)\Bigr)\\
&=\frac{2\pi h}{s_2}+2\pi r^2\tanh(s_2 h)
>2\pi r^2\tanh(1).
\end{align*}
The assumption $2h<r\tanh(1)$ then implies $A(s_2)>4\pi rh=\Haus^2({C_0^{r,h}})$ which completes the proof. 
\end{proof}

%%%%%%%%%%%%%%%%%%%%%%%%%%%

\section{Saturation of the sweepout and its width}\label{sec:swap}

In order to apply a min-max procedure, let us consider the $\dih_{g+1}$-saturation $\Pi$ of the sweepout $\{\Sigma_t\}_{t\in[0,1]}$ defined in \cref{lem:ConstructionSweepout}, and let $W_\Pi$ be its min-max width (see \cref{def:SaturationWidth}).
In this section, we prove that the min-max width $W_\Pi$ is strictly larger than $\pi=\area(\Sigma_0)=\area(\Sigma_1)$, namely that the mountain pass condition in \cref{thm:EquivMinMax} is satisfied. 
For this purpose, it is helpful to introduce some terminology about finite perimeter sets (the reader is referred to e.\,g. \cite[Chapter 12]{Maggi2012}).

Hereafter, any subset of the form $\{x\in {B^3}\st x\cdot v\geq0\}$ for some $v \in\R^3$ is called a \emph{half-ball}. 

\begin{definition}
We say that a finite perimeter set $E\subset {B^3}$ is $\dih_n$-equivariant if, for all $\varphi\in \dih_n$, the set $\varphi(E)$ coincides either with $E$ or ${B^3}\setminus E$ up to a negligible set. 
\end{definition}

\begin{lemma}\label{lem:SweepoutsCorrespondence}
Let $\{\Sigma_t\}_{t\in[0,1]}$ be the $\dih_{g+1}$-sweepout in \cref{lem:ConstructionSweepout} and let $\Pi$ be its $\dih_{g+1}$-saturation. Then for every $\{\Lambda_t\}_{t\in[0,1]}\in\Pi$ there exists a family $\{F_t\}_{t\in[0,1]}$ of $\dih_{g+1}$-equivariant finite perimeter sets such that the following properties hold.
\begin{enumerate} [label={\normalfont(\arabic*)}]
    \item \label{fpss:extremaltimes} $F_0$ is the upper half-ball and $F_1$ is the lower half-ball. 
    \item \label{fpss:continuity} The family $\{ F_t \}_{t\in[0,1]}$ is continuous in the sense of finite perimeter sets, i.e, $\Haus^3({F_t\triangle F_{t_0}}) \to 0$ whenever $t\to t_0$, where $F_t\triangle F_{t_0} \vcentcolon= (F_t\setminus F_{t_0}) \cup (F_{t_0}\setminus F_t)$. 
    \item \label{fpss:equivariance} The finite perimeter sets $F_t$ are $\dih_{g+1}$-equivariant for all $0\le t\le 1$. 
    \item \label{fpss:relbdry} For every $0\le t\le 1$, $\Lambda_t$ is the relative boundary of $F_t$ in ${B^3}$; namely $\Lambda_t \setminus \partial B^3 = \partial F_t\setminus\partial B^3$. 
    \item \label{fpss:volume} The volume of $F_t$ is half the volume of ${B^3}$ for all $0\le t\le 1$, i.e, $\Haus^3({F_t}) = \Haus^3({B^3})/2$ for all $0\le t\le 1$. 
\end{enumerate}
\end{lemma}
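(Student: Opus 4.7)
The plan is to construct $\{F_t\}$ in two steps: first for the specific sweepout $\{\Sigma_t\}$ from \cref{lem:ConstructionSweepout}, and then transport the construction to an arbitrary $\{\Lambda_t\} \in \Pi$ via the defining equivariant isotopy.

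For the first step, I observe that, for each $t\in(0,1)$, the surface $\Sigma_t$ is a connected, orientable, properly embedded genus $g$ surface with connected boundary in $B^3$, and therefore separates $B^3$ into exactly two open connected components. Since the $\pi$-rotation about the horizontal axis $\xi_0$ belongs to $\dih_{g+1}$ and maps $\Sigma_t$ to itself while swapping the north and south poles (which lie in different components, because the vertical axis $\{(0,0,z)\}$ intersects $\Sigma_t$ in an odd number of points --- three, in the three-disc regime), this isometry exchanges the two components and in particular both have volume $\Haus^3(B^3)/2$. I define $F_t^{(0)}$ using the explicit three-disc structure of the construction: for $t$ in the three-disc regime, $F_t^{(0)}$ is the component containing the open region above the top disc $\Eq_{t,\varepsilon}^+$, which equivalently consists of that region together with the open interiors of the up-ribbons and the open region between the middle disc $\Eq_\varepsilon^+\cap\Eq_\varepsilon^-$ and the bottom disc $\Eq_{t,\varepsilon}^-$ lying outside the down-ribbons; for $t$ in the catenoid and retraction regimes, $F_t^{(0)}$ is the continuous extension of this selection. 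A direct volume computation shows that $F_t^{(0)}$ converges, in finite perimeter topology, to the upper half-ball as $t \to 0^+$, and to the lower half-ball as $t \to 1^-$, since in the latter limit the top disc shrinks to the north pole while the between-middle-and-bottom region expands to fill the entire lower half-ball. Equivariance in the sense of the preceding definition holds because each nontrivial element of $\dih_{g+1}$ either preserves $F_t^{(0)}$ (the rotations about the vertical axis) or exchanges it with its complement (the horizontal $\pi$-rotations), so the condition that $\varphi(F_t^{(0)})$ agrees with $F_t^{(0)}$ or $B^3\setminus F_t^{(0)}$ up to null sets is satisfied.

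For the second step, any $\{\Lambda_t\} \in \Pi$ may be written as $\Lambda_t = \Phi_t(\Sigma_t)$ for some $\Phi \in \Is_{\dih_{g+1}}(I, M)$ with $\Phi_t = \id$ on $\partial I$. Setting $F_t \vcentcolon= \Phi_t(F_t^{(0)})$, properties \ref{fpss:extremaltimes}--\ref{fpss:relbdry} are immediate: the boundary values use $\Phi_0 = \Phi_1 = \id$; continuity in finite perimeter topology follows from the joint smoothness of $(t,x) \mapsto \Phi_t(x)$ combined with the continuity of $\{F_t^{(0)}\}$; equivariance transfers because $\Phi_t$ commutes with the $\dih_{g+1}$ action; and the relative boundary identity reduces to $\Phi_t(\Sigma_t \setminus \partial B^3) = \Lambda_t \setminus \partial B^3$, valid since $\Phi_t$ is a diffeomorphism of $\overline{B^3}$ preserving $\partial B^3$. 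Finally, the volume condition \ref{fpss:volume} comes for free: $F_t$ is one of the two connected components of $B^3 \setminus \Lambda_t$ (being the diffeomorphic image of a component of $B^3 \setminus \Sigma_t$), and the $\dih_{g+1}$-equivariance of $\Lambda_t$ forces both components to have volume $\Haus^3(B^3)/2$.

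The main technical difficulty will be verifying continuity of $t \mapsto F_t^{(0)}$ at the singular times of the sweepout construction --- notably at $t=0$ and $t=1$ where the topological type of $\Sigma_t$ degenerates, and at the interfaces between the three-disc, catenoid, and retraction regimes --- and unambiguously identifying which component deserves the label $F_t^{(0)}$ in each regime so that the endpoint values are the upper and lower half-balls as prescribed. The explicit three-disc bookkeeping above resolves this by giving, at each intermediate $t$, a concrete geometric description of both components that varies continuously across the regime transitions and produces the required orientation swap between $t=0$ and $t=1$.
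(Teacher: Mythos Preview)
Your approach is essentially the same as the paper's: build the family $\{F_t^{(0)}\}$ for the explicit sweepout $\{\Sigma_t\}$, then push forward by the equivariant isotopy $\Phi_t$ to handle an arbitrary element of $\Pi$, and deduce the half-volume property from the fact that a horizontal $\pi$-rotation exchanges $F_t$ with its complement. The paper dispatches the base-sweepout step in one sentence (``by the construction \ldots we easily obtain''), whereas you spell out which component to take and why the endpoint limits are the upper and lower half-balls; this extra bookkeeping is reasonable and correct in spirit.

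One notational slip to correct: you write ``the $\pi$-rotation about the horizontal axis $\xi_0$'', but in the paper's conventions $\xi_0=\{(0,0,r):r\in[-1,1]\}$ is the \emph{vertical} axis, and the rotation about it lies in $\dih_{g+1}$ only when its angle is a multiple of $2\pi/(g+1)$ --- it never swaps the poles. The element you want is the $\pi$-rotation about any of the horizontal axes $\xi_1,\ldots,\xi_{g+1}$; the paper uses $\xi_1$. With that correction, your commutation argument $\psi(F_t)=\psi(\Phi_t(F_t^{(0)}))=\Phi_t(\psi(F_t^{(0)}))=\Phi_t(B^3\setminus F_t^{(0)})=B^3\setminus F_t$ is exactly the paper's, and the half-volume conclusion follows.
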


\begin{proof}
By the construction of the sweepout $\{\Sigma_t\}_{t\in[0,1]}$ in \cref{lem:ConstructionSweepout}, we easily obtain that there exists a family $\{ F_t^\Sigma\}_{t\in[0,1]}$ of $\dih_{g+1}$-equivariant finite perimeter sets with properties \ref{fpss:extremaltimes}--\ref{fpss:volume}.
In particular, we can choose $\{F_t^\Sigma\}_{t\in[0,1]}$ such that $F_0^\Sigma$ is the upper half-ball and $F_1^\Sigma$ is the lower half-ball.

Now let us consider any other sweepout $\{ \Lambda_t \}_{t\in[0,1]} \in \Pi$. By definition of saturation there exists a $\dih_{g+1}$-isotopy $\Phi\colon[0,1]\times \overline{B^3}\to\overline{B^3}$ such that $\Lambda_t = \Phi_t(\Sigma_t)$ for all $0\le t\le 1$.
We want to prove that $\{F_t\}_{t\in[0,1]}$ defined by $F_t \vcentcolon= \Phi_t(F_t^\Sigma)$ is a family of $\dih_{g+1}$-equivariant finite perimeter sets as required in the statement.
Observe that properties \ref{fpss:extremaltimes}, \ref{fpss:continuity}, \ref{fpss:equivariance} and \ref{fpss:relbdry} are straightforward, hence we just need to check \ref{fpss:volume}.

Let us consider $\psi\in\dih_{g+1}$ given by the rotation (of angle $\pi$) around the isotropy axis $\xi_1$ and observe that $\psi(F_t^\Sigma) ={B^3}\setminus F_t^\Sigma$ for every $t\in[0,1]$. 
Hence we have that
\begin{align*}
\psi(F_t) = \psi(\Phi_t(F_t^\Sigma)) = \Phi_t(\psi(F_t^\Sigma)) = \Phi_t ({B^3}\setminus F_t^\Sigma) = {B^3}\setminus \Phi_t(F_t^\Sigma) = {B^3}\setminus F_t,
\end{align*}
which proves $\Haus^3({F_t}) = \Haus^3({B^3})/2$ and concludes the proof.
\end{proof}

We now denote by $\Theta$ the set of all families of $\dih_{g+1}$-equivariant finite perimeter sets for which properties \ref{fpss:extremaltimes}--\ref{fpss:volume} in \cref{lem:SweepoutsCorrespondence} hold for some $\dih_{g+1}$-sweepout $\{\Lambda_t\}_{t\in[0,1]}\in\Pi$.
Since there is a one-to-one correspondence between $\Pi$ and $\Theta$, we derive the following conclusion.

\begin{corollary} \label{cor:CompareWidths}
Under the hypotheses of the previous lemma, we have that
\begin{align*}
    W_\Pi =\adjustlimits\inf_{\{F_t\}\in \Theta~}\sup_{t\in[0,1]} P(F_t;{B^3}),
\end{align*}
where $P(F_t;B^3)$ denotes the relative perimeter of the finite perimeter set $F_t$ in $B^3$.
\end{corollary}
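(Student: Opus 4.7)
The plan is to leverage the correspondence constructed in \cref{lem:SweepoutsCorrespondence} to translate the min-max problem over sweepouts into a min-max problem over families of finite perimeter sets, observing that the area of $\Lambda_t$ matches the relative perimeter $P(F_t;B^3)$ under this correspondence.

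First I would verify the pointwise identity $\Haus^2(\Lambda_t) = P(F_t;B^3)$. When $\Lambda_t$ is a smooth, properly embedded $\dih_{g+1}$-equivariant surface with $\partial\Lambda_t\subset\partial B^3$, property \ref{fpss:relbdry} of the previous lemma asserts that $\Lambda_t\setminus\partial B^3 = \partial F_t\setminus\partial B^3$, so the reduced boundary of $F_t$ (measured inside $B^3$) coincides $\Haus^2$-almost everywhere with $\Lambda_t$, giving the identity by the definition of relative perimeter. At the finitely many exceptional times where $\Lambda_t$ may fail to be smooth (cf. \cref{def:GSweepout}), the identity persists: on the one hand $\Haus^2(\Lambda_t)$ equals the mass $\|\Lambda_t\|(M)$ since $\Lambda_t$ is smooth away from finitely many points; on the other hand, $P(\cdot; B^3)$ is lower semicontinuous under $L^1$-convergence of sets and $t\mapsto F_t$ is continuous in the finite perimeter sense by \ref{fpss:continuity}, so one concludes by approximating with nearby regular slices.

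Next I would prove the two inequalities. For the direction $W_\Pi \geq \inf_{\{F_t\}\in\Theta}\sup_t P(F_t;B^3)$: given any $\{\Lambda_t\}\in\Pi$, the previous lemma produces $\{F_t\}\in\Theta$ with $\Haus^2(\Lambda_t)=P(F_t;B^3)$ for every $t$, whence $\sup_t \Haus^2(\Lambda_t) = \sup_t P(F_t;B^3) \geq \inf_\Theta \sup_t P$; taking the infimum over $\{\Lambda_t\}\in\Pi$ yields the claim. For the reverse direction $W_\Pi \leq \inf_{\{F_t\}\in\Theta}\sup_t P(F_t;B^3)$: by the very definition of $\Theta$, every $\{F_t\}\in\Theta$ is associated (via the previous lemma) with some $\{\Lambda_t\}\in\Pi$ satisfying the same pointwise area/perimeter identity, and therefore $\sup_t P(F_t;B^3) = \sup_t \Haus^2(\Lambda_t) \geq W_\Pi$; taking the infimum over $\Theta$ gives the desired inequality.

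The only delicate point is the identification of area with relative perimeter at the exceptional, possibly singular, times $t\in T\cup\partial I^n$; however this is a mild technicality resolved by approximation, since in the end we only care about suprema over $t\in[0,1]$ and the finite exceptional set does not affect the continuous maps $t\mapsto \Haus^2(\Lambda_t)$ and $t\mapsto P(F_t;B^3)$ in any essential way.
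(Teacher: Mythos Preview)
Your proof is correct and follows the same approach that the paper leaves implicit: the paper simply states that the corollary follows from the one-to-one correspondence between $\Pi$ and $\Theta$ established in \cref{lem:SweepoutsCorrespondence}, without spelling out the details. You have filled in precisely those details, namely the identification $\Haus^2(\Lambda_t)=P(F_t;B^3)$ via property \ref{fpss:relbdry} and the two inequalities coming from the bijection; the discussion of exceptional times is a reasonable technical precaution, though in practice it is harmless since the relevant quantities vary continuously.
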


In order to prove that $W_\Pi$ is indeed strictly larger than $\pi$, we first need the following stability lemma for the isoperimetric inequality.

\begin{lemma}[Stability of the isoperimetric inequality] \label{lem:StabIsop}
Fix $2\le n\in\N$. Then, for every $\varepsilon>0$ there exists $\delta>0$ such that, given a $\dih_n$-equivariant finite perimeter set $F\subset {B^3}$ with Lebesgue measure $\Haus^3(F) = \Haus^3({B^3})/2$ and relative perimeter $P(F;{B^3})\le \pi+\delta$, there exists a $\dih_n$-equivariant half-ball $\tilde F$ with $\Haus^3({F \triangle \tilde F})\le\varepsilon$. 
\end{lemma}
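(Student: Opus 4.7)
The plan is to argue by contradiction using the standard compactness framework for sets of finite perimeter, combined with the rigidity of the relative isoperimetric inequality in $B^{3}$.

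Suppose the conclusion fails. Then there exist $\varepsilon>0$ and a sequence $\{F_k\}_{k\in\N}$ of $\dih_n$-equivariant finite perimeter sets in $B^3$ with $\Haus^3(F_k)=\Haus^3(B^3)/2$ and $P(F_k;B^3)\le \pi+1/k$, such that $\Haus^3(F_k\triangle \tilde F)>\varepsilon$ for every $\dih_n$-equivariant half-ball $\tilde F$.
Since $P(F_k;B^3)$ is uniformly bounded, by the standard $BV$-compactness theorem (see \cite{Maggi2012}*{Theorem 12.26}) there exists a finite perimeter set $F_\infty\subset B^3$ such that, up to subsequence, $F_k\to F_\infty$ in $L^1$, meaning $\Haus^3(F_k\triangle F_\infty)\to 0$. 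Continuity of the Lebesgue measure under $L^1$-convergence gives $\Haus^3(F_\infty)=\Haus^3(B^3)/2$, while lower semicontinuity of the relative perimeter yields $P(F_\infty;B^3)\le \pi$.

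First I would observe that $F_\infty$ is $\dih_n$-equivariant. Indeed, the group $\dih_n$ is finite, so for each $\psi\in\dih_n$ we can pass to a further (diagonal) subsequence along which the alternative $\psi(F_k)=F_k$ vs. $\psi(F_k)=B^3\setminus F_k$ (up to negligible set) is constant in $k$; $L^1$-convergence then transfers this alternative to $F_\infty$. Next, by the classical relative isoperimetric inequality in $B^3$, the value $\pi$ is the minimum of the relative perimeter among finite perimeter sets of volume $\Haus^3(B^3)/2$, and the minimizers are exactly the half-balls (i.e. sets of the form $\{x\in B^3\st x\cdot v\ge 0\}$ for some $v\in S^2$, up to negligible modifications). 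Hence $F_\infty$ is a half-ball, and by the previous step it is a $\dih_n$-equivariant one, so a legitimate competitor $\tilde F=F_\infty$ in the contradiction hypothesis. But then $\Haus^3(F_k\triangle F_\infty)\to 0$ contradicts $\Haus^3(F_k\triangle F_\infty)>\varepsilon$ for all $k$, completing the proof.

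The only nontrivial ingredient beyond standard $BV$-compactness is the rigidity statement for equality in the relative isoperimetric inequality in $B^3$ at the half-volume, which is a classical fact; no step appears genuinely delicate, and the $\dih_n$-equivariance passes to the limit essentially for free thanks to the finiteness of the group. The main technical care is just to ensure, when extracting subsequences, that for each of the finitely many $\psi\in\dih_n$ the equivariance alternative stabilises, which is automatic as explained above.
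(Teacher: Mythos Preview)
Your proof is correct and follows essentially the same contradiction-plus-compactness strategy as the paper. The only minor difference is in how rigidity is invoked: the paper identifies $F_\infty$ as a half-ball by first applying regularity for relative isoperimetric minimizers (\cite{Maggi2012}*{Theorem 27.4}) to get that $\partial^*F_\infty\cap B^3$ is smooth with constant mean curvature, and then citing Bokowski--Sperner \cite{BokowskiSperner1979} (cf.\ also \cite{Ros2005}) to conclude it is an equatorial disc; you instead invoke the equality case of the relative isoperimetric inequality directly, which amounts to the same thing.
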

\begin{proof}
Towards a contradiction, assume that there exist $\varepsilon>0$ and a sequence $\{F_k\}_{k\in\N}$ of $\dih_n$-equivariant finite perimeter sets satisfying $\Haus^3({F_k}) = \Haus^3({B^3})/2$ and $P(F_k;{B^3})\le\pi+\delta_k$ for $\delta_k\to 0$ as well as $\Haus^3({F_k\triangle \tilde F})\ge \varepsilon$ for every $\dih_n$-equivariant half-ball $\tilde F$.

By the compactness theorem for finite perimeter sets (see \cite[Theorem 12.26]{Maggi2012}), there exists a $\dih_n$-equivariant finite perimeter set $F_\infty\subset {B^3}$ such that a subsequence of $\{F_k\}_{k\in\N}$, which we do not rename, satisfies $\Haus^3({F_\infty\triangle F_k})\to 0$ as $k\to\infty$. 
In particular we have that $\Haus^3({F_\infty}) = \Haus^3({B^3})/2$. 
Moreover, by lower semicontinuity of the perimeter (cf. \cite[Proposition 12.15]{Maggi2012}), it holds
\[P(F_\infty;{B^3}) \le \liminf_{k\to\infty} P(F_k;{B^3}) = \pi.\]
Hence, $F_\infty$ is a finite perimeter set in ${B^3}$ with $\Haus^3({F_\infty}) = \Haus^3({B^3})/2$ and $P(F_\infty;{B^3})\le \pi$, which implies that $F_\infty$ is a half-ball. Indeed, the reduced boundary $\partial^* F_\infty \cap B^3$ of $F_\infty$ in $B^3$ is smooth analytic with constant mean curvature by Theorem 27.4 in \cite{Maggi2012} (see pp. 386--389 therein for historical notes) and thus it is an equatorial disc by \cite[Satz 1]{BokowskiSperner1979} (see also \cite[Theorem 5]{Ros2005}). 
However, this contradicts the choice of the sequence $\{F_k\}_{k\in\N}$ and concludes the proof.
\end{proof}

\begin{proposition} \label{prop:NonTrivialWidthFinitePerSets}
Fix $2\le n\in \N$. Then, there exists $\delta_0>0$ with the following property. Let $\{F_t\}_{t\in[0,1]}$ be a family of $\dih_n$-equivariant finite perimeter sets in the unit ball ${B^3}$ such that
\begin{enumerate}[label={\normalfont(\roman*)}]
    \item $\{F_t\}_{t\in[0,1]}$ is continuous in the sense of finite perimeter sets, i.e., $\Haus^3({F_t\triangle F_{t_0}})\to 0$ whenever $t\to t_0$;
    \item $\Haus^3({F_{t}}) = \Haus^3({B^3})/2$ for all $0\le t\le 1$;
    \item $F_0 ={B^3}\setminus F_1$ up to a negligible set.
\end{enumerate}
Then, $\sup_{t\in[0,1]} P(F_t;{B^3})\ge \pi + \delta_0$.
\end{proposition}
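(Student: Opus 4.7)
The plan is to argue by contradiction, combining the quantitative stability of the isoperimetric inequality (Lemma \ref{lem:StabIsop}) with the fact that the set of $\dih_n$-equivariant half-balls in $B^3$ is \emph{finite}, so that the continuous curve $t\mapsto F_t$ cannot `hop' between two of them. First I would suppose that no such $\delta_0>0$ exists, producing a sequence $\delta_k\searrow 0$ and families $\{F_t^{(k)}\}_{t\in[0,1]}$ satisfying (i)--(iii) with $\sup_{t}P(F_t^{(k)};B^3)\le\pi+\delta_k$.

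Next I would prove the discreteness statement. A half-ball $H_v\vcentcolon=\{x\in B^3\st x\cdot v\ge 0\}$ is $\dih_n$-equivariant precisely when each generator $\varphi$ of $\dih_n$ satisfies $\varphi(v)=\pm v$, i.e.~the line $\operatorname{span}(v)$ is a $\dih_n$-invariant one-dimensional subspace of $\R^3$. For $n\ge 3$ the only such line is the vertical axis $\xi_0$ (hence $H_v$ is the upper or the lower half-ball); for $n=2$ one additionally picks up the two orthogonal horizontal axes $\xi_1,\xi_2$. In either case there are finitely many $\dih_n$-equivariant half-balls, say $H_1,\ldots,H_m$, and the quantity
\[
\mu_0 \vcentcolon= \min_{1\le i<j\le m}\Haus^3(H_i\triangle H_j)
\]
is strictly positive.

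Now I would fix $\varepsilon\in(0,\mu_0/4)$ and apply Lemma \ref{lem:StabIsop} to obtain a corresponding $\delta>0$. For $k$ large enough that $\delta_k<\delta$, each slice $F_t^{(k)}$ admits some $H_{i(t)}$ with $\Haus^3(F_t^{(k)}\triangle H_{i(t)})\le\varepsilon$; moreover the index $i(t)$ is \emph{uniquely} determined, since two distinct choices $H_i$ and $H_j$ would give $\Haus^3(H_i\triangle H_j)\le 2\varepsilon<\mu_0$, contradicting the definition of $\mu_0$. The continuity of $t\mapsto F_t^{(k)}$ in the finite perimeter topology then forces $t\mapsto i(t)$ to be locally (and hence globally) constant: at any $t_0$, in a neighborhood where $\Haus^3(F_t^{(k)}\triangle F_{t_0}^{(k)})<\mu_0/2$, the triangle inequality yields $\Haus^3(H_{i(t)}\triangle H_{i(t_0)})<\mu_0$, and thus $i(t)=i(t_0)$.

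Finally, this would give a single $\dih_n$-equivariant half-ball $\tilde F$ with $\Haus^3(F_t^{(k)}\triangle \tilde F)\le\varepsilon$ for every $t\in[0,1]$. Combining the estimates at $t=0$ and $t=1$ with hypothesis (iii), namely $F_0^{(k)}=B^3\setminus F_1^{(k)}$ up to a null set, would give
\[
\Haus^3(B^3)=\Haus^3(F_0^{(k)}\triangle F_1^{(k)})\le \Haus^3(F_0^{(k)}\triangle\tilde F)+\Haus^3(\tilde F\triangle F_1^{(k)})\le 2\varepsilon,
\]
contradicting the choice of $\varepsilon$ as soon as $2\varepsilon<\Haus^3(B^3)=4\pi/3$. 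I do not expect any of these steps to present serious obstacles: the isoperimetric stability is already provided by Lemma \ref{lem:StabIsop}, and the only point requiring actual work is the elementary linear-algebraic verification that the $\dih_n$-invariant lines in $\R^3$ are finite in number, which will be the mildly delicate ingredient of the argument.
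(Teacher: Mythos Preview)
Your proof is correct and follows essentially the same strategy as the paper: invoke Lemma~\ref{lem:StabIsop}, observe that the $\dih_n$-equivariant half-balls are finite in number, deduce that the nearest half-ball is uniquely determined and locally constant along the path, and derive a contradiction with $F_0=B^3\setminus F_1$. The one cosmetic difference is that your outer contradiction with a sequence $\delta_k\searrow 0$ is superfluous: once you fix $\varepsilon<\mu_0/4$ and obtain the corresponding $\delta$ from Lemma~\ref{lem:StabIsop}, that $\delta$ already serves as $\delta_0$, and the argument applies directly to any single family with $\sup_t P(F_t;B^3)<\pi+\delta_0$ (this is exactly how the paper proceeds, with the concrete choice $\varepsilon=\pi/9$).
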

\begin{proof}
Pick $\varepsilon=\Haus^3(B^3)/12= \pi/9$ and consider $\delta_0>0$ to be the associated $\delta$ given by \cref{lem:StabIsop}. 
If \[\sup_{t\in[0,1]}P(F_t; {B^3})< \pi+\delta_0,\] then for every $t\in[0,1]$ there exists a $\dih_n$-equivariant half-ball $\tilde F_t$ such that $\Haus^3({F_t\triangle \tilde F_t}) \le {\pi}/{9}$.
Note that the $\dih_n$-equivariant half-balls are the upper and the lower half-balls and, for $n=2$, also the two half-balls bounded by the plane containing $\xi_0,\xi_1$ and the two half-balls bounded by the plane containing $\xi_0,\xi_2$.
In any case we deduce that, for every $t\in[0,1]$, the $\dih_n$-equivariant half-ball $\tilde F_t$ is uniquely determined.
Therefore, by continuity of the family $\{F_t\}_{t\in[0,1]}$, $\tilde F_t$ must be constant, but this contradicts the assumption that $F_{0}$ is the complement of $F_1$ in $B^3$.
\end{proof}

\begin{corollary} \label{cor:WidthBiggerPi}
Let $\{\Sigma_t\}_{t\in[0,1]}$ be the $\dih_{g+1}$-sweepout given by \cref{lem:ConstructionSweepout} and let $\Pi$ be its $\dih_{g+1}$-saturation given in \cref{def:SaturationWidth}. Then the min-max width of $\Pi$ is larger than $\pi$ and smaller than $3\pi$, namely $\pi<W_\Pi <3\pi$.
\end{corollary}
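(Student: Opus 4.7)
The plan is to prove the two inequalities separately, using essentially all the preparatory work already collected: the explicit sweepout from \cref{lem:ConstructionSweepout} for the upper bound, and the chain Lemma--Corollary--Proposition culminating in \cref{prop:NonTrivialWidthFinitePerSets} for the lower bound. Neither direction should require new geometric input; the role of this corollary is simply to package those statements in the form required by the mountain-pass hypothesis of \cref{thm:EquivMinMax}.

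For the upper bound $W_\Pi<3\pi$, I would first note that the sweepout $\{\Sigma_t\}_{t\in[0,1]}$ itself lies in its own saturation (take the trivial isotopy $\Phi_t\equiv\id$), hence by definition of $W_\Pi$ one has $W_\Pi\le\sup_{t\in[0,1]}\area(\Sigma_t)$. It then suffices to verify that this supremum is strictly less than $3\pi$. The inequality $\area(\Sigma_t)<3\pi$ holds for every $t\in(0,1)$ by \cref{lem:ConstructionSweepout}, and the endpoint values $\area(\Sigma_0)=\area(\Sigma_1)=\pi$ are safely below $3\pi$. Inspecting the explicit construction in \cref{sec:sweepout} shows that the bounds \eqref{eqn:20191107}, \eqref{eqn:<3pi} and \eqref{eq:lastregime} collectively provide a single constant $c<3\pi$ dominating $\area(\Sigma_t)$ on each of the subintervals $[t_0,1]$, $[t_0/2,t_0]$ and $[0,t_0/2]$, so one can simply take the maximum of these three strict bounds together with $\pi$.

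For the lower bound $W_\Pi>\pi$, the strategy is to pass from sweepouts to families of $\dih_{g+1}$-equivariant finite perimeter sets. By \cref{cor:CompareWidths} we may rewrite
\[
W_\Pi=\inf_{\{F_t\}\in\Theta}\,\sup_{t\in[0,1]}P(F_t;B^3),
\]
where $\Theta$ is the set of families built in \cref{lem:SweepoutsCorrespondence}. Any such family satisfies exactly the hypotheses of \cref{prop:NonTrivialWidthFinitePerSets} with $n=g+1\ge 2$: it is continuous in the finite perimeter sense, each $F_t$ is $\dih_{g+1}$-equivariant of Lebesgue measure $\Haus^3(B^3)/2$, and $F_0$ is the complement of $F_1$ since one is the upper and the other the lower half-ball. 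Consequently $\sup_{t\in[0,1]}P(F_t;B^3)\ge\pi+\delta_0$ for a constant $\delta_0=\delta_0(g+1)>0$ that does \emph{not} depend on the particular family. Taking the infimum over $\{F_t\}\in\Theta$ gives $W_\Pi\ge\pi+\delta_0>\pi$.

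The only point deserving care is the uniformity of $\delta_0$ across the saturation: if $\delta_0$ depended on the family in $\Theta$, the infimum could collapse to $\pi$. This uniformity is however exactly what the quantitative stability of the isoperimetric inequality (\cref{lem:StabIsop}) ensures, via the compactness argument in \cref{prop:NonTrivialWidthFinitePerSets}, so no additional work is needed. Thus I expect the proof itself to be short, essentially a two-line assembly once the preparatory lemmas are invoked.
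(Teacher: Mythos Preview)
Your proposal is correct and follows exactly the route the paper intends: the corollary is stated without proof precisely because it is meant to be assembled from \cref{lem:ConstructionSweepout} for the upper bound and from \cref{cor:CompareWidths} together with \cref{prop:NonTrivialWidthFinitePerSets} for the lower bound, just as you outline. Your care about strictness of the supremum and uniformity of $\delta_0$ is well placed and correctly handled.
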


%%%%%%%%%%%%%%%%%%%%%%%%%%%%%%%%%

\section{Controlling the topology}\label{sec:control}

In this section we prove the existence of the surface $M_g$ for $g\ge1$, stated in \cref{thm:main-fbms-b1}. Then, the last section will be devoted to the study of the asymptotic behavior of $M_g$ as $g\to\infty$, completing the proof of \cref{thm:main-fbms-b1}.
By \cref{cor:WidthBiggerPi}, all conditions for applying \cref{thm:EquivMinMax}, in particular the mountain pass condition \[W_\Pi>\max\{\Haus^2({\Sigma_0}),\Haus^2({\Sigma_1})\},\] are satisfied.  
We thus obtain a min-max sequence $\{\Sigma^j\}_{j\in\N}$ consisting of $\dih_{g+1}$-equivariant surfaces and converging in the sense of varifolds to $m\Gamma$, where $\Gamma$ is a smooth, properly embedded, compact connected free boundary minimal surface in ${B^3}$, and the multiplicity $m$ is a positive integer. 
Moreover, the following statements hold. 
\begin{enumerate}[label={\normalfont(\arabic*)}]
\item\label{min-max-theorem-axes} The surface $\Gamma$ contains the horizontal axes $\xi_1,\ldots,\xi_{g+1}$ and intersects $\xi_0$ orthogonally.
\item\label{min-max-theorem-odd} The integer $m$ is odd.
\item\label{min-max-theorem-genus} $\genus(\Gamma) \le g$.
\item\label{min-max-theorem-index} $\ind_{\dih_{g+1}}(\Gamma)\le1$.
\end{enumerate}

\begin{remark}\label{rem:DLP}
Observe that statement \ref{min-max-theorem-axes} is a consequence of the $\dih_{g+1}$-equivariance (cf. also Lemma 3.4 and Lemma 3.5 in \cite{Ketover2016Equivariant}). Point \ref{min-max-theorem-odd} follows from the invariance with respect to the rotation of angle $\pi$ around the axes $\xi_1,\ldots,\xi_{g+1}$ and its (self-contained) proof can be found at the end of Section 7.3 in \cite{Ketover2016FBMS}.
\end{remark}

\begin{lemma}\label{lem:multiplicity}
The multiplicity $m$ is equal to $1$ and $\Gamma$ is not a (topological) disc. 
\end{lemma}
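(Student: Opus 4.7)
The plan is to combine the two-sided bound $\pi < W_\Pi < 3\pi$ furnished by \cref{cor:WidthBiggerPi}, the parity constraint on $m$ (point \ref{min-max-theorem-odd}), and an elementary lower bound $\area(\Gamma)\geq \pi$ obtained from the monotonicity formula applied at the origin. The crucial observation, needed only to locate a convenient point where $\Gamma$ is smooth, is that by \ref{min-max-theorem-axes} the surface $\Gamma$ contains each horizontal axis $\xi_k$ and hence passes smoothly through the centre $0$ of the ball.

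First I would use the varifold convergence $\Sigma^j\to m\Gamma$ to deduce the mass identity $W_\Pi = m\,\area(\Gamma)$. Next, since $0$ is an interior point of $B^3$ sitting at distance exactly $1$ from $\partial B^3$, every Euclidean ball $B_r(0)$ with $r<1$ is contained in the interior of the ambient, so I can apply the classical interior monotonicity formula to the minimal surface $\Gamma$: the ratio $\area(\Gamma\cap B_r(0))/(\pi r^2)$ is nondecreasing in $r$, and its limit as $r\to 0^+$ equals the density $\Theta^2(\Gamma,0)=1$, since $\Gamma$ is a smoothly embedded two-surface through $0$. Letting $r\nearrow 1$ and using that $\Gamma\setminus\partial B^3$ and $\Gamma$ share the same area, one obtains $\area(\Gamma)\geq \pi$, with equality forcing $\Gamma$ to coincide with a flat disc through $0$, i.e.\ the equatorial disc.

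Given this lower bound, the upper bound $W_\Pi<3\pi$ immediately implies $m<3$, and since $m$ is a positive odd integer this forces $m=1$. Consequently $\area(\Gamma)=W_\Pi$, and the strict lower bound $W_\Pi>\pi$ then yields $\area(\Gamma)>\pi$, which by the equality case of the monotonicity formula rules out the possibility that $\Gamma$ is an equatorial disc; since no other topological disc can arise as a $\dih_{g+1}$-equivariant free boundary minimal surface containing every $\xi_k$, this concludes that $\Gamma$ is not a disc.

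The only step that requires any real care is the application of monotonicity up to radius $1$: I would not need the more delicate extended (free boundary) monotonicity formula, because every $B_r(0)$ with $r<1$ sits in the interior of the ambient ball, so the classical interior statement suffices and one concludes by passing to the limit $r\nearrow 1$ using that $\Gamma$ is compactly contained in $\overline{B^3}$ and that $\Gamma\cap\partial B^3$ is $\Haus^2$-negligible.
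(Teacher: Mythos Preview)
Your overall strategy matches the paper's: use $\pi<W_\Pi<3\pi$ together with the parity of $m$ to force $m=1$, and then deduce $\area(\Gamma)>\pi$ to exclude the disc. The paper obtains the lower bound $\area(\Gamma)\geq\pi$ by quoting Fraser--Schoen \cite[Theorem~5.4]{FraserSchoen2011}, whereas you derive it from interior monotonicity at the origin, exploiting that $\Gamma$ passes through $0$ thanks to \ref{min-max-theorem-axes}. Your route is perfectly valid and arguably more self-contained in this specific situation (it does not work for an arbitrary free boundary minimal surface in $B^3$, but you do not need that generality here).

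There is, however, a genuine gap in your final sentence. From $\area(\Gamma)>\pi$ and the equality case of monotonicity you correctly exclude the \emph{equatorial} disc, but you then assert that ``no other topological disc can arise as a $\dih_{g+1}$-equivariant free boundary minimal surface containing every $\xi_k$'' without justification. Equivariance and containment of the axes only pin down the tangent plane at the origin; they do not by themselves force flatness of a free boundary minimal disc. What is actually needed here is Nitsche's uniqueness theorem \cite{Nitsche1985}: every free boundary minimal disc in $B^3$ is, up to ambient isometry, the equatorial disc. This is precisely what the paper invokes to close the argument. Once you cite Nitsche, your proof is complete.
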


\begin{proof}
Fraser and Schoen (see \cite[Theorem 5.4]{FraserSchoen2011}) proved that any free boundary minimal surface in ${B^3}$ has area at least $\pi$.
By varifold convergence of the min-max sequence, it holds $m\Haus^2(\Gamma) = W_\Pi$. By \cref{cor:WidthBiggerPi}, $\pi<m\Haus^2(\Gamma)<3\pi$
whence we conclude $m<3$. 
In fact, $m=1$ since $m$ must be odd by~\ref{min-max-theorem-odd}.
As a result, $\Haus^2({\Gamma})>\pi$, which implies that $\Gamma$ is not isometric to the equatorial disc. 
However, according to \cite{Nitsche1985}, the equatorial disc is the only possible free boundary minimal disc in $B^3$ up to ambient isometries.  
\end{proof}

\begin{lemma}\label{lem:number_of_boundary_components}
The number of boundary components of $\Gamma$ is 1.
\end{lemma}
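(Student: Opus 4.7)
The argument proceeds in two stages. First, I would single out a distinguished component of $\partial \Gamma$ by invoking \cref{lem:endpoints}: since $\Gamma$ is smooth, properly embedded, $\dih_{g+1}$-equivariant and contains the horizontal axes $\xi_1, \ldots, \xi_{g+1}$ by property \ref{min-max-theorem-axes}, the endpoints $q_1, \ldots, q_{g+1}$ of these axes all lie on the same connected component $\sigma_\star$ of $\partial \Gamma$. The task is then to show that $\sigma_\star$ is in fact the \emph{only} boundary component of $\Gamma$.

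The key structural input from the construction is that each surface $\Sigma^j$ of the min-max sequence produced by \cref{thm:EquivMinMax} has connected boundary. Indeed, each sweepout surface $\Sigma_t$ for $t \in (0,1)$ has connected boundary by \cref{lem:ConstructionSweepout}, and every element of the $\dih_{g+1}$-saturation $\Pi$ is obtained via a $\dih_{g+1}$-equivariant isotopy $\Phi$ of $\overline{B^3}$. Since each $\Phi_t$ is a diffeomorphism of $\overline{B^3}$, it preserves $\partial B^3$ and thus the number of boundary components of the sweepout surface it acts on. Extracting a subsequence so that $t_j$ avoids the finite exceptional set of parameters, I obtain that $\partial \Sigma^j$ is a single simple closed curve in $\partial B^3$ for all $j$ large.

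Assume, towards a contradiction, that there exists a second boundary component $\sigma \neq \sigma_\star$ of $\partial \Gamma$. Combining the multiplicity-one convergence established in \cref{lem:multiplicity} with the regularity analysis of \cref{sec:ProofRegularityGenusFinal}, the convergence $\Sigma^j \to \Gamma$ is smooth and graphical with multiplicity one on compact subsets of $\overline{B^3} \setminus \mathcal{Y}$, for a certain finite set $\mathcal{Y}$. Invoking (the free-boundary version of) Simon's Lifting Lemma, \cite[Proposition 2.1]{DeLellisPellandini2010} (cf.\ its adaptation to the free boundary setting in \cite[Section 9]{Li2015}), one lifts each of the essential curves $\sigma_\star$ and $\sigma$ to sequences of simple closed curves $\sigma^j_\star, \sigma^j \subset \partial \Sigma^j$ that converge in the Hausdorff topology to $\sigma_\star$ and $\sigma$, respectively. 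Since $\sigma$ and $\sigma_\star$ are disjoint compact subsets of $\partial B^3$, the curves $\sigma^j_\star$ and $\sigma^j$ must be disjoint for $j$ sufficiently large. Consequently $\partial \Sigma^j$ has at least two connected components, in contradiction with the previous step.

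The principal technical obstacle in the argument above is the careful application of Simon's Lifting Lemma to boundary components of $\Gamma$. Away from the finite set $\mathcal{Y}$, the smooth multiplicity-one convergence immediately yields arcs of $\partial \Sigma^j$ that are $C^\infty$-close to the arcs of $\partial \Gamma \setminus \mathcal{Y}$. The delicate point is to ensure that these arcs assemble, across the (possibly nonempty) finite intersection $\partial \Gamma \cap \mathcal{Y}$, into genuinely distinct \emph{simple closed} components of $\partial \Sigma^j$, rather than being connected through thin strings concentrating at bad points into a single long loop. This is exactly what Simon's Lifting Lemma accomplishes, via a local topological analysis at each bad point combined with the multiplicity-one graphical convergence on its complement.
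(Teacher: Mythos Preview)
Your argument has a genuine gap in the use of Simon's Lifting Lemma. That lemma (\cite[Proposition~2.1]{DeLellisPellandini2010}, adapted in \cite[Section~9]{Li2015}) takes a closed curve in the \emph{interior} of the limit surface $\Gamma$ and produces a curve $\alpha^j\subset\Sigma^j$ homotopic to it in a tubular neighbourhood $U_\delta\Gamma$; it does not assert that the lift lies in $\partial\Sigma^j$, nor that components of $\partial\Gamma$ correspond to components of $\partial\Sigma^j$. Your final paragraph correctly identifies the obstacle --- arcs of $\partial\Sigma^j$ near distinct components of $\partial\Gamma$ might be joined through thin strings near bad points --- but this is exactly what the Lifting Lemma does \emph{not} rule out. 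The number of boundary components is in general not lower semicontinuous under min-max convergence (cf.\ Li's remark quoted in the introduction), which is why the paper calls this step a ``rather surprising application'' of the lemma. As a side remark, the claimed smooth graphical convergence of $\Sigma^j$ to $\Gamma$ away from a finite set is also unjustified: the $\Sigma^j$ are sweepout slices, not minimal surfaces, so Allard-type upgrades do not apply, and the regularity theory of \cref{sec:ProofRegularityGenusFinal} concerns replacements, not the min-max sequence itself.

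The paper's route avoids boundary curves entirely. By \cref{lem:endpoints} the putative extra component $\alpha$ is disjoint from the singular locus $\mathcal{S}$, so one pushes it slightly inward to an interior curve $\tilde\alpha\subset\Gamma\setminus\mathcal{S}$ that is essential in $U_\delta\Gamma$, and applies the Lifting Lemma to $\tilde\alpha$. The lift $\alpha^j$ then lies in $\Sigma^j\setminus\mathcal{S}$, which --- since $\Sigma^j$ is equivariantly diffeomorphic to a sweepout slice --- consists of two topological discs; hence $\alpha^j$ bounds a disc $D^j\subset\Sigma^j$. A coarea-slicing argument then shows that $D^j$ can be capped off inside $U_\delta\Gamma$, contradicting the essentialness of $\tilde\alpha$ there. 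The operative topological input is thus ``$\Sigma^j\setminus\mathcal{S}$ is a union of discs'' rather than ``$\partial\Sigma^j$ is connected''.
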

\begin{proof}
Suppose, towards a contradiction, that $\partial\Gamma$ has more than one connected component. 
Then \cref{lem:endpoints} implies that one connected component, say $\alpha$, of $\partial\Gamma$ is disjoint from $\mathcal{S}=\xi_0\cup\xi_1\cup\ldots\cup\xi_{g+1}$. 
Recall that the vertical axis $\xi_0$ is always disjoint from $\partial\Gamma$ because $\Gamma\subset B^3$ is properly embedded and intersects $\xi_0$ orthogonally by item \ref{min-max-theorem-axes} above.
Moreover, let $\tilde\alpha$ be a simple closed curve in the interior of $\Gamma\setminus\mathcal{S}$ that is homotopic to $\alpha$ in $\Gamma\setminus\mathcal{S}$ (it is sufficient to slightly push $\alpha$ towards the interior of $\Gamma$). 

Let then $\delta>0$ be so small that 
\begin{align*}
U_{\delta}\Gamma\vcentcolon=\{x\in B^3\st \dist_{\R^3}(x,\Gamma)<\delta\}
\end{align*}
is a tubular neighbourhood of $\Gamma$ in $B^3$.
Since $\Gamma$ is connected and thus it is not a disc (since it has at least two boundary components), $\alpha$ and $\tilde\alpha$ are not contractible in $U_\delta\Gamma$.

Thanks to Proposition 4.10 in \cite{Li2015}, without loss of generality we can assume that the min-max sequence $\{\Sigma^j\}_{j\in\N}$ is outer almost minimising (see \cite[Definition 3.6]{Li2015}) in sufficiently small annuli.
Therefore, by Simon's Lifting Lemma (see \cite[Proposition 2.1]{DeLellisPellandini2010} and \cite[Section 9]{Li2015}), for every $j$ sufficiently large there exists a closed curve $\alpha^j\subset \Sigma^j\cap U_\delta\Gamma$ that is homotopic to $\tilde\alpha$ in $U_\delta\Gamma$ (note that we can apply the lemma since $\tilde\alpha$ is contained in the interior of $\Gamma$). In fact, given any $\rho>0$, it follows from the proof in Section 4.3 of \cite{DeLellisPellandini2010} that $\alpha^j$ can be taken in $U_\rho\tilde\alpha\vcentcolon=\{x\in B^3\st \dist_{\R^3}(x,\tilde\alpha)<\rho\}$.
Hence, choosing $\rho$ such that $U_{\rho}\tilde\alpha\subset U_{\delta/2}\Gamma\setminus\mathcal{S}$, we can guarantee that $\alpha^j\subset (\Sigma^j\cap U_{\delta/2}\Gamma)\setminus\mathcal{S}$ for every $j$ sufficiently large.

Now observe that $\Sigma^j$ is diffeomorphic to an element $\Sigma_{t_j}$ of the sweepout given by \cref{lem:ConstructionSweepout} for some $0<t_j<1$, through a $\dih_{g+1}$-equivariant diffeomorphism.
Thus, both connected components of $\Sigma^j\setminus(\xi_1\cup\ldots\cup\xi_{g+1})$ are topological discs, hence $\alpha^j \subset \Sigma^j\setminus\mathcal{S}$ is contractible.
Let us denote by $D^j$ a disc in the interior of $\Sigma^j$ such that $\partial D^j = \alpha^j$.
We claim that $\alpha^j$ bounds a disc in $U_{\delta}\Gamma$ as well, which would contradict the existence of $\alpha$, since $\alpha^j$ is homotopic to $\tilde{\alpha}$ in $U_{\delta}\Gamma$ and $\tilde{\alpha}$ is not contractible there.

We now exploit an argument similar to the one in Section 2.4 of \cite{DeLellisPellandini2010}.
Since the min-max sequence $\{\Sigma^j\}_{j\in\N}$ converges in the sense of varifolds to $\Gamma$, it follows that given any $\eta>0$ there exists $J=J({\delta,\eta})\in\N$ such that, for every $j\geq J$, 
\begin{align*}
\Haus^2(\Sigma^j\setminus U_{\delta/2}\Gamma)<\eta.
\end{align*}
Defining $V_s\Gamma\vcentcolon= \partial(U_s\Gamma)\cap B^3$ for $s\in\interval{0,\delta}$,
we observe that $\{V_s\Gamma\}_{s\in\interval{0,\delta}}$ is a smooth foliation of $U_{\delta}\Gamma$ and we can apply the coarea formula to conclude 
\begin{align*}
\int^{\delta}_{\delta/2}\Haus^1(\Sigma^j\cap V_s\Gamma)
&\leq \Haus^2(\Sigma^j\setminus U_{\delta/2}\Gamma)<\eta,
\end{align*}
for every $j$ sufficiently large.
Thus, there exists a subset $I\subset\interval{\delta/2,\delta}$ of measure at least ${\delta}/{4}$ such that for all $s\in I$
\begin{align*}
\Haus^1(\Sigma^j\cap V_s\Gamma)<\frac{4\eta}{\delta}. 
\end{align*}
By Sard's theorem there exists $s\in I$ such that the intersection 
\(
\Theta^{j}_{s}\vcentcolon=\Sigma^j\cap V_s\Gamma
\)
is transverse. 
This implies that any connected component of $\Theta^{j}_{s}$ is smooth and either a simple closed curve or an arc connecting two points of $\partial\Sigma^j$ in $V_s\Gamma$.

There exists $\lambda>0$ (depending on $\Gamma$ and $\delta$) such that for any $s\in\interval{\delta/2,\delta}$ any simple closed curve in $V_s\Gamma$ with length less than $\lambda$ bounds an embedded disc in $V_s\Gamma$. 
At this stage, we may choose $\eta>0$ such that $4\eta<\lambda\delta$ and then $j\geq J({\delta,\eta})$ to ensure that the length of each connected component of $\Theta_{s}^{j}$ is less than~$\lambda$.
Now observe that $D^j\cap V_s\Gamma\subset \Sigma^j\cap V_s\Gamma = \Theta_s^j$. In particular, $D^j\cap V_s\Gamma$ consists of a finite number of simple closed curves (since $D^j$ is contained in the interior of $\Sigma^j$) of length less than $\lambda$ and thus each connected component of $D^j\cap V_s\Gamma$ bounds a disc in $V_s\Gamma$.

Hence, defining $G^j\subset U_\delta\Gamma$ as the connected component of $D^j\cap U_s\Gamma$ containing $\partial D^j=\alpha^j$, it is possible to cap the boundary components of $G^j$ lying in $V_s\Gamma$ with discs (contained in $U_\delta\Gamma$) in such a way that the resulting surface, which we denote by $\tilde D^j$, satisfies $\partial \tilde D^j = \partial D^j = \alpha^j$ (for recall that $\alpha^j\subset U_{\delta/2}\Gamma$). Note that $\tilde D^j$ is a topological disc since it is obtained from the topological disc $D^j$ by removing interior discs and then gluing discs with those same boundaries.
Therefore it follows that $\tilde\alpha^j=\partial \tilde D^j$ is contractible in $U_\delta\Gamma$, which contradicts the initial choice of $\alpha$.
\end{proof}

\begin{lemma}\label{lem:genus}
The genus of $\Gamma$ is $g$.
\end{lemma}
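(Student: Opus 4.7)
The plan is to reduce the statement to a direct application of the structural result \cref{lem:structural}, which was proven in \cref{sec:structure} precisely for this purpose. All the ingredients have essentially been collected in the previous lemmas of this section and in point \ref{min-max-theorem-axes} above, so it is just a matter of verifying the hypotheses.

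First I would establish a \emph{lower} bound on the genus. By \cref{lem:number_of_boundary_components}, $\Gamma$ has exactly one boundary component, and by \cref{lem:multiplicity}, $\Gamma$ is not a topological disc; since the only orientable compact surface with connected boundary and genus zero is the disc, this forces $\genus(\Gamma) \ge 1$. (Here $\Gamma$ is orientable because it is a smooth, properly embedded, two-sided surface in $B^3$.)

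Next I would recall from the conclusion of the equivariant min-max procedure (item \ref{min-max-theorem-genus} above, which is a consequence of the genus bound \eqref{eq:GenusBound} in \cref{thm:EquivMinMax}) that $\genus(\Gamma) \le g$. Combined with the previous step, this gives $\genus(\Gamma) \in \{1,\ldots,g\}$.

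Finally, I would invoke \cref{lem:structural}: $\Gamma$ is a compact, connected, properly embedded surface in $B^3$ which is $\dih_{g+1}$-equivariant and contains all the horizontal axes $\xi_1,\ldots,\xi_{g+1}$ by item \ref{min-max-theorem-axes}, and its genus lies in $\{1,\ldots,g\}$. The lemma then concludes $\genus(\Gamma) = g$, as desired. There is no real obstacle here, since the delicate structural work (the Riemann--Hurwitz computation showing that intermediate genera are incompatible with the dihedral symmetry and the presence of the $g+1$ axes) has already been carried out in \cref{lem:structural}.
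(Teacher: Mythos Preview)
Your proposal is correct and follows essentially the same approach as the paper: establish $\genus(\Gamma)\ge 1$ from \cref{lem:multiplicity} and \cref{lem:number_of_boundary_components}, combine with the upper bound $\genus(\Gamma)\le g$ from \ref{min-max-theorem-genus}, and then apply \cref{lem:structural} using \ref{min-max-theorem-axes}. The only difference is cosmetic ordering and a bit of extra explanation on your part about orientability.
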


\begin{proof}
By \ref{min-max-theorem-axes} and \ref{min-max-theorem-genus}, $\Gamma$ contains the horizontal axes $\xi_k$ for $k\in\{1,\ldots,g+1\}$ and has genus $\genus(\Gamma)\le g$.
Moreover, by \cref{lem:multiplicity,lem:number_of_boundary_components}, $\Gamma$ has one boundary component and it is not a topological disc, thus $\genus(\Gamma)\ge 1$.
As a result, \cref{lem:structural} applies, proving the claim.
\end{proof}

According to \cref{lem:number_of_boundary_components,lem:genus}, the free boundary minimal surface $M_g=\Gamma$ has genus $g$ and connected boundary. 
As said before, $\Gamma$ inherits the dihedral symmetry $\dih_{g+1}$ from the min-max sequence $\{\Sigma^j\}_{j\in\N}$.
Let us conclude this section by proving that the $\dih_{g+1}$-equivariant index of $M_g$ is $1$.

\begin{lemma} \label{lem:EquivIndMg}
The $\dih_{g+1}$-equivariant index of $M_g$ is $1$.
\end{lemma}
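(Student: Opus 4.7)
The upper bound $\ind_{\dih_{g+1}}(M_g)\le 1$ is immediate: by construction $M_g$ arises from the one-parameter ($n=1$) $\dih_{g+1}$-sweepout produced in \cref{lem:ConstructionSweepout}, so the equivariant index bound \eqref{eq:IndexBound} in \cref{thm:EquivMinMax} gives $\ind_{\dih_{g+1}}(M_g)\le 1$. The substance of the lemma is therefore to exhibit a single $\dih_{g+1}$-equivariant negative direction for the second variation $Q_{M_g}$; since $M_g$ is two-sided (it separates $B^3$ into two regions of equal volume by \cref{lem:SweepoutsCorrespondence}), we look for a nonzero $u\in C^\infty_{\dih_{g+1}}(M_g)$ with $Q_{M_g}(u,u)<0$, where $C^\infty_{\dih_{g+1}}(M_g)$ is defined via $u\circ h=\operatorname{sgn}_{M_g}(h)\,u$ for $h\in\dih_{g+1}$ (cf. \cref{sec:twoside}).

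My candidate is the vertical coordinate function $u(x_1,x_2,x_3)=x_3$, restricted to $M_g$. The plan is first to verify its $\dih_{g+1}$-equivariance by computing $\operatorname{sgn}_{M_g}$ on the generators of $\dih_{g+1}$. For the rotation $R_k$ of angle $\pi$ around a horizontal axis $\xi_k\subset M_g$: since $\xi_k\subset M_g$, the tangent plane $T_p M_g$ at any $p\in\xi_k$ contains $\xi_k$, so the differential of $R_k$ preserves $T_pM_g$ and acts as $-\mathrm{id}$ on the orthogonal complement of $\xi_k$ in $\R^3$; in particular $(R_k)_*\nu=-\nu$, giving $\operatorname{sgn}_{M_g}(R_k)=-1$. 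On the other hand $R_k$ sends $x_3$ to $-x_3$ (as $\xi_k$ is horizontal), so $u\circ R_k=-u=\operatorname{sgn}_{M_g}(R_k)\,u$. For the rotations $T_k$ about the vertical axis $\xi_0$: since every $T_k$ is a product of two horizontal rotations, $\operatorname{sgn}_{M_g}(T_k)=1$; and $u\circ T_k=u$ because $T_k$ fixes $x_3$. Since $\dih_{g+1}$ is generated by these isometries, $u\in C^\infty_{\dih_{g+1}}(M_g)$.

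Next, I would compute $Q_{M_g}(u,u)$ using the free boundary minimality in $B^3$. Because $x_3$ is a coordinate function on $\R^3$ and $M_g$ is minimal, $\Delta_{M_g}x_3=0$, so $\jac_{M_g}(x_3)=(\Delta_{M_g}+|A|^2)x_3=|A|^2x_3$. At $\partial M_g\subset \partial B^3$, the free boundary condition identifies the outward conormal $\eta$ with the position vector, so $\partial_\eta x_3 = x_3$. Since $B^3$ is Ricci-flat and $\II^{\partial B^3}(\nu,\nu)=-1$ (as noted in the basic notation), the formula for $Q_{M_g}$ from \cref{sec:SecondVariation} yields, upon integration by parts,
\[
Q_{M_g}(u,u)
=-\int_{M_g} u\,\jac_{M_g}(u)\,d\Haus^2+\int_{\partial M_g}\bigl(u\,\partial_\eta u+\II^{\partial B^3}(\nu,\nu)u^2\bigr)\,d\Haus^1
=-\int_{M_g}|A|^2 x_3^2\,d\Haus^2,
\]
where the boundary integral vanishes because $x_3\cdot x_3-x_3^2=0$.

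Finally, I claim this integral is strictly positive. By real analyticity of minimal surfaces in $\R^3$, both $|A|^2$ and $x_3$ are analytic on the connected surface $M_g$, so each either vanishes on a set of measure zero or vanishes identically. The function $|A|^2$ does not vanish identically, since otherwise $M_g$ would be contained in a plane, contradicting $\genus(M_g)=g\ge 1$ established in \cref{lem:genus}; and $x_3$ does not vanish identically on $M_g$, since otherwise $M_g\subset \{x_3=0\}\cap B^3$ and hence, being a connected properly embedded free boundary minimal surface, $M_g$ would be the equatorial disc, again contradicting $g\ge 1$ (see also \cite{Nitsche1985}). Therefore $\int_{M_g}|A|^2 x_3^2\,d\Haus^2>0$, so $Q_{M_g}(u,u)<0$ and $\ind_{\dih_{g+1}}(M_g)\ge 1$. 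Combined with the upper bound, this gives the equality. The only subtle point in the plan is the sign computation for $\operatorname{sgn}_{M_g}$, which must be done on generators and carefully tracked; everything else is a routine integration by parts and analyticity argument.
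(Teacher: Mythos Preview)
Your proof is correct and follows essentially the same approach as the paper: the upper bound from \cref{thm:EquivMinMax} and the lower bound via the test function $u=x_3$, whose $\dih_{g+1}$-equivariance is verified exactly by computing $\operatorname{sgn}_{M_g}=-1$ on the horizontal rotations (since $\xi_k\subset M_g$). The paper cites \cite{Devyver2019}*{Lemma 6.1} for the identity $Q_{M_g}(x_3,x_3)=-\int_{M_g}|A|^2x_3^2$, whereas you supply the computation and the strict-negativity argument explicitly; both are fine.
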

\begin{proof}
As stated in \ref{min-max-theorem-index} above (from the application of \cref{thm:EquivMinMax}), we have that the surface $M_g=\Gamma$ has $\dih_{g+1}$-equivariant index less or equal than $1$. We want to prove that the $\dih_{g+1}$-equivariant index is exactly $1$. To do so, it is sufficient to construct a test function on which the associated quadratic form attains a negative value.

Recall that $M_g$ contains the horizontal axes of symmetry $\xi_1,\ldots,\xi_{g+1}$. Hence, $\operatorname{sgn}_{M_g}(\selem) = -1$ for all $\selem\in\dih_{g+1}$ given from a rotation of angle $\pi$ around any horizontal axis. Observe that these isometries generate all $\dih_{g+1}$, because the composition of the rotations of angle $\pi$ around $\xi_1$ and $\xi_2$ is equal to the rotation of angle $2\pi/(g+1)$ around $\xi_0$ (see \cref{sec:DihedralGroup}).
Hence, we can infer the sign of all the elements of the group.
In particular, we obtain that the function $u(x_1,x_2,x_3) = x_3$ on $M_g$ belongs to $C^\infty_G(M_g)$ for all $g\in \dih_{g+1}$. Moreover, one can compute that
\[
Q_{M_g}(u,u) = -\int_{M_g}\abs{A}^2u^2\de\Haus^2 <0
\]
(see \cite{Devyver2019}*{Lemma 6.1}). This proves that the $\dih_{g+1}$-equivariant index of $M_g$ is exactly equal to $1$.
\end{proof}

\begin{remark}
Note that the function $x^\perp = \scal{x}{\nu}$ is in $C^\infty_G(M_g)$ and $Q(x^\perp, x^\perp) = 0$.
\end{remark}

\section{Asymptotic behavior}

In this section, we conclude the proof of \cref{thm:main-fbms-b1} by proving that the free boundary minimal surfaces $M_g\subset B^3$ converge in the sense of varifolds to the equatorial disc with multiplicity three, for $g\to\infty$.

\begin{proof}
First observe that, by \cref{cor:WidthBiggerPi}, we have that $\Haus^2(M_g) < 3\pi$ for all $g\ge 1$. Hence, by Theorem 2 in \cite{White2016}, we get that $\Haus^1(\partial M_g) < 6\pi$.
Moreover, note that $\partial M_g$ consists of $2(g+1)$ isometric parts, each of which connecting two distinct endpoints of the symmetry axes $\xi_1,\ldots,\xi_{g+1}$. Hence, by the estimate above on the length of $\partial M_g$, the length of each of these parts is less than $3\pi/(g+1)$, which converges to $0$ as $g\to\infty$. 

Now, let $h(g)$ be the maximal distance of a point of $M_g$ from the equatorial disc. By Theorem 15 in \cite{White2016}, $h(g)$ is realized by a point on the boundary of $M_g$. Thanks to the observations above, this point is contained in a curve of length less than $3\pi/(g+1)$ with endpoints on the equatorial disc. As a result, we obtain that $h(g) < 3\pi/(2g+2) \to0$.

Hence, we get that any subsequence (not relabelled) of $M_g$ converges in the sense of varifolds to an integer multiple $m$ of the equatorial disc $D$. Thus it is sufficient to prove that $m=3$, in order to conclude that the whole sequence $M_g$ converges to the equatorial disc with multiplicity three.
Since $\Haus^2(M_g) < 3\pi$, we have that $m\in\{1,2,3\}$. By Proposition 2.1 in \cite{Ketover2016FBMS}, we exclude $m=1$.

Now observe that, without loss of generality, we can assume that there is one horizontal symmetry axis $\xi$ in common among all the surfaces $M_g$. In fact, if we prove that a rotation of $M_g$ around the vertical axis converges to the triple equatorial disc, then also $M_g$ converges to the triple equatorial disc (being the equatorial disc invariant with respect to rotations around the vertical axis).

Consider a radius $0<R<1$ such that $M_g$ intersects transversally $B_R(0)$ for all the elements of the subsequence $M_g$ under consideration. Then we can apply \cref{lem:structural} and obtain that the genus of $M_g\cap B_R(0)$ is $0$ or $g$. In either cases, extracting a further subsequence of $M_g$, we find a point $x\in \xi$ such that $M_g$ has genus $0$ in $B_r(x)$ for some $r>0$ sufficiently small. Indeed we can take $x\in \xi \cap B_R(0)$ if $\genus(M_g\cap B_R(0)) = 0$ and $x\in \xi \cap \overline{B_R(0)}^c$ if $\genus(M_g\cap B_R(0)) = g$.

Therefore, by Lemmas 1 and 2 in \cite{Ilmanen1998}, it is possible to take $r>0$ possibly smaller in such a way that $M_g$ converges with multiplicity $m$ to $D$ smoothly away from finitely many points in $B_r(x)$. In particular there exist a point $y\in \xi$ and a ball $B_s(y)$ such that $M_g$ converges to $D$ smoothly with multiplicity $m$ in $B_s(y)$. 
Hence $M_g\cap B_s(y)$ consists of $m$ graphs converging smoothly to $D\cap B_s(y)$. Observe that exactly one of these graphs contains the axis $\xi\cap B_s(y)$. As a result, it easily follows from equivariance that the multiplicity $m$ must be odd (as in Section 7.3 of \cite{Ketover2016FBMS}).
Hence, we conclude that the multiplicity of the convergence is $3$, as desired.
\end{proof}

\backmatter

\phantomsection

\addcontentsline{toc}{chapter}{Bibliography}

\bibliographystyle{alpha}
\bibliography{biblio}

% \chapter*{Curriculum vitae}
% 
% \addcontentsline{toc}{chapter}{Curriculum vitae} 
% 
% \input{./tex/Curriculum}

% \let\leftmark \indexname
\end{document}